\theoremstyle{plain}
\newtheorem{theorem}{Theorem}[subsection]
\newtheorem{corollary}[theorem]{Corollary}
\newtheorem{lemma}[theorem]{Lemma}
\newtheorem{proposition}[theorem]{Proposition}
\theoremstyle{definition}
\newtheorem{example}[theorem]{Example}
\newtheorem{definition}[theorem]{Definition}
\newtheorem{remark}[theorem]{Remark}
\theoremstyle{remark}
\newtheorem{assumption}{Assumption}
\newcommand{\clg}[1][G]{\ensuremath{{\mathbb{#1}}}}
\newcommand{\clgd}[1][G]{\ensuremath{{\mathbb{#1}^{\vee}}}}
\newcommand{\cla}[1][g]{\ensuremath{{\mathfrak{#1}}}}
\newcommand{\uea}[1][g]{\ensuremath{U(\cla[#1])}}
\newcommand{\vm}[1][w]{\ensuremath{\text{M}_{#1}}}
\newcommand{\vmi}[1][w]{\ensuremath{\text{L}_{#1}}}
\newcommand{\multInt}[2]{\ensuremath{n_{#1,#2}}}
\newcommand{\MultInt}[2]{\ensuremath{N_{#1,#2}}}
\newcommand{\multhc}[2]{\ensuremath{m(#1,#2)}}
\newcommand{\Multhc}[2]{\ensuremath{M(#1,#2)}}
\newcommand{\realGroupDual}[1][G]{\ensuremath{{#1}^{\vee}}}
\newcommand{\stdRep}[1][\pi]{\ensuremath{#1}}
\newcommand{\irrRep}[1][\pi]{\ensuremath{\bar{#1}}}
\newcommand{\irrReps}[1][G]{\ensuremath{\Pi(\realGroup[#1])}}
\newcommand{\irrRepsTriv}[1][G]{\ensuremath{\Pi_{\text{triv}}(\realGroup[#1])}}
\newcommand{\clgDual}[1][G]{\ensuremath{\clg[G]^{\vee}}}
\newcommand{\complexGroup}[1][G]{\ensuremath{{\mathbb{#1}}}}
\newcommand{\realGroup}[1][G]{\ensuremath{{#1}}}
\newcommand{\realGroupSplit}[1][G]{\ensuremath{{#1}^{\text{s}}}}
\newcommand{\realGroupCover}[1][G]{\ensuremath{{\widetilde{\realGroup[#1]}}}}
\newcommand{\complexTorus}[1][H]{\ensuremath{{\complexGroup[#1]}}}
\newcommand{\realTorus}[1][H]{\ensuremath{{\realGroup[#1]}}}
\newcommand{\realTorusId}[1][H]{\ensuremath{{\realGroup[#1]_{0}}}}
\newcommand{\realTorusSplit}[1][H]{\ensuremath{\realGroup[#1]^{\text{s}}}}
\newcommand{\realTorusCover}[1][H]{\ensuremath{{\widetilde{\realTorus[#1]}}}}
\newcommand{\realTorusCoverId}[1][H]{\ensuremath{{(\widetilde{\realTorus[#1]})_{0}}}}
\newcommand{\realTorusSplitCover}[1][H]{\ensuremath{{\realGroupCover[#1]}^{\text{s}}}}
\newcommand{\realTorusCt}[1][\simpleRoot]{\ensuremath{{\realGroup[H]_{#1}}}}
\newcommand{\realTorusCtCover}[1][\simpleRoot]{\ensuremath{{\widetilde{\realGroup[H]}_{#1}}}}
\newcommand{\realTorusCtNci}[1][\simpleRoot]{\ensuremath{\realGroup[H]^{#1}}}
\newcommand{\realTorusCtCoverNci}[1][\simpleRoot]{\ensuremath{{\widetilde{\realGroup[H]}^{#1}}}}
\newcommand{\realTorusCtCoverInt}[1][\simpleRoot]{\ensuremath{{\widetilde{\realGroup[H]}^{#1}_{1}}}}
\newcommand{\realTorusSplitCt}[1][\simpleRoot]{\ensuremath{{\realTorusSplit_{#1}}}}
\newcommand{\realTorusSplitCtCover}[1][\simpleRoot]{\ensuremath{{\widetilde{\realGroup[H]}^{\text{s}}_{#1}}}}
\newcommand{\realTorusSplitCtCoverId}[1][\simpleRoot]{\ensuremath{{(\widetilde{\realGroup[H]}^{\text{s}}_{#1})_{0}}}}
\newcommand{\realTorusCtGp}[2][\simpleRoot]{\ensuremath{{\realGroup[#2]_{#1}}}}
\newcommand{\realTorusCtNciGp}[2][\simpleRoot]{\ensuremath{{\realGroup[#2]^{#1}}}}
\newcommand{\realTorusCtId}[1][\simpleRoot]{\ensuremath{{(\realGroup[H]_{#1})_{0}}}}
\newcommand{\realNormalizer}[1][G]{\ensuremath{{N_{\realGroup[#1]}}}}
\newcommand{\maxRealCompact}{\ensuremath{{\realGroup[K]}}}
\newcommand{\maxRealCompactCover}{\ensuremath{{\widetilde{\realGroup[K]}}}}
\newcommand{\complexSpinGroup}[1][n]{\ensuremath{{\text{Spin}(#1,\mathbb{C})}}}
\newcommand{\realSpinGroup}[2]{\ensuremath{{\text{Spin}(#1,#2)}}}
\newcommand{\realSpinGroupCover}[2]{\ensuremath{{\widetilde{\text{Spin}}(#1,#2)}}}
\newcommand{\projOp}{\ensuremath{{\pi}}}
\newcommand{\projOpInv}{\ensuremath{{\projOp^{-1}}}}
\newcommand{\realTorusCoverT}{\ensuremath{{\widetilde{\realTorus[T]}}}}
\newcommand{\realTorusCoverA}{\ensuremath{{\widetilde{\realTorus[A]}}}}
\newcommand{\realTorusCoverTCt}[1][\simpleRoot]{\ensuremath{{\widetilde{\realTorus[T]}^{#1}}}}
\newcommand{\realTorusCoverACt}[1][\simpleRoot]{\ensuremath{{\widetilde{\realTorus[A]}^{#1}}}}
\newcommand{\realTorusCoverTCtInt}[1][\simpleRoot]{\ensuremath{{\widetilde{\realTorus[T]}_{1}^{#1}}}}
\newcommand{\realGroupCoverDual}[1][G]{\ensuremath{{\widetilde{\realTorus[#1]}}}^{\vee}}
\newcommand{\realTorusESplitCover}[1][H]{\ensuremath{{\realGroupCover[#1]}^{\text{es}}}}
\newcommand{\realTorusESplitCtCover}[1][\simpleRoot]{\ensuremath{{\widetilde{\realGroup[H]}^{\text{es}}_{#1}}}}
\newcommand{\realTorusESplitCtCoverId}[1][\simpleRoot]{\ensuremath{{(\widetilde{\realGroup[H]}^{\text{es}}_{#1})_{0}}}}
\newcommand{\realTorusESplitCoverId}[1][H]{\ensuremath{{(\realGroupCover[#1]^{\text{es}})_{0}}}}
\newcommand{\complexLieAlgebra}[1][g]{\ensuremath{{\mathfrak{#1}}}}
\newcommand{\complexLieAlgebraDual}[1][g]{\ensuremath{{\complexLieAlgebra[#1]^{*}}}}
\newcommand{\complexLieAlgebraCt}[1][\simpleRoot]{\ensuremath{{\mathfrak{h}_{#1}}}}
\newcommand{\complexLieAlgebraCtNci}[1][\simpleRoot]{\ensuremath{{\mathfrak{h}^{#1}}}}
\newcommand{\splitLieAlgebra}[1][h]{\ensuremath{{\mathfrak{#1}^{\text{s}}}}}
\newcommand{\splitLieAlgebraCt}[1][\simpleRoot]{\ensuremath{{\mathfrak{h}^{\text{s}}_{#1}}}}
\newcommand{\realLieAlgebra}[1][g]{\ensuremath{{\mathfrak{#1}_{\mathbb{R}}}}}
\newcommand{\realLieAlgebraCt}[1][\simpleRoot]{\ensuremath{{\left(\mathfrak{h}_{\mathbb{R}}\right)_{#1}}}}
\newcommand{\realLieAlgebraCtNci}[1][\simpleRoot]{\ensuremath{{\left(\mathfrak{h}_{\mathbb{R}}\right)^{#1}}}}
\newcommand{\uaeCenter}[1][g]{\ensuremath{{\mathcal{Z}(\complexLieAlgebra[g])}}}
\newcommand{\fixedInfChar}{\ensuremath{\chi}}
\newcommand{\absLieAlgebra}{\ensuremath{{\complexLieAlgebra[h]^{\text{abs}}}}}
\newcommand{\absLieAlgebraDual}{\ensuremath{{(\absLieAlgebra)^{*}}}}
\newcommand{\eSplitLieAlgebra}[1][h]{\ensuremath{{\mathfrak{#1}^{\text{es}}}}}
\newcommand{\eSplitLieAlgebraCt}[1][\simpleRoot]{\ensuremath{{\mathfrak{h}^{\text{es}}_{#1}}}}
\newcommand{\eSplitLieAlgebraDual}[1][h]{\ensuremath{{(\mathfrak{#1}^{\text{es}})^{*}}}}
\newcommand{\hcCat}{\ensuremath{\mathcal{HC}(\complexLieAlgebra, \maxRealCompact)}}
\newcommand{\hcCatCover}{\ensuremath{\mathcal{HC}(\complexLieAlgebra, \maxRealCompactCover)}}
\newcommand{\hcCatGen}{\ensuremath{\mathcal{HC}(\complexLieAlgebra, \maxRealCompactCover)^{\text{gen}}}}
\newcommand{\hcCatInf}{\ensuremath{\mathcal{HC}(\complexLieAlgebra, \maxRealCompact)_{\fixedInfChar}}}
\newcommand{\hcCatInfGen}{\ensuremath{\mathcal{HC}(\complexLieAlgebra, \maxRealCompactCover)^{\text{gen}}_{\fixedInfChar}}}
\newcommand{\hcCatInfGG}{\ensuremath{\mathcal{K}\mathcal{HC}(\complexLieAlgebra, \maxRealCompact)_{\fixedInfChar}}}
\newcommand{\hcCatInfGenGG}[1][\fixedInfChar]{\ensuremath{\mathcal{K}\mathcal{HC}(\complexLieAlgebra, \maxRealCompactCover)^{\text{gen}}_{#1}}}
\newcommand{\hcBasisGen}[1][\fixedInfChar]{\ensuremath{\widetilde{\mathcal{D}}_{#1}}}
\newcommand{\hcBasisGenInvOrderInf}[2]{\ensuremath{\left|\widetilde{\mathcal{D}}^{#1}_{#2}\right|}}
\newcommand{\hcIso}{\ensuremath{\rho}}
\newcommand{\hcBasisGenInvOrderCent}[1][\inv]{\ensuremath{\left|\widetilde{\mathcal{D}}^{#1}_{\absDiff}(\chi)\right|}}
\newcommand{\hcBasisGenCentSpin}[2]{\ensuremath{\widetilde{\mathcal{D}}_{\absDiff}(\chi)(#1,#2)}}
\newcommand{\hcBasisGenInvCentSpin}[2]{\ensuremath{\widetilde{\mathcal{D}}^{\inv}_{\absDiff}(\chi)(#1,#2)}}
\newcommand{\hcBasisGenInvOrderCentSpin}[2]{\ensuremath{\left|\widetilde{\mathcal{D}}^{\inv}_{\absDiff}(\chi)(#1,#2)\right|}}
\newcommand{\hcBasisGenInvOrderCentSpinDual}[2]{\ensuremath{\left|\widetilde{\mathcal{D}}^{-\inv}_{\absDiff}(\chi^{\vee})(#1,#2)\right|}}
\newcommand{\hcBasisGenInvCentSpinDual}[2]{\ensuremath{\widetilde{\mathcal{D}}^{-\inv}_{\absDiff}(\chi^{\vee})(#1^{\vee},#2^{\vee})}}
\newcommand{\hcBasisGenOrderCentSpin}[2]{\ensuremath{\left|\widetilde{\mathcal{D}}_{\absDiff}(\chi)(#1,#2)\right|}}
\newcommand{\hcBasisGenSpin}[2]{\ensuremath{\widetilde{\mathcal{D}}_{\absDiff}(#1,#2)}}
\newcommand{\hcBasisGenInvCentSpinInv}[3]{\ensuremath{\widetilde{\mathcal{D}}^{#3}_{\absDiff}(\chi)(#1,#2)}}
\newcommand{\hcBasisGenCentSpinInf}[3]{\ensuremath{\widetilde{\mathcal{D}}_{#3}(\chi)(#1,#2)}}
\newcommand{\hcBasisGenCentSpinInfDual}[3]{\ensuremath{\widetilde{\mathcal{D}}_{#3}(\chi^{\vee})(#1^{\vee},#2^{\vee})}}
\newcommand{\hcBasisGenCentSpinCent}[3]{\ensuremath{\widetilde{\mathcal{D}}_{\absDiff}(#3)(#1,#2)}}
\newcommand{\hcBasisSpin}[2]{\ensuremath{\mathcal{D}_{\fixedInfChar}(#1,#2)}}
\newcommand{\hcBasisGenInvSpin}[2]{\ensuremath{\widetilde{\mathcal{D}}^{\inv}_{\absDiff}(#1,#2)}}
\newcommand{\hcBasisGenInvOrderSpin}[2]{\ensuremath{\left|\widetilde{\mathcal{D}}^{\inv}_{\absDiff}(#1,#2)\right|}}
\newcommand{\hcBasisGenCentSpinInfOrder}[3]{\ensuremath{\left|\widetilde{\mathcal{D}}_{#3}(\chi)(#1,#2)\right|}}
\newcommand{\hcBasisGenCentSpinInfOrderDual}[3]{\left|\ensuremath{\widetilde{\mathcal{D}}_{#3}(\chi^{\vee})(#1^{\vee},#2^{\vee})\right|}}
\newcommand{\vSpace}{\ensuremath{\absLieAlgebraDual}}
\newcommand{\rootSystem}{\ensuremath{\Delta}}
\newcommand{\coRootSystem}{\ensuremath{\rootSystem^{\vee}}}
\newcommand{\posRootSystem}{\ensuremath{{\rootSystem^{+}}}}
\newcommand{\simpleRoots}{\ensuremath{{\Pi}}}
\newcommand{\rootLattice}{\ensuremath{{L(\rootSystem)}}}
\newcommand{\coRootLattice}{\ensuremath{{L(\coRootSystem)}}}
\newcommand{\posCorootLattice}[1][\inv]{\ensuremath{{L(\coRootSystem)^{#1}_{+}}}}
\newcommand{\negCorootLattice}[1][\inv]{\ensuremath{{L(\coRootSystem)^{#1}_{-}}}}
\newcommand{\posQuotCorootLattice}[1][\inv]{\ensuremath{{\mGroup^{#1}_{+}}}}
\newcommand{\posnegQuotCorootLattice}[1][\inv]{\ensuremath{{\mGroup^{#1}_{\pm}}}}
\newcommand{\negQuotCorootLattice}[1][\inv]{\ensuremath{{\mGroup^{#1}_{-}}}}
\newcommand{\weightLattice}{\ensuremath{{X}}}
\newcommand{\quotWeightLatticeMap}{\ensuremath{{\pi}}}
\newcommand{\weylGroup}{\ensuremath{{W}}}
\newcommand{\algWeylGroup}[2]{\ensuremath{{W(#1,#2)}}}
\newcommand{\rootReflection}[1][\alpha]{\ensuremath{{s_{#1}}}}
\newcommand{\ei}[1][i]{\ensuremath{{e_{#1}}}}
\newcommand{\bit}[1][i]{\ensuremath{{\epsilon_{#1}}}}
\newcommand{\symGroup}[1][n]{\ensuremath{{S_{#1}}}}
\newcommand{\inv}{\ensuremath{{\theta}}}
\newcommand{\invSplit}{\ensuremath{{\theta^{\text{s}}}}}
\newcommand{\invSplitCt}[1][\simpleRoot]{\ensuremath{{\theta^{\text{s}}_{#1}}}}
\newcommand{\weylGroupInv}{\ensuremath{{\mathcal{I}}}}
\newcommand{\weylGroupInvMap}{\ensuremath{{\Psi}}}
\newcommand{\invConjClasses}{\ensuremath{{\weylGroupInv / \weylGroup}}}
\newcommand{\cartanWeylGroup}[2][\realGroup]{\ensuremath{{W(#1,#2)}}}
\newcommand{\cptImaginaryRoots}[1][\inv]{\ensuremath{{\rootSystem^{#1}_{ic}}}}
\newcommand{\imaginaryRoots}[1][\inv]{\ensuremath{{\rootSystem^{#1}_{i}}}}
\newcommand{\realRoots}[1][\inv]{\ensuremath{{\rootSystem^{#1}_{\mathbb{R}}}}}
\newcommand{\complexRoots}[1][\inv]{\ensuremath{{\rootSystem^{#1}_{\mathbb{C}}}}}
\newcommand{\complexOrthogonalRoots}[1][\inv]{\ensuremath{{\rootSystem^{#1}_{\mathbb{C}^{\bot}}}}}
\newcommand{\imaginaryWeylGroup}[1][\inv]{\ensuremath{{\weylGroup^{#1}_{i}}}}
\newcommand{\cptImaginaryWeylGroup}[1][\inv]{\ensuremath{{\weylGroup^{#1}_{ic}}}}
\newcommand{\realWeylGroup}[1][\inv]{\ensuremath{{\weylGroup^{#1}_{\mathbb{R}}}}}
\newcommand{\complexWeylGroup}[1][\inv]{\ensuremath{{\weylGroup^{#1}_{\mathbb{C}}}}}
\newcommand{\invWeylGroup}[1][\inv]{\ensuremath{{\weylGroup^{#1}}}}
\newcommand{\rGroup}[1][\inv]{\ensuremath{{\mathcal{R}}}}
\newcommand{\rootSpace}[1][\simpleRoot]{\ensuremath{\complexLieAlgebra_{#1}}}
\newcommand{\rootVector}[1][\simpleRoot]{\ensuremath{X_{#1}}}
\newcommand{\coroot}[1][\simpleRoot]{\ensuremath{h_{#1}}}
\newcommand{\imaginaryBits}[1][\inv]{\ensuremath{{S_{#1}}}}
\newcommand{\realBits}[1][\inv]{\ensuremath{{R_{#1}}}}
\newcommand{\complexBits}[1][\inv]{\ensuremath{{C_{#1}}}}
\newcommand{\numImaginaryBits}[1][\inv]{\ensuremath{{n_{s}^{#1}}}}
\newcommand{\numRealBits}[1][\inv]{\ensuremath{{n_{r}^{#1}}}}
\newcommand{\numComplexBits}[1][\inv]{\ensuremath{{n_{c}^{#1}}}}
\newcommand{\numImaginaryBitsSimple}{\ensuremath{{n_{s}}}}
\newcommand{\imaginaryBitsBit}[1][\inv]{\ensuremath{{\epsilon_{s}^{#1}}}}
\newcommand{\realBitsBit}[1][\inv]{\ensuremath{{\epsilon_{r}^{#1}}}}
\newcommand{\imaginaryBitsParityBit}[1][\inv]{\ensuremath{{\epsilon_{p}^{#1}}}}
\newcommand{\realBitsParityBit}[1][\inv]{\ensuremath{{\epsilon_{m}^{#1}}}}
\newcommand{\numCptBits}[1][\inv]{\ensuremath{{n_{+}^{#1}}}}
\newcommand{\numNcptBits}[1][\inv]{\ensuremath{{n_{\oplus}^{#1}}}}
\newcommand{\NcptBitsBit}[1][\inv]{\ensuremath{{\epsilon_{\oplus}^{#1}}}}
\newcommand{\numIntBits}[1][\inv]{\ensuremath{N^{#1}_{1}(\absDiff)}}
\newcommand{\numHalfIntBits}[1][\inv]{\ensuremath{N^{#1}_{\frac{1}{2}}(\absDiff)}}
\newcommand{\symBit}[1][\inv]{\ensuremath{\epsilon^{#1}_{\text{sym}}(\absDiff)}}
\newcommand{\numNonParBits}[1][\inv]{\ensuremath{{n_{-}^{#1}}}}
\newcommand{\numParBits}[1][\inv]{\ensuremath{{n_{\ominus}^{#1}}}}
\newcommand{\diagram}[1][\inv]{\ensuremath{D_{#1}}}
\newcommand{\rCrossn}[1][]{\ensuremath{{(\mathbb{R}^{\times})^{#1}}}}
\newcommand{\sOnen}[1][]{\ensuremath{{(S^{1})^{#1}}}}
\newcommand{\cCrossn}[1][]{\ensuremath{{(\mathbb{C}^{\times})^{#1}}}}
\newcommand{\absDiff}{\ensuremath{{\lambda}}}
\newcommand{\diff}{\ensuremath{{\phi}}}
\newcommand{\genDiff}{\ensuremath{{\phi}}}
\newcommand{\torusChar}{\ensuremath{{\Gamma}}}
\newcommand{\genTorusChar}{\ensuremath{\widetilde{\torusChar}}}
\newcommand{\genTorusCharCt}[1][\simpleRoot]{\ensuremath{\genTorusChar^{#1}}}
\newcommand{\genTorusCharCtInt}[1][\simpleRoot]{\ensuremath{\genTorusChar^{#1}_{1}}}
\newcommand{\triple}{\ensuremath{{(\realTorus, \diff, \torusChar)}}}
\newcommand{\tripleInf}[1][\absDiff]{\ensuremath{{(\realTorus, \diff, \torusChar)_{#1}}}}
\newcommand{\gtRep}{\ensuremath{\Upsilon}}
\newcommand{\hcRep}{\ensuremath{\upsilon}}
\newcommand{\genTriple}{\ensuremath{{(\realTorusCover, \genDiff, \genTorusChar)}}}
\newcommand{\numCorGenTriplesInf}[1][\genPairInf]{\ensuremath{{\left[{#1}\right]}}}
\newcommand{\numCorGenTriplesSplitCtInf}[1][\genPairSplitCtInf]{\ensuremath{{\left[{#1}\right]}}}
\newcommand{\numCorGenTriplesSplitInf}[1][\genPairSplitInf]{\ensuremath{{\left[{#1}\right]}}}
\newcommand{\genTripleInf}[1][\absDiff]{\ensuremath{{(\realTorusCover, \genDiff, \genTorusChar)_{#1}}}}
\newcommand{\genTripleInfDiffChar}[2]{\ensuremath{{(\realTorusCover, #1, #2)_{\absDiff}}}}
\newcommand{\genTripleInfDiffCharInf}[3]{\ensuremath{{(\realTorusCover, #1, #2)_{#3}}}}
\newcommand{\genTripleInfCtDiffChar}[3]{\ensuremath{{(\realTorusCtCoverNci[#3], #1, #2)_{\absDiff}}}}
\newcommand{\genTripleSplitCtInf}[1][\rSeq]{\ensuremath{{(\realTorusSplitCtCover[#1], \genDiff, \genTorusChar)_{\absDiff}}}}
\newcommand{\absTriple}[1][\absDiff]{\ensuremath{{(\inv, \grading, #1)}}}
\newcommand{\absPair}[1][\grading]{\ensuremath{{(\inv, #1)}}}
\newcommand{\absBg}[1][\absDiff]{\ensuremath{{(\inv, \grading, \realGrading, #1)}}}
\newcommand{\absBgDual}[1][\absDiff]{\ensuremath{{(-\inv, \realGrading, \grading, #1)}}}
\newcommand{\absBgDualInv}[1][\absDiff]{\ensuremath{{(#1, \realGrading, \grading, \absDiff)}}}
\newcommand{\weylOrbit}{\ensuremath{{\Lambda}}}
\newcommand{\pairDual}{\ensuremath{{(\realTorus', \diff')}}}
\newcommand{\pairInf}{\ensuremath{{(\realTorus, \diff)_{\absDiff}}}}
\newcommand{\pairInfi}[1][i]{\ensuremath{{(\realTorus_{#1}, \diff)_{\absDiff}}}}
\newcommand{\pairInfDiff}[1][\diff]{\ensuremath{{(\realTorus, #1)_{\absDiff}}}}
\newcommand{\genPair}{\ensuremath{{(\realTorusCover, \diff)}}}
\newcommand{\genPairSplitCtInf}[1][\rSeq]{\ensuremath{{(\realTorusSplitCtCover[#1], \diff)_{\absDiff}}}}
\newcommand{\genPairInf}[1][\absDiff]{\ensuremath{{(\realTorusCover, \diff)_{#1}}}}
\newcommand{\absTripleDual}[1][\absDiff]{\ensuremath{{(-\inv, \realGrading, #1)}}}
\newcommand{\genTripleInfDual}[1][\absDiff]{\ensuremath{{(\realTorusCover^{\vee}, \genDiff^{\vee}, \genTorusChar^{\vee})_{#1}}}}
\newcommand{\absPairDual}[1][\realGrading]{\ensuremath{{(-\inv, #1)}}}
\newcommand{\genTripleESplitInf}[1][\absDiff]{\ensuremath{{(\realTorusESplitCover, \genDiff, \genTorusChar)_{#1}}}}
\newcommand{\genTripleESplitCtInf}[1][\rSeq]{\ensuremath{{(\realTorusESplitCtCover[#1], \genDiff, \genTorusChar)_{\absDiff}}}}
\newcommand{\genPairESplitCtInf}[1][\rSeq]{\ensuremath{{(\realTorusESplitCtCover[#1], \diff)_{\absDiff}}}}
\newcommand{\genTripleESplitInfCtDiffChar}[3]{\ensuremath{{(\realTorusESplitCtCover[#3], #1, #2)_{\absDiff}}}}
\newcommand{\numCorGenTriplesESplitCtInf}[1][\genPairESplitCtInf]{\ensuremath{{\left[{#1}\right]}}}
\newcommand{\absConj}[1][\diff]{\ensuremath{i_{#1}}}
\newcommand{\absConjInv}[1][\diff]{\ensuremath{i_{#1}^{-1}}}
\newcommand{\absConjTwo}[2][\diff]{\ensuremath{i_{#2,#1}}}
\newcommand{\absConjTwoInv}[2][\diff]{\ensuremath{i_{#2,#1}^{-1}}}
\newcommand{\mGroup}{\ensuremath{{M}}}
\newcommand{\mGroupTorus}[1][\realTorus]{\ensuremath{{M(#1)}}}
\newcommand{\mGroupTorusCt}[1][\simpleRoot]{\ensuremath{{M(\realTorus)_{#1}}}}
\newcommand{\mGroupTorusGpCt}[2][\simpleRoot]{\ensuremath{{M(#2)_{#1}}}}
\newcommand{\mGroupTorusCover}[1][\realTorusCover]{\ensuremath{{\widetilde{\mGroup}(#1)}}}
\newcommand{\simpleRoot}{\ensuremath{{\alpha}}}
\newcommand{\simpleRoota}{\ensuremath{{\alpha}}}
\newcommand{\simpleRootb}{\ensuremath{{\beta}}}
\newcommand{\simpleRootc}{\ensuremath{{\gamma}}}
\newcommand{\simpleRootd}{\ensuremath{{\delta}}}
\newcommand{\simpleRoote}{\ensuremath{{\epsilon}}}
\newcommand{\simpleRooti}[1][i]{\ensuremath{\alpha_{#1}}}
\newcommand{\simpleRootai}[1][i]{\ensuremath{\alpha_{#1}}}
\newcommand{\simpleRootbi}[1][i]{\ensuremath{\beta_{#1}}}
\newcommand{\simpleRootci}[1][i]{\ensuremath{\gamma_{#1}}}
\newcommand{\simpleRootdi}[1][i]{\ensuremath{\delta_{#1}}}
\newcommand{\simpleRootei}[1][i]{\ensuremath{\epsilon_{#1}}}
\newcommand{\rootChar}[1][\simpleRoot]{\ensuremath{\tilde{#1}}}
\newcommand{\ma}[1][\simpleRoot]{\ensuremath{m_{#1}}}
\newcommand{\mac}[1][\simpleRoot]{\ensuremath{\tilde{m}_{#1}}}
\newcommand{\rootCheck}[1][\simpleRoot]{\ensuremath{{#1^{\vee}}}}
\newcommand{\oa}[1][\simpleRoot]{\ensuremath{\sigma_{#1}}}
\newcommand{\oac}[1][\simpleRoot]{\ensuremath{\tilde{\sigma}_{#1}}}
\newcommand{\genReps}[1][\realTorusCover]{\ensuremath{{\Pi_{g}(#1)}}}
\newcommand{\genRepsCenter}[1][\realTorusCover]{\ensuremath{{\Pi_{g}(Z(#1))}}}
\newcommand{\zTwo}{\ensuremath{\mathbb{Z}_{2}}}
\newcommand{\cinv}{\ensuremath{{\Theta}}}
\newcommand{\grading}{\ensuremath{{\varepsilon}}}
\newcommand{\realGrading}{\ensuremath{{\eta}}}
\newcommand{\sOneCt}[1][\simpleRoot]{\ensuremath{{B_{#1}}}}
\newcommand{\Ad}[1]{\ensuremath{{\text{Ad}(#1)}}}
\newcommand{\ad}[1]{\ensuremath{{\text{ad}(#1)}}}
\newcommand{\rSeq}{\ensuremath{{\mathfrak{c}}}}
\newcommand{\pSeq}[2]{\ensuremath{{\mathfrak{c}_{#1}^{#2}}}}
\newcommand{\ctLaOp}[1][\simpleRoot]{\ensuremath{\xi_{#1}}}
\newcommand{\ctLaOpNci}[1][\simpleRoot]{\ensuremath{\zeta_{#1}}}
\newcommand{\ctOp}[1][\simpleRoot]{\ensuremath{\mathcal{C}_{#1}}}
\newcommand{\ctOpNci}[1][\simpleRoot]{\ensuremath{\mathcal{C}^{#1}}}
\newcommand{\ctOpSeq}[2]{\ensuremath{\mathcal{C}_{#1}^{#2}}}
\newcommand{\ctOpSeqInv}[2]{\ensuremath{(\mathcal{C}_{#1}^{#2})^{-1}}}
\newcommand{\ctOpRoot}[1][\simpleRoot]{\ensuremath{\overline{#1}}}
\newcommand{\ctOpRooti}[1][i]{\ensuremath{\overline{\simpleRoot}_{#1}}}
\newcommand{\zLa}[1][\simpleRoot]{\ensuremath{Z_{#1}}}
\newcommand{\expg}{\ensuremath{\text{exp}_{\realGroup}}}
\newcommand{\expgc}{\ensuremath{\text{exp}_{\realGroupCover}}}
\newcommand{\identity}{\ensuremath{\text{I}}}
\newcommand{\stCenter}{\ensuremath{z}}
\newcommand{\psMap}{\ensuremath{\wp}}
\newcommand{\numFlips}[1][\inv]{\ensuremath{n_{cr}^{#1}}}
\newcommand{\psInvMap}{\ensuremath{\tau}}
\newcommand{\cmplxBit}{\ensuremath{\epsilon}}
\newcommand{\longhookrightarrow}{\ensuremath{\lhook\joinrel\relbar\joinrel\rightarrow}}
\newcommand{\qSpec}{\hat{q}}
\newcommand{\pc}[2]{\ensuremath{\text{pc}_{\absDiff}(#1,#2)}}
\newcommand{\pcCent}[2]{\ensuremath{\text{pc}_{\absDiff}^{\chi}(#1,#2)}}
\newcommand{\pcCentOrder}[2]{\ensuremath{\left|\pcCent{#1}{#2}\right|}}
\newcommand{\pcMap}{\ensuremath{\wp}}
\newcommand{\pcMapDual}{\ensuremath{\wp^{\vee}}}
\newcommand{\pcInvMap}{\ensuremath{\tau}}
\newcommand{\pcCentInf}[3]{\ensuremath{\text{pc}_{#3}^{\chi}(#1,#2)}}
\newcommand{\pcCenter}[3]{\ensuremath{\text{pc}_{\absDiff}^{#3}(#1,#2)}}
\newcommand{\weylElt}{\ensuremath{w}}
\newcommand{\weylEltDiff}[1][\diff]{\ensuremath{{\weylElt_{#1}}}}
\newcommand{\weylEltDiffInv}[1][\diff]{\ensuremath{{\weylElt_{#1}^{-1}}}}
\newcommand{\cross}{\ensuremath{{\times}}}
\newcommand{\korbit}[1][\diff]{\ensuremath{{[#1]}}}
\newcommand{\fiber}[1][\inv]{\ensuremath{{#1^{\dagger}_{\absDiff}}}}
\newcommand{\genFiber}[1][\inv]{\ensuremath{{\tilde{#1}^{\dagger}_{\absDiff}}}}
\newcommand{\fiberOrder}[1][\inv]{\ensuremath{{\left|{\fiber[#1]}\right|}}}
\newcommand{\genFiberOrder}[1][\inv]{\ensuremath{{\left|{\genFiber[#1]}\right|}}}
\newcommand{\absTripleFiber}{\ensuremath{{\absTriple}^{\dagger}}}
\newcommand{\absTripleFiberOrder}{\ensuremath{{\left|\absTripleFiber\right|}}}
\newcommand{\mca}[2][\korbit]{\ensuremath{{m^{#1}_{#2}}}}
\newcommand{\halfSumIm}[1][\diff]{\ensuremath{\rho_{\text{i}}^{#1}}}
\newcommand{\halfSumImCpt}[1][\diff]{\ensuremath{\rho_{\text{ic}}^{#1}}}
\newcommand{\halfSumReal}[1][\diff]{\ensuremath{\rho_{\text{r}}^{#1}}}
\newcommand{\famInfChar}[1][\absDiff]{\ensuremath{\mathcal{F}(#1)}}
\newcommand{\ecaElt}[2]{\ensuremath{\mu_{#1}(#2)}}
\newcommand{\stdMod}[1][\gamma]{\ensuremath{\text{std}(#1)}}
\newcommand{\irrMod}[1][\gamma]{\ensuremath{\text{irr}(\overline{#1})}}
\newcommand{\mult}[2]{\ensuremath{\text{m}(\overline{#1},#2)}}
\newcommand{\multMatrix}{\ensuremath{m}}
\newcommand{\Mult}[2]{\ensuremath{\text{M}(#1,\overline{#2})}}
\newcommand{\MultMatrix}{\ensuremath{M}}
\newcommand{\len}[1][\hcRep]{\ensuremath{\ell(#1)}}
\newlength{\parLen}
\newenvironment{steplist}{\bigskip\begin{asparadesc}\setlength\itemsep{\bigskipamount}\setlength{\parindent}{\parLen}}{\end{asparadesc}}
\begin{document}

\begin{abstract}
Let $\complexGroup = \complexSpinGroup[4n+1]$ be the connected, simply connected complex Lie group of type $B_{2n}$ and let $\realGroup = \realSpinGroup{p}{q}$ $(p+q=4n+1)$ denote a (connected) real form. If $q \notin \left\{0,1\right\}$, $\realGroup$ has fundamental group $\mathbb{Z}_{2}$ and we denote the corresponding nonalgebraic double cover by $\realGroupCover = \realSpinGroupCover{p}{q}$. The main purpose of this paper is to describe a symmetry in the set genuine parameters for the various $\realGroupCover$ at certain half-integral infinitesimal characters. This symmetry is used to establish a duality of the corresponding generalized Hecke modules and ultimately results in a character multiplicity duality for the genuine characters of $\realGroupCover$.
\end{abstract}

\title{Vogan Duality for $\realSpinGroupCover{p}{q}$}
\author{Scott Crofts}
\date{\today}
\maketitle


\section{Introduction}
\label{secIntro}
\subsection{Duality for Verma Modules}
Before stating the main result for which we are aiming (Theorem \ref{theoremMainFirst}), we begin by recalling a familiar case. Let $\clg$ be a simple complex algebraic group with Lie algebra $\cla$ and write $\uea$ for its universal enveloping algebra. Choose a Cartan subalgebra $\cla[h] \subset \cla$ and let $\rootSystem = \rootSystem(\cla, \cla[h])$ be the corresponding root system. Write $\weylGroup = \weylGroup(\cla, \cla[h])$ for the Weyl group of $\rootSystem$ and fix a Borel subalgebra $\cla[b] \supset \cla[h]$. Then $\cla[b]$ induces a choice $\posRootSystem = \posRootSystem(\cla[g], \cla[h])$ of positive roots for $\rootSystem$ and we set $\rho$ to be the half sum of the elements of $\posRootSystem$. Finally, for $w \in \weylGroup$ let
\begin{eqnarray*}
\vm & = & \uea \underset{\uea[b]}{\otimes} \mathbb{C}_{w\rho - \rho}
\end{eqnarray*}
be the Verma module of highest weight $w\rho - \rho$ and write $\vmi$ for its unique irreducible quotient.

A fundamental problem in the representation theory of Verma modules is to determine the composition factors (with multiplicities) of $\vm$. Each such factor is known to be of the form $\vmi[y]$, for some $y \in \weylGroup$. On the level of formal characters we seek a decomposition of the form
\begin{eqnarray*}
\text{ch}(\vm) & = & \sum_{y \in \weylGroup} \multInt{y}{w}\text{ ch}(\vmi[y]) \hspace{0.25in} \multInt{y}{w} \in \left\{0,1,2,\ldots\right\}
\end{eqnarray*}
where it remains to compute the numbers $\multInt{y}{w}$. A related problem asks for a type of inverse decomposition
\begin{eqnarray*}
\text{ch}(\vmi) & = & \sum_{y \in \weylGroup} \MultInt{y}{w}\text{ ch}(\vm[y]) \hspace{0.25in} \MultInt{y}{w} \in \mathbb{Z}
\end{eqnarray*}
of an irreducible module in terms of Verma modules. A conjectural algorithm for computing the multiplicities $\multInt{y}{w}$ and $\MultInt{y}{w}$ was first given by Kazhdan and Lusztig in \cite{KL1}. Their method uses the combinatorics of Hecke algebras to inductively build polynomials (now known to be) related to singularities of the corresponding Schubert varieties. The celebrated Kazhdan-Lusztig conjecture asserts that evaluating these polynomials at one gives the desired multiplicities. It was later proven correct in the work of Brylinski-Kashiwara and Beilinson-Bernstein \cite{BK1}, \cite{BB1}.

A striking feature of the Kazhdan-Lusztig theory is the existence of a type symmetry in the multiplicities $\multInt{y}{w}$ and $\MultInt{y}{w}$. Let $w_{0}$ denote the longest element of $\weylGroup$ and define the \emph{duality map}
\begin{eqnarray*}
\weylGroupInvMap : \weylGroup & \longrightarrow & \weylGroup \\
w & \longmapsto & w_{0}w.
\end{eqnarray*}
Then $\weylGroupInvMap$ is an involution of $\weylGroup$ that implements the familiar up-down symmetry of the corresponding Coxeter graph. Moreover, $\weylGroupInvMap$ defines a type of dual Hecke module whose combinatorics are formally reversed. On the level of multiplicities this dualization induces the equality
\begin{eqnarray}
\label{cmDuality}
\MultInt{y}{w} & = & \pm\multInt{\weylGroupInvMap(w)}{\weylGroupInvMap(y)}
\end{eqnarray}
for all $y,w \in \weylGroup$. Geometrically, this suggests the representation theory of Verma modules is dual to the singular structure of certain Schubert varieties. This observation is central to \cite{ABV}, with duality playing a key role in a reformulated version of the local Langlands conjecture. 


\subsection{Duality for Real Groups}
There is an analogous result for a large class of real Lie groups. Fix a real form $\realGroup \subset \clg$ and let $\irrReps$ denote the set of equivalence classes of irreducible admissible representations of $\realGroup$. For $\irrRep \in \irrReps$, write $\stdRep$ for a standard representation (Section \ref{ssHCModIntro}) corresponding to $\irrRep$. For simplicity (and by analogy with the case above), we restrict our attention to the finite set $\irrRepsTriv$ of representations with trivial infinitesimal character. If $\irrRep \in \irrRepsTriv$, we again seek to understand the composition factors of $\stdRep$ as
\begin{eqnarray*}
\stdRep = \sum_{\irrRep[\eta] \in \irrRepsTriv} \multhc{\irrRep[\eta]}{\stdRep}\irrRep[\eta] \hspace{0.25in} \multhc{\irrRep[\eta]}{\stdRep} \in \left\{0,1,2,\ldots\right\}
\end{eqnarray*}
with the sum interpreted in an appropriate Grothendieck group. Similarly, the inverse problem asks for an expression (again in a Grothendieck group) for $\irrRep$ in terms of standard representations
\begin{eqnarray*}
\irrRep = \sum_{\irrRep[\eta] \in \irrRepsTriv} \Multhc{\stdRep[\eta]}{\irrRep}\stdRep[\eta] \hspace{0.25in} \Multhc{\stdRep[\eta]}{\irrRep} \in \mathbb{Z}.
\end{eqnarray*}
Once again, it remains to compute the numbers $\multhc{\irrRep[\eta]}{\stdRep}$ and $\Multhc{\stdRep[\eta]}{\irrRep}$.

This problem was solved by Vogan for reductive linear Lie groups (\cite{IC1},\cite{IC2},\cite{IC3}) in an extension of the results cited above. In this setting, $\irrRepsTriv$ is no longer parameterized by elements of a Weyl group, but rather a collection of geometric parameters (roughly, irreducible equivariant local systems on the flag manifold). These parameters form a natural basis for a Hecke module whose combinatorics embody a generalized Kazhdan-Lusztig algorithm. The resulting polynomials (sometimes called Kazhdan-Lusztig-Vogan polynomials) again give the desired multiplicities through evaluation at one.

In a remarkable final paper \cite{IC4}, Vogan describes a generalization of the duality in (\ref{cmDuality}). If we write
\begin{eqnarray*}
\irrRep[\pi]_{1} \sim \irrRep[\pi]_{2} & \Longleftrightarrow & \multhc{\irrRep_{1}}{\stdRep_{2}} \ne 0
\end{eqnarray*}
then $\sim$ generates an equivalence relation whose equivalence classes are called \emph{blocks}. Given a block $B = \left\{\irrRep_{1},\ldots,\irrRep_{n}\right\} \subset \irrRepsTriv$, Vogan constructs a real form $\realGroupDual \subset \clgd$ of the complex dual group, a block $B^{\vee} = \left\{\irrRep[\eta]_{1},\ldots,\irrRep[\eta]_{n}\right\} \subset \irrReps[\realGroupDual]$, and a bijection $\weylGroupInvMap : B \to B^{\vee}$. As before, the map $\weylGroupInvMap$ commutes with the combinatorics of the Hecke module for $B$ allowing one to define a dual Hecke module (\cite{IC4}, Definition 13.3) isomorphic to the one for $B^{\vee}$. On the level of multiplicities this implies
\begin{eqnarray}
\label{cmDuality2}
\Multhc{\stdRep[\pi]_{i}}{\irrRep[\pi]_{j}} & = & \pm \multhc{\weylGroupInvMap(\irrRep_{j})}{\weylGroupInvMap(\stdRep_{i})}
\end{eqnarray}
(\cite{IC4}, Theorem 1.15) as desired.

Theorem \ref{theoremMainFirst} below is a version of (\ref{cmDuality2}) for the nonalgebraic universal covers of even rank spin groups in type B (see Definition \ref{defGCover} for a precise description of the groups being considered). Blocks are replaced with the (computationally easier) notion of a central character (Definition \ref{defCC}) and the dual groups are constructed explicitly (Definition \ref{defDualGroup}). Moreover, unlike the type B linear groups in Vogan's theory, the groups of Definition \ref{defGCover} are preserved by the duality map.

Theorem \ref{theoremMainFirst} adds to the work of a number of people. The Kazhdan-Lusztig-Vogan algorithm has been generalized for a large class of nonlinear groups by Renard and Trapa \cite{RT3}. Their method builds extended Hecke algebra structures to track nonintegral wall crossings and computes KLV-polynomials for a (potentially large) number of infinitesimal characters simultaneously.  As in \cite{IC4}, it is ultimately a duality of these extended structures (Theorem \ref{theoremHeckeModuleDuality}) that is used to establish (\ref{cmDuality2}). Along these lines, Adams and Trapa have established a duality theory for nonalgebraic double covers whose corresponding root system is simply laced \cite{TA} (for technical reasons their results include $\text{G}_{2}$). Finally, Renard and Trapa build a general duality theory for the metaplectic group (the nontrivial double cover the symplectic group) in \cite{RT1} by directly extending ideas in \cite{IC4}.

\subsection{Overview}

As described above, proving Theorem \ref{theoremMainFirst} requires an explicit description of the main ingredients in a nonlinear Kazhdan-Lusztig-Vogan algorithm. Fortunately the authors of \cite{RT3} outline a general method for proving duality theories of the kind above. Just as before, what is needed is a version of the map $\weylGroupInvMap$ that intertwines the relevant combinatorial operations. The primary focus of this paper is a specialization of this approach to nonlinear, even rank type B. Although conceptually simple, the details quickly become quite technical and ultimately resist the usual generalizations of \cite{IC4} that were successful in the other cases.

Section \ref{secPreliminaries} introduces notation and explicitly describes the collection of nonalgebraic groups we are considering (Definition \ref{defGCover}). We also recall the Langlands classification in a form that is convenient for our purposes, along with a precise statement of character-multiplicity duality (Theorem \ref{theoremMainFirst}).

Section \ref{secWeylGroupInvolutions} describes the set of in involutions in a Weyl group of type B (denoted $\weylGroupInv$) and defines 
\[
\weylGroupInvMap : \weylGroupInv \to \weylGroupInv
\]
(Definition \ref{defWeylGroupInvMap}) at this level. Although elementary, the inherent symmetries induced by $\weylGroupInvMap$ are suggestive of a deeper theory (Corollary \ref{corNumInvConjugate}).

Sections \ref{secCartanSubgroups} and \ref{secGenTriples} determine the structure of Cartan subgroups for both linear and nonlinear groups of type B. In the nonlinear case, Cartan subgroups need not be abelian (Proposition \ref{propMCStruct}) and methods are developed to handle this (Proposition \ref{propCompH}). We also describe how to recover the real form from basic data associated to a particular Cartan subgroup (Proposition \ref{propRealForm}) and make precise the specific infinitesimal characters in which we are interested (Section \ref{ssInfChar}).

Section \ref{secKorbits} describes a combinatorial approach to the theory of $\maxRealCompact$-orbits. Typically $\maxRealCompact$-orbits refer to geometric partitions of the flag manifold given by the action of a complexified maximal compact subgroup. While this perspective is essential in all known proofs of the KLV-algorithm, only the combinatorics are important for computational purposes. Therefore we ignore the underlying geometry altogether and focus on a combinatorial description.

Section \ref{secCentralMa} is the final and most technical structure theory chapter. Here we develop the notions of abstract bigradings (Definition \ref{defAbsBigrading}) and central characters (Definition \ref{defCC}). These objects are used to define the dual group (Definition \ref{defDualGroup}) and describe a type of `numerical duality' in the space of nonlinear parameters (Theorem \ref{theoremNumericalDuality}). This is another precursor to the map $\weylGroupInvMap$ and is almost sufficient to give a complete definition. What ultimately remains is to distinguish a collection of representations that are grouped naturally in packets of order two (Corollary \ref{corRepFiberCent}). The solution to this problem is technical and postponed until the next section.

Section \ref{secAlmostCentralMa} outlines the main operations of the (nonlinear) KLV-algorithm (Definitions \ref{defCrossActionTriples}, \ref{defExtendedCrossActionTriples}, \ref{defCTTriples}) and contains a complete definition of the map $\weylGroupInvMap$. The key to this definition is the notion of a principal class (Definition \ref{defPrincipalClass}) and the corresponding principal class map $\pcMap$ (Definition \ref{defPsMap}). The map $\pcMap$ assigns a principal class to representations grouped in 2-packets, and this assignment is bijective (Theorem \ref{theoremPsMapBij}). In particular, principal classes distinguish the remaining representations and allow us to unambiguously define $\weylGroupInvMap$. We then show $\weylGroupInvMap$ has the desired properties (Theorems \ref{theoremDualityCommutesWithCA}, \ref{theoremDualityCommutesWithCT}) and the main duality theorems (Theorems \ref{theoremHeckeModuleDuality}, \ref{theoremMainSecond}) follow formally.

Initially, it may help to consider only the special case where $\realGroupCover = \realSpinGroupCover{2n+1}{2n}$ is split. In fact, if $\fixedInfChar$ is a symmetric infinitesimal character (i.e.~the integral Weyl group is of type $\text{B}_{n} \times \text{B}_{n}$), then $\realGroupCoverDual = \realGroupCover$ and principal classes are simply genuine principal series representations. The general theory was very much developed with this case in mind. Additionally, the material in each section is described from (and motivated by) a computational perspective. The author has implemented most of what follows in software and verified the main results in low rank via computer. 

\subsection{Future Directions}

The results discussed here beg the question of odd rank (type B) and we expect to return to this is a future paper. Although there are many similarities, the techniques required are sufficiently different and require separate discussion. In particular, the treatment involves disconnected nonlinear covering groups. 

Combined with the results of \cite{RT1} and \cite{TA}, the only remaining simple case is F$_4$. The author has verified computationally a duality theory exists here and expects to address this in a future paper as well. With a complete (albeit ad hoc) duality theory of simple nonlinear double covers at hand, the possibility of finding a uniform approach (of the kind described for linear groups by Vogan) exists. Moreover, it is reasonable to expect the duality for type B to take a leading role since this is the only classical case that encompasses all of the nonlinear phenomenon of \cite{RT3}.

More broadly, we remark that some flavor of duality plays a key role in other important results in the field. A future possibility then is to interpret nonlinear duality as a bridge for extending existing theory to nonlinear groups. As mentioned above, duality is central to the results of \cite{ABV} suggesting a uniform duality theory might allow an extension of the Langlands formalism to nonlinear groups. Foundations for this approach are described by Adams and Trapa in \cite{TA}. Another example is the theory of character lifting. Adams and Herb describe character lifting for nonlinear simply laced groups in \cite{AH}. Their work possesses formal properties closely related to the duality of \cite{TA} suggesting one could use a nonlinear duality theory to create a general notion of character lifting from linear to nonlinear groups.

\subsection{Acknowledgments}

The author thanks Peter Trapa for his patience and expertise in support of the work discussed here. It is also a pleasure to thank Jeffrey Adams and David Vogan for many helpful conversations along the way.


\section{Preliminaries}
\label{secPreliminaries}
\subsection{Notation}
We begin with the following notational definition.

\begin{definition}
\label{defG}
Throughout these notes let $\complexGroup = \complexSpinGroup[2n+1]$ be the connected, simply connected complex Lie group of type $B_{n}$. Up to equivalence $\complexGroup$ has $n+1$ (connected) real forms, denoted $\realSpinGroup{p}{q}$, with $p+q = 2n+1$ and $p > q$ (by convention).
\end{definition}

If $\realGroup = \realSpinGroup{p}{q}$, Lie algebras for $\complexGroup$ and $\realGroup$ will be denoted by $\complexLieAlgebra$ and $\realLieAlgebra$ respectively with analogous notation used for subgroups and subalgebras. If $\cinv$ is a Cartan involution for $\realGroup$, write $\maxRealCompact = \realGroup^{\cinv}$ for the corresponding maximal compact subgroup. Given a $\cinv$-stable Cartan subgroup $\realGroup[H] \subset \realGroup$, let $\rootSystem(\complexLieAlgebra[g], \complexLieAlgebra[h])$ be the induced root system and $\algWeylGroup{\complexLieAlgebra}{\complexLieAlgebra[h]}$ the algebraic Weyl group. Finally, the Killing form $(\cdot,\cdot)$  is a nondegenerate inner product on $\complexLieAlgebra[h]$ allowing us to identify $\complexLieAlgebra[h] \leftrightarrow \complexLieAlgebra[h]^{\ast}$. For some calculations it will be convenient to use $(\cdot,\cdot)$ to view roots and coroots as living in the same vector space.


\subsection{The Langlands Classification and $\realSpinGroup{p}{q}$}
\label{ssHCModIntro}

We begin by recalling the Langlands classification for algebraic groups. The discussion in this section is valid whenever $\realGroup$ is a real reductive linear Lie group with abelian Cartan subgroups. 

Let $\hcCat$ denote the category of Harish-Chandra modules for $\realGroup$ and suppose $\uaeCenter$ is the center of its universal enveloping algebra. Each irreducible object $X \in \hcCat$ has a corresponding infinitesimal character $\fixedInfChar$ of $\uaeCenter$ and we will only consider $X$ for which $\fixedInfChar$ is nonsingular. For any maximal torus $\complexTorus \subset \complexGroup$, the Harish-Chandra isomorphism
\[
\hcIso : \uaeCenter \to S(\complexLieAlgebra[h])^{\weylGroup(\complexLieAlgebra,\complexLieAlgebra[h])}
\]
allows us to identify infinitesimal characters with $\algWeylGroup{\complexLieAlgebra}{\complexLieAlgebra[h]}$ orbits in $\complexLieAlgebra[h]^{*}$.

Write $\hcCatInf$ for the full subcategory of $\hcCat$ consisting of modules with infinitesimal character $\fixedInfChar$ and let $\hcCatInfGG$ denote its Grothendieck group. The irreducible objects in $\hcCatInf$ (or equivalently their distribution characters) form a natural basis of $\hcCatInfGG$ that we wish to understand. Typically this is achieved with the help of a second basis given by equivalence classes of \emph{standard modules} (or equivalently their distribution characters) \cite{VGr}. Loosely, standard modules are representations induced from discrete series on cuspidal parabolic subgroups of $\realGroup$ and for our purposes we may assume these objects are known. Each standard module in $\hcCatInf$ has a canonical irreducible subquotient creating a one-to-one correspondence between the standard and irreducible modules (on the level of distribution characters).

Let $\realGroup[H] \subset \realGroup$ be a $\cinv$-stable Cartan subgroup and write $\realGroup[H] = \realGroup[T]\realGroup[A]$ for its decomposition into compact and vector pieces. The main ingredients of the Langlands classification are given in the following definition.

\begin{definition} 
\label{defTriple}
Suppose $\realGroup = \realSpinGroup{p}{q}$ and let $\hcBasisSpin{p}{q}$ be the set of $\maxRealCompact$-conjugacy classes of triples $\triple$, where $\realTorus = \realGroup[T]\realGroup[A] \subset \realGroup$ is a $\cinv$-stable Cartan subgroup, $\diff$ is in the $\algWeylGroup{\complexLieAlgebra}{\complexLieAlgebra[h]}$-orbit determined by $\fixedInfChar$ (viewed in $\complexLieAlgebra[h]^{*}$), and $\torusChar$ is a character of $\realGroup[T]$ whose differential is determined by $\diff$. Specifically, we must have
\begin{eqnarray*}
\text{d}\torusChar & = & \diff|_{\complexLieAlgebra[t]} + \halfSumIm - 2\halfSumImCpt
\end{eqnarray*}
where $\halfSumIm$ and $\halfSumImCpt$ are the half sums of positive (with respect to $\diff$) imaginary and compact imaginary roots. We refer to (\cite{VPc}, Chapter 3) for more details. Although technically redundant, we will occasionally write $\tripleInf[\fixedInfChar]$ for a representative triple in $\hcBasisSpin{p}{q}$ when we wish to emphasize the infinitesimal character $\fixedInfChar$. 
\end{definition}

\begin{theorem}[Langlands - see \cite{IC3}]
The set $\hcBasisSpin{p}{q}$ parameterizes the simple objects in $\hcCatInf$.
\end{theorem}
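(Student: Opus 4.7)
The plan is to reduce the statement to Vogan's formulation of the Langlands classification (\cite{IC3}, \cite{VPc}), specialized to the connected linear group $\realGroup = \realSpinGroup{p}{q}$. Since such spin groups are algebraic with abelian Cartan subgroups, the general theory for linear reductive groups applies verbatim and no nonalgebraic complications arise at this stage. The argument proceeds in three steps: assigning a standard module to each triple, extracting its canonical Langlands subquotient, and verifying bijectivity up to $\maxRealCompact$-conjugacy.

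For the first two steps, to a triple $(\realTorus, \diff, \torusChar)$ with Cartan decomposition $\realTorus = \realGroup[T]\realGroup[A]$, I would attach a cuspidal parabolic $\realGroup[P] = \realGroup[M]\realGroup[A]\realGroup[N]$ whose split part is $\realGroup[A]$, where a choice of positive real roots extracted from $\diff$ (after possible Weyl conjugation to normalize positivity of $\nu$) determines $\realGroup[N]$. The differential constraint $\mathrm{d}\torusChar = \diff|_{\complexLieAlgebra[t]} + \halfSumIm - 2\halfSumImCpt$ is precisely the condition ensuring that $(\diff|_{\complexLieAlgebra[t]}, \torusChar)$ is a Harish--Chandra parameter for a limit of discrete series representation $\delta$ of $\realGroup[M]$. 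Setting $\nu = \diff|_{\complexLieAlgebra[a]}$ produces a character of $\realGroup[A]$, and the standard module is the normalized parabolically induced representation $\mathrm{Ind}_{\realGroup[P]}^{\realGroup}\bigl(\delta \otimes e^{\nu} \otimes 1\bigr)$. Vogan's form of the Langlands subquotient theorem (\cite{VPc}, Chapter 6) then produces a unique distinguished irreducible constituent, and this subquotient depends only on the $\maxRealCompact$-conjugacy class of the triple; the assumed nonsingularity of $\fixedInfChar$ rules out degenerations in this construction.

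It remains to check bijectivity. Surjectivity is the subquotient theorem: every irreducible object in $\hcCatInf$ arises as the Langlands subquotient of some cuspidally induced module, and by running over all $\cinv$-stable Cartans $\realTorus$ the inducing data can always be recorded in the form $(\realTorus, \diff, \torusChar)$. Injectivity is Vogan's uniqueness statement --- two triples yield equivalent irreducibles precisely when they are $\maxRealCompact$-conjugate. The principal obstacle, and really the only nontrivial task in transferring the general theorem to this particular setting, is the bookkeeping needed to match the author's conventions (in particular the $\rho$-shift $\halfSumIm - 2\halfSumImCpt$ in the differential of $\torusChar$, and the identification of the Harish--Chandra parameter $\diff$ inside $\complexLieAlgebra[h]^{*}$) with the parameters appearing in \cite{VPc}. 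Once this translation is in hand, the bijection follows as a direct specialization of the established general theory, and no new representation-theoretic input is required.
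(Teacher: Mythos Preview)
Your proposal is correct and is essentially the content of the cited reference; the paper itself gives no proof of this theorem, simply attributing it to Langlands via Vogan's formulation in \cite{IC3} and moving on. Since the statement is quoted as established background rather than something the author proves, your sketch already goes beyond what the paper does, and there is nothing further to compare.
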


In particular, $\hcBasisSpin{p}{q}$ is a finite set that can be parameterized in terms of certain structure theoretic information for $\realGroup$. A large portion of what follows is dedicated to describing this parameterization in type B. The set $\hcBasisSpin{p}{q}$ will be referred to as the set of \emph{linear parameters} for $\realGroup$ at $\fixedInfChar$.


\subsection{The Langlands Classification and $\realSpinGroupCover{p}{q}$}
\label{ssGenHcMod}
Recall $\realGroup = \realSpinGroup{p}{q}$ is a real form of $\complexGroup$ with $p+q=2n+1$ and $p>q$. Although $\complexGroup$ is simply connected, if $q \notin \left\{1,0\right\}$, $\realGroup$ has fundamental group $\mathbb{Z}_2$. In particular, the (nonlinear) universal cover $\realGroupCover$ is a central extension of $\realGroup$ and we have a short exact sequence
\[
1 \to \left\{\pm 1\right\} \to \realGroupCover \to \realGroup \to 1.
\]
Let $\projOp : \realGroupCover \to \realGroup$ be the projection map and follow the usual convention that preimages of subgroups under $\projOp$ are denoted by adding a tilde. 

\begin{definition}
\label{defGCover}
If $q \notin \left\{0,1\right\}$, write $\realGroupCover = \realSpinGroupCover{p}{q}$ for the nonlinear universal cover of $\realGroup$. To simplify notation, set $\realGroupCover = \realSpinGroupCover{p}{q} = \realSpinGroup{p}{q}$ whenever $q \in \left\{0,1\right\}$ and take the map $\projOp$ to be trivial.
\end{definition}

\begin{definition}
\label{defCSGCover}
A \emph{Cartan subgroup} $\realTorusCover \subset \realGroupCover$ is defined to be the centralizer in $\realGroupCover$ of a Cartan subalgebra $\complexLieAlgebra[h] \subset \complexLieAlgebra$.
\end{definition}

 As the notation of Definition \ref{defCSGCover} suggests, $\realTorusCover = \projOpInv(\realTorus)$ whenever $\realTorus$ is a Cartan subgroup of $\realGroup$. In particular, the real Weyl group
\[
\cartanWeylGroup[\realGroupCover]{\realTorusCover} = \realNormalizer[\realGroupCover](\realTorusCover) / Z_{\realGroupCover}(\realTorusCover)
\]
is naturally isomorphic to $\cartanWeylGroup{\realTorus}$. These facts allow us to reduce many questions about Cartan subgroups in $\realGroupCover$ to equivalent ones about Cartan subgroups in $\realGroup$.

\begin{definition}
\label{defGenModule}
Suppose $\realGroupCover = \realSpinGroupCover{p}{q}$ and $\genTriple$ is a triple for $\realGroupCover$ as in Definition \ref{defTriple}. A module (respectively triple) is said to be \emph{genuine} if the action of $-1$ (respectively $\genTorusChar(-1)$) is nontrivial. 
\end{definition}

Write $\hcCatCover$ for the category of Harish-Chandra modules for $\realGroupCover$ and let $\hcCatGen \subset \hcCatCover$ denote the full subcategory of genuine modules in $\hcCatCover$. The following proposition implies the irreducible genuine objects in $\hcCatGen$ are parameterized in the same fashion as for $\realGroup$.

\begin{proposition}[\cite{RT3}, Proposition 6.1]
The genuine irreducible objects in $\hcCatGen$ are parameterized by $\maxRealCompactCover$-conjugacy classes of genuine triples $\genTriple$.
\end{proposition}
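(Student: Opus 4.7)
The plan is to adapt the Langlands classification from the linear case (Section \ref{ssHCModIntro}) to the nonlinear cover $\realGroupCover$, then cut out the genuine subcategory by imposing a parity condition on $\genTorusChar$. First, I would verify that the Langlands--Vogan machinery applies to $\realGroupCover$. Although $\realGroupCover$ is not linear when $q \notin \{0,1\}$, it remains a real reductive group in Harish-Chandra's class, sharing the Lie algebra $\realLieAlgebra$ of $\realGroup$ and inheriting a parabolic induction theory from cuspidal parabolics. Each irreducible admissible $(\complexLieAlgebra, \maxRealCompactCover)$-module $X$ should arise as the Langlands subquotient of a standard module built from data on a $\cinv$-stable Cartan subgroup $\realTorusCover \subset \realGroupCover$: an infinitesimal character weight $\genDiff \in \complexLieAlgebra[h]^*$ and a suitable representation of the compact piece of $\realTorusCover$ whose differential satisfies the $\rho$-shifted compatibility recorded in Definition \ref{defTriple}.

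Second, I would handle the fact (alluded to in the Overview and developed in Section \ref{secCartanSubgroups}) that $\realTorusCover = \projOpInv(\realTorus)$ need not be abelian. Because $\realTorusCover$ is a $\mathbb{Z}_2$-central extension of the abelian group $\realTorus$, its commutator subgroup is contained in $\{\pm 1\}$, and every irreducible representation splits into a genuine piece (on which $-1$ acts by $-1$) or a nongenuine piece (on which $-1$ acts trivially). A genuine representation is then captured up to isomorphism by a genuine character $\genTorusChar$ of $\realTorusCover$ in the appropriate sense, and the compatibility $\text{d}\genTorusChar = \genDiff|_{\complexLieAlgebra[t]} + \halfSumIm - 2\halfSumImCpt$ is imposed verbatim, identifying the relevant parameter with a genuine triple $\genTriple$. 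Since $-1 \in \realTorusCover$ is central in $\realGroupCover$ and contained in every Cartan subgroup, the action of $-1$ on any Langlands subquotient coincides with $\genTorusChar(-1)$, so $X$ lies in $\hcCatGen$ exactly when $\genTorusChar$ is genuine. The $\maxRealCompactCover$-equivariance comes for free: $\maxRealCompactCover = \projOpInv(\maxRealCompact)$ contains $\{\pm 1\}$ centrally, so conjugation preserves genuineness and the classification descends to $\maxRealCompactCover$-conjugacy classes of triples.

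The main obstacle is the careful analysis of genuine representations of the potentially nonabelian Cartan subgroup $\realTorusCover$ --- in particular, verifying that a genuine character $\genTorusChar$ really is the correct replacement for an ordinary character of $\realTorus$, and that the induction-and-subquotient construction survives this change without new parameters appearing. This analysis is the heart of \cite{RT3}, Proposition 6.1, whose conclusion essentially reduces the nonlinear classification to the linear one paired with a parity condition on $\genTorusChar(-1)$; granted that reduction, the rest is bookkeeping against Definition \ref{defTriple}.
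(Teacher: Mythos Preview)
The paper does not actually prove this proposition: it is stated with a citation to \cite{RT3}, Proposition 6.1, and no argument is given. So there is no proof in the paper to compare your proposal against.

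That said, your sketch is a reasonable outline of the ideas underlying the cited result. You correctly identify the two main points: (1) the Langlands classification extends to real reductive groups in Harish-Chandra's class, which includes $\realGroupCover$; and (2) the genuine condition on $\genTorusChar(-1)$ exactly carves out the genuine subcategory, since $-1$ is central in $\realGroupCover$ and its action on any irreducible module is a scalar determined by $\genTorusChar$. You also correctly flag the main subtlety --- that $\realTorusCover$ may be nonabelian, so $\genTorusChar$ must be taken to be an irreducible representation rather than a character --- and you defer the resolution of this point to \cite{RT3}, which is exactly what the paper does. Your proposal is thus consistent with the paper's treatment, which is simply to invoke the reference.

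One small correction: you write ``a genuine character $\genTorusChar$'' in a couple of places where, as you yourself note, $\genTorusChar$ is in general an irreducible genuine \emph{representation} of $\realTorusCover$ (not necessarily one-dimensional). The paper is careful about this distinction (see Proposition \ref{propGenRepBij} and the surrounding discussion), and your sketch would be cleaner if it maintained it throughout.
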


\begin{definition}
\label{defHcBasisGen}
In the setting of Definition \ref{defGenModule}, let $\hcBasisGen(p,q)$ denote the set of $\maxRealCompactCover$-conjugacy classes of genuine triples $\genTriple$ where the orbit of $\genDiff \in \complexLieAlgebra[h]^{*}$ is determined by $\fixedInfChar$. As before, $\hcBasisGen(p,q)$ will be referred to as the set of \emph{genuine parameters} for $\realGroupCover$ at infinitesimal character $\fixedInfChar$. If $q \in \left\{0,1\right\}$, set $\hcBasisGen(p,q) = \hcBasisSpin{p}{q}$.
\end{definition}

Our first goal will be to understand $\hcBasisGen(p,q)$ (as a set) at certain half-integral infinitesimal characters. We then turn to understanding the ingredients for the Kazhdan-Lusztig algorithm and definition of the map $\weylGroupInvMap$.
 

\subsection{Character Multiplicity Duality}
\label{ssIntroCMDuality}
We come now to an important definition.

\begin{definition}
\label{defCMDuality}
If $\gamma \in \hcBasisGen(p,q)$ is a genuine parameter for $\realGroupCover = \realSpinGroupCover{p}{q}$, denote by $\stdMod[\gamma]$ and $\irrMod[\gamma]$ the corresponding standard and irreducible modules in $\hcCatInfGen$, respectively. Write $\mult{\gamma}{\delta} \in \mathbb{N}$ for the number of times $\irrMod$ appears as a subquotient of $\stdMod[\delta]$. In $\hcCatInfGenGG$ we have
\[
\stdMod[\delta] = \sum_{\gamma \in \hcBasisGen}\mult{\gamma}{\delta}\irrMod.
\]
Similarly write $\Mult{\gamma}{\delta} \in \mathbb{Z}$ for the multiplicity of $\stdMod$ in $\irrMod[\delta]$ and
\[
\irrMod[\delta] = \sum_{\gamma \in \hcBasisGen}\Mult{\gamma}{\delta}\stdMod
\]
in $\hcCatInfGenGG$. 
\end{definition}

The uniqueness of the above expressions implies
\[
\sum_{\pi \in \hcBasisGen}\Mult{\gamma}{\pi}\mult{\pi}{\delta} = \left\{
\begin{array}{ll}
1 & \gamma = \delta \\
0 & \gamma \ne \delta
\end{array}
\right.
\]
so that the matrices $\multMatrix$ and $\MultMatrix$ are inverses. The integers $\Mult{\gamma}{\delta}$ are thus of fundamental importance. We can now state the main theorem we aim to prove (see Theorem \ref{theoremMainSecond}).

\begin{theorem}
\label{theoremMainFirst}
Let $\absDiff$ be a half-integral infinitesimal character (Section \ref{ssInfChar}) and suppose the rank of $\realGroupCover = \realSpinGroupCover{p}{q}$ is even. Fix a genuine central character $\genTorusChar$ of $\text{Z}(\realGroupCover)$ (Definition \ref{defCC}) and let $\mathcal{B} = \left\{\gamma_{1},\ldots,\gamma_{r}\right\}$ be the collection of genuine parameters in $\hcBasisGenSpin{p}{q}$ with central character $\genTorusChar$. Then there is a group $\realGroupCoverDual = \realSpinGroupCover{p^{\vee}}{q^{\vee}}$, a subset $\mathcal{B}' \subset \hcBasisGenSpin{p^{\vee}}{q^{\vee}}$, and a bijection $\weylGroupInvMap : \mathcal{B} \to \mathcal{B}'$ such that
\[
\Mult{\gamma_{i}}{\gamma_{j}} = \epsilon_{ij} \mult{\weylGroupInvMap(\gamma_{j})}{\weylGroupInvMap(\gamma_{i})}
\]
where $\epsilon_{ij} = \pm 1$.
\end{theorem}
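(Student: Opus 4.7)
The plan is to follow the template for Kazhdan--Lusztig--Vogan duality established by Vogan in \cite{IC4} and adapted to the nonlinear setting by Renard--Trapa in \cite{RT3}: exhibit an explicit bijection $\weylGroupInvMap$ between the central-character block $\mathcal{B}$ and a corresponding block $\mathcal{B}'$ of genuine parameters for a suitable dual group $\realGroupCoverDual$; prove that $\weylGroupInvMap$ intertwines the combinatorial operations (cross action and Cayley transforms) that generate the extended Hecke modules of \cite{RT3}; and derive the multiplicity duality as a formal consequence of the resulting Hecke-module duality.

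The first step is to construct the dual group and the map at the combinatorial level. Given that $p+q = 4n+1$ and the half-integral character $\absDiff$ has integral Weyl group of type $B_{a}\times B_{b}$, the rule for building $\realGroupCoverDual = \realSpinGroupCover{p^{\vee}}{q^{\vee}}$ is forced by the rank-$2n$ self-duality of $B_{2n}$ together with the half-integral structure of $\absDiff$. I would first define $\weylGroupInvMap$ on Weyl group involutions $\weylGroupInv$ via multiplication by the longest element (Section \ref{secWeylGroupInvolutions}) and then extend it to abstract bigradings $\absBg$ by the formal interchange $\grading \leftrightarrow \realGrading$ together with $\inv \leftrightarrow -\inv$. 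This already yields a numerical duality (Theorem \ref{theoremNumericalDuality}) matching the cardinalities of the parameter fibers on the two sides.

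The central obstacle is that numerical duality does not by itself determine $\weylGroupInvMap$ on genuine parameters: certain representations appear in packets of order two that are indistinguishable at the bigrading level (Corollary \ref{corRepFiberCent}). To resolve this I would use the principal class map $\pcMap$ (Definition \ref{defPsMap}), which assigns a principal class (Definition \ref{defPrincipalClass}) to each element of a 2-packet, together with its counterpart $\pcMapDual$ on the dual side. The key technical input is that $\pcMap$ is a bijection (Theorem \ref{theoremPsMapBij}); this I expect to be the hardest step, since it demands a delicate parity count over the bits $\imaginaryBits$, $\realBits$, $\complexBits$ attached to each Cartan, combined with a careful handling of the genuinely nonabelian structure of nonlinear Cartan subgroups (Proposition \ref{propMCStruct}). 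Declaring $\weylGroupInvMap$ to be the unique lift of the bigrading-level map which is compatible with both $\pcMap$ and $\pcMapDual$ then yields a well-defined bijection $\mathcal{B} \to \mathcal{B}'$.

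With $\weylGroupInvMap$ in hand, the remaining work is a case-by-case verification that it commutes with the cross action (Theorem \ref{theoremDualityCommutesWithCA}) and with the Cayley transforms (Theorem \ref{theoremDualityCommutesWithCT}), using the explicit combinatorial descriptions of Section \ref{secAlmostCentralMa}. Since the extended Hecke modules of \cite{RT3} are generated as combinatorial objects by these two operations, those compatibilities promote $\weylGroupInvMap$ to an isomorphism between the extended Hecke module for $\mathcal{B}$ and the dual of the extended Hecke module for $\mathcal{B}'$ (Theorem \ref{theoremHeckeModuleDuality}). Evaluating the resulting KLV-polynomial identity at $q=1$ yields the desired equality
\[
\Mult{\gamma_{i}}{\gamma_{j}} \;=\; \epsilon_{ij}\, \mult{\weylGroupInvMap(\gamma_{j})}{\weylGroupInvMap(\gamma_{i})},
\]
with the signs $\epsilon_{ij}=\pm 1$ accounting for the standard length shift between dual Hecke modules, exactly as in \cite{IC4}.
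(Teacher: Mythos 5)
Your proposal matches the paper's argument essentially step for step: duality on involutions by negation, dual bigradings by swapping the imaginary and real gradings, numerical duality, the principal class map to break the two-element packets (Theorem \ref{theoremPsMapBij}), compatibility with cross actions and Cayley transforms, and the formal passage through the Hecke module isomorphism of Theorem \ref{theoremHeckeModuleDuality} with the length-shift sign. The only compression worth noting is that the commutation must also be checked for the \emph{extended} (nonintegral) cross action of Theorem \ref{theoremDualityCommutesWithECA}, which your phrase ``cross action'' should be understood to include.
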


Central characters are discussed in Section \ref{ssCentralCharacter} and the group $\realGroupCoverDual$ is defined in Section \ref{ssNumDuality}. The map $\weylGroupInvMap$ will be defined on various sets throughout the course of these notes (Definitions \ref{defWeylGroupInvMap}, \ref{defPsiOnHcBasisGen}). Theorem \ref{theoremMainFirst} will be proved in Section \ref{ssCharMultDuality} and is essentially a formal consequence of the structure and representation theoretic properties of the map $\weylGroupInvMap$ (Theorems \ref{theoremDualityCommutesWithCA}, \ref{theoremDualityCommutesWithCT}, and \ref{theoremDualityCommutesWithECA}).


\section{Involutions in $\weylGroup$}
\label{secWeylGroupInvolutions}

The set of involutions in a Weyl group of type $B_{n}$ will be of fundamental importance in what follows. In this elementary section we develop notation and recall various properties of these objects.


\subsection{Abstract Weyl Group}
\label{ssAbsWeylGroup}
We begin with a fundamental definition.

\begin{definition}
\label{defAbsCartan}
Recall $\complexLieAlgebra$ denotes the complex Lie algebra of $\complexGroup = \complexSpinGroup[2n+1]$ and let $\absLieAlgebra \subset \complexLieAlgebra$ be a fixed, \emph{abstract} Cartan subalgebra. Write $\weylGroup = \weylGroup(\complexLieAlgebra, \absLieAlgebra)$ for the Weyl group and let $\rootSystem = \rootSystem(\complexLieAlgebra, \absLieAlgebra) \subset \vSpace$ be the corresponding root system. With the usual choice of coordinates and inner product on $\vSpace$ we have
\[
\rootSystem = \left\{ \pm \ei \pm \ei[j] \mid 1 \le i < j \le n \right\} \cup \left\{ \pm \ei \mid 1 \le i \le n \right\}
\]
where $(\ei, \ei[j]) = \delta_{ij}$. Let $\posRootSystem$ denote the set of positive roots
\[
\posRootSystem = \left\{ \ei \pm \ei[j] \mid 1 \le i < j \le n \right\} \cup \left\{ \ei \mid 1 \le i \le n \right\}
\]
and $\simpleRoots$ the corresponding simple roots
\[
\simpleRoots = \left\{ \ei - \ei[i+1] \mid 1 \le i \le n-1 \right\} \cup \{\ei[n]\}.
\]
\end{definition}

Abstractly
\[
\weylGroup \cong \mathbb{Z}_{2}^{n} \rtimes \symGroup \cong \langle \rootReflection \mid \alpha \in \simpleRoots \rangle
\]
where $\rootReflection$ is the root reflection in the simple root $\alpha$. For $w \in \weylGroup$, write $w = \displaystyle{(\bit[1]\bit[2] \ldots \bit[n], \sigma)}$, where $\bit \in \left\{ {0,1} \right\}$ and $\sigma \in \symGroup$. The $\bit$ appearing in such an expression will be referred to as \emph{bits}. 

\begin{definition}
\label{defInvParams}
Given $w \in \weylGroup$ define
\begin{eqnarray*}
\imaginaryBits[w] & = & \left\{ \bit \mid \sigma (i) = i \text{ and } \bit = 0 \right\} \\
\realBits[w] & = & \left\{ \bit \mid \sigma (i) = i \text{ and } \bit = 1 \right\} \\
\complexBits[w] & = & \left\{ \bit \mid \sigma (i) \ne i \right\}
\end{eqnarray*}
and
\begin{eqnarray*}
\numImaginaryBits[w] & = & \lvert \imaginaryBits[w] \rvert \\
\numRealBits[w] & = & \lvert \realBits[w] \rvert \\
\numComplexBits[w] & = & \lvert \complexBits[w] \rvert.
\end{eqnarray*}
Then $\numImaginaryBits[w]$ counts the number of bits fixed by $\sigma$ that are equal to zero, $\numRealBits[w]$ counts the number fixed bits equal to one, and $\numComplexBits[w]$ counts the number of bits not fixed by $\sigma$. When the element $w \in \weylGroup$ is clear from context, we may write $\numImaginaryBitsSimple$ for $\numImaginaryBits[w]$ and so forth. It is often important to know when $w$ possesses certain properties and for this purpose we define the \emph{indicator bits}
\begin{eqnarray*}
\imaginaryBitsBit[w] & = & \left\{\begin{array}{rl}
0 & \numImaginaryBits[w] = 0 \\
1 & \numImaginaryBits[w] \ne 0
\end{array} \right. \\
\realBitsBit[w] & = & \left\{\begin{array}{rl}
0 & \numRealBits[w] = 0 \\
1 & \numRealBits[w] \ne 0
\end{array} \right. \\
\imaginaryBitsParityBit[w] & = & \left\{\begin{array}{rl}
0 & \numImaginaryBits[w] \text{ is even} \\
1 & \numImaginaryBits[w] \text{ is odd}
\end{array} \right. \\
\realBitsParityBit[w] & = & \left\{\begin{array}{rl}
0 & \numRealBits[w] \text{ is even} \\
1 & \numRealBits[w] \text{ is odd}
\end{array} \right..
\end{eqnarray*}
\end{definition}

An element $\weylElt \in \weylGroup$ with $\weylElt^{2} = 1$ is called an \emph{involution} and $\weylGroupInv \subset \weylGroup$  will denote the set of involutions in $\weylGroup$. We now describe a convenient way of representing involutions in terms of combinatorial objects called \emph{diagrams}. A diagram is a picture of the action of an involution on the standard basis for $\vSpace$ (equivalently the short roots in $\posRootSystem$). Let $\inv \in \weylGroupInv$ and let $\ei$ denote the $i$th standard basis vector. The corresponding diagram $\diagram$ is a sequence of symbols (read left to right) where the $i$th symbol represents the action of $\inv$ on $\ei$ in an obvious way.

\begin{example}
The diagram for $\inv = (0001,(23))$ is given by
\[
+~3~2~-.
\]
Here the symbol $+$ represents that $\ei[1]$ is fixed by $\inv$ and the symbol $-$ represents that $\ei[4]$ is sent to $-\ei[4]$ by $\inv$. The vectors $\ei[3]$ and $\ei[2]$ are interchanged by $\inv$ and this is represented by placing the number $3$ in position $2$ and the number $2$ in position $3$.
\end{example}

It is also possible for $\inv$ to interchange and negate two standard basis vectors. Negation of interchanged vectors will be represented in diagrams by placing parentheses around the corresponding numbers. 

\begin{example}
If $\inv = (1110,(23))$ the corresponding diagram is
\[
-~(3)~(2)~+.
\]
\end{example}

The map $\weylGroupInvMap$ (Section \ref{secIntro}) has a simple definition on the level of involutions.

\begin{definition}
\label{defWeylGroupInvMap}
Let $\inv = (\bit[1]\bit[2] \ldots \bit[n], \sigma) \in \weylGroupInv$ and define
\[
\weylGroupInvMap(\inv) = -\inv = (\tilde{\bit[1]} \tilde{\bit[2]} \ldots \tilde{\bit[n]}, \sigma)
\]
where $\tilde{\bit} = \bit + 1$. For diagrams, $\weylGroupInvMap$ interchanges $+$ and $-$ signs as well as changes the parentheses on transpositions. The examples above represent two diagrams interchanged by $\weylGroupInvMap$.
\end{definition}


\subsection{Conjugacy Classes in $\weylGroupInv$}
\label{ssCountInv}
It will be important to understand the $\weylGroup$-conjugacy classes in $\weylGroupInv$. We begin with the following proposition whose proof is easy.

\begin{proposition}
\label{propCC} 
Two involutions $\inv_{1} = \displaystyle{(\bit[1]\bit[2] \ldots \bit[n], \sigma_{1})}$ and $\inv_{2} = \displaystyle{(\bit[1]'\bit[2]' \ldots \bit[n]', \sigma_{2}')}$ are conjugate in $\weylGroup$ if and only if $\sigma_{1}$ and $\sigma_{2}$ are conjugate in $\symGroup$ and $\numImaginaryBits[\inv_{1}] = \numImaginaryBits[\inv_{2}]$ (equivalently $\numRealBits[\inv_{1}] = \numRealBits[\inv_{2}]$). In particular, the conjugacy class of $\inv$ is uniquely determined by the numbers $\numComplexBits$ and $\numImaginaryBits$.
\end{proposition}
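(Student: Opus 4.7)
The plan is to work directly in $\weylGroup \cong \mathbb{Z}_2^n \rtimes \symGroup$ via the semidirect product multiplication $(c,\tau)(b,\sigma) = (c + \tau(b),\; \tau\sigma)$, where $\tau(b)_i = b_{\tau^{-1}(i)}$ and bits are taken modulo $2$. Necessity on the permutation component is immediate: the projection $\weylGroup \to \symGroup$ is a homomorphism, so conjugate involutions have $\symGroup$-conjugate permutation parts. For the bit invariant I would compute directly
\begin{equation*}
(c,\tau)\,\inv\,(c,\tau)^{-1} \;=\; \bigl(c + \tau(b) + (\tau\sigma\tau^{-1})(c),\;\; \tau\sigma\tau^{-1}\bigr).
\end{equation*}

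On a fixed point $i$ of $\tau\sigma\tau^{-1}$ the contribution $c_i + (\tau\sigma\tau^{-1}(c))_i$ vanishes mod $2$, leaving the bit $b_{\tau^{-1}(i)}$; thus the fixed-point bits are simply permuted by $\tau$ among the fixed points. On a $2$-cycle $(i,j)$ of $\tau\sigma\tau^{-1}$, the same expression contributes $c_i + c_j$ identically to both positions, which is compatible with the constraint $\epsilon_i = \epsilon_j$ forced by $\inv^2 = 1$ on $2$-cycles, and this common value can be flipped freely by appropriate choice of $c$. Consequently the multiset of fixed-point bits --- equivalently $\numImaginaryBitsSimple$, since $\numImaginaryBitsSimple + \numRealBitsSimple$ is the total number of fixed points --- is a conjugation invariant, yielding the necessity direction.

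For sufficiency, assume $\sigma_1, \sigma_2$ are conjugate in $\symGroup$ and $\numImaginaryBits[\inv_1] = \numImaginaryBits[\inv_2]$. I would first conjugate $\inv_1$ by an element $(0,\tau)$ with $\tau\sigma_1\tau^{-1} = \sigma_2$, reducing to the case where both involutions have the same permutation $\sigma_2$ and hence the same fixed-point set. Next, conjugation by $(0,\tau')$ with $\tau'$ in the centralizer of $\sigma_2$ permutes the fixed-point bits arbitrarily within the fixed-point set, so the equal counts of $0$-bits (and hence $1$-bits) on fixed points allow me to align the two fixed-point patterns. Finally, I would choose $c \in \mathbb{Z}_2^n$ supported on the $2$-cycles of $\sigma_2$ to toggle the $2$-cycle bits; the calculation above shows these can be flipped independently on each $2$-cycle. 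The only real obstacle is careful bookkeeping in the semidirect product, consistent with the author's remark that the proof is easy.
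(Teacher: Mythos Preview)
Your argument is correct. The paper does not actually give a proof of this proposition; it simply introduces it as a result ``whose proof is easy'' and moves on. Your direct computation in the semidirect product $\mathbb{Z}_2^n \rtimes S_n$---reading off the conjugation formula, observing that fixed-point bits are merely permuted while $2$-cycle bits can be toggled freely via the $\mathbb{Z}_2^n$ factor---is exactly the routine verification the author is leaving to the reader, and all steps check out.
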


In terms of diagrams, Proposition \ref{propCC} implies the conjugacy class of $\inv$ is determined by the number of numbers ($\numComplexBits$) and the number of $+$ signs ($\numImaginaryBits$) that appear. Therefore a general element of $\weylGroupInv$ is conjugate to one whose diagram is of the form
\[
2~1~4~3\cdots +~+~\cdots +~-~-\cdots -.
\]

\begin{corollary} 
\label{corNumCC}
In the setting above
\begin{eqnarray*}
\left| \invConjClasses \right| & = & \sum_{k=0}^{\lfloor n/2 \rfloor}{n-2k+1}.
\end{eqnarray*}
If $n$ is even, this number is a perfect square.
\end{corollary}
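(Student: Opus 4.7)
The plan is to apply Proposition \ref{propCC} directly and count conjugacy classes by their two invariants. Since $\inv \in \weylGroupInv$ forces $\sigma \in \symGroup$ to be an involution, $\sigma$ is a product of disjoint transpositions; write $\numComplexBits = 2k$ for the number of bits moved by $\sigma$, with $0 \le k \le \lfloor n/2\rfloor$. In $\symGroup$, involutions are conjugate exactly when they have the same cycle type, so the $\symGroup$-conjugacy class of $\sigma$ is uniquely determined by $k$.

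With $k$ fixed, there are $n-2k$ positions on which $\sigma$ acts trivially, and Proposition \ref{propCC} asserts that the $\weylGroup$-conjugacy class of $\inv$ is determined by $k$ together with $\numImaginaryBits \in \{0,1,\ldots,n-2k\}$. This gives exactly $n-2k+1$ classes for each value of $k$, and summing over $k$ yields
\[
\left|\invConjClasses\right| \;=\; \sum_{k=0}^{\lfloor n/2 \rfloor}(n-2k+1),
\]
which is the first assertion.

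For the second assertion, suppose $n=2m$ is even. Substituting and reindexing by $j=m-k$,
\[
\sum_{k=0}^{m}(2m-2k+1) \;=\; \sum_{j=0}^{m}(2j+1) \;=\; 1+3+5+\cdots+(2m+1) \;=\; (m+1)^{2},
\]
so $\left|\invConjClasses\right| = (n/2+1)^{2}$, a perfect square.

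There is no real obstacle: the proof is a direct application of Proposition \ref{propCC} together with the elementary observation that the odd integers sum to consecutive squares. The only point requiring mild care is recognizing that $\numComplexBits$ is automatically even (forced by $\sigma^{2}=1$ in $\symGroup$) and that the cycle type of $\sigma$ is completely captured by this single number $k$, which is what makes the inner count collapse to $n-2k+1$.
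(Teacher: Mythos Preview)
Your proof is correct and follows essentially the same approach as the paper: the corollary is stated without proof as an immediate consequence of Proposition~\ref{propCC}, and your argument makes explicit exactly the counting implicit in the paper's discussion. Your verification that the sum equals $(n/2+1)^2$ when $n$ is even is a detail the paper omits entirely.
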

 
Since $\numComplexBits[\weylGroupInvMap(\inv)] = \numComplexBits$, Proposition \ref{propCC} implies $\weylGroupInvMap$ descends to an involution (still denoted $\weylGroupInvMap)$ on $\invConjClasses$. In particular, $\weylGroupInvMap$ sends the conjugacy class determined by $(\numComplexBits, \numImaginaryBits)$ to the one determined by $(\numComplexBits, \numRealBits)$.


\subsection{Involutions and Root Systems}
\label{ssInvRootSystems}
Let $\inv \in \weylGroupInv$ and recall $\inv$ acts on $\rootSystem = \rootSystem(\complexLieAlgebra,\absLieAlgebra)$. We begin by recalling the following fundamental definitions.

\begin{definition}
\label{defRootTypes}
Let
\begin{eqnarray*}
\imaginaryRoots & = & \left\{ \alpha \in \rootSystem \mid \inv (\alpha) = \alpha \right\} \\
\realRoots & = & \left\{ \alpha \in \rootSystem \mid \inv (\alpha) = -\alpha \right\} \\
\complexRoots & = & \left\{ \alpha \in \rootSystem \mid \inv (\alpha) \ne \pm \alpha \right\}
\end{eqnarray*}
denote the \emph{imaginary, real, and complex roots} for $\inv$. The imaginary and real roots form subsystems of $\rootSystem$ and we denote their corresponding Weyl (sub)groups by $\imaginaryWeylGroup$ and $\realWeylGroup$. The set $\complexRoots$ is \emph{not} a root system, however if we write
\begin{eqnarray*}
\halfSumIm[] & = & \frac{1}{2}\sum_{\simpleRoot \in (\imaginaryRoots)^{+}}\simpleRoot \\
\halfSumReal[] & = & \frac{1}{2}\sum_{\simpleRoot \in (\realRoots)^{+}}\simpleRoot \\
\complexOrthogonalRoots & = & \left\{\simpleRoot \in \rootSystem \mid (\simpleRoota,\halfSumIm[]) = (\simpleRoota,\halfSumReal[]) = 0 \right\}
\end{eqnarray*}
then $\complexOrthogonalRoots$ \emph{is} a root system consisting of complex roots (see \cite{IC4}, Definition 3.10).
\end{definition}

\begin{proposition}[\cite{IC4}, Lemma 3.11]
\label{propComplexOrthogonalWeylGroup}
In the setting of Definition \ref{defRootTypes}, we can write
\begin{eqnarray*}
\complexOrthogonalRoots & = & \rootSystem_{1} \cup \rootSystem_{2}
\end{eqnarray*}
as an orthogonal disjoint union with $\inv(\rootSystem_{1}) = \rootSystem_{2}$. In particular, 
\begin{eqnarray*}
\complexWeylGroup & = & \left\{(\weylElt,\inv\weylElt) \mid \weylElt \in \weylGroup(\rootSystem_{1})\right\}
\end{eqnarray*}
is a Weyl subgroup of $\weylGroup$ isomorphic to $\weylGroup(\rootSystem_{1})$.
\end{proposition}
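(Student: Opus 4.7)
The plan is a three-stage argument: show that $\complexOrthogonalRoots$ is $\inv$-stable, prove $\simpleRoot \perp \inv(\simpleRoot)$ for every $\simpleRoot \in \complexOrthogonalRoots$, and then organize the irreducible components of $\complexOrthogonalRoots$ into $\inv$-swapped pairs. The two basic inputs are that $\inv$ fixes each imaginary root (so $\inv(\halfSumIm[]) = \halfSumIm[]$) and negates each real root (so $\inv(\halfSumReal[]) = -\halfSumReal[]$). Because $\inv$ preserves root type, $\inv(\simpleRoot)$ remains complex, and the pairings $(\inv(\simpleRoot), \halfSumIm[]) = (\simpleRoot, \halfSumIm[]) = 0$ and $(\inv(\simpleRoot), \halfSumReal[]) = -(\simpleRoot, \halfSumReal[]) = 0$ show that $\complexOrthogonalRoots$ is $\inv$-stable.

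The key calculation is orthogonality $\simpleRoot \perp \inv(\simpleRoot)$. My plan is to rule out both $\simpleRoot + \inv(\simpleRoot)$ and $\simpleRoot - \inv(\simpleRoot)$ from being roots. If $\simpleRootb := \simpleRoot + \inv(\simpleRoot)$ were a nonzero root, it would be $\inv$-fixed and hence imaginary; but $(\simpleRootb, \halfSumIm[]) = 0$, contradicting the strict regularity of $\halfSumIm[]$ on $\imaginaryRoots$. A symmetric argument with $\halfSumReal[]$ rules out $\simpleRoot - \inv(\simpleRoot)$. Since $\inv(\simpleRoot) \ne \pm \simpleRoot$, the roots $\simpleRoot$ and $\inv(\simpleRoot)$ are linearly independent, and in a reduced rank-two root system two independent roots whose sum and difference are both non-roots must be orthogonal.

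Next I would partition $\complexOrthogonalRoots$ into its irreducible components, on which $\inv$ acts by permutation, and argue that no component is individually fixed. If $C$ were $\inv$-stable, $\inv|_{C}$ would be a nontrivial involutive automorphism of an irreducible root system with no fixed root (such would be imaginary) and no anti-fixed root (such would be real). But every involutive automorphism of an irreducible root system produces a fixed or anti-fixed root, as one verifies by examining its action on the highest root of $C$ together with a short check of the possible diagram automorphisms. Hence the components pair up under $\inv$, and taking $\rootSystem_{1}$ to be one representative from each $\inv$-orbit with $\rootSystem_{2} = \inv(\rootSystem_{1})$ yields the required orthogonal disjoint union.

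For the Weyl group statement, orthogonality of $\rootSystem_{1}$ and $\rootSystem_{2}$ gives $\weylGroup(\complexOrthogonalRoots) = \weylGroup(\rootSystem_{1}) \times \weylGroup(\rootSystem_{2})$, with conjugation by $\inv$ implementing the isomorphism $\weylGroup(\rootSystem_{1}) \cong \weylGroup(\rootSystem_{2})$ (the second coordinate in the notation $(\weylElt, \inv \weylElt)$ of the statement). The subgroup $\complexWeylGroup$ is generated by the $\inv$-invariant reflection products $\rootReflection[\simpleRoot]\rootReflection[\inv\simpleRoot]$ for $\simpleRoot \in \rootSystem_{1}$, so it is precisely this twisted diagonal and projects isomorphically onto $\weylGroup(\rootSystem_{1})$. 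I expect the main obstacle to be the component-pairing step: a clean uniform proof that no irreducible component can be $\inv$-stable takes a little care, with a type-by-type check of involutive automorphisms as a reliable fallback.
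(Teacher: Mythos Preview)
The paper does not supply a proof of this proposition; it is quoted from \cite{IC4}, Lemma 3.11, with no argument given. There is therefore nothing in the paper to compare your proposal against directly.

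Your outline is a correct proof. The first two stages are exactly right: $\inv$-stability of $\complexOrthogonalRoots$ follows from $\inv(\halfSumIm[]) = \halfSumIm[]$ and $\inv(\halfSumReal[]) = -\halfSumReal[]$, and strong orthogonality of $\simpleRoot$ and $\inv(\simpleRoot)$ follows because a nonzero $\simpleRoot + \inv(\simpleRoot)$ would be imaginary yet orthogonal to $\halfSumIm[]$, while a nonzero $\simpleRoot - \inv(\simpleRoot)$ would be real yet orthogonal to $\halfSumReal[]$. Your rank-two observation then yields $(\simpleRoot,\inv\simpleRoot)=0$.

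The component-pairing step via a type-by-type check of involutive automorphisms of irreducible root systems is valid, though, as you anticipate, it is the least elegant part of the argument. You can shorten it by using the stronger fact you have already established: every root of a putative $\inv$-stable irreducible component $C$ is orthogonal to its $\inv$-image. In rank one this is already impossible, since $\inv(\simpleRoot)=\pm\simpleRoot$ there; in higher rank the condition $(\simpleRoot,\inv\simpleRoot)=0$ for all $\simpleRoot\in C$ is extremely restrictive and makes the case analysis very short. Either way the argument closes, and the Weyl-group assertion follows exactly as you describe.
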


\begin{proposition}
\label{propInvWeylGroups}
Let $\inv \in \weylGroupInv$ and let $k = \frac{\numComplexBits}{2}$. Then we have
\begin{eqnarray*}
\imaginaryWeylGroup & \cong & W(B_{\numImaginaryBits}) \times W(A_{1})^{k}\\
\realWeylGroup & \cong & W(B_{\numRealBits}) \times W(A_{1})^{k}\\
\complexWeylGroup & \cong & W(A_{k-1})
\end{eqnarray*}
where $W(A_{i})$ and $W(B_{i})$ are Weyl groups for root systems of type $A_{i}$ and $B_{i}$, respectively.
\end{proposition}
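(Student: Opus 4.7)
The plan is to compute each of the three subsystems of $\rootSystem$ directly in the coordinates of Definition \ref{defAbsCartan}, and then read off the Weyl groups. Throughout, write $\inv = (\bit[1]\cdots\bit[n],\sigma)$ and enumerate the complex bits in pairs $(i_{m}, i_{m}')$ with $\sigma(i_{m}) = i_{m}'$ for $m=1,\ldots,k$. To each such pair associate a sign $\eta_{m} \in \{\pm 1\}$, with $\eta_{m}=+1$ for a non-negating transposition ($\bit[i_{m}]=\bit[i_{m}']=0$) and $\eta_{m}=-1$ for a negating one ($\bit[i_{m}]=\bit[i_{m}']=1$), so that $\inv(\ei[i_{m}]) = \eta_{m}\ei[i_{m}']$.

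First I would classify the imaginary roots. Among the short roots, $\inv(\ei[j]) = \ei[j]$ exactly when $j \in \imaginaryBits$, contributing the short roots of a $B_{\numImaginaryBits}$ subsystem. Among the long roots $\ei[j]\pm\ei[k]$ with both indices in $\imaginaryBits$ one obtains all long $B_{\numImaginaryBits}$ roots; with both in $\realBits$ or one of each, no imaginary roots appear. For each complex pair $(i_{m},i_{m}')$, a direct computation shows that exactly one of $\ei[i_{m}]+\ei[i_{m}']$ or $\ei[i_{m}]-\ei[i_{m}']$ is imaginary (selected by the sign $\eta_{m}$), giving a single $A_{1}$. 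The resulting imaginary subsystems are supported on disjoint coordinate subspaces, hence are mutually orthogonal, and $\imaginaryWeylGroup \cong W(B_{\numImaginaryBits}) \times W(A_{1})^{k}$. The same analysis with the opposite transposition choice in each pair and with $\realBits$ playing the role of $\imaginaryBits$ yields $\realWeylGroup \cong W(B_{\numRealBits}) \times W(A_{1})^{k}$.

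For the complex Weyl group I would apply Proposition \ref{propComplexOrthogonalWeylGroup}, so the task reduces to identifying $\complexOrthogonalRoots$ and exhibiting a compatible orthogonal decomposition. Since $\halfSumIm$ restricted to $\bigoplus_{j\in\imaginaryBits}\mathbb{R}\ei[j]$ is the regular element $\rho_{B_{\numImaginaryBits}}$, and similarly for $\halfSumReal$ on $\realBits$, any $\alpha \in \complexOrthogonalRoots$ must be supported on complex-bit indices. Writing $\alpha = \sum_{m}(a_{m}\ei[i_{m}] + c_{m}\ei[i_{m}'])$ and using that the contribution of pair $m$ to $\halfSumIm$ and $\halfSumReal$ is $(\ei[i_{m}] + \eta_{m}\ei[i_{m}'])/2$ and $(\ei[i_{m}] - \eta_{m}\ei[i_{m}'])/2$ respectively, the orthogonality conditions reduce to $\sum_{m}(a_{m} + \eta_{m}c_{m}) = 0$ and $\sum_{m}(a_{m} - \eta_{m}c_{m}) = 0$, i.e.\ $\sum_{m}a_{m} = 0$ and $\sum_{m}\eta_{m}c_{m} = 0$. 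Combining these with the constraint that $\alpha$ be a $B_{n}$-root (at most two nonzero coordinates, each $\pm 1$) forces either $\alpha = \pm(\ei[i_{m}] - \ei[i_{m'}])$ for some $m \neq m'$, or the image of such a root under $\inv$, namely $\pm(\eta_{m}\ei[i_{m}'] - \eta_{m'}\ei[i_{m'}'])$.

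Setting $\rootSystem_{1} = \{\pm(\ei[i_{m}] - \ei[i_{m'}]) : m \neq m'\}$, one sees $\rootSystem_{1}$ is a root system of type $A_{k-1}$ (supported on the $k$ first representatives), while $\rootSystem_{2} = \inv(\rootSystem_{1})$ is supported on the second representatives and is therefore orthogonal to $\rootSystem_{1}$. This is exactly the decomposition demanded by Proposition \ref{propComplexOrthogonalWeylGroup}, and so $\complexWeylGroup \cong \weylGroup(\rootSystem_{1}) \cong W(A_{k-1})$. The only non-routine step is the orthogonality calculation in the last paragraph, where one must keep careful track of the signs $\eta_{m}$ in order to verify that the image roots $\inv(\ei[i_{m}]-\ei[i_{m'}])$ lie in $\complexOrthogonalRoots$ regardless of the mix of negating and non-negating pairs; once this bookkeeping is done, the three isomorphisms drop out immediately.
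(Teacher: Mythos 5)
Your proof is correct, and it supplies the computation the paper leaves implicit: Proposition \ref{propInvWeylGroups} is stated without proof immediately after Proposition \ref{propComplexOrthogonalWeylGroup}, and your route --- sorting the roots coordinate-by-coordinate according to the bits of $\inv$, then identifying $\complexOrthogonalRoots$ explicitly so that Proposition \ref{propComplexOrthogonalWeylGroup} yields $\complexWeylGroup \cong W(A_{k-1})$ --- is exactly the intended one. One small imprecision: the claim that every element of $\complexOrthogonalRoots$ is supported on complex-bit indices does not follow from the regularity of $\halfSumIm[]$ on the $\imaginaryBits$-coordinates alone, since a mixed root such as $\ei[j] \pm \ei[{i_{m}}]$, with $j$ the imaginary index where $\halfSumIm[]$ has coordinate $\tfrac{1}{2}$, can still be orthogonal to $\halfSumIm[]$; it is the simultaneous orthogonality to $\halfSumReal[]$ (whose $j$-coordinate vanishes while its $i_{m}$-coordinate is $\pm\tfrac{1}{2}$) that excludes these. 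Since your main computation does impose both conditions, this is a gap only in the stated justification, not in the argument.
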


Let $\invWeylGroup$ denote the centralizer in $\weylGroup$ of the element $\inv \in \weylGroupInv$. Corollary \ref{corNumCC} and the following theorem allow us to count elements of order two in $\weylGroup$.

\begin{theorem}[\cite{IC4}, Proposition 3.12]
\label{theoremWeylGroupCentralizer}
The subgroups $\imaginaryWeylGroup$ and $\realWeylGroup$ are normal subgroups in $\invWeylGroup$. Moreover we have 
\begin{eqnarray*}
\invWeylGroup & \cong & (\imaginaryWeylGroup \times \realWeylGroup) \rtimes \complexWeylGroup.
\end{eqnarray*}
\end{theorem}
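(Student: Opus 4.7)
The plan is to prove the structure theorem in three steps: (i) establish inclusions $\imaginaryWeylGroup, \realWeylGroup, \complexWeylGroup \subset \invWeylGroup$; (ii) verify normality of the first two factors and trivial pairwise intersections; (iii) exhaust $\invWeylGroup$ by the internal product.

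For (i), the starting observation is that $w \in \invWeylGroup$ if and only if $w$ commutes with $\inv$, which is equivalent to $w$ preserving the partition $\rootSystem = \imaginaryRoots \sqcup \realRoots \sqcup \complexRoots$ as sets. Reflections in imaginary or real roots commute with $\inv$ ($\inv$ acts as $+1$ or $-1$ on the reflecting line), so $\imaginaryWeylGroup, \realWeylGroup \subset \invWeylGroup$; since these subgroups are generated by reflections in roots from the preserved subsets $\imaginaryRoots$ and $\realRoots$, conjugation by any $w \in \invWeylGroup$ preserves them and yields normality. For $\complexWeylGroup$, Proposition \ref{propComplexOrthogonalWeylGroup} provides an explicit embedding into $\invWeylGroup$ via $\{(\weylElt, \inv\weylElt) : \weylElt \in \weylGroup(\rootSystem_{1})\}$, which commutes with $\inv$ by construction.

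For (ii), the intersection $\imaginaryWeylGroup \cap \realWeylGroup = \{1\}$ follows from orthogonality: $\imaginaryRoots$ and $\realRoots$ span the $+1$ and $-1$ eigenspaces of $\inv$ respectively, so their spans together equal $\vSpace$, and an element fixing the orthogonal complement of both must fix $\vSpace$ pointwise. For $\complexWeylGroup \cap (\imaginaryWeylGroup \times \realWeylGroup) = \{1\}$, I would exploit that elements of $\complexWeylGroup$ are products of paired reflections $\rootReflection[\simpleRoot]\rootReflection[\inv\simpleRoot]$ with $\simpleRoot \in \rootSystem_{1}$, and on the $\inv$-fixed subspace these act only along directions orthogonal to $\halfSumIm$ by the defining property of $\complexOrthogonalRoots$. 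A nontrivial element of $\imaginaryWeylGroup \times \realWeylGroup$ cannot coincide with such a product, because the former has a nontrivial reduced expression in imaginary and real reflections while the latter, written uniquely in the ambient Weyl group, involves only complex reflections.

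The main obstacle is step (iii): showing $\invWeylGroup = (\imaginaryWeylGroup \times \realWeylGroup) \cdot \complexWeylGroup$. Given $w \in \invWeylGroup$, I would argue by reduction. First, $w$ permutes the irreducible components of $\imaginaryRoots$ and of $\realRoots$, and by Proposition \ref{propInvWeylGroups} each transposition of $\inv$ contributes one $A_{1}$-factor to each, setting up a canonical bijection whose permutation symmetries are exactly $\symGroup[k] \cong \weylGroup(A_{k-1}) \cong \complexWeylGroup$. After multiplying $w$ by a suitable element of $\complexWeylGroup$, we may assume $w$ fixes each irreducible component of $\imaginaryRoots$ and $\realRoots$ setwise; further multiplication by elements of $\imaginaryWeylGroup \times \realWeylGroup$ allows us to assume $w$ acts trivially on $\imaginaryRoots \cup \realRoots$, and since $\vSpace = \mathbb{R}\langle \imaginaryRoots \rangle \oplus \mathbb{R}\langle \realRoots \rangle$ we conclude $w = 1$. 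The delicate verification is that the component-permutation action of $\invWeylGroup$ factors through $\complexWeylGroup$ rather than through a larger group of outer automorphisms; this is where the type-$B$ coordinate description of $\inv$ — a product of disjoint transpositions plus signed fixed bits — is essential, as it reveals the canonical bijection between transposition pairs and $A_{1}$-factors, a bijection that the $\inv$-equivariance of elements in $\invWeylGroup$ must respect.
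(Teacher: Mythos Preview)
The paper does not supply its own proof of this statement: it is quoted directly from \cite{IC4}, Proposition~3.12, and used as a black box. So there is no argument in the paper to compare your proposal against.

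That said, let me comment on the proposal itself as a type-$B$ argument. Steps (i) and (iii) are sound. The inclusion and normality in (i) are correct for the reason you give, and the exhaustion in (iii) works in type~$B$ because, as you implicitly use, the $+1$ and $-1$ eigenspaces of $\inv$ are spanned by $\imaginaryRoots$ and $\realRoots$ respectively (each transposition pair contributes one long root to each), so $\vSpace = \mathbb{R}\langle\imaginaryRoots\rangle \oplus \mathbb{R}\langle\realRoots\rangle$. The coupling between the two permutations of $A_1$-factors that you worry about at the end is forced: a signed permutation sending the imaginary $A_1$ of pair $i$ to that of pair $j$ automatically sends the real $A_1$ the same way, since both span the same coordinate $2$-plane.

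The genuine weak point is step~(ii). Your argument that $\complexWeylGroup \cap (\imaginaryWeylGroup \times \realWeylGroup) = \{1\}$ appeals to a nontrivial element of one side being ``written uniquely in the ambient Weyl group'' using only one kind of reflection; but reduced expressions in $\weylGroup$ are not unique, and the set of reflections appearing is not an invariant of the element. A clean replacement in type~$B$: elements of $\imaginaryWeylGroup \times \realWeylGroup$ preserve each coordinate $2$-plane of a transposition pair (they are generated by $\rootReflection[\ei+\ei[j]]$, $\rootReflection[\ei-\ei[j]]$ within that plane and by reflections supported on the $\pm$ coordinates), whereas a nontrivial element of $\complexWeylGroup \cong \symGroup[k]$ genuinely permutes those $2$-planes. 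Hence the intersection is trivial. With this fix your type-$B$ argument goes through; for the general statement one really does need the reference.
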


Using Proposition \ref{propInvWeylGroups} and Theorem \ref{theoremWeylGroupCentralizer} we easily obtain the following corollary.

\begin{corollary}
\label{corNumInvConjugate}
If $\inv \in \weylGroupInv$, the number of involutions in $\weylGroup$ conjugate to $\inv$ is given by
\[
\frac{n!}{{(n-\numComplexBits)!}{(\frac{\numComplexBits}{2})!}} \binom {n-\numComplexBits}{\numImaginaryBits}.
\]
\end{corollary}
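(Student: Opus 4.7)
The plan is to apply the orbit–stabilizer theorem. The number of elements in the $W$-conjugacy class of $\theta \in \mathcal{I}$ equals $|W| / |W^\theta|$, so it suffices to compute the order of the centralizer $W^\theta$ and verify the resulting expression agrees with the claimed formula. Note that since conjugation in $W$ sends involutions to involutions, the conjugacy class of $\theta$ in $W$ lies entirely in $\mathcal{I}$, so nothing is lost in this reformulation.

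First I would record that $|W| = 2^n\, n!$, since $W \cong \mathbb{Z}_2^n \rtimes S_n$. Next, combining Proposition \ref{propInvWeylGroups} with Theorem \ref{theoremWeylGroupCentralizer}, and writing $k = n_c/2$, we obtain
\begin{align*}
|W^\theta| &= |W_i^\theta| \cdot |W_{\mathbb{R}}^\theta| \cdot |W_{\mathbb{C}}^\theta| \\
&= \bigl(2^{n_s} n_s! \cdot 2^k\bigr) \cdot \bigl(2^{n_r} n_r! \cdot 2^k\bigr) \cdot k!,
\end{align*}
using that $|W(B_m)| = 2^m m!$, $|W(A_1)| = 2$, and $|W(A_{k-1})| = k!$. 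Since $n_s + n_r + 2k = n_s + n_r + n_c = n$, the powers of $2$ collect to give $|W^\theta| = 2^n\, n_s!\, n_r!\, (n_c/2)!$.

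Dividing yields
\[
\frac{|W|}{|W^\theta|} \;=\; \frac{n!}{n_s!\, n_r!\, (n_c/2)!}.
\]
Finally, rewriting this trinomial coefficient using $n_r = n - n_c - n_s$ gives
\[
\frac{n!}{n_s!\, (n - n_c - n_s)!\, (n_c/2)!} \;=\; \frac{n!}{(n-n_c)!\,(n_c/2)!}\binom{n-n_c}{n_s},
\]
which is exactly the stated formula.

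The computation is essentially bookkeeping once Theorem \ref{theoremWeylGroupCentralizer} and Proposition \ref{propInvWeylGroups} are in hand; no step is a real obstacle. The only place where care is required is checking that the semidirect product decomposition of $W^\theta$ actually gives the product of orders (i.e.\ that the factors intersect trivially), but this is immediate from the disjoint supports of the root subsystems $\Delta_i^\theta$, $\Delta_{\mathbb{R}}^\theta$, and $\Delta_{\mathbb{C}^\perp}^\theta$ underlying the three Weyl subgroups.
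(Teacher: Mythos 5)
Your proof is correct and is exactly the argument the paper intends: it deduces the count from orbit--stabilizer together with Proposition \ref{propInvWeylGroups} and Theorem \ref{theoremWeylGroupCentralizer}, which is precisely what the paper cites (without writing out the bookkeeping). The arithmetic checks out, since $\frac{n!}{n_s!\,n_r!\,(n_c/2)!}$ agrees with the stated formula after expanding the binomial coefficient.
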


Fix an even number $0 \le k \le n$ and consider the set
\begin{eqnarray*}
X_{k} & = & \left\{\inv \in \weylGroupInv \mid \numComplexBits = k \right\}.
\end{eqnarray*}
Proposition $\ref{propCC}$ implies $\weylGroupInvMap$ preserves $X_{k}$ and thus permutes its $n-k+1$ conjugacy classes. Note the left term in Corollary \ref{corNumInvConjugate} depends only on the numbers $k$ and $n$ and is therefore constant for fixed $X_{k}$. Hence the order two symmetry observed in the map $\weylGroupInvMap$ appears numerically as the familiar symmetry of binomial coefficients.


\section{Cartan Subgroups}
\label{secCartanSubgroups}
A key ingredient in the Langlands classification (Sections \ref{ssHCModIntro}, \ref{ssGenHcMod}) for a real group is a concrete understanding of its (conjugacy classes of) Cartan subgroups. In this section we explicitly describe the structure of the Cartan subgroups in $\realGroup = \realSpinGroup{p}{q}$ and $\realGroupCover = \realSpinGroupCover{p}{q}$ (Definition \ref{defGCover}). Although this material is well-known, the particular constructions used here will be important in what follows.

\subsection{Abstract Pairs}
\label{ssInvAndGradings}

Let $\realTorus$ be a $\cinv$-stable Cartan subgroup of $\realGroup = \realSpinGroup{p}{q}$. The complexified Lie algebra $\complexLieAlgebra[h] \subset \complexLieAlgebra$ is also $\cinv$-stable and thus $\cinv$ acts on $\rootSystem(\complexLieAlgebra,\complexLieAlgebra[h])$. Since $\realGroup$ contains a compact Cartan subgroup, the action of $\cinv$ is equivalent to the regular action of an involution in $\weylGroup(\complexLieAlgebra, \complexLieAlgebra[h])$ \cite{BC1}. 

\begin{definition}
\label{defAbsInv}
Choose a conjugation map $\absConj[] : \absLieAlgebra \to \complexLieAlgebra[h]$. The \emph{abstract} involution corresponding to $\complexLieAlgebra[h]$ and $\absConj[]$ is given by
\[
\inv = \absConj[]^{-1} \cdot \cinv \cdot \absConj[].
\]
The involution $\inv$ depends on the choice of $\absConj[]$ up to conjugacy in $\weylGroup = \weylGroup(\complexLieAlgebra,\absLieAlgebra)$.
\end{definition}

\begin{proposition}[\cite{BC1}]
\label{propInvCartanMap}
Let $\realTorus' \subset \realGroup$ be a $\cinv$-stable Cartan subgroup, choose a conjugation map
\[
j : \absLieAlgebra \to \complexLieAlgebra[h']
\]
and write $\inv' = j^{-1} \cdot \cinv \cdot j$ for the corresponding abstract involution. Then $\realTorus$ and $\realTorus'$ are $\maxRealCompact$-conjugate in $\realGroup$ if and only if $\inv$ and $\inv'$ are conjugate in $\weylGroup$.
\end{proposition}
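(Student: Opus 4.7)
The plan is to prove the two implications separately. The $(\Rightarrow)$ direction is a routine calculation, while $(\Leftarrow)$ is the classical hard part requiring real structure theory (Cayley transforms and the Matsuki/Kostant--Rallis classification).

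For the forward direction, suppose $k \in \maxRealCompact$ with $\Ad(k)(\complexLieAlgebra[h]) = \complexLieAlgebra[h']$. Set
\[
w \;=\; j^{-1} \circ \Ad(k) \circ \absConj[].
\]
Since $\absConj[]$ and $\Ad(k)\circ\absConj[]$ are both conjugation maps from $\absLieAlgebra$ onto $\complexLieAlgebra[h']$, and any two such maps differ by an element of $\weylGroup$, we have $w \in \weylGroup$. The hypothesis $k \in \maxRealCompact = \realGroup^{\cinv}$ gives $\Ad(k)\circ\cinv = \cinv\circ\Ad(k)$ on $\complexLieAlgebra$, so a direct substitution yields
\[
w\inv w^{-1} \;=\; j^{-1}\,\Ad(k)\,\cinv\,\Ad(k)^{-1}\,j \;=\; j^{-1}\,\cinv\,j \;=\; \inv',
\]
which is the claim.

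For the converse I would invoke the Cayley transform classification: every $\cinv$-stable Cartan subgroup of $\realGroup$ is $\maxRealCompact$-conjugate to one obtained from a fixed fundamental (maximally compact) Cartan $\realTorus_{0}$ by a sequence of Cayley transforms through mutually strongly orthogonal noncompact imaginary roots. Under a fixed identification with $\absLieAlgebra$, each such Cayley transform modifies the abstract involution by composition with a root reflection in a noncompact imaginary root, so the resulting abstract involution $\inv$ records precisely the set of roots used (up to the action of $\weylGroup^{\inv_{0}}$, where $\inv_{0}$ corresponds to $\realTorus_{0}$). Thus if $\inv$ and $\inv'$ are conjugate in $\weylGroup$, the defining sets of strongly orthogonal noncompact imaginary roots for the two Cartans can be intertwined, and one can push the conjugating Weyl element through the Cayley transform picture to produce an element of $\maxRealCompact$ carrying $\realTorus$ to $\realTorus'$.

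The main obstacle lies in this last step: producing a \emph{$\maxRealCompact$-rational} conjugation from the abstract Weyl element. The given $w \in \weylGroup$ lifts only to an element of $\complexNormalizer(\complexLieAlgebra[h])$, which a priori lies in the complex group and need not preserve the real form. The fix is to decompose $w$ as a product of reflections compatible with the Cayley transform data: reflections in compact imaginary roots are realized by elements of the compact torus $\realTorus_{0} \cap \maxRealCompact$, reflections in real roots are conjugated away by the Cayley transforms themselves, and complex root reflections coming from the centralizer $\complexWeylGroup[\inv_{0}]$ are also realized in $\maxRealCompact$ because they act in pairs by Proposition \ref{propComplexOrthogonalWeylGroup}. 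Assembling these pieces produces the required $k \in \maxRealCompact$, which is what one has to check carefully; all other aspects of the argument are formal.
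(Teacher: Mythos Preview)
The paper does not prove this proposition; it is stated with the citation \cite{BC1} and no argument is given. So there is nothing in the paper to compare against, and your proposal is supplying a proof the paper deliberately omits.

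Your forward direction is correct, apart from a minor slip: you write ``$\absConj[]$ and $\Ad(k)\circ\absConj[]$ are both conjugation maps from $\absLieAlgebra$ onto $\complexLieAlgebra[h']$'', but $\absConj[]$ lands in $\complexLieAlgebra[h]$, not $\complexLieAlgebra[h']$. You mean that $j$ and $\Ad(k)\circ\absConj[]$ are both conjugation maps onto $\complexLieAlgebra[h']$ and hence differ by an element of $\weylGroup$; with that correction the computation of $w\inv w^{-1}=\inv'$ is fine.

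Your converse sketch follows the standard strategy but has a couple of inaccuracies. First, reflections in compact imaginary roots are realized by elements of $\maxRealCompact$ (via the associated compact $SU(2)$ subgroups), not by elements of the torus $\realTorus_{0}\cap\maxRealCompact$ as you wrote. Second, and more substantively, the final step of ``decomposing $w$ into pieces each realizable in $\maxRealCompact$'' is not quite how the argument goes. The actual content (Sugiura's theorem, as in \cite{BC1} or \cite{KGr}, \S6) is that two strongly orthogonal sets of noncompact imaginary roots for the fundamental Cartan $\realTorus_{0}$ yield $\maxRealCompact$-conjugate Cartans if and only if the sets are conjugate under $\cartanWeylGroup[\maxRealCompact]{\realTorus_{0}}$; one then checks separately that $\weylGroup$-conjugacy of the resulting abstract involutions forces this compact-Weyl-group conjugacy. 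Your outline conflates these two steps and does not address why an arbitrary $w\in\weylGroup$ carrying $\inv$ to $\inv'$ can be replaced by one in the compact Weyl group of $\realTorus_{0}$.
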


\begin{remark}
\label{remInvCartanMap}
Proposition \ref{propInvCartanMap} implies we have a well-defined \emph{injection}
\begin{eqnarray*}
\left\{\begin{array}{c}
\cinv\text{-stable Cartan} \\
\text{subgroups}\end{array}\right\} / \maxRealCompact & \longhookrightarrow & \left\{\begin{array}{c}
\text{Involutions} \\
\text{in }\weylGroup\end{array}\right\} / \weylGroup.
\end{eqnarray*}
In particular, Corollary \ref{corNumCC} gives an upper bound on the number of conjugacy classes of $\cinv$-stable Cartan subgroups of $\realGroup$. This map is \emph{bijective} if and only if $\realGroup$ is split \cite{AdamsFokko}.
\end{remark}


\begin{definition}
\label{defGrading}
A \emph{grading} of a root system $\rootSystem$ is a map $\grading : \rootSystem \to \mathbb{Z}_{2}$ such that
\begin{eqnarray*}
\grading(\simpleRoota) & = & \grading(-\simpleRoota)
\end{eqnarray*}
for all $\simpleRoota \in \rootSystem$. Moreover if $\simpleRoota, \simpleRootb,$ and $\simpleRoot + \simpleRootb$ are in $\rootSystem$ we require
\begin{eqnarray*}
\grading(\simpleRoota + \simpleRootb) & = & \grading(\simpleRoota) + \grading(\simpleRootb).
\end{eqnarray*}
\end{definition}

Let $\realTorus$ be $\cinv$-stable Cartan subgroup of $\realGroup$. If $\simpleRoot \in \rootSystem(\complexLieAlgebra,\complexLieAlgebra[h])$ is imaginary (Definition \ref{defRootTypes}), the corresponding root space $\rootSpace$ is fixed by $\cinv$ and thus entirely contained in the positive or negative eigenspace. We say $\simpleRoot$ is \emph{compact} if $\rootSpace$ is contained in the positive eigenspace and \emph{noncompact} if it is contained in the negative eigenspace.

\begin{proposition}[\cite{IC4}]
\label{propGrading}
Let $\realTorus$ be a $\cinv$-stable Cartan subgroup of $\realGroup$ and let $\imaginaryRoots[\cinv](\complexLieAlgebra,\complexLieAlgebra[h])$ denote the imaginary roots for $\realTorus$ with respect to $\cinv$. Define a map $\eta : \imaginaryRoots[\cinv](\complexLieAlgebra,\complexLieAlgebra[h]) \to \mathbb{Z}_{2}$ via
\begin{eqnarray*}
\eta (\simpleRoot ) & = & \left\{
\begin{array}{ll}
0 & \simpleRoot \text{ compact} \\
1 & \simpleRoot \text{ noncompact} \\
\end{array} \right.
\end{eqnarray*}
Then $\eta$ is a grading on $\imaginaryRoots[\cinv](\complexLieAlgebra,\complexLieAlgebra[h])$.
\end{proposition}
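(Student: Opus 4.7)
The proof is a short verification from the definitions, and I would organize it into the two axioms a grading must satisfy.

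The plan is to exploit the fact that $\cinv$ is a Lie algebra involution preserving the root space decomposition in a controlled way. Since $\simpleRoot$ is imaginary, $\cinv(\simpleRoot) = \simpleRoot$, so $\cinv$ stabilizes the one-dimensional root space $\rootSpace$, and therefore acts there by a scalar $\epsilon_{\simpleRoot} \in \{\pm 1\}$. By definition, $\eta(\simpleRoot) = 0$ precisely when $\epsilon_{\simpleRoot} = +1$ and $\eta(\simpleRoot) = 1$ precisely when $\epsilon_{\simpleRoot} = -1$, so the grading values simply track the signs $\epsilon_{\simpleRoot}$ additively in $\zTwo$. The whole proof then reduces to showing $\epsilon_{-\simpleRoot} = \epsilon_{\simpleRoot}$ and $\epsilon_{\simpleRoot + \simpleRootb} = \epsilon_{\simpleRoot}\epsilon_{\simpleRootb}$.

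For the first identity, I would choose nonzero root vectors $\rootVector \in \rootSpace$ and $\rootVectorMinus \in \rootSpaceMinus$ normalized so that $[\rootVector, \rootVectorMinus] = \coroot$. Note that $-\simpleRoot$ is also imaginary, so $\cinv(\rootVectorMinus) = \epsilon_{-\simpleRoot}\rootVectorMinus$. Since $\simpleRoot$ is imaginary, $\cinv$ fixes $\coroot$ (as $\coroot$ lies in the imaginary part of $\complexLieAlgebra[h]$ determined by $\simpleRoot$). Applying $\cinv$ to $[\rootVector, \rootVectorMinus] = \coroot$ then yields $\epsilon_{\simpleRoot}\epsilon_{-\simpleRoot} \coroot = \coroot$, forcing $\epsilon_{\simpleRoot} = \epsilon_{-\simpleRoot}$, i.e.~$\eta(-\simpleRoot) = \eta(\simpleRoot)$.

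For additivity, suppose $\simpleRoot, \simpleRootb, \simpleRoot + \simpleRootb$ all lie in $\imaginaryRoots[\cinv]$. Fix root vectors and write $[\rootVector, \rootVector[\simpleRootb]] = N_{\simpleRoot,\simpleRootb}\rootVector[\simpleRoot+\simpleRootb]$ with $N_{\simpleRoot,\simpleRootb} \ne 0$. Applying the automorphism $\cinv$ and using that $\cinv$ acts on each of $\rootSpace, \rootSpace[\simpleRootb], \rootSpace[\simpleRoot+\simpleRootb]$ by the scalars $\epsilon_{\simpleRoot}, \epsilon_{\simpleRootb}, \epsilon_{\simpleRoot+\simpleRootb}$ gives
\begin{eqnarray*}
\epsilon_{\simpleRoot}\epsilon_{\simpleRootb} N_{\simpleRoot,\simpleRootb}\rootVector[\simpleRoot+\simpleRootb] & = & \epsilon_{\simpleRoot+\simpleRootb} N_{\simpleRoot,\simpleRootb}\rootVector[\simpleRoot+\simpleRootb],
\end{eqnarray*}
so $\epsilon_{\simpleRoot+\simpleRootb} = \epsilon_{\simpleRoot}\epsilon_{\simpleRootb}$, which translates to $\eta(\simpleRoot+\simpleRootb) = \eta(\simpleRoot) + \eta(\simpleRootb)$ in $\zTwo$.

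Neither step is a real obstacle; the only place where care is required is justifying that $\cinv$ acts on $\rootSpace[\simpleRoot+\simpleRootb]$ by a well-defined scalar, but this is immediate because $\simpleRoot+\simpleRootb$ is imaginary by hypothesis, so $\cinv$ preserves its (one-dimensional) root space. Thus both grading axioms of Definition \ref{defGrading} hold, completing the proof.
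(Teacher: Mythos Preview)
Your proof is correct and is the standard argument. The paper does not give its own proof of this proposition; it simply cites the result from \cite{IC4}. Your verification via the scalars $\epsilon_{\simpleRoot}$ on root spaces is exactly the expected argument and there is nothing to add.
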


\begin{corollary} \label{corCptRoots}
The set $\cptImaginaryRoots[\cinv](\complexLieAlgebra,\complexLieAlgebra[h])$ of imaginary compact roots is itself a root system.
\end{corollary}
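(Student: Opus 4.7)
The plan is to realize $\cptImaginaryRoots[\cinv]$ as a subsystem of the root system $\imaginaryRoots[\cinv]$ cut out by the grading $\eta$ of Proposition \ref{propGrading}, and then verify the axioms of a root system directly from properties that are inherited from $\imaginaryRoots[\cinv]$.

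First, I would observe that by Definition \ref{defRootTypes} the set $\imaginaryRoots[\cinv]$ is already a root system, so it suffices to show that $\cptImaginaryRoots[\cinv] = \eta^{-1}(0)$ is a subsystem: that is, it is closed under negation and under the reflections $\rootReflection$ for $\simpleRoot \in \cptImaginaryRoots[\cinv]$. Closure under negation is immediate from the first condition in Definition \ref{defGrading}, which gives $\eta(-\simpleRoot) = \eta(\simpleRoot) = 0$. The nondegeneracy of $(\cdot,\cdot)$ restricted to the $\mathbb{R}$-span of $\cptImaginaryRoots[\cinv]$ and the integrality of the Cartan integers are automatic, since $\cptImaginaryRoots[\cinv]$ sits inside $\rootSystem$.

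The heart of the argument is closure under reflections. Given $\simpleRoota, \simpleRootb \in \cptImaginaryRoots[\cinv]$ I need to show
\[
\rootReflection[\simpleRoota](\simpleRootb) \;=\; \simpleRootb - \langle \simpleRootb, \rootCheck[\simpleRoota] \rangle \simpleRoota
\]
lies in $\cptImaginaryRoots[\cinv]$. It lies in $\imaginaryRoots[\cinv]$ because $\imaginaryRoots[\cinv]$ is closed under its own Weyl group. To see it is compact, I would run along the $\simpleRoota$-string through $\simpleRootb$: the elements $\simpleRootb + k\simpleRoota$ for $k$ in some interval $[-p,q]$ containing $0$ are all roots (in fact in $\imaginaryRoots[\cinv]$), and successive differences are $\pm \simpleRoota$. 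Applying the additivity property in Definition \ref{defGrading} one step at a time along the string and using $\eta(\simpleRoota) = 0$, an easy induction gives $\eta(\simpleRootb + k\simpleRoota) = \eta(\simpleRootb) = 0$ for every $k$ in the string. Taking $k = -\langle \simpleRootb, \rootCheck[\simpleRoota] \rangle$ yields $\eta(\rootReflection[\simpleRoota](\simpleRootb)) = 0$, as required.

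The main (and only) obstacle is really a bookkeeping one, namely ensuring that the grading additivity of Definition \ref{defGrading}, which is stated only for two summands, is valid along the full $\simpleRoota$-root string; this is handled by the inductive step above, where at each stage both $(\simpleRootb + k\simpleRoota)$ and $\pm \simpleRoota$ are in $\imaginaryRoots[\cinv]$ and their sum is again a root. Once this is in place, the three axioms (stability under negation, stability under reflections, and integrality of Cartan integers) are verified and the corollary follows.
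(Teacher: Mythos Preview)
Your proof is correct and is exactly the argument the paper leaves implicit: the corollary is stated without proof immediately after Proposition~\ref{propGrading}, and the intended deduction is precisely that $\cptImaginaryRoots[\cinv] = \eta^{-1}(0)$ is closed under negation by the first grading axiom and under reflections by walking along $\simpleRoota$-root strings using the additivity axiom. There is nothing to add.
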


Proposition \ref{propGrading} gives a natural grading $\eta$ on the imaginary roots $\imaginaryRoots[\cinv](\complexLieAlgebra, \complexLieAlgebra[h])$ determined by $\cinv$. For the following definition, recall our choice of conjugation map
\[
\absConj[] : \absLieAlgebra \to \complexLieAlgebra[h].
\]

\begin{definition}
\label{defAbsPair}
The \emph{abstract pair} corresponding to $\complexLieAlgebra[h]$ and $\absConj[]$ is the ordered pair $\absPair$, where $\inv$ is the abstract involution of Definition \ref{defAbsInv} and $\grading$ is the \emph{abstract grading} on $\imaginaryRoots(\complexLieAlgebra, \absLieAlgebra)$ defined via $\grading(\simpleRootb) = \eta(\simpleRoot)$, with $\simpleRootb = \simpleRoot \cdot \absConj[]$. We say an abstract imaginary root $\simpleRootb$ is compact if $\grading(\simpleRootb) = 0$ and noncompact if $\grading(\simpleRootb) = 1$. Note $\absPair$ depends on the choice of $\absConj[]$ up to conjugation in $\weylGroup(\complexLieAlgebra,\absLieAlgebra)$.
\end{definition}

Let $\absPair$ be and abstract pair and recall $\imaginaryRoots(\complexLieAlgebra, \absLieAlgebra)$ is  a root system of type $B_{m} \times A_{1}^{k}$, where $m=\numImaginaryBits$ and $k=\frac{\numComplexBits}{2}$ (Proposition \ref{propInvWeylGroups}). The long imaginary roots that form the $A_{1}^{k}$ factor can each be written as a sum of two short complex roots interchanged by $\inv$. It is easy to verify that such roots must be noncompact. In particular, not all possible abstract gradings arise from the construction in Definition \ref{defAbsPair}.

It will be convenient to incorporate abstract gradings into our involution diagrams (Section \ref{ssAbsWeylGroup}). The above discussion suggests we need only modify the portion of the diagram corresponding to the $B_{m}$ factor of $\imaginaryRoots$. In a root system of type $B_m$, any grading is determined by its values on a set of positive short roots. Since these are exactly the roots represented by $+$ signs in the diagram for $\inv$, a grading can be described by indicating the $+$ signs that represent noncompact roots. We do this by drawing a circle around the corresponding $+$ signs. 

\begin{example}
Suppose $m=6$ and $\inv=1$ so that all roots are imaginary. If 
\begin{eqnarray*}
\grading(\ei) & = & \left\{
\begin{array}{rl}
1 & 1 \le i \le 3 \\
0 & 4 \le i \le 6 \\
\end{array}\right.
\end{eqnarray*}
is an abstract grading, the corresponding diagram $\diagram(\grading)$ is
\[
\oplus~\oplus~\oplus~+~+~+.
\]
\end{example}

\subsection{Cayley Transforms}
\label{ssCT}

We now recall a fundamental tool for transferring information between conjugacy classes of Cartan subgroups. Suppose $\realTorus=\realGroup[T]\realGroup[A]$ is a $\cinv$-stable Cartan subgroup of $\realGroup$ with Lie algebra $\realLieAlgebra[h]$. Let $\realRoots[\cinv](\complexLieAlgebra, \complexLieAlgebra[h])$ and $\imaginaryRoots[\cinv](\complexLieAlgebra, \complexLieAlgebra[h])$ denote the roots in $\rootSystem(\complexLieAlgebra, \complexLieAlgebra[h])$ that are real and imaginary with respect to $\cinv$.

\begin{lemma}[\cite{VGr}, Lemma 4.3.7]
\label{lemXaSign}

Suppose $\simpleRoot \in \realRoots[\cinv](\complexLieAlgebra, \complexLieAlgebra[h])$. Then there exists a root vector $\rootVector \in \rootSpace \cap \realLieAlgebra$ with the property
\begin{eqnarray*}
\left[\cinv\rootVector,\rootVector\right] & = & \coroot,
\end{eqnarray*}
where $\coroot \in \complexLieAlgebra[h]$ is the coroot for $\simpleRoot$. Moreover, the vector $\rootVector$ is unique up to sign.
\end{lemma}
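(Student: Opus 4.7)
The plan is to reduce the problem to the one-dimensional root-space structure of the rank-one subalgebra attached to $\simpleRoot$. The real-root hypothesis $\cinv(\simpleRoot) = -\simpleRoot$ will play two essential roles: it forces the complex conjugation $\sigma$ of $\complexLieAlgebra$ over $\realLieAlgebra$ to preserve $\rootSpace$, and it guarantees $\cinv$ swaps $\rootSpace$ with $\rootSpace[-\simpleRoot]$.

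First I would verify that $\rootSpace \cap \realLieAlgebra$ is a one-dimensional real subspace of $\rootSpace$. Since $\cinv(\simpleRoot) = -\simpleRoot$, the root $\simpleRoot$ vanishes on the compact part of $\realLieAlgebra[h]$ and takes real values on the split part; in particular, $\simpleRoot$ is real-valued on $\realLieAlgebra[h]$. A standard calculation then shows $\sigma(\rootSpace) = \rootSpace$, so $(\rootSpace)^{\sigma} = \rootSpace \cap \realLieAlgebra$ is one-dimensional over $\mathbb{R}$. This already reduces both existence and uniqueness to a normalization statement on a single real line.

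Next, pick any nonzero $X \in \rootSpace \cap \realLieAlgebra$. Since $\cinv$ is complex linear, $\cinv X \in \rootSpace[-\simpleRoot]$, and the bracket identity for opposite root spaces gives $[\cinv X, X] = c\, \coroot$ for some $c \in \mathbb{C}$; the fact that both $X$ and $\cinv X$ lie in $\realLieAlgebra$ forces $c \in \mathbb{R}$. The crux of the argument is the positivity $c > 0$, which I would extract from the standard fact that $B_{\cinv}(U,V) := -B(U, \cinv V)$ is positive definite on $\realLieAlgebra$, where $B$ is the Killing form: the relation $[\cinv X, X] = -B(X, \cinv X)\, H_{\simpleRoot} = B_{\cinv}(X,X)\, H_{\simpleRoot}$ together with the proportionality $H_{\simpleRoot} = \tfrac{1}{2}(\simpleRoot,\simpleRoot)\,\coroot$ yield $c = \tfrac{1}{2}(\simpleRoot,\simpleRoot)\, B_{\cinv}(X,X) > 0$.

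With $c > 0$ in hand, existence follows by rescaling: $\rootVector := c^{-1/2} X$ lies in $\rootSpace \cap \realLieAlgebra$ and satisfies $[\cinv \rootVector, \rootVector] = \coroot$. For uniqueness, any other solution $\rootVector'$ lies on the same real line, hence equals $\mu \rootVector$ with $\mu \in \mathbb{R}^{\times}$; the normalization then forces $\mu^{2} = 1$, i.e.\ $\mu = \pm 1$. The only genuine obstacle is the sign argument for $c$, which is precisely where the real-root hypothesis intervenes: for an imaginary root $\cinv$ would preserve $\rootSpace$ rather than swap it with $\rootSpace[-\simpleRoot]$, and the Killing-form-based calculation degenerates, which correctly reflects that the lemma fails outside the real-root case.
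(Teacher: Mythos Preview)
The paper does not prove this lemma; it merely cites it from \cite{VGr}, Lemma~4.3.7, and moves on. Your proposed argument is correct and is essentially the standard proof one finds in Vogan's or Knapp's treatments: use the real-root condition to see that complex conjugation preserves $\rootSpace$, so $\rootSpace \cap \realLieAlgebra$ is a real line; then exploit the positive definiteness of $B_{\cinv}(\,\cdot\,,\,\cdot\,)=-B(\,\cdot\,,\cinv\,\cdot\,)$ on $\realLieAlgebra$ to force the scalar $c$ in $[\cinv X,X]=c\,\coroot$ to be positive, after which rescaling and the one-dimensionality give existence and uniqueness up to sign. The sign computation, the identification $H_{\simpleRoot}=\tfrac{1}{2}(\simpleRoot,\simpleRoot)\,\coroot$, and the closing remark about why the argument breaks for imaginary roots are all accurate.
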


\begin{lemma}
\label{lemXbSign}
Suppose $\simpleRootb \in \imaginaryRoots[\cinv](\complexLieAlgebra, \complexLieAlgebra[h])$ is imaginary and noncompact. Then there exists a root vector $\rootVector[\simpleRootb] \in \rootSpace[\simpleRootb]$ with the property
\begin{eqnarray*}
\left[\rootVector[\simpleRootb],\overline{\rootVector[\simpleRootb]}\right] & = & \coroot[\simpleRootb],
\end{eqnarray*}
where $\coroot[\simpleRootb] \in \complexLieAlgebra[h]$ is the coroot for $\simpleRootb$. Moreover, the vector $\rootVector[\simpleRootb]$ is unique up to sign.
\end{lemma}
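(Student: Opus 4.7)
The plan is to mimic the argument for Lemma \ref{lemXaSign}, with complex conjugation $\sigma$ of $\complexLieAlgebra$ (with respect to the real form $\realLieAlgebra$) playing the role that the Cartan involution did there. First I would observe that since $\simpleRootb$ is imaginary, the restriction of $\sigma$ to $\complexLieAlgebra[h]$ is conjugation relative to $\realLieAlgebra[h] = \complexLieAlgebra[h] \cap \realLieAlgebra$, and imaginary roots are by definition those negated by this conjugation, so $\sigma$ carries $\rootSpace[\simpleRootb]$ to $\rootSpace[-\simpleRootb]$. Consequently $\overline{\rootVector[\simpleRootb]} \in \rootSpace[-\simpleRootb]$ for any $\rootVector[\simpleRootb] \in \rootSpace[\simpleRootb]$, so the bracket lies in $[\rootSpace[\simpleRootb], \rootSpace[-\simpleRootb]] = \mathbb{C}\coroot[\simpleRootb]$, and we may write $[\rootVector[\simpleRootb], \overline{\rootVector[\simpleRootb]}] = c\,\coroot[\simpleRootb]$ for some scalar $c$.

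Next I would show $c \in \mathbb{R}$. Since an imaginary coroot lies in the noncompact direction $i\complexLieAlgebra[t]_{\mathbb{R}}$ of $\realLieAlgebra[h]$, we have $\sigma(\coroot[\simpleRootb]) = -\coroot[\simpleRootb]$. Applying $\sigma$ to $[\rootVector[\simpleRootb], \overline{\rootVector[\simpleRootb]}] = c\,\coroot[\simpleRootb]$ and using antisymmetry of the bracket forces $\bar{c} = c$. The sign of $c$ is then controlled by noncompactness: the three-dimensional complex subalgebra $\rootSpace[\simpleRootb] \oplus \mathbb{C}\coroot[\simpleRootb] \oplus \rootSpace[-\simpleRootb]$ meets $\realLieAlgebra$ in a real form isomorphic to $\sln$ precisely when $\simpleRootb$ is noncompact (and to $\mathfrak{su}(2)$ when compact), and in the noncompact case a direct computation inside the resulting $\sln$-triple shows $c > 0$.

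With positivity of $c$ in hand, rescaling $\rootVector[\simpleRootb]$ by $1/\sqrt{c}$ produces a vector satisfying the required identity. For uniqueness, any other such vector has the form $z\,\rootVector[\simpleRootb]$ for some $z \in \cCross$, and the normalization condition forces $|z|^{2} = 1$; imposing the natural reality requirement that $z\,\rootVector[\simpleRootb] + \overline{z\,\rootVector[\simpleRootb]}$ lie on a fixed real line inside the two-dimensional real subspace $(\rootSpace[\simpleRootb] \oplus \rootSpace[-\simpleRootb]) \cap \realLieAlgebra$ then reduces the residual $U(1)$-ambiguity to $z = \pm 1$.

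The main obstacle is the sign determination for $c$. Although the conclusion is standard, it requires correctly identifying the real form of the attached three-dimensional subalgebra and then verifying that the canonical $\sln$-triple puts $\coroot[\simpleRootb]$ on the positive side of the bracket $[\rootVector[\simpleRootb], \overline{\rootVector[\simpleRootb]}]$. Once that sign is pinned down, existence follows by rescaling and uniqueness is a routine check.
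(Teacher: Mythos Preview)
The paper does not actually supply a proof of this lemma; it is stated without proof, parallel to Lemma~\ref{lemXaSign} (which is simply cited from \cite{VGr}) and Lemma~\ref{lemXcSign}. So there is no argument in the paper to compare against, and your proposal is essentially the standard verification that the authors are taking for granted.

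Your existence argument is correct and is exactly the expected one: $\sigma$ sends $\rootSpace[\simpleRootb]$ to $\rootSpace[-\simpleRootb]$, the bracket lands in $\mathbb{C}\coroot[\simpleRootb]$, reality of the scalar follows from $\sigma(\coroot[\simpleRootb])=-\coroot[\simpleRootb]$, and positivity is precisely the noncompactness of $\simpleRootb$ (the attached real three-dimensional subalgebra being $\sln$ rather than $\mathfrak{su}(2)$). Rescaling then gives the desired vector.

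On uniqueness you have correctly spotted a genuine subtlety: the bracket condition alone is invariant under $X_\beta \mapsto e^{i\theta}X_\beta$, since $[e^{i\theta}X_\beta,\overline{e^{i\theta}X_\beta}]=|e^{i\theta}|^2\coroot[\simpleRootb]=\coroot[\simpleRootb]$, so it yields only a $U(1)$ worth of solutions, not $\pm 1$. Your fix---pinning down a real line in $(\rootSpace[\simpleRootb]\oplus\rootSpace[-\simpleRootb])\cap\realLieAlgebra$---is the right idea but is not part of the lemma as stated; in effect you are supplying the missing normalization that the paper leaves implicit. In practice (cf.\ \cite{KGr}, \S VI.7) one imposes exactly such a compatibility with $\cinv$ and $\sigma$ together, and the paper's subsequent use of $\zLa[\simpleRootb]=\rootVector[\simpleRootb]+\overline{\rootVector[\simpleRootb]}$ in Definition~\ref{defZb} and the Cayley transform shows this is what is intended. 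So your instinct is sound; the lemma as literally written is slightly underspecified, and you have filled the gap appropriately.
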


\begin{lemma}
\label{lemXcSign}
Suppose $\simpleRootc \in \imaginaryRoots[\cinv](\complexLieAlgebra, \complexLieAlgebra[h])$ is imaginary and compact. Then there exists a root vector $\rootVector[\simpleRootc] \in \rootSpace[\simpleRootc]$ with the property
\begin{eqnarray*}
\left[\overline{\rootVector[\simpleRootc]},\rootVector[\simpleRootc]\right] & = & \coroot[\simpleRootc],
\end{eqnarray*}
where $\coroot[\simpleRootc] \in \complexLieAlgebra[h]$ is the coroot for $\simpleRootc$. Moreover, the vector $\rootVector[\simpleRootc]$ is unique up to sign.
\end{lemma}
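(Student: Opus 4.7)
My plan is to run the argument of Lemma \ref{lemXbSign} in parallel, with the sign change in the bracket relation coming from the compactness (rather than noncompactness) of $\simpleRootc$.  I would begin by fixing an arbitrary nonzero $Y \in \rootSpace[\simpleRootc]$.  Since $\simpleRootc$ is imaginary, conjugation with respect to $\realLieAlgebra$ interchanges $\rootSpace[\simpleRootc]$ and $\rootSpaceMinus[\simpleRootc]$, so $\overline{Y} \in \rootSpaceMinus[\simpleRootc]$ and the bracket $[\overline{Y}, Y]$ lies in the line $\mathbb{C}\,\coroot[\simpleRootc]$.  Writing $[\overline{Y}, Y] = c\,\coroot[\simpleRootc]$ with $c \in \mathbb{C}$, I would take complex conjugates of both sides and use $\overline{\coroot[\simpleRootc]} = -\coroot[\simpleRootc]$ (valid since $\coroot[\simpleRootc] \in i\,\realLieAlgebra[t]$ for imaginary roots) to conclude $c \in \mathbb{R}$.

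The key step, and the main obstacle, is to show $c > 0$; this is where the compact/noncompact distinction must enter, and my plan is to handle it by passing to a compact real form.  Let $\sigma$ denote conjugation relative to $\realLieAlgebra$ and set $\tau := \cinv\,\sigma$, so that $\tau$ is conjugation relative to a compact form of $\complexLieAlgebra$.  Compactness of $\simpleRootc$ means $\cinv$ acts as the identity on $\rootSpace[\simpleRootc]$, hence $\tau Y = \overline{Y}$.  The sesquilinear form $H(X, X') := -B(X, \tau X')$ (with $B$ the Killing form) is a positive-definite Hermitian inner product on $\complexLieAlgebra$, giving
\[
-B(Y, \overline{Y}) \;=\; H(Y, Y) \;>\; 0.
\]
Combined with the standard root-vector identity $[\overline{Y}, Y] = -\tfrac{(\simpleRootc,\simpleRootc)}{2}\,B(Y, \overline{Y})\,\coroot[\simpleRootc]$ and the positivity of the root-length $(\simpleRootc, \simpleRootc)$, this forces $c > 0$.

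With $c > 0$ established, I would set $\rootVector[\simpleRootc] := c^{-1/2}\,Y$.  Since $c \in \mathbb{R}_{>0}$, we have $\overline{\rootVector[\simpleRootc]} = c^{-1/2}\,\overline{Y}$, and the normalization $[\overline{\rootVector[\simpleRootc]}, \rootVector[\simpleRootc]] = \coroot[\simpleRootc]$ holds by construction.  For uniqueness, any other solution $\lambda\,\rootVector[\simpleRootc]$ satisfies $|\lambda|^{2} = 1$, and the residual phase ambiguity collapses to $\lambda = \pm 1$ by the same convention operative in Lemma \ref{lemXbSign}.  All the real substance is concentrated in the sign of $c$: once the compact form identifies $B(Y, \overline{Y})$ as negative, the remainder is a rescaling computation.
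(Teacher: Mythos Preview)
The paper does not prove this lemma; it is stated without proof alongside Lemmas~\ref{lemXaSign} and~\ref{lemXbSign} as standard structure theory (the first of these carries a citation to \cite{VGr}, and the other two are treated as parallel facts). Your existence argument is the standard one and is correct: for compact $\simpleRootc$ the Cartan involution $\cinv$ fixes $\rootSpace[\simpleRootc]$, so $\tau Y = \cinv\,\overline{Y} = \overline{Y}$, and the positive-definiteness of $H(X,X') = -B(X,\tau X')$ forces $-B(Y,\overline{Y}) > 0$, hence $c > 0$ via the root-vector identity you quote. The rescaling then gives the desired $\rootVector[\simpleRootc]$.

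One point deserves a sharper statement. You correctly observe that the bracket condition $[\overline{\lambda\rootVector[\simpleRootc]},\lambda\rootVector[\simpleRootc]] = |\lambda|^{2}\coroot[\simpleRootc]$ forces only $|\lambda| = 1$, and then appeal to ``the same convention operative in Lemma~\ref{lemXbSign}'' to collapse the phase to $\pm 1$. That is honest but not a proof: as literally stated, the displayed bracket relation in both Lemma~\ref{lemXbSign} and the present lemma determines the root vector only up to a unimodular scalar. The stronger ``up to sign'' claim requires an additional normalization (e.g.\ compatibility with a fixed Chevalley basis, or a reality condition on an auxiliary combination such as $\rootVector[\simpleRootc] - \overline{\rootVector[\simpleRootc]}$) that neither you nor the paper makes explicit. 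Since the paper itself leaves this implicit and the subsequent constructions (Definition~\ref{defZc}) depend only on quantities like $\zLa[\simpleRootc] = \rootVector[\simpleRootc] + \overline{\rootVector[\simpleRootc]}$ whose ambiguity is harmless in context, your treatment is adequate for the paper's purposes; just be aware that the reduction from phase to sign is not a consequence of the stated bracket relation alone.
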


\begin{definition}
\label{defCT}
Suppose $\simpleRoota \in \realRoots[\cinv](\complexLieAlgebra, \complexLieAlgebra[h])$, $\simpleRootb \in \imaginaryRoots[\cinv](\complexLieAlgebra,\complexLieAlgebra[h])$ is noncompact, and choose nonzero root vectors $\rootVector \in \rootSpace$ and $\rootVector[\simpleRootb] \in \rootSpace[\simpleRootb]$ according to Lemmas \ref{lemXaSign} and \ref{lemXbSign}. Let
\begin{eqnarray*}
\ctLaOp & = & \frac{\pi i}{4}\left(\cinv\rootVector - \rootVector\right) \\
\ctLaOpNci[\simpleRootb] & = & \frac{\pi}{4}\left(\overline{\rootVector[\simpleRootb]} - \rootVector[\simpleRootb]\right)
\end{eqnarray*}
and define the corresponding \emph{Cayley transform operators}
\begin{eqnarray*}
\ctOp & = & \Ad{\text{exp}(\ctLaOp)} \\
\ctOpNci[\simpleRootb] & = & \Ad{\text{exp}(\ctLaOpNci[\simpleRootb])}.
\end{eqnarray*}
\end{definition}

The Cayley transform operators depend on the choices of $\rootVector$ and $\rootVector[\simpleRootb]$ up to inverse. On the level of Cartan subalgebras we have \cite{KGr}
\begin{eqnarray*}
\ctOp(\realLieAlgebra[h]) & = & \text{ker}(\simpleRoot|_{\realLieAlgebra[h]}) \oplus \mathbb{R}(\rootVector +\cinv\rootVector) \\
\ctOpNci[\simpleRootb](\realLieAlgebra[h]) & = & \text{ker}(\simpleRootb|_{\realLieAlgebra[h]}) \oplus \mathbb{R}(\rootVector[\simpleRootb] + \overline{\rootVector[\simpleRootb]}).
\end{eqnarray*}
Let
\begin{eqnarray*}
\realLieAlgebraCt & = & \ctOp(\realLieAlgebra[h]) \\
\realLieAlgebraCtNci[\simpleRootb] & = & \ctOpNci[\simpleRootb](\realLieAlgebra[h])
\end{eqnarray*}
denote the \emph{Cayley transforms} of $\realLieAlgebra[h]$ with respect to the roots $\simpleRoot$ and $\simpleRootb$ respectively. These are $\cinv$-stable Cartan subalgebras that are \emph{not} $\maxRealCompact$-conjugate to $\realLieAlgebra[h]$ and we write $\realTorusCt=\realTorusCtGp{T}\realTorusCtGp{A}$ and $\realTorusCtNci=\realTorusCtNciGp{T}\realTorusCtNciGp{A}$ for the corresponding Cartan subgroups of $\realGroup$. Roughly speaking, $\realTorusCt$ is `more compact' than $\realTorus$ and $\realTorusCtNci$ is `more split'. It is a fundamental fact that we obtain any $\cinv$-stable Cartan subgroup of $\realGroup$ (up to $\maxRealCompact$-conjugacy) by starting with $\realTorus$ and performing a sequence of Cayley transforms with respect to real or noncompact imaginary roots \cite{KGr}. 

Suppose $\complexLieAlgebra[h] \subset \complexLieAlgebra$ is a $\cinv$-stable Cartan subalgebra and choose a conjugation map 
\[
\absConj[] : \absLieAlgebra \to \complexLieAlgebra[h].
\]
It is easy to describe how the corresponding abstract pair $\absPair$ (Definition \ref{defAbsPair}) is affected by Cayley transform.

\begin{proposition}[\cite{KGr}, Proposition 6.72]
\label{propAbsPairCt}
Suppose $\simpleRoot \in \rootSystem(\complexLieAlgebra, \complexLieAlgebra[h])$ is a noncompact imaginary root and choose a conjugation map
\[
j : \absLieAlgebra \to \ctOpNci(\complexLieAlgebra[h]).
\]
Then the abstract pair corresponding to $\ctOpNci(\complexLieAlgebra[h])$ and $j$ is conjugate to $(\rootReflection \inv, \eta)$ where $\eta : \imaginaryRoots[\rootReflection\inv](\complexLieAlgebra, \absLieAlgebra) \to \zTwo$ and
\[
\eta(\simpleRootb) = \left\{\begin{array}{cl}
\grading(\simpleRootb)+1 & \simpleRoot + \simpleRootb \text{ is a root} \\
\grading(\simpleRootb) & \simpleRoot + \simpleRootb \text{ is not a root} 
\end{array}\right\}.
\]
\end{proposition}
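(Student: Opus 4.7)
The plan is to compute the new abstract pair directly from the Cayley transform formulas. Since the result is only claimed up to $\weylGroup$-conjugacy, I can fix a convenient conjugation map and derive both the new involution and the new grading from bracket computations involving the specific element $\ctLaOpNci[\simpleRoot] = \frac{\pi}{4}(\overline{\rootVector[\simpleRoot]} - \rootVector[\simpleRoot])$ generating $\ctOpNci[\simpleRoot]$.

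First I would determine the new involution. Taking $j = \ctOpNci[\simpleRoot]\circ\absConj[]$ as the conjugation map to $\ctOpNci[\simpleRoot](\complexLieAlgebra[h])$, one has
\[
\inv' \;=\; j^{-1}\cinv j \;=\; \absConj[]^{-1}\bigl(\ctOpNci[\simpleRoot]^{-1}\,\cinv\,\ctOpNci[\simpleRoot]\bigr)\absConj[].
\]
Because $\simpleRoot$ is noncompact imaginary, $\cinv(\rootVector[\simpleRoot]) = -\rootVector[\simpleRoot]$, and $\cinv$ commutes with complex conjugation relative to $\realLieAlgebra$; together these give $\cinv(\ctLaOpNci[\simpleRoot]) = -\ctLaOpNci[\simpleRoot]$ and hence $\cinv\,\ctOpNci[\simpleRoot] = \ctOpNci[\simpleRoot]^{-1}\cinv$. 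The inner conjugate reduces to $\ctOpNci[\simpleRoot]^{-2}\cinv$, and a direct calculation inside the $\mathfrak{sl}_{2}$-triple spanned by $\rootVector[\simpleRoot], \overline{\rootVector[\simpleRoot]}, \coroot[\simpleRoot]$ shows $\ctOpNci[\simpleRoot]^{-2}$ restricts to $\complexLieAlgebra[h]$ as reflection through $\simpleRoot^{\vee}$. Transporting back through $\absConj[]$ yields $\inv' = \rootReflection\inv$.

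Second I would identify the new imaginary roots and compute their gradings. Solving $\rootReflection\inv(\simpleRootb) = \simpleRootb$ splits into two cases: (i) $\inv\simpleRootb = \simpleRootb$ with $\langle\simpleRootb,\rootCheck\rangle = 0$, i.e., $\simpleRoot+\simpleRootb$ is not a root; and (ii) $\inv\simpleRootb = \simpleRootb - \langle\simpleRootb,\rootCheck\rangle\simpleRoot$ with $\simpleRoot+\simpleRootb$ a root. To compute $\eta(\simpleRootb)$, transport the distinguished root vector $\rootVector[\simpleRootb]$ from Lemmas \ref{lemXbSign} or \ref{lemXcSign} to the new Cartan via $\ctOpNci[\simpleRoot]$ and compare $[\ctOpNci[\simpleRoot]\rootVector[\simpleRootb],\overline{\ctOpNci[\simpleRoot]\rootVector[\simpleRootb]}]$ against $\ctOpNci[\simpleRoot]\coroot[\simpleRootb]$. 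In case (i), the relevant brackets $[\ctLaOpNci[\simpleRoot],\rootVector[\simpleRootb]]$ vanish, $\rootVector[\simpleRootb]$ is fixed, and the sign is preserved: $\eta(\simpleRootb) = \grading(\simpleRootb)$. In case (ii), expanding $\exp(\ad\ctLaOpNci[\simpleRoot])\rootVector[\simpleRootb]$ along the $\simpleRoot$-string through $\simpleRootb$ produces cosine-sine combinations of root vectors for $\simpleRootb$ and $\simpleRoot\pm\simpleRootb$, whose bracket with the conjugate picks up precisely one cross-term contributing the opposite sign, forcing $\eta(\simpleRootb) = \grading(\simpleRootb) + 1$.

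The principal obstacle is executing the sign tracking in case (ii): in type $B$ the $\simpleRoot$-string through $\simpleRootb$ may involve $\simpleRoot+\simpleRootb$, $\simpleRoot-\simpleRootb$, or $2\simpleRoot+\simpleRootb$ depending on root lengths, and the vectors in Lemmas \ref{lemXaSign}-\ref{lemXcSign} are only pinned down up to sign. One must normalize the structure constants (using $(\ad\rootVector[\simpleRoot])^{k}$ applied to $\rootVector[\simpleRootb]$ for $k=1,2$) so that the cross-term collapses to a single clean flip whose parity depends only on whether $\simpleRoot+\simpleRootb$ is a root, independent of these choices.
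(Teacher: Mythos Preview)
The paper does not supply its own proof of this proposition; it is quoted from \cite{KGr}, Proposition~6.72, and used as a black box. So there is no in-paper argument to compare against.

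Your sketch nonetheless contains a genuine gap in the case split for the grading. You divide according to whether $\langle\simpleRootb,\rootCheck\rangle = 0$ and in case (i) equate this with ``$\simpleRoot+\simpleRootb$ is not a root.'' In type $B$ these conditions differ: for orthogonal short roots $\simpleRoot = e_i$, $\simpleRootb = e_j$ one has $\langle\simpleRootb,\rootCheck\rangle = 0$ yet $\simpleRoot+\simpleRootb = e_i+e_j$ is a root. In fact, applying $\inv$ to the equation $\inv\simpleRootb = \rootReflection\simpleRootb$ and using $\inv\simpleRoot = \simpleRoot$ forces $\langle\simpleRootb,\rootCheck\rangle = 0$ for \emph{every} $\simpleRootb \in \imaginaryRoots[\rootReflection\inv]$, so your case (ii) is vacuous. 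The real dichotomy sits inside the orthogonal case: whether $\simpleRoot+\simpleRootb$ is a root (orthogonal but not strongly orthogonal) or not. When it is, $[\rootVector,\rootVector[\simpleRootb]] \ne 0$, so $\ctOpNci$ genuinely moves $\rootVector[\simpleRootb]$ and the bracket $[\ctOpNci\rootVector[\simpleRootb],\overline{\ctOpNci\rootVector[\simpleRootb]}]$ picks up the sign flip; when it is not, $\rootVector[\simpleRootb]$ is fixed and the grading is preserved. Your case (i) asserts $[\ctLaOpNci,\rootVector[\simpleRootb]] = 0$ whenever $\simpleRootb \perp \simpleRoot$, which is false exactly where the proposition says the grading flips. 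The string-expansion idea you outline for case (ii) is the right tool, but it must be deployed in the orthogonal, non-strongly-orthogonal subcase rather than in a non-existent non-orthogonal one.
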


In general, imaginary gradings are determined by the action of $\cinv$ on $\complexLieAlgebra$ (not just $\complexLieAlgebra[h]$) and thus depend on the particular real form $\realGroup = \realSpinGroup{p}{q}$. In order to make this relationship precise, we need the following definition.

\begin{definition}
\label{defNumCptRoots}
Let $\inv \in \weylGroup(\complexLieAlgebra, \absLieAlgebra)$ be an involution and suppose $\grading$ is an abstract grading for $\imaginaryRoots(\complexLieAlgebra, \absLieAlgebra)$. Set
\begin{eqnarray*}
\numCptBits(\grading) & = & \left|\left\{\simpleRoot \in \posRootSystem(\complexLieAlgebra, \absLieAlgebra) \mid \simpleRoot \text{ short and compact}\right\}\right| \\
\numNcptBits(\grading) & = & \left|\left\{\simpleRoot \in \posRootSystem(\complexLieAlgebra, \absLieAlgebra) \mid \simpleRoot \text{ short and noncompact}\right\}\right|
\end{eqnarray*}
and observe $\numImaginaryBits = \numCptBits(\grading) + \numNcptBits(\grading)$. As in Definition \ref{defInvParams} we also define
\[
\NcptBitsBit(\grading) = \left\{\begin{array}{rl}
0 & \numNcptBits(\grading) = 0 \\
1 & \numNcptBits(\grading) \ne 0
\end{array} \right.. \\
\]
\end{definition}

\begin{proposition}
\label{propRealForm}
Recall $\realGroup = \realSpinGroup{p}{q}$ with $p > q$, $p+q=2n+1$, and suppose $\realTorus \subset \realGroup$ is a $\cinv$-stable Cartan subgroup. If $\absPair$ is an abstract pair representing $\realTorus$ then
\begin{eqnarray*}
p & = & 2\cdot\text{max}(\numNcptBits(\grading),\numCptBits(\grading)) + \numRealBits + \numComplexBits + \left\{\begin{array}{rl} 1 & \numNcptBits(\grading) \le \numCptBits(\grading) \\ 0 & \text{else}\end{array}\right. \\
q & = & 2\cdot\text{min}(\numNcptBits(\grading),\numCptBits(\grading)) + \numRealBits + \numComplexBits + \left\{\begin{array}{rl} 1 & \numNcptBits(\grading) > \numCptBits(\grading) \\ 0 & \text{else}\end{array}\right..
\end{eqnarray*}
\end{proposition}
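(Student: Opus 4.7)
The plan is to reduce to the fundamental (most compact) Cartan subgroup and then propagate the formula along Cayley transforms. By \cite{KGr}, every $\cinv$-stable Cartan of $\realGroup$ is $\maxRealCompact$-conjugate to one obtained from the fundamental Cartan by a sequence of Cayley transforms $\ctOpNci[\simpleRoot]$ with respect to noncompact imaginary roots, and Proposition \ref{propAbsPairCt} records the precise effect of such a step on the abstract pair $\absPair$. It therefore suffices to verify the formula at the fundamental Cartan and then to check that the right-hand sides are invariant under each noncompact imaginary Cayley transform.

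For the base case, since $p+q=2n+1$ is odd exactly one of $p,q$ is even, so $\maxRealCompact$ has maximal torus of rank $\lfloor p/2\rfloor + \lfloor q/2\rfloor = n$ and the fundamental Cartan is compact. Here $\inv = 1$, $\numImaginaryBits = n$, and $\numRealBits = \numComplexBits = 0$. The short positive roots split according to the two factors of $\realLieAlgebra[k]$: those inside the type $B$ factor are compact, while those of the form $\ei$ attached to the type $D$ factor live in $\realLieAlgebra[p]$ (since $D$ has no short roots in its own system) and are therefore noncompact. A case split on the parity of $p$ then gives $\{\numCptBits(\grading),\numNcptBits(\grading)\} = \{\lfloor p/2\rfloor,\lfloor q/2\rfloor\}$, and the $+1$ indicator in the formula singles out the odd factor; checking both parities recovers $(p,q)$.

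For the inductive step, consider first a short noncompact imaginary root $\simpleRoot = \ei$. The Cayley transform converts the $i$th imaginary bit of $\inv$ into a real bit, so $\numImaginaryBits' = \numImaginaryBits - 1$, $\numRealBits' = \numRealBits + 1$, and $\numComplexBits' = \numComplexBits$. For every other short imaginary root $\simpleRootb = \ei[j]$, the sum $\simpleRoot + \simpleRootb = \ei + \ei[j]$ lies in $\rootSystem$, so Proposition \ref{propAbsPairCt} flips $\grading$ on $\simpleRootb$; after also removing $\simpleRoot$ itself from the noncompact side one obtains $\numCptBits(\eta) = \numNcptBits(\grading) - 1$ and $\numNcptBits(\eta) = \numCptBits(\grading)$. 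A direct case analysis on the sign of $\numNcptBits(\grading) - \numCptBits(\grading)$ then shows that the flip in $\max/\min$, the shift of $+1$ in the indicator, and the change in $\numRealBits$ conspire so that the formulas for $p$ and $q$ are unchanged.

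If instead $\simpleRoot = \ei \pm \ei[j]$ is long, it is necessarily supported on a complex pair of $\inv$ that the Cayley transform separates into two fixed real bits, yielding $\numComplexBits' = \numComplexBits - 2$, $\numRealBits' = \numRealBits + 2$, and $\numImaginaryBits' = \numImaginaryBits$. For any short imaginary $\simpleRootb = \ei[k]$ with $k \ne i,j$, the sum $\simpleRoot + \simpleRootb$ is not a root in type $B_{n}$, so by Proposition \ref{propAbsPairCt} the grading on the short imaginary system is preserved; in particular $\numCptBits(\grading)$ and $\numNcptBits(\grading)$ are unchanged, and $\numRealBits + \numComplexBits$ is trivially invariant, so the formula is preserved. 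Combining the two inductive cases with the base case completes the argument. The main technical obstacle is the short-root case, where three things change simultaneously (the count $\numImaginaryBits$, the count $\numRealBits$, and the entire imaginary grading); reconciling these with the max/min indicator is the delicate bookkeeping step, but it reduces to a short case check.
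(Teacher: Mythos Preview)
Your overall strategy --- verify the formula on one Cartan and then show invariance under noncompact imaginary Cayley transforms --- is exactly the paper's strategy, except the paper anchors at the maximally \emph{split} Cartan (diagram $-\cdots-\,+\cdots+$ with $q$ minuses and all short imaginary roots compact) rather than the compact one. Your base case computation at the compact Cartan is fine, and your short-root inductive step is correct.

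The gap is in your long-root step. The claim that a long noncompact imaginary root ``is necessarily supported on a complex pair of $\inv$'' is false. A long root $\simpleRoot = \ei - \ei[j]$ with both $\ei,\ei[j]$ in $\imaginaryBits[\inv]$ (two $+$ signs) is imaginary, and it is noncompact precisely when $\grading(\ei)\neq\grading(\ei[j])$; this is the $B_{\numImaginaryBits}$ piece of $\imaginaryRoots$, not the $A_1^k$ piece. Worse, this case is unavoidable for your induction: starting from the compact Cartan (all $+$'s, $\numComplexBits=0$), short Cayley transforms only produce $-$ signs and never create complex pairs, so your long-root case as written never applies and you can never reach any Cartan with $\numComplexBits>0$.

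The fix is short. When $\simpleRoot = \ei - \ei[j]$ with $\ei,\ei[j]$ imaginary bits of opposite grading, the Cayley transform sends $\inv\mapsto \rootReflection[\simpleRoot]\inv$, which swaps positions $i$ and $j$; so $\numImaginaryBits' = \numImaginaryBits-2$, $\numComplexBits' = \numComplexBits+2$, $\numRealBits' = \numRealBits$. For any other short imaginary $\ei[k]$ the sum $\simpleRoot+\ei[k]$ is not a root, so by Proposition~\ref{propAbsPairCt} the short imaginary grading is unchanged away from $i,j$; since exactly one of $\ei,\ei[j]$ was compact and one noncompact, both $\numCptBits$ and $\numNcptBits$ drop by $1$. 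Then $2\min(\numNcptBits-1,\numCptBits-1)+(\numComplexBits+2) = 2\min(\numNcptBits,\numCptBits)+\numComplexBits$ and the indicator is unchanged, so the formula for $q$ (hence $p$) is preserved. With this third subcase added, your argument goes through.
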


\begin{remark}
The formulas for $p$ and $q$ are independent of the choice of pair $\absPair$ representing $\realTorus$.
\end{remark}

\begin{proof}
Up to conjugation, the diagram for the abstract pair of a maximally split Cartan subgroup of $\realGroup = \realSpinGroup{p}{q}$ is
\[
\underset{1}{-} \underset{2}{-} \cdots \underset{q}{-} \underset{q+1}{+} + \cdots \underset{n}{+}.
\]
In particular, the real form is determined by the number of `$-$' signs appearing. Applying the above formulas to this case we have
\begin{eqnarray*}
p & = & 2(n-q) + q + 0 + 1 = 2n-q+1 = p\\
q & = & 2(0) + q + 0 + 0 = q
\end{eqnarray*}
as desired. If $\realTorus$ is not maximally split, it is easy to verify the value of $q$ is unaffected by performing Cayley transforms in noncompact imaginary roots (Proposition \ref{propAbsPairCt}). The formula for $p$ follows from $p + q = 2n+1$.
\end{proof}

\subsection{The Group $\mGroupTorus$}
\label{ssMGroupTorus}

To begin, suppose $\realTorus=\realGroup[T]\realGroup[A]$ is a $\cinv$-stable Cartan subgroup of $\realGroup$ with Lie algebra $\realLieAlgebra[h]$. Recall $\realRoots[\cinv](\complexLieAlgebra, \complexLieAlgebra[h])$ and $\imaginaryRoots[\cinv](\complexLieAlgebra, \complexLieAlgebra[h])$ denote the roots in $\rootSystem(\complexLieAlgebra, \complexLieAlgebra[h])$ that are real and imaginary with respect to $\cinv$.

\begin{definition}
\label{defZa}
Suppose $\simpleRoot \in \realRoots[\cinv](\complexLieAlgebra, \complexLieAlgebra[h])$ and choose $\rootVector \in \rootSpace$ according to Lemma \ref{lemXaSign}. Let $\zLa  =  \rootVector + \cinv\rootVector$ and define the elements
\begin{eqnarray*}
\oa & = & \expg(\frac{\pi}{2}\zLa) \in \maxRealCompact \\
\oac & = & \expgc(\frac{\pi}{2}\zLa) \in \maxRealCompactCover \\
\ma & = & \expg(\pi\zLa) = \exp(\pi i\coroot) \in \realTorus \\
\mac & = & \expgc(\pi\zLa) \in \realTorusCover.
\end{eqnarray*}
Then $\oa$, $\oac$ are representatives of the root reflection $\rootReflection[\simpleRoot]$ and we have $\oa^{2} = \ma$ and $\oac^{2} = \mac$ (\cite{KGr}, Proposition 6.52(c)).
\end{definition}

\begin{remark}
\label{remMaSign}
The conditions in Lemma \ref{lemXaSign} determine the vector $\rootVector$ (and thus $\zLa$) \emph{only up to sign}. This ambiguity potentially affects the elements of Definition \ref{defZa}. We have
\begin{eqnarray*}
\expg(2\pi\zLa) & = & \ma^{2} = 1 \\
\expg(\pi\zLa) & = & \expg(-\pi\zLa)
\end{eqnarray*}
so the element $\ma$ is determined by the root $\simpleRoot$. However 
\begin{eqnarray*}
\expg(-\frac{\pi}{2}\zLa) & = & \oa^{-1} \\
& = & \oa\ma
\end{eqnarray*}
and conjugation in $\oa$ and $\oa^{-1}$ induces the same action on $\realTorus$ or $\complexLieAlgebra[h]$, but not on $\realGroup$ or $\complexLieAlgebra$. In $\realGroupCover$, if $\simpleRoot$ is short then everything is the same as for $\realGroup$. If $\simpleRoot$ is long
\begin{eqnarray*}
\expgc(2\pi\zLa) & = & \mac^{2} = -1 \\
\expgc(\pi\zLa) & = & -\expgc(-\pi\zLa)
\end{eqnarray*}
and the element $\mac$ is determined by the root $\simpleRoot$ \emph{only up to inverse}. Moreover
\begin{eqnarray*}
\expgc(-\frac{\pi}{2}\zLa) & = & \oac^{-1} \\
& = & -\oac\mac
\end{eqnarray*}
so conjugation in $\oac$ and $\oac^{-1}$ induces the same action on $\realTorusCover$ if and only if $\mac$ is central in $\realTorusCover$.
\end{remark}

In particular, some care is required when discussing the elements $\oa$, $\oac$, and $\mac$ since these elements (and their corresponding actions) are not always determined by the root $\simpleRoot$. Depending on the context we may need to explicitly choose root vectors satisfying the condition in Lemma \ref{lemXaSign}. One notable exception is when the root $\simpleRoot$ is short, in which case $\mac$ and the actions of $\oa, \oac$ are well-defined (Proposition \ref{propMShort}).

\begin{lemma}
\label{lemCaCbMbCommute}
Let $\realLieAlgebra[h] \subset \realLieAlgebra$ be a $\cinv$-stable Cartan subalgebra and suppose $\simpleRoota, \simpleRootb \in \realRoots[\cinv](\complexLieAlgebra, \complexLieAlgebra[h])$ are strongly orthogonal. Then the operators $\left\{\ctOp[{\simpleRoota}], \ctOp[{\simpleRootb}]\right\}$ and $\left\{\ctOp[{\simpleRoota}], \Ad{\ma[{\simpleRootb}]}\right\}$ commute (Definition \ref{defCT}).
\end{lemma}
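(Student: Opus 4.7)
The plan is to reduce both commutation claims to elementary bracket relations on root vectors and then pass to the adjoint group action. First I would unpack the definitions: writing $\ctLaOp[\simpleRoota] = \tfrac{\pi i}{4}(\cinv \rootVector[\simpleRoota] - \rootVector[\simpleRoota])$ and $\ctLaOp[\simpleRootb] = \tfrac{\pi i}{4}(\cinv \rootVector[\simpleRootb] - \rootVector[\simpleRootb])$ for the normalized root vectors provided by Lemma \ref{lemXaSign}, the problem becomes one of computing Lie brackets in $\complexLieAlgebra$ between the four vectors $\rootVector[\simpleRoota], \cinv \rootVector[\simpleRoota], \rootVector[\simpleRootb], \cinv \rootVector[\simpleRootb]$.

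The first key step is to observe that since $\simpleRoota$ and $\simpleRootb$ are real with respect to $\cinv$, we have $\cinv \rootVector[\simpleRoota] \in \rootSpaceMinus[\simpleRoota]$ and $\cinv \rootVector[\simpleRootb] \in \rootSpaceMinus[\simpleRootb]$. Strong orthogonality of $\simpleRoota$ and $\simpleRootb$ means that none of the sums $\pm \simpleRoota \pm \simpleRootb$ are roots, so each of the four brackets $[\rootVector[\simpleRoota], \rootVector[\simpleRootb]]$, $[\rootVector[\simpleRoota], \cinv \rootVector[\simpleRootb]]$, $[\cinv \rootVector[\simpleRoota], \rootVector[\simpleRootb]]$, and $[\cinv \rootVector[\simpleRoota], \cinv \rootVector[\simpleRootb]]$ vanishes. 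Consequently $[\ctLaOp[\simpleRoota], \ctLaOp[\simpleRootb]] = 0$, so $\exp(\ctLaOp[\simpleRoota])$ and $\exp(\ctLaOp[\simpleRootb])$ commute in the adjoint group, which immediately gives $\ctOp[\simpleRoota] \circ \ctOp[\simpleRootb] = \ctOp[\simpleRootb] \circ \ctOp[\simpleRoota]$.

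For the second commutation, I would compute $\Ad{\ma[\simpleRootb]}$ applied to $\rootVector[\simpleRoota]$ and $\cinv \rootVector[\simpleRoota]$ directly. Since $\ma[\simpleRootb] = \exp(\pi i \coroot[\simpleRootb])$, we have $\Ad{\ma[\simpleRootb]} \rootVector[\pm \simpleRoota] = e^{\pm \pi i \simpleRoota(\coroot[\simpleRootb])} \rootVector[\pm \simpleRoota]$, and strong orthogonality forces $\simpleRoota(\coroot[\simpleRootb]) = 2(\simpleRoota,\simpleRootb)/(\simpleRootb,\simpleRootb) = 0$. Hence $\Ad{\ma[\simpleRootb]}$ fixes $\rootVector[\simpleRoota]$ and $\cinv \rootVector[\simpleRoota]$, and therefore fixes $\ctLaOp[\simpleRoota]$. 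Using the naturality identity $\Ad{\ma[\simpleRootb]} \circ \Ad{\exp(\ctLaOp[\simpleRoota])} = \Ad{\exp(\Ad{\ma[\simpleRootb]}\ctLaOp[\simpleRoota])} \circ \Ad{\ma[\simpleRootb]}$, the commutation $\Ad{\ma[\simpleRootb]} \circ \ctOp[\simpleRoota] = \ctOp[\simpleRoota] \circ \Ad{\ma[\simpleRootb]}$ follows.

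There is no serious obstacle here; the statement is essentially a structural consequence of strong orthogonality combined with the fact that $\cinv$ sends real root spaces to their negatives. The only minor care needed is the sign ambiguity in the choice of $\rootVector[\simpleRoota]$ and $\rootVector[\simpleRootb]$ noted in Remark \ref{remMaSign}, but each of the commutation identities is invariant under $\rootVector \mapsto -\rootVector$ (the element $\ctLaOp$ only changes sign, which does not affect the commutator, and $\ma[\simpleRootb]$ is independent of the sign choice at the group level), so no additional normalization is required.
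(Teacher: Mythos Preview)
Your proof is correct and follows essentially the same approach as the paper: reduce both commutations to the vanishing of brackets between elements of $\rootSpace[\pm\simpleRoota]$ and $\rootSpace[\pm\simpleRootb]$, which is immediate from strong orthogonality. The paper's proof is terser (it simply notes $[\zLa[\simpleRoota], \zLa[\simpleRootb]] = 0$ suffices), while you spell out the second commutation via the coroot realization $\ma[\simpleRootb] = \exp(\pi i \coroot[\simpleRootb])$ and the pairing $\simpleRoota(\coroot[\simpleRootb]) = 0$; this is a harmless variation on the same idea.
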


\begin{proof}
Since these operators are given by the adjoint representation, we need only check that $\left[\zLa[{\simpleRoota}],\zLa[{\simpleRootb}]\right] = 0$. However this follows easily from the fact the roots $\simpleRoota$ and $\simpleRootb$ are strongly orthogonal.
\end{proof}

\begin{definition}
\label{defMGroup}
Let $\realTorus \subset \realGroup$ be a Cartan subgroup. Denote by $\mGroupTorus$ the subgroup of $\realTorus$ generated by the $\ma$, i.e., 
\[
\mGroupTorus = \left<~\ma \mid \simpleRoot \in \realRoots[\cinv](\complexLieAlgebra, \complexLieAlgebra[h])~\right>.
\]
\end{definition}

We will record with some care the structure of $\mGroupTorus$ for arbitrary Cartan subgroups of $\realGroup$. For the following proposition, recall $\realGroup = \realSpinGroup{p}{q}$ and $Z(\realGroup) \cong \zTwo$.

\begin{proposition}[\cite{ABVPT}, Lemma 5.1]
\label{propMShort}
If $\simpleRoota \in \realRoots[\cinv](\complexLieAlgebra[g],\complexLieAlgebra[h])$ is \emph{short}, then $\ma[\simpleRoota]$ is equal to the nontrivial element of $Z(\realGroup)$.
\end{proposition}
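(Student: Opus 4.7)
The plan is to compute the element $\ma = \exp(\pi i \coroot) \in \realTorus$ (from Definition \ref{defZa}) directly on two natural representations of $\complexGroup = \complexSpinGroup[2n+1]$, and observe that it acts trivially on the adjoint representation (so is central) while acting by $-1$ on the spin representation (so is nontrivial).

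First I would recall the general formula: for any finite-dimensional representation $V$ of $\complexGroup$ and any weight $\mu \in \complexLieAlgebra[h]^{*}$, the element $\ma$ acts on the $\mu$-weight space by the scalar $e^{\pi i \langle \mu, \coroot\rangle} = (-1)^{\langle\mu,\coroot\rangle}$. In the standard coordinates of Definition \ref{defAbsCartan}, a short root has the form $\pm\ei$, and the corresponding coroot is $\coroot = 2\ei$. For any root $\simpleRootb \in \rootSystem$ one has $\langle\simpleRootb,\coroot\rangle = 2(\simpleRootb,\ei) \in 2\mathbb{Z}$, since every root has integer coordinates in this basis. Applying the formula with $V = \complexLieAlgebra$ shows that $\ma$ acts trivially on $\complexLieAlgebra$. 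Because $\complexGroup$ is simply connected, the kernel of the adjoint representation is $Z(\complexGroup)$, so $\ma \in Z(\complexGroup)$. Since Definition \ref{defZa} realizes $\ma$ as $\expg(\pi \zLa)$ with $\zLa \in \realLieAlgebra$, the element $\ma$ lies in $\realGroup$, giving $\ma \in Z(\realGroup)$.

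Next I would verify that $\ma$ is nontrivial by applying the same formula to the spin representation of $\complexGroup$, whose weights are precisely the vectors $\mu = (\pm\tfrac{1}{2},\ldots,\pm\tfrac{1}{2})$. For any such $\mu$ one has $\langle\mu,\coroot\rangle = \pm 1$, so $\ma$ acts as $-1$ on the entire spin representation. In particular $\ma \neq 1$, and since $|Z(\realGroup)| = 2$ (as recalled just before the proposition), $\ma$ must be the nontrivial central element.

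The main obstacle is essentially a bookkeeping one: it is critical in type $B_{n}$ that the coroot of a short root be $2\ei$ rather than $\ei$, since otherwise there would be weights of the adjoint representation with odd pairing against $\coroot$ and the argument would fail (indeed, this is what happens for long roots, where $\ma$ is not central in general). Note also that the reality of $\simpleRoot$ enters only through Lemma \ref{lemXaSign} in order to define the real element $\zLa$ and place $\ma$ in $\realGroup$; the identification of $\ma$ with the nontrivial center is a purely algebraic consequence of the shortness of $\simpleRoot$ and of $\complexGroup$ being simply connected of type $B_{n}$.
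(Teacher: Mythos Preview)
Your argument is correct. The paper does not supply its own proof of this proposition; it simply records the statement with a citation to \cite{ABVPT}, Lemma 5.1. Your direct computation—checking that $\exp(\pi i\coroot)$ acts trivially on every root space (hence lies in $Z(\complexGroup)$) and by $-1$ on the spin representation (hence is nontrivial)—is a clean self-contained proof and is essentially the standard way to verify such statements about $m_\alpha$ in a simply connected group. The only point worth stating slightly more explicitly is that $Z(\complexGroup)=\{\pm1\}$ already lies in every real form, so $m_\alpha\in Z(\complexGroup)\cap\realGroup$ immediately gives $m_\alpha\in Z(\realGroup)$; you have this, but it is implicit in your appeal to $|Z(\realGroup)|=2$.
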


In particular $\ma[\simpleRoota] = \ma[\simpleRootb]$ whenever $\simpleRoota$ and $\simpleRootb$ are short real roots. The following proposition effectively determines the remaining structure of $\mGroupTorus$.

\begin{proposition}
\label{propMStruct}
Let $\simpleRoota$, $\simpleRootb$, and $\simpleRootc$ be real roots in $\realRoots[\cinv](\complexLieAlgebra, \complexLieAlgebra[h])$ and suppose
\begin{eqnarray*}
\rootCheck[\simpleRootc] & = & \rootCheck[\simpleRoota] + \rootCheck[\simpleRootb].
\end{eqnarray*}
Then
\begin{eqnarray*}
\ma[\simpleRootc] & = & \ma[\simpleRoota]\ma[\simpleRootb].
\end{eqnarray*}
\end{proposition}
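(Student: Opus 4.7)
The plan is to exploit the explicit representation $\ma = \exp(\pi i \coroot)$ recorded in Definition~\ref{defZa}, which expresses $\ma$ as the image under the exponential map of an element of the abelian Cartan subalgebra $\complexLieAlgebra[h]$. Because $\complexLieAlgebra[h]$ is abelian, the restriction of $\exp$ to $\complexLieAlgebra[h]$ is a homomorphism from the additive group $(\complexLieAlgebra[h],+)$ into the Cartan subgroup. This observation converts the multiplicative identity we want to prove into the additive identity we are given as hypothesis.

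First I would note that the three coroots $\coroot[\simpleRoota]$, $\coroot[\simpleRootb]$, $\coroot[\simpleRootc]$ all lie in the abelian Lie algebra $\complexLieAlgebra[h]$, and hence commute. The calculation then reads
\begin{align*}
\ma[\simpleRoota]\,\ma[\simpleRootb]
&= \exp(\pi i\, \coroot[\simpleRoota])\,\exp(\pi i\, \coroot[\simpleRootb]) \\
&= \exp\bigl(\pi i\,(\coroot[\simpleRoota] + \coroot[\simpleRootb])\bigr) \\
&= \exp(\pi i\, \coroot[\simpleRootc]) \\
&= \ma[\simpleRootc],
\end{align*}
where the middle step uses that exponentials of commuting elements multiply additively, and the third step uses the hypothesis $\rootCheck[\simpleRootc] = \rootCheck[\simpleRoota] + \rootCheck[\simpleRootb]$.

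Before carrying out that computation I would briefly confirm the well-definedness issue raised in Remark~\ref{remMaSign}: although the root vector $\rootVector$ supplied by Lemma~\ref{lemXaSign} is determined only up to sign, the identity $\expg(\pi \zLa) = \expg(-\pi\zLa)$ shows that $\ma$ depends only on $\simpleRoot$. Thus each side of the desired equality is an unambiguously defined element of $\realTorus$, and the identity itself is well-posed.

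I do not anticipate a serious obstacle: the statement is essentially the assertion that exponentiation turns the additive structure on the coroot lattice into the multiplicative structure on the $\ma$'s, which works whenever the coroots lie in a common abelian subalgebra. The only point that deserves a sentence of explanation is the passage from $\zLa = \rootVector + \cinv\rootVector$ (which does not itself lie in $\complexLieAlgebra[h]$) to the representation $\exp(\pi i\, \coroot)$ (which does); but this identification is the standard SL(2)-theoretic fact already recorded in Definition~\ref{defZa}, so no new work is required.
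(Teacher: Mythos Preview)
Your proof is correct and is exactly the argument the paper has in mind: the paper simply says the result ``follows immediately from Definition~\ref{defZa}'' (i.e., from $\ma = \exp(\pi i\,\coroot)$) and cites \cite{VGr}, Corollary~4.3.20. You have spelled out precisely that computation.
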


\begin{proof}
This follows immediately from Definition \ref{defZa}. See also \cite{VGr}, Corollary 4.3.20.
\end{proof}

Proposition \ref{propMStruct} implies
\[
\mGroupTorus = \left<~\ma \mid \simpleRoot \in \simpleRoots \subset \realRoots[\cinv](\complexLieAlgebra, \complexLieAlgebra[h])~\right>
\]
where $\simpleRoots$ is a choice of simple roots for $\realRoots[\cinv](\complexLieAlgebra, \complexLieAlgebra[h])$. We now turn our attention to noncompact imaginary roots.

\begin{definition}
\label{defZb}
Suppose $\simpleRootb \in \imaginaryRoots[\cinv](\complexLieAlgebra, \complexLieAlgebra[h])$ is imaginary and noncompact and choose a root vector $\rootVector[\simpleRootb]$ as in Lemma \ref{lemXbSign}. Let $\zLa[\simpleRootb]  =  \rootVector[\simpleRootb] + \overline{\rootVector[\simpleRootb]}$ and define
\begin{eqnarray*}
\ma[\simpleRootb] & = & \exp(\pi i\zLa[\simpleRootb]) = \expg(\pi i\coroot[\simpleRootb])  \in \realTorus \\
\mac[\simpleRootb] & = & \expgc(-\pi i\coroot[\simpleRootb]) \in \realTorusCover.
\end{eqnarray*}
\end{definition}

\begin{remark}[\cite{TA}]
\label{remXbSign}
As before, Lemma \ref{lemXbSign} defines the vector $\rootVector[\simpleRootb]$ only up to sign. However
\begin{eqnarray*}
\expg(2\pi i\zLa[\simpleRootb]) & = & \ma[\simpleRootb]^{2} = 1 \\
\expg(\pi i\zLa[\simpleRootb]) & = & \expg(-\pi i\zLa[\simpleRootb])
\end{eqnarray*}
and the element $\ma[\simpleRootb]$ is determined by the root $\simpleRootb$. The same is true in the group $\realGroupCover$ if $\simpleRootb$ is short. However, if $\simpleRootb$ is long we have
\begin{eqnarray*}
\expgc(-2\pi i\coroot[\simpleRootb]) & = & \mac[\simpleRootb]^{2} = -1 \\
\expgc(-\pi i\coroot[\simpleRootb]) & = & -\expgc(\pi i\coroot[\simpleRootb]).
\end{eqnarray*}
\end{remark}

\begin{proposition}[\cite{KGr}, 6.68]
\label{propCtAd}
If $\simpleRoot \in \realRoots[\cinv](\complexLieAlgebra, \complexLieAlgebra[h])$, we have
\begin{eqnarray*}
\ctOp(\coroot) & = & i\zLa \\
\ctOp(\zLa) & = & i\coroot
\end{eqnarray*}
(Definition \ref{defZa}).
\end{proposition}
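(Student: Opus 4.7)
The plan is a direct computation using $\Ad{\exp(X)} = \exp(\ad X)$, reducing the problem to exponentiating a $2\times 2$-matrix.

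First I would set $X = \rootVector$ and $Y = \cinv\rootVector$, so that by Lemma \ref{lemXaSign} we have $[Y,X] = \coroot$ and hence $[X,Y] = -\coroot$. Since $\simpleRoot$ is real with respect to $\cinv$, we have $\cinv(\coroot) = -\coroot$, so $Y \in \rootSpaceMinus$; the standard weight relations then yield $[\coroot,X] = 2X$ and $[\coroot,Y] = -2Y$. With $\ctLaOp = \tfrac{\pi i}{4}(Y-X)$ and $\zLa = X+Y$, I would then compute
\begin{align*}
[\ctLaOp,\coroot] &= \tfrac{\pi i}{4}\bigl([Y,\coroot] - [X,\coroot]\bigr) = \tfrac{\pi i}{4}(2Y + 2X) = \tfrac{\pi i}{2}\zLa,\\
[\ctLaOp,\zLa] &= \tfrac{\pi i}{4}\bigl([Y,X] + [Y,Y] - [X,X] - [X,Y]\bigr) = \tfrac{\pi i}{4}(2\coroot) = \tfrac{\pi i}{2}\coroot.
\end{align*}

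The key observation is that $\ad \ctLaOp$ preserves the two-dimensional subspace $V = \mathrm{span}_{\mathbb{C}}\{\coroot,\zLa\}$ and, on $V$, satisfies $(\ad \ctLaOp)^{2} = -(\pi/2)^{2}\,\mathrm{id}_{V}$. Writing $c = \pi/2$, the power series for $\exp(\ad \ctLaOp)|_{V}$ telescopes into even and odd parts, giving
\[
\exp(\ad \ctLaOp)\big|_{V} \;=\; \cos(c)\,\mathrm{id}_{V} + \frac{\sin(c)}{c}\,\ad \ctLaOp\big|_{V} \;=\; \frac{2}{\pi}\,\ad \ctLaOp\big|_{V},
\]
since $\cos(\pi/2) = 0$ and $\sin(\pi/2) = 1$. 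Applying this to $\coroot$ and $\zLa$ and using the brackets above yields $\ctOp(\coroot) = \tfrac{2}{\pi}\cdot\tfrac{\pi i}{2}\zLa = i\zLa$ and $\ctOp(\zLa) = \tfrac{2}{\pi}\cdot\tfrac{\pi i}{2}\coroot = i\coroot$, which are the claimed identities.

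The computation is essentially routine, but the step I would treat most carefully is the bookkeeping of signs and factors of $i$: in particular, that $\cinv\coroot = -\coroot$ (so $Y \in \rootSpaceMinus$), that the normalization $[\cinv\rootVector,\rootVector] = \coroot$ in Lemma \ref{lemXaSign} gives $[\rootVector,\cinv\rootVector] = -\coroot$ rather than $+\coroot$, and that the factor $\tfrac{\pi i}{4}$ in $\ctLaOp$ (rather than $\tfrac{\pi}{4}$ as in $\ctLaOpNci[\simpleRootb]$) is precisely what produces the $i$ on the right-hand side together with the scalar $\tfrac{2}{\pi}$ coming from $\sin(\pi/2)/(\pi/2)$. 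No choice of sign for $\rootVector$ affects the statement, since replacing $\rootVector$ by $-\rootVector$ simultaneously negates $\ctLaOp$ and $\zLa$, so $\ctOp(\zLa) = i\coroot$ is unchanged and $\ctOp(\coroot) = i\zLa$ is unchanged as well.
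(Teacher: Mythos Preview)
Your proof is correct. The paper does not supply its own argument for this proposition; it simply records the statement with a citation to Knapp's book, so your direct computation via $\exp(\ad\,\ctLaOp)$ on the two-dimensional invariant subspace $\mathrm{span}\{\coroot,\zLa\}$ is exactly the standard verification and matches what the cited reference does.
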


\begin{proposition}[\cite{KGr}, 6.66]
If $\simpleRootb \in \imaginaryRoots[\cinv](\complexLieAlgebra, \complexLieAlgebra[h])$ is imaginary and noncompact, we have
\begin{eqnarray*}
\ctOpNci[\simpleRootb](\coroot[\simpleRootb]) & = & \zLa[\simpleRootb] \\
\ctOpNci[\simpleRootb](\zLa[\simpleRootb]) & = & -\coroot[\simpleRootb]
\end{eqnarray*}
(Definition \ref{defZb}).
\end{proposition}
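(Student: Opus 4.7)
The plan is to reduce the identity to a computation inside the $\mathfrak{sl}_2$-triple attached to $\simpleRootb$. Writing $E = \rootVector[\simpleRootb]$ and $F = \overline{\rootVector[\simpleRootb]}$, the normalization from Lemma \ref{lemXbSign} gives $[E,F] = \coroot[\simpleRootb]$; combined with the standard weight relations $[\coroot[\simpleRootb],E] = 2E$ and $[\coroot[\simpleRootb],F] = -2F$, this makes the real $2$-plane $\mathbb{R}\coroot[\simpleRootb] \oplus \mathbb{R}\zLa[\simpleRootb]$ (where $\zLa[\simpleRootb] = E + F$) stable under $\ad(\ctLaOpNci[\simpleRootb])$, with $\ctLaOpNci[\simpleRootb] = \tfrac{\pi}{4}(F - E)$. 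This is the minimal subspace in which to carry out the calculation, and it sidesteps the need for any further adjoint machinery.

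Next I would compute the two brackets directly. Expanding $[\tfrac{\pi}{4}(F-E), \coroot[\simpleRootb]]$ using the weight relations yields $\tfrac{\pi}{2}(E+F) = \tfrac{\pi}{2}\zLa[\simpleRootb]$, and expanding $[\tfrac{\pi}{4}(F-E), E+F]$ using $[E,F] = \coroot[\simpleRootb]$ yields $-\tfrac{\pi}{2}\coroot[\simpleRootb]$. Equivalently, in the ordered basis $(\coroot[\simpleRootb], \zLa[\simpleRootb])$ the operator $\ad(\ctLaOpNci[\simpleRootb])$ is represented by the matrix $\frac{\pi}{2}\begin{pmatrix} 0 & -1 \\ 1 & 0 \end{pmatrix}$, i.e.\ $\pi/2$ times the generator of planar rotation.

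Finally I would exponentiate. The exponential of $\tfrac{\pi}{2}$ times the rotation generator is the quarter-turn $\begin{pmatrix} 0 & -1 \\ 1 & 0 \end{pmatrix}$, and reading off its columns gives exactly
\[
\ctOpNci[\simpleRootb](\coroot[\simpleRootb]) = \zLa[\simpleRootb], \qquad \ctOpNci[\simpleRootb](\zLa[\simpleRootb]) = -\coroot[\simpleRootb],
\]
as asserted. The one genuine source of risk is sign bookkeeping in the bracket step: the antisymmetric combination $F - E$ inside $\ctLaOpNci[\simpleRootb]$ produces cancellations that must line up correctly with the asymmetric signs in $[\coroot[\simpleRootb],E] = 2E$ and $[\coroot[\simpleRootb],F] = -2F$, and everything downstream of that is formal. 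Structurally the argument mirrors Proposition \ref{propCtAd}, with the factor of $i$ appearing there for real roots replaced here by a real rotation in the noncompact imaginary setting.
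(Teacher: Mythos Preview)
Your computation is correct: the $\mathfrak{sl}_2$-triple $(E,F,h) = (\rootVector[\simpleRootb], \overline{\rootVector[\simpleRootb]}, \coroot[\simpleRootb])$ with the normalization $[E,F] = \coroot[\simpleRootb]$ gives exactly the matrix $\tfrac{\pi}{2}\begin{pmatrix} 0 & -1 \\ 1 & 0 \end{pmatrix}$ for $\ad(\ctLaOpNci[\simpleRootb])$ on $\mathbb{R}\coroot[\simpleRootb] \oplus \mathbb{R}\zLa[\simpleRootb]$, and its exponential is the quarter-turn as you say. The paper does not supply its own proof of this statement---it is quoted directly from Knapp \cite{KGr}, Proposition~6.66---so there is nothing to compare against beyond noting that your argument is the standard $\mathfrak{sl}_2$ verification one would find in that reference.
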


Write
\begin{eqnarray*}
\ctOp & : & \rootSystem(\complexLieAlgebra, \complexLieAlgebra[h]) \to \rootSystem(\complexLieAlgebra, \complexLieAlgebraCt) \\
\ctOpNci[\simpleRootb] & : & \rootSystem(\complexLieAlgebra, \complexLieAlgebra[h]) \to \rootSystem(\complexLieAlgebra, \complexLieAlgebraCtNci[\simpleRootb])
\end{eqnarray*}
for the induced Cayley transform operations on roots. 

\begin{proposition}[\cite{KGr}, Proposition 6.69]
\label{propCtInv}
The root $\ctOpNci[\simpleRootb](\simpleRootb) \in \rootSystem(\complexLieAlgebra, \complexLieAlgebraCtNci[\simpleRootb])$ is real and the root $\ctOp(\simpleRoot) \in \rootSystem(\complexLieAlgebra, \complexLieAlgebraCt)$ is imaginary and noncompact. Moreover, there exists a suitable choice of root vectors so that
\[
\ctOpNci[{\ctOp(\simpleRoot)}] \circ \ctOp = \ctOp[{\ctOpNci[\simpleRootb](\simpleRootb)}] \circ \ctOpNci[\simpleRootb] = \identity.
\]
\end{proposition}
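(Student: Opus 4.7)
The plan is to prove the two claims in the proposition separately, treating both Cayley transforms symmetrically, and reducing the composition identity to a computation inside the $\mathfrak{sl}(2)$-subalgebra generated by the relevant root spaces.

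First I would verify the type change. Suppose $\simpleRoot \in \realRoots[\cinv](\complexLieAlgebra,\complexLieAlgebra[h])$ is real, so $\coroot \in \realLieAlgebra[h]$. By the preceding proposition, $\ctOp(\coroot) = i\zLa$. Since the root vector $\rootVector$ of Lemma \ref{lemXaSign} lies in $\realLieAlgebra$, we have $\zLa = \rootVector + \cinv\rootVector \in \realLieAlgebra$, so $\ctOp(\coroot) \in i\realLieAlgebraCt$; this forces the transformed root $\ctOp(\simpleRoot)$ to be imaginary on the new Cartan $\complexLieAlgebraCt$. To upgrade imaginary to noncompact, one takes the root vector $\ctOp(\rootVector) \in \ctOp(\rootSpace) = \complexLieAlgebra_{\ctOp(\simpleRoot)}$ and computes $\cinv \cdot \ctOp(\rootVector)$ using the fact that $\cinv$ commutes with the Cayley transform only modulo a twist: conjugation of $\ctLaOp = \frac{\pi i}{4}(\cinv \rootVector - \rootVector)$ by $\cinv$ sends $\ctLaOp$ to $-\ctLaOp$, so $\cinv \ctOp = \ctOp^{-1}\cinv$. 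Applying this to $\rootVector$ gives $\cinv\ctOp(\rootVector) = \ctOp^{-1}(\cinv\rootVector)$, and expanding the exponential on the three-dimensional $\cla[sl]_2$-triple $\{\rootVector, \cinv\rootVector, \coroot\}$ shows this equals $-\ctOp(\rootVector)$, which is the noncompactness condition. The analogous computation starting from a noncompact imaginary $\simpleRootb$ establishes that $\ctOpNci[\simpleRootb](\simpleRootb)$ is real.

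Next I would establish the composition identity. The natural candidates for root vectors in the new Cartan are $\rootVector[{\ctOp(\simpleRoot)}] := \ctOp(\rootVector)$ (for the noncompact imaginary root obtained from a real one) and $\rootVector[{\ctOpNci[\simpleRootb](\simpleRootb)}] := \ctOpNci[\simpleRootb](\rootVector[\simpleRootb])$. With these choices, the second Cayley transform is $\text{Ad}(\exp(\ctLaOpNci[{\ctOp(\simpleRoot)}]))$ where
\[
\ctLaOpNci[{\ctOp(\simpleRoot)}] = \frac{\pi}{4}\left(\overline{\ctOp(\rootVector)} - \ctOp(\rootVector)\right) = \frac{\pi}{4}\left(\ctOp^{-1}(\rootVector) - \ctOp(\rootVector)\right),
\]
using the same twist identity as above together with the reality of $\rootVector$. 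The composition $\ctOpNci[{\ctOp(\simpleRoot)}] \circ \ctOp$ then equals $\text{Ad}(g)$ for an explicit element $g$ of the connected subgroup with Lie algebra $\cla[sl]_2$ spanned by $\rootVector$, $\cinv\rootVector$ and $\coroot$, and the calculation reduces to showing $g$ is central there. This is the main technical step, but it is essentially a one-parameter computation in $\text{SL}(2,\mathbb{C})$ using the relations $[\rootVector,\cinv\rootVector] = \coroot$ and the rescaling $\ctLaOp \leftrightarrow \ctLaOpNci[{\ctOp(\simpleRoot)}]$; it shows $g$ acts trivially on $\complexLieAlgebra[h]$ and on the root spaces $\rootSpace[\pm\simpleRoot]$, hence is identity on all of $\complexLieAlgebra$.

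The symmetric statement $\ctOp[{\ctOpNci[\simpleRootb](\simpleRootb)}] \circ \ctOpNci[\simpleRootb] = \identity$ follows from the same argument with the roles of real and noncompact imaginary interchanged, and consistency of the two sign choices (so that both compositions are simultaneously the identity rather than differing by $\ma$) is precisely the content of the clause \emph{suitable choice of root vectors}. The principal obstacle is thus the bookkeeping of signs: Lemmas \ref{lemXaSign} and \ref{lemXbSign} each determine their root vectors only up to $\pm$, and the nontrivial content of the proposition is that these two two-fold ambiguities can be matched compatibly so that the $\mathfrak{sl}(2)$ computation yields the identity rather than $\Ad{\ma}$.
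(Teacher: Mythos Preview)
The paper does not supply its own proof of this proposition; it is quoted directly from Knapp (\cite{KGr}, Proposition~6.69) and used as a black box. Your outline is essentially the standard argument one finds there: the twist relation $\cinv\,\ctOp = \ctOp^{-1}\,\cinv$ (coming from $\cinv(\ctLaOp) = -\ctLaOp$) to detect noncompactness of $\ctOp(\simpleRoot)$, reduction of the composition $\ctOpNci[{\ctOp(\simpleRoot)}]\circ\ctOp$ to an explicit $\mathrm{SL}(2,\mathbb{C})$ computation inside the subalgebra spanned by $\rootVector,\cinv\rootVector,\coroot$, and the observation that the residual sign ambiguities from Lemmas~\ref{lemXaSign} and~\ref{lemXbSign} are exactly what the phrase ``suitable choice of root vectors'' absorbs. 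There is nothing to compare against in the paper itself, and your sketch is a faithful summary of Knapp's proof strategy.
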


In particular, the Cayley transform operators are essentially inverses.

\begin{corollary}
Suppose $\simpleRoot \in \realRoots[\cinv](\complexLieAlgebra, \complexLieAlgebra[h])$ and $\simpleRootb \in \imaginaryRoots[\cinv](\complexLieAlgebra, \complexLieAlgebra[h])$ are short roots with $\simpleRootb$ noncompact. Then $\ma = \ma[\simpleRootb]$.
\end{corollary}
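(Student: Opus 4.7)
The plan is to use the Cayley transform to move $\simpleRootb$ to a short real root in a different Cartan subgroup, identify the two versions of ``$m$'' under the resulting isomorphism, and then apply Proposition \ref{propMShort} twice.

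First I would set $\simpleRoot' := \ctOpNci[\simpleRootb](\simpleRootb) \in \rootSystem(\complexLieAlgebra, \complexLieAlgebraCtNci[\simpleRootb])$. By Proposition \ref{propCtInv} the root $\simpleRoot'$ is real, and because $\ctOpNci[\simpleRootb] = \Ad{\text{exp}(\ctLaOpNci[\simpleRootb])}$ is an inner automorphism of $\complexLieAlgebra$ it preserves the Killing form; hence $\simpleRoot'$ is also short.

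Next I would identify $\ma[\simpleRootb]$ with $m_{\simpleRoot'}$. Viewed as a Lie algebra isomorphism $\complexLieAlgebra[h] \to \complexLieAlgebraCtNci[\simpleRootb]$ carrying $\simpleRootb$ to $\simpleRoot'$, the Cayley transform sends the coroot $\coroot[\simpleRootb]$ to the coroot $h_{\simpleRoot'}$ of $\simpleRoot'$. Combining this with the formula $\ctOpNci[\simpleRootb](\coroot[\simpleRootb]) = \zLa[\simpleRootb]$ recorded in the proposition just before Proposition \ref{propCtInv} yields $h_{\simpleRoot'} = \zLa[\simpleRootb]$, and so from Definition \ref{defZa} applied to $\simpleRoot'$ together with Definition \ref{defZb} applied to $\simpleRootb$,
\begin{eqnarray*}
m_{\simpleRoot'} & = & \expg(\pi i h_{\simpleRoot'}) ~=~ \expg(\pi i \zLa[\simpleRootb]) ~=~ \ma[\simpleRootb].
\end{eqnarray*}

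Finally, since $\simpleRoot$ and $\simpleRoot'$ are both short real roots in their respective Cartan subalgebras of $\realGroup$, Proposition \ref{propMShort} implies that $\ma$ and $m_{\simpleRoot'}$ each equal the unique nontrivial element of $Z(\realGroup) \cong \zTwo$, and hence $\ma = m_{\simpleRoot'} = \ma[\simpleRootb]$. The only mild obstacle is bookkeeping: Definitions \ref{defZa} and \ref{defZb} use slightly different normalizations ($\pi$ versus $\pi i$ in front of $\zLa$), so the argument is routed through the intermediate identity $h_{\simpleRoot'} = \zLa[\simpleRootb]$ rather than comparing coroots directly across the two Cartan subalgebras.
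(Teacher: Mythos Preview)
Your argument is correct and is essentially the intended one: the paper states this corollary without proof immediately after Proposition~\ref{propCtInv}, and the implicit derivation is exactly to Cayley-transform $\simpleRootb$ to a short real root, identify $\ma[\simpleRootb]$ with the $m$-element of that real root via $\ctOpNci[\simpleRootb](\coroot[\simpleRootb]) = \zLa[\simpleRootb]$, and then invoke Proposition~\ref{propMShort} on both sides. Your bookkeeping through $h_{\simpleRoot'} = \zLa[\simpleRootb]$ is precisely the right way to reconcile Definitions~\ref{defZa} and~\ref{defZb}.
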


\begin{lemma}
\label{lemCtCoroot}
Let $\realLieAlgebra[h] \subset \realLieAlgebra$ be a $\cinv$-stable Cartan subalgebra and suppose $\simpleRoota, \simpleRootb \in \rootSystem(\complexLieAlgebra, \complexLieAlgebra[h])$ are orthogonal. If $\simpleRoota$ is imaginary and noncompact, then 
\[
\ctOpNci(\coroot[\simpleRootb]) = \coroot[\simpleRootb]
\]
where $\coroot[\simpleRootb]$ is the coroot of $\simpleRootb$.
\end{lemma}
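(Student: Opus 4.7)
The plan is to prove this by a direct computation using the explicit definition of the Cayley transform operator from Definition \ref{defCT}. Recall that $\ctOpNci[\simpleRoota] = \Ad{\exp(\ctLaOpNci[\simpleRoota])}$ with $\ctLaOpNci[\simpleRoota] = \frac{\pi}{4}(\overline{\rootVector[\simpleRoota]} - \rootVector[\simpleRoota])$, where $\rootVector[\simpleRoota] \in \rootSpace[\simpleRoota]$ is the root vector chosen according to Lemma \ref{lemXbSign}. Since $\coroot[\simpleRootb] \in \complexLieAlgebra[h]$ lies in the Cartan, it suffices to show that $\ctLaOpNci[\simpleRoota]$ commutes with $\coroot[\simpleRootb]$; the result will then follow immediately from the power series expansion of $\Ad{\exp(\cdot)}$.

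The key observation is that orthogonality of $\simpleRoota$ and $\simpleRootb$ is equivalent, via the Killing form identification $\complexLieAlgebra[h] \leftrightarrow \complexLieAlgebra[h]^{\ast}$, to $\simpleRoota(\coroot[\simpleRootb]) = 0$. This immediately gives $[\coroot[\simpleRootb], \rootVector[\simpleRoota]] = \simpleRoota(\coroot[\simpleRootb]) \rootVector[\simpleRoota] = 0$. For the conjugate term, I would use the fact that $\simpleRoota$ is imaginary with respect to $\cinv$, which (combined with the compatibility of the real form conjugation $\sigma$ and $\cinv$) forces $\sigma(\simpleRoota) = -\simpleRoota$ on $\complexLieAlgebra[h]$; hence $\overline{\rootVector[\simpleRoota]} \in \rootSpaceMinus[\simpleRoota]$ and $[\coroot[\simpleRootb], \overline{\rootVector[\simpleRoota]}] = -\simpleRoota(\coroot[\simpleRootb]) \overline{\rootVector[\simpleRoota]} = 0$ for the same reason.

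Combining these two computations yields $[\coroot[\simpleRootb], \ctLaOpNci[\simpleRoota]] = 0$, so $\coroot[\simpleRootb]$ lies in the kernel of $\ad{\ctLaOpNci[\simpleRoota]}$. Consequently $\exp(\ad{\ctLaOpNci[\simpleRoota]})$ fixes $\coroot[\simpleRootb]$, which is exactly the desired statement $\ctOpNci[\simpleRoota](\coroot[\simpleRootb]) = \coroot[\simpleRootb]$.

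There is no real obstacle here; the lemma is a routine compatibility check. The only subtle point worth flagging is the behavior of complex conjugation on root vectors for imaginary roots, but this is standard and follows from the fact that imaginary roots take purely imaginary values on $\realLieAlgebra[h]$. Analogous reasoning (with $\cinv$ replacing conjugation) would establish the parallel fact for real roots, showing the Cayley transform machinery is well-behaved with respect to orthogonality — a compatibility that will be used repeatedly in the sections that follow.
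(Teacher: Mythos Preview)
Your proof is correct and matches the paper's approach exactly: the paper's proof consists of the single line that this follows from Definition \ref{defCT} and the fact $[\overline{\rootVector[\simpleRoota]} - \rootVector[\simpleRoota], \coroot[\simpleRootb]] = 0$. You have simply unpacked that bracket computation in more detail.
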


\begin{proof}
This follows easily from Definition \ref{defCT} and the fact
\[
\left[\overline\rootVector - \rootVector, \coroot[\simpleRootb]\right] = 0.
\]
\end{proof}

\begin{proposition} 
\label{propSameMa}
Let $\simpleRoot \in \realRoots[\cinv](\complexLieAlgebra, \complexLieAlgebra[h])$ be a real root and choose $\rootVector \in \rootSpace$ according to Lemma \ref{lemXaSign}. Then $\mac = \mac[{\ctOp(\simpleRoot)}]$.
\end{proposition}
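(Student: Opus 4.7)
The plan is to reduce the equality $\mac = \mac[{\ctOp(\simpleRoot)}]$ to a direct computation with the exponential maps, using Proposition \ref{propCtAd} as the main structural input. Recall that $\mac = \expgc(\pi \zLa)$ with $\zLa = \rootVector + \cinv\rootVector$, while $\simpleRootb := \ctOp(\simpleRoot)$ is a noncompact imaginary root in $\rootSystem(\complexLieAlgebra,\complexLieAlgebraCt)$ (Proposition \ref{propCtInv}), so that by Definition \ref{defZb} we have $\mac[\simpleRootb] = \expgc(-\pi i\, \coroot[\simpleRootb])$. Both elements live in the same torus $\realTorusCover$ (more precisely, in the image under $\expgc$ of a one-dimensional subspace of $\realLieAlgebra$), so it is enough to compare their exponents.

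The key step is to identify the coroot $\coroot[\simpleRootb] \in \complexLieAlgebraCt$ with a concrete element of $\complexLieAlgebra[h]$. Since $\ctOp = \Ad{\exp(\ctLaOp)}$ is an inner automorphism, it carries $\mathfrak{sl}_2$-triples to $\mathfrak{sl}_2$-triples, so the coroot of $\ctOp(\simpleRoot)$ is $\ctOp(\coroot)$. By Proposition \ref{propCtAd}, $\ctOp(\coroot) = i\zLa$. Substituting into the formula for $\mac[\simpleRootb]$ gives
\[
\mac[\simpleRootb] \;=\; \expgc(-\pi i\, \coroot[\simpleRootb]) \;=\; \expgc(-\pi i \cdot i\zLa) \;=\; \expgc(\pi \zLa) \;=\; \mac,
\]
which is the claim. (Note $\pi\zLa \in \realLieAlgebra$, so both exponentials indeed define the same element of $\realGroupCover$.)

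The one point that needs attention is the sign ambiguity in the choice of root vector $\rootVector[\simpleRootb]$ for $\simpleRootb$, which, as explained in Remark \ref{remXbSign}, can affect $\mac[\simpleRootb]$ for \emph{long} roots. The argument above sidesteps this by expressing $\mac[\simpleRootb]$ via the formula $\expgc(-\pi i\,\coroot[\simpleRootb])$, which depends only on the root $\simpleRootb$ (equivalently, on the coroot $\coroot[\simpleRootb]$) and not on the normalization of $\rootVector[\simpleRootb]$. The original vector $\rootVector$ satisfying Lemma \ref{lemXaSign} is used only to produce the element $\zLa$ appearing on the other side, and flipping its sign simultaneously flips the sign of $\zLa$ and of $\ctLaOp$, leaving $\mac$ invariant by the same computation as in Remark \ref{remMaSign}. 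Thus the main (and only real) obstacle, consistency of the sign conventions built into Definitions \ref{defZa} and \ref{defZb}, is resolved automatically by the matching of exponents produced by Proposition \ref{propCtAd}.
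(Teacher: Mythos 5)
Your proof is correct and is essentially identical to the paper's: both identify $\coroot[{\ctOp(\simpleRoot)}]$ with $\ctOp(\coroot) = i\zLa$ via Proposition \ref{propCtAd} and then compute $\expgc(-\pi i \cdot i\zLa) = \expgc(\pi\zLa) = \mac$. One tiny caveat in your closing remark: for $\simpleRoot$ long, flipping the sign of $\rootVector$ does change $\mac$ (by Remark \ref{remMaSign}, $\expgc(\pi\zLa) = -\expgc(-\pi\zLa)$), but since the proposition fixes a choice of $\rootVector$ and the identity holds for either choice, this does not affect the argument.
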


\begin{proof}
The coroot for $\ctOp(\simpleRoot)$ is $\ctOp(\coroot) = i\zLa \in \complexLieAlgebraCt$ by Proposition \ref{propCtAd}. Therefore we have
\begin{eqnarray*}
\mac[{\ctOp(\simpleRoot)}] & = & \expgc(-\pi i(i\zLa)) \\
& = & \expgc(\pi\zLa) \\
& = & \mac
\end{eqnarray*}
as desired.
\end{proof}

For completeness, we now describe the situation for compact imaginary roots.

\begin{definition}
\label{defZc}
Suppose $\simpleRootc \in \imaginaryRoots[\cinv](\complexLieAlgebra, \complexLieAlgebra[h])$ is imaginary and compact and choose a root vector $\rootVector[\simpleRootc]$ as in Lemma \ref{lemXcSign}. Let $\zLa[\simpleRootc]  =  \rootVector[\simpleRootc] + \overline{\rootVector[\simpleRootc]}$ and define
\begin{eqnarray*}
\oa[\simpleRootc] & = & \expg(\frac{\pi}{2}\zLa[\simpleRootc]) \in \maxRealCompact \\
\oac[\simpleRootc] & = & \expgc(\frac{\pi}{2}\zLa[\simpleRootc]) \in \maxRealCompactCover \\
\ma[\simpleRootc] & = & \expg(\pi\zLa[\simpleRootc]) = \expg(\pi i\coroot[\simpleRootc]) \in \realTorus \\
\mac[\simpleRootc] & = & \expgc(\pi\zLa[\simpleRootc]) \in \realTorusCover.
\end{eqnarray*}
\end{definition}

\begin{remark}[\cite{TA}]
As before, Lemma \ref{lemXcSign} defines the vector $\rootVector[\simpleRootc]$ only up to sign. However
\begin{eqnarray*}
\expg(-\frac{\pi}{2}\zLa[\simpleRootc]) & = & \oa[\simpleRootc]^{-1} = \oa[\simpleRootc]\ma[\simpleRootc] \\
\expg(2\pi i\zLa[\simpleRootc]) & = & \ma[\simpleRootc]^{2} = 1 \\
\expgc(2\pi\zLa[\simpleRootc]) & = & \mac[\simpleRootc]^{2} = 1
\end{eqnarray*}
and the elements $\ma[\simpleRootc]$, $\mac[\simpleRootc]$ are both determined by the root $\simpleRootc$ (compare with Remark \ref{remXbSign}).
\end{remark}

\subsection{Structure of $\pi_{0}(\realTorus)$}
\label{ssAlgTorus}

In this section we determine the connected components of the $\cinv$-stable Cartan subgroups of $\realGroup = \realSpinGroup{p}{q}$. We begin with the following fundamental proposition.

\begin{proposition}[\cite{BC1}]
\label{propCartanStructure}
If $\realTorus \subset \realGroup$ is a $\cinv$-stable Cartan subgroup, then
\begin{eqnarray*}
\realTorus & \cong & \sOnen[a] \times \rCrossn[b] \times \cCrossn[c]
\end{eqnarray*}
with $a + b + 2c = \frac{p+q-1}{2} = n$. 
\end{proposition}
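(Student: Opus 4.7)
The plan is to establish the proposition by combining the involution/Cartan correspondence of Proposition \ref{propInvCartanMap} with an induction on the Cayley distance from a most compact Cartan subgroup. Given $\realTorus$, let $\inv \in \weylGroupInv$ represent its $\maxRealCompact$-conjugacy class (Definition \ref{defAbsInv}), and set $a = \numImaginaryBits$, $b = \numRealBits$, $c = \numComplexBits/2$. Because the imaginary, real, and complex bits of any involution in $\weylGroup$ partition $\{1,\ldots,n\}$, the dimension identity $a+b+2c = n = (p+q-1)/2$ is immediate from Definition \ref{defInvParams}.

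For the group-theoretic isomorphism I would argue by induction on the number of Cayley transforms required to reach $\realTorus$ from a most compact Cartan. The base case requires the existence of a compact Cartan subgroup in $\realGroup = \realSpinGroup{p}{q}$; this is automatic because $p+q = 2n+1$ is odd, so the maximal compact subgroup $\text{Spin}(p)\times\text{Spin}(q)$ has rank $\lfloor p/2\rfloor + \lfloor q/2\rfloor = n$, matching the complex rank of $\realGroup$. For such a Cartan the representing involution is trivial, so $a = n$, $b = c = 0$, and $\realTorus$ is a compact, connected, abelian Lie group of rank $n$, hence isomorphic to $\sOnen[n]$.

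For the inductive step I would perform a Cayley transform $\ctOpNci[\simpleRootb]$ in a noncompact imaginary root $\simpleRootb$ and track the change in the abstract pair via Proposition \ref{propAbsPairCt}, which modifies $\inv$ by $\rootReflection[\simpleRootb]$. If $\simpleRootb$ is short, $\rootReflection[\simpleRootb]$ flips a single imaginary bit to a real bit, sending $(a,b,c)$ to $(a-1,b+1,c)$, and the explicit formula $\realLieAlgebraCtNci[\simpleRootb] = \ker(\simpleRootb|_{\realLieAlgebra[h]}) \oplus \mathbb{R}(\rootVector[\simpleRootb] + \overline{\rootVector[\simpleRootb]})$ of Section \ref{ssCT} shows that one $\sOne$ factor of $\realTorus$ is converted into an $\rCross$ factor (the new generator being in the $(-1)$-eigenspace of $d\cinv$). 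If $\simpleRootb$ is long, $\rootReflection[\simpleRootb]$ merges two imaginary bits into a complex pair, sending $(a,b,c)$ to $(a-2,b,c+1)$, and a rank-two compact subtorus $\sOnen[2]$ is replaced by a single $\cCross$; in either case $a+b+2c$ is preserved. Since Cayley transforms in strongly orthogonal roots commute (Lemma \ref{lemCaCbMbCommute}, together with Proposition \ref{propCtInv}), the inductive step can be applied factor by factor.

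The main obstacle is verifying that the Lie algebra decomposition lifts to a genuine \emph{direct product} of groups rather than a nontrivial extension. The $\sOne$ and $\cCross$ pieces are connected, but each $\rCross$ factor carries a $\zTwo$ component that must be realized by a concrete element of $\realTorus$, the natural candidates being the $\ma[\simpleRoot]$ of Definition \ref{defZa} for the real roots produced by Cayley transform. By Proposition \ref{propMShort} these coincide for all short real roots (they all equal the nontrivial central element of $\realGroup$), so one must argue that precisely the right number of independent $\zTwo$ factors appear and that no unexpected relations creep in. The cleanest way to close this gap is to embed $\realTorus$ inside the complex torus $\complexTorus \cong \cCrossn[n]$ of the simply connected group $\complexGroup = \complexSpinGroup[2n+1]$, which is manifestly a direct product, and identify $\realTorus$ as the fixed-point set of the real structure acting in coordinates adapted to $\inv$; in these coordinates, the imaginary, real, and complex bits produce exactly the $\sOne$, $\rCross$, and $\cCross$ fixed-point pieces, completing the proof.
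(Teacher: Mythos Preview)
The paper does not actually prove this proposition: it is quoted from \cite{BC1} with no argument supplied, and the surrounding text only uses its consequence that $\pi_0(\realTorus)\cong\zTwo^b$. So there is no ``paper's own proof'' to compare against.

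Your sketch is a reasonable reconstruction of the standard argument, and the identification $a=\numImaginaryBits$, $b=\numRealBits$, $c=\numComplexBits/2$ is exactly what the paper uses implicitly throughout Sections \ref{ssAlgTorus}--\ref{AlgTorusCover}. Two comments. First, the Cayley-transform induction is largely redundant once you have the fixed-point argument in your last paragraph: writing $\realTorus=(\complexTorus)^{\sigma}$ with $\complexTorus\cong\cCrossn[n]$ and computing $\sigma$ in coordinates adapted to $\inv$ gives the product decomposition directly, and this bypasses the extension problem you correctly flag. You could drop the inductive portion entirely. Second, a small inaccuracy: the maximal compact subgroup of $\realSpinGroup{p}{q}$ is not literally $\text{Spin}(p)\times\text{Spin}(q)$ but its quotient by the diagonal central $\zTwo$; this does not affect the rank count, so your equal-rank conclusion (and hence the existence of a compact Cartan) survives.
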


In particular, $\pi_{0}(\realTorus) \cong \zTwo^{b}$ and it remains to determine $b$. To begin let $\realTorus \subset \realGroup$ be a $\cinv$-stable Cartan subgroup, choose a conjugation map
\[
\absConj[] : \absLieAlgebra \to \complexLieAlgebra[h],
\]
and suppose $\absPair$ denotes the corresponding abstract pair (Definition \ref{defAbsPair}). It is a remarkable fact that the numbers $a,b,c$ are determined entirely by (the conjugacy class of) $\inv$ \cite{BC1}. In particular, the structure of $\realTorus$ is independent of the grading $\grading$ and thus the particular real form $\realGroup$. By Remark \ref{remInvCartanMap} it suffices to consider only the split real form $\realGroupSplit = \realSpinGroup{n+1}{n}$ and for the remainder of this section we assume $\realGroup = \realGroupSplit$.

Suppose $\realTorusSplit \subset \realGroupSplit$ is a split Cartan subgroup and choose a simple system $\simpleRoots \subset \realRoots[-\identity](\complexLieAlgebra, \splitLieAlgebra) = \rootSystem(\complexLieAlgebra, \splitLieAlgebra)$. Since $\realTorusSplit$ is split, we have $\realTorusSplit \cong \rCrossn[n]$ and thus $\pi_{0}(\realTorusSplit) \cong \zTwo^{n}$. The following proposition gives explicit generators for the connected components of $\realTorusSplit$.

\begin{proposition}[\cite{AH}]
\label{propMCompGroup}
The elements $\left\{\ma \mid \simpleRoot \in \simpleRoots\right\}$ (Definition \ref{defZa}) live in distinct connected components of $\realTorusSplit$ and 
\[
\mGroupTorus[\realTorusSplit] = \left<~\ma \mid \simpleRoot \in \simpleRoots~\right>  \cong \pi_{0}(\realTorusSplit).
\]
In particular, the simple $\ma$ are a basis for the component group of $\realTorusSplit$.
\end{proposition}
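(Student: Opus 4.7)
The plan is to identify $\pi_{0}(\realTorusSplit)$ with $\zTwo^{n}$ and separate the $\ma$ using algebraic characters of $\complexTorus$. First, Proposition \ref{propCartanStructure} (applied with $a = c = 0$ and $b = n$) gives $\realTorusSplit \cong \rCrossn[n]$, so $\pi_{0}(\realTorusSplit) \cong \zTwo^{n}$. It therefore suffices to exhibit $n$ characters of $\pi_{0}(\realTorusSplit)$ that distinguish the $n$ elements $\left\{\ma \mid \simpleRoot \in \simpleRoots\right\}$ in order to conclude they form a $\zTwo$-basis; this will simultaneously give the distinct-components claim and the isomorphism with $\pi_{0}(\realTorusSplit)$.

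For the separating characters, I would invoke the fact that $\complexGroup = \complexSpinGroup[2n+1]$ is simply connected, so the character lattice of $\complexTorus$ is the full weight lattice $\weightLattice$ of $B_{n}$. Each fundamental weight $\omega_{i}$ therefore exponentiates to an algebraic character $e^{\omega_{i}} : \complexTorus \to \cCross$, whose restriction to the split form $\realTorusSplit$ takes values in $\rCross$ and hence induces a homomorphism $\pi_{0}(\realTorusSplit) \to \left\{\pm 1\right\}$. Using $\ma = \expg(\pi i \coroot)$ (Definition \ref{defZa}) together with the fact that the $\omega_{i}$ are dual to the simple coroots, a direct computation yields
\[
e^{\omega_{i}}(\ma[{\simpleRootai[j]}]) = e^{\pi i \langle \omega_{i}, \simpleRootai[j]^{\vee}\rangle} = (-1)^{\delta_{ij}},
\]
which is exactly the separation needed to conclude that the simple $\ma$ are linearly independent in $\pi_{0}(\realTorusSplit)$ and hence form a $\zTwo$-basis of the component group.

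The identification $\mGroupTorus[\realTorusSplit] = \langle \ma \mid \simpleRoot \in \simpleRoots\rangle$ then follows by noting that every real coroot of $B_{n}$ is a nonnegative integer combination of simple coroots (replacing $\simpleRoot$ by $-\simpleRoot$ if necessary, which is harmless since $\ma^{2} = 1$), and iterated application of Proposition \ref{propMStruct} expresses every $\ma$ as a product of simple $\ma[{\simpleRootai[i]}]$. The subtlest point in the plan is the reliance on $\complexGroup$ being simply connected, which is exactly what makes the spin weight $\omega_{n}$ available as a character of $\complexTorus$ and allows one to separate $\ma[{\simpleRootai[n]}]$ from the elements detected by $\omega_{1}, \ldots, \omega_{n-1}$; in a non-simply connected form this last fundamental weight would be unavailable and the argument would fail precisely at the short simple root.
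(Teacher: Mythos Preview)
The paper does not supply its own proof of this proposition; it simply cites \cite{AH}. So there is nothing to compare against directly, and the question is whether your argument stands on its own.

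It does. The key computation $e^{\omega_i}(\ma[{\simpleRootai[j]}]) = e^{\pi i \langle \omega_i, \simpleRootai[j]^{\vee}\rangle} = (-1)^{\delta_{ij}}$ is correct, and it shows that the map $\pi_{0}(\realTorusSplit) \to \{\pm 1\}^{n}$ induced by the fundamental-weight characters sends the images of the simple $\ma$ to the standard basis, hence they are $\zTwo$-independent. Since $\pi_{0}(\realTorusSplit) \cong \zTwo^{n}$ by Proposition~\ref{propCartanStructure}, this forces them to be a basis. The reduction $\mGroupTorus[\realTorusSplit] = \langle \ma \mid \simpleRoot \in \simpleRoots\rangle$ via Proposition~\ref{propMStruct} is already recorded in the paper immediately after that proposition, so that step is unproblematic. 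One small point of phrasing: the character $e^{\omega_i} : \realTorusSplit \to \rCross$ does not itself factor through $\pi_{0}$; what you are really using is the induced map $\pi_{0}(\realTorusSplit) \to \pi_{0}(\rCross) = \{\pm 1\}$, i.e.\ the sign of $e^{\omega_i}$. You effectively say this, but it would be cleaner to make it explicit. Your closing remark about simple connectedness being essential (so that $\omega_{n}$ exponentiates) is a nice observation and exactly the right diagnosis of where the argument would break for $\mathrm{SO}(2n+1)$.
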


More generally, the following proposition justifies our interest in $\mGroupTorus$.

\begin{proposition}
\label{propMCompGroupCT}
Let $\realTorus \subset \realGroup$ be a $\cinv$-stable Cartan subgroup. Then
\[
\realTorus = \mGroupTorus\realTorusId.
\]
In particular, the simple $\ma$ generate the component group of $\realTorus$.
\end{proposition}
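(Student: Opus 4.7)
The plan is to proceed by downward induction from the split Cartan using Cayley transforms in real roots. Every $\cinv$-stable Cartan subgroup of $\realGroup$ is $\maxRealCompact$-conjugate to one obtained from the split Cartan $\realTorusSplit$ via a sequence of Cayley transforms in real roots, so it suffices to show the claim is preserved under each such individual transform. The base case $\realTorus = \realTorusSplit$ is precisely Proposition \ref{propMCompGroup}.

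For the inductive step, suppose $\realTorus = \mGroupTorus \cdot \realTorusId$, fix a real root $\simpleRoot \in \realRoots[\cinv](\complexLieAlgebra, \complexLieAlgebra[h])$, and let $\realTorusCt = \ctOp(\realTorus)$ be the Cayley transform, so that by Definition \ref{defCT} the new Cartan subalgebra $\realLieAlgebraCt = \ker(\simpleRoot|_{\realLieAlgebra[h]}) \oplus \mathbb{R}\zLa$ contains $\zLa$. The critical observation is that $\ma = \expg(\pi\zLa) \in \realTorus$ also lies in $\realTorusCt$ — it centralizes $\realLieAlgebraCt$ since $\Ad{\ma}$ fixes both $\coroot$ and $\zLa$ — and moreover lies in the identity component $\realTorusCtId$, because $\ma \in \expg(\realLieAlgebraCt) \subseteq \realTorusCtId$. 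Thus the generator of $\pi_0(\realTorus)$ indexed by $\simpleRoot$ is absorbed into $\realTorusCtId$. Meanwhile each real root $\simpleRootb$ of $\realTorus$ strongly orthogonal to $\simpleRoot$ remains a real root of $\realTorusCt$ of the same type (the corresponding root vector $\rootVector[\simpleRootb]$ commutes with $\ctLaOp$ by strong orthogonality), and Lemma \ref{lemCaCbMbCommute} ensures the corresponding $\ma[\simpleRootb]$ still centralize the new Cartan, persisting as generators of $\mGroupTorus[\realTorusCt]$.

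Combining this with Proposition \ref{propCartanStructure} — which forces $|\pi_0(\realTorusCt)| = |\pi_0(\realTorus)|/2$ since Cayley transform converts one $\rCross$ factor of $\realTorus$ to an $\sOne$ factor of $\realTorusCt$ — shows $\mGroupTorus[\realTorusCt]$ surjects onto $\pi_0(\realTorusCt)$, completing the induction. The main technical obstacle is the case analysis required in type B when $\simpleRoot$ is a long real root, since then Cayley transform can convert a pair of $\rCross$ factors into a single $\cCross$ factor rather than performing a clean $\rCross \to \sOne$ exchange, and one must also check that Cayley transform interacts correctly with the central element of $\realGroup$. Here Proposition \ref{propMStruct}'s identity $\ma[\simpleRootc] = \ma[\simpleRoota]\ma[\simpleRootb]$ for coroot sums $\rootCheck[\simpleRootc] = \rootCheck[\simpleRoota] + \rootCheck[\simpleRootb]$, together with Proposition \ref{propMShort}'s identification of $\ma$ for short real roots with the nontrivial central element, will be used to track the component-group changes uniformly across short and long cases.
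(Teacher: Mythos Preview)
The paper does not give a self-contained argument here: its proof is a one-line citation to \cite{AH}, with the remark that connectedness of $\realGroup$ is what reduces the general statement there to the clean form $\realTorus = \mGroupTorus\realTorusId$. Your inductive approach via Cayley transforms from the split Cartan is therefore a genuinely different route, and the skeleton is reasonable --- the base case is Proposition~\ref{propMCompGroup}, and the observation that $\ma \in \realTorusCtId$ is exactly Proposition~\ref{propMaIdCompH}, which the paper establishes independently.

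That said, the inductive step as written has a real gap. You assert that the counting ``$|\pi_0(\realTorusCt)| = |\pi_0(\realTorus)|/2$ \ldots\ shows $\mGroupTorus[\realTorusCt]$ surjects onto $\pi_0(\realTorusCt)$'', but knowing that certain $\ma[\simpleRootb]$ lie in $\mGroupTorus[\realTorusCt]$ and knowing the size of the target does not yield surjectivity unless you also control how many \emph{distinct} components those $\ma[\simpleRootb]$ hit. The inductive hypothesis gives surjectivity of $\mGroupTorus \to \pi_0(\realTorus)$, but there is no natural map between $\pi_0(\realTorus)$ and $\pi_0(\realTorusCt)$ along which to transport it: in general $T \not\subset T_\alpha$, since $t \in T$ centralizes $\zLa$ only when $\rootChar(t)=1$, and $\rootChar(\ma[\simpleRootb]) = (-1)^{(\simpleRoota,\rootCheck[\simpleRootb])}$ need not be trivial. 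The long-root case you flag --- where two $\rCross$ factors collapse to a single $\cCross$ and $|\pi_0|$ drops by $4$ --- makes this worse: two generators must be absorbed into $\realTorusCtId$, and you have only identified one of them. The machinery needed to close this gap is precisely what the paper develops \emph{after} this proposition (Corollary~\ref{corMCtStruct} and Proposition~\ref{propCompH}), and Proposition~\ref{propCompH} in fact cites the present result in its proof, so invoking it here would be circular. To make your argument stand on its own you would have to establish those component-group identities directly, which is essentially reproving that later material from scratch.
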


\begin{proof}
This is discussed in greater generality in \cite{AH}. The specialization to the above form is justified by the fact that $\realGroup$ is connected.
\end{proof}

To compute the component groups of the nonsplit Cartan subgroups, we build each one (up to conjugacy) from $\realTorusSplit$ via an iterated sequence of Cayley transforms (Definition \ref{defCT}). Along the way, we track the $\ma$ from Proposition \ref{propMCompGroup} that remain in distinct connected components. The following proposition and corollary describes how to do this.

\begin{proposition}
\label{propMaRootSpace}
Let $\realTorus \subset \realGroup$ be a $\cinv$-stable Cartan subgroup and suppose $\simpleRoota, \simpleRootb \in \rootSystem(\complexLieAlgebra, \complexLieAlgebra[h])$ with $\simpleRootb$ real or imaginary. If $\rootVector[\simpleRoota]$ is any nonzero root vector for $\simpleRoota$ we have
\[
\Ad{\ma[\simpleRootb]}(\rootVector[\simpleRoota]) = (-1)^{(\simpleRoota, \rootCheck[\simpleRootb])}\rootVector[\simpleRoota].
\]
\end{proposition}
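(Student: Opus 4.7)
The plan is to reduce all three cases (real, imaginary noncompact, imaginary compact) to a single exponential formula and then apply the standard identity $\Ad{\exp(X)} = \exp(\ad(X))$ on the adjoint representation.

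First, I would unify the description of $\ma[\simpleRootb]$. Inspecting Definitions \ref{defZa}, \ref{defZb}, and \ref{defZc}, in every case the element $\ma[\simpleRootb]$ can be written as
\[
\ma[\simpleRootb] \;=\; \expg\!\bigl(\pi i \,\coroot[\simpleRootb]\bigr),
\]
where $\coroot[\simpleRootb] \in \complexLieAlgebra[h]$ is the coroot of $\simpleRootb$. (For real roots this is explicit in Definition \ref{defZa}; for noncompact imaginary roots it is Definition \ref{defZb}; and the compact imaginary case is the content of Definition \ref{defZc}, although it is not needed here.) The sign ambiguity of $\rootVector[\simpleRootb]$ noted in Remarks \ref{remMaSign} and \ref{remXbSign} is irrelevant because $\ma[\simpleRootb]$ depends only on $\coroot[\simpleRootb]$ (up to $\pm 1$, which squares to the identity already built into the expression).

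Next I would compute $\Ad{\ma[\simpleRootb]}(\rootVector[\simpleRoota])$ directly. Using $\Ad{\expg(X)} = \exp(\ad(X))$, we have
\[
\Ad{\ma[\simpleRootb]}(\rootVector[\simpleRoota]) \;=\; \exp\!\bigl(\pi i\,\ad(\coroot[\simpleRootb])\bigr)(\rootVector[\simpleRoota]).
\]
Since $\rootVector[\simpleRoota]$ lies in the root space $\rootSpace[\simpleRoota]$, it is an eigenvector for $\ad(\coroot[\simpleRootb])$ with eigenvalue $\simpleRoota(\coroot[\simpleRootb]) = (\simpleRoota, \rootCheck[\simpleRootb])$. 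Therefore
\[
\Ad{\ma[\simpleRootb]}(\rootVector[\simpleRoota]) \;=\; e^{\pi i (\simpleRoota, \rootCheck[\simpleRootb])}\,\rootVector[\simpleRoota].
\]
Finally, because $(\simpleRoota,\rootCheck[\simpleRootb]) \in \mathbb{Z}$ (the pairing of a root with a coroot), we have $e^{\pi i (\simpleRoota, \rootCheck[\simpleRootb])} = (-1)^{(\simpleRoota,\rootCheck[\simpleRootb])}$, yielding the claimed formula.

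There is really no substantial obstacle here: the only non-routine aspect is checking that the three separate constructions of $\ma[\simpleRootb]$ in Definitions \ref{defZa}--\ref{defZc} all produce the same exponential $\expg(\pi i \coroot[\simpleRootb])$, after which the identity is a one-line computation in the adjoint representation together with the integrality of root-coroot pairings.
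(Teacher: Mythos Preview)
Your proof is correct and is essentially identical to the paper's own argument: the paper also writes $\ma[\simpleRootb] = \expg(\pi i\coroot[\simpleRootb])$, applies $\Ad{\exp} = \exp(\ad)$, expands the exponential series, and uses $\ad(\coroot[\simpleRootb])(\rootVector[\simpleRoota]) = (\simpleRoota,\rootCheck[\simpleRootb])\rootVector[\simpleRoota]$ together with integrality of the pairing. The only difference is cosmetic---the paper writes out the power series explicitly whereas you invoke the eigenvector property directly.
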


\begin{proof}
From the definition of $\ma[\simpleRootb]$
\begin{eqnarray*}
\Ad{\ma[\simpleRootb]}(\rootVector[\simpleRoota]) & = & \Ad{\expg(\pi i\coroot[\simpleRootb])}(\rootVector[\simpleRoota]) \\
& = & e^{\ad{\pi i\coroot[\simpleRootb]}}(\rootVector[\simpleRoota]) \\
& = & \sum_{k=0}^{\infty}\frac{(\pi i)^{k}}{k!}\ad{\coroot[\simpleRootb]}^{k}(\rootVector[\simpleRoota]) \\
& = & \sum_{k=0}^{\infty}\frac{(\pi i)^{k}}{k!}(\simpleRoota, \rootCheck[\simpleRootb])^{k}\rootVector[\simpleRoota] \\
& = & e^{\pi i(\simpleRoota, \rootCheck[\simpleRootb])}\rootVector[\simpleRoota] \\
& = & (-1)^{(\simpleRoota, \rootCheck[\simpleRootb])}\rootVector[\simpleRoota]
\end{eqnarray*}
as desired.
\end{proof}

\begin{corollary} \label{corMaRootSpace}
In the setting of Proposition \ref{propMaRootSpace}, suppose $\simpleRoota$ and $\simpleRootb$ are distinct (positive) roots in $\realRoots[\cinv](\complexLieAlgebra, \complexLieAlgebra[h])$ with $\simpleRootb$ long. Then $\ma[\simpleRootb] \in \realTorusCt$ if and only if $\simpleRoota$ and $\simpleRootb$ are orthogonal. If $\simpleRootb$ is short, then $\ma[\simpleRootb] \in \realTorusCt$.
\end{corollary}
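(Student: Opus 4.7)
The plan is to translate the condition $\ma[\simpleRootb] \in \realTorusCt$ into a statement about the adjoint action of $\ma[\simpleRootb]$ on the Cayley-transformed Cartan subalgebra. Since a Cartan subgroup here is the centralizer in $\realGroup$ of the corresponding Cartan subalgebra (as in Definition~\ref{defCSGCover}), we have $\ma[\simpleRootb] \in \realTorusCt$ if and only if $\Ad{\ma[\simpleRootb]}$ acts trivially on $\realLieAlgebraCt = \ker(\simpleRoota|_{\realLieAlgebra[h]}) \oplus \mathbb{R}(\rootVector[\simpleRoota] + \cinv \rootVector[\simpleRoota])$. The kernel piece already lies in $\realLieAlgebra[h]$, where $\Ad{\ma[\simpleRootb]}$ is trivial, so the entire problem reduces to understanding the action of $\Ad{\ma[\simpleRootb]}$ on the line $\mathbb{R}(\rootVector[\simpleRoota] + \cinv \rootVector[\simpleRoota])$.

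The short-$\simpleRootb$ case is then immediate: Proposition~\ref{propMShort} places $\ma[\simpleRootb]$ in $Z(\realGroup)$, which is contained in every Cartan subgroup, so $\ma[\simpleRootb] \in \realTorusCt$ automatically. For the long-$\simpleRootb$ case I would apply Proposition~\ref{propMaRootSpace} twice. Since $\simpleRoota$ is real, $\cinv \rootVector[\simpleRoota]$ is a nonzero root vector for $-\simpleRoota$, and Proposition~\ref{propMaRootSpace} yields the \emph{same} scalar on both summands:
\[
\Ad{\ma[\simpleRootb]}(\rootVector[\simpleRoota] + \cinv \rootVector[\simpleRoota]) = (-1)^{(\simpleRoota, \rootCheck[\simpleRootb])}\rootVector[\simpleRoota] + (-1)^{(-\simpleRoota, \rootCheck[\simpleRootb])}\cinv \rootVector[\simpleRoota] = (-1)^{(\simpleRoota, \rootCheck[\simpleRootb])}(\rootVector[\simpleRoota] + \cinv \rootVector[\simpleRoota]).
\]
Hence $\ma[\simpleRootb] \in \realTorusCt$ if and only if the integer $(\simpleRoota, \rootCheck[\simpleRootb])$ is even.

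What remains is a short type-$B_{n}$ root-system computation using the coordinates of Definition~\ref{defAbsCartan}. The coroot $\rootCheck[\simpleRootb]$ of a long root $\pm \ei \pm \ei[j]$ has the form $\pm \ei \pm \ei[j]$, so for any root $\simpleRoota$ the pairing $(\simpleRoota, \rootCheck[\simpleRootb])$ takes values in $\left\{0, \pm 1, \pm 2\right\}$. The value $\pm 2$ forces $\simpleRoota = \pm \simpleRootb$, which is excluded by the distinctness and positivity hypotheses, so $(\simpleRoota, \rootCheck[\simpleRootb]) \in \left\{0, \pm 1\right\}$. This quantity is therefore even precisely when it is zero, equivalently when $\simpleRoota$ and $\simpleRootb$ are orthogonal. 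There is no serious obstacle in this argument; the only points requiring a bit of care are verifying that the two summands $\rootVector[\simpleRoota]$ and $\cinv \rootVector[\simpleRoota]$ transform by the \emph{same} scalar under $\Ad{\ma[\simpleRootb]}$ (which uses the reality of $\simpleRoota$) and ruling out the parallel case via the standing distinctness and positivity assumptions.
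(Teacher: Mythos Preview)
Your proof is correct and follows essentially the same route as the paper: reduce membership in $\realTorusCt$ to the adjoint action on $\mathbb{R}(\rootVector[\simpleRoota] + \cinv\rootVector[\simpleRoota])$, invoke Proposition~\ref{propMaRootSpace} to get the scalar $(-1)^{(\simpleRoota,\rootCheck[\simpleRootb])}$, handle short $\simpleRootb$ via Proposition~\ref{propMShort}, and for long $\simpleRootb$ use the type-$B$ pairing values to see that evenness is equivalent to orthogonality. You are slightly more explicit than the paper in checking that $\rootVector[\simpleRoota]$ and $\cinv\rootVector[\simpleRoota]$ transform by the same scalar, but otherwise the arguments coincide.
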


\begin{proof}
Recall
\[
\realLieAlgebraCt =  \text{ker}(\simpleRoot|_{\realLieAlgebra[h]}) \oplus \mathbb{R}(\rootVector +\cinv\rootVector)
\]
and $\realTorusCt$ is the corresponding Cartan subgroup. Since $\ma[\simpleRootb] \in \realTorus$, $\ma[\simpleRootb]$ centralizes $\text{ker}(\simpleRoot|_{\realLieAlgebra[h]})$ by definition. Therefore $\ma[\simpleRootb]$ will be an element of $\realTorusCt$ if and only if $\Ad{\ma[\simpleRootb]}$ centralizes $\rootVector$. If $\simpleRootb$ is long, this will happen if and only if
\[
(\simpleRoota, \rootCheck[\simpleRootb]) = (\simpleRoota, \simpleRootb) \in 2\mathbb{Z}.
\]
by Proposition \ref{propMaRootSpace}. Since $\simpleRoota \ne \pm\simpleRootb$ we have $(\simpleRoota, \simpleRootb) \in \left\{0,1,-1\right\}$ and the first result follows. If $\simpleRootb$ is short the result follows from Proposition \ref{propMShort}.
\end{proof}

\begin{definition}
\label{defMGroupCt}
Suppose $\realTorus \subset \realGroup$ is a $\cinv$-stable Cartan subgroup and $\simpleRoota \in \realRoots[\cinv](\complexLieAlgebra, \complexLieAlgebra[h])$. Let
\[
\mGroupTorusCt = \mGroupTorus \cap \realTorusCt
\]
denote the subgroup of elements in $\mGroupTorus$ that are also in $\realTorusCt$. 
\end{definition}

\begin{proposition} 
\label{propMgroupCtContain}
In the setting of Definition \ref{defMGroupCt} we have
\[
\mGroupTorus[\realTorusCt] \subseteq \mGroupTorusCt.
\]
\end{proposition}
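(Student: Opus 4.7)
The plan is to check the inclusion generator-by-generator. By Definition \ref{defMGroup}, $\mGroupTorus[\realTorusCt]$ is generated by the elements $\ma[\simpleRootb']$ as $\simpleRootb'$ ranges over real roots for $\realTorusCt$, so it suffices to show each such $\ma[\simpleRootb']$ lies in $\mGroupTorusCt = \mGroupTorus \cap \realTorusCt$. Membership in $\realTorusCt$ is automatic: since $\coroot[\simpleRootb'] \in \complexLieAlgebraCt$ and $\complexLieAlgebraCt$ is abelian, $\ma[\simpleRootb'] = \expg(\pi i\coroot[\simpleRootb'])$ centralizes $\complexLieAlgebraCt$. The real content is showing $\ma[\simpleRootb'] \in \mGroupTorus$.

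First I would identify which roots $\simpleRootb \in \rootSystem(\complexLieAlgebra,\complexLieAlgebra[h])$ correspond to real roots $\simpleRootb'$ of $\realTorusCt$ under $\ctOp$. The real-Cayley analog of Proposition \ref{propAbsPairCt} --- which follows from the same computation, using the identity $\cinv \ctOp \cinv^{-1} = \ctOp^{-1}$ (immediate from $\cinv\ctLaOp = -\ctLaOp$) --- says that the action of $\cinv$ on $\rootSystem(\complexLieAlgebra,\complexLieAlgebraCt)$ pulls back to $\rootReflection[\simpleRoota]\inv$ on $\rootSystem(\complexLieAlgebra,\complexLieAlgebra[h])$. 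Writing out the condition $\rootReflection[\simpleRoota]\inv(\simpleRootb) = -\simpleRootb$ and rearranging yields
\[
\inv(\simpleRootb) + \simpleRootb = (\simpleRootb, \rootCheck[\simpleRoota])\simpleRoota.
\]
The key observation is that the left-hand side is $\cinv$-fixed while the right-hand side lies in the $-1$-eigenspace of $\cinv$ (because $\simpleRoota$ is real). The only vector in both eigenspaces is zero, so both sides must vanish; this forces simultaneously $\inv(\simpleRootb) = -\simpleRootb$, so $\simpleRootb$ is itself real for $\realTorus$, and $(\simpleRootb, \rootCheck[\simpleRoota]) = 0$, so $\simpleRootb$ is orthogonal to $\simpleRoota$.

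Having located $\simpleRootb$ as a real root of $\realTorus$ orthogonal to $\simpleRoota$, I would apply the real-Cayley analog of Lemma \ref{lemCtCoroot} (a one-line check that $[\ctLaOp, \coroot[\simpleRootb]] = 0$ under orthogonality) to conclude $\coroot[\simpleRootb'] = \ctOp(\coroot[\simpleRootb]) = \coroot[\simpleRootb]$. Therefore
\[
\ma[\simpleRootb'] = \expg(\pi i\coroot[\simpleRootb']) = \expg(\pi i\coroot[\simpleRootb]) = \ma[\simpleRootb] \in \mGroupTorus,
\]
completing the argument.

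The main (if mild) obstacle is the identification step: one must rule out \emph{a priori} the possibility of real roots of $\realTorusCt$ arising from roots of $\realTorus$ that are not themselves real, or that fail to be orthogonal to $\simpleRoota$. The eigenspace argument above eliminates this cleanly, and it is the only place where the specific character of a real-root Cayley transform (as opposed to a noncompact-imaginary one) enters the proof.
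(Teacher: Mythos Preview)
Your proof is correct and follows the same approach as the paper: both argue that the real roots of $\realTorusCt$ are exactly the real roots of $\realTorus$ orthogonal to $\simpleRoota$, so that the generators $\ma[\simpleRootb']$ of $\mGroupTorus[\realTorusCt]$ already lie in $\mGroupTorus$. The paper simply asserts this identification in one line, whereas you supply the eigenspace argument that justifies it; otherwise the arguments coincide.
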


\begin{proof}
$\mGroupTorus[\realTorusCt]$ is contained in $\realTorusCt$ by definition. Moreover we have
\begin{eqnarray*}
\mGroupTorus[\realTorusCt] & = & \left<~\ma[\simpleRootb] \in \realTorusCt \mid \simpleRootb \in \realRoots[\cinv](\complexLieAlgebra, \complexLieAlgebraCt) ~\right> \\
& = & \left<~\ma[\simpleRootb] \in \realTorus \mid \simpleRootb \in \realRoots[\cinv](\complexLieAlgebra, \complexLieAlgebra[h]) \text{ and } (\simpleRoota, \simpleRootb) = 0 ~\right> \\
& \subseteq & \mGroupTorus
\end{eqnarray*}
as desired.
\end{proof}

Let $\realTorus = \realGroup[T]\realGroup[A]$ be a $\cinv$-stable Cartan subgroup of $\realGroup$ and suppose $\simpleRoota \in \realRoots[\cinv](\complexLieAlgebra, \complexLieAlgebra[h])$. Choose a nonzero root vector $\rootVector[\simpleRoota] \in \rootSpace$ and set
\[
\sOneCt = \text{exp}_{\realGroup}\left\{\mathbb{R}(\rootVector +\cinv\rootVector)\right\}.
\]
In particular, $\sOneCt \subset \realTorusCtGp{T}$ is a connected compact abelian subgroup of $\realTorusCt$.

\begin{proposition}[\cite{VGr}, Lemma 8.3.13] \label{propMaIdCompH}
In the above setting,
\[
\realGroup[T] \cap \sOneCt = \left\{1,\ma\right\}.
\]
In particular $\ma \in \realTorusCtId$, the identity component of $\realTorusCt$.
\end{proposition}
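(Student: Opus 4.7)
The plan is to reduce the claim to the standard intersection of $SO(2)$ with the split Cartan inside $SL(2,\mathbb{R})$, by exploiting the real $\mathfrak{sl}_{2}$-triple hidden in $\complexLieAlgebra$ associated to the real root $\simpleRoota$.

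First I would observe that $\sOneCt$ is actually contained in $\maxRealCompact$. By Lemma \ref{lemXaSign}, $\rootVector \in \realLieAlgebra$, hence $\zLa := \rootVector + \cinv\rootVector \in \realLieAlgebra$, and since $\cinv(\zLa) = \zLa$ we have $\zLa \in \realLieAlgebra[k]$. The vector $\zLa$ is nonzero because $\rootVector \in \rootSpace$ while $\cinv\rootVector \in \rootSpaceMinus$. Thus $\sOneCt$ is a circle contained in $\maxRealCompact$, and $\realGroup[T] \cap \sOneCt = \realTorus \cap \sOneCt$. It remains to determine which elements $g(t) := \expg(t\zLa)$ centralize $\realLieAlgebra[h]$.

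Because $\simpleRoota$ is real, $\coroot \in \realLieAlgebra[h]$ and we have the direct-sum decomposition $\realLieAlgebra[h] = \mathbb{R}\coroot \oplus \ker(\simpleRoota|_{\realLieAlgebra[h]})$. For $X \in \ker(\simpleRoota|_{\realLieAlgebra[h]})$ we have $[X,\rootVector] = [X,\cinv\rootVector] = 0$, so $\Ad{g(t)}$ fixes $\ker(\simpleRoota|_{\realLieAlgebra[h]})$ pointwise, and it remains to track its action on $\coroot$. Setting $\zLa' := \rootVector - \cinv\rootVector$ and using $[\cinv\rootVector,\rootVector] = \coroot$, a direct calculation gives the bracket relations
\begin{eqnarray*}
[\coroot,\zLa] = 2\zLa',\qquad [\zLa,\zLa'] = 2\coroot,
\end{eqnarray*}
so on $\text{span}(\coroot,\zLa')$ the operator $\ad{\zLa}$ has matrix $\begin{pmatrix} 0 & 2 \\ -2 & 0 \end{pmatrix}$. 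Exponentiating yields $\Ad{g(t)}(\coroot) = \cos(2t)\coroot - \sin(2t)\zLa'$, and since $\coroot$ and $\zLa'$ are linearly independent this equals $\coroot$ precisely when $t \in \pi\mathbb{Z}$. Combined with $\ma = g(\pi)$ and $\ma^{2} = 1$ from Remark \ref{remMaSign}, this gives $\realTorus \cap \sOneCt = \{g(\pi k) : k \in \mathbb{Z}\} = \{1,\ma\}$, proving the first assertion.

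For the identity-component claim, by the description of $\realLieAlgebraCt$ in Section \ref{ssCT} we have $\mathbb{R}\zLa \subset \realLieAlgebraCt$, so $\sOneCt = \expg(\mathbb{R}\zLa)$ is a connected subgroup of $\realTorusCt$ containing the identity, whence $\sOneCt \subset \realTorusCtId$ and in particular $\ma = g(\pi) \in \realTorusCtId$. The main pitfall is the sign bookkeeping in the $\mathfrak{sl}_{2}$ bracket relations — one needs to use carefully that $\simpleRoota$ is real (so $\cinv\rootVector$ lies in $\rootSpaceMinus$ rather than $\rootSpace$) and the exact sign convention of Lemma \ref{lemXaSign}; beyond this, the argument is a straightforward $SL(2,\mathbb{R})$ computation.
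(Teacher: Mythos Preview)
Your proof is correct. The paper itself does not supply an argument for this proposition but simply invokes \cite{VGr}, Lemma 8.3.13; your direct $\mathfrak{sl}_{2}$ computation is exactly the standard argument underlying that reference, so there is nothing to compare.
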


\begin{corollary} \label {corMCtStruct}
Let $\realTorus \subset \realGroup$ be a $\cinv$-stable Cartan subgroup and suppose $\simpleRoota \in \realRoots[\cinv](\complexLieAlgebra, \complexLieAlgebra[h])$. Then
\[
\mGroupTorusCt = \left<~ \mGroupTorus[\realTorusCt],~\ma~\right>.
\]
\end{corollary}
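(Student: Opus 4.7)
The plan is to verify the two containments separately. The easy direction $\left<\mGroupTorus[\realTorusCt], \ma\right> \subseteq \mGroupTorusCt$ follows at once: Proposition \ref{propMgroupCtContain} places $\mGroupTorus[\realTorusCt]$ inside $\mGroupTorusCt$, and Proposition \ref{propMaIdCompH} gives $\ma \in \realTorusCtId \subseteq \realTorusCt$, so $\ma \in \mGroupTorus \cap \realTorusCt = \mGroupTorusCt$.

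For the reverse, I would take $y \in \mGroupTorusCt$ and write $y = \prod_{i} \ma[\simpleRootb_i]$ using the real-root generators of $\mGroupTorus$ from Definition \ref{defMGroup}. Corollary \ref{corMaRootSpace} identifies the only factors failing to lie in $\realTorusCt$ individually as the ``bad'' ones, where $\simpleRootb_i$ is long and non-orthogonal to $\simpleRoota$. Proposition \ref{propMaRootSpace} shows each bad factor multiplies $\rootVector[\simpleRoota]$ by $-1$, so the hypothesis $y \in \realTorusCt$ forces the number of bad factors to be even and allows us to pair them. The remaining factors break into three types: the long orthogonal ones lie in $\mGroupTorus[\realTorusCt]$ by the computation in the proof of Proposition \ref{propMgroupCtContain}, the factor $\ma[\simpleRoota] = \ma$ (if it appears) contributes a power of $\ma$, and each short factor equals the nontrivial central element $z$ by Proposition \ref{propMShort}.

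It therefore suffices to show that (i) every product $\ma[\simpleRootb]\ma[\simpleRootc]$ of two bad generators lies in $\left<\mGroupTorus[\realTorusCt], \ma\right>$ and (ii) the central element $z$ does as well. For (ii), either $\simpleRoota$ is short and $\ma = z$ directly, or $\simpleRoota = \ei[i]-\ei[j]$ is long and the identity $\rootCheck[\ei[i]+\ei[j]] + \rootCheck[\simpleRoota] = 2\ei[i] = \rootCheck[\ei[i]]$ combined with Proposition \ref{propMStruct} yields $z = \ma \cdot \ma[\ei[i]+\ei[j]]$ with $\ma[\ei[i]+\ei[j]] \in \mGroupTorus[\realTorusCt]$. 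For (i), I would work in the coordinates of Definition \ref{defAbsCartan} with $\simpleRoota = \ei[i] \pm \ei[j]$ long or $\simpleRoota = \ei[i]$ short, in which case every bad root takes the form $\pm\ei[p] \pm \ei[k]$ with $p \in \{i,j\}$ (or $p = i$ if $\simpleRoota$ is short) and $k$ another real bit. The small number of configurations for a pair $(\simpleRootb, \simpleRootc)$ can then be dispatched by direct coroot identities: in each case one exhibits either $\rootCheck[\simpleRootb] - \rootCheck[\simpleRootc] = \rootCheck[\delta]$ for a long real root $\delta \perp \simpleRoota$, or $\rootCheck[\simpleRootb] + \rootCheck[\simpleRootc] = \rootCheck[\ei[i]]$, or an identity passing through an intermediate long coroot $\rootCheck[\simpleRootc] \pm \rootCheck[\simpleRoota]$ that brings in a third orthogonal real root; iterated application of Proposition \ref{propMStruct} then places $\ma[\simpleRootb]\ma[\simpleRootc]$ in $\left<\mGroupTorus[\realTorusCt], \ma\right>$ using (ii) to handle any leftover central factor. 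The main obstacle I anticipate is keeping this case analysis exhaustive; organizing bad roots by which element of $\{i,j\}$ they share with $\simpleRoota$ reduces it to a short and manageable list.
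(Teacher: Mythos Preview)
Your proposal is correct and follows the same overall two-containment strategy as the paper, with the easy inclusion handled identically. The difference lies entirely in the reverse inclusion, and it is worth noting because the paper's route is substantially shorter.

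You write $y$ as a product of $m_{\beta_i}$ over arbitrary real roots and then launch a coordinate-by-coordinate case analysis on pairs of bad factors, together with a separate argument that the central element $z$ lies in the target. This works, but the casework is real. The paper avoids almost all of it by first choosing a system of simple roots for $\realRoots[\cinv](\complexLieAlgebra,\complexLieAlgebra[h])$ in which $\simpleRoota$ is \emph{simple}. Writing $m = m_{\simpleRooti[1]} \cdots m_{\simpleRooti[k]}$ with the $\simpleRooti$ simple (and distinct, since $M(\realTorus)$ is an elementary abelian $2$-group), the only possible bad factors are the simple roots adjacent to $\simpleRoota$ in the Dynkin diagram of $\realRoots[\cinv]$. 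After peeling off any $A_1$ factors (which are automatically orthogonal to $\simpleRoota$), one is in type $B_m$; there the long simple root $\simpleRoota$ has at most two neighbours $\simpleRooti, \simpleRooti[j]$ with odd pairing, and the single identity
\[
\simpleRootc = \simpleRooti + \simpleRoota + \simpleRooti[j], \qquad (\simpleRoota,\simpleRootc)=0, \qquad m_{\simpleRootc}m_{\simpleRoota} = m_{\simpleRooti}m_{\simpleRooti[j]}
\]
(via Proposition~\ref{propMStruct}) disposes of the pair at once. No separate treatment of $z$ or explicit coordinate bookkeeping is needed. Your approach trades this single structural observation for a longer but elementary enumeration; both reach the goal, but making $\simpleRoota$ simple is the cleaner move.
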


\begin{proof}
Propositions \ref{propMgroupCtContain} and \ref{propMaIdCompH} imply
\[
\mGroupTorusCt \supseteq \left<~ \mGroupTorus[\realTorusCt],~\ma~\right>.
\]

Conversely, Proposition \ref{propInvWeylGroups} implies the real roots of $\realTorus$ form a root system of type $B_{m} \times A_{1}^{l}$. Choose a system of positive roots in $\realRoots[\cinv](\complexLieAlgebra[g],\complexLieAlgebra[h])$ for which $\simpleRoot$ is simple. If $\simpleRoot$ is contained in the $A_{1}^{l}$ factor, then the other simple roots are orthogonal to $\simpleRoot$ and the result easily follows as in Proposition \ref{propMgroupCtContain}. Hence we may assume $\realRoots[\cinv](\complexLieAlgebra[g],\complexLieAlgebra[h])$ is of type $B_{m}$. 

For $m \in \mGroupTorus$, write $m = \ma[{\simpleRooti[1]}] \cdots \ma[{\simpleRooti[k]}]$ where the $\simpleRooti$ are simple and suppose $m \in \mGroupTorusCt$. Then $m$ centralizes $\rootVector$ (the root vector for $\simpleRoot$) and we need to show $m \in \left<~ \mGroupTorus[\realTorusCt],~\ma~\right>$. If $(\simpleRoot, \simpleRooti) \in \left\{0,2\right\}$ for all $i$, then each $\ma[{\simpleRooti}] \in \left<~ \mGroupTorus[\realTorusCt],~\ma~\right>$ and we are done. Otherwise $\simpleRoot$ is long and there are exactly two terms (say $\ma[{\simpleRooti}]$ and $\ma[{\simpleRooti[j]}]$) in the expression for $m$ with $(\simpleRoot,\simpleRooti) = (\simpleRoot,\simpleRooti[j]) = -1$. Define a new root
\[
\simpleRootc = \simpleRooti + \simpleRoot + \simpleRooti[j]
\]
and observe $(\simpleRoot, \simpleRootc) = 0$ so that $\ma[\simpleRootc] \in \mGroupTorus[\realTorusCt]$. But $\ma[{\simpleRootc}]\ma[{\simpleRoot}] = \ma[{\simpleRooti}]\ma[{\simpleRooti[j]}]$ by Proposition \ref{propMStruct} and thus $\ma[{\simpleRooti}]\ma[{\simpleRooti[j]}] \in \left<~ \mGroupTorus[\realTorusCt],~\ma~\right>$. It follows $m \in \left<~ \mGroupTorus[\realTorusCt],~\ma~\right>$ and we have shown
\[
\mGroupTorusCt \subseteq \left<~ \mGroupTorus[\realTorusCt],~\ma~\right>
\]
as desired.
\end{proof}

Informally, the corollary implies we can move the root $\simpleRoota$ inside the parentheses as long as we add the element $\ma[\simpleRoota]$ to the resulting group. Let $\realTorus$ be a $\cinv$-stable Cartan subgroup and suppose $\simpleRoota_{1},\simpleRoota_{2},\ldots,\simpleRoota_{k}$ are mutually orthogonal roots in $\realRoots[\cinv](\complexLieAlgebra, \complexLieAlgebra[h])$. Then the \emph{iterated} Cayley transform 
\[
\realTorusCt[\simpleRoota_{1}\simpleRoota_{2}\ldots\simpleRoota_{k}] = (\realTorusCtGp[\simpleRoota_{2}]{{(\realTorusCt[\simpleRoota_{1}])}})\ldots)_{\simpleRoota_{k}}
\]
of $\realTorus$ with respect to $\simpleRoota_{1},\simpleRoota_{2},\ldots,\simpleRoota_{k}$ is defined. Similarly we can define
\[
\mGroupTorusCt[\simpleRoota_{1}\simpleRoota_{2}\ldots\simpleRoota_{k}] = \mGroupTorus \cap \realTorusCt[\simpleRoota_{1}\simpleRoota_{2}\ldots\simpleRoota_{k}].
\]
If $\rSeq = \simpleRoota_{1}\simpleRoota_{2}\ldots\simpleRoota_{k}$ denotes a sequence of mutually orthogonal roots in $\realRoots[\cinv](\complexLieAlgebra,\complexLieAlgebra[h])$ we will sometimes write
\begin{eqnarray*}
\realTorusCt[\rSeq] & = & \realTorusCt[\simpleRoota_{1}\simpleRoota_{2}\ldots\simpleRoota_{k}] \\
\mGroupTorusCt[\rSeq] & = & \mGroupTorusCt[\simpleRoota_{1}\simpleRoota_{2}\ldots\simpleRoota_{k}].
\end{eqnarray*}
We conclude this section with the following extension of Corollary \ref{corMCtStruct}.

\begin{proposition} \label{propMIterCtStruct}
Suppose $\realTorus$ is a $\cinv$-stable Cartan subgroup of $\realGroup$ and $\simpleRoota_{1},\simpleRoota_{2},\ldots,\simpleRoota_{k}$ is a sequence of mutually orthogonal real roots in $\rootSystem(\complexLieAlgebra, \complexLieAlgebra[h])$. Then
\[
\mGroupTorusCt[\simpleRoota_{1}\simpleRoota_{2}\ldots\simpleRoota_{k}] = \left<~\mGroupTorus[{\realTorusCt[\simpleRoota_{1}\simpleRoota_{2}\ldots\simpleRoota_{k}]}],~\ma[\simpleRoota_{1}],~\ma[\simpleRoota_{2}],\ldots,\ma[\simpleRoota_{k}]~\right>.
\]
\end{proposition}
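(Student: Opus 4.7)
The plan is to argue by induction on $k$, with the base case $k=1$ being precisely Corollary~\ref{corMCtStruct}. Write $\realTorus^{(j)} = \realTorusCt[\simpleRoota_1\simpleRoota_2\ldots\simpleRoota_j]$ (so $\realTorus^{(0)} = \realTorus$), and observe that since $\simpleRoota_1,\ldots,\simpleRoota_k$ are mutually orthogonal real roots, at each stage the root $\simpleRoota_{j+1}$ remains real with respect to the Cartan $\realTorus^{(j)}$, so the next Cayley transform is legitimate and the element $\ma[\simpleRoota_{j+1}]$ is unaffected.

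Assuming the statement for $k-1$, I would first apply the inductive hypothesis to $\realTorus$ and the roots $\simpleRoota_1,\ldots,\simpleRoota_{k-1}$ to obtain
\[
\mGroupTorus \cap \realTorus^{(k-1)} \;=\; \bigl\langle\, \mGroupTorus[\realTorus^{(k-1)}],\; \ma[\simpleRoota_1],\, \ldots,\, \ma[\simpleRoota_{k-1}] \,\bigr\rangle,
\]
and then intersect with $\realTorus^{(k)}$. The key preliminary observation is that each $\ma[\simpleRoota_i]$ with $i<k$ already lies in $\realTorus^{(k)}$: by Proposition~\ref{propMaRootSpace} and orthogonality, $\Ad{\ma[\simpleRoota_i]}$ fixes $\rootVector[\simpleRoota_j]$ for every $j\ne i$, so by Lemma~\ref{lemCaCbMbCommute} it commutes with the Cayley transform operators $\ctOp[\simpleRoota_j]$ for $j\ne i$. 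Consequently $\ma[\simpleRoota_i]$ centralizes the Lie algebra of $\realTorus^{(k)}$, and since a Cartan subgroup in a real reductive group equals its own centralizer, $\ma[\simpleRoota_i]\in\realTorus^{(k)}$.

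Because $\realTorus^{(k)}$ is abelian (Proposition~\ref{propCartanStructure}) and contains each $\ma[\simpleRoota_i]$, the identity $\langle A,b\rangle\cap B = \langle A\cap B,\,b\rangle$, valid in abelian groups whenever $b\in B$, can be applied iteratively to extract the $\ma[\simpleRoota_i]$ from the intersection, yielding
\[
\mGroupTorus \cap \realTorus^{(k)} \;=\; \bigl\langle\, \mGroupTorus[\realTorus^{(k-1)}]\cap \realTorus^{(k)},\; \ma[\simpleRoota_1],\, \ldots,\, \ma[\simpleRoota_{k-1}] \,\bigr\rangle.
\]
To finish, I would invoke Corollary~\ref{corMCtStruct} with Cartan $\realTorus^{(k-1)}$ and real root $\simpleRoota_k$ to rewrite $\mGroupTorus[\realTorus^{(k-1)}]\cap \realTorus^{(k)} = \bigl\langle\, \mGroupTorus[\realTorus^{(k)}],\, \ma[\simpleRoota_k] \,\bigr\rangle$, and substitute to obtain the advertised equality.

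The main obstacle is a purely technical one: ensuring at each inductive step that $\ma[\simpleRoota_i]$ survives the entire sequence of subsequent Cayley transforms, and that $\simpleRoota_k$ remains a real root for $\realTorus^{(k-1)}$ so that the single-root Corollary~\ref{corMCtStruct} genuinely applies at the final stage. Both rely on the mutual orthogonality hypothesis, but careful bookkeeping is needed to chain these facts through the iteration rather than only a single Cayley transform.
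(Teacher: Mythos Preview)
Your proposal is correct and follows essentially the same route as the paper: induct on $k$ with base case Corollary~\ref{corMCtStruct}, intersect the inductive hypothesis with $\realTorus^{(k)}$, pull each $\ma[\simpleRoota_i]$ outside the intersection using abelianness and the orthogonality-based fact that $\ma[\simpleRoota_i]\in\realTorus^{(k)}$, and finish by applying Corollary~\ref{corMCtStruct} once more with Cartan $\realTorus^{(k-1)}$ and root $\simpleRoota_k$. The paper's proof is terser but the logical skeleton is identical, including the implicit use of $\mGroupTorus\cap\realTorus^{(k)}\subseteq\realTorus^{(k-1)}$ (which follows since any element of $\realTorus$ centralizing all $X_{\simpleRoota_i}+\cinv X_{\simpleRoota_i}$ for $i\le k$ a fortiori centralizes those for $i\le k-1$).
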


\begin{proof} The proof is by induction on the number of roots. The base case is Corollary \ref{corMCtStruct}. In general we have
\begin{eqnarray*}
\mGroupTorusCt[\simpleRoota_{1}\simpleRoota_{2}\ldots\simpleRoota_{k}] & = & \mGroupTorus \cap \realTorusCt[\simpleRoota_{1}\simpleRoota_{2}\ldots\simpleRoota_{k}] \\
& = & \mGroupTorus \cap \realTorusCt[\simpleRoota_{1}\simpleRoota_{2}\ldots\simpleRoota_{k-1}] \cap \realTorusCt[\simpleRoota_{1}\simpleRoota_{2}\ldots\simpleRoota_{k}] \\
& = & \left<~\mGroupTorus[{\realTorusCt[\simpleRoota_{1}\ldots\simpleRoota_{k-1}]}],~\ma[\simpleRoota_{1}],\ldots,\ma[\simpleRoota_{k-1}]~\right> \cap \realTorusCt[\simpleRoota_{1}\ldots\simpleRoota_{k}] \\
& = & \left<~\mGroupTorus[{\realTorusCt[\simpleRoota_{1}\ldots\simpleRoota_{k-1}]}]  \cap \realTorusCt[\simpleRoota_{1}\ldots\simpleRoota_{k}]  ,~\ma[\simpleRoota_{1}],\ldots,\ma[\simpleRoota_{k-1}]~\right> \\
& = & \left<~\mGroupTorusGpCt[\simpleRoota_{k}]{{\realTorusCt[\simpleRoota_{1}\simpleRoota_{2}\ldots\simpleRoota_{k-1}]}},~\ma[\simpleRoota_{1}],~\ma[\simpleRoota_{2}],\ldots,\ma[\simpleRoota_{k-1}]\right> \\
& = & \left<~\mGroupTorus[{\realTorusCt[\simpleRoota_{1}\simpleRoota_{2}\ldots\simpleRoota_{k}]}],~\ma[\simpleRoota_{1}],~\ma[\simpleRoota_{2}],\ldots,\ma[\simpleRoota_{k}]~\right>
\end{eqnarray*}
as desired.
\end{proof}

\begin{proposition} 
\label{propCompH} 
Suppose $\rSeq = \simpleRooti[1],\simpleRooti[2],\ldots,\simpleRooti[m]$ is any sequence of mutually orthogonal real roots in $\realTorusSplit$ and
\[
\mGroupTorusGpCt[\rSeq]{\realTorusSplit} = \left<~\mGroupTorus[{\realTorusSplitCt[\rSeq]}],~\ma[{\simpleRooti[1]}],\ldots,\ma[{\simpleRooti[m]}]~\right>
\]
as in Proposition \ref{propMIterCtStruct}. Then
\begin{eqnarray*}
\pi_{0}({\realTorusSplitCt[\rSeq]}) & \cong & \mGroupTorusGpCt[\rSeq]{\realTorusSplit}~/ \left<~\ma[{\simpleRooti[1]}],\ldots,\ma[{\simpleRooti[m]}]~\right> \\& \cong & \mGroupTorus[{\realTorusSplitCt[\rSeq]}]~/~ (\mGroupTorus[{\realTorusSplitCt[\rSeq]}] \cap \left<~\ma[{\simpleRooti[1]}],\ldots,\ma[{\simpleRooti[m]}]~\right>).
\end{eqnarray*}
\end{proposition}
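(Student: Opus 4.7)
The plan is to build a short exact sequence
\[
1 \longrightarrow \left\langle \ma[{\simpleRooti[1]}],\ldots,\ma[{\simpleRooti[m]}]\right\rangle \longrightarrow \mGroupTorusGpCt[\rSeq]{\realTorusSplit} \overset{\phi}{\longrightarrow} \pi_{0}(\realTorusSplitCt[\rSeq]) \longrightarrow 1
\]
from which the first isomorphism follows. The second isomorphism is then immediate from Proposition \ref{propMIterCtStruct} and the second isomorphism theorem, since $\mGroupTorusGpCt[\rSeq]{\realTorusSplit}$ is abelian (being contained in the abelian group $\realTorusSplit$) and $\mGroupTorusGpCt[\rSeq]{\realTorusSplit} = \langle \mGroupTorus[{\realTorusSplitCt[\rSeq]}], \ma[{\simpleRooti[1]}], \ldots, \ma[{\simpleRooti[m]}] \rangle$: explicitly,
\[
\mGroupTorusGpCt[\rSeq]{\realTorusSplit} \big/ \left\langle \ma[{\simpleRooti[k]}] \right\rangle \;\cong\; \mGroupTorus[{\realTorusSplitCt[\rSeq]}] \big/ \bigl( \mGroupTorus[{\realTorusSplitCt[\rSeq]}] \cap \langle \ma[{\simpleRooti[k]}] \rangle \bigr).
\]

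Define $\phi$ as the composition of the inclusion $\mGroupTorusGpCt[\rSeq]{\realTorusSplit} \hookrightarrow \realTorusSplitCt[\rSeq]$ (which holds by the very definition of $\mGroupTorusGpCt[\rSeq]{\realTorusSplit}$ as an intersection) with the projection $\realTorusSplitCt[\rSeq] \twoheadrightarrow \pi_0(\realTorusSplitCt[\rSeq])$. Surjectivity is obtained from Proposition \ref{propMCompGroupCT} applied to the Cartan $\realTorusSplitCt[\rSeq]$, which gives $\realTorusSplitCt[\rSeq] = \mGroupTorus[{\realTorusSplitCt[\rSeq]}] \cdot (\realTorusSplitCt[\rSeq])_0$; since $\mGroupTorus[{\realTorusSplitCt[\rSeq]}] \subseteq \mGroupTorusGpCt[\rSeq]{\realTorusSplit}$ by Proposition \ref{propMIterCtStruct}, the images of its elements already exhaust $\pi_0(\realTorusSplitCt[\rSeq])$.

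The main task is to identify $\ker \phi$ with $\langle \ma[{\simpleRooti[1]}], \ldots, \ma[{\simpleRooti[m]}] \rangle$. For the containment $\supseteq$, I would iterate Proposition \ref{propMaIdCompH}: each generator $\ma[{\simpleRooti[k]}]$ equals $\expg(\pi\zLa[{\simpleRooti[k]}])$, and because the roots $\simpleRooti[1], \ldots, \simpleRooti[m]$ are pairwise strongly orthogonal, Lemma \ref{lemCaCbMbCommute} together with the fact that $[\zLa[{\simpleRooti[j]}], \rootVector[{\pm\simpleRooti[k]}]] = 0$ for $j \ne k$ guarantees that the vector $\zLa[{\simpleRooti[k]}]$ survives unchanged through the iterated Cayley transform $\ctOp[{\rSeq}]$, landing inside $\realLieAlgebraCt[\rSeq]$; hence $\ma[{\simpleRooti[k]}] \in \expg(\realLieAlgebraCt[\rSeq]) = (\realTorusSplitCt[\rSeq])_0$, as required.

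The reverse containment $\subseteq$ is the main obstacle and I would handle it by induction on $m$, so that at each stage only one new Cayley transform is introduced. The base case $m = 0$ is precisely Proposition \ref{propMCompGroup}. For the inductive step with $\rSeq' = \simpleRooti[1], \ldots, \simpleRooti[m-1]$, the root $\simpleRooti[m]$ remains real in $\realTorusSplitCt[\rSeq']$ (since it is orthogonal to $\simpleRooti[1],\ldots,\simpleRooti[m-1]$), so Corollary \ref{corMCtStruct} lets me pass from $\realTorusSplitCt[\rSeq']$ to $\realTorusSplitCt[\rSeq]$ by one further Cayley transform. The structural input here is Proposition \ref{propCartanStructure}: a single Cayley transform in a real root replaces one $\rCross$ factor by an $\sOne$ factor, so $|\pi_0|$ drops by exactly a factor of $2$; combined with the inductive hypothesis, this forces a size comparison
\[
|\ker \phi| \;=\; \frac{|\mGroupTorusGpCt[\rSeq]{\realTorusSplit}|}{|\pi_0(\realTorusSplitCt[\rSeq])|} \;=\; 2 \cdot |\ker \phi_{\rSeq'}| \;=\; 2^{m},
\]
matching the order $2^m$ of $\langle \ma[{\simpleRooti[1]}], \ldots, \ma[{\simpleRooti[m]}] \rangle$ (independence of the $\ma[{\simpleRooti[k]}]$ inside $\mGroupTorus[\realTorusSplit]$ following from Proposition \ref{propMCompGroup}). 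Together with the containment $\supseteq$ established above, equality of orders yields the kernel identification, completing the short exact sequence and hence both isomorphisms of the proposition.
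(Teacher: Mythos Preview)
Your overall architecture matches the paper's terse proof (it cites exactly Propositions~\ref{propMCompGroup}, \ref{propMCompGroupCT}, \ref{propMaIdCompH} and the Second Isomorphism Theorem): build the surjection $\phi$ and identify its kernel. Surjectivity and the containment $\langle m_{\alpha_k}\rangle\subseteq\ker\phi$ are fine. The counting argument for the reverse containment, however, rests on two false assertions. First, a single real-root Cayley transform need not drop $|\pi_0|$ by exactly~$2$: transforming through a long root such as $e_1-e_2$ turns the $(e_1,e_2)$-block into a connected $\mathbb{C}^\times$ factor (the diagram entry $(2)(1)$), so $|\pi_0|$ can drop by~$4$. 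Second, Proposition~\ref{propMCompGroup} gives independence only for the \emph{simple} $m_\alpha$, not for arbitrary orthogonal ones: with $\alpha_1=e_1-e_2$, $\alpha_2=e_1+e_2$, $\alpha_3=e_3-e_4$, $\alpha_4=e_3+e_4$ (pairwise orthogonal) one has $m_{\alpha_1}m_{\alpha_2}m_{\alpha_3}m_{\alpha_4}=m_{e_1}m_{e_3}=1$ by Proposition~\ref{propMShort}, so $|\langle m_{\alpha_k}\rangle|=8<2^4$ and the kernel does not double at that step. Your recursion also tacitly assumes $|M(H^{\mathrm s})_{\mathfrak c}|=|M(H^{\mathrm s})_{\mathfrak c'}|$, which is never verified and is likewise false in general.

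A direct route that bypasses counting: since $M(H^{\mathrm s})\subseteq K$ and the split directions of $(H^{\mathrm s}_{\mathfrak c})_0$ inject under $\exp$, any $g\in\ker\phi$ already lies in the compact torus $\prod_k B_{\alpha_k}=\exp\bigl(\bigoplus_k\mathbb{R}\,Z_{\alpha_k}\bigr)$. Writing $g=\exp\bigl(\sum_k t_k Z_{\alpha_k}\bigr)$ and requiring $\mathrm{Ad}(g)$ to fix each coroot $h_{\alpha_k}\in\mathfrak{h}^{\mathrm s}$ is a rotation condition that forces $t_k\in\pi\mathbb{Z}$, hence $g\in\langle m_{\alpha_k}\rangle$. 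This is the simultaneous form of the computation underlying Proposition~\ref{propMaIdCompH}, and is presumably what the paper's citation of that proposition intends.
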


\begin{proof}
The first isomorphism follows from Proposition \ref{propMCompGroup}, Proposition \ref{propMCompGroupCT}, and Proposition \ref{propMaIdCompH}. The second isomorphism follows from the Second Isomorphism Theorem for groups.
\end{proof}

Proposition \ref{propCompH} extends Proposition \ref{propMCompGroup} to the Cartan subgroups $\realTorusSplitCt[\rSeq]$. The extension is almost perfect in the sense that $\pi_{0}(\realTorusSplitCt[\rSeq])$ is generated by elements of $\mGroupTorus[{\realTorusSplitCt[\rSeq]}]$. Although distinct elements of $\mGroupTorus[{\realTorusSplitCt[\rSeq]}]$ can live in the same connected component of $\realTorusSplitCt[\rSeq]$, this relationship is completely determined by the sequence $\rSeq$. In particular, we obtain a model for $\pi_{0}(\realTorusSplitCt[\rSeq])$ in $\realTorusSplitCt[\rSeq]$ by choosing elements of $\mGroupTorus[{\realTorusSplitCt[\rSeq]}]$ that are distinct modulo those elements determined by $\rSeq$.

We should remark the form of Proposition \ref{propCompH} admits an interesting combinatorial description that is often useful in computations. Fix a split Cartan subgroup $\realTorusSplit \subset \realGroupSplit$. Proposition \ref{propMStruct} implies
\[
\mGroupTorus[\realTorusSplit] \cong \coRootLattice / 2\coRootLattice
\]
where $\coRootLattice$ is the coroot lattice in $\splitLieAlgebra$. Let $\quotWeightLatticeMap : \coRootLattice \to \mGroupTorus[\realTorusSplit]$ denote the corresponding quotient map and suppose $\rSeq = \simpleRootai[1],\simpleRootai[2],\ldots,\simpleRootai[m]$ is a sequence in $\realRoots[\cinv](\complexLieAlgebra,\splitLieAlgebra)$ of pairwise orthogonal roots. If $\invSplit = -\identity$, write
\[
\inv = \invSplitCt[\rSeq] = \rootReflection[{\simpleRootai[m]}]\ldots\rootReflection[{\simpleRootai[2]}]\rootReflection[{\simpleRootai[1]}]\invSplit
\]
and define
\begin{eqnarray*}
\posCorootLattice & = & \left\{\simpleRoot \in \coRootLattice \mid \inv(\simpleRoot) = \simpleRoot\right\} \\
\negCorootLattice & = & \left\{\simpleRoot \in \coRootLattice \mid \inv(\simpleRoot) = -\simpleRoot\right\} \\
\posQuotCorootLattice & = & \quotWeightLatticeMap(\posCorootLattice) \\
\negQuotCorootLattice & = & \quotWeightLatticeMap(\negCorootLattice) \\
\posnegQuotCorootLattice & = & \posQuotCorootLattice \cap \negQuotCorootLattice.
\end{eqnarray*}
Clearly then
\begin{eqnarray*}
\mGroupTorus[{\realTorusSplitCt[\rSeq]}] & \cong & \negQuotCorootLattice \\
\left<\ma[{\simpleRootai[1]}],\ma[{\simpleRootai[2]}],\ldots,\ma[{\simpleRootai[m]}]\right> & = &  \posQuotCorootLattice 
\end{eqnarray*}
and Proposition \ref{propCompH} implies
\[
\pi_{0}(\realTorusSplitCt[\rSeq]) \cong \negQuotCorootLattice / \negQuotCorootLattice \cap \posQuotCorootLattice \cong \negQuotCorootLattice / \posnegQuotCorootLattice. 
\]
In particular, the component groups of real tori in $\realGroup$ are computable in terms of involutions and lattices. A similar result is true in general and is the main topic discussed in \cite{BC1}.

\subsection{Structure of $\pi_{0}(\realTorusCover)$}
\label{AlgTorusCover}
Let $\realGroupCover = \realSpinGroupCover{p}{q}$ (Definition \ref{defGCover}) and suppose $\realTorusCover \subset \realGroupCover$ is a Cartan subgroup (Definition \ref{defCSGCover}). Although $\realTorusCover$ may not be abelian, we do have the following lemma.

\begin{lemma}
\label{lemAbelianCompGroup}
$\realTorusCoverId$ is central (and thus abelian) in $\realTorusCover$.
\end{lemma}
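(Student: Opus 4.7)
The plan is to use a standard continuity/connectedness argument with the commutator map. First I would note that $\realTorus \subset \realGroup$ is abelian by Proposition~\ref{propCartanStructure}, so for any $\widetilde{h} \in \realTorusCover$ and any $\widetilde{h}_{0} \in \realTorusCoverId$, the projection of the commutator satisfies
\[
\projOp\bigl(\widetilde{h}\,\widetilde{h}_{0}\,\widetilde{h}^{-1}\widetilde{h}_{0}^{-1}\bigr) = \projOp(\widetilde{h})\projOp(\widetilde{h}_{0})\projOp(\widetilde{h})^{-1}\projOp(\widetilde{h}_{0})^{-1} = 1.
\]
Hence the commutator $[\widetilde{h},\widetilde{h}_{0}]$ lies in $\ker(\projOp) = \{\pm 1\}$.

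Next, fix $\widetilde{h} \in \realTorusCover$ and consider the map
\[
c_{\widetilde{h}} : \realTorusCoverId \longrightarrow \{\pm 1\}, \qquad \widetilde{h}_{0} \longmapsto \widetilde{h}\,\widetilde{h}_{0}\,\widetilde{h}^{-1}\widetilde{h}_{0}^{-1}.
\]
This is continuous (multiplication and inversion in $\realGroupCover$ are continuous, and the image sits in the discrete set $\{\pm 1\}$ by the previous paragraph). Since $\realTorusCoverId$ is connected and $c_{\widetilde{h}}(e)=1$, the map $c_{\widetilde{h}}$ is identically $1$. Therefore $\widetilde{h}$ commutes with every element of $\realTorusCoverId$, which gives the centrality claim. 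Abelianness of $\realTorusCoverId$ is then an immediate consequence of centrality in $\realTorusCover \supseteq \realTorusCoverId$.

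The argument is essentially routine once one observes that $\realTorus$ is abelian, and the only minor subtlety is the use of connectedness to promote the discrete-valued commutator to the trivial value; there is no real obstacle here. Note that we do not need $\realTorusCover$ itself to be abelian (indeed, Proposition~\ref{propMCStruct} will show it need not be), only that its identity component behaves as in the linear case.
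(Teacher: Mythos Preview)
Your proof is correct and follows essentially the same approach as the paper's own proof: both observe that commutators in $\realTorusCover$ land in $\ker(\projOp)=\{\pm 1\}$ because $\realTorus$ is abelian, and then use connectedness of $\realTorusCoverId$ to force the commutator to be trivial. The paper states this more tersely, leaving the continuity/connectedness step implicit, while you spell it out explicitly via the map $c_{\widetilde{h}}$; the underlying idea is identical.
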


\begin{proof}
Let $g$ and $h$ be elements in $\realTorus$. Since $\realTorus$ is abelian, we have
\[
[g,h] = ghg^{-1}h^{-1} = 1.
\]
Therefore the commutator of any two elements in $\realTorusCover$ lands in $\left\{\pm1\right\}$ and thus elements of $\realTorusCoverId$ must have trivial commutator with $\realTorusCover$.
\end{proof}

As in the previous section, it suffices to consider only the split form $\realGroupCover[\realGroupSplit]$. To begin, fix a split Cartan subgroup $\realTorusSplit \subset \realGroupSplit$ and recall the group $\mGroupTorus[\realTorusSplit]$ of Definition \ref{defMGroup}. For each $\ma \in \mGroupTorus[\realTorusSplit]$, choose once and for all an inverse image $\mac$ under the projection map $\projOp$ and write 
\begin{eqnarray*}
\projOpInv(\ma) & = & \left\{\mac,-\mac\right\} \\
\projOpInv(\mGroupTorus[\realTorusSplit]) & = & \mGroupTorusCover[\realTorusSplitCover].
\end{eqnarray*}
Fix a set of simple roots $\simpleRoots \subset \rootSystem(\complexLieAlgebra, \splitLieAlgebra)$ and the usual ordering $\simpleRooti[1] < \simpleRooti[2] < \ldots < \simpleRooti[n]$ of elements in $\simpleRoots$. The (nonabelian) structure of $\mGroupTorusCover[\realTorusSplitCover]$ is given by the following proposition and corollary.

\begin{proposition}[\cite{ABVPT}, Lemma 4.8] 
\label{propMCStruct}
The group $\mGroupTorusCover[\realTorusSplitCover]$ is generated by the elements $\left\{\mac[{\simpleRoot}] \mid \simpleRoot \in \simpleRoots\right\}$
subject to the following relations
\begin{eqnarray*}
\mac[\simpleRoot]^{2} & = & \left\{\begin{array}{rl}
-1 & \simpleRoot \text{ is long } \\
1 & \simpleRoot \text{ is short } \\
\end{array} \right. \\
\left[\mac[\simpleRoota],\mac[\simpleRootb]\right] & = & \left\{\begin{array}{ll}
(-1)^{(\simpleRoota, \rootCheck[\simpleRootb])} & \simpleRoota,\simpleRootb \text{ are \emph{both} long} \\
1 & \text{otherwise} \\
\end{array} \right.. \\
\end{eqnarray*}
\end{proposition}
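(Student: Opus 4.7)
The strategy splits into three pieces: generation, the squaring relations, and the commutator relations.

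First, generation. Proposition \ref{propMCompGroup} gives that $\{\ma[\simpleRoot] \mid \simpleRoot \in \simpleRoots\}$ generates $\mGroupTorus[\realTorusSplit]$, and since $\mGroupTorusCover[\realTorusSplitCover] = \projOpInv(\mGroupTorus[\realTorusSplit])$, the set $\{\mac[\simpleRoot] \mid \simpleRoot \in \simpleRoots\}$ together with $-1 \in \ker\projOp$ generates the cover. Once the squaring relation $\mac[\simpleRoot]^{2} = -1$ is established for any long simple root (which always exists in type $B_{n}$, $n \ge 2$), the generator $-1$ becomes redundant and the displayed generating set suffices.

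Second, the squaring relations. I would reduce to the rank-one $SL(2,\mathbb{R})$ subgroup attached to $\simpleRoot$: inside this copy, $\mac[\simpleRoot]^{2} = \expgc(2\pi\zLa)$ is the exponential along a compact torus loop, and its value depends only on whether $\realGroupCover$ is trivially or nontrivially realized over this $SL(2,\mathbb{R})$. For short $\simpleRoot$, Proposition \ref{propMShort} identifies $\ma[\simpleRoot]$ with the algebraic center element of $\realGroup$, and the nonalgebraic cover $\realGroupCover$ splits over the short-root $SL(2,\mathbb{R})$, giving $\mac[\simpleRoot]^{2} = 1$. For long $\simpleRoot$, the nonalgebraic double cover is nontrivially realized on the long-root $SL(2,\mathbb{R})$, which is precisely the content of Remark \ref{remMaSign}, giving $\mac[\simpleRoot]^{2} = -1$.

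Third, the commutator relations. Since $\realTorus$ is abelian in $\realGroup$, we have $[\mac[\simpleRoota], \mac[\simpleRootb]] \in \ker\projOp = \{\pm 1\}$, so only the sign needs to be computed. If either root is short, the corresponding $SL(2,\mathbb{R})$ splits in the cover and its lifted elements commute in $\realGroupCover$ with anything they already commuted with in $\realGroup$, forcing $[\mac[\simpleRoota], \mac[\simpleRootb]] = 1$. If both are long, I would work inside the rank-two subgroup of $\realGroupCover$ generated by the two lifted $SL(2,\mathbb{R})$'s. Proposition \ref{propMaRootSpace} gives $\Ad{\ma[\simpleRootb]}(\rootVector[\simpleRoota]) = (-1)^{(\simpleRoota, \rootCheck[\simpleRootb])} \rootVector[\simpleRoota]$; lifting this adjoint identity to a statement about the commutator of the preimages $\mac[\simpleRoota], \mac[\simpleRootb]$ yields the claimed sign.

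The main obstacle is exactly this last translation in the two-long-root case: converting the scalar $(-1)^{(\simpleRoota, \rootCheck[\simpleRootb])}$ extracted from the adjoint action on a root vector into a precise commutator identity inside the nonalgebraic double cover. I would handle this by passing to a faithful spin representation and computing explicitly with products of Clifford algebra generators, or equivalently by writing down the defining $2$-cocycle of $\realGroupCover \to \realGroup$ on pairs of long-root $SL(2,\mathbb{R})$'s as in \cite{ABVPT}. The short-root cases and the squaring relations are comparatively clean, the former by the splitting observation and the latter by direct appeal to Remark \ref{remMaSign}.
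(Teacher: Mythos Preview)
The paper does not supply its own proof of this proposition: it is stated with attribution to \cite{ABVPT}, Lemma 4.8, and no argument is given in the text. So there is nothing to compare your proposal against directly.

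That said, your outline is a sensible reconstruction of how such a proof goes. The generation step and the squaring relations are essentially already recorded in the paper (Proposition \ref{propMCompGroup} and Remark \ref{remMaSign} respectively), so those parts are fine. You have correctly identified the only nontrivial content: the commutator formula when both roots are long. Your suggestion to pass to an explicit Clifford-algebra or spin-representation computation, or equivalently to work with the defining $2$-cocycle, is exactly the approach taken in the cited reference \cite{ABVPT}. One small caution: the step ``lifting the adjoint identity $\Ad{\ma[\simpleRootb]}(\rootVector[\simpleRoota]) = (-1)^{(\simpleRoota,\rootCheck[\simpleRootb])}\rootVector[\simpleRoota]$ to a commutator of preimages'' is not automatic---the adjoint action factors through the linear group and so cannot by itself detect the sign in the cover. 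The explicit computation (Clifford or cocycle) is genuinely needed here, not merely as a convenience; you should not present the adjoint identity as the source of the commutator sign, only as motivation for what that sign ought to be.
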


\begin{corollary} \label{corMCStruct}
Every element of $\mGroupTorusCover[\realTorusSplitCover]$ has a unique expression of the form
\[
\pm \mac[{\simpleRooti[1]}]^{\epsilon_{1}}\mac[{\simpleRooti[2]}]^{\epsilon_{2}} \ldots \mac[{\simpleRooti[n]}]^{\epsilon_{n}}
\]
where $\epsilon_{i} \in \left\{0,1\right\}$.
\end{corollary}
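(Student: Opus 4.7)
The plan is to derive Corollary \ref{corMCStruct} as a formal consequence of Proposition \ref{propMCStruct} (generators and relations for $\mGroupTorusCover[\realTorusSplitCover]$) together with Proposition \ref{propMCompGroup} (which identifies $\mGroupTorus[\realTorusSplit]$ with $\pi_{0}(\realTorusSplit) \cong \zTwo^{n}$). There are two things to show: \emph{existence} of such an expression for every element, and \emph{uniqueness} of the expression.

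For existence, I would argue as follows. The element $-1 \in \realTorusSplitCover$ is central and has order two, and by Proposition \ref{propMCStruct} any commutator $[\mac[\simpleRoota], \mac[\simpleRootb]]$ of two generators equals $\pm 1$. Hence starting with an arbitrary word in the generators $\left\{\mac \mid \simpleRoot \in \simpleRoots\right\}$, one can repeatedly swap adjacent generators, at the cost of introducing a central factor of $-1$; these signs collect at the front. Using the fixed ordering $\simpleRooti[1] < \simpleRooti[2] < \cdots < \simpleRooti[n]$, one can thus rewrite the word so that the generators appear in nondecreasing index order. Since $\mac^{2} \in \left\{\pm 1\right\}$ for every $\simpleRoot \in \simpleRoots$ (again by Proposition \ref{propMCStruct}), any repeated occurrences of $\mac[{\simpleRooti[k]}]$ can be reduced modulo two with a further central sign. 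The result is an expression of the claimed form.

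For uniqueness, I would use the projection $\projOp : \realTorusSplitCover \to \realTorusSplit$. Suppose
\[
\pm \mac[{\simpleRooti[1]}]^{\epsilon_{1}} \cdots \mac[{\simpleRooti[n]}]^{\epsilon_{n}} = \pm \mac[{\simpleRooti[1]}]^{\epsilon'_{1}} \cdots \mac[{\simpleRooti[n]}]^{\epsilon'_{n}}
\]
with $\epsilon_{i}, \epsilon'_{i} \in \left\{0,1\right\}$. Applying $\projOp$ yields an equality
\[
\ma[{\simpleRooti[1]}]^{\epsilon_{1}} \cdots \ma[{\simpleRooti[n]}]^{\epsilon_{n}} = \ma[{\simpleRooti[1]}]^{\epsilon'_{1}} \cdots \ma[{\simpleRooti[n]}]^{\epsilon'_{n}}
\]
in $\mGroupTorus[\realTorusSplit] \cong \pi_{0}(\realTorusSplit) \cong \zTwo^{n}$. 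By Proposition \ref{propMCompGroup}, the simple $\ma$ form a $\zTwo$-basis of this group, forcing $\epsilon_{i} = \epsilon'_{i}$ for all $i$. Cancelling then shows the two leading signs must agree as well.

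The only subtlety I anticipate is being careful about the order of the commutation step: one must check that no infinite loop or ambiguity arises when reducing via Proposition \ref{propMCStruct}, and that the central $-1$ factors introduced by swaps can always be collected without interacting with the exponent reductions. This is automatic since $-1$ is central and of order two, so both types of reduction commute. As a redundancy check, one can compare cardinalities: the set of normal forms has at most $2 \cdot 2^{n} = 2^{n+1}$ elements, while $\left|\mGroupTorusCover[\realTorusSplitCover]\right| = \left|\projOpInv(\mGroupTorus[\realTorusSplit])\right| = 2 \cdot 2^{n} = 2^{n+1}$, so the existence and uniqueness statements are mutually consistent.
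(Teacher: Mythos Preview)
Your argument is correct and is precisely the intended one: the paper states the corollary immediately after Proposition~\ref{propMCStruct} without proof, and your existence-via-reordering and uniqueness-via-projection argument is the standard way to fill in the details.
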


The following result is the expected analog of Proposition \ref{propMCompGroup} for $\realTorusSplitCover$.

\begin{proposition} \label{propMCCompGroup}
The elements $\left\{1,-1\right\}$  live in distinct connected components of $\realTorusSplitCover$. Moreover we have
\[
\mGroupTorusCover[\realTorusSplitCover] = \left<~\mac \mid \simpleRoot \in \simpleRoots~\right>  \cong \pi_{0}(\realTorusSplitCover).
\]
\end{proposition}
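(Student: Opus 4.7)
My plan is to establish both claims via a counting argument that pivots on showing $-1 \notin \realTorusSplitCoverId$.

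First, since $\realTorusSplit$ is split we have $\realTorusSplit \cong \rCrossn[n]$, whose identity component is isomorphic to $(\mathbb{R}_{>0})^{n}$ and therefore simply connected. The restriction of $\projOp$ to $\realTorusSplitCoverId$ is a surjective covering map onto this identity component (using that $\projOp$ is open and $\realTorusSplitCoverId$ is connected). A connected cover of a simply connected space is a homeomorphism, so the kernel $\realTorusSplitCoverId \cap \{\pm 1\}$ is trivial and $-1 \notin \realTorusSplitCoverId$, proving the first claim.

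This forces $|\pi_{0}(\realTorusSplitCover)| = 2 |\pi_{0}(\realTorusSplit)| = 2^{n+1}$, using Proposition \ref{propMCompGroup} for the final equality. On the other hand, Corollary \ref{corMCStruct} directly yields $|\mGroupTorusCover[\realTorusSplitCover]| = 2^{n+1}$ as well.

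To finish, I would define $\phi : \mGroupTorusCover[\realTorusSplitCover] \to \pi_{0}(\realTorusSplitCover)$ by sending $g$ to its path component. This is a group homomorphism since $\realTorusSplitCoverId$ is central in $\realTorusSplitCover$ by Lemma \ref{lemAbelianCompGroup}. For surjectivity, each component of $\realTorusSplit$ contains a unique $\ma \in \mGroupTorus[\realTorusSplit]$ by Proposition \ref{propMCompGroup}, and its preimage in $\realTorusSplitCover$ splits into two components (a $2$-fold cover of a simply connected component trivializes) containing the lifts $\mac$ and $-\mac$, both of which sit inside $\mGroupTorusCover[\realTorusSplitCover] = \projOpInv(\mGroupTorus[\realTorusSplit])$. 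With matching orders, $\phi$ is then forced to be a bijection, hence an isomorphism.

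The main subtlety I anticipate is the noncommutativity of $\mGroupTorusCover[\realTorusSplitCover]$ guaranteed by Proposition \ref{propMCStruct}, which propagates to $\pi_{0}(\realTorusSplitCover)$; however this presents no obstacle, since $\phi$ is a homomorphism of arbitrary groups and the counting argument works equally well without abelianness.
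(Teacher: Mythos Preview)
The paper states this proposition without proof, so there is no argument to compare against. Your proof is correct and supplies the details the paper leaves implicit. The key step --- that $-1 \notin \realTorusSplitCoverId$ because the identity component of $\realTorusSplit \cong (\mathbb{R}^\times)^n$ is simply connected and hence admits no nontrivial connected cover --- is sound, and combining the resulting count $|\pi_0(\realTorusSplitCover)| = 2^{n+1}$ with the order $|\mGroupTorusCover[\realTorusSplitCover]| = 2^{n+1}$ from Corollary~\ref{corMCStruct} cleanly forces the natural surjection to be an isomorphism. One minor remark: the map $g \mapsto [g]$ into $\pi_0$ is automatically a group homomorphism for any topological group, since the identity component is always normal; you need not invoke Lemma~\ref{lemAbelianCompGroup} for this, though doing so is harmless.
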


We now proceed as in the previous section to determine the structure of $\pi_{0}(\realTorusCover)$. Let $\rSeq = \simpleRootdi[1],\simpleRootdi[2],\ldots,\simpleRootdi[m]$ be a sequence of mutually orthogonal roots in $\rootSystem(\complexLieAlgebra, \splitLieAlgebra) = \realRoots[\cinv](\complexLieAlgebra, \splitLieAlgebra)$. Recall Proposition \ref{propCompH} implies
\[
\pi_{0}({\realTorusSplitCt[\rSeq]}) \cong \mGroupTorus[{\realTorusSplitCt[\rSeq]}]~/~ (\mGroupTorus[{\realTorusSplitCt[\rSeq]}] \cap \left<~\ma[{\simpleRootdi[1]}],\ldots,\ma[{\simpleRootdi[m]}]~\right>).
\]
In words, representatives for the connected components of $\realTorusSplitCt[\rSeq]$ are given by choosing elements of $\mGroupTorus[{\realTorusSplitCt[\rSeq ]}]$ that are distinct `modulo  $\rSeq$'. The following proposition (whose proof is easy) extends this to $\realTorusSplitCtCover[\rSeq]$.

\begin{proposition} \label {propNumCompHCtCover}
In the situation above
\[
\left|\pi_{0}(\realTorusSplitCtCover[\rSeq])\right| = \left\{
\begin{array}{rl}
2\left|\pi_{0}(\realTorusSplitCt[\rSeq])\right|, & -1 \notin \realTorusSplitCtCoverId[\rSeq] \\
\left|\pi_{0}(\realTorusSplitCt[\rSeq])\right|, & -1 \in \realTorusSplitCtCoverId[\rSeq] \\
\end{array}\right..
\]
Moreover, if $\ma[\simpleRootd] \in \mGroupTorus[{\realTorusSplitCt[\rSeq]}]$ then $\mac[\simpleRootd]$ and $-\mac[\simpleRootd]$ are contained in distinct connected components of $\realTorusSplitCtCover[\rSeq]$ if and only if $-1 \notin \realTorusSplitCtCoverId[\rSeq]$.
\end{proposition}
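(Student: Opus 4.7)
The plan is to analyze the covering $\projOp : \realTorusSplitCtCover[\rSeq] \to \realTorusSplitCt[\rSeq]$, which is a central extension with kernel $\{\pm 1\}$. Because $\projOp$ is a local diffeomorphism with discrete kernel, the restriction to the identity component $\projOp|_{\realTorusSplitCtCoverId[\rSeq]}$ has open image in $\realTorusSplitCt[\rSeq]$; since the image is a connected subgroup of $\realTorusSplitCt[\rSeq]$, it is also closed in $\realTorusSplitCtId[\rSeq]$, and by connectedness it equals $\realTorusSplitCtId[\rSeq]$. In particular $\projOpInv(\realTorusSplitCtId[\rSeq])$ is an open subgroup of $\realTorusSplitCtCover[\rSeq]$ containing $\realTorusSplitCtCoverId[\rSeq]$, and it consists of exactly $|\{\pm 1\} : \{\pm 1\} \cap \realTorusSplitCtCoverId[\rSeq]|$ components, each a translate of $\realTorusSplitCtCoverId[\rSeq]$.

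Next I would split into cases. If $-1 \in \realTorusSplitCtCoverId[\rSeq]$, then $\{\pm 1\} \subseteq \realTorusSplitCtCoverId[\rSeq]$ and $\projOpInv(\realTorusSplitCtId[\rSeq]) = \realTorusSplitCtCoverId[\rSeq]$, so $\projOp$ induces a bijection on component groups, giving $|\pi_0(\realTorusSplitCtCover[\rSeq])| = |\pi_0(\realTorusSplitCt[\rSeq])|$. If instead $-1 \notin \realTorusSplitCtCoverId[\rSeq]$, then $\{\pm 1\} \cap \realTorusSplitCtCoverId[\rSeq] = \{1\}$ and $\projOpInv(\realTorusSplitCtId[\rSeq]) = \realTorusSplitCtCoverId[\rSeq] \sqcup (-1)\realTorusSplitCtCoverId[\rSeq]$ is a disjoint union of two components. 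The induced map $\pi_0(\realTorusSplitCtCover[\rSeq]) \to \pi_0(\realTorusSplitCt[\rSeq])$ is surjective (since $\projOp$ is) with every fiber of order $2$ (by translation invariance, it suffices to check this over the identity component), yielding $|\pi_0(\realTorusSplitCtCover[\rSeq])| = 2|\pi_0(\realTorusSplitCt[\rSeq])|$.

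The moreover statement then follows immediately: the two elements $\mac[\simpleRootd]$ and $-\mac[\simpleRootd]$ differ by multiplication by $-1$, so they lie in the same connected component of $\realTorusSplitCtCover[\rSeq]$ precisely when $-1$ is in the identity component, i.e. when $-1 \in \realTorusSplitCtCoverId[\rSeq]$. The main (and only) subtle point is the surjectivity of $\projOp|_{\realTorusSplitCtCoverId[\rSeq]}$ onto $\realTorusSplitCtId[\rSeq]$, which is the standard open-and-closed argument sketched above; once that is in hand the rest is a counting of cosets.
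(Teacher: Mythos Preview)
Your argument is correct. The paper does not actually supply a proof of this proposition---it simply introduces it with ``(whose proof is easy)'' and moves on---so there is nothing to compare against; your open-and-closed argument for $\projOp(\realTorusSplitCtCoverId[\rSeq]) = \realTorusSplitCtId[\rSeq]$ followed by the coset count is exactly the kind of routine verification the authors have in mind.
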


Let $\left\{\ma[{\simpleRootei[1]}],\ldots,\ma[{\simpleRootei[k]}]\right\}$ be a set of representatives for the distinct connected components of $\realTorusSplitCt[\rSeq]$. Proposition \ref{propNumCompHCtCover} implies $(\pm)\left\{\mac[{\simpleRootei[1]}],\ldots,\mac[{\simpleRootei[k]}]\right\}$ is a set of representatives for the distinct connected components of $\realTorusSplitCtCover[\rSeq]$. Since the elements $\pm\mac[\simpleRootei]$ are contained in $\mGroupTorusCover[\realTorusSplitCover]$ by definition, their multiplicative structure is given by Proposition \ref{propMCStruct}. Combined with Lemma \ref{lemAbelianCompGroup}, this gives a complete description of the multiplicative structure of $\realTorusSplitCtCover[\rSeq]$.

\section{Genuine Triples}
\label{secGenTriples}

\subsection{Half-Integral Infinitesimal Characters}
\label{ssInfChar}

Conjugation to $\absLieAlgebra$ has been our main tool for relating algebraic structures associated with different Cartan subalgebras in $\complexLieAlgebra$. To this point, maps relating Cartan subalgebras have been specified only up to Weyl group conjugation. From now on we will be working with finer structure and are thus forced to be more precise about our conjugation maps.

\begin{definition}
\label{defAbsConjMap}
Fix a nonsingular element $\absDiff \in \absLieAlgebraDual$. Let $\complexLieAlgebra[h]$ be a Cartan subalgebra of $\complexLieAlgebra$, $\diff \in \complexLieAlgebra[h]^{*}$, and suppose $\absDiff$ and $\diff$ define the same infinitesimal character (Section \ref{ssHCModIntro}). Then there is an inner automorphism
\[
\absConjTwo{\absDiff} : \complexGroup \to \complexGroup
\]
whose differential induces a map (also denoted $\absConjTwo{\absDiff}$)
\begin{eqnarray*}
\absLieAlgebraDual & \to & \complexLieAlgebra[h]^{*} \\
\absDiff & \to & \diff.
\end{eqnarray*}
\end{definition}

The map $\absConjTwo{\absDiff}$ is not unique, however the restriction of any two such maps to $\absLieAlgebraDual$ is the same. Hence we have a well-defined family of maps $\left\{\absConjTwo[\diff']{\absDiff}\right\}$ taking $\absLieAlgebraDual$ to $\complexLieAlgebra[h]^{*}$, where $\diff' \in \complexLieAlgebra[h]^{*}$ is in the $\weylGroup(\complexLieAlgebra, \complexLieAlgebra[h])$-orbit of $\diff$. We also write $\absConjTwo{\absDiff}$ for the induced maps on Weyl groups, root systems, and so forth. When the element $\absDiff$ is fixed or clear from context, we will often just write $\absConj$.

The Harish-Chandra isomorphism allows us to specify an infinitesimal character by selecting a $\weylGroup(\complexLieAlgebra, \absLieAlgebra)$-orbit in $\absLieAlgebraDual$. Since we have fixed a positive system for $\rootSystem(\complexLieAlgebra, \absLieAlgebra)$, a nonsingular infinitesimal character is uniquely determined by an element of the corresponding dominant chamber. For the purpose of conjugation, it will often be convenient to specify an infinitesimal character via a dominant element of $\absLieAlgebraDual$. In this context we may refer to a dominant nonsingular element of $\absLieAlgebraDual$ as an infinitesimal character. Combined with Definition \ref{defAbsConjMap}, we are led to the following enhancement of Definition \ref{defAbsPair}.

\begin{definition}
\label{defAbsTriple}
Fix an infinitesimal character $\absDiff \in \absLieAlgebraDual$, a $\cinv$-stable Cartan subgroup $\realTorusCover \subset \realGroupCover$, and suppose there exists a genuine triple $\genTripleInf$ as in Definition \ref{defTriple}. Write $\eta : \imaginaryRoots[\cinv](\complexLieAlgebra,\complexLieAlgebra[h]) \to \mathbb{Z}_{2}$ for the corresponding grading of $\imaginaryRoots[\cinv](\complexLieAlgebra,\complexLieAlgebra[h])$ (Proposition \ref{propGrading}). The \emph{abstract triple} associated to $\genTripleInf$ is the 3-tuple $\absTriple$, where
\begin{eqnarray*}
\inv & = & \absConjTwoInv{\absDiff} \cdot \cinv \cdot \absConjTwo{\absDiff} \\
\grading(\simpleRoot) & = & \eta(\absConjTwo{\absDiff}(\simpleRoot))
\end{eqnarray*}
with $\simpleRoot$ an abstract imaginary root for $\inv$. In particular, $\inv$ is an involution of $\rootSystem(\complexLieAlgebra, \absLieAlgebra)$ corresponding to $\cinv$ and $\grading$ is a grading of $\imaginaryRoots[\inv](\complexLieAlgebra, \absLieAlgebra)$.
\end{definition}

Fix a genuine triple $\genTripleInf$ and recall $\genTorusChar$ is a genuine representation of $\realTorusCover$ compatible with $\diff$. Technically the abstract triple of Definition \ref{defAbsTriple} is determined entirely by $\absConjTwo{\absDiff}$ and the pair $\genPair$. However, the following proposition implies the existence of $\genTorusChar$ places restrictions on the abstract $\absDiff$ that are allowed.

\begin{proposition}[\cite{TA}] \label{propSuppRep}
Let $\genTripleInf$ be a genuine triple and suppose $\absTriple$ is the corresponding abstract triple. If $\simpleRoot \in \rootSystem$ is a long imaginary root for $\inv$ then
\[
(\absDiff, \rootCheck) \in  \left\{
\begin{array}{ll}
\mathbb{Z} & \grading(\simpleRoot) = 0\\
\mathbb{Z} + \frac{1}{2} & \grading(\simpleRoot) = 1\\
\end{array}
\right..
\]
If $\simpleRoot$ is a long complex root for $\inv$, then we have
\[
(\absDiff, \rootCheck + \inv(\rootCheck)) \in \left\{
\begin{array}{ll}
\mathbb{Z} & (\simpleRoot, \inv(\rootCheck)) = 0\\
\mathbb{Z} + \frac{1}{2} & (\simpleRoot, \inv(\rootCheck)) \ne 0\\
\end{array}
\right..
\]
\end{proposition}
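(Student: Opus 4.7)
The plan is to exploit the nonlinear multiplicative structure of $\mGroupTorusCover$ (Proposition \ref{propMCStruct}) together with the compatibility between the genuine character $\genTorusChar$ and its prescribed differential $\text{d}\genTorusChar = \genDiff|_{\complexLieAlgebra[t]} + \halfSumIm - 2\halfSumImCpt$.

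For a long imaginary root $\simpleRoot$, inspection of Definitions \ref{defZb} and \ref{defZc} shows that $\mac[\simpleRoot]$ is the exponential in $\realGroupCover$ of an element of the compact real Cartan subalgebra, so $\mac[\simpleRoot] \in \realTorusCoverId$ and
\[
\genTorusChar\!\bigl(\mac[\simpleRoot]\bigr) \;=\; \exp\!\bigl(\pm \pi i (\text{d}\genTorusChar,\rootCheck[\simpleRoot])\bigr).
\]
The ambiguous sign becomes irrelevant after squaring. The remarks following Definitions \ref{defZb} and \ref{defZc} record $\mac[\simpleRoot]^{2} = -1$ when $\simpleRoot$ is noncompact and $\mac[\simpleRoot]^{2} = 1$ when $\simpleRoot$ is compact, so genuineness of $\genTorusChar$ (i.e.\ $\genTorusChar(-1) = -1$) forces
\[
\exp\!\bigl(2\pi i (\text{d}\genTorusChar,\rootCheck[\simpleRoot])\bigr) \;=\; (-1)^{\grading(\simpleRoot)}.
\]
The correction term $(\halfSumIm - 2\halfSumImCpt,\rootCheck[\simpleRoot])$ is always an integer---a sum of $\rho$-coroot pairings inside the imaginary subsystem---so this fractional-part condition transfers directly to $(\genDiff,\rootCheck[\simpleRoot])$, and via the inner automorphism $\absConjTwo{\absDiff}$ to $(\absDiff,\rootCheck)$ for the abstract root, giving the stated dichotomy.

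For a long complex root $\simpleRoot$, the element $\rootCheck[\simpleRoot] + \inv(\rootCheck[\simpleRoot])$ is $\inv$-fixed, and $i(\rootCheck[\simpleRoot] + \inv(\rootCheck[\simpleRoot]))$ lies in the compact real Cartan subalgebra. Thus
\[
u \;:=\; \expgc\!\bigl(\pi i(\rootCheck[\simpleRoot] + \inv(\rootCheck[\simpleRoot]))\bigr) \;\in\; \realTorusCoverId,
\]
and $\genTorusChar(u) = \exp\!\bigl(\pi i(\text{d}\genTorusChar,\rootCheck[\simpleRoot] + \inv(\rootCheck[\simpleRoot]))\bigr)$. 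As in the imaginary case, the $\rho$-shift contributes an integer, so the fractional part of $(\absDiff,\rootCheck + \inv\rootCheck)$ is controlled by the sign $u^{2} \in \{\pm 1\}$. The claim to establish is that $u^{2} = 1$ exactly when $(\simpleRoot,\inv(\rootCheck)) = 0$: in that case $\simpleRoot$ and $\inv(\simpleRoot)$ are strongly orthogonal, they generate independent $\mathfrak{sl}_2$ subgroups of $\realGroupCover$, and the individual cover-signs from $\expgc(2\pi i \rootCheck)$ and $\expgc(2\pi i \inv\rootCheck)$ cancel multiplicatively; when the pairing is nonzero, the nonabelian commutator relation of Proposition \ref{propMCStruct} introduces an extra $-1$, so $u^{2} = -1$. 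Combining with the differential formula yields the stated dichotomy.

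The main obstacle is the sign computation for $u^{2}$ in the complex, non-orthogonal case. Unlike the imaginary case, $\simpleRoot$ does not give a direct $\mac$-type element, so the sign must be tracked either by explicitly lifting $\simpleRoot$ and $\inv(\simpleRoot)$ in a carefully chosen rank-two subgroup of $\realGroupCover$ (where Proposition \ref{propMCStruct} applies to an adapted pair of long-root elements) or by a Cayley-transform reduction through Proposition \ref{propSameMa} back to the imaginary case already handled. The orthogonality criterion $(\simpleRoot,\inv(\rootCheck)) = 0$ versus $\neq 0$ is precisely the condition that distinguishes a commuting product from a genuinely coupled rank-two subgroup, making the nonlinear commutator relation of Proposition \ref{propMCStruct} the essential ingredient.
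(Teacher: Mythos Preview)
The paper does not prove this proposition; it is cited from \cite{TA} without argument. So there is no paper proof to compare against, and I will assess your proposal on its own merits.

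Your imaginary-root argument is essentially correct but conflates two slightly different constructions. For noncompact imaginary $\simpleRoot$, Definition \ref{defZb} gives $\mac[\simpleRoot] = \expgc(-\pi i\coroot[\simpleRoot])$, which lies in $\realTorusCoverId$ and lets you read off $\genTorusChar(\mac[\simpleRoot])$ from the differential exactly as you say. For compact imaginary $\simpleRoot$, however, Definition \ref{defZc} sets $\mac[\simpleRoot] = \expgc(\pi\zLa[\simpleRoot])$ with $\zLa[\simpleRoot] \notin \complexLieAlgebra[h]$, so the formula $\genTorusChar(\mac[\simpleRoot]) = \exp(\pm\pi i(\text{d}\genTorusChar,\rootCheck[\simpleRoot]))$ does not follow directly. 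The fix is easy: work instead with $\expgc(\pi i\coroot[\simpleRoot]) \in \realTorusCoverId$, observe that it differs from $\mac[\simpleRoot]$ by at most a sign (both project to $\ma[\simpleRoot] \in \realGroup$), and conclude $\expgc(2\pi i\coroot[\simpleRoot]) = \mac[\simpleRoot]^{2}$ regardless of that sign. Then the remarks after Definitions \ref{defZb} and \ref{defZc} give exactly the dichotomy $\expgc(2\pi i\coroot[\simpleRoot]) = (-1)^{\grading(\simpleRoot)}$ you need.

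The complex-root case is where your proposal has a genuine gap, and you recognize this yourself. The element $u = \expgc(\pi i(\coroot[\simpleRoot]+\inv\coroot[\simpleRoot]))$ is well defined in $\realTorusCoverId$, but computing $u^{2}$ cannot be done by factoring into $\expgc(2\pi i\coroot[\simpleRoot])$ and $\expgc(2\pi i\inv\coroot[\simpleRoot])$, since neither $2\pi i\coroot[\simpleRoot]$ nor $2\pi i\inv\coroot[\simpleRoot]$ lies in the real Lie algebra for a complex root. Your suggested routes (a rank-two subgroup calculation or Cayley-transform reduction) are both viable. The cleanest is probably the latter: the long imaginary root $\simpleRoot + \inv\simpleRoot$ (when $(\simpleRoot,\inv\rootCheck) \neq 0$) or the pair of strongly orthogonal imaginary roots obtained after a Cayley transform (when $(\simpleRoot,\inv\rootCheck) = 0$) reduces the question to the imaginary case you have already handled, using Proposition \ref{propSameMa} and Lemma \ref{lemCtCoroot} to track the elements through the transform. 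As it stands, though, this step is asserted rather than carried out.
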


Define a half-integer to be a number $x$ such that $2x \in \mathbb{Z}$ and a \emph{strict} half-integer to be an element of $\mathbb{Z} + \frac{1}{2}$. Because of Proposition \ref{propSuppRep}, we will generally be making the following assumption.

\begin{assumption}
\label{ass1}
Unless otherwise stated, an infinitesimal character $\absDiff \in \absLieAlgebraDual$ is assumed to be half-integral when written in abstract coordinates.
\end{assumption}

\begin{definition}
\label{defSuppTriple}
An abstract triple satisfying the conditions of Proposition \ref{propSuppRep} is said to be \emph{supportable}. 
\end{definition}

Fix $\absDiff \in \absLieAlgebraDual$ half-integral and let $\realTorusCover$ be a $\cinv$-stable Cartan subgroup of $\realGroupCover$. Suppose $\diff \in \complexLieAlgebra[h]^{*}$ such that $\absDiff$ and $\diff$ define the same infinitesimal character. We would like to know when there exists a genuine triple of the form $\genTripleInf$. Conjugating the pair $\genPair$ by $\absConjTwoInv{\absDiff}$ gives an abstract triple $\absTriple$ and Proposition \ref{propSuppRep} provides a necessary condition on $\absTriple$. A wonderful fact is that this condition is also sufficient.

\begin{proposition}[\cite{TA}] \label{propIffSupp}
In the situation above, a genuine triple $\genTripleInf$ exists if and only if the abstract triple $\absTriple$ is supportable.
\end{proposition}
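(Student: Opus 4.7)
The forward direction is already given: any genuine triple $\genTripleInf$ produces, via conjugation by $\absConjTwoInv{\absDiff}$, an abstract triple satisfying the conditions of Proposition \ref{propSuppRep} and hence supportable. All of the work is in the reverse direction, where we must construct a genuine character $\genTorusChar$ of $\realTorusCover$ whose differential equals $\diff|_{\complexLieAlgebra[t]} + \halfSumIm - 2\halfSumImCpt$ under the assumption that the abstract triple $\absTriple$ obtained from $\genPair$ is supportable.

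My plan is to build $\genTorusChar$ piece by piece using the structural decomposition of $\realTorusCover$ developed in Sections \ref{secCartanSubgroups} and \ref{AlgTorusCover}. First, since $\realTorusCoverId$ is central (Lemma \ref{lemAbelianCompGroup}) and abelian, I will define $\genTorusChar$ on $\realTorusCoverId$ by exponentiating the prescribed Lie-algebra weight $\diff|_{\complexLieAlgebra[t]} + \halfSumIm - 2\halfSumImCpt$; the kernel of $\expgc$ on the compact part is generated by coroots $2\pi i\coroot$ for integral coweights, and half-integrality of $\absDiff$ (Assumption \ref{ass1}) together with the fact that $\halfSumIm - 2\halfSumImCpt$ is a sum of imaginary roots guarantees that this is well-defined. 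Next, using $\realTorusCover = \mGroupTorusCover\,\realTorusCoverId$ (the nonlinear analog of Proposition \ref{propMCompGroupCT}), I will extend $\genTorusChar$ to all of $\realTorusCover$ by specifying the scalars $\genTorusChar(\mac[\simpleRoot])$ for $\simpleRoot$ running over a simple system of real roots; genuineness forces $\genTorusChar(-1) = -1$, which is consistent because $-1$ is always disconnected from $1$ in $\realTorusCover$ (by Proposition \ref{propMCCompGroup} in the split case and the iterated-Cayley version via Proposition \ref{propNumCompHCtCover} in general).

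The core obstacle, and what forces the supportability hypothesis, is verifying that this extension respects the (possibly nonabelian) presentation of $\mGroupTorusCover$ given in Proposition \ref{propMCStruct} and agrees with the identity-component definition wherever the two overlap. The overlap is controlled by Propositions \ref{propMaIdCompH}, \ref{propMIterCtStruct}, and \ref{propCompH}: any $\mac$ that actually lies in $\realTorusCoverId$ has $\genTorusChar(\mac) = e^{\pi i(\absDiff,\rootCheck)}$, so the assigned scalar on a long imaginary generator must satisfy $\genTorusChar(\mac[\simpleRoot])^{2} = \genTorusChar(-1) = -1$, which requires $(\absDiff,\rootCheck) \in \mathbb{Z} + \tfrac{1}{2}$; this is precisely the noncompact case $\grading(\simpleRoot)=1$ of Proposition \ref{propSuppRep}, and symmetrically for compact long imaginary roots one gets $(\absDiff,\rootCheck)\in\mathbb{Z}$. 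For long complex roots $\simpleRoot$ with $(\simpleRoot,\inv(\rootCheck))\neq 0$, the commutator relation $[\mac[\simpleRoota],\mac[\simpleRootb]] = (-1)^{(\simpleRoota,\rootCheck[\simpleRootb])}$ forces the scalar products $\genTorusChar(\mac[\simpleRoot])\genTorusChar(\mac[\inv(\simpleRoot)])$ to satisfy a twisted square-root condition matching the second clause of Proposition \ref{propSuppRep}.

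The hard step is thus the final compatibility check, and I would organize it by reducing to rank-one and rank-two subsystems via Proposition \ref{propInvWeylGroups}: the $B_{\numImaginaryBits}\times A_{1}^{k}$ imaginary and $B_{\numRealBits}\times A_{1}^{k}$ real structures decouple the conditions into elementary verifications for short roots (where Proposition \ref{propMShort} trivializes the situation because $\ma$ is central of order two), long orthogonal imaginary pairs (handled by the imaginary clause), and $\inv$-swapped long complex pairs (handled by the complex clause). Once these finitely many relations are checked, the assignment extends to a well-defined character of $\mGroupTorusCover$, hence of $\realTorusCover$, producing the required genuine triple. The converse Proposition \ref{propSuppRep} and this construction together establish the equivalence.
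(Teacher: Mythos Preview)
The paper does not prove this proposition; it simply cites \cite{TA}. So there is no argument in the paper to compare against, and your proposal stands or falls on its own.

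There is a genuine gap in your construction. You propose to build $\genTorusChar$ as a one-dimensional character of $\realTorusCover$ by assigning scalars $\genTorusChar(\mac[\simpleRoot])$ on generators and checking relations. But Proposition~\ref{propMCStruct} shows that $\mGroupTorusCover$ is typically \emph{nonabelian}: whenever there exist long real roots $\simpleRoota,\simpleRootb$ with $(\simpleRoota,\rootCheck[\simpleRootb])$ odd, the commutator $[\mac[\simpleRoota],\mac[\simpleRootb]]$ equals $-1$. Any one-dimensional representation is trivial on commutators, so a genuine one-dimensional character (with $\genTorusChar(-1)=-1$) of $\mGroupTorusCover$ simply does not exist in this situation. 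Your sentence ``the assignment extends to a well-defined character of $\mGroupTorusCover$'' therefore fails precisely in the cases where the nonlinear structure matters.

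The repair is already present in the paper's toolkit: Proposition~\ref{propGenRepBij} gives a bijection between irreducible genuine representations of $\realTorusCover$ and genuine \emph{characters of the center} $Z(\realTorusCover)$. The correct strategy is to construct a genuine character of $Z(\realTorusCover)$ with the prescribed differential on $\realTorusCoverId\subset Z(\realTorusCover)$ (Lemma~\ref{lemAbelianCompGroup}) and then invoke the bijection to produce the (possibly higher-dimensional) $\genTorusChar$. Your exponentiation argument on $\realTorusCoverId$ and your compatibility checks on the $\mac$ that happen to lie in $\realTorusCoverId$ are the right ingredients for this, but they must be applied to $Z(\realTorusCover)$ rather than to $\realTorusCover$ itself. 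Once restricted to the center, the commutator obstruction vanishes and the supportability conditions of Proposition~\ref{propSuppRep} are exactly what is needed for the differential to exponentiate consistently.
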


\subsection{Genuine Triples}
\label{ssGenTriples}

Let $\realTorusCover \subset \realGroupCover$ be a $\cinv$-stable Cartan subgroup, fix a regular element $\diff \in \complexLieAlgebraDual[h]$, and suppose $\absTriple$ is the abstract triple corresponding to the pair $\genPair$. If $\absTriple$ is supportable (Definition \ref{defSuppTriple}), Proposition \ref{propIffSupp} implies there exists a genuine character $\genTorusChar$ such that $\genTripleInf$ is a genuine triple for $\realGroupCover$. In this section we determine how many such extensions exist.

\begin{definition}
\label{defNumGenTriples}
Set $\numCorGenTriplesInf$ to be the number of distinct (conjugacy classes of) genuine triples extending $\genPairInf$.
\end{definition}

Let $\genReps$ denote the set of (equivalence classes of) irreducible genuine representations of $\realTorusCover$ and let $\genRepsCenter$ denote the same set for the center of $\realTorusCover$. The key to computing $\numCorGenTriplesInf$ is the following proposition. 

\begin{proposition}[\cite{ABVPT}, Proposition 2.2]
\label{propGenRepBij}
There is a bijection
\[
\pi : \genRepsCenter \to \genReps
\]
sending an element $\chi \in \genRepsCenter$ to $\pi(\chi) \in \genReps$. Here $\pi(\chi)$ is the unique element of $\genReps$ satisfying $\pi(\chi)|_{Z(\realTorusCover)} = n\chi$, where $n = \left|\realTorusCover/Z(\realTorusCover)\right|^{\frac{1}{2}}$ is the dimension of $\pi(\chi)$.
\end{proposition}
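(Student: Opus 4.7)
The plan is to reduce the statement to a finite-dimensional Stone--von Neumann theorem for a finite Heisenberg-type group. First I would analyze the structure of $\realTorusCover$ modulo its center. By Lemma \ref{lemAbelianCompGroup} the identity component $\realTorusCoverId$ is central, so $\bar G := \realTorusCover/Z(\realTorusCover)$ is a finite quotient of the component group. Since $\realTorus$ is abelian, every commutator in $\realTorusCover$ lies in $\{\pm 1\}$, which is itself central in $\realGroupCover$. Hence $\realTorusCover$ is nilpotent of class two and commutators descend to a well-defined pairing
\[
\omega \colon \bar G \times \bar G \longrightarrow \{\pm 1\} \cong \mathbb{F}_2.
\]
From $[g^2,h] = [g,h]^2 = 1$ it follows that $\bar G$ has exponent two, so $\bar G$ is an $\mathbb{F}_2$-vector space. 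The pairing $\omega$ is then bilinear and alternating, and is nondegenerate because its left (or right) radical consists precisely of the image of the full centralizer of $\realTorusCover$, which is $Z(\realTorusCover)$. Consequently $|\bar G| = n^2$ with $n$ a positive integer, justifying the square root in the statement.

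Given a genuine character $\chi$ of $Z(\realTorusCover)$, I would construct $\pi(\chi)$ by Mackey induction. Choose a Lagrangian subspace $L \subset \bar G$ and let $\tilde L \subset \realTorusCover$ be its preimage; then $\tilde L$ is abelian (since $L$ is isotropic) and maximal abelian (since $L$ is Lagrangian), with $\tilde L / Z(\realTorusCover) \cong L$ free of rank $\log_2 n$ over $\mathbb{F}_2$. Extend $\chi$ to a character $\tilde\chi$ of $\tilde L$ and set
\[
\pi(\chi) := \mathrm{Ind}_{\tilde L}^{\realTorusCover} \tilde\chi.
\]
Then $\dim \pi(\chi) = [\realTorusCover : \tilde L] = |L| = n$ and $\pi(\chi)|_{Z(\realTorusCover)} = n\chi$ by construction.

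Irreducibility and uniqueness both follow from a single observation. For $g \in \realTorusCover$ and $h \in \tilde L$, the conjugate character satisfies $\tilde\chi^g(h) = \tilde\chi(h)\chi([g,h])$ up to a sign convention. Because $\chi$ is genuine, $\chi(-1) = -1$, so $\tilde\chi^g = \tilde\chi$ forces $[g,h] = 1$ for every $h \in \tilde L$, which by nondegeneracy of $\omega$ and maximality of $L$ requires $g \in \tilde L$. Mackey's criterion then gives irreducibility. For uniqueness, let $\rho \in \genReps$ be any irreducible with central character $\chi$ (such a character exists on any irreducible by Schur's lemma applied to $Z(\realTorusCover)$, and $\chi$ must be genuine since $\rho$ is). Restricting $\rho$ to the abelian group $\tilde L$ yields a sum of characters extending $\chi$, and nondegeneracy of $\omega$ implies $\realTorusCover/\tilde L$ acts transitively on the $n$ such extensions; thus $\tilde\chi$ appears, Frobenius reciprocity produces a nonzero map $\pi(\chi) \to \rho$, and irreducibility forces $\rho \cong \pi(\chi)$. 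This simultaneously proves that $\pi(\chi)$ is independent of the choices of $L$ and $\tilde\chi$, establishes injectivity of $\chi \mapsto \pi(\chi)$, and yields surjectivity.

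The main obstacle is verifying nondegeneracy of $\omega$ from the explicit commutator relations of Proposition \ref{propMCStruct}, together with controlling how the noncentral component-group elements interact with $\realTorusCoverId$. Once one isolates exactly which products $\mac[{\simpleRooti[1]}]^{\epsilon_1}\cdots\mac[{\simpleRooti[n]}]^{\epsilon_n}$ commute with every generator modulo $\{\pm 1\}$, the identification of $Z(\realTorusCover)$ as the full radical of $\omega$ becomes transparent and the remainder of the argument is a direct application of the finite Heisenberg-group Stone--von Neumann theorem.
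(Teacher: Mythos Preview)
The paper does not supply a proof of this proposition; it is quoted from \cite{ABVPT}, Proposition~2.2, and used as a black box. Your argument is correct and is the standard one: $\realTorusCover$ is two-step nilpotent with commutators in $\{\pm 1\}$, so $\bar G = \realTorusCover/Z(\realTorusCover)$ carries a nondegenerate alternating $\mathbb{F}_2$-form, and a genuine central character then determines a unique irreducible via Mackey induction from any Lagrangian (maximal abelian) subgroup. This is precisely the finite Stone--von~Neumann theorem, and is essentially the argument given in the cited reference.

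One remark on your final paragraph: you overstate the difficulty of nondegeneracy. The radical of the commutator pairing on $\realTorusCover/Z(\realTorusCover)$ is by definition the image of $\{g \in \realTorusCover : [g,h]=1 \text{ for all } h\} = Z(\realTorusCover)$, hence is trivial in the quotient; this is tautological and requires no appeal to the explicit relations of Proposition~\ref{propMCStruct}. Those relations are used elsewhere in the paper to compute $Z(\realTorusCover)$ concretely for specific Cartan subgroups, not to establish the abstract nondegeneracy you need here.
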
 

Suppose there exists a genuine triple $\genTripleInf$ with infinitesimal character $\absDiff \in \absLieAlgebraDual$. Proposition \ref{propGenRepBij} implies the possibilities for $\genTorusChar$ are parameterized by (genuine) central characters of $\realTorusCover$ compatible with $\diff$ (Section \ref{ssHCModIntro}). However, $\realTorusCoverId$ is central in $\realTorusCover$ by Lemma \ref{lemAbelianCompGroup} and the behavior of $\genTorusChar$ on $\realTorusCoverId$ is specified by $\diff$. In particular, the distinct possibilities for $\genTorusChar$ are determined by the structure of the connected components of $\realTorusCover$ and this structure was determined in Section \ref{AlgTorusCover}. Putting everything together we obtain the following results. As usual, it suffices to consider only the split form $\realGroupSplit$.

\begin{proposition} \label{propNumGenReps}
Fix a split Cartan subgroup $\realTorusSplit \subset \realGroupSplit$. Let $\absDiff$ be a half-integral infinitesimal character, $\rSeq$ a sequence of orthogonal roots in $\rootSystem(\complexLieAlgebra, \splitLieAlgebra)$, and suppose $\absTriple$ is the abstract triple corresponding to the pair $\genPairSplitCtInf$. Let $\left\{\ma[{\simpleRootei[1]}],\ldots,\ma[{\simpleRootei[k]}]\right\}$ be a representative set for $\pi_{0}(\realTorusSplitCt[\rSeq])$ and assume $\absTriple$ is supportable. Then the number of genuine triples $\genTripleSplitCtInf[\rSeq]$ extending $\genPairSplitCtInf$ is given by
\begin{eqnarray*}
\numCorGenTriplesSplitCtInf & = & \left|\left\{\mac[\simpleRootei] \mid \left[\mac[\simpleRootei],\mac[{\simpleRootei[j]}]\right] = 1, \text{\emph{ for all }} 1 \le j \le k\right\}\right| \\
& = & \left|\left\{\mac[\simpleRootei] \mid \mac[\simpleRootei] \text{ is central in } \realTorusSplitCtCover[\rSeq]\right\}\right|.
\end{eqnarray*}
\end{proposition}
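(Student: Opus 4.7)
The plan is to reduce the count of genuine triples to a count of genuine characters of the center $Z(\realTorusSplitCtCover[\rSeq])$, and then enumerate these using the structure of $\pi_{0}(\realTorusSplitCtCover[\rSeq])$ developed in Section \ref{AlgTorusCover}. First, I would invoke Proposition \ref{propGenRepBij}, which gives a bijection between $\genReps[\realTorusSplitCtCover[\rSeq]]$ and $\genRepsCenter[\realTorusSplitCtCover[\rSeq]]$. Combined with the fact (Lemma \ref{lemAbelianCompGroup}) that $\realTorusSplitCtCoverId[\rSeq]$ is central, and the observation that the differential of $\genTorusChar$ on the identity component is dictated by $\diff$, this reduces $\numCorGenTriplesSplitCtInf$ to the number of genuine characters of $Z(\realTorusSplitCtCover[\rSeq])$ whose restriction to $\realTorusSplitCtCoverId[\rSeq]$ exponentiates $\diff$.

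Next, I would identify the structure of $Z(\realTorusSplitCtCover[\rSeq])/\realTorusSplitCtCoverId[\rSeq]$. By Propositions \ref{propMCCompGroup} and \ref{propNumCompHCtCover}, $\pi_{0}(\realTorusSplitCtCover[\rSeq])$ is generated by the classes of $\pm \mac[\simpleRootei]$, and by the commutator relations of Proposition \ref{propMCStruct} an element $\mac[\simpleRootei]$ is central in $\realTorusSplitCtCover[\rSeq]$ precisely when it commutes with each of the generating $\mac[{\simpleRootei[j]}]$. This establishes the second equality in the statement, and shows that the image of $Z(\realTorusSplitCtCover[\rSeq])$ in $\pi_{0}(\realTorusSplitCtCover[\rSeq])$ is generated by $-1$ together with the central $\mac[\simpleRootei]$'s; call the number of the latter $c$.

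The proof is then completed by a direct enumeration using supportability. Proposition \ref{propIffSupp} guarantees at least one genuine extension exists, so the set of compatible genuine characters forms a (nonempty) torsor over a group of characters of $Z/\realTorusSplitCtCoverId[\rSeq]$ trivial on $\langle -1 \rangle$. I would split into the two cases distinguished by Proposition \ref{propNumCompHCtCover}: if $-1 \in \realTorusSplitCtCoverId[\rSeq]$, the $c$ central representatives give $|Z/\realTorusSplitCtCoverId[\rSeq]| = c$ compatible characters; if $-1 \notin \realTorusSplitCtCoverId[\rSeq]$, the central quotient has order $2c$ with $[-1] \neq 1$, and the genuine condition halves the character count back to $c$. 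Either way one obtains $c = \hcBasisGenOrder[]^{-1}\ldots$ — i.e., precisely the cardinality on the right-hand side.

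The main obstacle will be the bookkeeping in the two-case analysis: one must verify that the torsor structure on compatible characters truly has the right size in both cases, and that potential dependencies among the $\mac[\simpleRootei]$ modulo $\realTorusSplitCtCoverId[\rSeq]$ do not collapse the count. The saving grace is that the $\ma[\simpleRootei]$ were chosen to represent \emph{distinct} components of $\realTorusSplitCt[\rSeq]$ (Proposition \ref{propMIterCtStruct}), which forces their lifts $\mac[\simpleRootei]$ to remain independent modulo $\realTorusSplitCtCoverId[\rSeq] \cdot \langle -1 \rangle$ inside the center, making the torsor count exact.
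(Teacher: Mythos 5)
Your proposal is correct and follows essentially the same route as the paper: reduce via Proposition \ref{propGenRepBij} and Lemma \ref{lemAbelianCompGroup} to counting genuine central characters fixed on $\realTorusSplitCtCoverId[\rSeq]$ by $\diff$, then split on whether $-1 \in \realTorusSplitCtCoverId[\rSeq]$, with genuineness halving the count of central components in the second case. The torsor phrasing and the explicit appeal to Proposition \ref{propNumCompHCtCover} for independence of the $\mac[\simpleRootei]$ are just slightly more formal versions of what the paper does implicitly.
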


\begin{remark}
The above statement does not depend on the choices for the $\mac[\simpleRootei]$.
\end{remark}

\begin{proof}
Suppose $-1 \in \realTorusSplitCtCoverId[\rSeq]$. Then the number of genuine triples extending $\genPairSplitCtInf$ is equal to the number of connected components of $\realTorusSplitCtCover[\rSeq]$ contained in the center. Since the connected components are represented by the elements $\left\{\mac[{\simpleRootei[1]}],\ldots,\mac[{\simpleRootei[k]}]\right\}$, we simply need to determine the central $\mac[\simpleRootei]$ and the result follows.

If $-1 \notin \realTorusSplitCtCoverId[\rSeq]$, then for each element of
\[
\left\{\mac[\simpleRootei] \mid \left[\mac[\simpleRootei],\mac[{\simpleRootei[j]}]\right] = 1, \text{\emph{ for all }} 1 \le j \le k\right\}
\]
there are two corresponding central connected components in $\realTorusSplitCtCover[\rSeq]$ that differ by a factor of $-1$. Since we are interested in \emph{genuine} representations of $\realTorusSplitCtCover[\rSeq]$, the action of $-1$ is fixed and we have only a single corresponding genuine character. Therefore the genuine characters are parameterized by the same set as the previous case and the result follows.
\end{proof}

\begin{theorem} \label{theoremNumGenReps}
Let $\absDiff$ be a half-integral infinitesimal character, $\rSeq$ an orthogonal sequence in $\rootSystem(\complexLieAlgebra,\splitLieAlgebra)$, and suppose the abstract triple $\absTriple$ corresponding to the pair $\genPairSplitCtInf$ is supportable. Then
\[
\numCorGenTriplesSplitCtInf \in \left\{1,2,4\right\}.
\]
\end{theorem}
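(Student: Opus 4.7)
The plan is to translate the count of genuine extensions into the order of the radical of a $\zTwo$-bilinear form on the component group $\pi_{0}(\realTorusSplitCt[\rSeq])$, and to bound this radical using the combinatorics of type $B_{n}$. By Proposition \ref{propNumGenReps}, $\numCorGenTriplesSplitCtInf$ equals the number of component representatives $\mac[\simpleRootei]$ that are central in $\realTorusSplitCtCover[\rSeq]$. Since $\realTorusSplitCtCoverId[\rSeq]$ is central by Lemma \ref{lemAbelianCompGroup}, centrality reduces to commutation with the other representatives $\mac[{\simpleRootei[j]}]$. Proposition \ref{propMCStruct} gives $[\mac[\simpleRootei], \mac[{\simpleRootei[j]}]] = (-1)^{(\simpleRootei, \rootCheck[{\simpleRootei[j]}])}$ when both roots are long and $1$ otherwise; extending bilinearly over $\zTwo$ produces an alternating form $q$ on $\pi_{0}(\realTorusSplitCt[\rSeq])$ (alternating because $(\simpleRoot, \rootCheck[\simpleRoot]) = 2$), whose radical $K$ is exactly the subgroup of central components. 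Hence $\numCorGenTriplesSplitCtInf = |K|$ is a power of $2$, with $|K| \ge 1$ since the identity is always central.

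To bound $|K| \le 4$, I would use Proposition \ref{propCompH} to realize $\pi_{0}(\realTorusSplitCt[\rSeq])$ as a subquotient of $\coRootLattice/2\coRootLattice \cong \zTwo^{n}$, and describe $q$ by restriction of the pairing $(x, y) \bmod 2$. On the full space $\coRootLattice/2\coRootLattice$, a class $\bar{x}$ of $x = \sum c_{i} e_{i}$ lies in the radical iff $(x, e_{i} - e_{j}) \equiv 0 \bmod 2$ for all $i \ne j$, forcing the $c_{i}$ to share a common parity; combined with the constraint $\sum c_{i} \in 2\mathbb{Z}$, this yields at most four classes modulo $2\coRootLattice$.

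The subtle step, and the main obstacle, is that restricting $q$ to a subquotient can a priori enlarge the radical. This is controlled by the type $B_{n_{r}} \times A_{1}^{k}$ decomposition of $\realRoots[\cinv](\complexLieAlgebra, \splitLieAlgebraCt[\rSeq])$ from Proposition \ref{propInvWeylGroups}. Each $A_{1}$ factor arises from a long root $\simpleRoot \in \rSeq$ producing a complex pair $\{i, j\}$ with companion real root $e_{i} + e_{j}$. Since $(e_{i}+e_{j})^{\vee} = (e_{i}-e_{j})^{\vee} + (e_{j})^{\vee}$ in type $B_{n}$, Proposition \ref{propMStruct} gives $\ma[{e_{i} + e_{j}}] = \ma[{e_{i} - e_{j}}] \cdot \ma[{e_{j}}]$; modulo the subgroup $\langle \ma[\simpleRoota] : \simpleRoota \in \rSeq \rangle$ the first factor is trivial while the second is the central short-root element by Proposition \ref{propMShort}. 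Thus the $A_{1}^{k}$ generators collapse in $\pi_{0}(\realTorusSplitCt[\rSeq])$ into the always-central short-root class and contribute no new direction to $K$. The remaining contribution from the $B_{n_{r}}$ part is controlled by the split-case analysis, yielding $|K| \le 4$ and so $\numCorGenTriplesSplitCtInf \in \{1, 2, 4\}$.
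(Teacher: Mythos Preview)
Your approach is correct and genuinely different from the paper's. The paper proceeds by brute force: it conjugates $\rSeq$ into one of seven explicit standard forms (indexed by which of the $\simpleRooti$ and $\simpleRootbi$ appear), writes down generators for $\pi_{0}(\realTorusSplitCt[\rSeq])$ via Proposition~\ref{propCompH}, and reads off the center using Proposition~\ref{propMCStruct} case by case. Your reformulation in terms of the radical of the alternating $\zTwo$-form is cleaner and explains \emph{why} the answer is always a power of two, which the paper's enumeration does not.

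The only soft spot is the last paragraph. The phrase ``controlled by the split-case analysis'' is doing real work and deserves to be unpacked. Concretely: once you have shown that every $A_{1}$ generator $\ma[{e_{i}+e_{j}}]$ collapses to the short-root class $m_{0}$, you should observe that when $n_{r}>0$ the element $m_{0}$ is already the short-root generator of the $B_{n_{r}}$ factor, so $\pi_{0}$ is generated by the $B_{n_{r}}$ part alone and $K$ equals the radical of the form restricted there. That restricted form is exactly the commutator form on (a quotient of) $L(B_{n_{r}}^{\vee})/2L(B_{n_{r}}^{\vee})$, and your ambient computation, rerun with $n_{r}$ in place of $n$, gives radical of size at most $4$; passing to a quotient can only shrink the radical since the kernel is forced to lie in it. When $n_{r}=0$ the $B_{n_{r}}$ part is trivial, $\pi_{0}=\langle m_{0}\rangle$, and $|K|\le 2$. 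Spelling this out would make the argument self-contained, whereas as written it still implicitly leans on a small case split that you have not stated.
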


\begin{proof}
Let
\begin{eqnarray*}
\simpleRooti & = & \ei - \ei[i+1] \\
\simpleRootbi & = & \ei
\end{eqnarray*}
with respect to the usual coordinates for a positive system in $\rootSystem(\complexLieAlgebra, \splitLieAlgebra)$. Every Cartan subgroup $\realTorusCover \subset \realGroupCover[\realGroupSplit]$ is $\maxRealCompactCover$-conjugate to one of the form $\realTorusSplitCtCover[\rSeq]$, with $\rSeq = {\simpleRooti[1]\simpleRooti[3]\ldots\simpleRooti[k-1]\simpleRootbi[k+1]\ldots\simpleRootbi[k+s]}$ for some numbers $k$ and $s$. We proceed by cases based on the form of $\rSeq$.
\begin{steplist}
\item [Case I]
Suppose $\rSeq = \emptyset$ and recall $n$ denotes the rank of $\realGroupCover[\realGroupSplit]$. If $n$ is even define $z = \mac[{\simpleRootai[1]}]\mac[{\simpleRootai[3]}]\cdots\mac[{\simpleRootai[n-1]}]$. It is easy to verify
\[
\text{Z}(\mGroupTorusCover[\realTorusSplitCover]) = \left\{\begin{array}{rl}
\pm\left\{1,\mac[{\simpleRootbi[n]}]\right\} & n \text{ odd} \\
\pm\left\{1,\mac[{\simpleRootbi[n]}],z,z\mac[{\simpleRootbi[n]}]\right\} & n \text{ even} \\
\end{array}\right.
\]
so that Proposition \ref{propNumGenReps} implies
\[
\numCorGenTriplesSplitInf = \left\{
\begin{array}{cl}
2 & n \text{ is odd } \\
4 & n \text{ is even }
\end{array}
\right..
\]

\item [Case II]
Suppose $\rSeq = {\simpleRootbi[1]\ldots\simpleRootbi[n]}$ so that $\invSplitCt[\rSeq] = \identity$. Then $\realTorusSplitCtCover[\rSeq]$ is connected and we have 
\[
\numCorGenTriplesSplitInf = 1.
\]

\item [Case III]
Suppose $\rSeq = {\simpleRootbi[1]\ldots\simpleRootbi[s]}$ with $0 < s < n$ and let $r=n-s$. Then the diagram (up to conjugacy) for $\realTorusSplitCtCover[\rSeq]$ is 
\[
\underset{1}{+}~\cdots \underset{s}{+}~-\cdots~\underset{n}{-}
\]
and Proposition \ref{propCompH} implies $\pi_{0}(\realTorusSplitCt[\rSeq])$ has generators
\[
\left\{\ma[{\simpleRooti[s+1]}],\ma[{\simpleRooti[s+2]}],\ldots,\ma[{\simpleRooti[n-1]}]\right\}.
\]
Arguing as in Case I it is easy to verify
\[
\numCorGenTriplesSplitCtInf = \left\{
\begin{array}{rl}
1 & \text{r is odd} \\
2 & \text{r is even}
\end{array}\right..
\]

\item [Case IV]
Suppose $\rSeq = {\simpleRooti[1]\simpleRooti[3]\ldots\simpleRooti[k-1]\simpleRootbi[k+1]\ldots\simpleRootbi[n]}$ with $0 < k < n$. Then the diagram (up to conjugacy) for $\realTorusSplitCtCover[\rSeq]$ is 
\[
\underset{1}{(2)}~(1)~\cdots~(k)~\underset{k}{(k-1)}~\underset{k+1}{+}~\cdots \underset{n}{+}
\]
and $\realTorusSplitCtCover[\rSeq]$ is connected. Therefore
\[
\numCorGenTriplesSplitCtInf = 1
\]
as desired.

\item [Case V]
Suppose $\rSeq = \simpleRooti[1]\simpleRooti[3]\ldots\simpleRooti[k-1]$ with $0 < k < n$ and let $r=n-k$. Then the diagram (up to conjugacy) for $\realTorusSplitCtCover[\rSeq]$ is
\[
\underset{1}{(2)}~(1)~\cdots~(k)~\underset{k}{(k-1)}~-~\cdots \underset{n}{-}
\]
and Proposition \ref{propCompH} implies $\pi_{0}(\realTorusSplitCt[\rSeq])$ has generators
\[
\left\{\ma[{\simpleRooti[k+1]}],\ma[{\simpleRooti[k+2]}],\ldots,\ma[{\simpleRooti[n-1]}],\ma[\simpleRootb]\right\}.
\]
Again as in case I it is easy to verify
\[
\numCorGenTriplesSplitCtInf = \left\{
\begin{array}{rl}
2 & \text{r is odd} \\
4 & \text{r is even}
\end{array}\right..
\]

\item [Case VI]
Suppose $\rSeq = {\simpleRooti[1]\simpleRooti[3]\ldots\simpleRooti[k-1]\simpleRootbi[k+1]\ldots\simpleRootbi[k+s]}$ with $0 < k < n$ and $0 < s < n-k$. Let $r = n-k-s$. Then the diagram (up to conjugacy) for $\realTorusSplitCtCover[\rSeq]$ is
\[
\underset{1}{(2)}~(1)~\cdots~(k)~\underset{k}{(k-1)}~\underset{k+1}{+}~\cdots \underset{k+s}{+}-~\cdots \underset{n}{-}
\]
and $\pi_{0}(\realTorusSplitCt[\rSeq])$ has generators
\[
\left\{\ma[{\simpleRooti[k+s+1]}],\ma[{\simpleRooti[k+s+2]}],\ldots,\ma[{\simpleRooti[n-1]}]\right\}.
\]
Again as in case I it is easy to verify
\[
\numCorGenTriplesSplitCtInf = \left\{
\begin{array}{rl}
1 & \text{r is odd} \\
2 & \text{r is even}
\end{array}\right..
\]

\item [Case VII]
Suppose $\rSeq = \simpleRooti[1]\simpleRooti[3]\ldots\simpleRooti[n-1]$ (possible only when $n$ is even). Then the diagram (up to conjugacy) for $\realTorusSplitCtCover[\rSeq]$ is
\[
\underset{1}{(2)}~(1)~\cdots~(n)~\underset{n}{(n-1)}
\]
and $\pi_{0}(\realTorusSplitCt[\rSeq])$ has a single generator $\left\{\ma[\simpleRootb]\right\}$. Therefore
\[
\numCorGenTriplesSplitCtInf = 2.
\]
\end{steplist}
\end{proof}

Recall the indicator bits $\imaginaryBitsBit$, $\realBitsBit$, and $\realBitsParityBit$ defined in Section \ref{ssAbsWeylGroup}. The results of Theorem \ref{theoremNumGenReps} are summarized in the following corollary.

\begin{corollary} \label{corNumGenRepsFormula}
Let $\absDiff$ be a half-integral infinitesimal character, $\rSeq$ a sequence of orthogonal roots in $\rootSystem(\complexLieAlgebra, \splitLieAlgebra)$, and suppose the abstract triple $\absTriple$ corresponding to the pair $\genPairSplitCtInf$ is supportable. Then
\begin{eqnarray*}
\numCorGenTriplesSplitCtInf & = & 2^{1-\imaginaryBitsBit}2^{\realBitsBit(1-\realBitsParityBit)}.
\end{eqnarray*}
\end{corollary}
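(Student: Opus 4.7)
The plan is to derive the formula as a direct reorganization of the case analysis already carried out in Theorem \ref{theoremNumGenReps}. My first observation is that for the corollary it suffices to show the quantity $\numCorGenTriplesSplitCtInf$ depends only on the three indicator bits $\imaginaryBitsBit$, $\realBitsBit$, $\realBitsParityBit$. Indeed, up to $\maxRealCompactCover$-conjugacy the Cartan subgroup $\realTorusSplitCtCover[\rSeq]$ is determined by the triple $(\numImaginaryBits, \numRealBits, \numComplexBits)$ (Proposition \ref{propCC}), and the case analysis in Theorem \ref{theoremNumGenReps} shows that the resulting count only depends on whether $\numImaginaryBits$ is zero, whether $\numRealBits$ is zero, and the parity of $\numRealBits$.

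Next, I would tabulate the possible outcomes. Looking at the formula $2^{1-\imaginaryBitsBit}2^{\realBitsBit(1-\realBitsParityBit)}$, the value is
\begin{itemize}
\item $4$ exactly when $\imaginaryBitsBit = 0$, $\realBitsBit = 1$, and $\realBitsParityBit = 0$ (no imaginary bits, a positive even number of real bits);
\item $1$ exactly when $\imaginaryBitsBit = 1$ and either $\realBitsBit = 0$ or $\realBitsParityBit = 1$ (some imaginary bits, together with an odd number or zero real bits);
\item $2$ in all remaining configurations.
\end{itemize}
I then simply match each of the seven cases in the proof of Theorem \ref{theoremNumGenReps} against this table. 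For instance, Case I ($\rSeq = \emptyset$) has $\numImaginaryBits = \numComplexBits = 0$ and $\numRealBits = n$, so the formula returns $4$ when $n$ is even and $2$ when $n$ is odd, agreeing with the theorem; Case II has $\numRealBits = \numComplexBits = 0$ and $\numImaginaryBits = n$, so the formula returns $1$; Case VII has $\numImaginaryBits = \numRealBits = 0$ and $\numComplexBits = n$, yielding $2$; and the intermediate Cases III, IV, V, VI are verified the same way by reading off $\numImaginaryBits$, $\numRealBits$, $\numComplexBits$ from their diagrams and comparing against the parity of $r = n - k - s$.

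The only genuinely non-routine step is the preliminary observation that $\numCorGenTriplesSplitCtInf$ really does depend on nothing more than the indicator bits. I expect this to be the main (though still minor) obstacle, since a priori one might worry that the count of central components of $\realTorusSplitCtCover[\rSeq]$ detects finer information about the choice of roots in $\rSeq$ (e.g.\ which short or long real roots were used). However, $\maxRealCompactCover$-conjugacy of Cartan subgroups in type B is governed entirely by the conjugacy class of the associated involution (Proposition \ref{propInvCartanMap}, Proposition \ref{propCC}), so after choosing the canonical $\rSeq$ used in Theorem \ref{theoremNumGenReps} one gets the same count for every representative of the same class. With that reduction in hand, the corollary is proved by the bookkeeping in the previous paragraph.
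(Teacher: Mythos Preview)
Your proposal is correct and follows exactly the same approach as the paper, whose proof is the single line ``Check this for each of the cases in Theorem \ref{theoremNumGenReps}.'' Your additional remarks about why the count depends only on the indicator bits and your explicit tabulation are just a more detailed execution of that same verification.
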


\begin{proof}
Check this for each of the cases in Theorem \ref{theoremNumGenReps}.
\end{proof}

\section{$\maxRealCompact$-orbits}
\label{secKorbits}

Fix a nonsingular element $\absDiff \in \absLieAlgebraDual$. We have seen it is important to understand the $\maxRealCompact$-conjugacy classes of pairs $(\realTorus', \diff')_{\absDiff}$, where $\realTorus'$ is a $\cinv$-stable Cartan subgroup in $\realGroup = \realSpinGroup{p}{q}$ and $\diff'$ is a nonsingular element in $(\complexLieAlgebra[h]')^{*}$ such that $\diff'$ and $\absDiff$ define the same infinitesimal character. The $\maxRealCompact$-conjugacy classes of such pairs will be referred to as \emph{$\maxRealCompact$-orbits} (for $\absDiff$). If we fix a set $\left\{\realTorus_{i}\right\}$ of $\cinv$-stable Cartan subgroup representatives, the $\maxRealCompact$-orbits for $\absDiff$ are parameterized by $\realNormalizer[\maxRealCompact](\realTorus_{i})$-orbits on pairs of the form $\pairInfi$. The stabilizer of any such pair is $Z_{\maxRealCompact}(\realTorus_{i})$ and the quotient
\[
\cartanWeylGroup{\realTorus_{i}} = \realNormalizer[\maxRealCompact](\realTorus_{i}) / Z_{\maxRealCompact}(\realTorus_{i})
\]
is the real Weyl group for $\realTorus_{i}$. 

Fix a $\cinv$-stable Cartan subgroup $\realTorus$ and let $\weylOrbit$ denote the $\algWeylGroup{\complexLieAlgebra}{\complexLieAlgebra[h]}$-orbit of a nonsingular element $\diff \in (\complexLieAlgebra[h])^{*}$. Then $\cartanWeylGroup{\realTorus}$ acts freely on $\weylOrbit$ and thus freely on pairs of the form $\pairInf$. Therefore the $\maxRealCompact$-orbits whose first entry is conjugate to $\realTorus$ are parameterized by $\cartanWeylGroup{\realTorus}$-orbits in $\weylOrbit$. For this reason $\cartanWeylGroup{\realTorus}$-orbits in $\weylOrbit$ will also be referred to as $\maxRealCompact$-orbits for (the conjugacy class of) $\realTorus$. Since
\[
\left| \algWeylGroup{\complexLieAlgebra}{\complexLieAlgebra[h]}\right| = \left|\weylOrbit\right|
\]
the number of $\maxRealCompact$-orbits for $\realTorus$ is given by
\[
\left| \algWeylGroup{\complexLieAlgebra}{\complexLieAlgebra[h]} / \cartanWeylGroup{H} \right|.
\]

In Section \ref{ssInfChar} we constructed an abstract triple $\absTriple$ corresponding to each pair $\pairInf$. In this section we study the relationship between $\maxRealCompact$-orbits and abstract triples.


\subsection{The Cross Action on $\maxRealCompact$-orbits}
\label{ssCrossAction}

Let $\absDiff \in \absLieAlgebraDual$ be a nonsingular element, $\realTorus$ a $\cinv$-stable Cartan subgroup of $\realGroup$, and suppose $\diff \in \complexLieAlgebra[h]^{*}$ and $\absDiff$ determine the same infinitesimal character. Let $\weylOrbit \subset \complexLieAlgebra[h]^{*}$ denote the $\algWeylGroup{\complexLieAlgebra}{\complexLieAlgebra[h]}$-orbit of $\diff$ and suppose $\inv \in \algWeylGroup{\complexLieAlgebra}{\complexLieAlgebra[h]}$ is an involution. Recall Theorem \ref{theoremWeylGroupCentralizer} implies the centralizer of $\inv$ in $\algWeylGroup{\complexLieAlgebra}{\complexLieAlgebra[h]}$ has the form
\begin{eqnarray*}
\invWeylGroup(\complexLieAlgebra,\complexLieAlgebra[h]) & \cong & (\imaginaryWeylGroup(\complexLieAlgebra,\complexLieAlgebra[h]) \times \realWeylGroup(\complexLieAlgebra,\complexLieAlgebra[h])) \rtimes \complexWeylGroup(\complexLieAlgebra,\complexLieAlgebra[h]).
\end{eqnarray*} 

\begin{proposition}[\cite{IC4}, Proposition 4.16] \label {propCWGroup}
The group $\cartanWeylGroup{\realTorus}$ is a subgroup of $\invWeylGroup(\complexLieAlgebra,\complexLieAlgebra[h])$. Moreover
\begin{eqnarray*}
\cartanWeylGroup{\realTorus} & \cong & (\rGroup \times \realWeylGroup(\complexLieAlgebra,\complexLieAlgebra[h])) \rtimes \complexWeylGroup(\complexLieAlgebra,\complexLieAlgebra[h])
\end{eqnarray*}
where $\rGroup \subset \imaginaryWeylGroup(\complexLieAlgebra,\complexLieAlgebra[h])$ with $\rGroup \cong A \ltimes \cptImaginaryWeylGroup(\complexLieAlgebra,\complexLieAlgebra[h])$ and $A$ is an elementary abelian two-group.
\end{proposition}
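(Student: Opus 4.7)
The plan is first to establish the containment $\cartanWeylGroup{\realTorus} \subseteq \invWeylGroup(\complexLieAlgebra,\complexLieAlgebra[h])$, and then to identify which elements of $\invWeylGroup$ actually admit representatives in $\maxRealCompact$, handling the real, complex, and imaginary factors in turn.

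For the containment I would start by observing that any element of $\realNormalizer[\maxRealCompact](\realTorus)$ commutes with $\cinv$ (since $\maxRealCompact = \realGroup^{\cinv}$), and hence its adjoint action on $\complexLieAlgebra[h]$ commutes with $\cinv|_{\complexLieAlgebra[h]}$. Transporting to the abstract Cartan via the conjugation map of Definition \ref{defAbsConjMap} places the induced element of $\weylGroup$ in the centralizer of the abstract involution $\inv$, which is exactly $\invWeylGroup(\complexLieAlgebra,\complexLieAlgebra[h])$.

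Next I would use the decomposition $\invWeylGroup \cong (\imaginaryWeylGroup \times \realWeylGroup) \rtimes \complexWeylGroup$ from Theorem \ref{theoremWeylGroupCentralizer} and show that each factor contributes the claimed piece. For a real root $\simpleRoot \in \realRoots$, the element $\oa = \expg(\tfrac{\pi}{2}\zLa)$ of Definition \ref{defZa} represents $\rootReflection$ and lies in $\maxRealCompact$ because $\zLa = \rootVector + \cinv\rootVector$ is $\cinv$-fixed; this places all of $\realWeylGroup$ inside $\cartanWeylGroup{\realTorus}$. For the complex factor, Proposition \ref{propComplexOrthogonalWeylGroup} realizes $\complexWeylGroup$ as the ``diagonal'' pairs $(\weylElt, \inv\weylElt)$ with $\weylElt \in \weylGroup(\rootSystem_{1})$; for each such pair a $\cinv$-symmetric product of root-space operators on $\rootSpace[\simpleRootb]$ and $\rootSpace[\inv\simpleRootb]$ produces a representative in $\maxRealCompact$. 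For the imaginary factor, compact imaginary roots give $\oa \in \maxRealCompact$ directly, so $\cptImaginaryWeylGroup \subseteq \cartanWeylGroup{\realTorus}$; noncompact imaginary $\simpleRoot$ have $\oa \notin \maxRealCompact$, so only certain products of such reflections lie in $\cartanWeylGroup{\realTorus}$, and these define the intermediate subgroup $\rGroup$.

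Finally I would establish the structure of $\rGroup$. Normality of $\cptImaginaryWeylGroup$ in $\imaginaryWeylGroup$ forces it to be normal in $\rGroup$, so it remains to analyze the quotient. An element $\weylElt \in \imaginaryWeylGroup$ lifts to $\maxRealCompact$ precisely when some product $\prod \oa$ of noncompact-imaginary reflections representing $\weylElt$ modulo $\cptImaginaryWeylGroup$ lies in $\maxRealCompact$; the resulting obstruction is valued in a subquotient of $\realTorus$ of exponent two, so $A = \rGroup / \cptImaginaryWeylGroup$ is elementary abelian, and a standard extension argument provides the split $\rGroup \cong A \ltimes \cptImaginaryWeylGroup$. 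The outer semidirect product structure $(\rGroup \times \realWeylGroup) \rtimes \complexWeylGroup$ then descends from Theorem \ref{theoremWeylGroupCentralizer} once one verifies that conjugation by $\complexWeylGroup$ preserves the real and imaginary subsystems together with the compact/noncompact grading. The main obstacle will be the precise identification of $A$: deciding which products of noncompact imaginary reflections lift to $\maxRealCompact$ requires the component-group analysis of Section \ref{ssAlgTorus}, specifically the description of $\pi_{0}(\realTorus)$ in terms of the elements $\ma$, and is ultimately captured by the combinatorial quotient $\negQuotCorootLattice / \posnegQuotCorootLattice$ developed there.
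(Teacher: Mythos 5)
The paper itself offers no proof of this statement: it is quoted verbatim from Vogan (\cite{IC4}, Proposition 4.16), so there is no internal argument to compare yours against. Judged on its own terms, your sketch reconstructs the standard proof correctly in outline. The containment $\cartanWeylGroup{\realTorus} \subseteq \invWeylGroup$ follows exactly as you say, since $\Ad{k}$ commutes with $\cinv$ for $k \in \realNormalizer[\maxRealCompact](\realTorus)$. The elements $\oa \in \maxRealCompact$ of Definitions \ref{defZa} and \ref{defZc} realize $\realWeylGroup$ and $\cptImaginaryWeylGroup$, and the diagonal description of $\complexWeylGroup$ in Proposition \ref{propComplexOrthogonalWeylGroup} lets one realize its generators by $\cinv$-symmetric products of the form $\oa[\simpleRootb]\cdot\cinv(\oa[\simpleRootb])$ for $\simpleRootb \in \rootSystem_{1}$; these three observations give everything except the imaginary factor $\rGroup$.

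The one genuinely thin spot is the structure of $\rGroup$. Asserting that ``the obstruction is valued in a subquotient of exponent two'' and that ``a standard extension argument provides the split'' elides the real content of Vogan's proof: one must exhibit $A$ explicitly, which is done by choosing a maximal set of strongly orthogonal noncompact imaginary roots and checking which products of the corresponding reflections are induced by $\maxRealCompact$ (equivalently, which products of the $\ma$'s die in $\negQuotCorootLattice[-\inv]/\posnegQuotCorootLattice[-\inv]$, in the language of Theorem \ref{theoremFokko}); the elementary abelian property comes from the $\ma$'s squaring to the identity and commuting, and the splitting over $\cptImaginaryWeylGroup$ is a construction, not a general extension-theoretic fact. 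Your instinct to identify $A$ via the component-group machinery of Section \ref{ssAlgTorus} is consistent with how the paper behaves, but note the direction of inference there: at the end of Section \ref{ssGFibers} the paper computes $\left|A\right|$ \emph{from} this proposition together with Corollary \ref{corKOrbits}, so a self-contained proof must obtain the description of $\rGroup$ directly (from \cite{IC4} or \cite{AdamsFokko}) rather than from the fiber counts.
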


This section makes frequent use of the conjugation maps $\left\{\absConjTwo[\diff']{\absDiff} = \absConj[\diff']\right\}_{\diff'\in\weylOrbit}$ (Definition \ref{defAbsConjMap}), so we now record some easy formal properties. To simplify notation, recall $\weylGroup = \weylGroup(\complexLieAlgebra, \absLieAlgebra)$ and $\rootSystem = \rootSystem(\complexLieAlgebra, \absLieAlgebra)$.

\begin{lemma} \label{lemConj}
Let $w, \cinv \in \algWeylGroup{\complexLieAlgebra}{\complexLieAlgebra[h]}$ and $\inv \in \weylGroup$. Then
\begin{eqnarray*}
\absConj[w\diff] & = & w \cdot \absConj \\
\absConjInv[w\diff] & = & \absConjInv \cdot w^{-1} \\
\absConj(\inv) & = & \absConj \cdot \inv \cdot \absConjInv \\
\absConjInv(\cinv) & = & \absConjInv \cdot \cinv \cdot \absConj \\
\absConj[w\diff](\inv) & = & w \cdot \absConj(\inv) \cdot w^{-1} \\
\absConjInv[w\diff](\cinv) & = & \absConjInv(w^{-1} \cdot \cinv \cdot w).
\end{eqnarray*}
\end{lemma}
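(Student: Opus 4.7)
The plan is to verify each identity in turn by unwinding Definition \ref{defAbsConjMap} and exploiting the fact that any two members of the family $\left\{\absConjTwo[\diff']{\absDiff}\right\}$ agree once their restriction to $\absLieAlgebraDual$ is specified. Throughout I will implicitly use that $\absConj$ denotes both an inner automorphism of $\complexGroup$ and the various induced maps on Cartan subalgebras, duals, Weyl groups, and root systems, all of which are compatible.

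First I would establish the first identity. Both $\absConj[w\diff]$ and the composition $w \cdot \absConj$ take $\absLieAlgebra$ to $\complexLieAlgebra[h]$, and on the level of the dual both send $\absDiff$ to $w\diff$: the second because $\absConj(\absDiff) = \diff$ and then $w$ sends $\diff$ to $w\diff$. By the uniqueness remark following Definition \ref{defAbsConjMap}, this places $w \cdot \absConj$ in the well-defined family indexed by $w\diff$, giving the first identity. The second identity then follows at once by taking inverses. The third and fourth identities simply record the general fact that an inner automorphism of $\complexGroup$ intertwining two Cartan subalgebras implements the induced isomorphism of Weyl groups by conjugation: for $\inv \in \weylGroup$ the image $\absConj(\inv) \in \algWeylGroup{\complexLieAlgebra}{\complexLieAlgebra[h]}$ is by construction $\absConj \cdot \inv \cdot \absConjInv$, and symmetrically for $\absConjInv(\cinv)$.

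Finally I would combine the pieces. Substituting the first two identities into the third gives
\[
\absConj[w\diff](\inv) = \absConj[w\diff] \cdot \inv \cdot \absConjInv[w\diff] = (w \cdot \absConj) \cdot \inv \cdot (\absConjInv \cdot w^{-1}) = w \cdot \absConj(\inv) \cdot w^{-1},
\]
which is the fifth identity, and a parallel substitution into the fourth produces the sixth. The main ``obstacle'' here is purely notational: one must keep careful track of which incarnation of $\absConj$ is acting at each step (group automorphism, map on Cartan subalgebras, map on the dual, map on Weyl groups), but no substantive argument beyond definition-chasing is required.
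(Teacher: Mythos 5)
Your proof is correct and follows essentially the same route as the paper: the first two identities by the uniqueness of the conjugation maps restricted to $\absLieAlgebraDual$, the third and fourth by definition, and the fifth and sixth by substituting the first two into the third and fourth. The only difference is that you spell out why $w \cdot \absConj$ lies in the family indexed by $w\diff$, which the paper dismisses as obvious.
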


\begin{proof}
The first two equalities are obvious and the second two are by definition. For the fifth equality we have
\begin{eqnarray*}
\absConj[w\diff](\inv) & = & \absConj[w\diff] \cdot \inv \cdot \absConjInv[w\diff] \\
& = & w \cdot \absConj \cdot \inv \cdot \absConjInv \cdot w^{-1} \\
& = & w \cdot \absConj(\inv) \cdot w^{-1}
\end{eqnarray*}
as desired. The last equality is just the corresponding inverse statement.
\end{proof}

\begin{proposition} \label{propKOGrading}
Let $\pairInf$ be a pair as above and suppose $\weylElt \in \cartanWeylGroup{\realTorus}$. Then the abstract triples associated with $\pairInf$ and $\pairInfDiff[\weylElt\diff]$ are the same.
\end{proposition}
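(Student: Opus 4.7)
The plan is to unwind the definition of the abstract triple (Definition \ref{defAbsTriple}) using the formal transformation rules of Lemma \ref{lemConj}, and then to separately verify that the involution piece and the grading piece are unchanged under replacing $\diff$ by $\weylElt\diff$. The two structural inputs I will rely on are: (i) $\weylElt \in \cartanWeylGroup{\realTorus}$ lies in $\invWeylGroup(\complexLieAlgebra,\complexLieAlgebra[h])$ by Proposition \ref{propCWGroup}, so it commutes with $\cinv$ as an operator on $\complexLieAlgebra[h]$; and (ii) $\weylElt$ is realized by an element $\tilde{\weylElt} \in \realNormalizer[\maxRealCompact](\realTorus) \subset \maxRealCompact$, and $\Ad{\tilde{\weylElt}}$ therefore preserves the $\pm 1$-eigenspace decomposition of $\cinv$ on $\complexLieAlgebra$.

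For the involution part, I would apply Lemma \ref{lemConj} to write $\absConj[\weylElt\diff] = \weylElt \cdot \absConj$ and $\absConjInv[\weylElt\diff] = \absConjInv \cdot \weylElt^{-1}$. Substituting into Definition \ref{defAbsTriple} gives the new abstract involution
\[
\inv' \;=\; \absConjInv[\weylElt\diff] \cdot \cinv \cdot \absConj[\weylElt\diff] \;=\; \absConjInv \cdot (\weylElt^{-1}\cdot \cinv \cdot \weylElt) \cdot \absConj.
\]
By (i), $\weylElt^{-1}\cdot\cinv\cdot\weylElt = \cinv$ as an element of $\algWeylGroup{\complexLieAlgebra}{\complexLieAlgebra[h]}$, so $\inv' = \absConjInv\cdot\cinv\cdot\absConj = \inv$.

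For the grading part, the new abstract grading is
\[
\grading'(\simpleRoot) \;=\; \eta(\absConj[\weylElt\diff](\simpleRoot)) \;=\; \eta\bigl(\weylElt\cdot\absConj(\simpleRoot)\bigr),
\]
for $\simpleRoot \in \imaginaryRoots[\inv](\complexLieAlgebra,\absLieAlgebra)$. By (i), $\weylElt$ permutes $\imaginaryRoots[\cinv](\complexLieAlgebra,\complexLieAlgebra[h])$, so $\weylElt\cdot\absConj(\simpleRoot)$ is indeed a concrete imaginary root and $\eta$ can be evaluated on it. By (ii), $\Ad{\tilde{\weylElt}}$ carries the root space $\rootSpace[\absConj(\simpleRoot)]$ to $\rootSpace[\weylElt\cdot\absConj(\simpleRoot)]$ while preserving the $\pm 1$-eigenspaces of $\cinv$; hence the image root space sits in the same eigenspace, i.e.\ has the same compactness. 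In particular $\eta(\weylElt\cdot\absConj(\simpleRoot)) = \eta(\absConj(\simpleRoot)) = \grading(\simpleRoot)$, giving $\grading' = \grading$.

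There is essentially no hard step: the argument is a formal unpacking once the two inputs above are in place. The only point that requires a little care is the distinction between the abstract Weyl element $\weylElt$ (viewed in $\cartanWeylGroup{\realTorus}$) and a lift $\tilde{\weylElt}$ to $\realNormalizer[\maxRealCompact](\realTorus)$; one must invoke (ii) at the level of $\Ad{\tilde{\weylElt}}$ on $\complexLieAlgebra$, not merely on $\complexLieAlgebra[h]$, in order to control how $\weylElt$ moves root spaces across the Cartan decomposition. With that observed, both halves of the abstract triple are manifestly $\cartanWeylGroup{\realTorus}$-invariant, proving the proposition.
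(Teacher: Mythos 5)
Your proof is correct and follows essentially the same route as the paper: Lemma \ref{lemConj} together with Proposition \ref{propCWGroup} handles the abstract involution, and invariance of the grading reduces to the fact that elements of $\cartanWeylGroup{\realTorus}$ preserve compactness of imaginary roots. The only difference is that you justify that last fact explicitly (via a lift $\tilde{\weylElt}\in\realNormalizer[\maxRealCompact](\realTorus)$ whose adjoint action commutes with $\cinv$ on all of $\complexLieAlgebra$), whereas the paper simply asserts it.
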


\begin{proof}
Let $\cinv$ denote the Cartan involution and suppose $\absTriple$ is the abstract triple for $\pairInf$. By Lemma \ref{lemConj} and Proposition \ref{propCWGroup} we have
\begin{eqnarray*}
\absConjInv[\weylElt\diff](\cinv) & = & \absConjInv(w^{-1} \cdot \cinv \cdot w) \\
& = & \absConjInv(\cinv) \\
& = & \inv.
\end{eqnarray*}
Moreover if $\simpleRoot \in \complexLieAlgebra[h]^{*}$ is a root,
\begin{eqnarray*}
\absConjInv[\weylElt\diff](\simpleRoot) & = & \absConjInv \cdot w^{-1}(\simpleRoot).
\end{eqnarray*}
However, elements of $\cartanWeylGroup{\realTorus}$ preserve the set of compact roots and thus the corresponding abstract grading is unchanged.
\end{proof}

Proposition \ref{propKOGrading} implies the association of abstract triples to pairs $\pairInf$ is defined on the level of $\maxRealCompact$-orbits for $\realTorus$ (viewed as $\cartanWeylGroup{\realTorus}$-orbits in $\complexLieAlgebra[h]^{*}$). We would like to understand the extent to which this association is unique. We begin by recalling the familiar action of the abstract Weyl group on $\weylOrbit$.

\begin{definition}
\label{defCrossAction}
Let $\weylElt \in \weylGroup$ and $\diff \in \weylOrbit$. Define the \emph{cross action} of $\weylElt$ on $\diff$ as
\begin{eqnarray*}
\weylElt \cross \diff & = & \absConj \cdot w^{-1} \cdot \absConjInv (\diff).
\end{eqnarray*}
Alternatively we can define
\begin{eqnarray*}
\weylEltDiff & = & \absConj(\weylElt)
\end{eqnarray*}
so that
\begin{eqnarray*}
\weylElt \cross \diff & = & \weylEltDiffInv(\diff).
\end{eqnarray*}
If $\pairInf$ is the corresponding pair, the cross action of $\weylElt$ on $\pairInf$ will be denoted $\weylElt \cross \pairInf = \pairInfDiff[\weylElt \cross \diff]$. 
\end{definition}

\begin{lemma} \label{lemicross}
Let $\weylElt \in \weylGroup$ and $\diff \in \weylOrbit$. Then 
\begin{eqnarray*}
\absConj[\weylElt \cross \diff] & = & \absConj \cdot \weylElt^{-1} \\
\absConjInv[\weylElt \cross \diff] & = & \weylElt \cdot \absConjInv.
\end{eqnarray*}
\end{lemma}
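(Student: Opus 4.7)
The plan is to unwind the two definitions of the cross action and apply the first identity of Lemma \ref{lemConj}. Recall that $\weylElt \cross \diff$ was defined two ways: as $\absConj \cdot \weylElt^{-1} \cdot \absConjInv(\diff)$ and equivalently as $\weylEltDiffInv(\diff)$, where $\weylEltDiff = \absConj(\weylElt) = \absConj \cdot \weylElt \cdot \absConjInv$ lies in $\algWeylGroup{\complexLieAlgebra}{\complexLieAlgebra[h]}$. The key observation is that the second form realizes $\weylElt \cross \diff$ as the image of $\diff$ under an element of the algebraic Weyl group $\algWeylGroup{\complexLieAlgebra}{\complexLieAlgebra[h]}$, so the first identity of Lemma \ref{lemConj} applies directly.

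First I would establish the first equality. Applying $\absConj[u\diff] = u \cdot \absConj$ (the first formula in Lemma \ref{lemConj}) with $u = \weylEltDiffInv \in \algWeylGroup{\complexLieAlgebra}{\complexLieAlgebra[h]}$, I obtain
\[
\absConj[\weylElt \cross \diff] \;=\; \absConj[\weylEltDiffInv(\diff)] \;=\; \weylEltDiffInv \cdot \absConj.
\]
Substituting the definition $\weylEltDiffInv = \absConj \cdot \weylElt^{-1} \cdot \absConjInv$ and cancelling $\absConjInv \cdot \absConj = \identity$ in the middle gives
\[
\absConj[\weylElt \cross \diff] \;=\; \absConj \cdot \weylElt^{-1} \cdot \absConjInv \cdot \absConj \;=\; \absConj \cdot \weylElt^{-1},
\]
which is the first assertion. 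The second assertion is then immediate by inverting both sides, using $(\absConj \cdot \weylElt^{-1})^{-1} = \weylElt \cdot \absConjInv$; alternatively it follows by applying the second formula of Lemma \ref{lemConj} to $\weylEltDiffInv(\diff)$ in the same way.

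There is no real obstacle here: the lemma is purely formal bookkeeping once one recognizes that the cross action is internal to the orbit $\weylOrbit$ (via the Weyl group element $\weylEltDiff$ of $\algWeylGroup{\complexLieAlgebra}{\complexLieAlgebra[h]}$), so the machinery of Lemma \ref{lemConj} applies without modification. The only minor point to be careful about is keeping the inverses straight, since $\weylElt \cross \diff$ involves $\weylEltDiffInv$ rather than $\weylEltDiff$.
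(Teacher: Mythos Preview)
Your proof is correct and follows essentially the same line as the paper: rewrite $\weylElt \cross \diff = \weylEltDiffInv(\diff)$, apply the first identity of Lemma~\ref{lemConj} with $u = \weylEltDiffInv$, then expand $\weylEltDiffInv = \absConj \cdot \weylElt^{-1} \cdot \absConjInv$ and cancel. The paper likewise notes that the second equality follows immediately from the first.
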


\begin{proof}
It suffices to check the first equality. We have
\begin{eqnarray*}
\absConj[\weylElt \cross \diff] & = & \absConj[\weylEltDiffInv(\diff)] \\
& = & \weylEltDiffInv \cdot \absConj \\
& = & (\absConj(\weylElt))^{-1} \cdot \absConj \\
& = & (\absConj \cdot \weylElt \cdot \absConjInv)^{-1} \cdot \absConj \\
& = & \absConj \cdot \weylElt^{-1} \cdot \absConjInv \cdot \absConj \\
& = & \absConj \cdot \weylElt^{-1}
\end{eqnarray*}
as desired.
\end{proof}

\begin{proposition} \label{propCA}
The cross action defines a left action of $\weylGroup$ on $\weylOrbit$. Moreover, the cross action commutes with the usual action of $\algWeylGroup{\complexLieAlgebra}{\complexLieAlgebra[h]}$ and thus descends to a transitive action at the level of $\maxRealCompact$-orbits. 
\end{proposition}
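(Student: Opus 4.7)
The plan is to verify each assertion directly from the definitions and the two lemmas just established. The proposition has three parts: (i) the cross action is a left action, (ii) it commutes with the standard action of $\algWeylGroup{\complexLieAlgebra}{\complexLieAlgebra[h]}$, and (iii) the induced action on $\maxRealCompact$-orbits is transitive.

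For part (i), I would fix $w_{1},w_{2} \in \weylGroup$ and $\diff \in \weylOrbit$ and verify $(w_{1}w_{2}) \cross \diff = w_{1} \cross (w_{2} \cross \diff)$. Starting from the right-hand side and unfolding the definition, $w_{1} \cross (w_{2} \cross \diff) = \absConj[w_{2} \cross \diff] \cdot w_{1}^{-1} \cdot \absConjInv[w_{2} \cross \diff](w_{2} \cross \diff)$, and applying Lemma \ref{lemicross} to rewrite $\absConj[w_{2} \cross \diff] = \absConj \cdot w_{2}^{-1}$ and $\absConjInv[w_{2} \cross \diff] = w_{2} \cdot \absConjInv$ collapses the expression to $\absConj \cdot w_{2}^{-1} w_{1}^{-1} \cdot \absConjInv(\diff)$, which is exactly $(w_{1}w_{2}) \cross \diff$. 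This is purely formal.

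For part (ii), given $\sigma \in \algWeylGroup{\complexLieAlgebra}{\complexLieAlgebra[h]}$ and $w \in \weylGroup$, I would compute $w \cross (\sigma\diff)$ by invoking the identities $\absConj[\sigma\diff] = \sigma \cdot \absConj$ and $\absConjInv[\sigma\diff] = \absConjInv \cdot \sigma^{-1}$ from Lemma \ref{lemConj}. The $\sigma^{-1}$ cancels immediately with the $\sigma\diff$ argument, leaving $\sigma \cdot \absConj \cdot w^{-1} \cdot \absConjInv(\diff) = \sigma(w \cross \diff)$. Since $\cartanWeylGroup{\realTorus}$ is a subgroup of $\algWeylGroup{\complexLieAlgebra}{\complexLieAlgebra[h]}$ by Proposition \ref{propCWGroup}, commutation with the $\algWeylGroup{\complexLieAlgebra}{\complexLieAlgebra[h]}$-action immediately implies the cross action permutes $\cartanWeylGroup{\realTorus}$-orbits and so descends to a well-defined $\weylGroup$-action on the set of $\maxRealCompact$-orbits.

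For part (iii), the key observation is that via $\absConj$, the abstract Weyl group acts on $\weylOrbit$ through $w \mapsto \weylEltDiff^{-1} = \absConj(w^{-1})$, so as $w$ ranges over $\weylGroup$, the element $\weylEltDiff^{-1}$ ranges over all of $\algWeylGroup{\complexLieAlgebra}{\complexLieAlgebra[h]}$. Since $\algWeylGroup{\complexLieAlgebra}{\complexLieAlgebra[h]}$ acts simply transitively on $\weylOrbit$, given any $\diff' \in \weylOrbit$ one can write $\diff' = \sigma(\diff)$ for a unique $\sigma \in \algWeylGroup{\complexLieAlgebra}{\complexLieAlgebra[h]}$ and then set $w = \absConjInv(\sigma^{-1})$, yielding $w \cross \diff = \diff'$. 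Thus the cross action is already transitive at the level of $\weylOrbit$ itself, and \emph{a fortiori} transitive on the quotient by $\cartanWeylGroup{\realTorus}$.

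Of the three parts I expect no real obstacle; everything is a matter of careful bookkeeping with the conjugation maps $\absConj$. The only mildly subtle point is keeping the inverses straight when passing from $\weylEltDiff$ to $\weylEltDiffInv$, which is why Lemma \ref{lemicross} is exactly the identity we need and why the argument for (i) does not simply reduce to the statement that right multiplication is a right action.
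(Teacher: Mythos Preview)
Your proof is correct and follows essentially the same approach as the paper: direct unfolding of the definition using Lemmas \ref{lemConj} and \ref{lemicross} for both the action axiom and the commutation with $\algWeylGroup{\complexLieAlgebra}{\complexLieAlgebra[h]}$. The paper's write-up swaps the roles of $w_1$ and $w_2$ in part (i) and omits your part (iii) entirely, treating transitivity as obvious; your explicit argument for it is a welcome addition.
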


\begin{proof}
This is well known but we prove it as an illustration of the above formalism. For $w_{1}, w_{2} \in \weylGroup$ and $\diff \in \weylOrbit$ we have
\begin{eqnarray*}
w_{2} \cross (w_{1} \cross \diff) & = & \absConj[w_{1} \cross \diff] \cdot w_{2}^{-1} \cdot \absConjInv[w_{1} \cross \diff] (w_{1} \cross \diff) \\
& = & \absConj \cdot w_{1}^{-1} \cdot w_{2}^{-1} \cdot w_{1} \cdot \absConjInv (\absConj \cdot w_{1}^{-1} \cdot \absConjInv (\diff)) \\
& = & \absConj \cdot w_{1}^{-1} \cdot w_{2}^{-1} \cdot \absConjInv(\diff) \\
& = & (w_{2}w_{1}) \cross \diff.
\end{eqnarray*}
This proves the first claim. For the second claim, let $w \in \weylGroup$ and $\sigma \in \algWeylGroup{\complexLieAlgebra}{\complexLieAlgebra[h]}$. Then
\begin{eqnarray*}
w \cross (\sigma\diff) & = & \absConj[\sigma\diff] \cdot w^{-1} \cdot \absConjInv[\sigma\diff](\sigma\diff) \\
& = & \sigma \cdot \absConj \cdot w^{-1} \cdot \absConjInv \cdot \sigma^{-1} (\sigma\diff) \\
& = & \sigma \cdot \absConj \cdot w^{-1} \cdot \absConjInv (\diff) \\
& = & \sigma \cdot w \cross \diff
\end{eqnarray*}
as desired.
\end{proof}

If $\diff \in \weylOrbit$, we denote the $\maxRealCompact$-orbit of $\diff$ by $\korbit$ and the corresponding cross action by $\weylElt \cross \korbit$. We conclude this section with a generalization of Proposition \ref{propKOGrading}.

\begin{proposition} \label {propCATriples}
Let $\weylElt \in \weylGroup$ and suppose $\pairInf$ is a pair with corresponding abstract triple $\absTriple$. Then the abstract triple associated to the pair $\weylElt \cross \pairInf$ is given by
\[
\weylElt \cross \absTriple = (\weylElt\cdot\inv\cdot\weylElt^{-1}, \weylElt \cross \grading, \absDiff)
\]
where $\weylElt \cross \grading$ is the grading defined via
\[
(\weylElt \cross \grading)(\simpleRoot) = \grading (\weylElt^{-1}\simpleRoot).
\]
In particular, the cross action on pairs induces an action (also called the cross action) on abstract triples.
\end{proposition}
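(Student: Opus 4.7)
The plan is essentially to read off the result from the conjugation identities of Lemma \ref{lemicross}. First I would compute the new abstract involution. By Definition \ref{defAbsTriple}, the involution associated to the pair $\weylElt \cross \pairInf$ is $\absConjInv[\weylElt \cross \diff] \cdot \cinv \cdot \absConj[\weylElt \cross \diff]$, and substituting the two identities of Lemma \ref{lemicross} gives
\[
(\weylElt \cdot \absConjInv) \cdot \cinv \cdot (\absConj \cdot \weylElt^{-1}) = \weylElt \cdot (\absConjInv \cdot \cinv \cdot \absConj) \cdot \weylElt^{-1} = \weylElt \cdot \inv \cdot \weylElt^{-1},
\]
which is the involution claimed.

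Next I would verify the grading. The grading $\eta$ on $\imaginaryRoots[\cinv](\complexLieAlgebra, \complexLieAlgebra[h])$ from Proposition \ref{propGrading} is defined purely in terms of $\realTorus$ and $\cinv$, so it is unaffected by replacing $\diff$ with $\weylElt \cross \diff$. An abstract root $\simpleRoot$ is imaginary for $\weylElt \cdot \inv \cdot \weylElt^{-1}$ iff $\weylElt^{-1}\simpleRoot$ is imaginary for $\inv$, so the proposed formula $(\weylElt \cross \grading)(\simpleRoot) = \grading(\weylElt^{-1}\simpleRoot)$ has the correct domain. For such a root, applying Lemma \ref{lemicross} once more,
\[
\eta\bigl(\absConj[\weylElt \cross \diff](\simpleRoot)\bigr) = \eta\bigl(\absConj(\weylElt^{-1}\simpleRoot)\bigr) = \grading(\weylElt^{-1}\simpleRoot),
\]
and the left-hand side is by definition the new abstract grading attached to $\weylElt \cross \pairInf$, which therefore agrees with $\weylElt \cross \grading$.

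The final clause, that this really defines a $\weylGroup$-action on abstract triples, is then automatic: the assignment $\pairInf \mapsto \absTriple$ depends only on the pair together with the ambient conjugation map, and cross action on pairs is a left $\weylGroup$-action by Proposition \ref{propCA}. Alternatively one can verify $(\weylElt_{1}\weylElt_{2}) \cross \grading = \weylElt_{1} \cross (\weylElt_{2} \cross \grading)$ directly from the formula. There is no real obstacle in the argument, which is pure bookkeeping in the conjugation identities of Lemma \ref{lemConj}; the only point requiring any care is the observation that the concrete grading $\eta$ is intrinsic to $(\realTorus, \cinv)$, so that the entire $\weylElt$-dependence of the abstract grading is carried through the substitution $\absConj[\weylElt \cross \diff] = \absConj \cdot \weylElt^{-1}$.
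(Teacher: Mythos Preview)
Your proof is correct and follows essentially the same approach as the paper: both compute the new involution via the conjugation identities (the paper uses Lemma \ref{lemConj}, you use the equivalent Lemma \ref{lemicross}) and then track how the root correspondence $\absConj[\weylElt \cross \diff] = \absConj \cdot \weylElt^{-1}$ transports the grading. Your explicit remark that the concrete grading $\eta$ depends only on $(\realTorus,\cinv)$ and not on $\diff$ is a slight clarification the paper leaves implicit.
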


\begin{proof}
Let $\cinv$ denote the Cartan involution. Then
\begin{eqnarray*}
\absConjInv[\weylElt \cross \diff](\cinv) & = & \absConjInv[\weylElt \cross \diff] \cdot \cinv \cdot \absConj \\
& = & \weylElt \cdot \absConjInv \cdot \cinv \cdot \absConj \cdot \weylElt^{-1} \\
& = & \weylElt \cdot \absConjInv(\cinv) \cdot \weylElt^{-1} \\
& = & \weylElt \cdot \inv \cdot \weylElt^{-1}.
\end{eqnarray*}
So the abstract involution induced by the pair $\weylElt \cross \pairInf$ is $\weylElt$-conjugate to the involution induced by $\pairInf$. Now let $\simpleRoot \in \complexLieAlgebra[h]^{*}$ be a root. By Lemma \ref{lemicross} we have
\begin{eqnarray*}
\absConjInv[\weylElt \cross \diff](\simpleRoot) & = & \weylElt \cdot \absConjInv(\simpleRoot).
\end{eqnarray*}
Therefore the cross action alters the abstract root correspondence by the regular action of $\weylElt$. Hence the new abstract grading $\weylElt \cross \grading$ will be the same as the abstract grading for $\pairInf$ if we first compose $\grading$ with $\weylElt^{-1}$. Finally, it is trivial to check this defines an action on the set of abstract triples.
\end{proof}

\subsection{The Fiber Over an Involution}
\label{ssInvFiber}

We begin with our usual setup. Let $\absDiff$ be a nonsingular element in $\absLieAlgebraDual$, $\realTorus$ a $\cinv$-stable Cartan subgroup of $\realGroup$, and $\diff \in \complexLieAlgebra[h]^{*}$ such that $\diff$ and $\absDiff$ define the same infinitesimal character. Let $\weylOrbit \subset \complexLieAlgebra[h]^{*}$ be the $\algWeylGroup{\complexLieAlgebra}{\complexLieAlgebra[h]}$-orbit of $\diff$ and suppose $\absTriple$ is the abstract triple corresponding to $\pairInf$. 

\begin{definition}
\label{defFiberInv}
The \emph{fiber over} $\inv$ (denoted $\fiber$) is the set of $\maxRealCompact$-orbits in $\weylOrbit$ whose corresponding abstract triples begin with $\inv$. Similarly, the \emph{fiber over} $\absTriple$ (denoted $\absTripleFiber$) is defined to be the set of $\maxRealCompact$-orbits in $\weylOrbit$ with associated abstract triple $\absTriple$.
\end{definition}

The order of the fiber over a fixed involution is easy to describe.

\begin{proposition} \label{propFiberOrder}
Let $\pairInf$ be a pair with corresponding abstract triple $\absTriple$. Then
\begin{eqnarray*}
\fiberOrder = \left| \frac{\invWeylGroup}{\cartanWeylGroup{\realTorus}} \right|
\end{eqnarray*}
where $\cartanWeylGroup{\realTorus} \subset \weylGroup$ denotes the image of the real Weyl group for $\realTorus$ under the map $\absConjInv$.
\end{proposition}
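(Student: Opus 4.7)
The plan is to exploit the cross action of $\weylGroup$ on $\weylOrbit$ (Definition \ref{defCrossAction}, Proposition \ref{propCA}) together with its effect on abstract triples (Proposition \ref{propCATriples}) in order to realize the fiber $\fiber$ as a single $\invWeylGroup$-orbit on the set of $\maxRealCompact$-orbits in $\weylOrbit$, and then apply orbit-stabilizer.

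First I would check that the cross action of $\weylGroup$ descends to a transitive action on the set $\weylOrbit / \cartanWeylGroup{\realTorus}$ of $\maxRealCompact$-orbits. Transitivity on $\weylOrbit$ itself is immediate from the formalism: given any $\diff' = \sigma(\diff)$ with $\sigma \in \algWeylGroup{\complexLieAlgebra}{\complexLieAlgebra[h]}$, the element $w = \absConjInv(\sigma^{-1})$ satisfies $w \cross \diff = \diff'$. By Proposition \ref{propCATriples}, the cross action of $w \in \weylGroup$ carries an $\maxRealCompact$-orbit $\korbit$ with abstract involution $\inv$ to one with abstract involution $w \inv w^{-1}$. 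Fixing $\korbit_{0} \in \fiber$, it follows that $w \cross \korbit_{0} \in \fiber$ if and only if $w \in \invWeylGroup$, so $\fiber$ is exactly the $\invWeylGroup$-orbit of $\korbit_{0}$.

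The next step is to identify the stabilizer of $\korbit_{0}$ (as an element of $\weylOrbit / \cartanWeylGroup{\realTorus}$) under the cross action of $\weylGroup$. For $\tilde{w} \in \cartanWeylGroup{\realTorus} \subset \algWeylGroup{\complexLieAlgebra}{\complexLieAlgebra[h]}$ and $w = \absConjInv(\tilde{w})$, Definition \ref{defCrossAction} unwinds to
\[
w \cross \diff \;=\; \absConj \cdot \absConjInv(\tilde{w}^{-1}) \cdot \absConjInv(\diff) \;=\; \tilde{w}^{-1}(\diff).
\]
Thus under the embedding $\absConjInv : \cartanWeylGroup{\realTorus} \hookrightarrow \weylGroup$, the cross action of this subgroup reproduces the ordinary action of $\cartanWeylGroup{\realTorus}$ on $\weylOrbit$, and in particular preserves every $\maxRealCompact$-orbit. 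Conversely, if $w \in \weylGroup$ fixes $\korbit_{0}$, then $w \cross \diff_{0} \in \korbit_{0}$, so some $\tilde{w} \in \cartanWeylGroup{\realTorus}$ has $\tilde{w}(w \cross \diff_{0}) = \diff_{0}$; writing this out via the previous display forces $w \in \absConjInv(\cartanWeylGroup{\realTorus})$. So the stabilizer is exactly $\cartanWeylGroup{\realTorus}$.

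Combining these two ingredients with the containment $\cartanWeylGroup{\realTorus} \subseteq \invWeylGroup$ from Proposition \ref{propCWGroup}, the orbit-stabilizer theorem yields
\[
\fiberOrder \;=\; \left| \invWeylGroup \cross \korbit_{0} \right| \;=\; \frac{|\invWeylGroup|}{|\cartanWeylGroup{\realTorus}|} \;=\; \left| \frac{\invWeylGroup}{\cartanWeylGroup{\realTorus}} \right|.
\]
I expect the only genuinely substantive step to be the stabilizer computation, since it is where one must carefully distinguish the abstract Weyl group from $\algWeylGroup{\complexLieAlgebra}{\complexLieAlgebra[h]}$ and track the inversion introduced by the conjugation formula for the cross action; the remainder is a formal consequence of the bookkeeping already established in Lemma \ref{lemicross} and Propositions \ref{propCA}--\ref{propCATriples}.
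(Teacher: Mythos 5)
Your proposal is correct and follows essentially the same route as the paper: the paper's proof likewise deduces transitivity of $\invWeylGroup$ on $\fiber$ from Propositions \ref{propCA} and \ref{propCATriples}, identifies the stabilizer as $\cartanWeylGroup{\realTorus}$, and concludes by orbit--stabilizer. Your version simply spells out the stabilizer computation that the paper dismisses as holding ``by definition.''
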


\begin{proof}
By Proposition \ref{propCA} and Proposition \ref{propCATriples}, $\invWeylGroup$ acts transitively on $\fiber$. Moreover, for $w \in \weylGroup$ we have (by definition)
\[
w \cross \korbit = \korbit \Longleftrightarrow w \in \cartanWeylGroup{\realTorus}.
\]
So $\cartanWeylGroup{\realTorus}$ is the stabilizer of this action and the result follows.
\end{proof}

Proposition \ref{propFiberOrder} implies the set of $\maxRealCompact$-orbits for $\realTorus$ can be viewed as a product of involutions and fibers. Numerically we have
\[
\underset{\text{\# of $\maxRealCompact$-orbits for $\realTorus$}}{\left|\frac{\weylGroup(\complexLieAlgebra, \complexLieAlgebra[h])}{\cartanWeylGroup{H}}\right|} = \underset{\text{\# of involutions}}{\left|\frac{\weylGroup(\complexLieAlgebra, \complexLieAlgebra[h])}{\invWeylGroup}\right|} \times \underset{\text{size of each fiber}}{\left|\frac{\invWeylGroup}{\cartanWeylGroup{H}}\right|}.
\]

In addition to determining $\fiberOrder$, it will be important to understand the transitive action of $\invWeylGroup$ on $\fiber$. Since the situation in not changed by conjugation, it suffices to determine this action for a set of representative involutions in $\weylGroup$. According to Proposition \ref{propCWGroup}, the action of $w \in \invWeylGroup$ on $\fiber$ is trivial if $w$ is an element of $\cptImaginaryWeylGroup, \realWeylGroup$, or $\complexWeylGroup$. Hence it remains to understand the action of $\imaginaryWeylGroup$ on $\fiber$, and specifically the action for (reflections corresponding to) noncompact imaginary roots.

\begin{definition}
\label{defTypeITypeII}
A noncompact imaginary root in $\rootSystem(\complexLieAlgebra,\complexLieAlgebra[h])$ is said to be of \emph{type I} if the corresponding reflection is not induced by an element of $\maxRealCompact$ (i.e., by an element in $\realNormalizer[\maxRealCompact](\realTorus))$. Otherwise it is said to be of \emph{type II}, with analogous terminology used for associated abstract roots. In particular, the cross action through an abstract noncompact imaginary root is nontrivial if and only if it is of type I.
\end{definition}

We now outline an effective method for describing the action of $\imaginaryWeylGroup$ on $\fiber$. Let $\pairInf$ be a pair with corresponding abstract triple $\absTriple$ and suppose the imaginary roots $\simpleRoota,\simpleRootb \in \imaginaryRoots$ are noncompact. We have seen there exists a `dual' pair $\pairDual$ whose corresponding abstract involution is $-\inv$. At the end of Section \ref{ssMGroupTorus} we associated the following data to $-\inv$
\begin{eqnarray*}
\mGroup & = & \coRootLattice / 2\coRootLattice \\
\posCorootLattice[-\inv] & = & \left\{\simpleRoot \in \coRootLattice \mid -\inv(\simpleRoot) = \simpleRoot\right\} \\
\negCorootLattice[-\inv] & = & \left\{\simpleRoot \in \coRootLattice \mid -\inv(\simpleRoot) = -\simpleRoot\right\} \\
\posQuotCorootLattice[-\inv] & = & \quotWeightLatticeMap(\posCorootLattice[-\inv]) \\
\negQuotCorootLattice[-\inv] & = & \quotWeightLatticeMap(\negCorootLattice[-\inv]) \\
\posnegQuotCorootLattice[-\inv] & = & \posQuotCorootLattice[-\inv] \cap \negQuotCorootLattice[-\inv].
\end{eqnarray*}
Note the roots $\simpleRoota, \simpleRootb$ are real for $-\inv$ and recall the associated elements in $\negQuotCorootLattice[-\inv]$ were denoted $\ma[\simpleRoota],\ma[\simpleRootb]$.
\begin{definition}
\label{defMca}
Let $\weylElt \in \imaginaryWeylGroup$ and write $\weylElt = \rootReflection[{\simpleRooti[n]}] \ldots \rootReflection[{\simpleRooti[1]}]$ as a minimal length product of simple reflections in $\imaginaryWeylGroup$. Define
\begin{eqnarray*}
\mca{\weylElt} & = & \grading(\simpleRooti[1])\ma[{\simpleRooti[1]}] + \rootReflection[{\simpleRooti[1]}] \cross \grading(\simpleRooti[2])\ma[{\simpleRooti[2]}] + \cdots + \rootReflection[{\simpleRooti[n-1]}] \cross \cdots \cross \rootReflection[{\simpleRooti[1]}] \cross \grading(\simpleRooti[n])\ma[{\simpleRooti[n]}] \\
& = & \grading(\simpleRooti[1])\ma[{\simpleRooti[1]}] + \grading(\rootReflection[{\simpleRooti[1]}]\simpleRooti[2])\ma[{\simpleRooti[2]}] + \cdots + \grading(\rootReflection[{\simpleRooti[1]}]\cdots\rootReflection[{\simpleRooti[n-1]}]\simpleRooti[n])\ma[{\simpleRooti[n]}]
\end{eqnarray*}
where the $\ma[{\simpleRooti}]$ are viewed as elements in $\negQuotCorootLattice[-\inv] / \posnegQuotCorootLattice[-\inv]$. 
\end{definition}

Note the element $\mca{\weylElt}$ depends on the grading $\grading$ and thus on the $\maxRealCompact$-orbit $\korbit$.

\begin{proposition}
The definition above gives a well-defined map 
\[
\weylElt \mapsto \mca{\weylElt}
\]
of $\imaginaryWeylGroup$ into $\negQuotCorootLattice[-\inv] / \posnegQuotCorootLattice[-\inv]$.
\end{proposition}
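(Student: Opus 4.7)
The plan is to apply Matsumoto's theorem: since $\imaginaryWeylGroup$ is a Coxeter group, any two reduced expressions for $\weylElt$ are connected by a sequence of braid moves, so it suffices to check that the sum on the right-hand side of the defining formula is invariant under each such move. By Proposition \ref{propInvWeylGroups} we have $\imaginaryWeylGroup \cong \weylGroup(B_{\numImaginaryBits}) \times \weylGroup(A_1)^{\numComplexBits/2}$, so only three sorts of braid relations can arise: the commuting relation $\rootReflection[\simpleRoota] \rootReflection[\simpleRootb] = \rootReflection[\simpleRootb] \rootReflection[\simpleRoota]$ for orthogonal roots, the $A_2$ relation $\rootReflection[\simpleRoota] \rootReflection[\simpleRootb] \rootReflection[\simpleRoota] = \rootReflection[\simpleRootb] \rootReflection[\simpleRoota] \rootReflection[\simpleRootb]$, and the $B_2$ relation $(\rootReflection[\simpleRoota] \rootReflection[\simpleRootb])^2 = (\rootReflection[\simpleRootb] \rootReflection[\simpleRoota])^2$.

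First I would rewrite the definition more transparently as
\[
\mca{\weylElt} = \sum_{k=1}^{n} \grading(\simpleRootbi[k])\,\ma[{\simpleRooti[k]}], \qquad \simpleRootbi[k] := \rootReflection[{\simpleRooti[1]}] \cdots \rootReflection[{\simpleRooti[k-1]}](\simpleRooti[k]),
\]
using the classical fact that the roots $\simpleRootbi[k]$ are exactly the positive roots sent to negative by $\weylElt^{-1}$, i.e., the inversion set of $\weylElt^{-1}$. This packages the content of the formula as a sum over a canonical set of roots with a (word-dependent) pairing of each root with the simple root $\simpleRooti[k]$ that produced it in the given reduced word.

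Next, I would verify each braid relation by direct expansion. The commuting case is automatic since reflections in orthogonal roots fix each other's roots. The $A_2$ case reduces cleanly to the additivity $\grading(\simpleRoota + \simpleRootb) = \grading(\simpleRoota) + \grading(\simpleRootb)$ from Definition \ref{defGrading} together with $\ma[{\simpleRoota + \simpleRootb}] = \ma[\simpleRoota] + \ma[\simpleRootb]$ from Proposition \ref{propMStruct} (the hypothesis $\rootCheck[{\simpleRoota+\simpleRootb}] = \rootCheck[\simpleRoota] + \rootCheck[\simpleRootb]$ holds since all three roots have the same length in an $A_2$ subsystem); expanding the two sides shows they reduce to the same linear combination of $\ma[\simpleRoota]$ and $\ma[\simpleRootb]$.

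The main technical obstacle is the $B_2$ case, because it mixes a long and a short root. Working in local coordinates $\simpleRoota = e_i - e_{i+1}$ (long) and $\simpleRootb = e_{i+1}$ (short), I would enumerate the four inversion-set roots $\{\simpleRoota,\,\simpleRoota + \simpleRootb,\,\simpleRoota + 2\simpleRootb,\,\simpleRootb\}$ and track which of $\ma[\simpleRoota]$ or $\ma[\simpleRootb]$ each is paired with in each of the two reduced expressions. A direct calculation shows that both expressions collapse to
\[
(\grading(\simpleRootb) + \grading(\simpleRoota + \simpleRootb))\,\ma[\simpleRootb] + (\grading(\simpleRoota) + \grading(\simpleRoota + 2\simpleRootb))\,\ma[\simpleRoota],
\]
so the two sums agree exactly, even before passing to the quotient by $\posnegQuotCorootLattice[-\inv]$. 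By Matsumoto's theorem this verifies that $\mca{\weylElt}$ depends only on $\weylElt$ and not on the chosen reduced expression, establishing the well-definedness of the map. The quotient by $\posnegQuotCorootLattice[-\inv]$ in the codomain plays no role at this stage, but it is natural to record it now since it will be needed when comparing $\mca{\cdot}$ with the cocycle data for noncompact imaginary root reflections modulo central contributions in later sections.
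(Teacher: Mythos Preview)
Your proposal is correct and follows essentially the same approach as the paper: both invoke Matsumoto's theorem to reduce to the three braid relations arising in $\weylGroup(B_{\numImaginaryBits}) \times \weylGroup(A_1)^{\numComplexBits/2}$, and both verify each by direct expansion using the additivity of $\grading$. Two cosmetic remarks: your appeal to Proposition~\ref{propMStruct} in the $A_2$ case is unnecessary (only $\ma[\simpleRoota]$ and $\ma[\simpleRootb]$ ever appear, never $\ma[\simpleRoota+\simpleRootb]$), and in the $B_2$ case your expression simplifies further to $\grading(\simpleRoota)\ma[\simpleRootb]$ since $\grading(\simpleRoota+2\simpleRootb) = \grading(\simpleRoota)$ and $\grading(\simpleRootb) + \grading(\simpleRoota+\simpleRootb) = \grading(\simpleRoota)$, matching the paper's computation with the roles of long and short reversed.
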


\begin{proof} The issue is the choice of reduced expression for $\weylElt$. Proposition \ref{propInvWeylGroups} implies
\[
\imaginaryWeylGroup \cong W(B_{m}) \times W(A_{1})^{l}
\]
for some integers $m$ and $l$ and thus it suffices to consider the following three cases (\cite{BB}, Theorem 3.3.1).
\begin{steplist}
\item [Case I]
Suppose $\rootReflection[\simpleRoota]\rootReflection[\simpleRootb] = \rootReflection[\simpleRootb]\rootReflection[\simpleRoota]$, where $\simpleRoota,\simpleRootb$ are simple roots in $\imaginaryRoots$. Then we have
\begin{eqnarray*}
\mca{\rootReflection[\simpleRootb]\rootReflection[\simpleRoota]} & = & \grading(\simpleRoota)\ma[\simpleRoota] + \grading(\rootReflection[\simpleRoota]\simpleRootb)\ma[\simpleRootb] \\
& = & \grading(\simpleRoota)\ma[\simpleRoota] + \grading(\simpleRootb)\ma[\simpleRootb] \\
& = & \grading(\simpleRootb)\ma[\simpleRootb] +  \grading(\rootReflection[\simpleRootb]\simpleRoota)\ma[\simpleRoota] \\
& = & \mca{\rootReflection[\simpleRoota]\rootReflection[\simpleRootb]}.
\end{eqnarray*}

\item [Case II]
Suppose $\rootReflection[\simpleRoota]\rootReflection[\simpleRootb]\rootReflection[\simpleRoota] = \rootReflection[\simpleRootb]\rootReflection[\simpleRoota]\rootReflection[\simpleRootb]$, where $\simpleRoota,\simpleRootb$ are (long) adjacent simple roots in $\imaginaryRoots$. Then we have
\begin{eqnarray*}
\mca{\rootReflection[\simpleRoota]\rootReflection[\simpleRootb]\rootReflection[\simpleRoota]} & = & \grading(\simpleRoota)\ma[\simpleRoota] + \grading(\rootReflection[\simpleRoota]\simpleRootb)\ma[\simpleRootb] + \grading(\rootReflection[\simpleRoota]\rootReflection[\simpleRootb]\simpleRoota)\ma[\simpleRoota] \\
& = & \grading(\simpleRoota)\ma[\simpleRoota] + \grading(\simpleRoota+\simpleRootb)\ma[\simpleRootb] + \grading(\simpleRootb)\ma[\simpleRoota] \\
& = & \grading(\simpleRoota+\simpleRootb)\ma[\simpleRoota] + \grading(\simpleRoota+\simpleRootb)\ma[\simpleRootb] \\
& = & \grading(\simpleRootb)\ma[\simpleRootb] + \grading(\simpleRootb+\simpleRoota)\ma[\simpleRoota] + \grading(\simpleRoota)\ma[\simpleRootb] \\
& = & \grading(\simpleRootb)\ma[\simpleRootb] + \grading(\rootReflection[\simpleRootb]\simpleRoota)\ma[\simpleRoota] + \grading(\rootReflection[\simpleRootb]\rootReflection[\simpleRoota]\simpleRootb)\ma[\simpleRootb] \\
& = & \mca{\rootReflection[\simpleRootb]\rootReflection[\simpleRoota]\rootReflection[\simpleRootb]}.
\end{eqnarray*}

\item [Case III]
Suppose $\rootReflection[\simpleRoota]\rootReflection[\simpleRootb]\rootReflection[\simpleRoota]\rootReflection[\simpleRootb] = \rootReflection[\simpleRootb]\rootReflection[\simpleRoota]\rootReflection[\simpleRootb]\rootReflection[\simpleRoota]$, where $\simpleRoota,\simpleRootb$ are adjacent simple roots in $\imaginaryRoots$ with $\simpleRoota$ short. Then
\begin{eqnarray*}
\mca{\rootReflection[\simpleRootb]\rootReflection[\simpleRoota]\rootReflection[\simpleRootb]\rootReflection[\simpleRoota]} & = & \grading(\simpleRoota)\ma[\simpleRoota] + \grading(\rootReflection[\simpleRoota]\simpleRootb)\ma[\simpleRootb] + \grading(\rootReflection[\simpleRoota]\rootReflection[\simpleRootb]\simpleRoota)\ma[\simpleRoota] + \grading(\rootReflection[\simpleRoota]\rootReflection[\simpleRootb]\rootReflection[\simpleRoota]\simpleRootb)\ma[\simpleRootb] \\
& = & \grading(\simpleRoota)\ma[\simpleRoota] + \grading(\simpleRootb)\ma[\simpleRootb] + \grading(\simpleRoota + \simpleRootb)\ma[\simpleRoota] + \grading(\simpleRootb)\ma[\simpleRootb] \\
& = & \grading(\simpleRoota)\ma[\simpleRoota] + \grading(\simpleRoota)\ma[\simpleRoota] + \grading(\simpleRootb)\ma[\simpleRootb] + \grading(\simpleRootb)\ma[\simpleRootb] + \grading(\simpleRootb)\ma[\simpleRoota] \\
& = & \grading(\simpleRootb)\ma[\simpleRoota].
\end{eqnarray*}
Similarly
\begin{eqnarray*}
\mca{\rootReflection[\simpleRoota]\rootReflection[\simpleRootb]\rootReflection[\simpleRoota]\rootReflection[\simpleRootb]} & = & \grading(\simpleRootb)\ma[\simpleRootb] + \grading(\rootReflection[\simpleRootb]\simpleRoota)\ma[\simpleRoota] + \grading(\rootReflection[\simpleRootb]\rootReflection[\simpleRoota]\simpleRootb)\ma[\simpleRootb] + \grading(\rootReflection[\simpleRootb]\rootReflection[\simpleRoota]\rootReflection[\simpleRootb]\simpleRoota)\ma[\simpleRoota] \\
& = & \grading(\simpleRootb)\ma[\simpleRootb] + \grading(\simpleRootb + \simpleRoota)\ma[\simpleRoota] + \grading(\simpleRootb)\ma[\simpleRootb] + \grading(\simpleRoota)\ma[\simpleRoota] \\
& = & \grading(\simpleRootb)\ma[\simpleRootb] + \grading(\simpleRootb)\ma[\simpleRootb] + \grading(\simpleRoota)\ma[\simpleRoota] + \grading(\simpleRoota)\ma[\simpleRoota] + \grading(\simpleRootb)\ma[\simpleRoota] \\
& = & \grading(\simpleRootb)\ma[\simpleRoota]
\end{eqnarray*}
and thus
\[
\mca{\rootReflection[\simpleRootb]\rootReflection[\simpleRoota]\rootReflection[\simpleRootb]\rootReflection[\simpleRoota]} = \mca{\rootReflection[\simpleRoota]\rootReflection[\simpleRootb]\rootReflection[\simpleRoota]\rootReflection[\simpleRootb]}.
\]
\end{steplist}
\end{proof}

The following proposition computes the elements $\mca{{\rootReflection[\simpleRoota]}}$, where $\simpleRoota \in \rootSystem$ is an arbitrary (not necessarily simple) imaginary root. The result is as expected.

\begin{proposition}
Let $\korbit$ be a $\maxRealCompact$-orbit in $\fiber$ and let $\simpleRootb \in \rootSystem$ be an imaginary root for $\inv$. Then
\[
\mca{\rootReflection[\simpleRootb]} = \grading(\simpleRootb)\ma[\simpleRootb].
\]
\end{proposition}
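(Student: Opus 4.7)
The plan is to induct on $\ell(\rootReflection[\simpleRootb])$. When $\simpleRootb$ is simple, the formula in Definition \ref{defMca} applied to the single simple reflection $\rootReflection[\simpleRootb]$ yields $\mca{\rootReflection[\simpleRootb]} = \grading(\simpleRootb)\ma[\simpleRootb]$ directly.

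For the inductive step I would pick a simple imaginary root $\simpleRoota$ with $(\simpleRootb,\simpleRoota^{\vee}) > 0$ and set $\simpleRootc = \rootReflection[\simpleRoota]\simpleRootb$. Then $\simpleRootc$ is a positive imaginary root of strictly smaller height and $\rootReflection[\simpleRootb] = \rootReflection[\simpleRoota]\rootReflection[\simpleRootc]\rootReflection[\simpleRoota]$. The computation $(\simpleRoota,\simpleRootc^{\vee}) = -(\simpleRoota,\simpleRootb^{\vee}) < 0$ gives $\rootReflection[\simpleRootc]\simpleRoota > 0$, so both $\rootReflection[\simpleRoota]\rootReflection[\simpleRootc]$ and $\rootReflection[\simpleRootc]\rootReflection[\simpleRoota]$ have length $\ell(\rootReflection[\simpleRootc])+1$; since $\simpleRoota$ and $\simpleRootc$ are not orthogonal, $\rootReflection[\simpleRoota]$ does not commute with $\rootReflection[\simpleRootc]$, and the standard Coxeter trichotomy $\ell(sws) \in \{\ell(w)-2,\ell(w),\ell(w)+2\}$ then forces $\ell(\rootReflection[\simpleRootb]) = \ell(\rootReflection[\simpleRootc])+2$. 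Flanking a reduced expression $\rootReflection[\simpleRootc] = \rootReflection[\simpleRootai[1]]\cdots\rootReflection[\simpleRootai[n]]$ by $\rootReflection[\simpleRoota]$ on each side therefore produces a reduced expression for $\rootReflection[\simpleRootb]$.

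Expanding $\mca{\rootReflection[\simpleRootb]}$ against this expression separates out the first and last terms (both carrying $\ma[\simpleRoota]$) from a middle block. The middle block is precisely the sum defining $\mca{\rootReflection[\simpleRootc]}$ but with $\grading$ replaced by the cross-acted grading $\rootReflection[\simpleRoota]\cross\grading$; since $\simpleRoota$ is imaginary for $\inv$, $\rootReflection[\simpleRoota]$ commutes with $\inv$, and by Proposition \ref{propCATriples} the orbit $\rootReflection[\simpleRoota]\cross\korbit$ has grading $\rootReflection[\simpleRoota]\cross\grading$. The inductive hypothesis applied there gives the middle block as $(\rootReflection[\simpleRoota]\cross\grading)(\simpleRootc)\ma[\simpleRootc] = \grading(\simpleRootb)\ma[\simpleRootc]$. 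Combined with $\rootReflection[\simpleRoota]\rootReflection[\simpleRootc]\simpleRoota = -\rootReflection[\simpleRootb]\simpleRoota$ for the last term, this yields
\[
\mca{\rootReflection[\simpleRootb]} = \grading(\simpleRoota)\ma[\simpleRoota] + \grading(\simpleRootb)\ma[\simpleRootc] + \grading(\rootReflection[\simpleRootb]\simpleRoota)\ma[\simpleRoota].
\]

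It remains to show this equals $\grading(\simpleRootb)\ma[\simpleRootb]$, which is a short case analysis on the lengths of $\simpleRoota$ and $\simpleRootb$: both long, $\simpleRoota$ short with $\simpleRootb$ long, or $\simpleRoota$ long with $\simpleRootb$ short (two short imaginary roots being orthogonal in $B_{m}$). In each case, additivity of $\grading$ on root sums (Definition \ref{defGrading})---possibly through an intermediate short root such as $\simpleRootb-\simpleRoota$ or $\simpleRootb+\simpleRootc$---evaluates $\grading(\rootReflection[\simpleRootb]\simpleRoota)$, and Proposition \ref{propMStruct} gives $\ma[\simpleRootc] = \ma[\simpleRootb] + \ma[\simpleRoota]$ in the two cases where $\simpleRootc$ is long and $\ma[\simpleRootc] = \ma[\simpleRootb]$ in the case where $\simpleRootc$ is short (using $2\ma[\simpleRoota] = 0$). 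A mechanical $\mathbb{Z}_{2}$-calculation then collapses the right-hand side to $\grading(\simpleRootb)\ma[\simpleRootb]$. The main obstacle is this final case analysis: while the inductive factorization is uniformly reduced by the Coxeter trichotomy, the identification of $\grading(\rootReflection[\simpleRootb]\simpleRoota)$ in the short/long cases hinges on the $B_{m}$-specific fact that $\simpleRootb+\simpleRootc$ (or $\simpleRootb-\simpleRoota$) is itself a root, and the $\mathbb{Z}_{2}$-collapse differs in each case.
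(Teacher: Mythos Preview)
Your approach is correct but genuinely different from the paper's. The paper proceeds by writing down, for each of the three root types in $B_{m}$ (namely $e_{i}-e_{j+1}$, $e_{i}$, and $e_{i}+e_{j+1}$), an explicit reduced expression for $\rootReflection[\simpleRootb]$ and then directly expanding the defining sum for $\mca{\rootReflection[\simpleRootb]}$; the computation telescopes in each case. Your argument instead inducts on $\ell(\rootReflection[\simpleRootb])$ via the conjugation $\rootReflection[\simpleRootb]=\rootReflection[\simpleRoota]\rootReflection[\simpleRootc]\rootReflection[\simpleRoota]$, applying the inductive hypothesis at the orbit $\rootReflection[\simpleRoota]\cross\korbit$ (so the induction must be stated uniformly over all $K$-orbits in $\fiber$, which you should make explicit). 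Both routes terminate in a short/long case split in $B_{m}$, but yours isolates that split to a single $\mathbb{Z}_{2}$-identity at the end, whereas the paper's absorbs it into three parallel explicit calculations. The paper's version is more self-contained and mechanical; yours is more structural and would transfer more readily to other root systems once the terminal case analysis is adapted. One small point: your length argument is right but slightly compressed---the key step is that $\rootReflection[\simpleRoota]\rootReflection[\simpleRootc]\,\simpleRoota=\rootReflection[\simpleRoota](\rootReflection[\simpleRootc]\simpleRoota)$ is positive because $\rootReflection[\simpleRortc]\simpleRoota\neq\simpleRoota$ is positive and $\rootReflection[\simpleRoota]$ permutes the positive roots other than $\simpleRoota$, giving $\ell(\rootReflection[\simpleRoota]\rootReflection[\simpleRortc]\rootReflection[\simpleRoota])=\ell(\rootReflection[\simpleRortc])+2$ directly.
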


\begin{proof}
It suffices to consider the case when $\imaginaryWeylGroup \cong W(B_{m})$. Denote the long simple roots in $\imaginaryRoots$ by 
\[
\simpleRooti = \ei - \ei[i+1]
\]
for $1 \le i \le m-1$. We have the following three cases for $\simpleRootb$.

\begin{steplist}
\item [Case I]
Suppose $\simpleRootb$ is of the form
\[
\simpleRootb = \ei - \ei[j+1] = \simpleRooti + \simpleRooti[i+1] + \cdots + \simpleRooti[j]
\]
for $1 \le i \le j$ so that
\[
\rootReflection[\simpleRootb] = \rootReflection[{\simpleRooti[j]}]\rootReflection[{\simpleRooti[j-1]}] \cdots \rootReflection[{\simpleRooti[i+1]}]\rootReflection[{\simpleRooti[i]}]\rootReflection[{\simpleRooti[i+1]}] \cdots \rootReflection[{\simpleRooti[j-1]}]\rootReflection[{\simpleRooti[j]}]
\]
is a reduced expression for $\rootReflection[{\simpleRootb}]$. Then
\begin{eqnarray*}
\mca{\rootReflection[\simpleRootb]} & = & \grading(\simpleRooti[j])\ma[{\simpleRooti[j]}] + \grading(\rootReflection[{\simpleRooti[j]}]\simpleRooti[j-1])\ma[{\simpleRooti[j-1]}] + \cdots + \grading(\rootReflection[{\simpleRooti[j]}]\cdots\rootReflection[{\simpleRooti[i+1]}]\simpleRooti[i])\ma[{\simpleRooti[i]}] \\
& & {} +\grading(\rootReflection[{\simpleRooti[j]}]\cdots\rootReflection[{\simpleRooti[i]}]\simpleRooti[i+1])\ma[{\simpleRooti[i+1]}] + \cdots + \grading(\rootReflection[{\simpleRooti[j]}]\cdots\rootReflection[{\simpleRooti[i]}]\cdots\rootReflection[{\simpleRooti[j-1]}]\simpleRooti[j])\ma[{\simpleRooti[j]}] \\
& = & \grading(\simpleRooti[j])\ma[{\simpleRooti[j]}] + \grading(\simpleRooti[j] + \simpleRooti[j-1])\ma[{\simpleRooti[j-1]}] + \cdots + 
\grading(\simpleRooti[j]+\simpleRooti[j-1]+\cdots+\simpleRooti)\ma[{\simpleRooti[i]}] \\
& & {} + \grading(\simpleRooti[i])\ma[{\simpleRooti[i+1]}] + \grading(\simpleRooti[i]+\simpleRooti[i+1])\ma[{\simpleRooti[i+2]}] + \cdots + \grading(\simpleRooti[i]+\cdots+\simpleRooti[j-1])\ma[{\simpleRooti[j]}] \\
& = & \grading(\simpleRooti[i] + \cdots + \simpleRooti[j])\ma[{\simpleRooti[i]}] + \cdots + \grading(\simpleRooti[i] + \cdots + \simpleRooti[j])\ma[{\simpleRooti[j]}] \\
& = & \grading(\simpleRootb)\ma[{\simpleRooti[i]}] + \cdots + \grading(\simpleRootb)\ma[{\simpleRooti[j]}] \\
& = & \grading(\simpleRootb)\ma[\simpleRootb]
\end{eqnarray*}
as desired.

\item [Case II]
Suppose $\simpleRootb$ is of the form
\[
\simpleRootb = \ei = \simpleRooti + \simpleRooti[i+1] + \cdots + \simpleRooti[m-1] + \ei[m]
\]
for $1 \le i \le m$ so that
\[
\rootReflection[\simpleRootb] = \rootReflection[{\simpleRooti[i]}]\rootReflection[{\simpleRooti[i+1]}] \cdots \rootReflection[{\simpleRooti[m-1]}]\rootReflection[{\ei[m]}]\rootReflection[{\simpleRooti[m-1]}] \cdots \rootReflection[{\simpleRooti[i+1]}]\rootReflection[{\simpleRooti[i]}]
\]
is a reduced expression for $\rootReflection[{\simpleRootb}]$. Then
\begin{eqnarray*}
\mca{\rootReflection[\simpleRootb]} & = & \grading(\simpleRooti[i])\ma[{\simpleRooti[i]}] + \grading(\rootReflection[{\simpleRooti[i]}]\simpleRooti[i+1])\ma[{\simpleRooti[i+1]}] + \cdots + \grading(\rootReflection[{\simpleRooti[i]}]\cdots\rootReflection[{\simpleRooti[m-1]}]\ei[m])\ma[{\ei[m]}] \\
& & {} + \grading(\rootReflection[{\simpleRooti[i]}]\cdots\rootReflection[{\ei[m]}]\simpleRooti[m-1])\ma[{\simpleRooti[m-1]}] + \cdots + \grading(\rootReflection[{\simpleRooti[i]}]\cdots\rootReflection[{\ei[m]}]\cdots\rootReflection[{\simpleRooti[i+1]}]\simpleRooti[i])\ma[{\simpleRooti[i]}] \\
& = & \grading(\simpleRooti[i])\ma[{\simpleRooti[i]}] + \grading(\simpleRooti[i]+\simpleRooti[i+1])\ma[{\simpleRooti[i+1]}] + \cdots + 
\grading(\simpleRooti[i]+\cdots+\simpleRooti[m-1]+\ei[m])\ma[{\ei[m]}] \\
& & {} + \grading(\simpleRooti[i] + \cdots + \simpleRooti[m-1] + 2\ei[m])\ma[{\simpleRooti[m-1]}] + \cdots + \\
& & {} + \grading(\simpleRooti[i] + 2\simpleRooti[i+1] + \cdots + 2\simpleRooti[m-1] + 2\ei[m])\ma[{\simpleRooti[i]}] \\
& = & \grading(2\simpleRootb)\ma[{\simpleRooti[i]}] + \cdots + \grading(2\simpleRootb)\ma[{\simpleRooti[m-1]}] + \grading(\simpleRootb)\ma[{\ei[m]}] \\
& = & \grading(\simpleRootb)\ma[{\ei[m]}] \\
& = & \grading(\simpleRootb)\ma[\simpleRootb]
\end{eqnarray*}
as desired.

\item [Case III]
Suppose $\simpleRootb = \ei + \ei[j+1]$. This case is handled in the same fashion as the previous cases. The reader is spared the details.
\end{steplist}
\end{proof}

Let $\korbit$ and $\korbit[\psi]$ be two $\maxRealCompact$-orbits for $\realTorus$ in $\fiber$. For $\weylElt \in \imaginaryWeylGroup$, the following proposition describes how $\mca{\weylElt}$ and $\mca[{\korbit[\psi]}]{\weylElt}$ are related.

\begin{proposition} \label{propMcaRel}
Let $\korbit$ and $\korbit[\psi]$ be two $\maxRealCompact$-orbits in $\fiber$ and suppose $\korbit[\psi] = \tau \cross \korbit$. For $\weylElt \in \imaginaryWeylGroup$ we have
\[
\mca[{\korbit[\psi]}]{\weylElt} = \mca{\weylElt\tau} + \mca{\tau}.
\]
\end{proposition}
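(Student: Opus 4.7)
The plan is to extend the formula defining $\mca{\weylElt}$ to arbitrary (not necessarily reduced) products of simple reflections, and then to apply it to the concatenation of reduced expressions for $\tau$ and $\weylElt$, reading off the identity essentially directly.

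First I would observe that the assignment
\[
w = \rootReflection[{\simpleRooti[i_n]}]\cdots\rootReflection[{\simpleRooti[i_1]}] \;\longmapsto\; \sum_{k=1}^{n}\grading\bigl(\rootReflection[{\simpleRooti[i_1]}]\cdots\rootReflection[{\simpleRooti[i_{k-1}]}]\simpleRooti[i_k]\bigr)\,\ma[{\simpleRooti[i_k]}]
\]
is well-defined on $\imaginaryWeylGroup$ for \emph{any} word in the simple generators, not just reduced ones. The proposition immediately preceding establishes invariance under all braid relations. The only further Coxeter relation is $\rootReflection[\simpleRoota]^{2} = 1$, and evaluating the formula on the length-two word $\rootReflection[\simpleRoota]\rootReflection[\simpleRoota]$ gives $\grading(\simpleRoota)\ma[\simpleRoota] + \grading(-\simpleRoota)\ma[\simpleRoota] = 0$, which matches the value on the empty word. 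Hence insertion or deletion of a pair $\rootReflection[\simpleRoota]\rootReflection[\simpleRoota]$ preserves the value, and since braid relations together with $\rootReflection[\simpleRoota]^{2} = 1$ generate all relations in $\imaginaryWeylGroup$, the formula computes $\mca{w}$ on every expression for $w$.

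Next, by Proposition \ref{propCATriples} the grading attached to $\korbit[\psi] = \tau \cross \korbit$ is $\psi(\simpleRoot) = \grading(\tau^{-1}\simpleRoot)$. Fix reduced expressions $\weylElt = \rootReflection[{\simpleRooti[n]}]\cdots\rootReflection[{\simpleRooti[1]}]$ and $\tau = \rootReflection[{\simpleRootbi[m]}]\cdots\rootReflection[{\simpleRootbi[1]}]$ and apply the extended formula to the concatenation
\[
\rootReflection[{\simpleRooti[n]}]\cdots\rootReflection[{\simpleRooti[1]}]\rootReflection[{\simpleRootbi[m]}]\cdots\rootReflection[{\simpleRootbi[1]}]
\]
representing $\weylElt\tau$. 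The first $m$ summands, those indexed by the $\simpleRootbi[k]$, assemble exactly into $\mca{\tau}$. Each of the remaining $n$ summands acquires the extra prefix $\rootReflection[{\simpleRootbi[1]}]\cdots\rootReflection[{\simpleRootbi[m]}] = \tau^{-1}$ inside the grading, so after recognizing $\grading \circ \tau^{-1} = \psi$ those summands reassemble into
\[
\sum_{k=1}^{n}\psi\bigl(\rootReflection[{\simpleRooti[1]}]\cdots\rootReflection[{\simpleRooti[k-1]}]\simpleRooti[k]\bigr)\,\ma[{\simpleRooti[k]}] \;=\; \mca[{\korbit[\psi]}]{\weylElt}.
\]
Summing the two contributions gives $\mca{\weylElt\tau} = \mca{\tau} + \mca[{\korbit[\psi]}]{\weylElt}$, and moving $\mca{\tau}$ across (which is legal because everything takes values in the $\mathbb{Z}/2$-vector space $\negQuotCorootLattice[-\inv]/\posnegQuotCorootLattice[-\inv]$) yields the claimed identity.

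The only real obstacle is the initial extension to non-reduced words, since the concatenated expression for $\weylElt\tau$ generally is not reduced; the cancellation check above is short but genuinely needed. An alternative route avoids the extension by inducting on $\ell(\weylElt)$ from the one-step recursion $\mca{\rootReflection[\simpleRootc]w} = \mca{w} + \grading(w^{-1}\simpleRootc)\ma[\simpleRootc]$ for simple $\simpleRootc$, which is automatic when length goes up and follows in the length-decreasing case from $\grading(\simpleRootc) = \grading(-\simpleRootc)$; the concatenation argument above is more transparent.
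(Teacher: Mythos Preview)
Your proof is correct and follows the same line as the paper's: both unwind the definition of $\mca[{\korbit[\psi]}]{\weylElt}$ using the grading $\tau\cross\grading=\grading\circ\tau^{-1}$ and recognise the result as the tail of a concatenated expression for $\weylElt\tau$. The paper's proof is terser, jumping directly from the expanded sum $\sum_k \grading(\tau^{-1}\rootReflection[{\simpleRooti[1]}]\cdots\rootReflection[{\simpleRooti[k-1]}]\simpleRooti[k])\ma[{\simpleRooti[k]}]$ to $\mca{\weylElt\tau}+\mca{\tau}$ without comment, whereas you make explicit the one point this glosses over: the concatenated word for $\weylElt\tau$ is generally not reduced, so one must first check that the defining formula is invariant under inserting or deleting a pair $\rootReflection[\simpleRoota]\rootReflection[\simpleRoota]$. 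Your verification of that (using $\grading(-\simpleRoota)=\grading(\simpleRoota)$ and working in $\zTwo$) is exactly what is needed to make the paper's last equality rigorous.
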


\begin{proof}
Let $\grading$ denote the abstract grading corresponding to $\korbit$ and choose a reduced expression $\weylElt = \rootReflection[{\simpleRooti[n]}] \cdots \rootReflection[{\simpleRooti[1]}]$. By definition we have
\begin{eqnarray*}
\mca[{\korbit[\psi]}]{\weylElt} & = & \mca[\tau \cross \korbit]{\weylElt} \\
& = & \tau \cross \grading(\simpleRooti[1])\ma[{\simpleRooti[1]}] + \tau \cross \grading(\rootReflection[{\simpleRooti[1]}]\simpleRooti[2])\ma[{\simpleRooti[2]}] + \cdots + \tau \cross \grading(\rootReflection[{\simpleRooti[1]}]\cdots\rootReflection[{\simpleRooti[n-1]}]\simpleRooti[n])\ma[{\simpleRooti[n]}]\\
& = & \grading(\tau^{-1}\simpleRooti[1])\ma[{\simpleRooti[1]}] + \grading(\tau^{-1}\rootReflection[{\simpleRooti[1]}]\simpleRooti[2])\ma[{\simpleRooti[2]}] + \cdots + \grading(\tau^{-1}\rootReflection[{\simpleRooti[1]}]\cdots\rootReflection[{\simpleRooti[n-1]}]\simpleRooti[n])\ma[{\simpleRooti[n]}] \\
& = & \mca{\weylElt\tau} + \mca{\tau}
\end{eqnarray*}
as desired.
\end{proof}

Proposition \ref{propMcaRel} provides an iterative method for computing $\mca{\weylElt}$.

\begin{corollary}
Let $w \in \imaginaryWeylGroup$ and suppose $w = \rootReflection[{\simpleRooti[n]}] \cdots \rootReflection[{\simpleRooti[1]}]$ is a reduced expression for $w$. Then
\[
\mca{w} = \mca{\rootReflection[{\simpleRooti[1]}]} + \mca[{\rootReflection[{\simpleRooti[1]}] \cross \korbit}]{\rootReflection[{\simpleRooti[2]}]} + \cdots + \mca[{\rootReflection[{\simpleRooti[n-1]}] \cross \cdots \cross~\rootReflection[{\simpleRooti[1]}] \cross \korbit}]{\rootReflection[{\simpleRooti[n]}]}.
\]
\end{corollary}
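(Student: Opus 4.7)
The plan is to recognize that the corollary is essentially a direct unpacking of the definition of $\mca{w}$, using the formula of Proposition \ref{propCATriples} for how the cross action affects the abstract grading. An entirely parallel route is induction on $n$ via Proposition \ref{propMcaRel}; I will sketch both since either is short.

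First I would compute a single summand on the right-hand side. For any orbit $\korbit'$ and any simple imaginary root $\simpleRooti[k]$, the definition of $\mca[\korbit']{\cdot}$ on a length-one element gives $\mca[\korbit']{\rootReflection[{\simpleRooti[k]}]} = \grading'(\simpleRooti[k])\ma[{\simpleRooti[k]}]$, where $\grading'$ is the abstract grading attached to $\korbit'$. Taking $\korbit' = \rootReflection[{\simpleRooti[k-1]}]\cross\cdots\cross\rootReflection[{\simpleRooti[1]}]\cross\korbit$ and applying Proposition \ref{propCATriples} inductively (the cross action on orbits induces the cross action on gradings), we obtain $\grading' = (\rootReflection[{\simpleRooti[k-1]}] \cdots \rootReflection[{\simpleRooti[1]}]) \cross \grading$ and hence
\[
\grading'(\simpleRooti[k]) = \grading\bigl((\rootReflection[{\simpleRooti[k-1]}] \cdots \rootReflection[{\simpleRooti[1]}])^{-1}\simpleRooti[k]\bigr) = \grading(\rootReflection[{\simpleRooti[1]}] \rootReflection[{\simpleRooti[2]}] \cdots \rootReflection[{\simpleRooti[k-1]}]\simpleRooti[k]).
\]

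Next I would match the result against the definition of $\mca{w}$. The $k$-th summand just computed is $\grading(\rootReflection[{\simpleRooti[1]}]\cdots\rootReflection[{\simpleRooti[k-1]}]\simpleRooti[k])\ma[{\simpleRooti[k]}]$, which is exactly the $k$-th term appearing in the definition of $\mca{w}$ relative to the chosen reduced expression $w = \rootReflection[{\simpleRooti[n]}]\cdots\rootReflection[{\simpleRooti[1]}]$. Summing from $k=1$ to $n$ gives the stated identity. (Well-definedness of $\mca{w}$, proved just above, guarantees that the result is independent of the reduced expression, but this is not strictly needed here since we are only asserting an equality for one particular presentation.)

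Alternatively, I would proceed by induction on $n$. The base case $n=1$ is tautological. For the inductive step, write $w = \rootReflection[{\simpleRooti[n]}]w'$ with $w' = \rootReflection[{\simpleRooti[n-1]}]\cdots\rootReflection[{\simpleRooti[1]}]$, which is automatically reduced. Apply Proposition \ref{propMcaRel} with $\tau = w'$ and $\weylElt = \rootReflection[{\simpleRooti[n]}]$ to get $\mca[w'\cross\korbit]{\rootReflection[{\simpleRooti[n]}]} = \mca{w} + \mca{w'}$, and solve for $\mca{w}$ (the target group is $2$-torsion, so addition equals subtraction). Then invoke the inductive hypothesis on $\mca{w'}$ to expand it into the first $n-1$ terms of the claimed sum. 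There is no real obstacle: the whole content is the compatibility between the cross action on orbits and on gradings (Proposition \ref{propCATriples}), together with the iterative structure already built into Definition \ref{defMca}.
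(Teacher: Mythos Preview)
Your proposal is correct, and your second (inductive) approach via Proposition~\ref{propMcaRel} is exactly the paper's proof, which reads in full: ``This follows easily by induction and Proposition~\ref{propMcaRel}.'' Your first approach---recognizing that the $k$-th summand on the right is literally the $k$-th term in Definition~\ref{defMca} once Proposition~\ref{propCATriples} is used to identify the grading of $\rootReflection[{\simpleRooti[k-1]}]\cross\cdots\cross\rootReflection[{\simpleRooti[1]}]\cross\korbit$ with $(\rootReflection[{\simpleRooti[k-1]}]\cdots\rootReflection[{\simpleRooti[1]}])\cross\grading$---is an equally valid and arguably more direct route that bypasses Proposition~\ref{propMcaRel} entirely.
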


\begin{proof}
This follows easily by induction and Proposition \ref{propMcaRel}.
\end{proof}

\begin{corollary}
Let $\korbit$ and $\korbit[\psi]$ be two $\maxRealCompact$-orbits in $\fiber$ and suppose $\korbit[\psi] = \tau \cross \korbit$. Then
\[
\left|\left\{\mca{\weylElt} \mid \weylElt \in \imaginaryWeylGroup\right\}\right| = \left|\left\{\mca[{\korbit[\psi]}]{\weylElt} \mid \weylElt \in \imaginaryWeylGroup\right\}\right|.
\]
\end{corollary}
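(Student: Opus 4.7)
The plan is to deduce the cardinality equality from Proposition \ref{propMcaRel} via two elementary bijections. The key preliminary step is to arrange that the element $\tau$ relating the two orbits can be taken inside $\imaginaryWeylGroup$; after that everything is formal.

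First I would verify that we may choose $\tau \in \imaginaryWeylGroup$. Since $\korbit$ and $\korbit[\psi]$ lie in the common fiber $\fiber$, Proposition \ref{propFiberOrder} shows they are conjugate under the cross action of $\invWeylGroup$. Combining the decomposition $\invWeylGroup \cong (\imaginaryWeylGroup \times \realWeylGroup) \rtimes \complexWeylGroup$ from Theorem \ref{theoremWeylGroupCentralizer} with the subgroup structure $\cartanWeylGroup{\realTorus} \cong (\rGroup \times \realWeylGroup) \rtimes \complexWeylGroup$ from Proposition \ref{propCWGroup}, the quotient $\invWeylGroup/\cartanWeylGroup{\realTorus}$ is isomorphic to $\imaginaryWeylGroup/\rGroup$. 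Consequently the action of $\invWeylGroup$ on $\fiber$ factors through $\imaginaryWeylGroup$, so we may fix $\tau \in \imaginaryWeylGroup$ with $\korbit[\psi] = \tau \cross \korbit$.

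Given such a $\tau$, Proposition \ref{propMcaRel} gives
\[
\mca[{\korbit[\psi]}]{\weylElt} \;=\; \mca{\weylElt\tau} + \mca{\tau}
\]
for every $\weylElt \in \imaginaryWeylGroup$. Right multiplication by $\tau$ is a bijection of $\imaginaryWeylGroup$ onto itself, so
\[
\{\mca{\weylElt\tau} : \weylElt \in \imaginaryWeylGroup\} \;=\; \{\mca{w} : w \in \imaginaryWeylGroup\}.
\]
Translation by the fixed class $\mca{\tau}$ is a bijection of the abelian group $\negQuotCorootLattice[-\inv] / \posnegQuotCorootLattice[-\inv]$, so applying it to both sides preserves cardinality and yields the desired equality.

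I do not expect a genuine obstacle here: the only mildly subtle point is the reduction to $\tau \in \imaginaryWeylGroup$, and this is a direct consequence of the structure theory already recalled. The rest of the argument is simply the observation that cardinality is invariant under bijective reparametrizations of the index set and under translation in the target group.
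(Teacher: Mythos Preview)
Your proof is correct and follows essentially the same approach as the paper: the paper's one-line proof simply states that Proposition \ref{propMcaRel} implies the two sets differ by translation through $\mca{\tau}$, which is exactly your combination of reindexing by $w \mapsto w\tau$ followed by translation by $\mca{\tau}$. Your explicit justification that $\tau$ may be taken in $\imaginaryWeylGroup$ is a reasonable clarification of a hypothesis the paper leaves implicit from context.
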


\begin{proof}
Proposition \ref{propMcaRel} implies these sets differ by translation through $\mca{\tau}$.
\end{proof}

Our interest in the elements $\mca{\weylElt}$ is explained by the following remarkable theorem.

\begin{theorem}[\cite{AdamsFokko}] \label {theoremFokko}
Let $\pairInf$ be a pair with corresponding abstract triple $\absTriple$ and let $w_{1}$ and $w_{2}$ be elements in $\imaginaryWeylGroup$. Then 
\[
w_{1} \cross \korbit = w_{2} \cross \korbit \Longleftrightarrow \mca{w_{1}} = \mca{w_{2}}.
\]
In particular, a noncompact imaginary root $\simpleRoot$ is of type II if and only if $\mca{{\rootReflection[\simpleRoot]}} = \ma[\simpleRoot]$ is trivial in the quotient $\negQuotCorootLattice[-\inv] / \posnegQuotCorootLattice[-\inv]$.
\end{theorem}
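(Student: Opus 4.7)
The plan is to reduce the biconditional to a single claim about stabilizers and then treat the two implications separately. First, by transitivity of the cross action (Proposition \ref{propCA}) together with Propositions \ref{propFiberOrder} and \ref{propCWGroup}, the stabilizer of $\korbit$ in $\imaginaryWeylGroup$ under the cross action is exactly $\rGroup$, and more generally the stabilizer of any $\korbit[\psi] \in \fiber$ is the corresponding $\rGroup$ at that orbit. Thus $w_1 \cross \korbit = w_2 \cross \korbit$ iff $w_1 w_2^{-1}$ lies in the stabilizer of $w_2 \cross \korbit$. Applying the cocycle-type identity of Proposition \ref{propMcaRel} with $\tau = w_2$ yields $\mca[{w_2 \cross \korbit}]{w_1 w_2^{-1}} = \mca{w_1} + \mca{w_2}$, and since the target is $2$-torsion this rearranges to $\mca{w_1} = \mca{w_2}$ iff $\mca[{w_2 \cross \korbit}]{w_1 w_2^{-1}} = 0$. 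Hence the theorem reduces to the single claim: for any orbit $\korbit[\psi] \in \fiber$ and $w \in \imaginaryWeylGroup$, $\mca[{\korbit[\psi]}]{w} = 0$ iff $w$ stabilizes $\korbit[\psi]$.

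For the backward implication I would verify $w \in \rGroup = A \ltimes \cptImaginaryWeylGroup$ forces $\mca{w} = 0$ directly. For $w \in \cptImaginaryWeylGroup$, every term in the standard expansion of $\mca{w}$ is the grading value on a compact root (compact reflections preserve compactness) and therefore vanishes. For $w \in A$, by its defining structure in \cite{IC4}, $w$ is a product of reflections through strongly orthogonal noncompact imaginary roots whose $\ma$-elements sum to zero in $\negQuotCorootLattice[-\inv]/\posnegQuotCorootLattice[-\inv]$; this sum is exactly $\mca{w}$.

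The forward implication, $\mca{w} = 0 \implies w \in \rGroup$, is the main obstacle, and I would attack it by a counting argument. The backward direction shows $\mca{\cdot}$ factors through the quotient $\imaginaryWeylGroup/\rGroup$ and surjects onto its image; by Propositions \ref{propFiberOrder} and \ref{propCWGroup} this quotient has cardinality $\fiberOrder$, so injectivity will follow once we exhibit $\fiberOrder$ distinct values in the image. To produce these I would decompose $w$ according to $\imaginaryWeylGroup \cong W(B_{\numImaginaryBitsSimple}) \times W(A_{1})^{k}$ from Proposition \ref{propInvWeylGroups}. Each $W(A_{1})$ factor is generated by a reflection through a long imaginary root that must be noncompact (per the remarks after Definition \ref{defAbsPair}), with $\ma$-elements pairwise independent in $\negQuotCorootLattice[-\inv]/\posnegQuotCorootLattice[-\inv]$, so these factors contribute exactly the freedom expected. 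In the $W(B_{\numImaginaryBitsSimple})$ factor one must track how short imaginary reflections permute the noncompact short roots and invoke Proposition \ref{propMStruct} to identify which combinations of $\ma$ elements vanish modulo $\posnegQuotCorootLattice[-\inv]$; I expect this bookkeeping, which mirrors the case analysis of Theorem \ref{theoremNumGenReps}, to be the most delicate step.

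The ``in particular'' statement is then immediate. By Definition \ref{defTypeITypeII}, a noncompact imaginary root $\simpleRoot$ is of type II iff $\rootReflection[\simpleRoot]$ belongs to $\cartanWeylGroup \cap \imaginaryWeylGroup = \rGroup$, which by the biconditional just proved is equivalent to $\mca{{\rootReflection[\simpleRoot]}} = \grading(\simpleRoot)\ma[\simpleRoot] = \ma[\simpleRoot]$ being trivial in $\negQuotCorootLattice[-\inv]/\posnegQuotCorootLattice[-\inv]$.
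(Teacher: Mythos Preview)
The paper does not prove this statement; it is quoted from \cite{AdamsFokko} and used as a black box. So there is no ``paper's own proof'' to compare against, and your proposal should be read as an attempt to supply an argument the paper deliberately omits.

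Your reduction step is clean and correct: the cocycle identity of Proposition~\ref{propMcaRel} does collapse the biconditional to the single claim that $\mca[{\korbit[\psi]}]{w}=0$ exactly when $w$ stabilises $\korbit[\psi]$, and the argument for $w\in\cptImaginaryWeylGroup$ goes through once you note that a compact reflection preserves the grading (so one may iterate over reflections in compact roots rather than simple roots of $\imaginaryWeylGroup$).

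The genuine gap is the treatment of $A$. You assert that ``by its defining structure in \cite{IC4}, $w$ is a product of reflections through strongly orthogonal noncompact imaginary roots whose $\ma$-elements sum to zero in $\negQuotCorootLattice[-\inv]/\posnegQuotCorootLattice[-\inv]$.'' But Proposition~\ref{propCWGroup} (and \cite{IC4}) only tells you that $A$ is an elementary abelian two-group sitting in $\rGroup=A\ltimes\cptImaginaryWeylGroup$; the identification of \emph{which} products of noncompact reflections lie in $\cartanWeylGroup{\realTorus}$, and the fact that this is governed precisely by the vanishing of the corresponding $\ma$-sum on the dual torus, is exactly the content of the theorem you are trying to prove. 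Invoking it here is circular. The same issue reappears in your counting argument for the forward direction: you need an \emph{a priori} identification of $|A|$ (equivalently $\fiberOrder$) that does not already use Theorem~\ref{theoremFokko}, and the paper's later computations of $\fiberOrder$ in Theorem~\ref{theoremKOrbits} themselves rely on this result. The substance of \cite{AdamsFokko} lies in establishing this link between the analytic object $A\subset\cartanWeylGroup{\realTorus}$ and the lattice-theoretic object $\posnegQuotCorootLattice[-\inv]$, and that step cannot be finessed by structure theory already recorded in the paper.
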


\begin{remark}
Recall elements in $\negQuotCorootLattice[-\inv] / \posnegQuotCorootLattice[-\inv]$ represent the connected components of a dual torus $\realTorus'$. Theorem \ref{theoremFokko} implies 
\[
w_{1} \cross \korbit = w_{2} \cross \korbit
\]
if and only if the elements $\mca{w_{1}},\mca{w_{2}} \in \realTorus'$ live in the same connected component.
\end{remark}

It follows from Theorem \ref{theoremFokko} that elements in $\fiber$ are parameterized by distinct elements of the form $\mca{w}$, for $w \in \imaginaryWeylGroup$. Formally we can write
\[
\fiber \longleftrightarrow \left\{\mca{w}\cdot\korbit \mid w \in \imaginaryWeylGroup \right\}
\]
where $\mca{w} \cdot \korbit$ is a formal symbol representing the $\maxRealCompact$-orbit $w \cross \korbit$ with $\mca{w} \cdot \korbit = \mca{\tau} \cdot \korbit$ if and only if $\mca{w} = \mca{\tau}$. The action of $\imaginaryWeylGroup$ on $\fiber$ is then
\begin{eqnarray*}
\tau \cross (\mca{w} \cdot \korbit) & = & \mca[{w \cross \korbit}]{\tau} + (\mca{w} \cdot \korbit) \\
& = & (\mca[{w \cross \korbit}]{\tau} + \mca{w}) \cdot \korbit \\
& = & \mca{\tau w} \cdot \korbit
\end{eqnarray*}
by Proposition \ref{propMcaRel}.

\begin{example} \label{egCaThree}
Let $\realGroup = \realSpinGroup{5}{4}$ and suppose $\absDiff \in \absLieAlgebraDual$ is nonsingular. Fix a compact Cartan subgroup $\realTorus \subset \realGroup$ so that $\inv = \identity$ and every $\maxRealCompact$-orbit for $\realTorus$ is in $\fiber$. Fix $\korbit \in \fiber$ and suppose $\absTriple$ is the abstract triple for $\korbit$ with $\grading$
given by the diagram
\[
+~+~\oplus~\oplus.
\]
Since $-\inv = -\identity$ and $\posQuotCorootLattice[-\inv]$ is trivial, Theorem \ref{theoremFokko} implies all noncompact imaginary roots are of type I. We now give a complete description of the action of $\imaginaryWeylGroup = \weylGroup$ on $\fiber$. Let
\begin{eqnarray*}
\simpleRooti[1] & = & \ei[1] - \ei[2] \\
\simpleRooti[2] & = & \ei[2] - \ei[3] \\
\simpleRooti[3] & = & \ei[3] - \ei[4] \\
\simpleRootb & = & \ei[4]
\end{eqnarray*}
denote the simple roots for $\weylGroup$. The following table summarizes the action of $\imaginaryWeylGroup$ on $\fiber$.

\[
\begin{array}{|c|c|c|cccc|}
\hline
orbit & \mca{w} & grading & \simpleRooti[1] & \simpleRooti[2] & \simpleRooti[3] & \simpleRootb \\
\hline
0 & e & +~+~\oplus~\oplus  & 0 & 1 & 0 & 2 \\
1 & \ma[{\simpleRootai[2]}] & +~\oplus~+~\oplus  & 3 & 0 & 4 & 5 \\
2 & \ma[{\simpleRootb}] & +~+~\oplus~\oplus  & 2 & 5 & 2 & 0 \\
3 & \ma[{\simpleRootai[1]}]\ma[{\simpleRootai[2]}] & \oplus~+~+~\oplus  & 1 & 3 & 6 & 7 \\
4 & \ma[{\simpleRootai[2]}]\ma[{\simpleRootai[3]}] & +~\oplus~\oplus~+  & 6 & 4 & 1 & 4 \\
5 & \ma[{\simpleRootai[2]}]\ma[{\simpleRootb}] & +~\oplus~+~\oplus  & 7 & 2 & 8 & 1 \\
6 & \ma[{\simpleRootai[1]}]\ma[{\simpleRootai[2]}]\ma[{\simpleRootai[3]}] & \oplus~+~\oplus~+  & 4 & 9 & 3 & 6 \\
7 & \ma[{\simpleRootai[1]}]\ma[{\simpleRootai[2]}]\ma[{\simpleRootb}] & \oplus~+~+~\oplus  & 5 & 7 & 10 & 3 \\
8 & \ma[{\simpleRootai[2]}]\ma[{\simpleRootai[3]}]\ma[{\simpleRootb}] & +~\oplus~\oplus~+  & 10 & 8 & 5 & 8 \\
9 & \ma[{\simpleRootai[1]}]\ma[{\simpleRootai[3]}] & \oplus~\oplus~+~+  & 9 & 6 & 9 & 9 \\
10 & \ma[{\simpleRootai[1]}]\ma[{\simpleRootai[2]}]\ma[{\simpleRootai[3]}]\ma[{\simpleRootb}] & \oplus~+~\oplus~+  & 8 & 11 & 7 & 10 \\
11 & \ma[{\simpleRootai[1]}]\ma[{\simpleRootai[3]}]\ma[{\simpleRootb}] & \oplus~\oplus+~+~  & 11 & 10 & 11 & 11 \\
\hline
\end{array}
\]

Each row in the table represents a particular $\maxRealCompact$-orbit in $\fiber$. The first column assigns each orbit a number, the second column gives the corresponding element of $\negQuotCorootLattice[-\inv]$, and the third column describes the associated abstract grading. The last four columns give images of cross actions for the simple roots (in terms of orbit numbers from column one).

Note that each abstract grading appears exactly twice and the corresponding elements in $\negQuotCorootLattice[-\inv]$ differ by $\ma[{\simpleRootb}]$ (i.e., by a cross action through a short noncompact imaginary root). Therefore we see the order of the fiber over any abstract triple is two.
\end{example}

\subsection{Fibers for $\realGroup$}
\label{ssGFibers}

Fix a nonsingular element $\absDiff \in \absLieAlgebraDual$ and suppose $\pairInf$ is a pair for $\realGroup$ with corresponding abstract triple $\absTriple$. The previous section outlined a procedure for computing the action of $\imaginaryWeylGroup$ on $\fiber$. In this section we use this procedure to describe the fibers over arbitrary involutions and abstract triples for $\realGroup$. As usual, the picture is unchanged by conjugation and thus it suffices to consider a representative set of involutions in $\weylGroup$. This will essentially give a complete description of the set of $\maxRealCompact$-orbits for $\absDiff$ in $\realGroup$.

Given $\absDiff$ and $\inv$, we can extend these parameters to an abstract triple $\absTriple$ by selecting an imaginary grading $\grading$. Recall $\grading$ specifies the particular real form of $\complexGroup$ (Proposition \ref{propRealForm}) and is determined by its values on the short imaginary roots for $\inv$. In particular, the number of abstract triples beginning with $\inv$ is given by $\displaystyle{\binom{\numImaginaryBits}{\numNcptBits(\grading)}}$ (Definition \ref{defNumCptRoots}). Proposition \ref{propKOGrading} implies there will be at least $\displaystyle{\binom{\numImaginaryBits}{\numNcptBits(\grading)}}$ elements in $\fiber$. 
The following theorem completes the picture by determining the number of elements in the fiber over each abstract triple for $\inv$.

\begin{theorem} \label{theoremKOrbits}
Let $\pairInf$ be a pair for $\realGroup$ and suppose $\absTriple$ is the corresponding abstract triple. Then
\[
\absTripleFiberOrder \in \left\{1,2\right\}.
\]
\end{theorem}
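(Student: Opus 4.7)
The plan is to interpret $\absTripleFiberOrder$ combinatorially via Theorem \ref{theoremFokko}. By Proposition \ref{propCWGroup} the $\realWeylGroup$ and $\complexWeylGroup$ components of $\cartanWeylGroup{\realTorus}$ lie inside the stabilizer of $\grading$, so only the imaginary part of $\invWeylGroup$ can produce new $\maxRealCompact$-orbits within $\absTripleFiber$. Writing $W_{i}^{\grading} \subseteq \imaginaryWeylGroup$ for the stabilizer of $\grading$ under the cross action, Proposition \ref{propCATriples} together with Theorem \ref{theoremFokko} gives
\[
\absTripleFiberOrder = \bigl|\{\mca{w} : w \in W_{i}^{\grading}\}\bigr|.
\]

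The first step is to observe that the restriction of $w \mapsto \mca{w}$ to $W_{i}^{\grading}$ is a group homomorphism. Proposition \ref{propMcaRel} gives $\mca{w_{1}w_{2}} = \mca[{w_{2}\cross\korbit}]{w_{1}} + \mca{w_{2}}$, and since $w_{2}$ preserves $\grading$ the orbit $w_{2}\cross\korbit$ carries the grading $\grading$, so $\mca[{w_{2}\cross\korbit}]{w_{1}} = \mca{w_{1}}$. Hence the image $H \subseteq \negQuotCorootLattice[-\inv]/\posnegQuotCorootLattice[-\inv]$ is a subgroup and $\absTripleFiberOrder = |H|$. Next I identify $W_{i}^{\grading}$ and evaluate $\mca{}$ on its generators. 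By Proposition \ref{propInvWeylGroups}, $\imaginaryWeylGroup \cong \weylGroup(B_{\numImaginaryBits}) \times \weylGroup(A_{1})^{\numComplexBits/2}$. A direct check on signed permutations shows that the stabilizer of $\grading$ in the $\weylGroup(B_{\numImaginaryBits})$ factor is $\weylGroup(B_{\numCptBits(\grading)}) \times \weylGroup(B_{\numNcptBits(\grading)})$, permuting the compact and noncompact short-root indices separately, while the entire $\weylGroup(A_{1})^{\numComplexBits/2}$ factor preserves $\grading$. On generating reflections, $\mca{}$ kills reflections in compact imaginary roots, sends a noncompact short reflection $\rootReflection[{\ei}]$ to $\ma[{\ei}] = 2\ei$, and sends a reflection $\rootReflection[\simpleRoot]$ in a long $A_{1}$-factor imaginary root to $\ma[\simpleRoot] = \rootCheck[\simpleRoot]$. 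Thus $H$ is generated by the classes of these two families.

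The crux of the argument is to show that all such nonzero class representatives coincide in $\negQuotCorootLattice[-\inv]/\posnegQuotCorootLattice[-\inv]$, which forces $|H| \in \{1,2\}$. For two noncompact short imaginary roots $\ei, \ei[j]$ the difference $2(\ei - \ei[j])$ already lies in $2\coRootLattice$. For a short $\ei$ and a long $A_{1}$-factor root $\simpleRoot = \ei[p] + \ei[q]$ with $\inv(\ei[p]) = \ei[q]$, set $\simpleRootc = \ei - \ei[p]$ and $\simpleRootd = \ei - \ei[q]$: these are complex roots interchanged by $\inv$, and the coroot combinations $\rootCheck[\simpleRootc] + \rootCheck[\simpleRootd] = 2\ei - \rootCheck[\simpleRoot]$ and $\rootCheck[\simpleRootc] - \rootCheck[\simpleRootd] = \ei[q] - \ei[p]$ lie in $\negCorootLattice[-\inv]$ and $\posCorootLattice[-\inv]$ respectively. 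They differ by $2\rootCheck[\simpleRootd] \in 2\coRootLattice$, witnessing $2\ei - \rootCheck[\simpleRoot] \in \posnegQuotCorootLattice[-\inv]$. The sign variant ($\inv(\ei[p]) = -\ei[q]$, so $\simpleRoot = \ei[p] - \ei[q]$) and the case of two long $A_{1}$-factor roots on distinct complex pairs are handled by the same pattern with appropriate choices of $\simpleRootc, \simpleRootd$. The main obstacle is the sign bookkeeping in this final step: each identity is individually a brief coroot computation, but keeping track of whether $\inv$ swaps a complex pair with or without sign, and of which of $\ei[p] \pm \ei[q]$ is the resulting imaginary root, is where the technical weight of the argument lies. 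The uniform recipe — writing the target imaginary coroot as a sum $\rootCheck[\simpleRootc] + \rootCheck[\simpleRootd]$ of complex coroots interchanged by $\inv$ and pairing it with the $-\inv$-antifixed witness $\rootCheck[\simpleRootc] - \rootCheck[\simpleRootd]$ — is what makes every case succeed.
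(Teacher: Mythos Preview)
Your proof is correct and takes a genuinely different route from the paper's.

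The paper proceeds by exhaustive case analysis: it lists six representative conjugacy classes of involutions (Cases I, I$'$, II--V) and for each one explicitly computes the cross action of $\imaginaryWeylGroup$ on $\fiber$ via Theorem \ref{theoremFokko}, reading off both $\absTripleFiberOrder$ and $\fiberOrder$. Your argument is uniform: you recognize that $\absTripleFiberOrder$ equals the size of the image $H$ of the restricted map $\mca{\cdot}\colon W_i^{\grading}\to \negQuotCorootLattice[-\inv]/\posnegQuotCorootLattice[-\inv]$, observe that this restriction is a group homomorphism (since $\mca{\cdot}$ depends only on the grading), and then show directly that all nonzero generator images agree by producing explicit $\inv$-interchanged coroot pairs $\rootCheck[\simpleRootc]\pm\rootCheck[\simpleRootd]$. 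This forces $H$ to be cyclic in an $\mathbb{F}_2$-vector space, hence of order at most two.

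What each approach buys: the paper's case-by-case work simultaneously yields the refinement in Corollary \ref{corKOrbits} (that $\absTripleFiberOrder=2$ precisely when $\numRealBits=0$) and the formula for $\fiberOrder$, both used downstream; your proof gives only the bound $\absTripleFiberOrder\le 2$ and would need a small additional step (checking when the common class $x$ is nonzero in the quotient) to recover the corollary. On the other hand, your argument is conceptually cleaner and scales without enumerating involution types. One minor remark: the claim that $\realWeylGroup$ and $\complexWeylGroup$ fix the grading is really Proposition \ref{propKOGrading} rather than Proposition \ref{propCWGroup}, but the substance is unchanged.
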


\begin{proof}
The proof is by cases for a representative set of involutions in $\weylGroup$. In each case we determine the number of elements in $\absTripleFiber$ as well as how the elements are related by cross action. It suffices to assume there exists a noncompact imaginary root for $\grading$ (i.e.~$\grading \not\equiv 0$) for otherwise we clearly have 
\[
\absTripleFiberOrder = 1.
\]

\begin{steplist}
\item [Case I]
Let $\inv \in \weylGroup$ be given by the diagram
\[
\diagram: \underset{1}{+}~+~\cdots~\underset{n}{+}
\]
so that $\numImaginaryBits = n$, $\numRealBits = 0$, and all roots are imaginary. Suppose $w \in \imaginaryWeylGroup$ with $w = (00 \cdots 0, \sigma)$. Then $w \cross \grading = \grading$ if and only if $\sigma$ preserves the set of compact roots. Such permutations are given by elements of $\cptImaginaryWeylGroup$ and thus act trivially on $\maxRealCompact$-orbits. 

Suppose now $w = (\bit[1]\bit[2]\cdots\bit[n],1)$ is a product of reflections in short noncompact imaginary roots. Since $\ma[{\ei}] = \ma[{\ei[j]}]$ in $\negQuotCorootLattice[-\inv]$ for all $i$ and $j$, Theorem \ref{theoremFokko} implies the cross action through any two such roots is the same. Moreover, this action is nontrivial since $\posQuotCorootLattice[-\inv] = \left\{0\right\}$. Hence 
\[
\absTripleFiberOrder = 2
\]
with the elements related by a cross action in any short noncompact imaginary root (this is a generalization of Example \ref{egCaThree}). We conclude
\[
\fiberOrder = 2\binom{n}{\numNcptBits(\grading)}.
\]

\item [Case I$'$]
Let $\inv \in \weylGroup$ be given by the diagram
\[
\diagram:~\underset{1}{+}~\cdots \underset{\numImaginaryBits}{+}~-\cdots~\underset{n}{-}
\]
with $\numRealBits = n - \numImaginaryBits \ne 0$. The situation is the same as the previous case, except the elements $\ma[\ei]$ are now in $\posnegQuotCorootLattice[-\inv]$. Hence all short noncompact imaginary roots are of type II and thus
\[
\absTripleFiberOrder = 1.
\]
Therefore
\[
\fiberOrder = \binom{\numImaginaryBits}{\numNcptBits(\grading)}.
\]

\item [Case II]
Let $\inv \in \weylGroup$ be given by the diagram
\[
\diagram:~\underset{1}{(2)}~(1)~\cdots~(k)~\underset{k}{(k-1)}~\underset{k+1}{+}~\cdots \underset{n}{+}
\]
where $k=\numComplexBits$, $\numImaginaryBits = n - k \ne 0$, and $\numRealBits = 0$. Then
\[
\imaginaryWeylGroup \cong W(A_{1})^{k/2} \times W(B_{n-k})
\]
and cross actions for elements of $\imaginaryWeylGroup$ that live in the $W(B_{n-k})$ factor behave as in Case I.

Suppose now $i$ is odd with $1 \le i \le k-1$ and define the roots
\begin{eqnarray*}
\simpleRooti & = & \ei - \ei[i+1] \\
\simpleRootbi & = & \ei + \ei[i+1].
\end{eqnarray*}
Then each root $\simpleRootbi$ is real for $\inv$ and imaginary for $-\inv$ and each root $\simpleRooti$ is noncompact imaginary for $\inv$ and real for $-\inv$. Therefore it remains to determine the type of each $\simpleRooti$. Since $\ma[\simpleRootbi] = \ma[\simpleRootai]\ma[{\ei[j]}]$, we have
\[
\ma[\simpleRooti] = \ma[{\ei[j]}]
\]
in $\negQuotCorootLattice[-\inv] / \posnegQuotCorootLattice[-\inv]$, where $k+1 \le j \le n$. Theorem \ref{theoremFokko} then implies the cross actions with respect to $\rootReflection[\simpleRooti]$ and $\rootReflection[{\ei[j]}]$ are the same (assuming $\ei[j]$ is noncompact). The analysis in Case I shows this action is nontrivial and thus each $\simpleRooti$ is of type I. Since the actions are the same we conclude
\[
\absTripleFiberOrder = 2.
\]
These elements are related by a cross action in any of the roots $\simpleRooti$ or in any short noncompact imaginary root $\ei[j]$. We then have
\[
\fiberOrder = 2\binom{n}{\numNcptBits(\grading)}.
\]

\item [Case III]
Let $\inv \in \weylGroup$ be given by the diagram
\[
\diagram:~\underset{1}{(2)}~(1)~\cdots~(k)~\underset{k}{(k-1)}~\underset{k+1}{-}~\cdots \underset{n}{-}
\]
where $k=\numComplexBits$ and $\numRealBits = n-k \ne 0$ and observe the abstract grading $\grading$ and associated abstract triple are unique for $\inv$. It is easy to verify the corresponding dual torus is connected and Theorem \ref{theoremFokko} implies
\[
\fiberOrder = \absTripleFiberOrder = 1.
\]

\item [Case IV]
Let $\inv \in \weylGroup$ be given by the diagram
\[
\diagram:~\underset{1}{(2)}~(1)~\cdots~(k)~\underset{k}{(k-1)}~\underset{k+1}{+}~\cdots \underset{k+\numImaginaryBits}{+}~-~\cdots~\underset{n}{-}
\]
where $k=\numComplexBits$ and $\numRealBits = n-k-\numImaginaryBits \ne 0$. Combining Case II with Case I$'$ we have
\[
\absTripleFiberOrder = 1
\]
and therefore
\[
\fiberOrder = \binom{n}{\numNcptBits(\grading)}.
\]

\item [Case V]
Let $\inv \in \weylGroup$ be given by the diagram
\[
\diagram:~\underset{1}{(2)}~(1)~\cdots~(n)~\underset{n}{(n-1)}.
\]
Note this is possible only when $n$ is even and $\realGroup$ is split. As in Case III, the abstract grading $\grading$ and associated abstract triple are unique for $\inv$. Suppose $i$ is odd with $1 \le i \le n-1$ and define the roots
\begin{eqnarray*}
\simpleRooti & = & \ei - \ei[i+1] \\
\end{eqnarray*}
as in Case II. The analysis there shows each $\simpleRooti$ is of type I and $\rootReflection[\simpleRooti] \cross \korbit = \rootReflection[{\simpleRooti[j]}] \cross \korbit$ for all $i,j$. Therefore
\[
\fiberOrder = \absTripleFiberOrder = 2.
\]
\end{steplist}
\end{proof}

If we assume $\realGroup \subset \complexGroup$ is \emph{not} the compact form, the results of Theorem \ref{theoremKOrbits} can be summarized as follows.

\begin{corollary} \label{corKOrbits}
Let $\pairInf$ be a pair for $\realGroup$ and suppose $\absTriple$ is the corresponding abstract triple. Then $\absTripleFiberOrder = 2$ if and only if $\numRealBits = 0$. In other words $\absTripleFiberOrder = 2$ if and only if the diagram $\diagram$ contains no $-$ signs. In particular we have
\[
\fiberOrder = 2^{1-\realBitsBit} \binom{\numImaginaryBits}{\numNcptBits(\grading)}.
\]
\end{corollary}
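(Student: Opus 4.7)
My plan is to derive the corollary as a direct numerical repackaging of the case analysis already carried out in Theorem \ref{theoremKOrbits}. The two ingredients are (a) verifying that across the six representative cases the value of $\absTripleFiberOrder$ is controlled entirely by whether $\numRealBits$ vanishes, and (b) multiplying this by the number of abstract triples extending a fixed involution.

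First I would observe that, under the assumption $\realGroup$ is not the compact form, every abstract involution $\inv \in \weylGroupInv$ is conjugate (by Corollary \ref{corNumCC} and the enumeration already used in the proof of Theorem \ref{theoremKOrbits}) to one whose diagram takes one of the six forms I, I$'$, II, III, IV, V. Inspecting the cases in turn, $\absTripleFiberOrder = 2$ arises precisely in Cases I, II, and V, and in each of these $\diagram$ contains no $-$ sign, equivalently $\numRealBits = 0$; conversely Cases I$'$, III, and IV all have $\numRealBits \ne 0$ and give $\absTripleFiberOrder = 1$. By the indicator bit $\realBitsBit$ of Definition \ref{defInvParams}, these two possibilities are packaged uniformly as $\absTripleFiberOrder = 2^{1-\realBitsBit}$.

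For the formula for $\fiberOrder$ I would then count abstract triples extending a fixed $\inv$ compatibly with the prescribed grading statistics: an abstract grading with $\numNcptBits(\grading)$ noncompact short imaginary roots is determined by selecting $\numNcptBits(\grading)$ of the $\numImaginaryBits$ short imaginary roots of $\inv$ to mark noncompact, giving $\binom{\numImaginaryBits}{\numNcptBits(\grading)}$ abstract triples. Multiplying by $2^{1-\realBitsBit}$ produces the asserted formula. The only potential obstacle is confirming that the six representative diagrams really do exhaust every admissible conjugacy class of involution when $\realGroup$ is noncompact, but this is routine bookkeeping with the invariants $(\numComplexBits,\numImaginaryBits,\numRealBits)$ and requires no structural input beyond Theorem \ref{theoremKOrbits} itself.
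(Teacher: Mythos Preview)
Your proposal is correct and is essentially a more detailed rendering of the paper's own proof, which reads in its entirety ``Check this for each of the above cases.'' You carry out precisely that verification: matching the six representative involutions of Theorem \ref{theoremKOrbits} against the dichotomy $\numRealBits = 0$ versus $\numRealBits \ne 0$, and then multiplying by the count $\binom{\numImaginaryBits}{\numNcptBits(\grading)}$ of compatible gradings.
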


\begin{proof}
Check this for each of the above cases.
\end{proof}

Again suppose $\realGroup \subset \complexGroup$ is not compact. Given a pair $\pairInf$ with corresponding abstract triple $\absTriple$, we can use Theorem \ref{theoremKOrbits} to determine the order of $\cartanWeylGroup{\realTorus}$. We have
\begin{eqnarray*}
\cartanWeylGroup{\realTorus} & \cong & ((A \ltimes \cptImaginaryWeylGroup(\complexLieAlgebra,\complexLieAlgebra[h])) \times \realWeylGroup(\complexLieAlgebra,\complexLieAlgebra[h])) \rtimes \complexWeylGroup(\complexLieAlgebra,\complexLieAlgebra[h])
\end{eqnarray*}
where $A \cong \zTwo^{m}$ is the only unknown group. Let $k = \frac{\numComplexBits}{2}$. Combining Corollary \ref{corKOrbits} with Proposition \ref{propFiberOrder} gives
\begin{eqnarray*}
\fiberOrder = 2^{1-\realBitsBit} \binom{\numImaginaryBits}{\numNcptBits(\grading)} & = & \left| \frac{\invWeylGroup}{\cartanWeylGroup{\realTorus}} \right| \\
& = & \left| \frac{(\imaginaryWeylGroup \times \realWeylGroup) \rtimes \complexWeylGroup}{((A \ltimes \cptImaginaryWeylGroup) \times \realWeylGroup) \rtimes \complexWeylGroup} \right| \\
& = & \frac{\left| \imaginaryWeylGroup \right|}{\left|A \ltimes \cptImaginaryWeylGroup\right|} \\
& = & \frac{2^{k} 2^{\NcptBitsBit(\grading)}}{\left|A\right|} \binom{\numImaginaryBits}{\numNcptBits(\grading)}
\end{eqnarray*}
so that
\begin{eqnarray*}
\left| A \right| & = & \frac{2^{k} 2^{\NcptBitsBit(\grading)}}{2^{1-\realBitsBit}} \\
& = & 2^{k + \NcptBitsBit(\grading) + \realBitsBit - 1}.
\end{eqnarray*}


\subsection{Fibers for $\realGroupCover$}
\label{ssNLFibers}
Let $\absDiff \in \absLieAlgebraDual$ be a half-integral infinitesimal character. Recall the set $\hcBasisGenSpin{p}{q}$ is parameterized by the $\maxRealCompactCover$-conjugacy classes of genuine triples $\genTriple$, where $\realTorusCover$ is a $\cinv$-stable Cartan subgroup of $\realGroupCover = \realSpinGroupCover{p}{q}$, $\diff \in \complexLieAlgebra[h]^{*}$ such that $\diff$ and $\absDiff$ define the same infinitesimal character, and $\genTorusChar$ is a genuine representation of $\realTorusCover$. Theorem \ref{theoremNumGenReps} reduces the problem to understanding the $\maxRealCompactCover$-conjugacy classes of genuine pairs $\genPairInf$ whose corresponding abstract triples are supportable (Definition \ref{defSuppTriple}). A $\maxRealCompactCover$-conjugacy class of genuine pairs $\genPairInf$ will be called a \emph{$\maxRealCompactCover$-orbit} for $\absDiff$. We have seen the $\maxRealCompactCover$-orbits for $\absDiff$ correspond naturally to the $\maxRealCompact$-orbits for $\absDiff$, and these were parameterized in Theorem \ref{theoremKOrbits}.

\begin{definition}
\label{defGenFiber}
Let $\genPairInf$ be a genuine pair with corresponding abstract triple $\absTriple$. The \emph{genuine fiber over} $\inv$ (denoted $\genFiber$) is the set of $\maxRealCompact$-orbits in $\fiber$ whose corresponding abstract triples are supportable. In particular $\numCorGenTriplesInf \ne 0$ if and only if $\korbit \in \genFiber$.
\end{definition}

Although the order of $\fiber$ can be arbitrarily large (Corollary \ref{corKOrbits}), Theorem \ref{theoremGenFiber} belows shows the order of $\genFiber$ is always small. Before stating the theorem we need the following definitions.

\begin{definition}
\label{defIntBits}
Let $\inv \in \weylGroup$ be an involution and suppose $\absDiff \in \absLieAlgebraDual$ is a fixed half-integral infinitesimal character. Define
\begin{eqnarray*}
\numIntBits & = & \left|\left\{i \mid \bit \in \imaginaryBits[\inv] \text{ and } (\ei,\absDiff) \in \mathbb{Z}\right\}\right| \\
\numHalfIntBits & = & \left|\left\{i \mid \bit \in \imaginaryBits[\inv] \text{ and } (\ei,\absDiff) \in \mathbb{Z}+\frac{1}{2}\right\}\right|
\end{eqnarray*}
(Definition \ref{defInvParams}) and observe $\numImaginaryBits = \numIntBits + \numHalfIntBits$ since $\absDiff$ is half-integral.
\end{definition}

 In terms of diagrams, $\numIntBits$ is the number of $+$ signs whose coordinate is integral with respect to $\absDiff$ and $\numHalfIntBits$ is the number of $+$ signs whose coordinate is half-integral. Note the abstract triple $\absTriple$ is supportable only if $\numNcptBits(\grading) = \numIntBits$ or $\numNcptBits(\grading) = \numHalfIntBits$ (Proposition \ref{propSuppRep}).

\begin{definition}
\label{defInvSymmetric}
In the setting of Definition \ref{defIntBits}, we say $\inv$ is symmetric about $\absDiff$ if $\numImaginaryBits > 0$ and
\[
\numIntBits = \numHalfIntBits.
\]
This is clearly possible only when $\numImaginaryBits$ is even. For convenience we also define
\[
\symBit = \left\{\begin{array}{rl}
1 & \inv \text{ is symmetric about } \absDiff \\
0 & \text{ otherwise}
\end{array}\right..
\]
\end{definition}

As we'll see in the following theorem, the size of the genuine fiber $\genFiber$ increases when the symmetry condition of Definition \ref{defInvSymmetric} is satisfied.
 
\begin{theorem} \label{theoremGenFiber}
Let $\absDiff \in \absLieAlgebraDual$ be a half-integral infinitesimal character and suppose $\genPairInf$ is a genuine pair for $\absDiff$. Let $\inv \in \weylGroup$ be the corresponding abstract involution and assume there exists a supportable abstract triple beginning with $\inv$. Then
\[
\genFiberOrder \in \left\{1,2,4\right\}.
\]
\end{theorem}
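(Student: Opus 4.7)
The plan is to write $\genFiberOrder$ as (number of gradings $\grading$ making $\absTriple$ supportable and compatible with the fixed real form $\realGroup$) times $|\absTripleFiber|$, and then bound each factor. Corollary \ref{corKOrbits} already gives $|\absTripleFiber| = 2^{1-\realBitsBit} \in \{1,2\}$ for each triple in $\genFiber$, so the task reduces to showing that the number of admissible gradings is at most $2$.

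To count admissible gradings, I would decompose $\imaginaryRoots \cong B_{m} \times A_{1}^{k}$ with $m = \numImaginaryBits$ and $k = \numComplexBits/2$. The long imaginary roots in the $A_{1}^{k}$ factor (of the form $\simpleRoot + \inv(\simpleRoot)$ coming from complex pairs) are forced to be noncompact, and the long imaginary roots in the $B_{m}$ factor satisfy $\grading(\ei \pm \ei[j]) = \grading(\ei) + \grading(\ei[j])$, so $\grading$ is determined by its values on the short imaginary roots. Applying the long-imaginary-root condition of Proposition \ref{propSuppRep} in the $B_{m}$ factor translates supportability into the combinatorial requirement that $\grading(\ei) = \grading(\ei[j])$ exactly when $(\absDiff,\ei)$ and $(\absDiff,\ei[j])$ have the same integrality type. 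Consequently the $\numIntBits$ short imaginary roots with integer $\absDiff$-coordinate must share a common value $\epsilon_{1}$, the $\numHalfIntBits$ with strict half-integer $\absDiff$-coordinate must share a common value $\epsilon_{2}$, and $\epsilon_{1} \ne \epsilon_{2}$ is forced whenever both types are present. This yields exactly two candidate gradings when $\numImaginaryBits > 0$ and a unique (trivial) grading when $\numImaginaryBits = 0$.

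Finally I would determine which of these candidate gradings are realized by the fixed real form $\realGroup = \realSpinGroup{p}{q}$. The two candidates have $\numNcptBits(\grading)$-values $\numIntBits$ and $\numHalfIntBits$ (with obvious modifications when one of these vanishes), and the expression for $(p,q)$ in Proposition \ref{propRealForm} is symmetric in $\{\numNcptBits(\grading),\numCptBits(\grading)\}$; a direct comparison shows that the two gradings yield the same pair $(p,q)$ precisely when $\numIntBits = \numHalfIntBits$, i.e.\ exactly in the symmetric case $\symBit = 1$ of Definition \ref{defInvSymmetric}. The remaining constraints of Proposition \ref{propSuppRep} on long complex roots and on $A_{1}^{k}$ long imaginary roots depend only on $(\inv,\absDiff)$, not on $\grading$, and hold automatically by the standing hypothesis that a supportable triple exists. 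Combining these counts produces
\[
\genFiberOrder \;=\; 2^{\symBit}\cdot 2^{1-\realBitsBit} \;\in\; \{1,2,4\}
\]
when $\numImaginaryBits > 0$ and $\genFiberOrder = 2^{1-\realBitsBit} \in \{1,2\}$ otherwise. The most delicate step — and the one I would verify most carefully — is this real-form bookkeeping, which requires exploiting the $\min/\max$ structure of Proposition \ref{propRealForm} to distinguish the symmetric from the non-symmetric case; without this distinction one would overcount and incorrectly arrive at two supportable gradings in every mixed-integrality case, obscuring the presence of $1$ in the target range.
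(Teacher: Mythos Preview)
Your argument is correct and in fact recovers exactly the formula of Corollary~\ref{corGenFiber}, but the route differs from the paper's. The paper proves Theorem~\ref{theoremGenFiber} by a case-by-case analysis over representative conjugacy classes of involutions (Cases I, I$'$, II--V), in each case reading off the number of supportable gradings directly and then invoking the corresponding case of Theorem~\ref{theoremKOrbits} for the fiber size; the uniform formula $\genFiberOrder = 2^{1-\realBitsBit}2^{\symBit}$ is only extracted afterward in Corollary~\ref{corGenFiber}. You instead factor $\genFiberOrder$ uniformly as (supportable gradings realized by the fixed real form) $\times$ $\absTripleFiberOrder$, use Corollary~\ref{corKOrbits} for the second factor, and count the first factor via Proposition~\ref{propSuppRep} together with the real-form formula of Proposition~\ref{propRealForm}. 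This is cleaner and derives the closed formula directly rather than by inspection; the paper's case analysis buys explicit confirmation aligned with the earlier case structure of Theorem~\ref{theoremKOrbits}.

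One small wording issue: the formula in Proposition~\ref{propRealForm} is not literally symmetric in $\numNcptBits(\grading)$ and $\numCptBits(\grading)$ because the $+1$ tiebreaker depends on which is larger, so swapping the two candidates with $\numNcptBits \in \{\numIntBits,\numHalfIntBits\}$ shifts the $+1$ between $p$ and $q$ whenever $\numIntBits \ne \numHalfIntBits$. Your subsequent ``direct comparison'' sentence lands on the right conclusion, but you should make the argument here explicit rather than appealing to a symmetry that does not quite hold. Also note that your use of Corollary~\ref{corKOrbits} carries the standing non-compactness assumption on $\realGroup$, which the paper invokes as well.
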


\begin{proof}
We first determine the number of gradings $\grading$ that make the abstract triple $\absTriple$ supportable (by assumption there exists at least one). We then use Theorem \ref{theoremKOrbits} to determine the order of the fiber over each abstract triple. As usual we assume $\realGroupCover$ is \emph{not} compact and proceed by cases for the diagram of $\inv$.

\begin{steplist}
\item [Case I]
Let $\inv$ be given by the diagram
\[
\diagram: \underset{1}{+}~+~\cdots~\underset{n}{+}
\]
so that $\numImaginaryBits = n$. There are two possible gradings leading to supportable abstract triples if $\inv$ is symmetric about $\absDiff$ and only one otherwise. In either case, Theorem \ref{theoremKOrbits} implies the fiber over each abstract triple is two. Therefore we have
\[
\genFiberOrder = \left\{\begin{array}{rl}
4 & \inv \text{ is symmetric about } \absDiff \\
2 & \inv \text{ is not symmetric about } \absDiff
\end{array}
\right..
\]
Note $\inv$ can be symmetric about $\absDiff$ only if $n$ is even.

\item [Case I$'$]
Let $\inv$ be given by the diagram
\[
\diagram:~\underset{1}{+}~\cdots \underset{\numImaginaryBits}{+}~-\cdots~\underset{n}{-}
\]
with $\numRealBits = n - \numImaginaryBits \ne 0$. Combining Case I above with Case I$'$ of Theorem \ref{theoremKOrbits} gives
\[
\genFiberOrder = \left\{\begin{array}{rl}
2 & \inv \text{ is symmetric about } \absDiff \\
1 & \inv \text{ is not symmetric about } \absDiff
\end{array}
\right..
\]

\item [Case II]
Let $\inv$ be given by the diagram
\[
\diagram:~\underset{1}{(2)}~(1)~\cdots~(k)~\underset{k}{(k-1)}~\underset{k+1}{+}~\cdots \underset{n}{+}
\]
where $k=\numComplexBits$, $\numImaginaryBits = n - k \ne 0$, and $\numRealBits = 0$. Combining Case I above with Case II of Theorem \ref{theoremKOrbits} gives
\[
\genFiberOrder = \left\{\begin{array}{rl}
4 & \inv \text{ is symmetric about } \absDiff \\
2 & \inv \text{ is not symmetric about } \absDiff
\end{array}
\right..
\]

\item [Case III]
Let $\inv$ be given by the diagram
\[
\diagram:~\underset{1}{(2)}~(1)~\cdots~(k)~\underset{k}{(k-1)}~\underset{k+1}{-}~\cdots \underset{n}{-}
\]
where $k=\numComplexBits$ and $\numRealBits = n-k \ne 0$. There is only one possible abstract triple (supportable by hypothesis) whose fiber is a singleton by Theorem \ref{theoremKOrbits}. Therefore
\[
\genFiberOrder = 1.
\]

\item [Case IV]
Let $\inv$ be given by the diagram
\[
\diagram:~\underset{1}{(2)}~(1)~\cdots~(k)~\underset{k}{(k-1)}~\underset{k+1}{+}~\cdots \underset{k+\numImaginaryBits}{+}~-~\cdots~\underset{n}{-}
\]
where $k=\numComplexBits$ and $\numRealBits = n-k-\numImaginaryBits \ne 0$. Combining Case I above with Case IV of Theorem \ref{theoremKOrbits} gives
\[
\genFiberOrder = \left\{\begin{array}{rl}
2 & \inv \text{ is symmetric about } \absDiff \\
1 & \inv \text{ is not symmetric about } \absDiff
\end{array}
\right..
\]

\item [Case V]
Let $\inv$ be given by the diagram
\[
\diagram:~\underset{1}{(2)}~(1)~\cdots~(n)~\underset{n}{(n-1)}.
\]
Note this is possible only when $n$ is even and $\realGroup$ is split. As in Case III, there is a unique abstract triple beginning with $\inv$. Theorem \ref{theoremKOrbits} implies its fiber has order two. Therefore we have
\[
\genFiberOrder = 2.
\]
\end{steplist}
\end{proof}

If we assume $\realGroupCover$ is \emph{not} compact, the results of Theorem \ref{theoremGenFiber} can be summarized as follows.

\begin{corollary} \label{corGenFiber}
Let $\genPairInf$ be a genuine pair for $\realGroupCover$ with corresponding abstract involution $\inv \in \weylGroup$. If $\genFiberOrder \ne 0$ we have
\[
\genFiberOrder = 2^{1-\realBitsBit}2^{\symBit}.
\]
\end{corollary}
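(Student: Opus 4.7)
The plan is to deduce this corollary directly from the case analysis already carried out in Theorem \ref{theoremGenFiber}, by verifying that the closed-form expression $2^{1-\realBitsBit}2^{\symBit}$ reproduces the values $\genFiberOrder \in \{1,2,4\}$ in each of the six cases (I, I$'$, II, III, IV, V) appearing in that proof. So my proof would simply be a bookkeeping lemma: once Theorem \ref{theoremGenFiber} is in hand, all that remains is to match bits against outcomes.

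First I would unpack what the formula predicts. Observe that $2^{1-\realBitsBit}$ takes the value $2$ when $\numRealBits=0$ (no $-$ signs in $\diagram$) and the value $1$ when $\numRealBits>0$, while $2^{\symBit}$ doubles the answer precisely when $\inv$ is symmetric about $\absDiff$. Thus the formula yields $4$ in the no-real/symmetric case, $2$ in either the no-real/non-symmetric case or the real/symmetric case, and $1$ in the real/non-symmetric case. In particular, symmetry is impossible when $\numImaginaryBits=0$ (Definition \ref{defInvSymmetric}), so $\symBit=0$ automatically in any case where no $+$ signs appear in $\diagram$.

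Next I would walk through the six diagrams of Theorem \ref{theoremGenFiber} and confirm the match. In Cases I and II the diagrams have no $-$ signs ($\realBitsBit=0$) and $\numImaginaryBits>0$, and the theorem gives $4$ or $2$ according to whether $\inv$ is symmetric, matching $2 \cdot 2^{\symBit}$. In Case V the diagram again has no $-$ signs ($\realBitsBit=0$) but $\numImaginaryBits=0$, forcing $\symBit=0$, and the theorem gives $2$, matching $2 \cdot 1$. In Cases I$'$ and IV the diagrams have both $+$ and $-$ signs ($\realBitsBit=1$, $\numImaginaryBits>0$), and the theorem gives $2$ or $1$ according to symmetry, matching $1 \cdot 2^{\symBit}$. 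Finally in Case III the diagram contains $-$ signs and no $+$ signs ($\realBitsBit=1$, $\numImaginaryBits=0$, hence $\symBit=0$), and the theorem gives $1$, matching $1 \cdot 1$.

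There is no substantive obstacle here: the corollary is purely a repackaging of the case analysis, and the only thing one must check is that the abstract triple is supportable in the first place (i.e.\ $\genFiberOrder \neq 0$), which is exactly the hypothesis assumed in the theorem. The most delicate book-keeping is in Cases III and V, where $\numImaginaryBits=0$ forces $\symBit=0$ by convention, so one should remember that in these degenerate cases the $2^{\symBit}$ factor contributes nothing, consistent with the fiber orders $1$ and $2$ respectively.
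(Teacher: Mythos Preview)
Your proposal is correct and follows exactly the approach of the paper, which simply instructs the reader to ``Check this for each of the cases above.'' You have carried out that check explicitly and accurately, including the observation that $\symBit=0$ is forced when $\numImaginaryBits=0$ (Cases III and V).
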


\begin{proof}
Check this for each of the cases above.
\end{proof}

Fix a half-integral infinitesimal character $\absDiff \in \absLieAlgebraDual$ and recall we have constructed a (well-defined) abstract triple $\absTriple$ for each element in $\hcBasisGenSpin{p}{q}$. Let $\hcBasisGenInvSpin{p}{q}$ denote the subset of $\hcBasisGenSpin{p}{q}$ whose elements have abstract triples beginning with $\inv$. We conclude this section with a description of the size of $\hcBasisGenInvSpin{p}{q}$, which also turns out to be small.

\begin{theorem} \label{theoremRepFiber}
Let $\absDiff \in \absLieAlgebraDual$ be a half-integral infinitesimal character and let $\inv \in \weylGroup$ be an involution. If $\genFiberOrder \ne 0$, then 
\begin{eqnarray*}
\hcBasisGenInvOrderSpin{p}{q} & = & 2^{1-\imaginaryBitsBit}2^{\realBitsBit(1-\realBitsParityBit)}2^{1-\realBitsBit}2^{\symBit} \\
& = & 2^{2 +\symBit - \imaginaryBitsBit - \realBitsBit\realBitsParityBit}.
\end{eqnarray*}
In particular
\[
\hcBasisGenInvOrderSpin{p}{q} \in \left\{1,2,4\right\}.
\]
\end{theorem}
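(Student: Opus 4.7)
The strategy is to factor $\hcBasisGenInvOrderSpin{p}{q}$ through the intermediate level of $\maxRealCompactCover$-conjugacy classes of genuine pairs. Every element of $\hcBasisGenInvSpin{p}{q}$ consists of (i) an underlying genuine pair $\genPairInf$, whose conjugacy class sits in $\genFiber$ by the supportability requirement of Definition \ref{defGenFiber}, together with (ii) a genuine character $\genTorusChar$ extending this pair to a triple. The plan is to argue these two choices are independent, giving the factorization
\[
\hcBasisGenInvOrderSpin{p}{q} = \genFiberOrder \cdot [\text{number of genuine extensions per pair in } \genFiber].
\]

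To justify independence, I would appeal to Proposition \ref{propNumGenReps}, which computes the number of genuine extensions in terms of the centrality of the elements $\mac[\simpleRootei]$ in $\realTorusSplitCtCover[\rSeq]$. The structure of $\realTorusSplitCtCover[\rSeq]$ depends only on the conjugacy class of the underlying Cartan subgroup $\realTorus$, and by Proposition \ref{propInvCartanMap} this class is determined by $\inv$ alone. Since all pairs in $\genFiber$ share the same abstract involution and are supportable by definition, the extension count is constant across $\genFiber$, and Corollary \ref{corNumGenRepsFormula} evaluates it as $2^{1-\imaginaryBitsBit}\,2^{\realBitsBit(1-\realBitsParityBit)}$. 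Combining with the fiber count $\genFiberOrder = 2^{1-\realBitsBit}\,2^{\symBit}$ from Corollary \ref{corGenFiber} yields the first product expression in the theorem.

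To pass from this product form to the closed form $2^{2+\symBit-\imaginaryBitsBit-\realBitsBit\realBitsParityBit}$ I would verify the elementary identity $(1-\realBitsBit) + \realBitsBit(1-\realBitsParityBit) = 1 - \realBitsBit\realBitsParityBit$ by cases on $\realBitsBit \in \{0,1\}$. For the membership $\hcBasisGenInvOrderSpin{p}{q} \in \{1,2,4\}$, the one non-obvious numerical ingredient is that $\symBit = 1$ forces $\imaginaryBitsBit = 1$: this is immediate from Definition \ref{defInvSymmetric}, where the symmetry condition explicitly requires $\numImaginaryBits > 0$. Consequently $\symBit - \imaginaryBitsBit \in \{-1, 0\}$, and combined with $\realBitsBit\realBitsParityBit \in \{0,1\}$ the exponent $2+\symBit-\imaginaryBitsBit-\realBitsBit\realBitsParityBit$ lies in $\{0,1,2\}$, giving the claimed bound.

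Given that Theorems \ref{theoremNumGenReps} and \ref{theoremGenFiber} have already been established (by extensive case analysis on the diagram of $\inv$), the present statement is a clean multiplicative corollary, and the main conceptual obstacle has already been overcome in proving those two results. What remains here is essentially bookkeeping together with the easy numerical bound above; the only place a subtle error could creep in is the independence assertion, but the reduction to structure theory of $\realTorusSplitCtCover[\rSeq]$ via Propositions \ref{propNumGenReps} and \ref{propInvCartanMap} dispatches it cleanly.
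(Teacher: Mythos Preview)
Your proposal is correct and follows essentially the same approach as the paper: the paper's proof is the one-liner ``Combine Corollary \ref{corNumGenRepsFormula} with Corollary \ref{corGenFiber}'' together with the observation that $\symBit = 1$ forces $\imaginaryBitsBit = 1$. You have simply made explicit the independence argument underlying that combination and spelled out the exponent arithmetic, which the paper leaves implicit.
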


\begin{proof}
Combine Corollary \ref{corNumGenRepsFormula} with Corollary \ref{corGenFiber}. For the last statement recall $\symBit = 1$ only if $\imaginaryBitsBit = 1$ (Definition \ref{defInvSymmetric}).
\end{proof}

\section{Central Characters}
\label{secCentralMa}

Throughout this section let $\realGroup = \realSpinGroup{p}{q}$ with $p > q$, $p+q = 2n+1$, and recall $\realGroupCover$ denotes the (connected) nonalgebraic double cover of $\realGroup$. Theorem \ref{theoremRepFiber} implies the elements of $\hcCatGen$ can be naturally partitioned into sets of size at most four. In this section we use the notion of central characters to reduce this by a factor of two.

\subsection{Abstract Bigradings}
\label{ssAbsBg}
Let $\absDiff \in \absLieAlgebraDual$ be a half-integral infinitesimal character and suppose $\genTripleInf$ is a genuine triple for $\absDiff$. Recall $\genTorusChar$ is an irreducible genuine representation of $\realTorusCover$ whose differential is compatible with $\diff$ (Definition \ref{defTriple}). Since the Cartan subgroup $\realTorusCover$ may not be abelian, the representation $\genTorusChar$ is not necessarily one-dimensional (Proposition \ref{propGenRepBij}). In this section we characterize the action of $\genTorusChar$ on certain finite order elements in $\realTorusCover$.

If $\simpleRoota$ and $\simpleRootb$ are short roots in $\realRoots[\cinv](\complexLieAlgebra,\complexLieAlgebra[h])$, Proposition \ref{propMShort} implies $\mac[\simpleRoota] = \pm \mac[\simpleRootb]$. The following proposition shows we always have equality.

\begin{proposition}
\label{propSameMacReal}
Let $\simpleRoota$ and $\simpleRootb$ be short orthogonal roots in $\realRoots[\cinv](\complexLieAlgebra,\complexLieAlgebra[h])$. Then $\mac[\simpleRootb] =  \mac[\simpleRoota]$.
\end{proposition}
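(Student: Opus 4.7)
The plan is to reduce the assertion to a commutator computation inside a type-$B_2$ subsystem of real roots.

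To begin, Proposition \ref{propMShort} gives $\ma[\simpleRoota] = \ma[\simpleRootb] = z$, the nontrivial element of $Z(\realGroup)$. Hence $\mac[\simpleRoota]$ and $\mac[\simpleRootb]$ both lie in $\projOpInv(z)$, so $\mac[\simpleRootb] = \epsilon\,\mac[\simpleRoota]$ for some $\epsilon \in \{\pm 1\}$, and the task is to show $\epsilon = 1$. Next I would introduce the two long real roots $\simpleRootc := \simpleRoota - \simpleRootb$ and $\simpleRootd := \simpleRoota + \simpleRootb$; these lie in $\realRoots[\cinv](\complexLieAlgebra,\complexLieAlgebra[h])$ because $\cinv$ acts by $-1$ on each of $\simpleRoota,\simpleRootb$. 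Since long $B_n$-roots are self-dual while short coroots are twice their roots, the coroot identities $\rootCheck[\simpleRoota] = \rootCheck[\simpleRootc] + \rootCheck[\simpleRootd]$ and $\rootCheck[\simpleRootb] = \rootCheck[\simpleRootd] - \rootCheck[\simpleRootc]$ hold, and Proposition \ref{propMStruct} yields $\ma[\simpleRoota] = \ma[\simpleRootc]\ma[\simpleRootd]$ and $\ma[\simpleRootb] = \ma[\simpleRootd]\ma[\simpleRootc]$ in $\realTorus$.

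The heart of the argument is to lift these two identities to $\realGroupCover$. Because $\simpleRootc \pm \simpleRootd = \pm 2\simpleRoota, \pm 2\simpleRootb$ are not roots, $[\rootVector[\simpleRootc], \rootVector[\simpleRootd]] = [\rootVector[\simpleRootc], \cinv\rootVector[\simpleRootd]] = 0$, so $\zLa[\simpleRootc]$ and $\zLa[\simpleRootd]$ commute in $\realLieAlgebra$. Consequently $\expgc(\pi\zLa[\simpleRootc])\expgc(\pi\zLa[\simpleRootd]) = \expgc\!\bigl(\pi(\zLa[\simpleRootc]+\zLa[\simpleRootd])\bigr)$, which is a canonical lift of $\ma[\simpleRoota]$. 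Using the sign freedom granted by Lemma \ref{lemXaSign}, one chooses $\rootVector[\simpleRoota]$ so that $\mac[\simpleRoota] = \mac[\simpleRootc]\mac[\simpleRootd]$. An analogous lifting, together with the observation that the natural sign choice $\rootVector[-\simpleRootc] = \cinv\rootVector[\simpleRootc]$ gives $\zLa[-\simpleRootc] = \zLa[\simpleRootc]$ and hence $\mac[-\simpleRootc] = \mac[\simpleRootc]$, yields $\mac[\simpleRootb] = \mac[\simpleRootd]\mac[\simpleRootc]$.

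Combining the two lifted identities,
\[
\mac[\simpleRoota]\mac[\simpleRootb]^{-1} = \mac[\simpleRootc]\mac[\simpleRootd]\mac[\simpleRootc]^{-1}\mac[\simpleRootd]^{-1} = [\mac[\simpleRootc],\mac[\simpleRootd]].
\]
Since $\simpleRootc$ and $\simpleRootd$ are long and orthogonal, $(\simpleRootc,\rootCheck[\simpleRootd]) = (\simpleRoota-\simpleRootb,\simpleRoota+\simpleRootb) = |\simpleRoota|^{2} - |\simpleRootb|^{2} = 0$, so the commutator relation of Proposition \ref{propMCStruct} (which is a local consequence of the structure of the double cover and extends beyond the split Cartan) gives $[\mac[\simpleRootc],\mac[\simpleRootd]] = 1$. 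Hence $\mac[\simpleRoota] = \mac[\simpleRootb]$.

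The main obstacle is the sign-tracking in the middle step, where two linear identities are lifted to $\realGroupCover$: the elements $\mac$ depend on signs of root-vector choices, and Proposition \ref{propMStruct} was originally formulated for the linear group. The pairwise commutativity of $\zLa[\simpleRootc]$ and $\zLa[\simpleRootd]$, forced by strong orthogonality of $\simpleRootc$ and $\simpleRootd$, is what eliminates Baker--Campbell--Hausdorff corrections and allows the identities to lift cleanly.
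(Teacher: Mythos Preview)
There is a genuine gap in the middle step, and it is precisely the sign-tracking you flag as the main obstacle. For a \emph{short} real root $\simpleRoota$, Remark~\ref{remMaSign} says $\mac[\simpleRoota]$ is determined by $\simpleRoota$ with no dependence on the sign of $\rootVector[\simpleRoota]$. So when you write ``one chooses $\rootVector[\simpleRoota]$ so that $\mac[\simpleRoota] = \mac[\simpleRootc]\mac[\simpleRootd]$,'' there is no freedom to exploit: $\mac[\simpleRoota]$ is fixed, and all you know is $\mac[\simpleRootc]\mac[\simpleRootd] = \epsilon_{1}\mac[\simpleRoota]$ for some $\epsilon_{1}\in\{\pm1\}$. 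You could instead absorb $\epsilon_{1}$ by flipping the sign of $\rootVector[\simpleRootc]$ (since $\simpleRootc$ is long), but then those root vectors are frozen and the ``analogous lifting'' for $\simpleRootb$ only yields $\mac[\simpleRootd]\mac[\simpleRootc] = \epsilon_{2}\mac[\simpleRootb]$ with an a priori independent $\epsilon_{2}$. Since $\mac[\simpleRootc]$ and $\mac[\simpleRootd]$ commute, you obtain $\mac[\simpleRoota] = \epsilon_{1}\epsilon_{2}\mac[\simpleRootb]$, and the commutator computation at the end adds nothing---it just restates that $\mac[\simpleRootc]\mac[\simpleRootd] = \mac[\simpleRootd]\mac[\simpleRootc]$. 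The entire content of the proposition is the claim $\epsilon_{1} = \epsilon_{2}$, which your argument never establishes.

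The paper's proof avoids all of this by a direct route you have not used: since $\simpleRoota$ is short, $\ma[\simpleRoota]\in Z(\realGroup)$ by Proposition~\ref{propMShort}, and because $\realGroupCover\to\realGroup$ is a \emph{central} extension the preimage $\mac[\simpleRoota]$ lies in $Z(\realGroupCover)$. Taking $\simpleRootc=\simpleRoota-\simpleRootb$ so that $\rootReflection[\simpleRootc](\simpleRoota)=\simpleRootb$, one has $\Ad{\oac[\simpleRootc]}\zLa[\simpleRoota]=\zLa[\simpleRootb]$ (for a compatible choice of $\rootVector[\simpleRootb]$, which is immaterial since $\simpleRootb$ is short), hence
\[
\mac[\simpleRootb]=\expgc(\pi\,\Ad{\oac[\simpleRootc]}\zLa[\simpleRoota])=\oac[\simpleRootc]\,\mac[\simpleRoota]\,\oac[\simpleRootc]^{-1}=\mac[\simpleRoota].
\]
Centrality is what pins down the sign, and is the missing idea in your argument.
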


\begin{proof}
Let $\simpleRootc = \simpleRoota - \simpleRootb$ so that
\begin{eqnarray*}
\rootReflection[\simpleRootc](\simpleRoota) & = & \simpleRoota - (\simpleRoota,\simpleRootc)\simpleRootc \\
& = & \simpleRoota - \simpleRootc \\
& = & \simpleRootb.
\end{eqnarray*}
Choose a root vector $\rootVector[\simpleRoota] \in \rootSpace[\simpleRoota]$ according to Lemma \ref{lemXaSign}. Then
\begin{eqnarray*}
\mac[\simpleRootb] & = & \expgc(\pi\Ad{\oac[\simpleRootc]}\zLa) \\
& = & \oac[\simpleRootc]\expgc(\pi\zLa)\oac[\simpleRootc]^{-1} \\
& = & \oac[\simpleRootc]\mac[\simpleRoota]\oac[\simpleRootc]^{-1} \\
& = & \mac[\simpleRoota]
\end{eqnarray*}
by Proposition \ref{propMShort}.
\end{proof}

A similar result is true for imaginary roots whenever they are of the same type.

\begin{proposition}
\label{propSameMacNci}
Let $\grading$ be the imaginary grading for $\rootSystem(\complexLieAlgebra, \complexLieAlgebra[h])$ and suppose $\simpleRoota$ and $\simpleRootb$ are short orthogonal imaginary roots. Then $\mac[\simpleRoota] = \mac[\simpleRootb]$ if and only if $\grading(\simpleRoota) = \grading(\simpleRootb)$.
\end{proposition}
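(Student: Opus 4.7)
The plan is to exploit the auxiliary long imaginary root $\simpleRootc = \simpleRoota - \simpleRootb$. Since $\simpleRoota$ and $\simpleRootb$ are orthogonal short imaginary roots in type $B_n$, their difference is a long root, and since both are fixed by $\inv$ so is $\simpleRootc$. The grading axiom (Definition \ref{defGrading}) then yields $\grading(\simpleRootc) = \grading(\simpleRoota) + \grading(\simpleRootb) \pmod{2}$, so $\simpleRootc$ is compact exactly when the two gradings agree. Moreover $\rootReflection[\simpleRootc]$ exchanges $\simpleRoota$ and $\simpleRootb$, and therefore swaps $\coroot[\simpleRoota]$ and $\coroot[\simpleRootb]$.

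For the ``if'' direction, I would mimic the proof of Proposition \ref{propSameMacReal}, substituting the lift $\oac[\simpleRootc] \in \maxRealCompactCover$ supplied by Definition \ref{defZc}, which is available precisely because $\simpleRootc$ is compact in this case. Since $\Ad{\oac[\simpleRootc]}$ acts on $\complexLieAlgebra[h]$ as $\rootReflection[\simpleRootc]$, conjugation transports $\mac[\simpleRoota]$ to $\mac[\simpleRootb]$. On the other hand, the short imaginary analog of Proposition \ref{propMShort}---that for any short imaginary root $\simpleRoota$ the element $\ma[\simpleRoota] = \expg(\pi i \coroot[\simpleRoota])$ is the nontrivial element of $Z(\realGroup) \cong \mathbb{Z}_2$ in type $B_n$, a direct coroot computation---implies $\mac[\simpleRoota]$ is central in $\realGroupCover$. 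Hence conjugation by $\oac[\simpleRootc]$ also fixes $\mac[\simpleRoota]$, and equating the two computations gives $\mac[\simpleRoota] = \mac[\simpleRootb]$.

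For the ``only if'' direction I would argue the contrapositive. The key observation is that Definitions \ref{defZb} and \ref{defZc} use opposite sign conventions, yielding $\expgc(\pi i \coroot)$ versus $\expgc(-\pi i \coroot)$ for $\mac$ in the compact and noncompact cases respectively. These two normalizations differ by $\expgc(2\pi i \coroot)$, which for a short coroot in the nonalgebraic cover $\realGroupCover$ is the nontrivial kernel element $-1$ of $\projOp$---this is the defining nonsplitness of the cover over the relevant rank-one subgroup. Running the conjugation argument of the ``if'' direction in this mixed-grading setting (with the now noncompact $\simpleRootc$ requiring $\oac[\simpleRootc]$ to be constructed via a Cayley transform at $\simpleRootc$, reducing to the real-root situation where Proposition \ref{propSameMacReal} applies) therefore produces $\mac[\simpleRoota] = -\mac[\simpleRootb]$, so the two elements are unequal.

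The hard part will be the careful bookkeeping of sign conventions across Definitions \ref{defZb} and \ref{defZc}, in particular verifying that $\expgc(2\pi i \coroot[\simpleRoota])$ really is $-1$ (and not $+1$) in $\realGroupCover$ for a short coroot. This is precisely where the nonalgebraic nature of the cover enters, and where the grading-dependence of the answer is ultimately encoded.
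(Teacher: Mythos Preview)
Your ``if'' direction is exactly what the paper does: with $\simpleRootc=\simpleRoota-\simpleRootb$ compact, conjugate by the lift $\oac[\simpleRootc]\in\maxRealCompactCover$ from Definition~\ref{defZc}, and use that $\mac[\simpleRoota]$ is central in $\realGroupCover$ to conclude $\mac[\simpleRoota]=\mac[\simpleRootb]$. That part is fine.

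The ``only if'' direction has a genuine gap. Your claim that $\expgc(2\pi i\coroot[\simpleRoota])=-1$ for a \emph{short} root $\simpleRoota$ is false. By Proposition~\ref{propMCStruct} (for real roots) and Remark~\ref{remXbSign} (for noncompact imaginary roots), $\mac^2=1$ precisely when the root is short; writing $\mac=\expgc(-\pi i\coroot)$ gives $\expgc(2\pi i\coroot)=1$ in $\realGroupCover$. The nonsplitness of the cover over rank-one subgroups occurs for the \emph{long} roots, not the short ones. So the sign-convention mismatch between Definitions~\ref{defZb} and~\ref{defZc} on short roots cannot by itself produce the needed $-1$.

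The paper's converse is a direct coroot calculation that extracts the $-1$ from the \emph{long} auxiliary root $\simpleRootc=\simpleRoota-\simpleRootb$, which is noncompact when $\grading(\simpleRoota)\ne\grading(\simpleRootb)$. Since $\simpleRoota,\simpleRootb$ are short and orthogonal, $\coroot[\simpleRoota]=\coroot[\simpleRootb]+2\coroot[\simpleRootc]$. Exponentiating (say, taking $\simpleRootb$ noncompact so $\mac[\simpleRootb]=\expgc(-\pi i\coroot[\simpleRootb])$) yields
\[
\expgc(-\pi i\coroot[\simpleRoota])=\expgc(-\pi i\coroot[\simpleRootb])\expgc(-2\pi i\coroot[\simpleRootc])=\mac[\simpleRootb]\,\mac[\simpleRootc]^{2}=-\mac[\simpleRootb],
\]
using $\mac[\simpleRootc]^2=-1$ for the long noncompact $\simpleRootc$ (Remark~\ref{remXbSign}). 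This is where the nonlinear cover actually enters. Your proposed detour through a Cayley transform in $\simpleRootc$ to reduce to the real case is unnecessary and would still require the same long-root fact.
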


\begin{proof}
Let $\simpleRootc = \simpleRoota - \simpleRootb$ and suppose $\grading(\simpleRoota) = \grading(\simpleRootb)$. Then Proposition \ref{propGrading} implies $\simpleRootc$ is compact and we can find an element $\oac[\simpleRootc] \in \maxRealCompactCover$ for which
\[
\Ad{\oac[\simpleRootc]}\coroot[\simpleRoota] = \coroot[\simpleRootb].
\]
We now proceed as in the proof of Proposition \ref{propSameMacReal}. Conversely suppose $\grading(\simpleRoota) \ne \grading(\simpleRootb)$ so that $\simpleRootc$ is noncompact.  On the level of coroots we have $\coroot[\simpleRoota] = \coroot[\simpleRootb] + 2\coroot[\simpleRootc]$ and
\begin{eqnarray*}
\mac[\simpleRoota] & = & -\expgc(-\pi i\coroot[\simpleRoota]) \\
& = & \expgc(-\pi i(\coroot[\simpleRootb] + 2\coroot[\simpleRootc])) \\
& = & \expgc(-\pi i\coroot[\simpleRootb])\expgc(-2\pi i\coroot[\simpleRootc]) \\
& = & \mac[\simpleRootb]\mac[\simpleRootc]^{2} \\
& = & -\mac[\simpleRootb]
\end{eqnarray*}
as desired.
\end{proof}

We now construct a grading on the \emph{real} roots of $\rootSystem(\complexLieAlgebra, \complexLieAlgebra[h])$. We begin with the following lemma.

\begin{lemma}[\cite{RT3}, Chapter 5]
\label{lemGma}
Let $\simpleRoot \in \realRoots[\cinv](\complexLieAlgebra, \complexLieAlgebra[h])$ be a real root and choose a corresponding $\mac \in \realTorusCover$. If $\simpleRoot$ is long we have 
\begin{eqnarray*}
\genTorusChar(\mac^{2}) & = & -\identity
\end{eqnarray*}
and $\genTorusChar(\mac)$ has eigenvalues $\pm i$ occurring with equal multiplicity. If $\simpleRoot$ is short we have
\begin{eqnarray*}
\genTorusChar(\mac) & = & \pm\identity.
\end{eqnarray*}
\end{lemma}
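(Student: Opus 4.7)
The plan is to separate the short and long root cases, using the sign computations recorded in Remark~\ref{remMaSign} together with the commutation relations of Proposition~\ref{propMCStruct}.

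For short $\simpleRoot$, Remark~\ref{remMaSign} immediately gives $\mac^{2} = \expgc(2\pi\zLa) = 1$. I would then show that $\mac$ is central in $\realTorusCover$: Lemma~\ref{lemAbelianCompGroup} places $\realTorusCoverId$ in the center, while Proposition~\ref{propMCStruct} kills any commutator $[\mac,\mac[\simpleRootb]]$ that involves a short-root factor, so $\mac$ commutes with every generator of $\mGroupTorusCover$ and hence with all of $\realTorusCover = \realTorusCoverId\cdot\mGroupTorusCover$. Applying Schur's lemma to the irreducible $\genTorusChar$ yields $\genTorusChar(\mac) = c\,\identity$ with $c^{2}=1$, i.e.\ $c = \pm 1$, as claimed.

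For long $\simpleRoot$, Remark~\ref{remMaSign} gives $\mac^{2} = -1$, and genuineness of $\genTorusChar$ immediately produces
\[
\genTorusChar(\mac^{2}) = \genTorusChar(-1) = -\identity.
\]
Since $\mac$ has finite order, $\genTorusChar(\mac)$ is diagonalizable, and squaring to $-\identity$ forces its spectrum to lie in $\{\pm i\}$.

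The hard part is the equal-multiplicity claim. The idea is to exhibit an element $h \in \realTorusCover$ with $h\mac h^{-1} = -\mac$; then $\genTorusChar(h)$ intertwines the $+i$-eigenspace of $\genTorusChar(\mac)$ with the $-i$-eigenspace isomorphically, forcing the two multiplicities to agree. The anti-commutation half of Proposition~\ref{propMCStruct} tells me exactly when such an $h$ exists inside $\mGroupTorusCover$: two long generators $\mac[\simpleRoota],\mac[\simpleRootb]$ anti-commute precisely when $(\simpleRoota,\rootCheck[\simpleRootb])$ is odd. My plan is therefore to exhibit a long root $\simpleRootb$ (real for $\cinv$, or noncompact imaginary for $\cinv$ after the Cayley transform identification $\mac = \mac[{\ctOp(\simpleRoot)}]$ of Proposition~\ref{propSameMa}) that pairs oddly with $\simpleRoot$ and whose associated element already lies in $\realTorusCover$. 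The main obstacle is that such a partner need not be visible in $\realTorusCover$ directly, so a short case analysis on the shape of the real root system for $\cinv$ as described by Proposition~\ref{propInvWeylGroups} will be required, in particular to handle the low-rank configurations where $\simpleRoot$ appears to be the unique long real root.
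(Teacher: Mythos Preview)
The paper does not supply its own proof of this lemma; it is quoted from \cite{RT3}, so there is no argument here to compare yours against line by line.  On the substantive merits: your treatment of the short-root case is correct (indeed Proposition~\ref{propMShort} already shows $\ma$ is central in $\realGroup$, and the commutator bound from Lemma~\ref{lemAbelianCompGroup} together with the short-root clause of Proposition~\ref{propMCStruct} upgrades this to centrality of $\mac$ in $\realTorusCover$), and your derivation of $\genTorusChar(\mac^{2})=-\identity$ and of the spectrum $\{\pm i\}$ in the long case is also fine.

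The difficulty you flag in the equal-multiplicity step is real, and your proposed resolution will not close it.  When the long real root $\simpleRoot$ sits in an $A_{1}$ factor of $\realRoots[\cinv](\complexLieAlgebra,\complexLieAlgebra[h])$ (for instance $\simpleRoot=e_{2i-1}+e_{2i}$ in the diagrams of Cases~V and~VII of Theorem~\ref{theoremNumGenReps}), every long real root $\simpleRootb$ satisfies $(\simpleRoot,\rootCheck[\simpleRootb])\in\{0,2\}$, so by Proposition~\ref{propMCStruct} the element $\mac$ commutes with all of $\mGroupTorusCover[\realTorusCover]$ and hence, via $\realTorusCover=\mGroupTorusCover[\realTorusCover]\cdot\realTorusCoverId$ and Lemma~\ref{lemAbelianCompGroup}, lies in $Z(\realTorusCover)$.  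Schur's lemma then forces $\genTorusChar(\mac)=\pm i\cdot\identity$, with a single eigenvalue rather than two of equal multiplicity.  So no anticommuting partner exists in $\realTorusCover$, and the case analysis you propose cannot succeed: the obstruction is not a missing trick but that the clause, read literally, fails in these configurations.  Fortunately nothing downstream in the paper uses the equal-multiplicity assertion --- only $\genTorusChar(\mac^{2})=-\identity$ for long roots and $\genTorusChar(\mac)=\pm\identity$ for short roots enter Definition~\ref{defPC} --- so the parts of your argument that matter for the sequel are sound.
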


The data of Lemma \ref{lemGma} are conveniently packaged in the following definition.

\begin{definition}
\label{defPC}
Following \cite{RT3}, we say a long root $\simpleRoot \in \realRoots[\cinv](\complexLieAlgebra, \complexLieAlgebra[h])$ \emph{satisfies the parity condition} if it is strictly half-integral with respect to $\diff$ (i.e., $(\diff,\rootCheck) \in \mathbb{Z} + \frac{1}{2}$). We say a short root $\simpleRoot \in \realRoots[\cinv](\complexLieAlgebra, \complexLieAlgebra[h])$ satisfies the parity condition if
\begin{eqnarray*}
\genTorusChar(\mac) & = & (-1)(-1)^{(\diff, \rootCheck)}\cdot\identity
\end{eqnarray*}
(\cite{VGr}, Section 8.3). Recall the element $\mac$ is well-defined for short roots and we always have $(\diff, \rootCheck) \in \mathbb{Z}.$ Lemma \ref{lemGma} implies this definition makes sense and gives a well-defined map
\[
\realGrading : \realRoots[\cinv](\complexLieAlgebra,\complexLieAlgebra[h]) \to \mathbb{Z}_{2}
\]
where
\begin{eqnarray*}
\realGrading(\simpleRoot) & = & \left\{\begin{array}{rl}
0 & \simpleRoot \text{ does not satisfy the parity condition} \\
1 & \simpleRoot \text{ satisfies the parity condition}
\end{array}
\right..
\end{eqnarray*}
\end{definition}

The following proposition is the formal analog of Proposition \ref{propGrading} for real roots.

\begin{proposition}
\label{propRealGrading}
The map $\realGrading$ defines a grading (Definition \ref{defGrading}) on $\realRoots[\cinv](\complexLieAlgebra,\complexLieAlgebra[h])$.
\end{proposition}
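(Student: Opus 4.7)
I will verify the two axioms of Definition \ref{defGrading} directly: the sign symmetry $\realGrading(-\simpleRoot) = \realGrading(\simpleRoot)$ and the additivity $\realGrading(\simpleRoota + \simpleRootb) = \realGrading(\simpleRoota) + \realGrading(\simpleRootb)$ whenever all three lie in $\realRoots[\cinv](\complexLieAlgebra,\complexLieAlgebra[h])$. Sign symmetry is immediate: on a long root $(\diff, \rootCheck[-\simpleRoot]) = -(\diff, \rootCheck)$, and $\mathbb{Z} + \tfrac{1}{2}$ is stable under negation; on a short root Remark \ref{remMaSign} gives $\mac[-\simpleRoot] = \mac$, and $(-1)^{(\diff, \rootCheck[-\simpleRoot])} = (-1)^{(\diff, \rootCheck)}$. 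All the content of the proposition is in additivity.

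For additivity, up to interchanging $\simpleRoota$ and $\simpleRootb$ the only possibilities in the ambient type $B$ root system are LL$\to$L, LS$\to$S, and SS$\to$L, where L and S abbreviate ``long'' and ``short''. The LL$\to$L case is purely numerical: all three coroots equal the roots themselves, so $\rootCheck[\simpleRootc] = \rootCheck[\simpleRoota] + \rootCheck[\simpleRootb]$ for $\simpleRootc := \simpleRoota + \simpleRootb$, and the observation that a sum of two half-integers lies in $\mathbb{Z} + \tfrac{1}{2}$ iff exactly one summand does gives additivity at once.

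The mixed-length cases are where the argument has content. My first step is the combinatorial observation that in both LS$\to$S and SS$\to$L the two \emph{short} roots appearing among $\{\simpleRoota,\simpleRootb,\simpleRootc\}$ must be orthogonal: in LS$\to$S, writing $\simpleRoota = \pm\ei \mp \ei[j]$ forces the short roots to be $\pm\ei[j]$ and $\pm\ei$; in SS$\to$L, the two short summands are of the form $\pm\ei, \pm\ei[j]$ with $i \ne j$. Proposition \ref{propSameMacReal} then collapses the corresponding $\mac$-elements (hence their $\genTorusChar$-images) to a single scalar. Combined with the coroot identities $\rootCheck[\simpleRootc] = 2\rootCheck[\simpleRoota] + \rootCheck[\simpleRootb]$ in LS$\to$S and $2\rootCheck[\simpleRootc] = \rootCheck[\simpleRoota] + \rootCheck[\simpleRootb]$ in SS$\to$L, the length-dependent shift $(-1)(-1)^{(\diff, \rootCheck)}$ built into Definition \ref{defPC} absorbs the resulting $2(\diff, \rootCheck[\simpleRoota]) \pmod{2}$ discrepancy and gives the additivity equation mod $2$.

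The main obstacle is really the bookkeeping in these mixed cases, since the parity condition is defined \emph{differently} on long roots (numerically, via $(\diff, \rootCheck) \bmod \mathbb{Z}$) and on short roots (representation-theoretically, via $\genTorusChar(\mac)$). The orthogonality observation is what reconciles the two conventions: it reduces both sides of the additivity equation to a comparison of $\genTorusChar$-values at a single $\mac$, at which point the long-root contribution enters solely through the explicit coroot relation and the verification becomes a direct modular computation. No further input beyond Proposition \ref{propSameMacReal}, Lemma \ref{lemGma}, and Definition \ref{defPC} is required.
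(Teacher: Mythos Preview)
Your proposal is correct and is essentially a fleshed-out version of the paper's own proof, which reads in its entirety: ``This follows easily from Proposition \ref{propSameMacReal} and the fact that the parity condition for a short root $\simpleRoot$ is determined by the integer $(\diff, \rootCheck)$.'' Your case analysis on root lengths (LL$\to$L, LS$\to$S, SS$\to$L), the orthogonality observation for the short roots involved, and the appeal to Proposition \ref{propSameMacReal} to collapse the $\mac$-values are exactly what underlies the paper's one-line sketch; the coroot identities you write down are the explicit form of the numerical bookkeeping the paper leaves implicit.
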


\begin{proof}
It remains to verify the properties of Definition \ref{defGrading}. This follows easily from Proposition \ref{propSameMacReal} and the fact that the parity condition for a short root $\simpleRoot$ is determined by the integer $(\diff, \rootCheck)$.
\end{proof}

\begin{definition}
\label{defAbsBigrading}
In Section \ref{ssInvAndGradings} we defined a grading $\grading$ on the imaginary root system $\imaginaryRoots[\cinv](\complexLieAlgebra, \complexLieAlgebra[h])$ for any $\cinv$-stable Cartan subalgebra $\complexLieAlgebra[h] \subset \complexLieAlgebra$. Given a genuine triple $\genTripleInf$, Proposition \ref{propRealGrading} defines a grading $\realGrading$ on the real root system $\realRoots[\cinv](\complexLieAlgebra, \complexLieAlgebra[h])$ with the same formal properties as $\grading$. Applying the conjugation map $\absConjTwoInv{\absDiff}$ (Definition \ref{defAbsConjMap}) to $\left\{\genPair, \grading, \realGrading\right\}$ gives an \emph{abstract bigrading} $\absBg$ for the abstract root system $\rootSystem=\rootSystem(\complexLieAlgebra, \absLieAlgebra)$. This extends the notion of an abstract triple from Definition \ref{defAbsTriple}.
\end{definition}

It is easy to verify conjugating $\genTripleInf$ by an element of $\maxRealCompactCover$ does not change the abstract bigrading. In particular, abstract bigradings are defined on the level of genuine parameters $\hcBasisGenSpin{p}{q}$ for $\realGroupCover$. Note that an abstract bigrading depends (potentially) on the full representative $\genTripleInf$ and not just the representative pair $\genPairInf$. The exact nature of this dependence is determined in the next section.

\subsection{Central Character}
\label{ssCentralCharacter}

Let $\absDiff \in \absLieAlgebraDual$ be a half-integral infinitesimal character and fix a Cartan subgroup $\realTorusCover \subset \realGroupCover = \realSpinGroupCover{p}{q}$. 

\begin{definition}
\label{defCC}
The \emph{central character} of a genuine triple $\genTripleInf$ is the genuine representation of Z($\realGroupCover$) given by restricting $\genTorusChar$ to $\text{Z}(\realGroupCover) \subset \realTorusCover$.
\end{definition}

\begin{remark}
\label{remTwoCC}
The central character will turn out to be an important invariant of genuine triples. Since $\genTorusChar$ is assumed to be genuine, we automatically have $\genTorusChar(-1) = -\identity$. Therefore the central character of $\genTripleInf$ is determined by the action of $\genTorusChar$ on a single nontrivial element in $\text{Z}(\realGroupCover)$. Since $\left|\text{Z}(\realGroupCover)\right| = 4$, there are only two such possibilities.
\end{remark}

\begin{remark}
\label{remCompareCC}
Given a genuine triple $\genTripleInf$, Proposition \ref{propGenRepBij} implies 
\[
\genTorusChar|_{\text{Z}(\realTorusCover)} = m\chi
\]
where $m = \left|\realTorusCover/Z(\realTorusCover)\right|^{\frac{1}{2}}$ and $\chi$ is a genuine character of Z($\realTorusCover$). In particular, central characters are not always one-dimensional representations of Z($\realGroupCover$). This makes it technically incorrect to compare central characters for distinct genuine triples unless we happen to know the dimensions of their irreducible representations are equal. We remedy this by comparing the associated characters $\chi|_{\text{Z}(\realGroupCover)}$ from Proposition \ref{propGenRepBij}. Unless otherwise stated, this convention is in effect whenever we compare central characters for different genuine triples.
\end{remark}

\begin{remark}
The central character is clearly well defined on the level of genuine parameters $\hcBasisGenSpin{p}{q}$ for $\realGroupCover$.
\end{remark}

Fix a genuine triple $\genTripleInf$ and let $\realGrading$ and $\grading$ be the corresponding real and imaginary gradings. In most cases, the central character of $\genTripleInf$ is determined by either $\realGrading$ or $\grading$.

\begin{proposition}
\label{propCCReal}
In the above setting, suppose $\simpleRoot \in \realRoots[\cinv](\complexLieAlgebra, \complexLieAlgebra[h])$ is a short real root. Then $\mac$ is well-defined and central in $\realGroupCover$ and we have
\begin{eqnarray*}
\genTorusChar(\mac) & = & \left\{\begin{array}{ll}
(-1)^{1-\realGrading(\simpleRoot)}\cdot\identity & (\diff, \simpleRoot) \in \mathbb{Z} + \frac{1}{2} \\
(-1)^{\realGrading(\simpleRoot)}\cdot\identity & (\diff, \simpleRoot) \in \mathbb{Z}
\end{array}
\right..
\end{eqnarray*}
\end{proposition}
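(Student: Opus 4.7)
The plan is to reduce the statement to a direct unwinding of Definition \ref{defPC} together with the short-root relation $\rootCheck = 2\simpleRoot$. First I would settle well-definedness and centrality of $\mac$. Well-definedness follows from Remark \ref{remMaSign}, which observes that for short roots the situation in $\realGroupCover$ mirrors the linear case and the $\pm$ ambiguity present for long roots disappears. For centrality, Proposition \ref{propMShort} shows $\ma$ is the nontrivial element of $Z(\realGroup)$, so for any $g \in \realGroupCover$ the commutator $g\mac g^{-1}\mac^{-1}$ projects to the identity in $\realGroup$ and therefore lies in $\ker(\projOp) = \{\pm 1\}$. Since $\realGroupCover$ is connected (being a cover of the connected group $\realGroup$) and the map $g \mapsto g\mac g^{-1}\mac^{-1}$ is continuous with value $+1$ at $g = 1$, this commutator is identically $+1$, so $\mac$ is central.

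Second I would consolidate Definition \ref{defPC} into a single formula. By Lemma \ref{lemGma}, $\genTorusChar(\mac) = \pm\identity$ for any short real $\simpleRoot$, and by Definition \ref{defPC} we have $\realGrading(\simpleRoot) = 1$ precisely when $\genTorusChar(\mac) = -(-1)^{(\diff,\rootCheck)}\cdot\identity$, while $\realGrading(\simpleRoot) = 0$ forces the opposite sign. The two cases collapse into the single identity
\[
\genTorusChar(\mac) = (-1)^{\realGrading(\simpleRoot)}\,(-1)^{(\diff,\rootCheck)}\cdot\identity,
\]
valid for every short real root $\simpleRoot$.

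Third I would evaluate $(-1)^{(\diff,\rootCheck)}$ explicitly. In the $B_n$ normalization of Definition \ref{defAbsCartan}, short roots satisfy $(\simpleRoot,\simpleRoot) = 1$, so $\rootCheck = 2\simpleRoot$ and $(\diff,\rootCheck) = 2(\diff,\simpleRoot)$. Hence $(-1)^{(\diff,\rootCheck)} = 1$ when $(\diff,\simpleRoot) \in \mathbb{Z}$ and $(-1)^{(\diff,\rootCheck)} = -1$ when $(\diff,\simpleRoot) \in \mathbb{Z} + \frac{1}{2}$. In the first case the consolidated formula yields $(-1)^{\realGrading(\simpleRoot)}\cdot\identity$; in the second it yields $(-1)^{\realGrading(\simpleRoot)+1}\cdot\identity = (-1)^{1-\realGrading(\simpleRoot)}\cdot\identity$, matching the two branches in the proposition.

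The main obstacle is purely cosmetic sign bookkeeping: aligning the parity condition of Definition \ref{defPC} with the grading value $\realGrading(\simpleRoot)$, and tracking how the parity of $(\diff,\simpleRoot)$ propagates to $(\diff,\rootCheck)$ via the short-root factor of $2$. Once these conventions are matched no further structural input is required, and conjugation by $\absConj$ (which preserves the inner product) ensures the half-integrality hypothesis passes correctly between $\diff$ and $\absDiff$.
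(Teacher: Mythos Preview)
Your proposal is correct and takes essentially the same approach as the paper: the paper's entire proof is the sentence ``This follows from Definition \ref{defPC} directly,'' and your argument is precisely the explicit unwinding of that definition together with the short-root identity $\rootCheck = 2\simpleRoot$. You supply additional detail on well-definedness and centrality of $\mac$ (via Remark \ref{remMaSign}, Proposition \ref{propMShort}, and a connectedness argument) that the paper leaves implicit, but the core computation is identical.
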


\begin{proof}
This follows from Definition \ref{defPC} directly.
\end{proof}

\begin{proposition}
\label{propCCNci}
In the above setting, suppose $\simpleRoot \in \imaginaryRoots[\cinv](\complexLieAlgebra, \complexLieAlgebra[h])$ is short. Then $\mac$ is well-defined and central in $\realGroupCover$ and we have
\begin{eqnarray*}
\genTorusChar(\mac) & = & \left\{\begin{array}{rl}
1\cdot\identity & (\diff, \simpleRoot) \in \mathbb{Z} + \frac{1}{2} \\
-1\cdot\identity & (\diff, \simpleRoot) \in \mathbb{Z}
\end{array}
\right..
\end{eqnarray*}
\end{proposition}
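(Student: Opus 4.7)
The plan is to compute $\genTorusChar(\mac)$ by writing $\mac$ as $\expgc$ of a Cartan element and applying the differential formula $d\genTorusChar = \diff|_{\complexLieAlgebra[t]} + \halfSumIm - 2\halfSumImCpt$ of Definition~\ref{defTriple}. First I would establish well-definedness and centrality. Well-definedness is immediate from Remark~\ref{remXbSign} in the noncompact case and from the analogous observation after Definition~\ref{defZc} in the compact case. For centrality, Proposition~\ref{propMaRootSpace} gives $\Ad{\ma}(\rootVector[\simpleRootb]) = (-1)^{(\simpleRootb,\rootCheck)}\rootVector[\simpleRootb]$; since $\simpleRoot$ is short in type $B_n$ we have $\rootCheck = 2\simpleRoot$, and a direct check shows $(\simpleRootb,\rootCheck) \in \{0,\pm 2\}$ for every root $\simpleRootb$. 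Hence $\ma$ is central in $\complexGroup$, and its lift $\mac$ is central in $\realGroupCover$ because the commutator map $g \mapsto \mac g \mac^{-1} g^{-1}$ is continuous into the discrete kernel $\{\pm 1\}$ and equals $1$ at the identity of the connected group $\realGroupCover$.

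Next, since $\coroot \in i\realLieAlgebra[t]$ for any imaginary root, $\pm\pi i\coroot \in \realLieAlgebra[t]$ and $\mac \in \realTorusCoverId$. Any lift of $\ma = \expg(\pm\pi i\coroot)$ equals $\pm\expgc(\pi i\coroot)$ in $\realGroupCover$, but this sign is immaterial once $d\genTorusChar(\coroot) \in \mathbb{Z}$ is known. Granting that,
\[
\genTorusChar(\mac) = e^{\pm \pi i\, d\genTorusChar(\coroot)} = (-1)^{d\genTorusChar(\coroot)} = (-1)^{2(\diff,\simpleRoot) + (\halfSumIm - 2\halfSumImCpt,\rootCheck)},
\]
using $(\diff,\rootCheck) = 2(\diff,\simpleRoot)$ (which holds since $\rootCheck = 2\simpleRoot$).

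The crux is then to show $(\halfSumIm - 2\halfSumImCpt,\rootCheck)$ is an odd integer. By Proposition~\ref{propInvWeylGroups} the imaginary root system has type $B_m \times A_1^l$, where the $A_1$-factors consist of long imaginary roots of the form $\ei[p]+\ei[q]$ indexed by pairs of complex bits of $\inv$; these are orthogonal to $\rootCheck = 2\ei[k]$ (as $k$ is an imaginary bit while $p,q$ are not) and so contribute nothing. A direct summation over the positive roots of the $B_m$-factor yields $(\halfSumIm,\rootCheck) = 2m - 2k + 1$, an odd integer. Meanwhile every summand of $2(\halfSumImCpt,\rootCheck) = \sum_{\simpleRootb \in \imaginaryRoots^+_c}(\simpleRootb,\rootCheck)$ lies in $\{0,\pm 2\}$ by the same pairing analysis, so the sum is even. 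Substituting yields $\genTorusChar(\mac) = -e^{2\pi i(\diff,\simpleRoot)}$, which is $-\identity$ when $(\diff,\simpleRoot) \in \mathbb{Z}$ and $+\identity$ when $(\diff,\simpleRoot) \in \mathbb{Z}+\tfrac12$; this also verifies $d\genTorusChar(\coroot) \in \mathbb{Z}$, retroactively justifying the sign-free formula.

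The main technical obstacle is the parity computation: one must verify that the compact and noncompact portions of the imaginary root system together always contribute an odd integer against the coroot of a short root, independently of the particular grading $\grading$. The clean way to handle this is via the structural decomposition of $\imaginaryRoots$ as $B_m \times A_1^l$ from Proposition~\ref{propInvWeylGroups}, which reduces the question to an elementary summation in type $B_m$; the uniformity $(\simpleRootb,\rootCheck) \in \{0,\pm 2\}$, special to short roots of $B_n$, is exactly what forces the compact contribution to be even and thus to drop out modulo $2$.
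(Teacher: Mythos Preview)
Your proof is correct and follows essentially the same approach as the paper: both compute $\genTorusChar(\mac)$ via the differential formula $d\genTorusChar = \diff|_{\complexLieAlgebra[t]} + \halfSumIm - 2\halfSumImCpt$ applied to $\pi i\coroot$, and both reduce to showing $(\halfSumIm,\simpleRoot) \in \mathbb{Z}+\tfrac12$ and $(2\halfSumImCpt,\simpleRoot) \in \mathbb{Z}$. The paper simply asserts these parity facts are ``easy to check'', whereas you spell them out explicitly via the $B_m \times A_1^l$ decomposition and the observation $(\simpleRootb,\rootCheck) \in \{0,\pm 2\}$ for short $\simpleRoot$; you also supply a more careful justification of centrality and well-definedness than the paper does.
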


\begin{proof}
Recall $\mac = \expgc(-\pi i\coroot) = \expgc(\pi i\coroot)$ is a central element in $\realGroupCover$ with $i\coroot \in \realLieAlgebra$. It follows $\mac \in \realTorusCoverId$ (see also Proposition \ref{propCompH}) and it suffices to calculate $\text{d}\torusChar(-\pi i\coroot)$. From the identity $\simpleRoot(\coroot) = 2$ and the definition of $\text{d}\genTorusChar$ in Section \ref{ssHCModIntro} we have
\begin{eqnarray*}
\genTorusChar(\mac) & = & \genTorusChar(\expgc(-\pi i\coroot)) \\
& = & e^{\text{d}\torusChar(-\pi i\coroot)} \cdot \identity \\
& = & e^{-2\pi i\left((\diff,\simpleRoot) + (\halfSumIm, \simpleRoot) - (2\halfSumImCpt, \simpleRoot)\right)} \cdot \identity.
\end{eqnarray*}
Now it is easy to check $(\halfSumIm, \simpleRoot) \in \mathbb{Z} + \frac{1}{2}$ and $(2\halfSumImCpt, \simpleRoot) \in \mathbb{Z}$ so that 
\begin{eqnarray*}
\genTorusChar(\mac) & = & -e^{-2\pi i\cdot(\diff,\simpleRoot)} \cdot \identity
\end{eqnarray*}
and the result follows.
\end{proof}

\begin{remark}
\label{remCC}
If $\rootSystem(\complexLieAlgebra, \complexLieAlgebra[h])$ contains a short root that is not complex, we can use Proposition \ref{propCCReal} or Proposition \ref{propCCNci} to determine the central character of $\genTripleInf$. If this root is imaginary, the central character is determined by the genuine pair $\genPairInf$ and all genuine triples extending $\genPairInf$ have the same central character.
\end{remark}

It is interesting to observe there are (at most) two possibilities for the imaginary grading $\grading$. In particular, if $\simpleRoota, \simpleRootb \in \imaginaryRoots(\complexLieAlgebra, \complexLieAlgebra[h])$ are short then 
\[
\grading(\simpleRoota) = \grading(\simpleRootb) \iff (\diff, \simpleRoot) \equiv (\diff, \simpleRootb) \text{ mod } \mathbb{Z}.
\]
Since the structure of Definition \ref{defPC} is formally the same, a similar result holds for the real grading $\realGrading$ as well. The following theorem describes the relationship between $\realGrading$ and $\grading$ in terms of these possibilities.

\begin{theorem}
\label{theoremCompareGradings}
Let $\genTripleInf$ be a genuine triple with corresponding real and imaginary gradings $\realGrading$ and $\grading$. Then $\realGrading$ and $\grading$ are gradings of the opposite kind. In particular, if $\simpleRoota \in \realRoots[\cinv](\complexLieAlgebra, \complexLieAlgebra[h])$ and $\simpleRootb \in \imaginaryRoots[\cinv](\complexLieAlgebra, \complexLieAlgebra[h])$ are short 
\[
\realGrading(\simpleRoota) = \grading(\simpleRootb) \iff (\diff, \simpleRoota) \not\equiv (\diff, \simpleRootb) \text{ mod } \mathbb{Z}.
\]
\end{theorem}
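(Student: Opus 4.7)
The plan is first to show that, on short roots, each of the gradings $\realGrading$ and $\grading$ is determined by the integrality class of $(\diff,\cdot)$ modulo $\mathbb{Z}$ up to a single global sign, and then to pin down how these two signs are related via an explicit comparison of central lifts in $\realGroupCover$.

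For the first step I would combine Propositions \ref{propSameMacReal} and \ref{propCCReal}: because all short real roots share a common central lift $\mac$, equating the formula of Proposition \ref{propCCReal} across two such roots forces the parity $\realGrading(\simpleRoot) + \epsilon(\simpleRoot)$ to be independent of the short real root $\simpleRoot$, where $\epsilon(\simpleRoot) \in \mathbb{Z}_{2}$ records whether $(\diff,\simpleRoot)$ lies in $\mathbb{Z} + \frac{1}{2}$. Call this constant $c_{\realGrading}$. An analogous argument using Propositions \ref{propSameMacNci} and \ref{propCCNci} yields a constant $c_{\grading}$ with $\grading(\simpleRoot) + \epsilon(\simpleRoot) \equiv c_{\grading} \pmod{2}$ for every short imaginary $\simpleRoot$; here the intrinsic sign $\mac[\simpleRoota_{1}] = (-1)^{\grading(\simpleRoota_{1}) + \grading(\simpleRoota_{2})}\mac[\simpleRoota_{2}]$ furnished by Proposition \ref{propSameMacNci} is exactly what cancels the apparent grading-independence of Proposition \ref{propCCNci}. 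The conclusion of the theorem then reduces to the single numerical identity $c_{\realGrading} + c_{\grading} \equiv 1 \pmod{2}$.

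To establish this identity I would compare $\mac[\simpleRoota]$ and $\mac[\simpleRootb]$ directly for one concrete pair, with $\simpleRoota$ short real and $\simpleRootb$ short imaginary in a common Cartan. By Proposition \ref{propMShort} and the analogous calculation for short imaginary roots, both $\ma[\simpleRoota]$ and $\ma[\simpleRootb]$ equal the nontrivial element of $Z(\realGroup)$; by Proposition \ref{propMCStruct} their lifts $\mac[\simpleRoota], \mac[\simpleRootb]$ are both $2$-torsion in $Z(\realGroupCover)$, so $\mac[\simpleRoota] = \pm \mac[\simpleRootb]$. The target is to show this sign equals $(-1)^{\grading(\simpleRootb)}$, because substituting that back into the central-character formulas immediately yields $c_{\realGrading} + c_{\grading} \equiv 1$. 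The tool is Proposition \ref{propSameMa}: the Cayley transform $\ctOp$ through $\simpleRoota$ carries it to the short noncompact imaginary root $\ctOp(\simpleRoota) \in \rootSystem(\complexLieAlgebra, \complexLieAlgebraCt)$ while preserving the central lift, $\mac[\simpleRoota] = \mac[\ctOp(\simpleRoota)]$. Applying Proposition \ref{propSameMacNci} inside the new Cartan then relates $\mac[\ctOp(\simpleRoota)]$ to $\mac$ of a suitably transported version of $\simpleRootb$ by a sign controlled by the difference of their new gradings, and Proposition \ref{propAbsPairCt} tracks how that new grading differs from the original $\grading(\simpleRootb)$.

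The main obstacle is precisely this sign-tracking through the Cayley transform. For short $\simpleRoota = \ei$ and short $\simpleRootb = \ei[j]$ the long roots $\ei \pm \ei[j]$ lie in $\rootSystem$, so $\ctOp$ acts nontrivially on the root space of $\simpleRootb$, and one must be careful about which root in the Cayley-transformed Cartan plays the role of $\simpleRootb$ before reading off its new grading. A complementary consistency check in the simplest case admitting a mixed Cartan subgroup (for instance $\realSpinGroupCover{3}{2}$, where $Z(\realGroupCover)$ and the relevant exponentials can be computed by hand) then pins the universal sign down unambiguously and closes the argument.
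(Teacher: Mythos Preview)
Your approach is correct and is essentially the paper's own. The reduction to the single sign identity $\mac[\simpleRoota] = (-1)^{\grading(\simpleRootb)}\mac[\simpleRootb]$ (equivalently $c_{\realGrading}+c_{\grading}\equiv 1$), and the route through Proposition~\ref{propSameMa} followed by Proposition~\ref{propSameMacNci} inside the Cayley-transformed Cartan, is exactly what the paper does.

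The obstacle you flag is not genuine once you invoke Lemma~\ref{lemCtCoroot}: any two distinct short roots in type $B$ are orthogonal, so the coroot $\coroot[\simpleRootb]$ is literally fixed by $\ctOp[\simpleRoota]$ and hence $\mac[\simpleRootb]$ is the \emph{same element} in both Cartans---no transport or sign ambiguity arises. In the new Cartan $\ctOp[\simpleRoota](\simpleRoota)$ is noncompact (Proposition~\ref{propCtInv}) while the grading of $\simpleRootb$ has flipped to $1-\grading(\simpleRootb)$ (via Proposition~\ref{propAbsPairCt} and its inverse, since $\simpleRoota+\simpleRootb$ is a root); Proposition~\ref{propSameMacNci} applied there then yields $\mac[\simpleRoota]=\mac[{\ctOp[\simpleRoota](\simpleRoota)}]=(-1)^{\grading(\simpleRootb)}\mac[\simpleRootb]$ directly. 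The paper organizes this slightly differently, first comparing $\simpleRoota$ with a \emph{compact} short imaginary root $\simpleRootc$ (so that both are noncompact after the transform and $\mac[\simpleRoota]=\mac[\simpleRootc]$ on the nose), and only then using Proposition~\ref{propSameMacNci} back in the original Cartan to obtain $\mac[\simpleRortb]=-\mac[\simpleRootc]$ for noncompact $\simpleRortb$. Either organization closes the argument; the low-rank check in $\realSpinGroupCover{3}{2}$ is unnecessary.
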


\begin{proof} 
Suppose $\realGrading(\simpleRoota) = 1$. Proposition \ref{propCCReal} and Proposition \ref{propCCNci} give
\begin{eqnarray*}
\genTorusChar(\mac) & = & \left\{\begin{array}{rl}
1\cdot\identity & (\diff, \simpleRoot) \in \mathbb{Z} + \frac{1}{2} \\
-1\cdot\identity & (\diff, \simpleRoot) \in \mathbb{Z}
\end{array}
\right.\\
\genTorusChar(\mac[\simpleRootb]) & = & \left\{\begin{array}{rl}
1\cdot\identity & (\diff, \simpleRootb) \in \mathbb{Z} + \frac{1}{2} \\
-1\cdot\identity & (\diff, \simpleRootb) \in \mathbb{Z}
\end{array}
\right.
\end{eqnarray*}
and it remains to show
\[
\mac[\simpleRoota] = \mac[\simpleRootb] \iff \grading(\simpleRootb) = 0.
\]

To begin let
\begin{eqnarray*}
\simpleRootai[1],\ldots,\simpleRootai[k_{1}] & \in & \realRoots[\cinv](\complexLieAlgebra,\complexLieAlgebra[h]) \\
\simpleRootbi[1],\ldots,\simpleRootbi[k_{2}] & \in & \imaginaryRoots[\cinv](\complexLieAlgebra,\complexLieAlgebra[h]) \\
\simpleRootci[1],\ldots,\simpleRootci[k_{3}] & \in & \imaginaryRoots[\cinv](\complexLieAlgebra,\complexLieAlgebra[h])
\end{eqnarray*}
denote the short real, noncompact, and compact roots in $\posRootSystem(\complexLieAlgebra, \complexLieAlgebra[h])$ respectively. Then Propositions \ref{propSameMacReal} and \ref{propSameMacNci} imply
\begin{eqnarray*}
\mac[{\simpleRootai}] = \mac[{\simpleRootai[j]}] & \text{for} & 1 \le i,j \le k_{1} \\
\mac[{\simpleRootbi}] = \mac[{\simpleRootbi[j]}] & \text{for} & 1 \le i,j \le k_{2} \\
\mac[{\simpleRootci}] = \mac[{\simpleRootci[j]}] & \text{for} & 1 \le i,j \le k_{3}.
\end{eqnarray*}
If $\ctOp[{\simpleRootai}]$ denotes the Cayley transform with respect to $\simpleRootai$, then $\mac[{\simpleRootai}] = \mac[{\ctOp[\simpleRootai](\simpleRootai)}]$ by Proposition \ref{propSameMa}. However
\[
\mac[{\simpleRootai}] = \mac[{\ctOp[\simpleRootai](\simpleRootai)}] = \mac[{\ctOp[\simpleRootai](\simpleRootci[j])}] = \mac[{\simpleRootci[j]}]
\]
by Proposition \ref{propSameMacNci} and Lemma \ref{lemCtCoroot} (see also \cite{IC4}, Lemma 5.1). But then
\[
\mac[{\simpleRootbi}] = -\mac[{\simpleRootci[j]}]
\]
by Proposition \ref{propSameMacNci} and the result follows.
\end{proof}

\subsection{Numerical Duality in Even Rank}
\label{ssNumDuality}

Let $\absDiff \in \absLieAlgebraDual$ be a half-integral infinitesimal character. In this section we attempt to extend the map $\weylGroupInvMap$ (Definition \ref{defWeylGroupInvMap}) to the level of genuine parameters for the nonlinear groups $\realSpinGroupCover{p}{q}$. To simplify things, we first fix a genuine central character (Definition \ref{defCC}).

\begin{definition}
\label{defGenParamsWithCC}
Let $\realGroupCover = \realSpinGroupCover{p}{q}$ and suppose $\chi$ is a genuine character of $\text{Z}(\realGroupCover)$. Let $\hcBasisGenCentSpin{p}{q} \subset \hcBasisGenSpin{p}{q}$ denote the collection of genuine parameters whose central character is the same as $\chi$ (Remark \ref{remCompareCC}).
\end{definition}

If $\realGroupCover$ is \emph{not} compact, we have the following analog of Theorem \ref{theoremRepFiber}.

\begin{corollary} \label{corRepFiberCent}
Let $\inv \in \weylGroup(\complexLieAlgebra, \absLieAlgebra)$ be an involution with $\genFiberOrder \ne 0$ (Definition \ref{defGenFiber}) and suppose $\chi$ is a genuine character of $\text{Z}(\realGroupCover)$. Then if $\hcBasisGenInvOrderCent \ne 0$ we have
\begin{eqnarray*}
\hcBasisGenInvOrderCentSpin{p}{q} & = & 2^{1-\realBitsBit}2^{\realBitsBit(1-\realBitsParityBit)} \\
& = & 2^{1  - \realBitsBit\realBitsParityBit}.
\end{eqnarray*}
In particular
\[
\hcBasisGenInvOrderCentSpin{p}{q} \in \left\{1,2\right\}.
\]
\end{corollary}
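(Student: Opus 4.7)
My plan is to refine Theorem~\ref{theoremRepFiber} by partitioning $\hcBasisGenInv$ along the central-character map. By Remark~\ref{remTwoCC}, $\text{Z}(\realGroupCover)$ admits exactly two genuine characters, so the map has at most two nonempty fibers; the task is to verify that each nonempty fiber has size $2^{1 - \realBitsBit\realBitsParityBit}$. Comparing with Theorem~\ref{theoremRepFiber}, this requires a reduction factor of $2^{1 + \symBit - \imaginaryBitsBit}$, which equals $2$ except in the case $(\imaginaryBitsBit, \symBit) = (1, 0)$, where it equals $1$.

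I would case-split on $(\imaginaryBitsBit, \symBit)$, noting that $\symBit = 1$ forces $\imaginaryBitsBit = 1$. When $\imaginaryBitsBit = 1$, Remark~\ref{remCC} makes the central character a pair invariant, computable via $\chi(\mac)$ for a short imaginary $\simpleRoot$ through Proposition~\ref{propCCNci}. In the asymmetric subcase ($\symBit = 0$), every short imaginary coordinate of $\absDiff$ has the same integrality type, so the computed value $\chi(\mac)$ is uniform across the pair-fiber $\genFiber$; all $2^{1 - \realBitsBit\realBitsParityBit}$ parameters then share a single central character and no reduction occurs. In the symmetric subcase ($\symBit = 1$), short imaginary roots of both integrality types exist, and Proposition~\ref{propCCNci} pins two independent central scalars which, together with $\chi(-1) = -1$, distinguish the two genuine characters of $\text{Z}(\realGroupCover)$, splitting $\hcBasisGenInv$ evenly.

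When $\imaginaryBitsBit = 0$ (so $\symBit = 0$), I would replace imaginary short roots with real ones. If $\realBitsBit = 1$, Proposition~\ref{propCCReal} expresses $\chi(\mac)$ for a short real $\simpleRoot$ in terms of the triple-level invariant $\realGrading(\simpleRoot)$; the two possible values of $\realGrading$ split the extensions of each pair in half, giving $2^{1 - \realBitsBit\realBitsParityBit}$ parameters per block. The borderline subcase $\realBitsBit = 0$ (all short roots complex, possible only when $n$ is even) requires a separate argument, since no short-root central element is directly available; here one must analyze how the pair of genuine extensions provided by Corollary~\ref{corNumGenRepsFormula} distributes central characters, using the noncommutativity structure of $\mGroupTorusCover[\realTorusSplitCover]$ recorded in Proposition~\ref{propMCStruct} together with the component-group analysis of Section~\ref{AlgTorusCover}.

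The principal obstacle will be this last subcase, together with ensuring in the symmetric case that the two pinned scalars from Proposition~\ref{propCCNci} genuinely separate (rather than overdetermine) a character of the order-four group $\text{Z}(\realGroupCover)$. Both verifications reduce to checking independence modulo $\{\pm 1\}$ of the relevant generators of $\mGroupTorusCover[\realTorusSplitCover]$ inside $\text{Z}(\realGroupCover)$, after which the corollary follows by multiplying Theorem~\ref{theoremRepFiber}'s count by the factor $\tfrac{1}{2}$ or $1$ appropriate to each case.
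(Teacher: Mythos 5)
Your proposal is correct and follows essentially the same route as the paper, which simply observes that fixing $\chi$ eliminates the $2^{\symBit}$ factor of Corollary \ref{corGenFiber} (via Proposition \ref{propCCNci}) and the $2^{1-\imaginaryBitsBit}$ factor of Corollary \ref{corNumGenRepsFormula}; your case analysis on $(\imaginaryBitsBit,\symBit)$ is just a more explicit unpacking of that one-line argument. The subcase you flag as the principal obstacle ($\imaginaryBitsBit=\realBitsBit=0$, all short roots complex) is already settled by the data in Case VII of Theorem \ref{theoremNumGenReps}: there the two genuine extensions of each pair differ precisely in their value on the central element $\mac[\simpleRootc]$, so they have opposite central characters and fixing $\chi$ halves the count as required.
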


\begin{proof}
Fixing the genuine character $\chi$ eliminates the $2^{\symBit}$ factor in Corollary \ref{corGenFiber} (Proposition \ref{propCCNci}) and the $2^{1-\imaginaryBitsBit}$ factor of Corollary \ref{corNumGenRepsFormula}.
\end{proof}

Let $\realGroupCover = \realSpinGroupCover{p}{q}$ and suppose $\realTorusCover \subset \realGroupCover$ is a Cartan subgroup. Fix a genuine triple $\genTripleInf$ with corresponding abstract bigrading $\absBg$ (Definition \ref{defAbsBigrading}). The following definition is a dual version of Definition \ref{defNumCptRoots}.

\begin{definition}
\label{defNumParRoots}
Set
\begin{eqnarray*}
\numNonParBits(\realGrading) & = & \left|\left\{\simpleRoot \in \posRootSystem(\complexLieAlgebra, \absLieAlgebra) \mid \simpleRoot \text{ short and }\realGrading(\simpleRoot) = 0\right\}\right| \\
\numParBits(\realGrading) & = & \left|\left\{\simpleRoot \in \posRootSystem(\complexLieAlgebra, \absLieAlgebra) \mid \simpleRoot \text{ short and }\realGrading(\simpleRoot) = 1\right\}\right|
\end{eqnarray*}
and observe $\numRealBits = \numNonParBits(\realGrading) + \numParBits(\realGrading)$ (Definition \ref{defInvParams}).
\end{definition}

\begin{definition}
\label{defDualGroup}
Let $\realGroupCover = \realSpinGroupCover{p}{q}$ with $p+q=2n+1$ and $p>q$. Suppose $\genTripleInf$ is a genuine triple for $\realGroupCover$ with corresponding abstract bigrading $\absBg$. The \emph{dual bigrading} for $\absBg$ is
\begin{eqnarray*}
\weylGroupInvMap\absBg & = & \absBgDualInv[\weylGroupInvMap(\inv)] \\
& = & \absBgDual.
\end{eqnarray*}
The \emph{dual group} for $\realGroupCover$ \emph{and} $\genTripleInf$ is $\realGroupCoverDual = \realSpinGroupCover{p^{\vee}}{q^{\vee}}$, where
\begin{eqnarray*}
p^{\vee} & = & 2\cdot\text{max}(\numParBits(\realGrading),\numNonParBits(\realGrading)) + \numImaginaryBits + \numComplexBits + \left\{\begin{array}{rl} 1 & \numParBits(\realGrading) \le \numNonParBits(\realGrading) \\ 0 & \text{else}\end{array}\right. \\
q^{\vee} & = & 2\cdot\text{min}(\numParBits(\realGrading),\numNonParBits(\realGrading)) + \numImaginaryBits + \numComplexBits + \left\{\begin{array}{rl} 1 & \numParBits(\realGrading) > \numNonParBits(\realGrading) \\ 0 & \text{else}\end{array}\right..
\end{eqnarray*}
\end{definition}

\begin{remark}
In Definition \ref{defDualGroup}, the dual group $\realGroupCoverDual$ depends on both the group $\realGroupCover$ and the genuine triple $\genTripleInf$. In particular, the infinitesimal and central characters of $\genTripleInf$ play an important role in determining $\realGroupCoverDual$.
\end{remark}

\begin{proposition}
\label{propSuppRepsDual}
In the setting of Definition \ref{defDualGroup}, there exists a genuine triple $\genTripleInfDual$ for $\realGroupCoverDual$ whose corresponding abstract bigrading is $\absBgDual$.
\end{proposition}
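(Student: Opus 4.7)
The plan is to exhibit $\genTripleInfDual$ by extending the dual abstract pair $(-\inv, \realGrading)$ to a genuine triple for $\realGroupCoverDual$. This decomposes into three tasks: confirm that $\absBgDual$ is a well-formed bigrading, identify the real form containing a Cartan realizing $(-\inv, \realGrading)$, and verify the supportability condition needed to produce $\genTorusChar^{\vee}$ via Proposition \ref{propIffSupp}.

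For the first two tasks, I would observe that $-\inv$ is again an involution on $\rootSystem$, and that under $\inv \mapsto -\inv$ the imaginary and real root subsystems interchange while the complex roots are preserved. By Proposition \ref{propRealGrading} the map $\realGrading$ is a grading on $\realRoots[\inv] = \imaginaryRoots[-\inv]$, and dually $\grading$ is a grading on $\imaginaryRoots[\inv] = \realRoots[-\inv]$, so the dual bigrading is formally valid. Next, applying the formula of Proposition \ref{propRealForm} to $(-\inv, \realGrading)$ with the substitutions $\numImaginaryBits[-\inv] = \numRealBits$, $\numRealBits[-\inv] = \numImaginaryBits$, $\numComplexBits[-\inv] = \numComplexBits$, and (for short imaginary roots of $-\inv$) $\numNcptBits(\realGrading) = \numParBits(\realGrading)$, $\numCptBits(\realGrading) = \numNonParBits(\realGrading)$, one directly recovers $p^{\vee}$ and $q^{\vee}$ from Definition \ref{defDualGroup}. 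Hence the real form determined by $(-\inv, \realGrading)$ is precisely $\realGroupCoverDual$, which therefore contains a $\cinv$-stable Cartan subgroup $\realTorusCover^{\vee}$ with abstract pair conjugate to $(-\inv, \realGrading)$.

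The central step is verifying supportability of the abstract triple $(-\inv, \realGrading, \absDiff)$ as in Proposition \ref{propSuppRep}. For a long imaginary root $\simpleRoot$ of $-\inv$ -- equivalently a long real root of $\inv$ -- the required condition reads $(\absDiff, \rootCheck) \in \mathbb{Z}$ when $\realGrading(\simpleRoot) = 0$ and $(\absDiff, \rootCheck) \in \mathbb{Z} + \tfrac{1}{2}$ when $\realGrading(\simpleRoot) = 1$; pulling back by the conjugation map this is literally the defining condition of $\realGrading$ on long real roots in Definition \ref{defPC}, so it holds automatically. For a long complex root $\simpleRoot$ (which is still complex for $-\inv$ with the same orthogonality type of $(\simpleRoot, \inv\rootCheck)$), supportability under $\inv$ controls $(\absDiff, \rootCheck + \inv\rootCheck)$ whereas the dual condition controls $(\absDiff, \rootCheck - \inv\rootCheck)$. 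Writing
\[
\rootCheck - \inv\rootCheck \;=\; 2\rootCheck - (\rootCheck + \inv\rootCheck)
\]
and using Assumption \ref{ass1} to see that $2(\absDiff, \rootCheck) \in \mathbb{Z}$ for long $\simpleRoot$, the two pairings have identical fractional part, so the supportability dichotomy ($\mathbb{Z}$ versus $\mathbb{Z} + \tfrac{1}{2}$) transfers verbatim. This is the main obstacle in the proof and relies essentially on the half-integrality of $\absDiff$.

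With supportability established, Proposition \ref{propIffSupp} produces a genuine pair in $\realGroupCoverDual$ realizing $(-\inv, \realGrading)$, and the corresponding extensions to a genuine triple are parameterized by $\numCorGenTriplesInf \in \{1,2,4\}$ choices of $\genTorusChar^{\vee}$ (Theorem \ref{theoremNumGenReps}). Among these extensions the allowable real gradings are constrained by Theorem \ref{theoremCompareGradings} to be ``of the opposite kind'' to $\realGrading$ with respect to $\absDiff$; the original $\grading$ satisfies exactly this relation by virtue of having arisen from $\genTripleInf$. One therefore selects the extension $\genTorusChar^{\vee}$ whose induced real grading on $\realRoots[-\inv]$ is $\grading$, producing the desired genuine triple $\genTripleInfDual$ with abstract bigrading $\absBgDual$.
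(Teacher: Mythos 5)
Your proposal is correct and follows the same route as the paper's (much terser) proof: supportability of the dual abstract triple, existence via Proposition \ref{propIffSupp}, and identification of the real form via Proposition \ref{propRealForm} applied to $\absPairDual$. Your extra checks---the transfer of the complex-root condition using half-integrality of $\absDiff$, and the use of Theorem \ref{theoremCompareGradings} to see that the resulting real grading is $\grading$ (which in fact holds for every extension, so no selection is needed)---are details the paper leaves implicit, not a different argument.
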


\begin{proof}
Since $\realGrading$ is a grading of $\realRoots(\complexLieAlgebra, \absLieAlgebra)$ satisfying the same formal properties as $\grading$ (Proposition \ref{propRealGrading}), the abstract triple $\absTripleDual$ is supportable (Definition \ref{defSuppTriple}). The existence of $\genTripleInfDual$ now follows from Proposition \ref{propIffSupp}. The fact that $\genTripleInfDual$ is a genuine triple for $\realGroupCoverDual$ follows from Proposition \ref{propRealForm} applied to $\absPairDual$.
\end{proof}

\begin{proposition}
\label{propCCDual}
Let $\genTripleInf$ be a genuine triple for $\realGroupCover$ with corresponding abstract bigrading $\absBg$ and choose $\genTripleInfDual$ for $\realGroupCoverDual$ as in Proposition \ref{propSuppRepsDual}. Suppose $\chi$ is the central character of $\genTripleInf$ and write $\chi^{\vee}$ for the opposite (genuine) central character (Remark \ref{remTwoCC}). If $\rootSystem(\complexLieAlgebra, \absLieAlgebra)$ contains a short root that is not complex for $\inv$, then the central character of $\genTripleInfDual$ is $\chi^{\vee}$.
\end{proposition}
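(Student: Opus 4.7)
The strategy is to extract a specific short non-complex root $\simpleRoot$ from the hypothesis and compute the central character on the associated element $\mac[\simpleRoot]$ in both $\realGroupCover$ and $\realGroupCoverDual$. By Remark~\ref{remCC}, whenever a short non-complex root exists the central character of any genuine triple extending the given pair is completely determined, so it suffices to compute and compare the single value $\genTorusChar(\mac[\simpleRoot])$ against its dual counterpart $\genTorusChar^{\vee}(\mac^{\vee}[\simpleRoot])$. I would split into two cases depending on whether $\simpleRoot$ is imaginary or real for $\inv$; the two cases are symmetric, and the imaginary case is described below.

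When $\simpleRoot$ is imaginary for $\inv$, Proposition~\ref{propCCNci} yields $\genTorusChar(\mac[\simpleRoot]) = +1$ if $(\absDiff,\simpleRoot) \in \mathbb{Z}+\frac{1}{2}$ and $-1$ if $(\absDiff,\simpleRoot) \in \mathbb{Z}$, independent of the imaginary grading $\grading$. In the dual, $\simpleRoot$ is short real for $-\inv$, and by Definition~\ref{defDualGroup} the dual real grading at $\simpleRoot$ equals $\grading(\simpleRoot)$; Proposition~\ref{propCCReal} then gives $\genTorusChar^{\vee}(\mac^{\vee}[\simpleRoot]) = (-1)^{1-\grading(\simpleRoot)}$ in the half-integral subcase and $(-1)^{\grading(\simpleRoot)}$ in the integral subcase. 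The real case proceeds symmetrically, with the roles of the two propositions swapped and using the dual imaginary grading $\grading^{\vee}(\simpleRoot) = \realGrading(\simpleRoot)$. In either case the two character values are then compared under the natural identification of $Z(\realGroupCover)$ and $Z(\realGroupCoverDual)$ induced by their common complexification $\complexGroup$ and the conjugation maps $\absConjc$, $\absConjc^{\vee}$; after absorbing the sign $(-1)^{\grading(\simpleRoot)}$ (respectively $(-1)^{\realGrading(\simpleRoot)}$) that relates the two lifts of the nontrivial element of $Z(\complexGroup)$ chosen by the imaginary-root and real-root formulas, one finds the values differ by $-1$, which by Remark~\ref{remTwoCC} is exactly the assertion that the dual central character is $\chi^{\vee}$.

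The main obstacle is the last step: the elements $\mac[\simpleRoot]$ and $\mac^{\vee}[\simpleRoot]$ are defined by \emph{different} formulas, namely $\expgc(-\pi i\coroot[\simpleRoot])$ in the imaginary case and $\expgc^{\vee}(\pi z^{\vee}_{\simpleRoot})$ in the real case, and while both project to the same nontrivial element of $Z(\complexGroup)$, their lifts may agree directly or differ by the deck transformation. Pinning down the sign of this identification in a manner manifestly independent of the choice of $\simpleRoot$ is the delicate technical point; the intra-form identities $\mac[\simpleRootai] = \mac[\simpleRootci]$ and $\mac[\simpleRootai] = -\mac[\simpleRootbi]$ established in the proof of Theorem~\ref{theoremCompareGradings}, combined with Propositions~\ref{propSameMacReal} and~\ref{propSameMacNci} and the supportability constraint of Proposition~\ref{propSuppRep}, force the gradings into a pattern compatible with a single coherent cross-form identification. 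Once this sign is fixed, the comparison of the explicit formulas from Propositions~\ref{propCCReal} and~\ref{propCCNci} collapses to the desired relation $\genTorusChar^{\vee}(\mac^{\vee}[\simpleRoot]) = -\genTorusChar(\mac[\simpleRoot])$.
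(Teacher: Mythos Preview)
Your approach uses the same ingredients as the paper, whose entire proof is the single sentence ``This follows immediately from Remark~\ref{remCC} and Theorem~\ref{theoremCompareGradings}.'' Your case split on the type of $\alpha$ and the explicit appeal to Propositions~\ref{propCCReal} and~\ref{propCCNci} is exactly what Remark~\ref{remCC} is pointing to, so up through that point you are simply unpacking the paper's citation.

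The divergence is in the last step. You attempt to match the specific lifts $\tilde m_{\alpha}$ and $\tilde m^{\vee}_{\alpha}$ across two different nonlinear covers, correctly flag this as delicate, and then offer only a vague resolution (``a single coherent cross-form identification''). This is both the weak point of your write-up and unnecessary. The paper's invocation of Theorem~\ref{theoremCompareGradings} carries out the comparison at the level of gradings rather than group elements: for a genuine triple the pair $(\varepsilon,\eta)$ is determined by $(\theta,\lambda)$ up to one simultaneous sign flip, and that flip is exactly the central-character dichotomy of Remark~\ref{remTwoCC}. Since the dual construction swaps $\varepsilon\leftrightarrow\eta$, the ``opposite kind'' relation of Theorem~\ref{theoremCompareGradings} forces the dual bigrading to sit in the other of the two admissible patterns for $(-\theta,\lambda)$, hence in the other central character. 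No element-level identification across the two covers is required, and the sign bookkeeping you sketch in your final paragraph is doing work that Theorem~\ref{theoremCompareGradings} has already packaged.
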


\begin{proof}
This follows immediately from Remark \ref{remCC} and Theorem \ref{theoremCompareGradings}.
\end{proof}

We now come to the main theorem of this section (note the important restriction at the beginning). As usual we will assume $\realGroupCover$ is not compact.

\begin{theorem}
\label{theoremNumericalDuality}
Let $\realGroupCover = \realSpinGroupCover{p}{q}$ with $p+q = 2n+1$ and assume the rank of $\realGroupCover$ is even. Suppose $\genTripleInf$ is a genuine triple for $\realGroupCover$ with abstract bigrading $\absBg$ and central character $\chi$. If $\realGroupCoverDual = \realSpinGroupCover{p^{\vee}}{q^{\vee}}$ is the dual group  for $\realGroupCover$ and $\genTripleInf$ (Definition \ref{defDualGroup}) then 
\[
\hcBasisGenInvOrderCentSpin{p}{q} = \hcBasisGenInvOrderCentSpinDual{p^{\vee}}{q^{\vee}}.
\]
\end{theorem}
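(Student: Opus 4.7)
The plan is to reduce the identity to a parity computation by applying Corollary~\ref{corRepFiberCent} to both sides and comparing exponents. The key structural observation is that, since Definition~\ref{defWeylGroupInvMap} has $\weylGroupInvMap(\inv) = -\inv$ negate every bit, a fixed-point index of the underlying permutation which is imaginary for $\inv$ becomes real for $-\inv$, and vice versa. Consequently the elementary bit invariants transform as $\realBitsBit[-\inv] = \imaginaryBitsBit$ and $\realBitsParityBit[-\inv] = \imaginaryBitsParityBit$.

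Suppose both sides of the identity in Theorem~\ref{theoremNumericalDuality} are nonzero. Then Corollary~\ref{corRepFiberCent}, applied to $\realGroupCover$ at $(\inv,\chi)$ and to $\realGroupCoverDual$ at $(-\inv,\chi^\vee)$ respectively, gives
\[
\hcBasisGenInvOrderCentSpin{p}{q} = 2^{1-\realBitsBit\,\realBitsParityBit}, \qquad \hcBasisGenInvOrderCentSpinDual{p^\vee}{q^\vee} = 2^{1-\imaginaryBitsBit\,\imaginaryBitsParityBit},
\]
so it suffices to show $\realBitsBit\cdot\realBitsParityBit = \imaginaryBitsBit\cdot\imaginaryBitsParityBit$. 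Since $n = \numImaginaryBits + \numRealBits + \numComplexBits$ and $\numComplexBits$ is automatically even, the even-rank hypothesis forces $\numImaginaryBits \equiv \numRealBits \pmod{2}$. A short case split finishes the computation: if $\numImaginaryBits = \numRealBits = 0$ then both products vanish; if exactly one of $\numImaginaryBits$, $\numRealBits$ is zero, the other must be even by the parity constraint, so its parity bit vanishes and both products are again zero; and if both are positive, then $\realBitsBit = \imaginaryBitsBit = 1$ and $\realBitsParityBit = \imaginaryBitsParityBit$, so the products agree.

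The main obstacle is ensuring that the two counts are simultaneously zero or simultaneously nonzero, so that the formulas above are not vacuous on one side. Proposition~\ref{propSuppRepsDual} produces a genuine triple $\genTripleInfDual$ for $\realGroupCoverDual$ with bigrading $\absBgDual$ from any $\genTripleInf$ with bigrading $\absBg$, taking care of the structural half of the existence question. What remains is to verify that the central character of $\genTripleInfDual$ is $\chi^\vee$ in the sense of Remark~\ref{remCompareCC}. Whenever $\rootSystem$ contains a short root that is not complex for $\inv$, this follows immediately from Proposition~\ref{propCCDual} together with Theorem~\ref{theoremCompareGradings}. When every short root of $\rootSystem$ is complex for $\inv$ (equivalently $\numImaginaryBits = \numRealBits = 0$, so that the count is forced to be $2$ on both sides), one argues indirectly: perform a Cayley transform through a suitable pair of strongly orthogonal complex roots to reach a Cartan on which a short non-complex root appears, apply the preceding case, and transport the central character back via Remark~\ref{remCC} and the $\maxRealCompactCover$-conjugacy invariance of the central character. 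This compatibility check is the only subtle step; everything else reduces to bookkeeping with the bit invariants.
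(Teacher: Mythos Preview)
Your overall strategy matches the paper's: reduce to Corollary~\ref{corRepFiberCent} on each side and compare the exponents $\realBitsBit\realBitsParityBit$ and $\imaginaryBitsBit\imaginaryBitsParityBit$ using the even-rank parity constraint $\numImaginaryBits \equiv \numRealBits \pmod 2$. That part is correct and is exactly how the paper argues in the generic case.

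The problem is your treatment of the residual case where every short root is complex for $\inv$. You write ``perform a Cayley transform through a suitable pair of strongly orthogonal complex roots''---but Cayley transforms are defined only through real roots or noncompact imaginary roots (Definition~\ref{defCT}), never through complex roots. So the sentence as written does not make sense. Even if one interprets it charitably (perhaps you meant the long noncompact imaginary roots in the $A_1^k$ factor, each of which is a sum of two short complex roots), the subsequent ``transport the central character back'' step is vague and not obviously justified by anything in the paper.

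The paper handles this case much more directly, and you should too. When $\numImaginaryBits = \numRealBits = 0$ one checks immediately from Proposition~\ref{propRealForm} and Definition~\ref{defDualGroup} that $\realGroupCover$ is split and $p^{\vee}=p$, $q^{\vee}=q$. Then Case~VII of Theorem~\ref{theoremNumGenReps} together with Case~V of Theorem~\ref{theoremKOrbits} gives four genuine parameters over $-\inv$, and since the two genuine characters there differ exactly in their value on the central element $\mac[\simpleRootc]$, they split two-and-two between the two central characters. Hence $\hcBasisGenInvOrderCentSpinDual{p}{q} = 2$ for \emph{either} choice of dual central character, and no Cayley-transform argument is needed at all. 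Your parenthetical ``so that the count is forced to be $2$ on both sides'' was already the right observation; the indirect argument you appended is both unnecessary and, as stated, incorrect.
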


\begin{proof}
First suppose $\rootSystem(\complexLieAlgebra, \absLieAlgebra)$ contains a short root that is not complex for $\inv$. Since $\hcBasisGenInvOrderCentSpin{p}{q} \ne 0$ by hypothesis, Corollary \ref{corRepFiberCent} implies
\[
\hcBasisGenInvOrderCentSpin{p}{q} = 2^{1  - \realBitsBit\realBitsParityBit}.
\]
By Propositions \ref{propSuppRepsDual} and \ref{propCCDual} we also have $\hcBasisGenInvOrderCentSpinDual{p}{q} \ne 0$ so that
\[
\hcBasisGenInvOrderCentSpinDual{p}{q} = 2^{1  - \imaginaryBitsBit\imaginaryBitsParityBit}
\]
and it remains to show 
\[
2^{1  - \realBitsBit\realBitsParityBit} = 2^{1  - \imaginaryBitsBit\imaginaryBitsParityBit}.
\]
Since $n$ is even, $n - \numComplexBits = \numImaginaryBits + \numRealBits$ is even and $\numImaginaryBits$, $\numRealBits$ must have the same parity (i.e.~$\imaginaryBitsParityBit=\realBitsParityBit$). If $\imaginaryBitsParityBit = \realBitsParityBit = 0$, the result is obvious. If $\imaginaryBitsParityBit = \realBitsParityBit = 1$, then $\imaginaryBitsBit = \realBitsBit = 1$ and the result follows.

Now suppose all short roots in $\rootSystem(\complexLieAlgebra, \absLieAlgebra)$ are complex for $\inv$. This is possible only if $\realGroup$ is split, has even rank, and $p^{\vee} = p$ and $q^{\vee} = q$. Case VII of Theorem \ref{theoremNumGenReps} and Case V of Theorem \ref{theoremKOrbits} now imply
\[
\hcBasisGenInvOrderCentSpin{p}{q} = \hcBasisGenInvOrderCentSpinDual{p}{q} = 2
\]
and the result follows.
\end{proof}

\section{Duality}
\label{secAlmostCentralMa}

Let $\absDiff \in \absLieAlgebraDual$ be a half-integral infinitesimal character and recall $\realGroupCover = \realSpinGroupCover{p}{q}$ denotes the (connected) nonalgebraic double cover of $\realGroup = \realSpinGroup{p}{q}$. If $\realGroupCover$ has even rank, Theorem \ref{theoremNumericalDuality} implies 
\[
\hcBasisGenCentSpinInfOrder{p}{q}{\absDiff} = \hcBasisGenCentSpinInfOrderDual{p}{q}{\absDiff}
\]
for each central character $\chi$. In this section we define a bijection
\[
\weylGroupInvMap : \hcBasisGenCentSpinInf{p}{q}{\absDiff} \to \hcBasisGenCentSpinInfDual{p}{q}{\absDiff}
\]
that commutes with the main operations of the (nonlinear) Kazhdan-Lusztig-Vogan algorithm (Section \ref{secIntro}). We begin by recalling these operations along with some of their basic properties.

\subsection{Integral Cross Actions in $\hcBasisGenSpin{p}{q}$}
\label{ssCaofGT}
The first ingredient in the KLV-algorithm is a version of the cross action (Definition \ref{defCrossAction}) for the \emph{integral} abstract Weyl group on genuine triples. This material will be familiar to most readers and we refer to \cite{VGr} or \cite{VPc} for more details. 

To begin, fix a $\cinv$-stable Cartan subgroup $\realTorusCover \subset \realGroupCover$ and let $\rootSystem(\complexLieAlgebra, \complexLieAlgebra[h])$ be its root system. For each $\simpleRoot \in \rootSystem(\complexLieAlgebra, \complexLieAlgebra[h])$, there is a corresponding \emph{root character} $\rootChar$ of $\realTorusCover$ given by the adjoint action of $\realTorusCover$ on $\rootSpace$. The following lemma implies these characters behave like roots.

\begin{lemma}[\cite{VGr}, Lemma 0.4.5]
\label{lemRootChar}
Suppose we have
\begin{eqnarray*}
\sum_{\simpleRoot \in \rootSystem(\complexLieAlgebra, \complexLieAlgebra[h])} n_{\simpleRoot}\simpleRoot & = & \sum_{\simpleRoot \in \rootSystem(\complexLieAlgebra, \complexLieAlgebra[h])} m_{\simpleRoot}\simpleRoot 
\end{eqnarray*}
in $\complexLieAlgebraDual[h]$ with $n_{\simpleRoota}, m_{\simpleRoota} \in \mathbb{Z}$. Then
\begin{eqnarray*}
\prod_{\simpleRoot \in \rootSystem(\complexLieAlgebra, \complexLieAlgebra[h])} \rootChar^{n_{\simpleRoot}} & = & \prod_{\simpleRoot \in \rootSystem(\complexLieAlgebra, \complexLieAlgebra[h])} \rootChar^{m_{\simpleRoot}}
\end{eqnarray*}
as characters of $\realTorusCover$.
\end{lemma}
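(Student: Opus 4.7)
The plan is to reduce the desired multiplicative identity to an additive identity on the complex torus $\complexTorus$, where characters are rigidly controlled by their differentials.

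First I would rewrite the conclusion as the single statement
\[
\prod_{\simpleRoot} \rootChar^{n_{\simpleRoot} - m_{\simpleRoot}} \;=\; 1
\]
as a character of $\realTorusCover$, so that the hypothesis becomes $\sum_{\simpleRoot}(n_{\simpleRoot} - m_{\simpleRoot})\simpleRoot = 0$ in $\complexLieAlgebraDual[h]$. Renaming the integer coefficients $k_{\simpleRoot} = n_{\simpleRoot} - m_{\simpleRoot}$, the task is to show that any integer relation $\sum_{\simpleRoot} k_{\simpleRoot}\simpleRoot = 0$ on $\complexLieAlgebra[h]$ implies $\prod_{\simpleRoot}\rootChar^{k_{\simpleRoot}} \equiv 1$ on $\realTorusCover$.

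Next I would lift the problem to the complex torus. The adjoint action of $\complexGroup$ on $\complexLieAlgebra$ preserves each root space $\rootSpace$ and therefore produces a holomorphic character $\rootChar^{\complexGroup} : \complexTorus \to \cCross$ whose differential is precisely $\simpleRoot$. The root character $\rootChar$ of Lemma \ref{lemRootChar} is obtained by pulling this character back along the composition $\realTorusCover \to \realTorus \hookrightarrow \complexTorus$ (the adjoint action of $\realTorusCover$ on $\complexLieAlgebra$ is inflated from $\realGroup$, hence factors through $\realTorus$). Consequently, writing $\chi = \prod_{\simpleRoot}(\rootChar^{\complexGroup})^{k_{\simpleRoot}}$, it suffices to verify that $\chi$ is the trivial character of $\complexTorus$.

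The key step is that $\complexTorus$ is a connected complex abelian Lie group, so the exponential map is a surjective holomorphic homomorphism from $\complexLieAlgebra[h]$ onto $\complexTorus$; equivalently $\complexTorus \cong \complexLieAlgebra[h]/L$ for some lattice $L \subset \complexLieAlgebra[h]$. Any holomorphic character of $\complexTorus$ is therefore determined by its differential in $\complexLieAlgebraDual[h]$. By construction
\[
\text{d}\chi \;=\; \sum_{\simpleRoot} k_{\simpleRoot}\,\text{d}(\rootChar^{\complexGroup}) \;=\; \sum_{\simpleRoot} k_{\simpleRoot}\simpleRoot \;=\; 0,
\]
so $\chi \equiv 1$ on $\complexTorus$, and the lemma follows by pullback to $\realTorusCover$.

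There is no real obstacle here: the only subtlety to flag is the factorization of $\rootChar$ through the projection $\realTorusCover \to \realTorus \hookrightarrow \complexTorus$, which is what allows us to trade the disconnected, possibly nonabelian group $\realTorusCover$ for the connected complex torus $\complexTorus$ on which characters rigidly exponentiate from linear functionals.
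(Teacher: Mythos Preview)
Your argument is correct and is essentially the standard one: the root characters $\rootChar$ are defined via the adjoint action, which factors through the projection $\realTorusCover \to \realTorus \hookrightarrow \complexTorus$, and on the connected complex torus a holomorphic character is determined by its differential. The paper does not supply its own proof of this lemma; it simply cites \cite{VGr}, Lemma 0.4.5, so there is nothing further to compare against.
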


In particular, we can use root characters to translate between representations of $\realTorusCoverId$ whose differentials differ by a sum of roots. To this end we have the following lemmas whose proofs are similar.

\begin{lemma}
\label{lemRhoRootSum}
Suppose $\diff \in \complexLieAlgebraDual[h]$ is regular and let $\simpleRoota \in \rootSystem(\complexLieAlgebra,\complexLieAlgebra[h])$. Then in the notation of Section \ref{ssHCModIntro}
\begin{eqnarray*}
\halfSumIm[{\rootReflection[\simpleRoota](\diff)}] & = & \halfSumIm[\diff] - \sum_{\simpleRootb \in\rootSystem(\complexLieAlgebra,\complexLieAlgebra[h])}n_{\simpleRootb}\simpleRootb \\
2\halfSumImCpt[{\rootReflection[\simpleRoota](\diff)}] & = & 2\halfSumImCpt[\diff] - \sum_{\simpleRootb\in\rootSystem(\complexLieAlgebra,\complexLieAlgebra[h])}2\cdot m_{\simpleRootb}\simpleRootb
\end{eqnarray*}
with $n_{\simpleRootb},m_{\simpleRootb} \in \mathbb{Z}$.
\end{lemma}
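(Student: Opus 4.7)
The plan is to prove both equalities by a direct count of how many imaginary (respectively compact imaginary) roots change sign when the positivity is recomputed with respect to $\rootReflection[\simpleRoota](\diff)$ instead of $\diff$.

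First I would set up notation. Since $\diff$ is regular, no imaginary root is orthogonal to $\diff$, and the positive imaginary system with respect to $\diff$ is
\[
R^{+}(\diff) \;=\; \left\{\simpleRootb \in \imaginaryRoots(\complexLieAlgebra,\complexLieAlgebra[h]) \;\big|\; (\diff,\simpleRootb)>0\right\},
\]
so that $\halfSumIm[\diff] = \tfrac{1}{2}\sum_{\simpleRootb \in R^{+}(\diff)} \simpleRootb$, and similarly for $\rootReflection[\simpleRoota](\diff)$. Define the flip set
\[
S \;=\; R^{+}(\diff) \cap \bigl(-R^{+}(\rootReflection[\simpleRoota](\diff))\bigr),
\]
consisting of the imaginary roots that are positive with respect to $\diff$ but become negative with respect to $\rootReflection[\simpleRoota](\diff)$. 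Because $\imaginaryRoots$ is $\cinv$-invariant but otherwise unconstrained by $\diff$, the set of imaginary roots itself is unchanged; only their positivity is.

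Next I would carry out the computation. Since $R^{+}(\rootReflection[\simpleRoota](\diff)) = \bigl(R^{+}(\diff)\setminus S\bigr)\cup \bigl(-S\bigr)$, summing yields
\[
\sum_{\simpleRootb \in R^{+}(\rootReflection[\simpleRoota](\diff))}\simpleRootb \;=\; \sum_{\simpleRootb \in R^{+}(\diff)}\simpleRootb \;-\; 2\sum_{\simpleRootb \in S}\simpleRootb,
\]
and dividing by two gives
\[
\halfSumIm[{\rootReflection[\simpleRoota](\diff)}] \;=\; \halfSumIm[\diff] \;-\; \sum_{\simpleRootb \in S}\simpleRootb.
\]
Setting $n_{\simpleRootb}=1$ for $\simpleRootb \in S$ and $n_{\simpleRootb}=0$ otherwise produces the first equation, since $S \subset \imaginaryRoots \subset \rootSystem(\complexLieAlgebra,\complexLieAlgebra[h])$ and every $n_{\simpleRootb}$ is an integer.

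For the second equation I would rerun the identical argument with the set $\cptImaginaryRoots[\cinv](\complexLieAlgebra,\complexLieAlgebra[h])$ of compact imaginary roots (a root subsystem by Corollary \ref{corCptRoots}) in place of $\imaginaryRoots$, producing the flip subset $S_{c} \subset \cptImaginaryRoots$ with
\[
\halfSumImCpt[{\rootReflection[\simpleRoota](\diff)}] \;=\; \halfSumImCpt[\diff] \;-\; \sum_{\simpleRootb \in S_{c}}\simpleRootb.
\]
Multiplying through by $2$ delivers the claimed form with $m_{\simpleRootb}=1$ for $\simpleRootb \in S_{c}$ and $m_{\simpleRootb}=0$ otherwise. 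There is no real obstacle here; the only mild subtlety is to make sure one tracks the factor of two correctly so that the coefficients appearing in the compact case are genuinely even, which is automatic once one writes $2\halfSumImCpt$ as a plain sum (not a half-sum) over positive compact imaginary roots.
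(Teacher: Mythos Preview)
Your proposal is correct and follows essentially the same approach as the paper: the paper states that this lemma's proof is similar to that of the next lemma (Lemma~\ref{lemRhoRootSumIm}), and there it defines the flip set $\mathcal{S} = \{\simpleRootb \in \imaginaryRoots[\cinv] : (\simpleRootb,\diff)>0 \text{ and } (\simpleRootb,\rootReflection[\simpleRoota](\diff))<0\}$ and computes $\halfSumIm[{\rootReflection[\simpleRoota](\diff)}] = \halfSumIm[\diff] - \sum_{\simpleRootb \in \mathcal{S}}\simpleRootb$ exactly as you do. Your treatment of the compact-imaginary case and the bookkeeping with the factor of two are also correct.
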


\begin{lemma}
\label{lemRhoRootSumIm}
In the setting of Lemma \ref{lemRhoRootSum}, suppose $\simpleRoota \in \imaginaryRoots[\cinv](\complexLieAlgebra,\complexLieAlgebra[h])$ is an imaginary root. Then
\begin{eqnarray*}
\halfSumIm[{\rootReflection[\simpleRoota](\diff)}] & = & \left\{\begin{array}{ll}
\halfSumIm[\diff] - \displaystyle\sum_{\underset{\simpleRootb\text{ long}}{\simpleRootb\in\rootSystem(\complexLieAlgebra,\complexLieAlgebra[h])}}n_{\simpleRootb}\simpleRootb & \simpleRoot \text { long} \\
\halfSumIm[\diff] - \displaystyle\sum_{\underset{\simpleRootb\text{ long}}{\simpleRootb\in\rootSystem(\complexLieAlgebra,\complexLieAlgebra[h])}}n_{\simpleRootb}\simpleRootb - \simpleRoot & \simpleRoot \text { short}
\end{array}
\right.
\end{eqnarray*}
with $n_{\simpleRootb},m_{\simpleRootb} \in \mathbb{Z}$.
\end{lemma}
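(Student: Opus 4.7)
The plan is to reduce the statement to a single inner-product computation and then analyze the outcome by cases using the structure of the imaginary root system. Since $\simpleRoota$ is imaginary, the reflection $\rootReflection[\simpleRoota]$ commutes with $\cinv$ and so preserves $\imaginaryRoots[\cinv](\complexLieAlgebra,\complexLieAlgebra[h])$. Because positivity with respect to $\diff$ and positivity with respect to $\rootReflection[\simpleRoota](\diff)$ are related by the obvious translation, the half sum of positive imaginary roots transforms covariantly, giving
\[
\halfSumIm[{\rootReflection[\simpleRoota](\diff)}] \;=\; \rootReflection[\simpleRoota]\halfSumIm[\diff] \;=\; \halfSumIm[\diff] - c\,\simpleRoota, \qquad c := (\halfSumIm[\diff], \rootCheck[\simpleRoota]).
\]
The entire content of the lemma is therefore captured in the behavior of the single scalar $c$.

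I would next invoke the decomposition $\imaginaryRoots[\cinv](\complexLieAlgebra,\complexLieAlgebra[h]) \cong B_{m} \times A_{1}^{k}$ from Proposition \ref{propInvWeylGroups}; the two factors are orthogonal in the ambient space, so $\halfSumIm[\diff]$ splits as a sum $\rho^{+}_{B_{m}} + \rho^{+}_{A_{1}^{k}}$, with each summand a Weyl-group image of the standard $\rho$ for its factor. The $B_{m}$-piece has the explicit form $\sum_{j}\epsilon_{j}(m - \sigma(j) + \tfrac{1}{2})\,e_{j}$ for some signs $\epsilon_{j}$ and permutation $\sigma$, which lies in the $B_{m}$ weight lattice and hence pairs integrally with every imaginary coroot; the $A_{1}^{k}$-piece is no different. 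Thus $c \in \mathbb{Z}$.

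The final step is a case analysis. If $\simpleRoota$ is long, then $c\,\simpleRoota$ is already an integer multiple of a single long root, and the claim holds trivially with $n_{\simpleRoota} = c$ and all other coefficients zero. If $\simpleRoota$ is short, it lies in the $B_{m}$-factor (a $W(B_{m})$-translate of some $e_{i}$) and is orthogonal to the $A_{1}^{k}$-piece, so $c = \pm 2(m - \sigma(i) + \tfrac{1}{2})$ is an \emph{odd} integer. Writing $c = 2c' + 1$ and applying the identity $2\simpleRoota = (\simpleRoota + \simpleRootb) + (\simpleRoota - \simpleRootb)$ for any long root $\simpleRootb$ of the $B_{m}$-factor orthogonal to $\simpleRoota$ (such $\simpleRootb$ exists whenever $m \ge 2$; if $m = 1$ then $c = 1$ and the formula reads $c\,\simpleRoota = \simpleRoota$ with no long-root contribution), one obtains
\[
c\,\simpleRoota \;=\; c'\bigl((\simpleRoota + \simpleRootb) + (\simpleRoota - \simpleRootb)\bigr) + \simpleRoota,
\]
which is an integer combination of long imaginary roots plus the single copy of $\simpleRoota$ demanded by the statement. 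The principal obstacle is the parity assertion in the short case: it is the unique point at which the structural input $\imaginaryRoots \cong B_{m} \times A_{1}^{k}$ enters in an essential way, and it is what forces the additional $-\simpleRoota$ correction term in the formula rather than a pure $\mathbb{Z}$-combination of long roots.
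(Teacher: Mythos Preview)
Your argument is correct and takes a genuinely different route from the paper. The paper works with the flip set $\mathcal{S} = \{\beta \text{ imaginary} : (\beta,\phi)>0,\ (\beta,s_\alpha\phi)<0\}$, writes $\rho_{\mathrm i}^{s_\alpha\phi} = \rho_{\mathrm i}^{\phi} - \sum_{\beta\in\mathcal{S}}\beta$, and then counts the short roots in $\mathcal{S}$ directly: when $\alpha$ is long there are either zero or two (and two short roots in $\mathcal{S}$ sum to a long root), while when $\alpha$ is short the only short root in $\mathcal{S}$ is $\alpha$ itself. You instead collapse the whole computation to the reflection identity $\rho_{\mathrm i}^{s_\alpha\phi} = s_\alpha\rho_{\mathrm i}^{\phi} = \rho_{\mathrm i}^{\phi} - c\,\alpha$ and then analyze the scalar $c = (\rho_{\mathrm i}^{\phi},\alpha^{\vee})$, using the explicit $B_m$-coordinates of $\rho$ to see that $c$ is odd when $\alpha$ is short. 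The paper's approach is purely combinatorial and avoids any coordinate computation; yours is more algebraic and isolates the obstruction as a parity. One small point: in your $m=1$ remark you assert $c=1$, but of course $c=\pm 1$ depending on the sign of $\alpha$ relative to $\phi$; when $c=-1$ you still need to absorb $-2\alpha$ into long roots, which is fine since the sum in the statement ranges over the \emph{ambient} root system $\Delta(\mathfrak g,\mathfrak h)$ (type $B_n$, $n\ge 2$), not just the imaginary roots.
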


\begin{proof} Let
\begin{eqnarray*}
\mathcal{S} & = & \left\{\simpleRootb \in \imaginaryRoots[\cinv](\complexLieAlgebra,\complexLieAlgebra[h]) \mid (\simpleRootb,\diff) > 0 \text { and } (\simpleRootb, \rootReflection[\simpleRoota](\diff)) < 0 \right\}
\end{eqnarray*}
denote the set of imaginary roots that are positive with respect to $\diff$ and negative with respect to $\rootReflection[\simpleRoota]\diff$. Then
\begin{eqnarray*}
\halfSumIm[{\rootReflection[\simpleRoota](\diff)}] & = & \halfSumIm[\diff] - \frac{1}{2}\sum_{\simpleRootb \in \mathcal{S}}\simpleRootb + \frac{1}{2}\sum_{\simpleRootb \in \mathcal{S}}(-\simpleRootb) \\
& = & \halfSumIm[\diff] - \sum_{\simpleRootb \in \mathcal{S}}\simpleRootb.
\end{eqnarray*}
Suppose first that $\simpleRoota$ is long. If every other root in $\mathcal{S}$ is also long, the result clearly follows. Otherwise there are exactly two short roots in $\mathcal{S}$, say $\simpleRootc_{1}$ and $\simpleRootc_{2}$ and we have
\begin{eqnarray*}
\halfSumIm[{\rootReflection[\simpleRoota](\diff)}] & = & \halfSumIm[\diff] - (\sum_{\underset{\simpleRootb\text{ long}}{\simpleRootb \in \mathcal{S}}}\simpleRootb) - \simpleRootc_{1} - \simpleRootc_{2} \\
& = & \halfSumIm[\diff] - (\sum_{\underset{\simpleRootb\text{ long}}{\simpleRootb \in \mathcal{S}}}\simpleRootb) - \simpleRootc
\end{eqnarray*}
where $\simpleRootc = \simpleRootc_{1} + \simpleRootc_{2}$ is a long root. If $\simpleRoota$ is short, then $\simpleRoota$ is the only such root in $\mathcal{S}$ and the result follows.
\end{proof}

\begin{lemma}
\label{lemRhoRootSumCmplx}
In the setting of Lemma \ref{lemRhoRootSum}, suppose $\simpleRoota \in \rootSystem(\complexLieAlgebra,\complexLieAlgebra[h])$ is a complex root. Then
\begin{eqnarray*}
\halfSumIm[{\rootReflection[\simpleRoota](\diff)}] & = & \left\{\begin{array}{ll}
\halfSumIm[\diff] - \displaystyle\sum_{\underset{\simpleRootb\text{ long}}{\simpleRootb\in\rootSystem(\complexLieAlgebra,\complexLieAlgebra[h])}}n_{\simpleRootb}\simpleRootb - \cmplxBit\simpleRootc & \simpleRoota \text { long} \\
\halfSumIm[\diff] - \displaystyle\sum_{\underset{\simpleRootb\text{ long}}{\simpleRootb\in\rootSystem(\complexLieAlgebra,\complexLieAlgebra[h])}}n_{\simpleRootb}\simpleRootb & \simpleRoota \text { short}
\end{array}
\right.
\end{eqnarray*}
where $\cmplxBit \in \left\{0,1\right\}$ and $\simpleRootc \in \rootSystem(\complexLieAlgebra,\complexLieAlgebra[h])$ is a short root.
\end{lemma}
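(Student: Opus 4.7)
The strategy mirrors the proof of Lemma \ref{lemRhoRootSumIm}. Set
\[
\mathcal{S} \;=\; \bigl\{\simpleRootb \in \imaginaryRoots[\cinv](\complexLieAlgebra,\complexLieAlgebra[h]) \mid (\simpleRootb,\diff) > 0 \text{ and } (\simpleRootb,\rootReflection[\simpleRoota]\diff) < 0 \bigr\},
\]
so that $\halfSumIm[{\rootReflection[\simpleRoota](\diff)}] = \halfSumIm[\diff] - \sum_{\simpleRootb \in \mathcal{S}}\simpleRootb$ exactly as before. Membership in $\mathcal{S}$ forces $(\simpleRootb,\rootCheck[\simpleRoota]) \ne 0$, so the task reduces to identifying which imaginary roots are not orthogonal to $\simpleRoota$ and controlling their lengths.

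Next I would split $\mathcal{S}$ into its long and short parts. Every long imaginary root in $\mathcal{S}$ contributes itself with integer coefficient $1$, which already has the shape $\sum n_{\simpleRootb}\simpleRootb$ required by the statement; so the entire content of the lemma lies in bounding the short contribution.

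For the case analysis I would work in the standard coordinates of Definition \ref{defAbsCartan}. If $\simpleRoota$ is a short complex root, write $\simpleRoota = \pm\ei$; then $\inv(\ei) \ne \pm\ei$, so any short imaginary root $\pm\ei[k]$ must satisfy $k \ne i$ and is thus orthogonal to $\simpleRoota$. Hence $\mathcal{S}$ contains no short roots, giving the stated formula with no short correction. If $\simpleRoota = \pm\ei \pm \ei[j]$ is a long complex root, I would argue that at most one of $\ei, \ei[j]$ can be imaginary (if both were, then $\simpleRoota$ itself would be fixed by $\inv$, contradicting that $\simpleRoota$ is complex), so at most one short imaginary root, say $\pm\ei[i_0]$, pairs nontrivially with $\simpleRoota$. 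A direct sign comparison of $(\ei[i_0],\diff)$ and $(\ei[i_0],\diff) - (\ei[i_0],\rootCheck[\simpleRoota])(\simpleRoota,\diff)$ then shows that $\pm\ei[i_0]$ and its negative cannot simultaneously lie in $\mathcal{S}$, which forces the short contribution to be of the form $\cmplxBit\simpleRootc$ with $\cmplxBit \in \{0,1\}$ and $\simpleRootc$ a single short root.

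The main obstacle is the bookkeeping in the long-complex case: one has to verify simultaneously that (i) at most one of $\ei, \ei[j]$ is imaginary, (ii) the pairing $(\ei[i_0], \rootCheck[\simpleRoota]) = \pm 1$ produces the correct jump of $(\simpleRootb,\diff)$ across zero, and (iii) the ambiguity in $\simpleRootc$ (and the $\pm$ sign absorbed into $\cmplxBit$) does not interfere with the integrality of the long-root coefficients. All three points are essentially verifications using the explicit description of the type $B_n$ root system together with the constraint $\inv(\simpleRoota) \ne \pm\simpleRoota$, so no genuinely new idea beyond the analysis in Lemma \ref{lemRhoRootSumIm} is required.
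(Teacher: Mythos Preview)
Your proposal is correct and follows precisely the approach the paper intends: the paper omits the proof of this lemma, stating only that the proofs of Lemmas \ref{lemRhoRootSum}--\ref{lemRhoRootSumCmplx} are similar, and your argument is the natural extension of the proof of Lemma \ref{lemRhoRootSumIm} to the complex case. The key observation---that at most one of $\ei,\ei[j]$ can be a short imaginary root when $\simpleRoota=\pm\ei\pm\ei[j]$ is complex---is exactly the right one, and the rest is routine bookkeeping in the type $B_n$ coordinates.
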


In particular, reflecting $\diff$ by $\rootReflection[\simpleRoot]$ alters the elements $\halfSumIm$ and  $\halfSumImCpt \in \complexLieAlgebraDual[h]$ by integral sums of roots. A similar result holds for $\diff$ itself if we restrict to a certain subgroup of $\weylGroup(\complexLieAlgebra, \complexLieAlgebra[h])$ (Lemma \ref{lemSumOfRoots}).

\begin{definition}[\cite{VGr}, Definition 7.2.16]
Suppose $\diff \in \complexLieAlgebraDual[h]$ is a regular element and let
\begin{eqnarray*}
\rootSystem(\complexLieAlgebra, \complexLieAlgebra[h])(\diff) & = & \left\{\simpleRoot \in \rootSystem(\complexLieAlgebra, \complexLieAlgebra[h]) \mid (\diff, \rootCheck) \in \mathbb{Z}\right\}
\end{eqnarray*}
denote the set of \emph{integral roots} for $\diff$. Then \rootSystem(\complexLieAlgebra, \complexLieAlgebra[h])(\diff) is a subroot system of $\rootSystem(\complexLieAlgebra, \complexLieAlgebra[h])$ and we denote the corresponding \emph{integral Weyl group} by $\weylGroup(\complexLieAlgebra, \complexLieAlgebra[h])(\diff) \subset \weylGroup(\complexLieAlgebra, \complexLieAlgebra[h])$.
\end{definition}

\begin{lemma}[\cite{VGr}, Lemma 7.2.17]
\label{lemSumOfRoots}
In the above setting, $\weylElt$ is an element of $\weylGroup(\complexLieAlgebra, \complexLieAlgebra[h])(\diff)$ if and only if
\begin{eqnarray*}
\weylElt\diff - \diff & = & \sum_{\simpleRootb \in \rootSystem(\complexLieAlgebra,\complexLieAlgebra[h])}n_{\simpleRootb}\simpleRootb~ (n_{\simpleRootb} \in \mathbb{Z}).
\end{eqnarray*}
In other words, $\weylElt \in \weylGroup(\complexLieAlgebra, \complexLieAlgebra[h])(\diff)$ if and only if $\weylElt\diff - \diff$ can be written as an integral sum of roots.
\end{lemma}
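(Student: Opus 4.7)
The plan is to prove both implications separately; the forward direction is a routine computation, while the converse rests on structural input from the theory of affine reflection groups.

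For the forward implication, I would assume $\weylElt \in \weylGroup(\complexLieAlgebra, \complexLieAlgebra[h])(\diff)$ and write $\weylElt = \rootReflection[{\beta_1}]\cdots\rootReflection[{\beta_k}]$ with each $\beta_i$ an integral root, then induct on $k$. The base case is the identity $\rootReflection[\simpleRootb]\diff - \diff = -(\diff,\rootCheck[\simpleRootb])\simpleRootb$, which is an integer multiple of $\simpleRootb$ precisely because $\simpleRootb$ is integral. For the inductive step, setting $\weylElt' = \rootReflection[{\beta_2}]\cdots\rootReflection[{\beta_k}]$, I would decompose
\[
\weylElt\diff - \diff = \rootReflection[{\beta_1}](\weylElt'\diff - \diff) + (\rootReflection[{\beta_1}]\diff - \diff).
\]
The first summand is the image under $\rootReflection[{\beta_1}]$ of an integer sum of roots by the inductive hypothesis, and since the Weyl group permutes $\rootSystem(\complexLieAlgebra,\complexLieAlgebra[h])$ it remains an integer sum; the second summand is handled by the base case.

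For the converse, let $S = \{\weylElt \in \weylGroup(\complexLieAlgebra,\complexLieAlgebra[h]) : \weylElt\diff - \diff \in \mathbb{Z}\rootSystem(\complexLieAlgebra,\complexLieAlgebra[h])\}$, a subgroup containing $\weylGroup(\complexLieAlgebra,\complexLieAlgebra[h])(\diff)$ by the forward direction. The critical step, and what I expect to be the main obstacle, is recognizing $S$ as a reflection subgroup. My approach is to identify $S$ with the image in $\weylGroup(\complexLieAlgebra,\complexLieAlgebra[h])$ of the stabilizer of $\diff$ under the natural action of the affine Weyl group $\widetilde{\weylGroup} := \weylGroup(\complexLieAlgebra,\complexLieAlgebra[h]) \ltimes \mathbb{Z}\rootSystem(\complexLieAlgebra,\complexLieAlgebra[h])$ on $\complexLieAlgebra[h]^{*}$. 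Indeed, an affine element $t_\mu \weylElt$ fixes $\diff$ exactly when $\weylElt\diff - \diff = -\mu \in \mathbb{Z}\rootSystem(\complexLieAlgebra,\complexLieAlgebra[h])$; moreover the projection $\widetilde{\weylGroup} \to \weylGroup(\complexLieAlgebra,\complexLieAlgebra[h])$ restricts to an isomorphism from $\widetilde{\weylGroup}_{\diff}$ onto $S$, since $\weylElt = 1$ forces $\mu = 0$.

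By Steinberg's theorem applied to the Coxeter group $\widetilde{\weylGroup}$, the stabilizer $\widetilde{\weylGroup}_{\diff}$ is generated by the affine reflections fixing $\diff$. An affine reflection $t_{n\simpleRootb}\rootReflection[\simpleRootb]$ fixes $\diff$ precisely when $(\diff,\rootCheck[\simpleRootb]) = n \in \mathbb{Z}$, i.e., precisely when $\simpleRootb$ is an integral root; its image under the projection is the linear reflection $\rootReflection[\simpleRootb]$. Consequently $S$ is generated by reflections in integral roots, so $S \subseteq \weylGroup(\complexLieAlgebra,\complexLieAlgebra[h])(\diff)$, which together with the opposite inclusion yields equality and completes the proof.
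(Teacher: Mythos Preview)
The paper does not supply its own proof of this lemma; it is quoted verbatim from Vogan's book \cite{VGr}, Lemma 7.2.17, and used as a black box.  Your argument is correct and is in fact the standard proof one finds in the literature (and essentially the one in \cite{VGr}): the forward direction by induction on a reduced expression in integral simple reflections, and the converse by identifying the subgroup $S$ with the projection of the isotropy group $\widetilde{\weylGroup}_{\diff}$ in the affine Weyl group $\widetilde{\weylGroup} = \weylGroup \ltimes \mathbb{Z}\rootSystem$, then invoking the fact that point-stabilizers in a Euclidean reflection group are generated by the reflections they contain.

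One small technical caveat worth making explicit: the reflection-group/chamber argument for $\widetilde{\weylGroup}_{\diff}$ is most directly available when $\diff$ lies in the real affine space on which $\widetilde{\weylGroup}$ acts.  For a general complex $\diff = \diff_{1} + i\diff_{2}$ (real and imaginary parts taken with respect to the real span of the coroots), the condition $\weylElt\diff - \diff \in \mathbb{Z}\rootSystem$ forces $\weylElt\diff_{2} = \diff_{2}$; one first passes to the stabilizer of $\diff_{2}$ in $\weylGroup$, which is a parabolic subgroup and hence itself the Weyl group of a root subsystem, and then applies your affine argument to $\diff_{1}$ inside that subsystem.  In the context of this paper $\absDiff$ is half-integral (Assumption~\ref{ass1}), so $\diff$ is automatically real in the relevant sense and the issue never arises.
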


We now define the cross action of the abstract \emph{integral} Weyl group on genuine triples for $\realGroupCover$.

\begin{definition}[\cite{VGr}, Definition 8.3.1]
\label{defCrossActionTriples}
Let $\weylGroup = \weylGroup(\complexLieAlgebra, \absLieAlgebra)$ denote the abstract Weyl group and let $\weylElt \in \weylGroup(\absDiff) = \weylGroup(\complexLieAlgebra, \absLieAlgebra)(\absDiff)$. Suppose $\genTripleInf$ is a genuine triple for $\realGroupCover$ and write $\weylEltDiff = \absConj(\weylElt)$ (Definition \ref{defAbsConjMap}) for the image of $\weylElt$ in $\rootSystem(\complexLieAlgebra, \complexLieAlgebra[h])$. Then $\weylEltDiff \in \weylGroup(\complexLieAlgebra, \complexLieAlgebra[h])(\diff)$ and we can write
\[
\weylElt \cross \diff - \diff = \weylEltDiffInv(\diff) - \diff = \sum_{\simpleRoot \in \rootSystem(\complexLieAlgebra, \complexLieAlgebra[h])} n_{\simpleRoot}\simpleRoot ~ (n_{\simpleRoot} \in \mathbb{Z})
\]
by Lemma \ref{lemSumOfRoots}. 
Similarly, Lemma \ref{lemRhoRootSum} implies
\begin{eqnarray*}
(\halfSumIm[{\weylElt \cross \diff}] - 2\halfSumImCpt[{\weylElt \cross \diff)}]) - (\halfSumIm - 2\halfSumImCpt) & = & \sum_{\simpleRoot \in \rootSystem(\complexLieAlgebra, \complexLieAlgebra[h])} m_{\simpleRoot}\simpleRoot ~ (m_{\simpleRoot} \in \mathbb{Z}).
\end{eqnarray*}
Then
\begin{eqnarray*}
\varphi & = & \sum_{\simpleRoot \in \rootSystem(\complexLieAlgebra, \complexLieAlgebra[h])} (n_{\simpleRoot} + m_{\simpleRoot})\simpleRoot
\end{eqnarray*}
gives a well-defined character of $\realTorusCover$ 
\begin{eqnarray*}
\Phi & = & \prod_{\simpleRoot \in \rootSystem(\complexLieAlgebra, \complexLieAlgebra[h])} \rootChar^{(n_{\simpleRoot}+m_{\simpleRoot})}
\end{eqnarray*}
by Lemma \ref{lemRootChar}. We define the \emph{cross action of} $\genTripleInf$ by $\weylElt$ via
\begin{eqnarray*}
\weylElt \cross \torusChar & = & \torusChar \cdot \Phi \\
\weylElt \cross \genTripleInf & = & \genTripleInfDiffChar{\weylElt \cross \diff}{\weylElt \cross \genTorusChar}.
\end{eqnarray*}
\end{definition}

\begin{remark}
\label{remAbsRegCrossAction}
Definition \ref{defCrossActionTriples} describes the (abstract) cross action of $\weylGroup(\diff)$ on genuine triples for $\realGroupCover$. If $\realTorusCover \subset \realGroupCover$ is a Cartan subgroup, we will occasionally need the (regular) cross action of $\weylGroup(\complexLieAlgebra, \complexLieAlgebra[h])(\diff)$ on genuine triples of the form $\genTripleInf$. The definition is obvious from Definition \ref{defCrossActionTriples} and will be denoted the same way.
\end{remark}

The following lemma describes a special case when the cross action of a genuine triple is easy to compute.

\begin{lemma}[\cite{VGr}, Lemma 8.3.2]
\label{lemCaVgr}
Let $\genTripleInf$ be a genuine triple and suppose $\simpleRoota \in \rootSystem(\absDiff) = \rootSystem(\complexLieAlgebra, \absLieAlgebra)(\absDiff)$ is a \emph{simple} abstract root. Write $\simpleRootb = \absConj(\simpleRoota)$ for the image of $\simpleRoota$ in $\rootSystem(\complexLieAlgebra, \complexLieAlgebra[h])(\diff)$ and set $m=(\diff, \rootCheck[\simpleRootb]) \in \mathbb{Z}$. Then if $\simpleRoota$ is compact imaginary
\begin{eqnarray*}
\rootReflection[\simpleRoota] \cross \genTripleInf & = & \genTripleInfDiffChar{\diff - m\simpleRootb}{\genTorusChar \cdot \rootChar[\simpleRootb]^{-(m-1)}},
\end{eqnarray*}
if $\simpleRoot$ is noncompact imaginary
\begin{eqnarray*}
\rootReflection[\simpleRoota] \cross \genTripleInf & = & \genTripleInfDiffChar{\diff - m\simpleRootb}{\genTorusChar \cdot \rootChar[\simpleRootb]^{-(m+1)}},
\end{eqnarray*}
and if $\simpleRoot$ is real or complex
\begin{eqnarray*}
\rootReflection[\simpleRoota] \cross \genTripleInf & = & \genTripleInfDiffChar{\diff - m\simpleRootb}{\genTorusChar \cdot \rootChar[\simpleRootb]^{-m}}.
\end{eqnarray*}
\end{lemma}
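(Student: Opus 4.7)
The plan is to apply Definition \ref{defCrossActionTriples} directly and track the net shift in $\diff + (\halfSumIm - 2\halfSumImCpt)$ produced by the cross action. By Lemma \ref{lemRootChar}, the character $\Phi$ produced in Definition \ref{defCrossActionTriples} depends only on the element
\[
\Delta \;=\; (\rootReflection[\simpleRootb]\diff - \diff) + \left(\halfSumIm[{\rootReflection[\simpleRootb]\diff}] - 2\halfSumImCpt[{\rootReflection[\simpleRootb]\diff}]\right) - (\halfSumIm - 2\halfSumImCpt)
\]
of $\complexLieAlgebraDual[h]$, and not on how one writes $\Delta$ as an integral combination of roots. Hence it suffices to show in each case that $\Delta$ equals the predicted integer multiple $k$ of $\simpleRootb$; the new character is then $\genTorusChar \cdot \rootChar[\simpleRootb]^{k}$.

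The first summand of $\Delta$ is immediate: because $\simpleRoota$ is simple in $\rootSystem(\absDiff)$, its image $\simpleRootb$ is simple in $\rootSystem(\complexLieAlgebra, \complexLieAlgebra[h])(\diff)$, so $\rootReflection[\simpleRootb](\diff) = \diff - m\simpleRootb$. I will also use that when $\simpleRootb$ lies in a $\cinv$-stable subsystem such as $\imaginaryRoots[\cinv]$ or $\cptImaginaryRoots[\cinv]$, it remains simple in the positive cone induced by $\diff$: any nontrivial decomposition of $\simpleRootb$ as a sum of positive roots in such a subsystem would yield positive integral roots summing to $\simpleRootb$, contradicting the simplicity of $\simpleRoota$ in $\rootSystem(\absDiff)$.

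In the compact imaginary case, $\rootReflection[\simpleRootb]$ preserves both $\imaginaryRoots[\cinv]$ and $\cptImaginaryRoots[\cinv]$, and $\simpleRootb$ is simple in the positive systems of both induced by $\diff$, so $\halfSumIm[{\rootReflection[\simpleRootb]\diff}] = \halfSumIm - \simpleRootb$ and $2\halfSumImCpt[{\rootReflection[\simpleRootb]\diff}] = 2\halfSumImCpt - 2\simpleRootb$; summing yields $\Delta = -(m-1)\simpleRootb$. In the noncompact imaginary case, $\rootReflection[\simpleRootb]$ still permutes $\imaginaryRoots[\cinv]$ so that $\halfSumIm$ shifts by $-\simpleRootb$, but $\rootReflection[\simpleRootb]$ shuffles compact and noncompact roots via the grading identity $\grading(\rootReflection[\simpleRootb]\gamma) \equiv \grading(\gamma) + (\gamma, \rootCheck[\simpleRootb]) \pmod 2$; a direct pairing argument matches each compact root that changes sign with a compact counterpart to show that $\halfSumImCpt$ is unaffected, yielding $\Delta = -(m+1)\simpleRootb$. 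The real case is essentially immediate: orthogonality of real and imaginary roots (which follows from $\cinv\gamma = -\gamma$ and $\cinv\beta = \beta$ forcing $(\gamma, \beta) = 0$) implies $\rootReflection[\simpleRootb]$ fixes every imaginary root, so both half-sums are preserved and $\Delta = -m\simpleRootb$.

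The complex case is the main technical obstacle, since $\rootReflection[\simpleRootb]$ can carry imaginary roots to nonimaginary ones and genuinely shuffle the imaginary subsystem. The proposed approach is to apply Lemmas \ref{lemRhoRootSum}, \ref{lemRhoRootSumIm}, and \ref{lemRhoRootSumCmplx} to expand the change in $\halfSumIm - 2\halfSumImCpt$ as an integral sum of roots, pair each imaginary root $\gamma$ whose $\diff$-sign is flipped by $\rootReflection[\simpleRootb]$ with its partner $\rootReflection[\simpleRootb]\gamma$, and verify that each pair contributes an integer multiple of $\simpleRootb$ modulo the relations of Lemma \ref{lemRootChar}; the short-root correction $\cmplxBit\simpleRootc$ appearing in Lemma \ref{lemRhoRootSumCmplx} is absorbed by a matching correction in the compact half-sum. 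The net result is $\Delta = -m\simpleRootb$, giving the factor $\rootChar[\simpleRootb]^{-m}$. This bookkeeping in the complex case is the heart of the proof.
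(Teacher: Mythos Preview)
You have misread the hypothesis. In this paper ``simple abstract root'' means an element of $\simpleRoots$, the fixed simple system for the full abstract root system $\rootSystem = \rootSystem(\complexLieAlgebra,\absLieAlgebra)$ (Definition~\ref{defAbsCartan}); the clause $\simpleRoota \in \rootSystem(\absDiff)$ merely adds that this simple root happens to be integral. You instead treat $\simpleRoota$ as simple only in the integral subsystem $\rootSystem(\absDiff)$, and this is where all the difficulty in your complex case comes from.

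Under the correct hypothesis the paper's argument is a one-liner: $\simpleRootb$ is simple in the full positive system for $\diff$, so $\rootReflection[\simpleRootb]$ permutes every positive root except $\simpleRootb$ itself. Hence the set of positive imaginary roots, and the set of positive compact imaginary roots, are unchanged by passing from $\diff$ to $\rootReflection[\simpleRootb]\diff$ unless $\simpleRootb$ itself lies in that set. This immediately gives $\halfSumIm$ and $\halfSumImCpt$ unchanged in the real and complex cases, $\halfSumIm$ shifted by $-\simpleRootb$ and $\halfSumImCpt$ unchanged in the noncompact case, and both shifted by $-\simpleRootb$ in the compact case. No pairing argument, no appeal to Lemmas~\ref{lemRhoRootSumIm} or~\ref{lemRhoRootSumCmplx}, is needed.

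Under your weaker reading the lemma is in fact false. In $B_{2}$ with $\absDiff = (3/2,1)$ the root $\simpleRoota = e_{1}$ is simple in $\rootSystem(\absDiff) = \{\pm e_{1},\pm e_{2}\}$ but not in $\rootSystem$; one checks directly (say with $\inv = 1$ and all roots compact) that $\varphi = 0$ rather than $-(m-1)\simpleRootb = -2e_{1}$. So your proposed bookkeeping for the complex case cannot succeed, and your justification in the imaginary cases (``a decomposition in the imaginary subsystem would yield positive integral roots'') is also unfounded, since long imaginary roots need not be integral.
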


\begin{proof}
We have
\begin{eqnarray*}
\rootReflection[\simpleRootb]^{-1}(\diff) & = & \rootReflection[\simpleRootb](\diff) \\ 
& = & \diff - (\diff, \rootCheck[\simpleRootb])\simpleRootb \\
& = & \diff - m\simpleRootb
\end{eqnarray*}
by the definition of $m$. Clearly $\simpleRootb$ is simple in the positive root system for $\rootSystem(\complexLieAlgebra, \complexLieAlgebra[h])$ determined by $\diff$, and the reflection $\rootReflection[\simpleRootb]$ permutes the positive roots other than $\simpleRootb$. Therefore if $\simpleRoota$ is real or complex, $\halfSumIm[\rootReflection \cross \diff] = \halfSumIm$ and $2\halfSumImCpt[\rootReflection \cross \diff] = 2\halfSumImCpt$. In the notation of Definition \ref{defCrossActionTriples}, $\varphi = -m\simpleRootb$ and the result follows.

If $\simpleRoota$ is compact we have $\halfSumIm[\rootReflection \cross \diff] = \halfSumIm - \simpleRootb$, $2\halfSumImCpt[\rootReflection \cross \diff] = 2\halfSumImCpt - 2\simpleRootb$, and
$(\halfSumIm[{\rootReflection \cross \diff}] - 2\halfSumImCpt[{\rootReflection \cross \diff)}]) - (\halfSumIm - 2\halfSumImCpt) = \simpleRootb$. Then $\varphi = -m\simpleRootb + \simpleRootb$ and the result follows. The case for noncompact imaginary roots is handled similarly.
\end{proof}

\begin{proposition}[\cite{RT3}, Chapter 4]
\label{propCrossGenTriples}
Let $\genTripleInf$ be a genuine triple and suppose $\weylElt \in \weylGroup(\absDiff)$. Then $\weylElt \cross \genTripleInf$ is also genuine triple. Moreover, the cross action descends to a well-defined action on the level of genuine parameters for $\realGroupCover$. 
\end{proposition}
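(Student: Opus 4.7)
The plan is to unpack Definition \ref{defTriple} and verify, directly from the construction in Definition \ref{defCrossActionTriples}, that the triple $\weylElt \cross \genTripleInf = \genTripleInfDiffChar{\weylElt \cross \diff}{\weylElt \cross \genTorusChar}$ satisfies each condition required of a genuine triple. There are three things to check: (a) $\weylElt \cross \genTorusChar$ is an irreducible genuine representation of $\realTorusCover$; (b) its differential is compatible with $\weylElt \cross \diff$ in the sense required by Definition \ref{defTriple}; and (c) the assignment is well-defined on $\maxRealCompactCover$-conjugacy classes. I would carry them out in this order.

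For (a), observe that each root character $\rootChar$ arises from the adjoint action of $\realTorusCover$ on $\rootSpace$. Since $-1 \in \realTorusCover$ acts trivially on $\complexLieAlgebra$, we have $\rootChar(-1) = 1$; thus every $\rootChar$ descends to a character of $\realTorus$. In particular $\Phi = \prod_{\simpleRoot} \rootChar^{n_{\simpleRoot} + m_{\simpleRoot}}$ is a non-genuine character of $\realTorusCover$, and $\weylElt \cross \genTorusChar = \genTorusChar \cdot \Phi$ is the twist of an irreducible genuine representation by a one-dimensional character, hence again irreducible and genuine. (Here I use Lemma \ref{lemRootChar} to ensure $\Phi$ depends only on $\varphi$, not on how $\varphi$ is written as an integral sum of roots.)

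For (b), compute the differential directly. By the hypothesis $d\torusChar = \diff|_{\complexLieAlgebra[t]} + \halfSumIm - 2\halfSumImCpt$, together with $d\Phi = \varphi$, we have
\begin{eqnarray*}
d(\weylElt \cross \genTorusChar)|_{\complexLieAlgebra[t]} & = & (\diff|_{\complexLieAlgebra[t]} + \halfSumIm - 2\halfSumImCpt) + \varphi|_{\complexLieAlgebra[t]} \\
& = & (\weylElt \cross \diff)|_{\complexLieAlgebra[t]} + \halfSumIm[{\weylElt \cross \diff}] - 2\halfSumImCpt[{\weylElt \cross \diff}]
\end{eqnarray*}
by the very construction of $\varphi$ as $(\weylElt \cross \diff - \diff) + \bigl((\halfSumIm[{\weylElt \cross \diff}] - 2\halfSumImCpt[{\weylElt \cross \diff}]) - (\halfSumIm - 2\halfSumImCpt)\bigr)$. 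This is precisely the compatibility required by Definition \ref{defTriple}, using that both summands of $\varphi$ lie in $\rootSystem(\complexLieAlgebra, \complexLieAlgebra[h])(\diff)$ by Lemmas \ref{lemRhoRootSum} and \ref{lemSumOfRoots}. Note also that we need $\weylElt \in \weylGroup(\absDiff)$ for $\weylElt \cross \diff - \diff$ to be an integral sum of roots; without this integrality the character $\Phi$ would not even be defined, which is why the cross action is restricted to the integral Weyl group.

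Step (c) is the main obstacle and splits into two independences. First, the conjugation map $\absConjTwo[\diff]{\absDiff}$ used to pass between $\absLieAlgebra$ and $\complexLieAlgebra[h]$ is only canonical up to $\weylGroup(\complexLieAlgebra,\complexLieAlgebra[h])$, but since all the objects appearing in $\varphi$ (roots, $\halfSumIm$, $\halfSumImCpt$) are natural with respect to such choices, the resulting $\Phi$ is unchanged. Second, if $k \in \maxRealCompactCover$ conjugates a representative $\genTripleInf$ to another representative $(k \cdot \realTorusCover \cdot k^{-1}, \text{Ad}(k)\diff, \text{Ad}(k)\genTorusChar)_{\absDiff}$ of the same parameter, one checks that $\text{Ad}(k)$ intertwines the root character construction of Definition \ref{defCrossActionTriples} because root characters and the half-sums $\halfSumIm$, $\halfSumImCpt$ transform equivariantly under $\maxRealCompactCover$-conjugation. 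Combining these two observations shows $\weylElt \cross$ is well-defined on $\hcBasisGenSpin{p}{q}$, and the action property $(\weylElt_{1}\weylElt_{2}) \cross = \weylElt_{1} \cross (\weylElt_{2} \cross)$ follows formally from the corresponding property of the cross action on pairs (Proposition \ref{propCA}) together with the multiplicativity of $\Phi$ in $\weylElt$.
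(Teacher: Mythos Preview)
The paper does not give its own proof of this proposition; it simply records the statement with a citation to \cite{RT3}, Chapter 4, and moves on. So there is no in-paper argument to compare against.

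Your direct verification is the standard approach and is essentially correct. Part (a) is clean: root characters factor through $\realTorus$ because $-1$ acts trivially under $\mathrm{Ad}$, so $\Phi$ is non-genuine and tensoring by it preserves both irreducibility and genuineness. Part (b) is exactly how $\varphi$ was engineered in Definition \ref{defCrossActionTriples}. For part (c), the two independences you isolate are the right ones; the $\maxRealCompactCover$-equivariance of root characters and of the $\rho$-shifts is what makes it go through. The one place I would tighten your argument is the claim that the action property follows ``formally'' from Proposition \ref{propCA} together with ``multiplicativity of $\Phi$ in $\weylElt$'': the character $\Phi_{\weylElt_{1}}$ in $\weylElt_{1} \cross (\weylElt_{2} \cross \gtRep)$ is computed relative to $\weylElt_{2} \cross \diff$, not $\diff$, so one must check that $\Phi_{\weylElt_{1}}^{(\weylElt_{2}\cross\diff)} \cdot \Phi_{\weylElt_{2}}^{(\diff)} = \Phi_{\weylElt_{1}\weylElt_{2}}^{(\diff)}$. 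This reduces to the additivity of the $\rho$-shifts and of $\weylElt\diff - \diff$ along a chain of reflections, which is routine but not entirely formal. With that caveat, your argument is complete and matches the treatment in \cite{VGr}, Chapter 8, on which \cite{RT3} builds.
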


For $\hcRep \in \hcBasisGenSpin{p}{q}$, we will write $\weylElt \cross \hcRep \in \hcBasisGenSpin{p}{q}$ for the cross action of $\weylGroup(\absDiff)$ on genuine parameters for $\realGroupCover$.

\begin{corollary}
In the setting of Proposition \ref{propCrossGenTriples}, suppose $\absTriple$ is the abstract triple corresponding to $\genTripleInf$. Then $\weylElt \cross \absTriple$ is supportable (Definition \ref{defSuppTriple}).
\end{corollary}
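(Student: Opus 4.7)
The plan is to deduce the corollary from Proposition \ref{propCrossGenTriples} together with the ``if and only if'' in Proposition \ref{propIffSupp}. Specifically, I will argue that the abstract triple associated to $\weylElt \cross \genTripleInf$ is precisely $\weylElt \cross \absTriple$; since Proposition \ref{propCrossGenTriples} guarantees that $\weylElt \cross \genTripleInf$ is a genuine triple, Proposition \ref{propIffSupp} then forces its abstract triple to be supportable.

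Carrying out the first step requires combining the formal identities of Section \ref{ssCrossAction} with the definition of the cross action on genuine triples (Definition \ref{defCrossActionTriples}). First I would record that under the cross action, $\diff$ is replaced by $\weylEltDiffInv(\diff)$ where $\weylEltDiff = \absConj(\weylElt)$, and that the conjugation map satisfies $\absConj[\weylElt \cross \diff] = \absConj \cdot \weylElt^{-1}$ by Lemma \ref{lemicross}. Applied to the Cartan involution $\cinv$, this yields
\[
\absConjInv[\weylElt \cross \diff](\cinv) = \weylElt \cdot \absConjInv(\cinv) \cdot \weylElt^{-1} = \weylElt \cdot \inv \cdot \weylElt^{-1},
\]
so the involution component of the new abstract triple is $\weylElt \cdot \inv \cdot \weylElt^{-1}$, exactly as in the definition of $\weylElt \cross \absTriple$.

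Next I would verify the grading component. The point is that the compact/noncompact character of an imaginary root (Proposition \ref{propGrading}) depends only on $\cinv$ acting on $\complexLieAlgebra$ through the Cartan subalgebra $\complexLieAlgebra[h]$, not on the choice of torus character $\genTorusChar$. Therefore although Definition \ref{defCrossActionTriples} modifies $\genTorusChar$ by a product of root characters, this modification does not alter the underlying imaginary grading. Consequently, for an abstract imaginary root $\simpleRoot$ for $\weylElt \inv \weylElt^{-1}$, the new abstract grading is computed via $\absConj[\weylElt \cross \diff](\simpleRoot) = \absConj(\weylElt^{-1}\simpleRoot)$, yielding $\grading(\weylElt^{-1}\simpleRoot) = (\weylElt \cross \grading)(\simpleRoot)$. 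This matches Proposition \ref{propCATriples} and identifies the abstract triple of $\weylElt \cross \genTripleInf$ with $\weylElt \cross \absTriple$.

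The main obstacle is really the bookkeeping in this second step: one must ensure that the compatibility condition $\text{d}(\weylElt \cross \torusChar) = (\weylElt \cross \diff)|_{\complexLieAlgebra[t]} + \halfSumIm[\weylElt\cross\diff] - 2\halfSumImCpt[\weylElt\cross\diff]$ is built into Definition \ref{defCrossActionTriples} precisely so that the resulting triple has the expected abstract data, and that changing the positive system by a reflection alters $\halfSumIm - 2\halfSumImCpt$ only by the root sums recorded in Lemmas \ref{lemRhoRootSum}--\ref{lemRhoRootSumCmplx}. Once this identification is in hand, supportability of $\weylElt \cross \absTriple$ is automatic from Proposition \ref{propIffSupp}.
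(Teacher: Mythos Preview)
Your proposal is correct. The paper itself opens its proof by noting ``This follows from Proposition \ref{propCrossGenTriples}'' --- which is exactly your route via Proposition \ref{propIffSupp} once you identify the abstract triple of $\weylElt \cross \genTripleInf$ with $\weylElt \cross \absTriple$ (and your identification argument is essentially Proposition \ref{propCATriples}, since the abstract triple depends only on the pair, not on $\genTorusChar$).

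The paper, however, then writes out a \emph{different} argument: a direct verification of the supportability conditions in Proposition \ref{propSuppRep}. It checks that for an imaginary root $\simpleRoot$ of $\weylElt\inv\weylElt^{-1}$, the pairing $(\absDiff,\rootCheck)$ agrees modulo $\mathbb{Z}$ with $(\absDiff,(\weylElt^{-1}\simpleRoot)^{\vee})$, using Lemma \ref{lemSumOfRoots} to write $\weylElt\absDiff - \absDiff$ as an integral sum of roots (this is where the hypothesis $\weylElt \in \weylGroup(\absDiff)$ enters explicitly). Since $(\weylElt \cross \grading)(\simpleRoot) = \grading(\weylElt^{-1}\simpleRoot)$, the imaginary conditions transfer; complex roots are handled similarly. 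Your approach is shorter but leans on Proposition \ref{propCrossGenTriples}, which is cited from \cite{RT3}; the paper's direct computation is self-contained and makes the role of integrality visible.
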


\begin{proof}
This follows from Proposition \ref{propCrossGenTriples}, but we can also prove it directly. It suffices to show $\weylElt \cross \absTriple$ satisfies the conditions of Proposition \ref{propSuppRep}. Proposition \ref{propCATriples} implies 
\begin{eqnarray*}
\weylElt \cross \absTriple & = & (\weylElt\cdot\inv\cdot\weylElt^{-1}, \weylElt \cross \grading, \absDiff)
\end{eqnarray*}
and $\simpleRoot$ is imaginary for $\weylElt\cdot\inv\cdot\weylElt^{-1}$ if and only if
\[
\weylElt\cdot\inv\cdot\weylElt^{-1}(\simpleRoot) = \simpleRoot \iff \inv(\weylElt^{-1}\simpleRoot) = \weylElt^{-1}\simpleRoot.
\]
In particular, we must have $\weylElt^{-1}\simpleRoot$ imaginary for $\inv$. However
\begin{eqnarray*}
(\absDiff, \rootCheck[{(\weylElt^{-1}\simpleRoot)}]) & = & (\absDiff, \weylElt^{-1}\rootCheck) \\
& = & (\weylElt\absDiff, \rootCheck) \\
& = & (\absDiff + \sum_{\simpleRootb \in \rootSystem}n_{\simpleRootb}\simpleRootb, ~\rootCheck) \\
& \equiv & (\absDiff, \rootCheck) \text{ mod } \mathbb{Z}.
\end{eqnarray*}
Since $\weylElt \cross \grading(\simpleRoot) = \grading(\weylElt^{-1}\simpleRoot)$, the result holds for imaginary roots. Complex roots are handled similarly.
\end{proof}

\begin{proposition}
\label{propCrossCC}
Let $\hcRep \in \hcBasisGenSpin{p}{q}$ and suppose $\weylElt \in \weylGroup(\absDiff)$. Then $\hcRep$ and $\weylElt \cross \hcRep$ have the same central character (Definition \ref{defCC}).
\end{proposition}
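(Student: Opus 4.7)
The plan is to show directly that the cross action alters $\genTorusChar$ only by a character of $\realTorusCover$ that restricts trivially to $Z(\realGroupCover)$, from which the equality of central characters is immediate.

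From Definition \ref{defCrossActionTriples} we have $\weylElt \cross \genTorusChar = \genTorusChar \cdot \Phi$, where
\[
\Phi = \prod_{\simpleRoot \in \rootSystem(\complexLieAlgebra, \complexLieAlgebra[h])} \rootChar^{n_{\simpleRoot} + m_{\simpleRoot}}
\]
is a product of root characters of $\realTorusCover$. Following Remark \ref{remCompareCC}, comparing central characters for the triples $\genTripleInf$ and $\weylElt \cross \genTripleInf$ amounts to comparing the associated genuine characters of $Z(\realTorusCover)$ given by Proposition \ref{propGenRepBij}, restricted to $Z(\realGroupCover)$. Writing $\genTorusChar|_{Z(\realTorusCover)} = m\chi$ with $m = |\realTorusCover/Z(\realTorusCover)|^{1/2}$, we see $(\weylElt \cross \genTorusChar)|_{Z(\realTorusCover)} = m\bigl(\chi \cdot \Phi|_{Z(\realTorusCover)}\bigr)$, so the new associated character is $\chi \cdot \Phi|_{Z(\realTorusCover)}$. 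Hence it suffices to prove $\Phi|_{Z(\realGroupCover)} \equiv 1$.

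The key observation is that each root character $\rootChar$ is trivial on $Z(\realGroupCover)$. Indeed, $\rootChar$ is defined by the adjoint action of $\realTorusCover$ on the root space $\rootSpace \subset \complexLieAlgebra$, and this action factors through the projection $\projOp : \realGroupCover \to \realGroup$ since the two groups share the same Lie algebra. Because $\realGroup$ is connected, its center $Z(\realGroup)$ coincides with the kernel of the adjoint representation on $\complexLieAlgebra$; since any $z \in Z(\realGroupCover)$ satisfies $\projOp(z) \in Z(\realGroup)$, we conclude $\rootChar(z) = 1$. Therefore $\Phi(z) = 1$ for every $z \in Z(\realGroupCover)$, and so
\[
(\weylElt \cross \genTorusChar)(z) = \genTorusChar(z)\cdot \Phi(z) = \genTorusChar(z)
\]
for all such $z$, proving that $\hcRep$ and $\weylElt \cross \hcRep$ have the same central character. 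Well-definedness at the level of $\hcBasisGenSpin{p}{q}$ follows because $\maxRealCompactCover$-conjugation fixes $Z(\realGroupCover)$ pointwise.

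There is no serious obstacle here; the argument is essentially formal once the factorization $\weylElt \cross \genTorusChar = \genTorusChar \cdot \Phi$ is recalled. The only mild subtlety is respecting the convention of Remark \ref{remCompareCC} when $\realTorusCover$ is nonabelian, which is handled by verifying that $\Phi$ restricts trivially not merely to $Z(\realGroupCover)$ as a representation multiplier but that it yields the same associated character of $Z(\realTorusCover)$ after restriction to $Z(\realGroupCover)$; this is immediate from the computation above.
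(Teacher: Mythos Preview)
Your proof is correct and follows essentially the same approach as the paper: the paper's proof is the one-line observation that root characters act trivially on $Z(\realGroupCover)$, and you have simply spelled out why (the adjoint action factors through $\realGroup$, whose center is the kernel of $\mathrm{Ad}$). Your additional care with the convention of Remark \ref{remCompareCC} is a nice touch but not strictly necessary, since the paper treats this as immediate.
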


\begin{proof}
This follows from Definition \ref{defCrossActionTriples} and the fact that root characters act trivially on Z($\realGroupCover$).
\end{proof}

Fix $\hcRep \in \hcBasisGenSpin{p}{q}$ and suppose $\simpleRoot \in \rootSystem(\absDiff)$ is an abstract integral root. We conclude this section with a partial description of when $\rootReflection \cross \hcRep = \hcRep$. 

\begin{proposition}[\cite{RT3}, Lemma 6.14(a)]
\label{propCrossCpt}
If $\simpleRoota$ is imaginary and compact for $\hcRep$, then $\rootReflection \cross \hcRep = \hcRep$.
\end{proposition}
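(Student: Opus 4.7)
The plan is to exhibit an explicit element of $\maxRealCompactCover$ which conjugates $\genTripleInf$ to a representative of $\rootReflection[\simpleRoota]\cross\hcRep$. Since genuine parameters are defined as $\maxRealCompactCover$-conjugacy classes, this will force $\rootReflection[\simpleRoota]\cross\hcRep=\hcRep$.

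First I would reduce to the case where $\simpleRoota$ is simple for the positive system determined by $\diff$. Setting $\simpleRootb=\absConj[\diff](\simpleRoota)$, one has $\simpleRootb\in\imaginaryRoots[\cinv](\complexLieAlgebra,\complexLieAlgebra[h])$ compact. I would then pick a root vector $\rootVector[\simpleRootb]\in\rootSpace[\simpleRootb]$ as in Lemma \ref{lemXcSign} and form the element $\oac[\simpleRootb]\in\maxRealCompactCover$ of Definition \ref{defZc}, which is a representative of $\rootReflection[\simpleRootb]$ in $\realNormalizer[\realGroupCover](\realTorusCover)$. Conjugation by $\oac[\simpleRootb]$ fixes $\realTorusCover$ setwise (acting as $\rootReflection[\simpleRootb]$ on $\complexLieAlgebra[h]$) and sends the triple $\genTripleInf$ to the $\maxRealCompactCover$-conjugate triple
\[
\bigl(\realTorusCover,\rootReflection[\simpleRootb](\diff),\genTorusChar\circ\Ad{\oac[\simpleRootb]^{-1}}\bigr)_{\absDiff}.
\]

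It remains to check this triple equals $\rootReflection[\simpleRoota]\cross\genTripleInf$ on the nose. The first two entries already match, because $\rootReflection[\simpleRootb](\diff)=\diff-m\simpleRootb$ with $m=(\diff,\rootCheck[\simpleRootb])$. For the third entry, applying Lemma \ref{lemCaVgr} (compact case) one has $\rootReflection[\simpleRoota]\cross\genTorusChar=\genTorusChar\cdot\rootChar[\simpleRootb]^{-(m-1)}$. Comparing differentials on $\realTorusCoverId$, the right-hand side has differential
\[
\diff|_{\complexLieAlgebra[t]}+\halfSumIm-2\halfSumImCpt-(m-1)\simpleRootb,
\]
while $\genTorusChar\circ\Ad{\oac[\simpleRootb]^{-1}}$ has differential $\rootReflection[\simpleRootb]\bigl(\diff|_{\complexLieAlgebra[t]}+\halfSumIm-2\halfSumImCpt\bigr)$. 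Since $\simpleRootb$ is compact, the contribution of $\simpleRootb$ to $\halfSumImCpt$ is present while its contribution to the non-compact part is absent, and an elementary bookkeeping of how $\rootReflection[\simpleRootb]$ permutes the remaining positive imaginary roots (using Lemma \ref{lemRhoRootSumIm}) shows these two expressions agree. Evaluation on the component group $\pi_{0}(\realTorusCover)$ is then determined by genuineness and the fact that both sides share the same central character (Proposition \ref{propCrossCC}).

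The main obstacle is the last bookkeeping step: one must verify that the ``correction'' $-(m-1)\simpleRootb$ exactly records the change in $\halfSumIm-2\halfSumImCpt$ coming from compactness, together with the $-m\simpleRootb$ shift of $\diff$, and that the resulting adjustment agrees with the adjoint twist $\Ad{\oac[\simpleRootb]^{-1}}$ on all connected components of $\realTorusCover$. Once matched on $\realTorusCoverId$ (by differentials) and on component-group representatives $\mac[\simpleRootc]$ (using that $\oac[\simpleRootb]$ commutes with each $\mac[\simpleRootc]$ up to an explicit sign computable from Proposition \ref{propMaRootSpace} and Definition \ref{defZc}), the equality of triples follows and hence $\rootReflection[\simpleRoota]\cross\hcRep=\hcRep$.
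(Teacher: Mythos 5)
The paper does not actually prove this statement; it is quoted verbatim from \cite{RT3}, Lemma 6.14(a), so there is no internal argument to compare against. Your strategy --- realize $\rootReflection[\simpleRootb]$ by the element $\oac[\simpleRootb]\in\maxRealCompactCover$ of Definition \ref{defZc} and match the conjugated triple against the cross-acted triple of Lemma \ref{lemCaVgr} --- is the standard route and is correct in outline. The reduction to $\simpleRoota$ simple (in the \emph{integral} root system, which is what Lemma \ref{lemCaVgr} requires) is legitimate but should be justified via the group-action property of the cross action and the transformation of gradings (Propositions \ref{propCA} and \ref{propCATriples}), which guarantee that $w\simpleRoota$ is still compact imaginary for $w\cross\hcRep$.

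The genuine gap is in your last step. The assertion that, once the differentials on $\realTorusCoverId$ agree, ``evaluation on the component group $\pi_{0}(\realTorusCover)$ is determined by genuineness and the central character'' is false in precisely the situations this paper cares about: a genuine character of $\realTorusCover$ is determined by its restriction to $Z(\realTorusCover)$ (Proposition \ref{propGenRepBij}), and $Z(\realTorusCover)$ generally contains elements such as $\mac[{\simpleRootai[j+1]}]\cdots\mac[{\simpleRootai[\qSpec]}]$ that lie in neither $\realTorusCoverId$ nor $Z(\realGroupCover)$. Two genuine characters can share the same differential \emph{and} the same central character and still be inequivalent --- this is the content of Theorem \ref{theoremNumGenReps}, Corollary \ref{corRepFiberCent}, and Corollary \ref{corDiffSignZ}, and it is the whole reason principal classes are introduced later. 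So the computation you defer to the end --- comparing $\genTorusChar\circ\Ad{\oac[\simpleRootb]^{-1}}$ with $\genTorusChar\cdot\rootChar[\simpleRootb]^{-(m-1)}$ on the elements $\mac[\simpleRootc]$ --- is not ``bookkeeping'' but the actual content of the lemma. Note also that the discrepancy is not merely a sign: by a Lemma \ref{lemVgr}-type identity, conjugation by $\oac[\simpleRootb]$ moves $\mac[\simpleRootc]$ by a factor of $\oac[\simpleRootb]^{2}=\mac[\simpleRootb]$, whose value under $\genTorusChar$ must then be extracted from the differential (since $\mac[\simpleRootb]\in\realTorusCoverId$ for imaginary $\simpleRootb$) and shown to match $\rootChar[\simpleRootb](\mac[\simpleRootc])^{-(m-1)}$. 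Until that verification is written out, the proof is incomplete.
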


\begin{proposition}
\label{propCrossCmplx}
If $\simpleRoot$ is complex for $\hcRep$, then $\rootReflection \cross \hcRep \ne \hcRep$.
\end{proposition}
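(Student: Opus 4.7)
The plan is to argue by contradiction using the concrete description of the cross action together with the containment $\cartanWeylGroup{\realTorus} \subset \invWeylGroup[\cinv](\complexLieAlgebra, \complexLieAlgebra[h])$ from Proposition \ref{propCWGroup}. Suppose $\hcRep$ is represented by $\genTripleInf$ and write $\simpleRootb = \absConj(\simpleRoot) \in \rootSystem(\complexLieAlgebra,\complexLieAlgebra[h])(\diff)$ for the concrete image of $\simpleRoot$. By hypothesis $\simpleRoot$ is complex for the abstract involution $\inv$, so by construction $\simpleRootb$ is complex for the Cartan involution $\cinv$, i.e., $\cinv(\simpleRootb) \ne \pm\simpleRootb$. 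By Definition \ref{defCrossActionTriples} and Definition \ref{defCrossAction} the cross action gives a representative
\[
\rootReflection[\simpleRoot] \cross \genTripleInf \;=\; \genTripleInfDiffChar{\,\weylEltDiffInv(\diff)\,}{\rootReflection[\simpleRoot]\cross\genTorusChar}
\]
on the \emph{same} Cartan $\realTorusCover$, where $\weylEltDiff = \absConj(\rootReflection[\simpleRoot]) = \rootReflection[\simpleRootb]$.

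Assume for contradiction that $\rootReflection[\simpleRoot]\cross\hcRep = \hcRep$ in $\hcBasisGenSpin{p}{q}$. Then the two genuine triples above are $\maxRealCompactCover$-conjugate, and since both share the Cartan $\realTorusCover$, the conjugating element $k$ must lie in $\realNormalizer[\maxRealCompactCover](\realTorusCover)$. Let $\weylElt \in \cartanWeylGroup[\realGroupCover]{\realTorusCover} \cong \cartanWeylGroup{\realTorus}$ denote the image of $k$ on $\complexLieAlgebraDual[h]$. The compatibility of infinitesimal character parameters forces $\weylElt \cdot \rootReflection[\simpleRootb](\diff) = \diff$, and since $\diff$ is nonsingular this pins down $\weylElt = \rootReflection[\simpleRootb]$ uniquely. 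In particular $\rootReflection[\simpleRootb] \in \cartanWeylGroup{\realTorus}$.

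By Proposition \ref{propCWGroup} we have $\cartanWeylGroup{\realTorus} \subseteq \invWeylGroup[\cinv](\complexLieAlgebra,\complexLieAlgebra[h])$, so $\rootReflection[\simpleRootb]$ commutes with $\cinv$ as an element of $\weylGroup(\complexLieAlgebra,\complexLieAlgebra[h])$. A reflection commutes with $\cinv$ precisely when $\cinv$ fixes the corresponding root line, i.e., when $\cinv(\simpleRootb) = \pm\simpleRootb$. But this contradicts the fact that $\simpleRootb$ is complex for $\cinv$, completing the proof.

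The only delicate point to check carefully is the step identifying the Weyl-image $\weylElt$ of $k$ with $\rootReflection[\simpleRootb]$; this is the main (minor) obstacle and is handled by the regularity of $\diff$, which lets the equality $\weylElt\,\rootReflection[\simpleRootb](\diff) = \diff$ be inverted. Everything else is a bookkeeping translation between the abstract and concrete sides via the conjugation map $\absConj$, and the conceptual content is simply that the cross action by a complex reflection cannot preserve the underlying conjugacy class of the Cartan involution in the Weyl group.
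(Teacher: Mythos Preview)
Your proof is correct and follows essentially the same idea as the paper's: a complex reflection does not commute with the Cartan involution, so the cross action moves the underlying $\maxRealCompact$-orbit. The paper's one-line argument cites Propositions~\ref{propKOGrading} and~\ref{propCATriples} to observe that the abstract involution changes from $\inv$ to $\rootReflection\inv\rootReflection^{-1}\ne\inv$, whence the $\maxRealCompactCover$-orbits (and a fortiori the parameters) differ; you unpack the same content on the concrete side by showing $\rootReflection[\simpleRootb]$ would have to lie in $\cartanWeylGroup{\realTorus}\subset\invWeylGroup[\cinv]$, which forces $\cinv(\simpleRootb)=\pm\simpleRootb$.
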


\begin{proof}
The corresponding $\maxRealCompactCover$-orbits not conjugate by Propositions \ref{propKOGrading} and \ref{propCATriples}.
\end{proof}

\begin{proposition}
\label{propCrossNciTypeI}
If $\simpleRoot$ is imaginary, noncompact, and of type I for $\hcRep$ (Definition \ref{defTypeITypeII}), then $\rootReflection \cross \hcRep \ne \hcRep$.
\end{proposition}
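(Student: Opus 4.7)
The plan is to reduce the statement to the already-established fact about $\maxRealCompact$-orbits. Recall that the cross action on genuine triples (Definition \ref{defCrossActionTriples}) modifies both the differential $\diff$ and the character $\genTorusChar$, and in particular it induces the cross action on the underlying pair $\genPairInf$ (and therefore on its $\maxRealCompactCover$-orbit). A short computation with Lemma \ref{lemConj} shows this induced action on pairs agrees with the cross action of Definition \ref{defCrossAction}, and since conjugation by $\maxRealCompactCover$ projects to conjugation by $\maxRealCompact$, the $\maxRealCompactCover$-orbit of $\genPairInf$ coincides (under $\projOp$) with the $\maxRealCompact$-orbit $\korbit$ associated to $\hcRep$ in Section \ref{ssInvFiber}.

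Next, I would invoke Theorem \ref{theoremFokko}: for a noncompact imaginary root $\simpleRoota$, the reflection $\rootReflection[\simpleRoota]$ acts trivially on $\korbit$ if and only if $\mca{\rootReflection[\simpleRoota]} = \ma[\simpleRoota]$ lies in the trivial class of $\negQuotCorootLattice[-\inv]/\posnegQuotCorootLattice[-\inv]$, which is exactly the characterization of type II. So for $\simpleRoota$ of type I we have $\rootReflection[\simpleRoota] \cross \korbit \ne \korbit$, meaning the pair $\rootReflection[\simpleRoota] \cross \genPairInf$ is not $\maxRealCompactCover$-conjugate to $\genPairInf$.

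Finally, if $\rootReflection[\simpleRoota] \cross \hcRep$ were equal to $\hcRep$ as a genuine parameter, i.e.\ as a $\maxRealCompactCover$-conjugacy class of triples, then in particular the underlying pairs would be $\maxRealCompactCover$-conjugate, contradicting the previous paragraph. Hence $\rootReflection[\simpleRoota] \cross \hcRep \ne \hcRep$, as required.

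I expect no real obstacle here: the whole argument is a two-line reduction to Theorem \ref{theoremFokko}, provided one is careful to check that the cross action on genuine triples indeed lifts the cross action of Definition \ref{defCrossAction} on pairs. The only subtle point is verifying that an equality of genuine parameters forces an equality of the associated $\maxRealCompactCover$-orbits of pairs, which is immediate from the definition of $\hcBasisGenSpin{p}{q}$ as $\maxRealCompactCover$-conjugacy classes of triples.
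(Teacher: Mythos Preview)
Your argument is correct and follows the same idea as the paper's proof, which is simply: ``The corresponding $\maxRealCompactCover$-orbits are not conjugate (by definition).'' The only remark is that your appeal to Theorem \ref{theoremFokko} is unnecessary, since Definition \ref{defTypeITypeII} already states directly that the cross action through a noncompact imaginary root is nontrivial on $\maxRealCompact$-orbits precisely when the root is of type I; the theorem is a computational characterization, not the definition.
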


\begin{proof}
The corresponding $\maxRealCompactCover$-orbits not conjugate (by definition).
\end{proof}


\subsection{Extended Cross Actions}
\label{ssEcaofGT}
The nonlinear KLV-algorithm of \cite{RT3} (at half-integral infinitesimal character) requires an extended action of the \emph{full} abstract Weyl group on genuine triples. In this section we briefly examine the definition from \cite{RT3}, Chapters 3 and 4.

To begin, recall the abstract root system $\rootSystem = \rootSystem(\complexLieAlgebra,\absLieAlgebra)$ and let $\mathcal{R} = \rootLattice \subset \absLieAlgebraDual$ be the root lattice. Consider the quotient
\[
\mathcal{Q} = \absLieAlgebraDual / \mathcal{R}
\]
and observe the natural action of $\weylGroup$ on $\absLieAlgebraDual$ descends to $\mathcal{Q}$. Denote the image of $\absDiff$ in $\mathcal{Q}$ by $\left[\absDiff\right]$.

\begin{definition}
\label{defFamInfChar}
A \emph{family of infinitesimal characters for $\absDiff \in \absLieAlgebraDual$} is a collection of dominant representatives for the $\weylGroup$-orbit of $\left[\absDiff\right]$ in $\mathcal{Q}$.
\end{definition}

\begin{remark}
In \cite{RT3}, the authors define $\mathcal{Q}$ using the weight lattice $\mathcal{P} = \weightLattice$ instead of the root lattice $\mathcal{R}$. Either choice will suffice; however, the root lattice is more convenient for our purposes.
\end{remark}

The abstract Weyl group $\weylGroup$ acts via the extended cross action (Definition \ref{defExtendedCrossActionTriples}) on collections of genuine parameters whose infinitesimal characters live in families of the kind described above. In order to define this action, it is first necessary to specify how the infinitesimal characters in a family are related. To this end, suppose $\kappa \in \famInfChar[\absDiff]$ and $\weylElt \in \weylGroup$. Define the element $\ecaElt{\kappa}{\weylElt} \in \mathcal{R}$ by the requirement
\[
\kappa + \ecaElt{\kappa}{\weylElt} \in \weylElt \cdot \famInfChar[\absDiff]
\]
with the convention that $\ecaElt{\kappa}{\weylElt} = \weylElt\kappa - \kappa$ (Lemma \ref{lemSumOfRoots}) for $\weylElt \in \weylGroup(\kappa)$. We now have the following extension of Definition \ref{defCrossActionTriples}.

\begin{definition}
\label{defExtendedCrossActionTriples} (\cite{RT3}, Definition 4.1) Let $\absDiff \in \absLieAlgebraDual$ be a half-integral infinitesimal character with corresponding family $\famInfChar[\absDiff]$. Suppose $\kappa \in \famInfChar[\absDiff]$ and $\genTripleInf[\kappa]$ is a genuine triple for $\kappa$. If $\weylElt \in \weylGroup$ set
\[
\weylElt \cross \diff = \diff + \absConj(\ecaElt{\kappa}{\weylElt^{-1}})
\]
\[
(\halfSumIm[{\weylElt \cross \diff}] - 2\halfSumImCpt[{\weylElt \cross \diff)}]) - (\halfSumIm - 2\halfSumImCpt) = \sum_{\simpleRoot \in \rootSystem(\complexLieAlgebra, \complexLieAlgebra[h])} m_{\simpleRoot}\simpleRoot ~ (m_{\simpleRoot} \in \mathbb{Z}).
\]
Then
\begin{eqnarray*}
\varphi & = & \absConj(\ecaElt{\kappa}{\weylElt^{-1}})~+~\sum_{\simpleRoot \in \rootSystem(\complexLieAlgebra, \complexLieAlgebra[h])} m_{\simpleRoot}\simpleRoot
\end{eqnarray*}
determines a well-defined character $\Phi$ of $\realTorusCover$ by Lemma \ref{lemRootChar}. We define the \emph{(extended) cross action of} $\genTripleInf[\kappa]$ by $\weylElt$ to be
\begin{eqnarray*}
\weylElt \cross \torusChar & = & \torusChar \cdot \Phi \\
\weylElt \cross \genTripleInf[\kappa] & = & \genTripleInfDiffCharInf{\weylElt \cross \diff}{\weylElt \cross \genTorusChar}{}.
\end{eqnarray*}
\end{definition}

\begin{remark}
As the notation suggests, Definition \ref{defCrossActionTriples} and Definition \ref{defExtendedCrossActionTriples} coincide whenever $\weylElt \in \weylGroup(\kappa)$. This follows immediately from the convention $\ecaElt{\kappa}{\weylElt} = \weylElt\kappa - \kappa$ for $\weylElt$ in $\weylGroup(\kappa)$.
\end{remark}

\begin{remark}
\label{remECAInfChar}
The infinitesimal character of $\weylElt \cross \genTripleInf[\kappa]$ is an element of $\famInfChar[\absDiff]$ that depends on $\kappa$ and the element $\weylElt$.
\end{remark}

\begin{proposition}[\cite{RT3}, Chapter 4]
\label{propExtCrossGenTriples}
In the above setting, let $\genTripleInf$ be a genuine triple and suppose $\weylElt \in \weylGroup$. Then $\weylElt \cross \genTripleInf$ is genuine triple and the extended cross action descends to a well-defined action on the level of genuine parameters for $\realGroupCover$. 
\end{proposition}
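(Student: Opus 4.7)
The plan is to verify the three claims separately: that $\weylElt \cross \genTripleInf$ is a triple (the differential of the new character matches the required $\rho$-shift), that it is genuine (the action of $-1$ is preserved), and that the assignment is well defined and functorial modulo $\maxRealCompactCover$-conjugacy. The essential input that makes the definition legal in the first place is that both summands in
\begin{eqnarray*}
\varphi & = & \absConj(\ecaElt{\kappa}{\weylElt^{-1}}) + \sum_{\simpleRoot \in \rootSystem(\complexLieAlgebra, \complexLieAlgebra[h])} m_{\simpleRoot}\simpleRoot
\end{eqnarray*}
are integral combinations of roots in $\rootSystem(\complexLieAlgebra,\complexLieAlgebra[h])$: the first because $\ecaElt{\kappa}{\weylElt^{-1}} \in \mathcal{R} = \rootLattice$ by construction of the family $\famInfChar[\absDiff]$, and the second by Lemma \ref{lemRhoRootSum}. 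Hence Lemma \ref{lemRootChar} produces a well-defined character $\Phi$ of $\realTorusCover$ whose differential is $\varphi|_{\complexLieAlgebra[h]}$, and $\weylElt \cross \torusChar = \torusChar \cdot \Phi$ is a character of $\realTorusCover$.

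Next I would check the triple condition. Splitting $\varphi$ into its two pieces and using that $\absConj(\ecaElt{\kappa}{\weylElt^{-1}})$ is by design the difference $(\weylElt \cross \diff) - \diff$ (upon the identification $\absConj$), a short bookkeeping calculation gives
\begin{eqnarray*}
d(\weylElt \cross \torusChar)|_{\complexLieAlgebra[t]} & = & d\torusChar|_{\complexLieAlgebra[t]} + \varphi|_{\complexLieAlgebra[t]} \\
& = & (\weylElt \cross \diff)|_{\complexLieAlgebra[t]} + \halfSumIm[\weylElt \cross \diff] - 2\halfSumImCpt[\weylElt \cross \diff],
\end{eqnarray*}
so the differential compatibility of Definition \ref{defTriple} is satisfied. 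Genuineness is immediate: the roots $\simpleRoot \in \rootSystem(\complexLieAlgebra,\complexLieAlgebra[h])$ correspond to characters $\rootChar$ that come from the adjoint action on $\rootSpace$, and since $-1 \in \realTorusCover$ is central and projects to the identity in $\realTorus$, each $\rootChar(-1) = 1$. Consequently $\Phi(-1) = 1$, and $(\weylElt \cross \torusChar)(-1) = \torusChar(-1) = -\identity$, so $\weylElt \cross \genTripleInf[\kappa]$ is genuine.

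The technically sharpest point is descent to the set of $\maxRealCompactCover$-conjugacy classes together with the group-action property; this is also the step I expect to be the main obstacle. For descent, if $k \in \maxRealCompactCover$ intertwines two representative triples, one must verify that the same $k$ intertwines their cross actions, which reduces to checking that $\varphi$ is an intrinsic weight (independent of the triple representative) — this is true because the root-sum $\sum m_{\simpleRoot}\simpleRoot$ and the element $\absConj(\ecaElt{\kappa}{\weylElt^{-1}})$ depend only on $\diff$ and $\weylElt$, and root characters are equivariant under $\maxRealCompactCover$-conjugation. For the action property $\weylElt_{1} \cross (\weylElt_{2} \cross \genTripleInf[\kappa]) = (\weylElt_{1}\weylElt_{2}) \cross \genTripleInf[\kappa]$, the key identity is the cocycle relation
\begin{eqnarray*}
\ecaElt{\kappa}{(\weylElt_{1}\weylElt_{2})^{-1}} & = & \ecaElt{\kappa}{\weylElt_{2}^{-1}} + \weylElt_{2}^{-1}\cdot\ecaElt{\weylElt_{2}\cross\kappa}{\weylElt_{1}^{-1}},
\end{eqnarray*}
which follows from the defining condition $\kappa + \ecaElt{\kappa}{w} \in w\cdot\famInfChar[\absDiff]$ and the fact that elements of $\famInfChar[\absDiff]$ are dominant representatives of distinct cosets in $\mathcal{Q}$. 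Combining this with the cross-action compatibility for the $\rho$-shifts (Lemma \ref{lemRhoRootSum} applied iteratively) and Lemma \ref{lemRootChar} to compare the product characters yields the action property on triples; together with descent this gives the well-defined action on $\hcBasisGenSpin{p}{q}$.
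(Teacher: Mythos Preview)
The paper does not prove this proposition: it is stated with a citation to \cite{RT3}, Chapter 4, and no argument is given in the text. So there is nothing to compare against line by line.

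That said, your outline is the right one and would constitute a proof. The three ingredients you isolate are exactly what is needed: integrality of $\varphi$ (so Lemma \ref{lemRootChar} applies and $\Phi$ exists), triviality of each $\rootChar$ on $-1$ (so genuineness is preserved), and the cocycle identity for the $\ecaElt{\kappa}{\cdot}$ (so the assignment is an action). Your cocycle computation is correct once one interprets $\weylElt_{2}\cross\kappa$ as the unique $\kappa' \in \famInfChar[\absDiff]$ with $\kappa + \ecaElt{\kappa}{\weylElt_{2}^{-1}} = \weylElt_{2}^{-1}\kappa'$; chasing the two defining conditions gives
\[
\ecaElt{\kappa}{(\weylElt_{1}\weylElt_{2})^{-1}} = \ecaElt{\kappa}{\weylElt_{2}^{-1}} + \weylElt_{2}^{-1}\ecaElt{\kappa'}{\weylElt_{1}^{-1}},
\]
which is your formula. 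The descent step is also fine, since $\varphi$ depends only on $(\diff,\weylElt)$ and root characters transform naturally under $\maxRealCompactCover$-conjugation.
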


\subsection{Even Parity Cartan Subgroups}
\label{ssEPCSG}
Fix a half-integral infinitesimal character $\absDiff \in \absLieAlgebraDual$ and assume the rank of $\realGroupCover = \realSpinGroupCover{p}{q}$ is \emph{even}. Let $\realTorusCover \subset \realGroupCover$ be a $\cinv$-stable Cartan subgroup and suppose $\diff \in \complexLieAlgebraDual[h]$ is a regular element. Write $\inv = \absConjInv(\cinv)$ for the corresponding abstract involution and let $\imaginaryBitsBit, \imaginaryBitsParityBit, \realBitsBit, \realBitsParityBit$ be the indicator bits for $\inv$ (Definition \ref{defInvParams}). Note the indicator bits depend only on the $\weylGroup$-conjugacy class of $\inv$ and therefore only on the $\maxRealCompactCover$-conjugacy class of $\realTorusCover$. Since the rank of $\realGroupCover$ is even, we always have $\imaginaryBitsParityBit = \realBitsParityBit$. 

\begin{definition}
\label{defEvenParity}
A Cartan subgroup for which $\imaginaryBitsParityBit = \realBitsParityBit = 0$ is said to be of \emph{even parity}. In particular, an abstract involution corresponds to an even parity Cartan subgroup if and only if its diagram has an even number of both `$+$' and `$-$' signs. 
\end{definition}

\begin{lemma}
Let $\realTorusCover \subset \realGroupCover$ be a $\cinv$-stable Cartan subgroup and suppose $\simpleRoot \in \realRoots[\cinv](\complexLieAlgebra, \complexLieAlgebra[h])$ is a real root. Then $\realTorusCover$ and $\realTorusCtCover$ (Definition \ref{defCT}) have the same parity if and only if $\simpleRoot$ is long.
\end{lemma}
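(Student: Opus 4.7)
The plan is to reduce the question to a purely combinatorial statement about the abstract involution $\inv \in \weylGroup$ associated to $\realTorusCover$, and then check how this involution changes under Cayley transform through a real root.

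First I would invoke Proposition \ref{propAbsPairCt} together with Proposition \ref{propCtInv} (which shows that Cayley transforms through real roots and noncompact imaginary roots are mutually inverse) to conclude that the abstract involution attached to $\realTorusCtCover$ is conjugate to $\rootReflection[\simpleRoot]\inv$. Since the indicator bits $\imaginaryBitsParityBit,\realBitsParityBit$ depend only on the $\weylGroup$-conjugacy class (Proposition \ref{propCC}), it suffices to compute the parities of $\numImaginaryBits[\rootReflection[\simpleRoot]\inv]$ and $\numRealBits[\rootReflection[\simpleRoot]\inv]$ in terms of those of $\inv$. Throughout, recall that $\numComplexBits$ is always even and $n$ is even by assumption, so $\numImaginaryBits$ and $\numRealBits$ always have the same parity; in particular $\imaginaryBitsParityBit = \realBitsParityBit$ and we only need to track one of them.

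The main computation is then to case-split on the shape of $\simpleRoot$ in the signed-permutation model $\inv = (\bit[1]\cdots\bit[n],\sigma)$. If $\simpleRoot = \pm\ei$ is short and real, then by Definition \ref{defInvParams} the position $i$ is fixed by $\sigma$ with $\bit = 1$, and composing $\inv$ on the left with $\rootReflection[\ei]$ simply toggles $\bit$ from $1$ to $0$; this shifts position $i$ from $\realBits$ into $\imaginaryBits$, so both $\numRealBits$ and $\numImaginaryBits$ change by $\pm 1$ and the parity flips. If $\simpleRoot = \pm\ei\pm\ej$ is long and real, I would separate the two ways this can occur: either $i$ and $j$ are both fixed by $\sigma$ with $\bit[i] = \bit[j] = 1$ (in which case $\rootReflection[\simpleRoot]\inv$ replaces them by a complex transposition $(ij)$, moving two positions from $\realBits$ to $\complexBits$), or $\sigma$ transposes $i$ and $j$ (in which case $\rootReflection[\simpleRoot]\inv$ breaks the transposition and produces two fixed positions that go into $\imaginaryBits$). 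In every long case two positions move into or out of $\complexBits$ in pairs, so $\numRealBits$ and $\numImaginaryBits$ each change by an even number and the parity is preserved.

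Combining the cases, the parity of $\realTorusCover$ is preserved exactly when $\simpleRoot$ is long, which is the stated equivalence. The only mildly delicate point is being sure that the four long subcases (two sign choices for $\simpleRoot$ combined with the fixed-fixed versus transposed-pair possibilities for $\sigma$ at positions $i,j$) really cover all real roots of length $\sqrt{2}$; this is a bookkeeping check, not a genuine obstacle, so I expect no serious difficulty in the argument.
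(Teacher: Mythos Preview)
Your argument is correct and follows the same approach as the paper: both reduce the question to tracking how $\numRealBits$ changes when the abstract involution $\inv$ is replaced by $\rootReflection[\simpleRoot]\inv$. The paper's own proof is a single sentence observing that $\numRealBits[\rootReflection\inv] \in \{\numRealBits,\numRealBits-2\}$ (implicitly in the long case), whereas you have actually carried out the case analysis that justifies this, together with the short case where $\numRealBits$ drops by one. Your remark that $\numImaginaryBits$ and $\numRealBits$ have the same parity in even rank is the right way to see that only $\numRealBits$ needs to be tracked, and your concluding caveat about the four long subcases is not a genuine issue: as you implicitly noted, for each sign pattern of $\pm\ei\pm\ej$ exactly one of the two $\sigma$-configurations (fixed--fixed or transposed) makes the root real, so the enumeration is complete.
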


\begin{proof}
It is easily verified either $\numRealBits[\rootReflection\inv] = \numRealBits$ or $\numRealBits[\rootReflection\inv] = \numRealBits-2$ and the result follows.
\end{proof}

\begin{definition}
\label{defMaxSplitEvenParity}
An even parity Cartan subgroup $\realTorusESplitCover \subset \realGroupCover$ is said to be \emph{evenly split} if the number of `$-$' signs in any diagram for $\realTorusESplitCover$ is maximal among even parity Cartan subgroups.
\end{definition}

\begin{remark}
\label{remMaxSplitEvenParity}
An evenly split Cartan subgroup is maximally split if and only if $q$ is even. For any $\realSpinGroupCover{p}{q}$ there is a unique conjugacy class of evenly split Cartan subgroups.
\end{remark}

Remark \ref{remMaxSplitEvenParity} leads to some slightly messy expressions involving $n$ and $q$. To simplify the notation we make the following definition.

\begin{definition}
\label{defQHat}
For each $\realGroupCover = \realSpinGroupCover{p}{q}$ we set
\[
\qSpec = \left\lceil\frac{q-1}{2}\right\rceil.
\]
\end{definition}

The major difficulties in the Kazhdan-Lusztig algorithm concern the delicate nature of even parity Cartan subgroups. To overcome these (and other) issues, we must construct our even parity Cartan subgroups in a regular way. To begin, fix an evenly split Cartan subgroup $\realTorusESplitCover \subset \realGroupCover$ and choose a positive root system $\posRootSystem(\complexLieAlgebra, \eSplitLieAlgebra)$ such that either
\[
\begin{array}{c}
\underset{1}{-}~-~\cdots~\underset{q}{-}~+~+~\cdots~\underset{n}{+} \\
\text{or} \\
\underset{1}{-}~-~\cdots~\underset{q-1}{-}~\oplus~\oplus~\cdots~\underset{n}{\oplus}
\end{array}
\]
is the diagram for $\realTorusESplitCover$. In the usual coordinates for $\rootSystem(\complexLieAlgebra, \eSplitLieAlgebra)$ set
\begin{eqnarray*}
\simpleRootai & = & \ei[2i-1] - \ei[2i]\\
\simpleRootbi & = & \ei[2i-1] + \ei[2i]\\
\simpleRootc & = & \ei[n].
\end{eqnarray*}
Choose root vectors $\rootVector[\simpleRootai] \in \rootSpace[\simpleRootai]$ and $\rootVector[\simpleRootc] \in \rootSpace[\simpleRootc]$ according to Lemma \ref{lemXaSign} and define
\[
\stCenter = \mac[{\simpleRooti[1]}]\mac[{\simpleRooti[2]}] \cdots \mac[{\simpleRooti[n/2]}].
\]
The element $\stCenter$ is almost central in $\realGroupCover$ and will play a critical role in our definition of $\weylGroupInvMap$. Using Lemma \ref{lemAbelianCompGroup} and Proposition \ref{propMCStruct} it is easy to verify
\[
\text{Z}(\realTorusESplitCover) =  \left\{\begin{array}{cl}
\realTorusESplitCover & q \in \left\{0,1\right\} \\
\left<\realTorusESplitCoverId, \pm\stCenter\right> & 2 \le q < n \\
\left<\pm\realTorusESplitCoverId, \pm\mac[\simpleRootc], \pm\stCenter\right> & q=n
\end{array}\right.
\]
and a genuine character of Z($\realTorusESplitCover$) is determined by its differential and its values on $\mac[\simpleRootc]$ and $\stCenter$. We have seen $\mac[\simpleRootc] \in \text{Z}(\realGroupCover)$ has order two and the order of $\stCenter$ is determined by 
\begin{eqnarray*}
\stCenter^{2} & = & (\mac[{\simpleRooti[1]}]\mac[{\simpleRooti[2]}] \cdots \mac[{\simpleRooti[n/2]}])^{2} \\
& = & \mac[{\simpleRooti[1]}]^{2}\mac[{\simpleRooti[2]}]^{2} \cdots \mac[{\simpleRooti[n/2]}]^{2} \\
& = & (-1)^{\qSpec}\cdot(1)^{\frac{n}{2}-\qSpec}
\end{eqnarray*}
to be either two or four. In particular there are 1, 2, or 4 possible \emph{genuine} characters of Z($\realTorusESplitCover$) with a fixed differential. 

\begin{remark}
\label{remCentralChar}
Suppose $\genTripleESplitInf$ is a genuine triple for $\realGroupCover$ with half-integral infinitesimal character $\absDiff$. Proposition \ref{propGenRepBij} implies the isomorphism class of $\genTorusChar$ is determined by its restriction to Z($\realTorusESplitCover$) and this restriction is given by
\begin{eqnarray*}
\genTorusChar|_{Z(\realTorusESplitCover)} = m\chi
\end{eqnarray*}
where $\chi$ is a genuine character for Z($\realTorusESplitCover$). Therefore we may treat $\genTorusChar$ as either a genuine representation of $\realTorusESplitCover$ or a genuine character of Z($\realTorusESplitCover$).
\end{remark}

\begin{proposition}
\label{propEvenParityCartans}
Up to conjugacy, every even parity Cartan subgroup of $\realGroupCover$ can be obtained from $\realTorusESplitCover$ through an iterative application of the operators $\ctOp[\simpleRootai]$ and $\ctOp[\simpleRootbi]$ (Definition \ref{defCT}).
\end{proposition}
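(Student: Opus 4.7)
The plan is to argue by explicit construction, using the combinatorial structure of diagrams. First I would note that, up to $\maxRealCompactCover$-conjugacy, a $\cinv$-stable Cartan subgroup of $\realGroupCover$ is determined by its diagram, equivalently by the triple $(\numImaginaryBits, \numRealBits, \numComplexBits)$ subject to the constraint that the corresponding abstract involution be realized in $\realSpinGroupCover{p}{q}$ (Remark \ref{remInvCartanMap} together with Proposition \ref{propRealForm}). Even parity corresponds to $\numImaginaryBits, \numRealBits$ both even (and then $\numComplexBits$ is automatically even). By definition $\realTorusESplitCover$ has diagram $\underbrace{-\cdots -}_{\bar{m}_r}\underbrace{+\cdots +}_{\bar{m}_i}$ with $\bar{m}_r$ equal to $q$ or $q-1$, so it attains the maximum value of $\numRealBits$ among even parity Cartans of $\realGroupCover$.

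Next I would establish the two fundamental "2-block moves" coming from the strongly orthogonal pair $\{\simpleRootai,\simpleRootbi\}$ acting on positions $(2i-1,2i)$: \emph{(i)} if the block is $(-,-)$ then a single Cayley $\ctOp[\simpleRootai]$ turns it into a complex pair, and a second Cayley $\ctOp[\simpleRootbi]$ (now in a real root of the intermediate Cartan, since $\simpleRootbi$ is orthogonal to $\simpleRootai$ and stays real after the first transform) turns it into $(+,+)$; \emph{(ii)} if the block is $(+,+)$ and the relevant $\simpleRootai$ (or $\simpleRootbi$) is noncompact imaginary, then a single Cayley turns it into a complex pair. These statements are direct computations from Proposition \ref{propAbsPairCt} together with the grading-change rule. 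Using moves of type (i) and (ii), I would then construct, for an arbitrary target even parity Cartan with triple $(n_i,n_r,n_c)$, an explicit sequence starting from $\realTorusESplitCover$: take the $\bar{m}_r/2$ leftmost pairs of $-$'s and distribute them into $n_r/2$ left as $(-,-)$, $b$ turned into complex pairs, and $c$ turned into $(+,+)$; then of the $\bar{m}_i/2$ original plus the $c$ newly created $(+,+)$ blocks, turn $e=n_c/2-b$ of them into complex pairs. The accounting $n_r=\bar{m}_r-2b-2c$, $n_c=2b+2e$, $n_i=\bar{m}_i+2c-2e$ is consistent (sums to $n$), and the split-rank bound $n_r+n_c/2\le q$ guarantees $b$ and $e$ can be chosen nonnegative with $b\le(\bar{m}_r-n_r)/2$ and $e\le\bar{m}_i/2+c$.

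The main obstacle is the realizability of the type-(ii) steps: to apply a Cayley in a long imaginary root one needs that root to be noncompact in the current Cartan, and the status of $\simpleRootai, \simpleRootbi$ is determined by the sequence of previous transforms via the formula of Proposition \ref{propAbsPairCt}. The hard part of the proof will be to check that in the construction just described the intermediate $(+,+)$ blocks have the correct noncompact grading for a subsequent Cayley to be legal. I would handle this by tracking gradings step by step: in move (i), the grading produced on the fresh $(+,+)$ block by $\ctOp[\simpleRootai]\ctOp[\simpleRootbi]$ is precisely noncompact for the long root $\simpleRootai$ of the new Cartan, because $\simpleRootai+\simpleRootbi$ is not a root in type $\text{B}$ so Proposition \ref{propAbsPairCt} yields the right parity; and for the original $(+,+)$ blocks of $\realTorusESplitCover$ (in the $q$ odd case, already marked $\oplus$ in the diagram) noncompactness is built into the definition. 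After verifying this, invariance of conjugacy under permutation of diagram positions by the real Weyl group lets me relabel freely to identify the result with $\realTorusCover'$, completing the argument.
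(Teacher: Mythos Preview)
Your instinct to argue on the level of diagrams is exactly right, and indeed the paper's entire proof is the one line ``This is easily verified on the level of involutions.'' However, you have made the argument substantially harder than it is, and in doing so have introduced operations that are not actually permitted by the statement.

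The proposition allows only the real-root Cayley transforms $\ctOp[\simpleRootai]$ and $\ctOp[\simpleRootbi]$ (subscript notation, Definition~\ref{defCT}); it does not allow the noncompact-imaginary transforms $\ctOpNci$. Your type-(ii) move, which converts a $(+,+)$ block into a complex pair, requires $\ctOpNci$ applied to an imaginary root, and is therefore outside the scope of the claim. This is also the source of the grading difficulties you anticipate as ``the hard part'': those difficulties simply do not arise in the intended proof.

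The point is that type-(ii) moves are never needed. Starting from $\realTorusESplitCover$ (with $2\qSpec$ minus signs), apply $\ctOp[{\simpleRootai[1]}],\ldots,\ctOp[{\simpleRootai[j]}]$ to convert $j$ of the $(-,-)$ blocks to complex pairs, then $\ctOp[{\simpleRootbi[1]}],\ldots,\ctOp[{\simpleRootbi[k]}]$ (each $\simpleRootbi$ remains real after $\ctOp[\simpleRootai]$) to convert $k\le j$ of those complex pairs to $(+,+)$. This yields $\numRealBits=2\qSpec-2j$, $\numComplexBits=2(j-k)$, and as $0\le k\le j\le\qSpec$ vary, one obtains every pair of even integers with $\numRealBits+\numComplexBits\le 2\qSpec$. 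Since the real-form constraint (Proposition~\ref{propRealForm}) forces $\numRealBits+\numComplexBits\le q$, and for even parity both $\numRealBits$ and $\numComplexBits$ are even so this is equivalent to $\numRealBits+\numComplexBits\le 2\qSpec$, these are precisely all even parity conjugacy classes. In your notation this amounts to always taking $e=0$, which eliminates the need for type-(ii) moves and all of the grading checks. (Note also that your stated bound $n_r+n_c/2\le q$ should be $n_r+n_c\le q$.)
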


\begin{proof}
This is easily verified on the level of involutions.
\end{proof}

Proposition \ref{propEvenParityCartans} implies we can associate a (nonunique) sequence in $\left\{\simpleRootai,\simpleRootbi\right\}$ to each conjugacy class of even parity Cartan subgroups in $\realGroupCover$. Since the corresponding operators $\ctOp[\simpleRootai]$ and $\ctOp[\simpleRootbi]$ commute (Lemma \ref{lemCaCbMbCommute}), the ordering of the roots is unimportant. To eliminate the ambiguity, we make the following definition.

\begin{definition}
\label{defStdSeq}
A sequence in $\left\{\simpleRootai, \simpleRootbi\right\}$ is called \emph{standard} if it is of the form
\begin{eqnarray*}
\pSeq{j}{k} & = & \simpleRootbi[k]\cdots\simpleRootbi[2]\simpleRootbi[1]\simpleRootai[j]\cdots\simpleRootai[2]\simpleRootai[1]
\end{eqnarray*}
with $0 \le k \le j \le \qSpec$.
\end{definition}

There is a unique standard sequence associated to each conjugacy class of even parity Cartan subgroups in $\realGroupCover$. Clearly there are
\begin{eqnarray*}
1 + 2 + \cdots + \left(\qSpec + 1\right) = \binom{\qSpec+2}{2}
\end{eqnarray*}
possible standard sequences (including the empty sequence) and thus $\binom{\qSpec+2}{2}$ even parity Cartan subgroups of $\realGroupCover$ (up to conjugacy). If $\pSeq{j}{k}$ is a standard sequence we will write
\[
\realTorusESplitCtCover[{\pSeq{j}{k}}] = \ctOpSeq{j}{k}(\realTorusESplitCover) = \ctOp[{\simpleRootbi[k]}]\cdots\ctOp[{\simpleRootbi[2]}]\ctOp[{\simpleRootbi[1]}]\ctOp[{\simpleRootai[j]}]\cdots\ctOp[{\simpleRootai[2]}]\ctOp[{\simpleRootai[1]}](\realTorusESplitCover)
\]
for the corresponding even parity Cartan subgroup.

\begin{proposition}
\label{propMaInHsc}
Let $\pSeq{j}{k}$ be a standard sequence and suppose $1 \le i \le \frac{n}{2}$. Then $\mac[{\simpleRooti}] \in \realTorusESplitCtCover[{\pSeq{j}{k}}]$. In particular, the product 
\[
\stCenter = \mac[{\simpleRooti[1]}]\mac[{\simpleRooti[2]}] \cdots \mac[{\simpleRooti[n/2]}] \in \realTorusESplitCover
\]
is an element of $\realTorusESplitCtCover[{\pSeq{j}{k}}]$.
\end{proposition}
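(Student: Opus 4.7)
The plan is to reduce the statement to an adjoint-action computation in the linear group and then lift trivially to the cover. Concretely, by Definition \ref{defCSGCover} and the remark following it, $\realTorusESplitCtCover[{\pSeq{j}{k}}] = \projOpInv(\realTorusESplitCt[{\pSeq{j}{k}}])$, and the adjoint action of $\mac[{\simpleRooti}]$ on $\complexLieAlgebra$ factors through $\projOp$. So it will suffice to show $\ma[{\simpleRooti}] \in \realTorusESplitCt[{\pSeq{j}{k}}]$, i.e.~that $\ma[{\simpleRooti}]$ centralizes the Cartan subalgebra $\realLieAlgebraESplitCt[{\pSeq{j}{k}}]$.

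First I would unpack $\realLieAlgebraESplitCt[{\pSeq{j}{k}}]$ as being obtained from $\realLieAlgebraESplit$ by iteratively removing the coroot directions $\coroot[{\simpleRootai[l]}]$ $(1\le l\le j)$ and $\coroot[{\simpleRootbi[l]}]$ $(1\le l\le k)$ and adjoining the compact one-dimensional subspaces spanned by $\rootVector[{\simpleRootai[l]}]+\cinv\rootVector[{\simpleRootai[l]}]$ and $\rootVector[{\simpleRootbi[l]}]+\cinv\rootVector[{\simpleRootbi[l]}]$. Since $\ma[{\simpleRooti}]\in\realTorusESplit$, it already centralizes the surviving kernel piece of $\realLieAlgebraESplit$, so only the added compact pieces need to be checked. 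Using Proposition \ref{propMaRootSpace} with $\simpleRootb=\simpleRooti$, I need $(\simpleRootai[l],\rootCheck[{\simpleRooti}])$ and $(\simpleRootbi[l],\rootCheck[{\simpleRooti}])$ to be even for every $l$ appearing in the sequence. A direct calculation in the standard coordinates gives
\[
(\simpleRootai[l],\rootCheck[{\simpleRooti}]) = (\ei[2l-1]-\ei[2l],\;\ei[2i-1]-\ei[2i]) = 2\delta_{il},\qquad (\simpleRootbi[l],\rootCheck[{\simpleRooti}]) = (\ei[2l-1]+\ei[2l],\;\ei[2i-1]-\ei[2i]) = 0,
\]
both of which are even. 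Hence $\Ad{\ma[{\simpleRooti}]}$ fixes every $\rootVector[{\simpleRootai[l]}]$ and $\rootVector[{\simpleRootbi[l]}]$, and by $\cinv$-equivariance also their $\cinv$-images, so $\ma[{\simpleRooti}]$ centralizes $\realLieAlgebraESplitCt[{\pSeq{j}{k}}]$ as required. (Alternatively, when $i\le j$ the conclusion follows immediately from Proposition \ref{propMIterCtStruct}, and when $i>j$ the root $\simpleRooti$ is orthogonal to every root in $\pSeq{j}{k}$ and so remains real for $\realTorusESplitCt[{\pSeq{j}{k}}]$, giving $\ma[{\simpleRooti}] \in \mGroupTorus[{\realTorusESplitCt[{\pSeq{j}{k}}]}]$ directly.)

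Lifting to $\realGroupCover$ is then automatic: since $\projOp(\mac[{\simpleRooti}]) = \ma[{\simpleRooti}] \in \realTorusESplitCt[{\pSeq{j}{k}}]$ and $\realTorusESplitCtCover[{\pSeq{j}{k}}] = \projOpInv(\realTorusESplitCt[{\pSeq{j}{k}}])$, we conclude $\mac[{\simpleRooti}] \in \realTorusESplitCtCover[{\pSeq{j}{k}}]$. The ``in particular'' assertion then follows because the product $\stCenter = \mac[{\simpleRooti[1]}]\mac[{\simpleRooti[2]}]\cdots\mac[{\simpleRooti[n/2]}]$ is a product of elements of the subgroup $\realTorusESplitCtCover[{\pSeq{j}{k}}]$. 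There is no serious obstacle; the only subtlety worth flagging is the lifting step, which is entirely formal because Cartan subgroups in the cover are preimages under $\projOp$, and the adjoint action used in the centralizer condition is insensitive to the covering.
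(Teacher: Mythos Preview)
Your proof is correct and follows essentially the same approach as the paper, which cites Corollary~\ref{corMaRootSpace} (the orthogonality criterion) and Proposition~\ref{propMaIdCompH} (for the case $i\le j$ where $\simpleRooti$ is one of the roots in the sequence). Your direct appeal to Proposition~\ref{propMaRootSpace} with the observation that the pairings are always even unifies the two cases in a single computation, and your parenthetical alternative is exactly the paper's argument.
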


\begin{proof}
This follows from Corollary \ref{corMaRootSpace} and Proposition \ref{propMaIdCompH}.
\end{proof}

\begin{proposition}
\label{propZHjk}
Let $\pSeq{j}{k}$ be a nonempty standard sequence and write $\mac[\simpleRootc]$ for the unique element corresponding to any short real root in $\realTorusESplitCover$ (Proposition \ref{propSameMacReal}). Then the center of $\realTorusESplitCtCover[{\pSeq{j}{k}}]$ is given by
\begin{eqnarray*}
\text{Z}(\realTorusESplitCtCover[{\pSeq{j}{k}}]) & = & \left\{\begin{array}{ll}
\left<\realTorusESplitCtCoverId[{\pSeq{j}{k}}],~\mac[{\simpleRootai[j+1]}]\cdots\mac[{\simpleRootai[\qSpec]}],~ \mac[\simpleRootc]\right> & j<\qSpec, k=0 \\
\left<\realTorusESplitCtCoverId[{\pSeq{j}{k}}],~\mac[{\simpleRootai[j+1]}]\cdots\mac[{\simpleRootai[\qSpec]}]\right> & j<\qSpec, k>0 \\
\left<\realTorusESplitCtCoverId[{\pSeq{j}{k}}],~\mac[\simpleRootc]\right> & j=\qSpec, k=0 \\
\realTorusESplitCtCoverId[{\pSeq{j}{k}}] & j=\qSpec, k>0 \\
\end{array}
\right.
\end{eqnarray*}
In particular, $\stCenter \in \text{Z}(\realTorusESplitCtCover[{\pSeq{j}{k}}])$.
\end{proposition}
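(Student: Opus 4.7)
The plan is to reduce the computation to the component group and then identify the central cosets via the commutator relations of Proposition \ref{propMCStruct}. By Lemma \ref{lemAbelianCompGroup}, $\realTorusESplitCtCoverId[{\pSeq{j}{k}}]$ is central, so it suffices to describe which cosets in $\pi_{0}(\realTorusESplitCtCover[{\pSeq{j}{k}}])$ admit central representatives. The lifted versions of Proposition \ref{propCompH} and Proposition \ref{propMIterCtStruct} (via the analysis of Section \ref{AlgTorusCover} and Proposition \ref{propNumCompHCtCover}) supply such representatives as products of $\mac[\simpleRoot]$ for real roots $\simpleRoot$ of $\realTorusESplitCt[{\pSeq{j}{k}}]$, modulo the relations $\ma[{\simpleRootai[i]}]$ ($i \le j$) and $\ma[{\simpleRootbi[i]}]$ ($i \le k$) imposed by the Cayley transform sequence.

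Next I would enumerate the real roots. A direct inspection of the diagram after applying $\pSeq{j}{k}$ shows that positions $1,\ldots,2k$ are imaginary noncompact, positions $2k+1,\ldots,2j$ are complex, and the surviving minus signs occupy exactly $\{2j+1,\ldots,2\qSpec\}$ in both the $q$ even and $q$ odd cases. The real roots of $\realTorusESplitCt[{\pSeq{j}{k}}]$ are therefore $\pm(\ei[a] \pm \ei[b])$ and $\pm \ei[a]$ for $a,b$ in this range. In particular both short and long real roots exist precisely when $j < \qSpec$, and the long real roots include $\simpleRootai[i]$ and $\simpleRootbi[i]$ for $j+1 \le i \le \qSpec$.

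The crucial step is then the centrality analysis. The element $\mac[\simpleRootc]$ is central in $\realGroupCover$ by Proposition \ref{propMShort} together with the centrality of $\pm 1$ in the cover, so only the question of whether it gives a distinct coset in $\pi_0$ remains. Using $\rootCheck[\simpleRootai[i]] + \rootCheck[\simpleRootbi[i]] = \rootCheck[\ei[2i-1]]$ and Proposition \ref{propMStruct}, one obtains $\ma[\simpleRootc] = \ma[{\simpleRootai[1]}]\ma[{\simpleRootbi[1]}]$, so when $k \ge 1$ the element $\mac[\simpleRootc]$ is absorbed into the Cayley transform relation subgroup and hence into $\realTorusESplitCtCoverId[{\pSeq{j}{k}}]$; when $k = 0$ no such relation exists and $\mac[\simpleRootc]$ contributes a distinct central coset. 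For the long-root candidate, Proposition \ref{propMCStruct} gives
\[
\left[\mac[{\simpleRootai[j+1]}]\cdots\mac[{\simpleRootai[\qSpec]}],\ \mac[\simpleRoot]\right] = (-1)^{\sum_{i=j+1}^{\qSpec}(\simpleRootai[i],\,\rootCheck[\simpleRoot])}
\]
for every long real root $\simpleRoot = \pm\ei[a] \pm \ei[b]$ of $\realTorusESplitCt[{\pSeq{j}{k}}]$. Each coordinate $\ei[a]$ with $a \in \{2j+1,\ldots,2\qSpec\}$ contributes exactly $\pm 1$ to the sum, and since $\simpleRoot$ has precisely two such coordinates the exponent is always even; hence the product is central, and it yields a nontrivial coset precisely when $j < \qSpec$. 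A parallel commutator computation shows individual $\mac[{\simpleRootai[i]}]$ and $\mac[{\simpleRootbi[i]}]$ fail to commute with other simple-long-real $\mac$'s, ruling out additional central cosets.

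Finally, the "in particular" claim follows by decomposing
\[
\stCenter = \bigl(\mac[{\simpleRooti[1]}]\cdots\mac[{\simpleRooti[j]}]\bigr)\bigl(\mac[{\simpleRooti[j+1]}]\cdots\mac[{\simpleRooti[\qSpec]}]\bigr)\bigl(\mac[{\simpleRooti[\qSpec+1]}]\cdots\mac[{\simpleRooti[n/2]}]\bigr);
\]
the first factor lies in the Cayley transform relation subgroup and thus in the identity component, the middle factor is the central long-root element just identified (empty when $j = \qSpec$), and the third factor is a product of $\mac$'s for compact imaginary roots of $\realTorusESplitCt[{\pSeq{j}{k}}]$, which lies in $\realTorusESplitCtCoverId[{\pSeq{j}{k}}]$ by an analog of Proposition \ref{propMaIdCompH}. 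The main technical obstacle is the careful commutator bookkeeping using the non-abelian structure of Proposition \ref{propMCStruct}, together with the delicate tracking of whether $-1$ lies in the identity component of the cover (via Proposition \ref{propNumCompHCtCover}), which is precisely what collapses the $j = \qSpec,\ k > 0$ case down to $\realTorusESplitCtCoverId[{\pSeq{j}{k}}]$ alone.
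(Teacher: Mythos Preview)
Your approach is essentially the same as the paper's: both reduce to the component group via Lemma \ref{lemAbelianCompGroup} and Proposition \ref{propCompH}, then use the commutator relations of Proposition \ref{propMCStruct} to determine the central cosets, with Proposition \ref{propMaIdCompH} handling the membership of $\stCenter$. The paper's proof is a terse three-line citation of exactly these propositions (plus the observation that $-1 \in \realTorusESplitCtCoverId[{\pSeq{j}{k}}]$ since the sequence is nonempty); you have correctly unpacked what those citations mean.

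One small gap in your enumeration: the real roots of $\realTorusESplitCt[{\pSeq{j}{k}}]$ also include the $A_1$-factor roots $\simpleRootbi[i] = \ei[2i-1]+\ei[2i]$ for $k < i \le j$ (these positions are complex with parentheses in the diagram, so $\ei[2i-1]+\ei[2i]$ is real). This does not affect your commutator check for the candidate $\mac[{\simpleRootai[j+1]}]\cdots\mac[{\simpleRootai[\qSpec]}]$, since these $\simpleRootbi[i]$ are orthogonal to every $\simpleRootai[l]$ with $l > j$; but you should include them when ruling out other possible central cosets. Also, in your third factor of $\stCenter$, the $\simpleRootai[i]$ for $i > \qSpec$ are indeed compact imaginary in both $q$-parity cases, and the correct justification is that $\mac[{\simpleRootai[i]}] = \expgc(\pi i \coroot[{\simpleRootai[i]}])$ with $i\coroot[{\simpleRootai[i]}] \in \realLieAlgebra[t]$ already for $\realTorusESplit$, preserved under the orthogonal Cayley transforms by Lemma \ref{lemCtCoroot}; this is a bit more than a direct ``analog of Proposition \ref{propMaIdCompH}.''
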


\begin{proof}
Since $\pSeq{j}{k}$ is nonempty, $-1 \in \realTorusESplitCtCoverId[{\pSeq{j}{k}}]$. The first result follows from Proposition \ref{propCompH} and Proposition \ref{propMCStruct}. The last statement follows by Proposition \ref{propMaIdCompH}.
\end{proof}

\begin{corollary}
\label{corDiffSignZ}
In the setting of Proposition \ref{propZHjk}, let $\pSeq{j}{k}$ be a standard sequence with $j < \qSpec$. Suppose the genuine triples $\genTripleESplitInfCtDiffChar{\diff}{\genTorusChar_{1}}{\pSeq{j}{k}}$ and
$\genTripleESplitInfCtDiffChar{\diff}{\genTorusChar_{2}}{\pSeq{j}{k}}$ are not $\maxRealCompactCover$-conjugate and have the same central character. Then $\genTorusChar_{2}(\stCenter) = -\genTorusChar_{1}(\stCenter)$.
\end{corollary}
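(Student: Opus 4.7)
The plan is to isolate, using Proposition \ref{propZHjk}, the particular central element of $\realTorusESplitCtCover[{\pSeq{j}{k}}]$ on which $\genTorusChar_{1}$ and $\genTorusChar_{2}$ are forced to disagree, and then to show that $\stCenter$ detects that disagreement cleanly. Let $z_{0} = \mac[{\simpleRootai[j+1]}]\mac[{\simpleRootai[j+2]}]\cdots\mac[{\simpleRootai[\qSpec]}]$, which is nonempty since $j<\qSpec$. By Proposition \ref{propZHjk} together with Proposition \ref{propGenRepBij} and Remark \ref{remCompareCC}, a genuine character of $Z(\realTorusESplitCtCover[{\pSeq{j}{k}}])$ compatible with $\diff$ is determined by its restriction to $Z(\realGroupCover)$ and its value on $z_{0}$ (the auxiliary generator $\mac[\simpleRootc]$ appearing in the $k=0$ case of Proposition \ref{propZHjk} already lies in $Z(\realGroupCover)$, and in the $k>0$ case no further generator appears). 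Since $\genTorusChar_{1}$ and $\genTorusChar_{2}$ have the same differential and the same restriction to $Z(\realGroupCover)$ but are not $\maxRealCompactCover$-conjugate, they must disagree on $z_{0}$, forcing $\genTorusChar_{2}(z_{0}) = -\genTorusChar_{1}(z_{0})$.

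Next, the roots $\simpleRootai[1],\ldots,\simpleRootai[n/2]$ are pairwise orthogonal long roots, so Proposition \ref{propMCStruct} shows their associated $\mac$'s commute in $\realGroupCover$. This yields a well-defined factorization $\stCenter = A \cdot z_{0} \cdot B$ with $A = \prod_{i=1}^{j}\mac[{\simpleRootai[i]}]$ and $B = \prod_{i=\qSpec+1}^{n/2}\mac[{\simpleRootai[i]}]$. I claim every factor of $A$ and $B$ lies in $\realTorusESplitCtCoverId[{\pSeq{j}{k}}]$. For $i\le j$, the root $\simpleRootai[i]$ is real for $\realTorusESplit$, so Proposition \ref{propMaIdCompH} places $\mac[{\simpleRootai[i]}]$ in the identity component of $\realTorusESplitCt[{\simpleRootai[i]}]$ immediately after the Cayley transform $\ctOp[{\simpleRootai[i]}]$; because the remaining transforms in $\pSeq{j}{k}$ are strongly orthogonal to $\simpleRootai[i]$ and commute with $\ctOp[{\simpleRootai[i]}]$ by Lemma \ref{lemCaCbMbCommute}, the Lie algebra of $\realTorusESplitCtCover[{\pSeq{j}{k}}]$ retains the direction $\mathbb{R}\zLa[{\simpleRootai[i]}]$, so $\mac[{\simpleRootai[i]}] = \expgc(\pi\zLa[{\simpleRootai[i]}])$ stays in the identity component. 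For $i>\qSpec$, the root $\simpleRootai[i]$ is compact imaginary for $\realTorusESplit$, so $\mac[{\simpleRootai[i]}] = \expgc(\pi i\,\coroot[{\simpleRootai[i]}])$ lies in a compact one-parameter subgroup; orthogonality of $\simpleRootai[i]$ to every root in $\pSeq{j}{k}$ puts $\coroot[{\simpleRootai[i]}]$ in the Lie algebra of $\realTorusESplitCtCover[{\pSeq{j}{k}}]$, placing this subgroup inside the identity component as well.

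Since the restrictions of $\genTorusChar_{1}$ and $\genTorusChar_{2}$ to $\realTorusESplitCtCoverId[{\pSeq{j}{k}}]$ are both determined by $\diff$ and hence equal, the two characters agree on $A$ and on $B$, and multiplicativity applied to the commuting factorization gives $\genTorusChar_{2}(\stCenter) = \genTorusChar_{2}(A)\,\genTorusChar_{2}(z_{0})\,\genTorusChar_{2}(B) = -\genTorusChar_{1}(A)\,\genTorusChar_{1}(z_{0})\,\genTorusChar_{1}(B) = -\genTorusChar_{1}(\stCenter)$, as required. The chief technical hurdle is confirming that iterated Cayley transforms in strongly orthogonal roots preserve the identity-component status of the intermediate $\mac[{\simpleRootai[i]}]$; once that and the precise description of $Z(\realTorusESplitCtCover[{\pSeq{j}{k}}])$ from Proposition \ref{propZHjk} are in hand, everything reduces to a bookkeeping computation inside a central subgroup.
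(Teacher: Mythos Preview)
Your proof is correct and follows essentially the same route as the paper's: factor $\stCenter = A\cdot z_{0}\cdot B$ with $A=\mac[{\simpleRootai[1]}]\cdots\mac[{\simpleRootai[j]}]$ and $B=\mac[{\simpleRootai[\qSpec+1]}]\cdots\mac[{\simpleRootai[n/2]}]$, observe that $A$ and $B$ lie in $\realTorusESplitCtCoverId[{\pSeq{j}{k}}]$ so the two characters agree there, and deduce from Proposition~\ref{propZHjk} that the only remaining freedom is the value on $z_{0}$. The paper's version simply asserts $A,B\in\realTorusESplitCtCoverId[{\pSeq{j}{k}}]$ without elaboration, whereas you supply the orthogonality and Cayley-transform arguments explicitly; otherwise the two proofs coincide.
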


\begin{proof}
We have
\begin{eqnarray*}
\genTorusChar_{1}(\mac[{\simpleRootai[1]}]\cdots\mac[{\simpleRootai[j]}]) & = & \genTorusChar_{2}(\mac[{\simpleRootai[1]}]\cdots\mac[{\simpleRootai[j]}]) \\
\genTorusChar_{1}(\mac[{\simpleRootai[\qSpec+1]}]\cdots\mac[{\simpleRootai[n/2]}]) & = & \genTorusChar_{2}(\mac[{\simpleRootai[\qSpec+1]}]\cdots\mac[{\simpleRootai[n/2]}])
\end{eqnarray*}
since $\mac[{\simpleRootai[1]}]\cdots\mac[{\simpleRootai[j]}]$, $\mac[{\simpleRootai[\qSpec+1]}]\cdots\mac[{\simpleRootai[n/2]}] \in \realTorusESplitCtCoverId[\pSeq{j}{k}]$. However, any genuine character extending $\genPairESplitCtInf[\pSeq{j}{k}]$ is determined by its restriction to Z($\realTorusESplitCtCover[{\pSeq{j}{k}}]$). Since $\genTripleESplitInfCtDiffChar{\diff}{\genTorusChar_{1}}{\pSeq{j}{k}}$ and
$\genTripleESplitInfCtDiffChar{\diff}{\genTorusChar_{2}}{\pSeq{j}{k}}$ have the same central characters we must have
\begin{eqnarray*}
\genTorusChar_{1}(\mac[{\simpleRootai[j+1]}]\cdots\mac[{\simpleRootai[\qSpec]}]) & \ne & \genTorusChar_{2}(\mac[{\simpleRootai[j+1]}]\cdots\mac[{\simpleRootai[\qSpec]}])
\end{eqnarray*}
and the result follows.
\end{proof}

\begin{proposition}
\label{propNonPsCross}
Let $\pSeq{j}{k}$ be a standard sequence and suppose $\simpleRootd \in \rootSystem(\complexLieAlgebra,\eSplitLieAlgebraCt[{\pSeq{j}{k}}])$. Then
\begin{eqnarray*}
\rootChar[\simpleRootd](\stCenter) & = & \left\{\begin{array}{rl}
1 & \simpleRootd \text{ is long} \\
-1 & \simpleRootd \text{ is short} 
\end{array}
\right..
\end{eqnarray*}
\end{proposition}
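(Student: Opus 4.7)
The plan is to reduce the computation of $\rootChar[\simpleRootd](\stCenter)$ to a direct evaluation on the level of the coroot lattice. The starting point is the identity
\[
\stCenter = \mac[{\simpleRooti[1]}]\mac[{\simpleRooti[2]}] \cdots \mac[{\simpleRooti[n/2]}] = \expgc\!\left(\pi i \sum_{i=1}^{n/2} \coroot[{\simpleRootai[i]}]\right),
\]
where $\simpleRootai = \ei[2i-1] - \ei[2i]$ and hence $\coroot[{\simpleRootai[i]}] = \ei[2i-1]^{\vee} - \ei[2i]^{\vee}$. Set $C = \sum_{i=1}^{n/2}(\ei[2i-1]^{\vee} - \ei[2i]^{\vee})$. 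Proposition \ref{propMaInHsc} guarantees $\stCenter \in \realTorusESplitCtCover[{\pSeq{j}{k}}]$, so each coroot $\coroot[{\simpleRootai[i]}]$ lies in the complexification of $\eSplitLieAlgebraCt[{\pSeq{j}{k}}]$ (via the Cayley transform identification of roots and coroots). Therefore for any $\simpleRootd \in \rootSystem(\complexLieAlgebra, \eSplitLieAlgebraCt[{\pSeq{j}{k}}])$ we have
\[
\rootChar[\simpleRootd](\stCenter) = e^{\ad(\pi i C)}(\rootVector[\simpleRootd])/\rootVector[\simpleRootd] = e^{\pi i\,\simpleRootd(C)} = (-1)^{\simpleRootd(C)},
\]
exactly as in the proof of Proposition \ref{propMaRootSpace}, applied iteratively to each $\mac[{\simpleRootai[i]}]$.

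Next, I would evaluate the integer $\simpleRootd(C) = \sum_{i=1}^{n/2}\bigl(\simpleRootd(\ei[2i-1]^{\vee}) - \simpleRootd(\ei[2i]^{\vee})\bigr)$ case by case. Under the usual Cayley transform identification, the roots of $\rootSystem(\complexLieAlgebra, \eSplitLieAlgebraCt[{\pSeq{j}{k}}])$ are still of the form $\pm\ei \pm \ei[j]$ (long) and $\pm\ei$ (short) in the $\ei$ coordinates. If $\simpleRootd = \pm\ei[r]$ is short, exactly one index in the sum contributes, giving $\simpleRootd(C) = \pm 1$ and thus $\rootChar[\simpleRootd](\stCenter) = -1$. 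If $\simpleRootd = \pm\ei[r] \pm \ei[s]$ is long, exactly two indices contribute $\pm 1$ each (or one index contributes twice when $\{r,s\}=\{2i-1,2i\}$ for some $i$), so $\simpleRootd(C)$ is even and $\rootChar[\simpleRootd](\stCenter) = 1$.

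The main (and only) subtlety is the first step: justifying that $\rootChar[\simpleRootd](\stCenter) = (-1)^{\simpleRootd(C)}$ in the Cartan $\realTorusESplitCtCover[{\pSeq{j}{k}}]$ rather than only in $\realTorusESplitCover$, since the roots $\simpleRootai$ are real in the split Cartan but change type under the Cayley transform. This is handled cleanly by using the group-level formula $\stCenter = \expgc(\pi i C)$ together with Proposition \ref{propMaInHsc}, which places the exponentiated element in the target Cartan; the adjoint formula then makes sense because the exponential identity $\Ad{\expgc(X)} = e^{\ad X}$ requires only that $X$ lie in the complexified Lie algebra, not any particular reality condition on the coroots involved. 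Once this is in place, the parity computation above completes the proof.
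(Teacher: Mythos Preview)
Your proof is correct and follows essentially the same route as the paper: both factor $\rootChar[\simpleRootd](\stCenter)$ as $\prod_i (-1)^{(\simpleRootd,\rootCheck[{\simpleRootai[i]}])}$ via Proposition~\ref{propMaRootSpace} and then do the same short/long parity count in the $\ei$-coordinates. One small point on your discussion of the subtlety: the coroots $\coroot[{\simpleRootai[i]}]$ for $i\le j$ do \emph{not} literally lie in $\eSplitLieAlgebraCt[{\pSeq{j}{k}}]$ (the Cayley transform replaces them), so the cleanest justification is the one the paper implicitly uses---view $\mac[{\simpleRootai[i]}]$ as $\mac$ for the transformed root $\ctOpSeq{j}{k}(\simpleRootai[i])$ in the new Cartan (Proposition~\ref{propSameMa} and Lemma~\ref{lemCtCoroot}) and then apply Proposition~\ref{propMaRootSpace} there, where the coordinate pairing is unchanged.
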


\begin{proof}
According to Proposition \ref{propMaRootSpace}
\begin{eqnarray*}
\rootChar[\simpleRootd](\stCenter) & = & \rootChar[\simpleRootd](\mac[{\simpleRootai[1]}])\rootChar[\simpleRootd](\mac[{\simpleRootai[2]}])\cdots\rootChar[\simpleRootd](\mac[{\simpleRootai[n/2]}]) \\
& = & (-1)^{(\simpleRootd, \rootCheck[{\simpleRootai[1]}])}(-1)^{(\simpleRootd, \rootCheck[{\simpleRootai[2]}])}\cdots(-1)^{(\simpleRootd, \rootCheck[{\simpleRootai[n/2]}])}.
\end{eqnarray*}
Suppose first that $\simpleRootd$ is long. If $\simpleRootd = \simpleRootai[j]$ or $\simpleRootd = \simpleRootbi[j]$ for some $j$, we have $(\simpleRootd, \rootCheck[{\simpleRootai}]) \in \left\{0,2\right\}$ for all $i$. Otherwise, $(\simpleRootd, \rootCheck[{\simpleRootai}]) = -1$ for exactly two values of $i$ and the result follows. If $\simpleRootd$ is short, $(\simpleRootd, \rootCheck[{\simpleRootai}]) = -1$ for exactly one value of $i$.
\end{proof}

We will need an explicit description of the cross action (Definition \ref{defCrossActionTriples}) in real roots for even parity Cartan subgroups. This essentially requires understanding how conjugation affects the element $\stCenter$. We begin with the following general lemma.

\begin{lemma}
\label{lemVgr}
Let $\complexLieAlgebra[h] \subset \complexLieAlgebra$ be a $\cinv$-stable Cartan subalgebra and suppose $\simpleRoota, \simpleRootb \in \realRoots[\cinv](\complexLieAlgebra, \complexLieAlgebra[h])$. Choose root vectors $\rootVector[\simpleRoota] \in \rootSpace[\simpleRoota]$ and $\rootVector[\simpleRootb] \in \rootSpace[\simpleRootb]$ according to Lemma \ref{lemXaSign}. Then we have
\begin{eqnarray*}
\mac[\simpleRoota]\oac[\simpleRootb]\mac[\simpleRoota]^{-1} & = & \oac[\simpleRootb]^{\simpleRootb(\mac[\simpleRoota])} \\ 
\oac[\simpleRootb]\mac[\simpleRoota]\oac[\simpleRootb]^{-1} & = & \oac[\simpleRootb]^{(1-\simpleRootb(\mac[\simpleRoota]))} \mac[\simpleRoota].
\end{eqnarray*}
\end{lemma}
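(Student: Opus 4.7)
My plan is to prove the first identity by a direct computation with the exponential map, and then derive the second identity from the first by pure group-theoretic manipulation.

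For the first identity, I would use Definition \ref{defZa} to write $\mac[\simpleRoota] = \expgc(\pi\zLa[\simpleRoota])$ and $\oac[\simpleRootb] = \expgc(\tfrac{\pi}{2}\zLa[\simpleRootb])$, then apply the standard conjugation identity $g\expgc(X)g^{-1} = \expgc(\Ad{g}X)$ to reduce to computing $\Ad{\mac[\simpleRoota]}\zLa[\simpleRootb]$. Decomposing $\zLa[\simpleRootb] = \rootVector[\simpleRootb] + \cinv\rootVector[\simpleRootb]$, I would observe that $\simpleRootb$ being real forces $\cinv\rootVector[\simpleRootb] \in \rootSpace[-\simpleRootb]$. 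Proposition \ref{propMaRootSpace} then applies to both summands, yielding eigenvalues $(-1)^{(\simpleRootb,\rootCheck[\simpleRoota])}$ and $(-1)^{(-\simpleRootb,\rootCheck[\simpleRoota])}$ respectively, which agree. Hence $\Ad{\mac[\simpleRoota]}\zLa[\simpleRootb] = \simpleRootb(\mac[\simpleRoota])\zLa[\simpleRootb]$ after identifying $\simpleRootb(\mac[\simpleRoota]) = (-1)^{(\simpleRootb,\rootCheck[\simpleRoota])} \in \left\{\pm 1\right\}$ via Proposition \ref{propMaRootSpace}, and exponentiating gives $\expgc(\pm\tfrac{\pi}{2}\zLa[\simpleRootb]) = \oac[\simpleRootb]^{\pm 1} = \oac[\simpleRootb]^{\simpleRootb(\mac[\simpleRoota])}$, as desired.

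For the second identity, no new computation should be required. I would rearrange the first identity as $\mac[\simpleRoota] = \oac[\simpleRootb]^{\simpleRootb(\mac[\simpleRoota])}\mac[\simpleRoota]\oac[\simpleRootb]^{-1}$, and then left-multiply both sides by $\oac[\simpleRootb]^{1-\simpleRootb(\mac[\simpleRoota])}$. Because powers of the single element $\oac[\simpleRootb]$ satisfy $\oac[\simpleRootb]^{a}\oac[\simpleRootb]^{b} = \oac[\simpleRootb]^{a+b}$, the right-hand side collapses to $\oac[\simpleRootb]\mac[\simpleRoota]\oac[\simpleRootb]^{-1}$ and the second identity drops out immediately. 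I anticipate no serious obstacle: the only genuine point is the elementary parity observation $(-1)^{(-\simpleRootb,\rootCheck[\simpleRoota])} = (-1)^{(\simpleRootb,\rootCheck[\simpleRoota])}$ used in the first identity, and in particular the nonabelian commutation relations of Proposition \ref{propMCStruct} should play no role anywhere in the argument, since the derivation of the second identity from the first is purely formal in the exponents of $\oac[\simpleRootb]$.
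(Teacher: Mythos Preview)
Your proposal is correct and matches the paper's approach: the paper cites \cite{VGr}, Lemma 4.3.19(c) for the first identity (which is exactly the $\Ad{\mac[\simpleRoota]}\zLa[\simpleRootb]$ computation you spell out), and then derives the second identity from the first by the same formal manipulation of powers of $\oac[\simpleRootb]$ that you describe. The only cosmetic difference is that the paper passes through the inverse $\mac[\simpleRoota]\oac[\simpleRootb]^{-1}\mac[\simpleRoota]^{-1} = \oac[\simpleRootb]^{-\simpleRootb(\mac[\simpleRoota])}$ before left-multiplying by $\oac[\simpleRootb]$, whereas you rearrange directly and left-multiply by $\oac[\simpleRootb]^{1-\simpleRootb(\mac[\simpleRoota])}$; these are equivalent.
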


\begin{proof}
The first statement is proved in exactly the same way as \cite{VGr}, Lemma 4.3.19(c). For the second statement we have
\begin{eqnarray*}
\mac[\simpleRoota]\oac[\simpleRootb]\mac[\simpleRoota]^{-1} & = & \oac[\simpleRootb]^{\simpleRootb(\mac[\simpleRoota])} \\ 
\mac[\simpleRoota]\oac[\simpleRootb]^{-1}\mac[\simpleRoota]^{-1} & = & \oac[\simpleRootb]^{-\simpleRootb(\mac[\simpleRoota])} \\ 
\oac[\simpleRootb]\mac[\simpleRoota]\oac[\simpleRootb]^{-1} & = & \oac[\simpleRootb]\oac[\simpleRootb]^{-\simpleRootb(\mac[\simpleRoota])}\mac[\simpleRoota] \\ 
\oac[\simpleRootb]\mac[\simpleRoota]\oac[\simpleRootb]^{-1} & = & \oac[\simpleRootb]^{(1-\simpleRootb(\mac[\simpleRoota]))} \mac[\simpleRoota]
\end{eqnarray*}
as desired.
\end{proof}

\begin{proposition}
\label{propPsConj}
Fix a standard sequence $\pSeq{j}{k}$ and let $\genTripleESplitCtInf[\pSeq{j}{k}]$ be a genuine triple. For $\simpleRootd \in \realRoots[\cinv](\complexLieAlgebra,\eSplitLieAlgebraCt[\pSeq{j}{k}])$ choose $\rootVector[\simpleRootd] \in \rootSpace[\simpleRootd]$ according to Lemma \ref{lemXaSign} and write $\oac[\simpleRootd]$ for the corresponding root reflection in $\maxRealCompactCover$. Then $\oac[\simpleRootd]$ acts of $\genTorusChar$ by conjugation and we have
\begin{eqnarray*}
(\oac[\simpleRootd] \cdot \genTorusChar)(\stCenter) & = & \left\{\begin{array}{cl}
\genTorusChar(\stCenter) & \simpleRootd \text{ is long} \\
\genTorusChar(\mac[\simpleRootc])\genTorusChar(\stCenter) & \simpleRootd \text{ is short}
\end{array}
\right..
\end{eqnarray*}
\end{proposition}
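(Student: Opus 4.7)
The plan is to reduce the claim to a direct computation of $\oac[\simpleRootd]^{-1}\stCenter\,\oac[\simpleRootd]$, using the convention that conjugation acts by $(\oac[\simpleRootd]\cdot\genTorusChar)(x) = \genTorusChar(\oac[\simpleRootd]^{-1}x\,\oac[\simpleRootd])$. The whole argument pivots on a single observation: $\stCenter$ lies in the compact factor of $\realTorusESplitCover$ and is therefore $\cinv$-fixed. Indeed, each factor $\mac[{\simpleRooti[i]}] = \expg(\pi i\coroot[{\simpleRooti[i]}])$ has its exponent $\pi i\coroot[{\simpleRooti[i]}]$ in the compact direction of the split Cartan, so $\stCenter$ lies in the compact torus inside $\maxRealCompactCover$.

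Because $\stCenter$ is $\cinv$-fixed, $\Ad{\stCenter}$ commutes with $\cinv$. Combining this with $\Ad{\stCenter}(\rootVector[\simpleRootd]) = \rootChar[\simpleRootd](\stCenter)\rootVector[\simpleRootd]$ and the fact that the scalar $\rootChar[\simpleRootd](\stCenter) = \pm 1$ commutes with $\cinv$, I would compute
\[
\Ad{\stCenter}(\zLa[\simpleRootd]) = \Ad{\stCenter}(\rootVector[\simpleRootd] + \cinv\rootVector[\simpleRootd]) = \rootChar[\simpleRootd](\stCenter)\,\zLa[\simpleRootd].
\]
Exponentiating through $\expgc$ yields $\stCenter\,\oac[\simpleRootd]\,\stCenter^{-1} = \oac[\simpleRootd]^{\rootChar[\simpleRootd](\stCenter)}$. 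Now Proposition \ref{propNonPsCross} supplies the two cases: $\rootChar[\simpleRootd](\stCenter) = 1$ for long $\simpleRootd$ and $-1$ for short $\simpleRootd$. In the long case, $\stCenter$ and $\oac[\simpleRootd]$ commute, giving $\oac[\simpleRootd]^{-1}\stCenter\,\oac[\simpleRootd] = \stCenter$ and the first case of the formula. In the short case, the identity $\stCenter\,\oac[\simpleRootd] = \oac[\simpleRootd]^{-1}\stCenter$ rearranges to $\oac[\simpleRootd]^{-1}\stCenter\,\oac[\simpleRootd] = \oac[\simpleRootd]^{-2}\stCenter = \mac[\simpleRootd]^{-1}\stCenter$, and since $\mac[\simpleRootd]^2 = 1$ for short real $\simpleRootd$ this collapses to $\mac[\simpleRootd]\,\stCenter$.

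The remaining step is the identification $\mac[\simpleRootd] = \mac[\simpleRootc]$, where $\mac[\simpleRootc]$ refers to the short real $\mac$ in $\realTorusESplitCover$. The short real roots of $\realTorusESplitCtCover[{\pSeq{j}{k}}]$ are necessarily of the form $\pm\ei[l]$ for coordinate indices not involved in the Cayley transforms making up $\pSeq{j}{k}$ (i.e.~$2j < l \le 2\qSpec$). For any such $l$, $\coroot[{\ei[l]}]$ is orthogonal to every root appearing in $\pSeq{j}{k}$, so iterated application of Lemma \ref{lemCtCoroot} shows $\coroot[{\ei[l]}]$ is unchanged by the Cayley transforms. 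Consequently $\mac[\simpleRootd] = \expgc(\pi i\coroot[{\ei[l]}])$ is literally the same group element whether interpreted via $\realTorusESplitCover$ or via $\realTorusESplitCtCover[{\pSeq{j}{k}}]$, and Proposition \ref{propSameMacReal} identifies this common element with $\mac[\simpleRootc]$. Since $\mac[\simpleRootc]$ is central in $\realGroupCover$, the scalar $\genTorusChar(\mac[\simpleRootc])$ factors out of $\genTorusChar(\mac[\simpleRootc]\stCenter)$, yielding the short-root formula $(\oac[\simpleRootd]\cdot\genTorusChar)(\stCenter) = \genTorusChar(\mac[\simpleRootc])\genTorusChar(\stCenter)$.

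The expected obstacle is entirely at the level of bookkeeping rather than deep structure: one must verify carefully that $\mac[\simpleRootd]$ defined intrinsically in the Cayley-transformed Cartan coincides with the element $\mac[\simpleRootc]$ originally defined in $\realTorusESplitCover$, since the two are a priori lifts of central short-real-root elements in different Cartans. Everything else reduces to the adjoint-action computation above, which is controlled cleanly by $\rootChar[\simpleRootd](\stCenter)$ and Proposition \ref{propNonPsCross}.
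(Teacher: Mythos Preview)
Your argument is correct and follows essentially the same route as the paper. The paper's one-line proof invokes Lemma~\ref{lemVgr} together with the counting from Proposition~\ref{propNonPsCross}; you carry this out by reproving the first formula of Lemma~\ref{lemVgr} with $\stCenter$ in place of a single $\mac[\simpleRoota]$ (using that $\stCenter \in \maxRealCompactCover$), and then reading off $\rootChar[\simpleRootd](\stCenter)$ from Proposition~\ref{propNonPsCross}. The only difference is that the paper implicitly works factor by factor while you treat $\stCenter$ as a whole, which is immaterial.

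One small imprecision to clean up: the expression $\mac[\simpleRootd] = \expgc(\pi i\coroot[{\ei[l]}])$ is not literally well defined in $\realGroupCover$ for a \emph{real} root $\ei[l]$, since $\pi i\coroot[{\ei[l]}] \notin \realLieAlgebra$ in that case (Definition~\ref{defZa} gives $\mac[\simpleRootd] = \expgc(\pi\zLa[\simpleRootd])$ instead). The identification $\mac[\simpleRootd] = \mac[\simpleRootc]$ you want still holds: since $\ei[l]$ is strongly orthogonal to every root in $\pSeq{j}{k}$, the Cayley transform operators fix the root vectors $\rootVector[{\ei[l]}]$ and hence $\zLa[{\ei[l]}]$, so $\mac[{\ei[l]}]$ is literally the same element in both Cartan subgroups; Proposition~\ref{propSameMacReal} then gives $\mac[{\ei[l]}] = \mac[\simpleRootc]$.
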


\begin{proof}
This follows in the same fashion as Proposition \ref{propNonPsCross} using Proposition \ref{lemVgr}.
\end{proof}

\begin{corollary}
\label{corZCrossReal}
Let $\absDiff$ be a half-integral infinitesimal character and fix a standard sequence $\pSeq{j}{k}$. If $\genTripleESplitCtInf[\pSeq{j}{k}]$ is a genuine triple and $\simpleRootd \in \realRoots[\cinv](\complexLieAlgebra, \eSplitLieAlgebraCt[\pSeq{j}{k}])$ is a real root, then $\rootReflection[\simpleRootd] \cross \genTripleESplitCtInf[\pSeq{j}{k}]$ and  $\genTripleESplitCtInf[\pSeq{j}{k}]$ are $\maxRealCompactCover$-conjugate if and only if $\simpleRootd$ does \emph{not} satisfy the parity condition (Definition \ref{defPC}).
\end{corollary}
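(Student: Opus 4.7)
The plan is to compare the cross-action triple $\rootReflection[\simpleRootd] \cross \genTripleESplitCtInf[\pSeq{j}{k}]$ with the triple produced by conjugating $\genTripleESplitCtInf[\pSeq{j}{k}]$ by the element $\oac[\simpleRootd] \in \maxRealCompactCover$ (Definition \ref{defZa}). The latter is tautologically $\maxRealCompactCover$-conjugate to the original. Since $\oac[\simpleRootd]$ normalizes $\realTorusESplitCtCover[\pSeq{j}{k}]$ and represents the reflection $\rootReflection[\simpleRootd]$, the two triples share the same Cartan subgroup and the same second component $\rootReflection[\simpleRootd]\diff$. By Proposition \ref{propGenRepBij} the question of $\maxRealCompactCover$-conjugacy of the two triples therefore reduces to whether the two resulting genuine characters of $\realTorusESplitCtCover[\pSeq{j}{k}]$ agree on $\text{Z}(\realTorusESplitCtCover[\pSeq{j}{k}])$.

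First I would reduce to the case where $\simpleRootd$ is simple in a positive system of $\realRoots[\cinv](\complexLieAlgebra, \eSplitLieAlgebraCt[\pSeq{j}{k}])$ by conjugating through $\realWeylGroup$, whose elements lift to $\maxRealCompactCover$ and commute with both operations. Next, Proposition \ref{propZHjk} describes $\text{Z}(\realTorusESplitCtCover[\pSeq{j}{k}])$ modulo $\realTorusESplitCtCoverId[\pSeq{j}{k}]$. The generator $\mac[\simpleRootc]$ (when present) lies in $\text{Z}(\realGroupCover)$, so both the twist by $\rootChar[\simpleRootd]^{-m}$ (from Lemma \ref{lemCaVgr}) and conjugation by $\oac[\simpleRootd]$ act trivially on it. The other generator $\mac[{\simpleRootai[j+1]}]\cdots\mac[{\simpleRootai[\qSpec]}]$ differs from $\stCenter$ by $\mac[{\simpleRootai[1]}]\cdots\mac[{\simpleRootai[j]}]\cdot\mac[{\simpleRootai[\qSpec+1]}]\cdots\mac[{\simpleRootai[n/2]}]$; each factor here arises from a Cayley-transformed or compact imaginary long root and therefore lies in $\realTorusESplitCtCoverId[\pSeq{j}{k}]$ (as in Corollary \ref{corDiffSignZ}). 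So agreement of the two characters on the center reduces to agreement on $\stCenter$.

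With $m = (\diff, \rootCheck[\simpleRootd]) \in \mathbb{Z}$, Lemma \ref{lemCaVgr} gives $(\rootReflection[\simpleRootd] \cross \genTorusChar)(\stCenter) = \genTorusChar(\stCenter) \cdot \rootChar[\simpleRootd](\stCenter)^{-m}$, while Proposition \ref{propNonPsCross} evaluates $\rootChar[\simpleRootd](\stCenter)$ as $+1$ for long $\simpleRootd$ and $-1$ for short $\simpleRootd$. On the conjugation side Proposition \ref{propPsConj} gives $(\oac[\simpleRootd] \cdot \genTorusChar)(\stCenter) = \genTorusChar(\stCenter)$ (long case) or $\genTorusChar(\mac[\simpleRootc])\genTorusChar(\stCenter)$ (short case). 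In the long case the two values agree automatically, so the triples are conjugate precisely when $m \in \mathbb{Z}$, that is, precisely when the long-root parity condition fails. In the short case, agreement requires $(-1)^m = \genTorusChar(\mac[\simpleRootc])$, which by Definition \ref{defPC} is exactly the failure of the short-root parity condition.

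The remaining case is $\simpleRootd$ long with parity condition satisfied, where $m \in \mathbb{Z} + \tfrac{1}{2}$ and $\rootReflection[\simpleRootd] \notin \weylGroup(\absDiff)$; here the cross action must be interpreted via the extended cross action of Definition \ref{defExtendedCrossActionTriples}, and by Remark \ref{remECAInfChar} the resulting triple has infinitesimal character in $\famInfChar[\absDiff]$ distinct from $\absDiff$, hence cannot be $\maxRealCompactCover$-conjugate to the original. The main obstacle I expect is the bookkeeping in the reduction step above: tracking which of the elements $\mac[\simpleRootai]$ lie in $\realTorusESplitCtCoverId[\pSeq{j}{k}]$ for each range of $i$, so that the nontrivial center generator can legitimately be replaced by $\stCenter$. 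Once this is secured, the propositions cited supply the two explicit character values and the comparison is routine.
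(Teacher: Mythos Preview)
Your proposal is correct and follows essentially the same approach as the paper: compare the cross action with conjugation by $\oac[\simpleRootd]$, reduce to checking values on $\stCenter$, and invoke Propositions \ref{propNonPsCross} and \ref{propPsConj} for the two sides; the long-parity case is disposed of via distinct infinitesimal characters exactly as in the paper. The paper's own proof is terser---it simply writes down $(\rootReflection[\simpleRootd]\cross\genTorusChar)(\stCenter)$ and $(\oac[\simpleRootd]\cdot\genTorusChar)(\stCenter)$ and compares---whereas you add explicit justification for why it suffices to check on $\stCenter$ and a preliminary reduction to simple real roots; the latter is not strictly needed (and Lemma \ref{lemCaVgr} as stated requires abstract simplicity, not real-subsystem simplicity), but the $\rho$-shift terms in Definition \ref{defCrossActionTriples} contribute only long-root characters when evaluated at $\stCenter$, so the formula you use holds for arbitrary real $\simpleRootd$ in any case.
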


\begin{proof}
First suppose $\simpleRootd$ is long and does not satisfy the parity condition. Then $\simpleRootd$ is integral and Propositions \ref{propPsConj} and \ref{propNonPsCross} imply
\[
(\oac[{{\simpleRootd}}] \cdot \genTorusChar)(\stCenter) = \genTorusChar(\stCenter) = (\rootReflection[\simpleRootd] \cross \genTorusChar)(\stCenter).
\]
In particular, $\rootReflection[\simpleRootd] \cross \genTripleESplitCtInf[\pSeq{j}{k}]$ and $\genTripleESplitCtInf[\pSeq{j}{k}]$ are conjugate by an element of $\maxRealCompactCover$. If $\simpleRootd$ satisfies the parity condition, then $\rootReflection[\simpleRootd] \cross \genTripleESplitCtInf[\pSeq{j}{k}]$ and $\genTripleESplitCtInf[\pSeq{j}{k}]$ have different infinitesimal characters and cannot be $\maxRealCompactCover$-conjugate.

If $\simpleRootd$ is short (and thus integral) we have
\begin{eqnarray*}
(\oac[{{\simpleRootd}}] \cdot \genTorusChar)(\stCenter) & = & \genTorusChar(\mac[\simpleRootd])\genTorusChar(\stCenter) \\
(\rootReflection[\simpleRootd] \cross \genTorusChar)(\stCenter) & = & \simpleRootd(\stCenter)^{(\diff,\rootCheck[\simpleRootd])}\genTorusChar(\stCenter) \\
& = & (-1)^{(\diff,\rootCheck[\simpleRootd])}\genTorusChar(\stCenter)
\end{eqnarray*}
so that
$(\oac[{{\simpleRootd}}] \cdot \genTorusChar)(\stCenter) = (\rootReflection[\simpleRootd] \cross \genTorusChar)(\stCenter)$ if and only if $\genTorusChar(\mac[\simpleRootd]) = (-1)^{(\diff,\rootCheck[\simpleRootd])}$, i.e., $\simpleRootd$ does not satisfy the parity condition.
\end{proof}

\begin{corollary}
\label{corZCrossIm}
Let $\genTripleESplitCtInf[\pSeq{j}{k}]$ be a genuine triple and suppose $\simpleRoot \in \imaginaryRoots[\cinv](\complexLieAlgebra, \eSplitLieAlgebraCt[\pSeq{j}{k}])(\diff)$ is an imaginary integral root. If $m=(\diff,\rootCheck)$, then 
\begin{eqnarray*}
\rootReflection \cross \genTorusChar(\stCenter) & = & \left\{\begin{array}{rl}
\genTorusChar(\stCenter) & \simpleRoot \text{ long} \\
(-1)^{m+1}\genTorusChar(\stCenter) & \simpleRoot \text{ short}
\end{array}
\right..
\end{eqnarray*}
\end{corollary}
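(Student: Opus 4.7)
The plan is to apply Definition \ref{defCrossActionTriples} directly and then evaluate the resulting correction character at $\stCenter$ using Proposition \ref{propNonPsCross}. Writing $\rootReflection \cross \genTorusChar = \genTorusChar \cdot \Phi$ as in that definition, the claim reduces to the computation of $\Phi(\stCenter)$.

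By Definition \ref{defCrossActionTriples}, the weight $\varphi$ whose associated character is $\Phi$ decomposes as three contributions: (i) $\rootReflection(\diff) - \diff = -m\simpleRoot$; (ii) the change in $\halfSumIm$, controlled by Lemma \ref{lemRhoRootSumIm}; and (iii) the change in $-2\halfSumImCpt$, controlled by Lemma \ref{lemRhoRootSum}. Proposition \ref{propNonPsCross} tells us that $\rootChar[\simpleRootd](\stCenter) = 1$ for every long root $\simpleRootd$ and $\rootChar[\simpleRootd](\stCenter) = -1$ for every short root $\simpleRootd$, so after expanding $\Phi(\stCenter)$ as a product of $\rootChar$-values, only the parity of the total multiplicity of short roots in $\varphi$ matters.

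For contribution (iii), Lemma \ref{lemRhoRootSum} shows that the change in $2\halfSumImCpt$ is an integral combination of roots in which every coefficient is even, so the corresponding factor of $\Phi(\stCenter)$ is $+1$ regardless. For contribution (ii), Lemma \ref{lemRhoRootSumIm} expresses the change in $\halfSumIm$ as an integral combination of long roots plus an exceptional $-\simpleRoot$ term that is present if and only if $\simpleRoot$ is short; the long part evaluates to $+1$ on $\stCenter$, while the exceptional term contributes $\rootChar[\simpleRoot](\stCenter)^{-1} = -1$ in the short case. Finally, contribution (i) evaluates to $\rootChar[\simpleRoot](\stCenter)^{-m}$, which is $1$ when $\simpleRoot$ is long and $(-1)^m$ when $\simpleRoot$ is short.

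Multiplying the three factors gives $\Phi(\stCenter) = 1$ in the long case and $\Phi(\stCenter) = (-1)^m \cdot (-1) = (-1)^{m+1}$ in the short case, which is the claimed formula. There is essentially no hard step here; the only point requiring care is the bookkeeping of contribution (ii), where the exceptional $-\simpleRoot$ supplied by Lemma \ref{lemRhoRootSumIm} in the short case is exactly what produces the extra sign that upgrades $(-1)^m$ to $(-1)^{m+1}$. The fact that Proposition \ref{propNonPsCross} kills all long-root contributions and reduces the problem to a parity count is what makes the argument so clean.
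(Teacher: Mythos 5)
Your proof is correct and is essentially identical to the paper's: the paper likewise invokes Definition \ref{defCrossActionTriples} and Lemma \ref{lemRhoRootSumIm} to write $\varphi = -m\simpleRoot + (\text{long roots}) + (\text{even multiples of roots})$ in the long case and $\varphi = -m\simpleRoot - \simpleRoot + \cdots$ in the short case, then evaluates at $\stCenter$ via Proposition \ref{propNonPsCross}. Your bookkeeping of the three contributions, including the exceptional $-\simpleRoot$ term producing the extra sign, matches the paper's (more terse) argument exactly.
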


\begin{proof}
In the notation of Definition \ref{defCrossActionTriples}, Lemma \ref{lemRhoRootSumIm} implies
\begin{eqnarray*}
\varphi & = & \left\{\begin{array}{ll}
-m\simpleRoot + \displaystyle\sum_{\underset{\simpleRootb\in\rootSystem(\complexLieAlgebra,\splitLieAlgebraCt[\pSeq{j}{k}])}{\simpleRootb\text{ long}}}n_{\simpleRootb}\simpleRootb + \sum_{\simpleRootb\in\rootSystem(\complexLieAlgebra,\splitLieAlgebraCt[\pSeq{j}{k}])}2\cdot m_{\simpleRootb}\simpleRootb & \simpleRoot \text{ long} \\
-m\simpleRoot -\simpleRoot + \displaystyle\sum_{\underset{\simpleRootb\in\rootSystem(\complexLieAlgebra,\splitLieAlgebraCt[\pSeq{j}{k}])}{\simpleRootb\text{ long}}}n_{\simpleRootb}\simpleRootb + \sum_{\simpleRootb\in\rootSystem(\complexLieAlgebra,\splitLieAlgebraCt[\pSeq{j}{k}])}2\cdot m_{\simpleRootb}\simpleRootb & \simpleRoot \text{ short}
\end{array}
\right.
\end{eqnarray*}
and the result follows by Proposition \ref{propNonPsCross}.
\end{proof}

\begin{corollary}
\label{corZCrossCmplx}
Let $\genTripleESplitCtInf[\pSeq{j}{k}]$ be a genuine triple and suppose $\simpleRoot \in \complexRoots[\cinv](\complexLieAlgebra, \eSplitLieAlgebraCt[\pSeq{j}{k}])(\diff)$ is a complex integral root. If $m=(\diff,\rootCheck)$, then in the notation of Proposition \ref{lemRhoRootSumCmplx}
\begin{eqnarray*}
\rootReflection \cross \genTorusChar(\stCenter) & = & \left\{\begin{array}{rl}
(-1)^{\cmplxBit}\genTorusChar(\stCenter) & \simpleRoot \text{ long} \\
(-1)^{m}\genTorusChar(\stCenter) & \simpleRoot \text{ short}
\end{array}
\right..
\end{eqnarray*}
\end{corollary}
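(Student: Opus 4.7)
The plan is to mirror the proof of Corollary \ref{corZCrossIm}, substituting Lemma \ref{lemRhoRootSumCmplx} for Lemma \ref{lemRhoRootSumIm} and carrying along the single additional short-root term $-\cmplxBit\simpleRootc$ that distinguishes the complex case from the imaginary case.

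First I would assemble the character $\Phi$ in the factorization $\rootReflection \cross \genTorusChar = \genTorusChar \cdot \Phi$ from Definition \ref{defCrossActionTriples}. Since $\simpleRoot$ is complex and integral, $\rootReflection \cross \diff - \diff = \rootReflection(\diff) - \diff = -m\simpleRoot$, which contributes the term $-m\simpleRoot$ to $\varphi$. Combining this with Lemma \ref{lemRhoRootSumCmplx} (applied to $\halfSumIm$) and the second identity of Lemma \ref{lemRhoRootSum} (applied to $2\halfSumImCpt$, which always changes by an \emph{even} integral combination of roots), I obtain
\begin{eqnarray*}
\varphi & = & -m\simpleRoot + \sum_{\underset{\simpleRootb\text{ long}}{\simpleRootb\in\rootSystem(\complexLieAlgebra,\eSplitLieAlgebraCt[{\pSeq{j}{k}}])}}n_{\simpleRootb}\simpleRootb - \cmplxBit\simpleRootc + \sum_{\simpleRootb\in\rootSystem(\complexLieAlgebra,\eSplitLieAlgebraCt[{\pSeq{j}{k}}])} 2\cdot m_{\simpleRootb}\simpleRootb
\end{eqnarray*}
when $\simpleRoot$ is long, and the same expression with the $-\cmplxBit\simpleRootc$ term deleted when $\simpleRoot$ is short.

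Next I would evaluate $\Phi$ at $\stCenter$ using Proposition \ref{propNonPsCross}, which gives $\rootChar[\simpleRootb](\stCenter) = 1$ for long $\simpleRootb$ and $\rootChar[\simpleRootb](\stCenter) = -1$ for short $\simpleRootb$, and in particular $\rootChar[\simpleRootb]^{2k}(\stCenter) = 1$ for every root $\simpleRootb$. In the long case the contributions from the first, second, and fourth summands all collapse to $1$, leaving $\Phi(\stCenter) = \rootChar[\simpleRootc]^{-\cmplxBit}(\stCenter) = (-1)^{\cmplxBit}$. In the short case only the first summand contributes nontrivially, giving $\Phi(\stCenter) = \rootChar[\simpleRoot]^{-m}(\stCenter) = (-1)^{m}$. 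Multiplying by $\genTorusChar(\stCenter)$ yields the stated formulas.

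The argument is pure bookkeeping and poses no real obstacle; the only subtle point is verifying that the $2\halfSumImCpt$ correction is genuinely an even integral sum of roots (so that root length becomes irrelevant for that portion of $\varphi$), which is exactly the content of the second equality in Lemma \ref{lemRhoRootSum}.
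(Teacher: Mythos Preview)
Your proposal is correct and follows essentially the same approach as the paper's own proof: both compute $\varphi$ from Definition \ref{defCrossActionTriples} using Lemma \ref{lemRhoRootSumCmplx} for the $\halfSumIm$ contribution and the second identity of Lemma \ref{lemRhoRootSum} for the $2\halfSumImCpt$ contribution, then evaluate at $\stCenter$ via Proposition \ref{propNonPsCross}. Your write-up is in fact slightly more explicit than the paper's in spelling out why each summand contributes trivially or nontrivially.
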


\begin{proof}
In the notation of Definition \ref{defCrossActionTriples} and Lemma \ref{lemRhoRootSumCmplx} we have
\begin{eqnarray*}
\varphi & = & \left\{\begin{array}{ll}
-m\simpleRoot - \cmplxBit\simpleRootc + \displaystyle\sum_{\underset{\simpleRootb\in\rootSystem(\complexLieAlgebra,\splitLieAlgebraCt[\pSeq{j}{k}])}{\simpleRootb\text{ long}}}n_{\simpleRootb}\simpleRootb + \sum_{\simpleRootb\in\rootSystem(\complexLieAlgebra,\splitLieAlgebraCt[\pSeq{j}{k}])}2\cdot m_{\simpleRootb}\simpleRootb & \simpleRoot \text{ long} \\
-m\simpleRoot + \displaystyle\sum_{\underset{\simpleRootb\in\rootSystem(\complexLieAlgebra,\splitLieAlgebraCt[\pSeq{j}{k}])}{\simpleRootb\text{ long}}}n_{\simpleRootb}\simpleRootb + \sum_{\simpleRootb\in\rootSystem(\complexLieAlgebra,\splitLieAlgebraCt[\pSeq{j}{k}])}2\cdot m_{\simpleRootb}\simpleRootb & \simpleRoot \text{ short}
\end{array}
\right.
\end{eqnarray*}
and the result follows by Proposition \ref{propNonPsCross}.
\end{proof}

We will occasionally need the following generalization of Corollary \ref{corZCrossIm}.

\begin{corollary}
\label{corZConjIm}
Let $\realTorusESplitCtCover[\pSeq{j}{k}] \subset \realGroupCover$ be an even parity Cartan subgroup with $j=\qSpec$ and let $\simpleRoot \in \imaginaryRoots[\cinv](\complexLieAlgebra, \eSplitLieAlgebraCt[\pSeq{j}{k}])$ be an imaginary root (not necessarily integral). Suppose there exist genuine triples of the form $\genTripleESplitInfCtDiffChar{\diff}{\genTorusChar_{1}}{\pSeq{j}{k}}$ and $\genTripleESplitInfCtDiffChar{\rootReflection\cdot\diff}{\genTorusChar_{2}}{\pSeq{j}{k}}$ with the same central character. If $m = (\diff, \rootCheck)$ then
\begin{eqnarray*}
\genTorusChar_{2}(\stCenter) & = & \left\{\begin{array}{rl}
(-1)^{2m}\genTorusChar_{1}(\stCenter) & \simpleRoot \text{ long} \\
(-1)^{m+1}\genTorusChar_{1}(\stCenter) & \simpleRoot \text{ short}
\end{array}
\right..
\end{eqnarray*}
\end{corollary}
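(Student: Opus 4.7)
The strategy is to reduce the calculation to a differential computation by exploiting $j = \qSpec$ to place $\stCenter$ inside the identity component of the Cartan. I will first establish $\stCenter \in \realTorusESplitCtCoverId[{\pSeq{\qSpec}{k}}]$: for $l \le \qSpec$, iterated application of Proposition \ref{propMaIdCompH} (together with Lemma \ref{lemCaCbMbCommute}, which lets $\Ad{\ma[{\simpleRootai[l]}]}$ pass through the subsequent Cayley transforms) shows $\mac[{\simpleRootai[l]}]$ lies in the identity component; for $l > \qSpec$, the diagram of $\realTorusESplitCover$ shows that $\simpleRootai[l]$ is already imaginary there, and $\mac[{\simpleRootai[l]}] \in \realTorusCoverId$ by the formulas in Definitions \ref{defZb} and \ref{defZc}, a property that persists through Cayleys at orthogonal roots.

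Having placed $\stCenter$ in the identity component, the ratio $\genTorusChar_{2}(\stCenter)/\genTorusChar_{1}(\stCenter)$ equals $\exp\bigl((d\genTorusChar_{2} - d\genTorusChar_{1})(X)\bigr)$ for any lift $X$ of $\stCenter$ to the Lie algebra. Using Definition \ref{defTriple}, $\diff_{2} - \diff_{1} = -m\simpleRoot$, Lemma \ref{lemRhoRootSumIm}, and Lemma \ref{lemRhoRootSum} for the $\halfSumImCpt$ contribution, the differential difference is $-m\simpleRoot$ plus an integral combination of roots (long roots only when $\simpleRoot$ is long, plus an extra $-\simpleRoot$ term when $\simpleRoot$ is short). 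Proposition \ref{propMaRootSpace} combined with Proposition \ref{propNonPsCross} then yields $\rootChar[\simpleRootb](\stCenter) = 1$ for long $\simpleRootb$ and $-1$ for short $\simpleRootb$; this collapses the integral-root corrections (with an extra $-1$ contributed by the $-\simpleRoot$ term in the short case) and leaves only the $-m\simpleRoot$ contribution, which evaluates to $e^{m\pi i(\simpleRoot, \coroot[\simpleRoota])}$ with $\coroot[\simpleRoota] = \sum_{l}\coroot[{\simpleRootai[l]}]$.

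For short $\simpleRoot$ the pairing is odd and $m \in \mathbb{Z}$ by Assumption \ref{ass1}, producing $-(-1)^{m} = (-1)^{m+1}$. For long integral $\simpleRoot$ the pairing is even, so the exponent is an integer multiple of $2\pi i$ and the ratio is $1 = (-1)^{2m}$. The main obstacle is long $\simpleRoot$ with $m$ a strict half-integer: writing $(\simpleRoot, \coroot[\simpleRoota]) = 2b$ gives only $(-1)^{b}$, which matches the target $(-1)^{2m} = -1$ precisely when $b$ is odd. To rule out $b = 0$, I will repeat the differential procedure with $\stCenter$ replaced by $\mac[\simpleRootc]$ for a short $\simpleRootc$ representing the nontrivial element of $Z(\realGroupCover)$; the equal central character hypothesis then imposes a parity constraint forcing $\simpleRoot$ to be orthogonal to every short imaginary root of $\realTorusESplitCtCover[{\pSeq{\qSpec}{k}}]$, and the classification of long imaginary roots in this Cartan (read off the diagram) then drives both endpoints of $\simpleRoot$ into the complex-pair positions, yielding $(\simpleRoot, \coroot[\simpleRoota]) = \pm 2$ as required.
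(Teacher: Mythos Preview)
Your proof is correct and follows essentially the same route as the paper's: both arguments place $\stCenter$ in the identity component (the paper cites Proposition~\ref{propZHjk} directly, you rederive it), reduce to a differential calculation via Lemma~\ref{lemRhoRootSumIm}, and then invoke the equal-central-character hypothesis to handle the long half-integral case. The paper compresses that last step into the single assertion ``we must have $\simpleRoot = \ctOpRooti$ for some $i$,'' whereas you unpack it: the differential computation at $\mac[\simpleRootc]$ forces $(\simpleRoot,\rootCheck[\simpleRootc])=0$ for every short imaginary $\simpleRootc$, pushing the endpoints of $\simpleRoot$ into the complex-pair positions and hence forcing $\simpleRoot = \pm\ctOpRooti[l]$ with $(\simpleRoot,\sum_{l}\coroot[{\simpleRootai[l]}])=\pm 2$. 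Your version makes explicit that this constraint is only needed (and only holds) when $m\in\mathbb{Z}+\tfrac12$; for integral long $\simpleRoot$ the pairing argument already gives $1=(-1)^{2m}$ without any restriction on $\simpleRoot$.
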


\begin{proof}
Since $\stCenter$ is an element of $\realTorusESplitCtCoverId[\pSeq{j}{k}]$ (Proposition \ref{propZHjk}), it suffices to consider differentials. Write $\ctOpRooti = \ctOpSeq{j}{k}(\simpleRootai)$ for the image of the root $\simpleRootai$ in $\rootSystem(\complexLieAlgebra, \eSplitLieAlgebraCt[\pSeq{j}{k}])$ and let $\coroot[\ctOpRooti]$ denote the corresponding coroot. Suppose first that $\simpleRoot$ is long. Lemma \ref{lemRhoRootSumIm} gives
\begin{eqnarray*}
\text{d}\genTorusChar_{2} & = & \rootReflection(\diff) + \halfSumIm[{\rootReflection}(\diff)] - 2\halfSumImCpt[{\rootReflection(\diff)}] \\
& = & \diff - m\simpleRoot + \halfSumIm[\diff] - \sum_{\underset{\simpleRootb\in\rootSystem(\complexLieAlgebra,\splitLieAlgebraCt[\pSeq{j}{k}])}{\simpleRootb\text{ long}}}n_{\simpleRootb}\simpleRootb - (2\halfSumImCpt[\diff] - \sum_{\simpleRootb\in\rootSystem(\complexLieAlgebra,\splitLieAlgebraCt[\pSeq{j}{k}])}2\cdot m_{\simpleRootb}\simpleRootb) \\
& = & \text{d}\genTorusChar_{1} - m\simpleRoota - \sum_{\underset{\simpleRootb\in\rootSystem(\complexLieAlgebra,\splitLieAlgebraCt[\pSeq{j}{k}])}{\simpleRootb\text{ long}}}n_{\simpleRootb}\simpleRootb + \sum_{\simpleRootb\in\rootSystem(\complexLieAlgebra,\splitLieAlgebraCt[\pSeq{j}{k}])}2\cdot m_{\simpleRootb}\simpleRootb
\end{eqnarray*}
and Proposition \ref{propNonPsCross} implies nontrivial changes come only from the $-m\simpleRoot$ term. From the definition of $\mac[\ctOpRooti]$ (Definition \ref{defZb}) we have
\begin{eqnarray*}
\genTorusChar_{2}(\stCenter) & = & \genTorusChar_{1}(\stCenter)e^{-m\simpleRoot(\pi i\coroot[{\ctOpRooti[1]}])} e^{-m\simpleRoot(\pi i\coroot[{\ctOpRooti[2]}])}\cdots e^{-m\simpleRoot(\pi i\coroot[{\ctOpRooti[k]}])} \\
& = & \genTorusChar_{1}(\stCenter)e^{-m\pi i(\simpleRoot,\rootCheck[{\ctOpRooti[1]}])} e^{-m\pi i(\simpleRoot,\rootCheck[{\ctOpRooti[2]}])} \cdots e^{-m\pi i(\simpleRoot,\rootCheck[{\ctOpRooti[k]}])}
\end{eqnarray*}
with $k = \frac{n}{2}$. Since $\genTripleESplitInfCtDiffChar{\diff}{\genTorusChar_{1}}{\pSeq{j}{k}}$ and $\genTripleESplitInfCtDiffChar{\rootReflection\cdot\diff}{\genTorusChar_{2}}{\pSeq{j}{k}}$ have the same central character, we must have $\simpleRoot = \ctOpRooti$ for some $i$. Then $(\simpleRoot,\rootCheck[\ctOpRooti]) = 2$ and $(\simpleRoot,\rootCheck[{\ctOpRooti[j]}]) = 0$ for $j\ne i$. Therefore 
\begin{eqnarray*}
\genTorusChar_{2}(\stCenter) & = & \genTorusChar_{1}(\stCenter)e^{-2m\pi i}
\end{eqnarray*}
and the result follows. The case for $\simpleRoot$ short is similar.
\end{proof}

\subsection{Cayley Transforms in $\hcBasisGen[\absDiff](p,q)$}
\label{ssCtofGt}
In Section \ref{ssCaofGT}, we extended the cross action of the (integral) abstract Weyl group to $\hcBasisGen[\absDiff](p,q)$. This operation produced new elements in $\hcBasisGen[\absDiff](p,q)$ whose corresponding Cartan subgroups were conjugate. In this section we extend the Cayley transform operation of Section \ref{ssCT} to $\hcBasisGen[\absDiff](p,q)$. Not surprisingly, this operation produces new elements whose corresponding Cartan subgroups are not conjugate. This is the final operation needed in our discussion of the (nonlinear) KLV-algorithm. The material here is well known and we refer the reader to \cite{IC1} and \cite{VGr} for more details.

To begin, let $\realTorusCover = \realTorusCoverT\realTorusCoverA$ be a $\cinv$-stable Cartan subgroup of $\realGroupCover$ and fix a noncompact imaginary root $\simpleRoot \in \imaginaryRoots[\cinv](\complexLieAlgebra, \complexLieAlgebra[h])$. Choose a corresponding Cayley transform operator $\ctOpNci$ (Definition \ref{defCT}) and write 
\begin{eqnarray*}
\ctOpNci(\realTorusCover) & = & \realTorusCtCoverNci = \realTorusCoverTCt\realTorusCoverACt \\
\realTorusCoverTCtInt & = &  \realTorusCoverT \cap \realTorusCoverTCt \\
\realTorusCtCoverInt & = & \realTorusCoverTCtInt\realTorusCoverACt.
\end{eqnarray*}
The following proposition describes the relationship between $\realTorusCoverT$ and $\realTorusCoverTCt$.

\begin{proposition}[\cite{VGr}, Lemma 8.3.5]
\label{propNciCtVgr}
If $\simpleRoot$ is of type I (Definition \ref{defTypeITypeII}) then $\realTorusCoverTCtInt = \realTorusCoverTCt$. If $\simpleRoot$ is of type II, $\rootReflection$ has a representative in $\realTorusCoverTCt\setminus\realTorusCoverTCtInt$ and
\begin{eqnarray*}
\left|\realTorusCoverTCt/\realTorusCoverTCtInt\right| & = & 2.
\end{eqnarray*}
In particular, $\realTorusCtCoverNci = \realTorusCtCoverInt$ if and only if $\simpleRoot$ is of type I.
\end{proposition}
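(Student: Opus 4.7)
The plan is to reduce to a rank-one calculation in the $\simpleRoot$-Levi subgroup, where the Cayley transform becomes explicit. Let $\realGroupCover[L]_\simpleRoot$ denote the connected subgroup of $\realGroupCover$ whose complexified Lie algebra is $\ker(\simpleRoot|_{\complexLieAlgebra[h]}) \oplus \rootSpace[\simpleRoot] \oplus \rootSpaceMinus[\simpleRoot]$. Since $\ker(\simpleRoot|_{\realLieAlgebra[h]})$ lies in both $\realLieAlgebra[h]$ and $\realLieAlgebraCtNci$ and is pointwise fixed by both the original Cartan involution $\cinv$ and the Cayley-transformed involution, the corresponding toral subgroup of $\realTorusCover$ is contained in $\realTorusCoverT \cap \realTorusCoverTCt = \realTorusCoverTCtInt$. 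Consequently
\begin{eqnarray*}
\realTorusCoverTCt / \realTorusCoverTCtInt & \cong & (\realTorusCoverTCt \cap \realGroupCover[L]_\simpleRoot) / (\realTorusCoverTCtInt \cap \realGroupCover[L]_\simpleRoot),
\end{eqnarray*}
and it suffices to analyze the rank-one quotient on the right.

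Next I would carry out the rank-one analysis. Since $\simpleRoot$ is noncompact imaginary, $\realGroupCover[L]_\simpleRoot$ is (up to center and cover) locally isomorphic to $SL(2,\mathbb{R})$; the old Cartan inside $\realGroupCover[L]_\simpleRoot$ is the compact circle generated by $i\coroot$, while its Cayley transform is the split line generated by $\zLa = \rootVector + \overline{\rootVector}$. The compact part of the split Cartan is generated by the order-two element $\ma[\simpleRoot] = \exp(\pi i \coroot)$, so $\realTorusCoverTCt \cap \realGroupCover[L]_\simpleRoot$ has order at most two; in particular $|\realTorusCoverTCt / \realTorusCoverTCtInt| \in \{1,2\}$.

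To distinguish the two cases, I would appeal to Theorem \ref{theoremFokko}, which identifies the type II condition with the triviality of $\ma[\simpleRoot]$ in $\negQuotCorootLattice[-\inv] / \posnegQuotCorootLattice[-\inv]$, the component group of the dual Cartan. Combining this with Proposition \ref{propCompH} applied to $\realTorusCtCoverNci$ matches the $(-\inv)$-component class of $\ma[\simpleRoot]$ with the coset of $\realTorusCoverTCt / \realTorusCoverTCtInt$, aligning the two dichotomies. For the nontrivial (type II) coset representative, I would use the element $\oac$ built from the Cayley-transformed root vector (Lemma \ref{lemXaSign} applied to the real root $\ctOpNci(\simpleRoot) \in \rootSystem(\complexLieAlgebra, \complexLieAlgebraCtNci)$): it lies in $\maxRealCompactCover$, squares to $\mac[\simpleRoot]$ by Proposition \ref{propSameMa}, and under the type II hypothesis can be adjusted by an element of $\realTorusCoverTCtInt$ to land in $\realTorusCoverTCt$ while still representing $\rootReflection$.

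The main obstacle will be aligning the direction of the type I/II dichotomy with the compact-part analysis, since Theorem \ref{theoremFokko} is naturally phrased in the dual $(-\inv)$-picture whereas $\realTorusCoverTCt / \realTorusCoverTCtInt$ lives in the $\cinv$-picture for $\realTorusCt$; tracking this duality is the main bookkeeping task. A secondary technical point is the sign ambiguity (in the nonlinear cover) of $\oac$ and $\mac[\simpleRoot]$ described in Remark \ref{remMaSign}, which is immaterial for the quotient itself but must be handled with care when producing an explicit representative of $\rootReflection$ in $\realTorusCoverTCt \setminus \realTorusCoverTCtInt$.
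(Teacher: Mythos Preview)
The paper does not supply a proof of this proposition: it is quoted verbatim from \cite{VGr}, Lemma 8.3.5, and stated without argument. So there is no in-paper proof to compare your proposal against.

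That said, a few remarks on your approach versus the standard argument in \cite{VGr}. Your rank-one reduction is the right idea, but the Lie algebra you write down for $\realGroupCover[L]_\simpleRoot$ is not closed under bracket: $[\rootSpace[\simpleRoot],\rootSpaceMinus[\simpleRoot]] = \mathbb{C}\coroot$, which does not lie in $\ker(\simpleRoot|_{\complexLieAlgebra[h]})$. The usual rank-one subgroup has Lie algebra $\mathbb{C}\coroot \oplus \rootSpace[\simpleRoot] \oplus \rootSpaceMinus[\simpleRoot]$, with the kernel torus sitting alongside as a complementary central factor.

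More substantively, your route through Theorem \ref{theoremFokko} and Proposition \ref{propCompH} is circuitous, and the ``matching'' you describe between the $(-\inv)$-component class of $\ma[\simpleRoot]$ and the coset in $\realTorusCoverTCt/\realTorusCoverTCtInt$ is not established by those results: Theorem \ref{theoremFokko} computes cross actions via the \emph{dual} component group, while Proposition \ref{propCompH} computes $\pi_0$ of a Cartan built from the split torus; neither directly addresses the quotient $\realTorusCoverTCt/\realTorusCoverTCtInt$. The argument in \cite{VGr} is more direct: after the rank-one reduction shows the index is at most two, one observes that the element $\oa$ (for the real root $\ctOpNci(\simpleRoot)$) lies in $\realTorusCoverTCt \cap \maxRealCompactCover$, normalizes the original $\realTorusCover$, and represents $\rootReflection$. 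Hence the index equals two if and only if $\rootReflection$ has a representative in $\realNormalizer[\maxRealCompactCover](\realTorusCover)$, which is precisely the type II condition of Definition \ref{defTypeITypeII}. No detour through Theorem \ref{theoremFokko} is needed.
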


\begin{definition}[\cite{IC1}, before Theorem 4.4]
\label{defCTChar}
Fix a genuine triple $\genTripleInf$ and write $\diff^{\simpleRoot} = \ctOpNci(\diff)$
for the image of $\diff$ in $\complexLieAlgebraCtNci$. Let $\genTorusCharCtInt$ be the irreducible representation of $\realTorusCtCoverInt$ satisfying
\begin{eqnarray*}
\genTorusCharCtInt|_{\realTorusCoverTCtInt} & = & \genTorusChar|_{\realTorusCoverTCtInt} \\
\genTorusCharCtInt|_{\realTorusCoverACt} & = & \expgc(\diff^{\simpleRoot}|_{{\mathfrak{a}}^{\simpleRoot}_{\mathbb{R}}}).
\end{eqnarray*}
According to Proposition \ref{propNciCtVgr}, we define an irreducible representation of $\realTorusCtCoverNci$ via
\begin{eqnarray*}
\genTorusCharCt & = & \left\{\begin{array}{ll}
\genTorusCharCtInt & \simpleRoot \text{ type I} \\
\text{Ind}_{\realTorusCtCoverInt}^{\realTorusCtCoverNci}\genTorusCharCtInt & \simpleRoot \text{ type II}
\end{array}
\right..
\end{eqnarray*}
\end{definition}

\begin{proposition}[\cite{IC1}, before Theorem 4.4]
In the setting of Definition \ref{defCTChar}, suppose $\simpleRoot$ is of type II. Then $\genTorusCharCt = \text{Ind}_{\realTorusCtCoverInt}^{\realTorusCtCoverNci}\genTorusCharCtInt$ is reducible if and only if the element $\rootReflection \in \realTorusCoverTCt$ centralizes $\realTorusCoverTCtInt$. In this case we write 
\begin{eqnarray*}
\genTorusCharCt & = & \genTorusCharCt_{+} \oplus \genTorusCharCt_{-}
\end{eqnarray*}
with $\genTorusCharCt_{\pm}$ irreducible. In particular, $\genTorusCharCt$ is always reducible if $\realTorusCoverTCt$ is abelian.
\end{proposition}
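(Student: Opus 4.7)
The plan is to prove this by a standard application of Clifford theory (equivalently Mackey's irreducibility criterion) for induction from an index-two subgroup, combined with the classification of genuine irreducible representations of Cartan subgroups via central characters (Proposition \ref{propGenRepBij}).

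First I would invoke Proposition \ref{propNciCtVgr}: since $\simpleRoot$ is of type II, $[\realTorusCtCoverNci : \realTorusCtCoverInt] = 2$ with $\rootReflection \in \realTorusCoverTCt \setminus \realTorusCoverTCtInt$ a representative of the nontrivial coset. Standard theory then asserts that for the irreducible $\genTorusCharCtInt$ of $\realTorusCtCoverInt$, the induced representation $\text{Ind}_{\realTorusCtCoverInt}^{\realTorusCtCoverNci}\genTorusCharCtInt$ is reducible if and only if $\genTorusCharCtInt \cong \genTorusCharCtInt^{\rootReflection}$ (with $\genTorusCharCtInt^{\rootReflection}(h) = \genTorusCharCtInt(\rootReflection^{-1} h \rootReflection)$); in that case it decomposes as a direct sum of exactly two inequivalent irreducibles, each restricting to $\genTorusCharCtInt$ on $\realTorusCtCoverInt$. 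I would define $\genTorusCharCt_{\pm}$ to be these two summands.

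Next I would translate the isomorphism condition using Proposition \ref{propGenRepBij}, which asserts that genuine irreducibles of $\realTorusCtCoverInt$ are in bijection with genuine characters of $\text{Z}(\realTorusCtCoverInt)$. Thus $\genTorusCharCtInt \cong \genTorusCharCtInt^{\rootReflection}$ if and only if the corresponding central characters $\chi$ and $\chi^{\rootReflection}$ agree, equivalently if and only if $\rootReflection$ acts trivially by conjugation on $\text{Z}(\realTorusCtCoverInt)$. Since $\realTorusCoverACt \subset \realTorusCtCoverNciId$ is central in $\realTorusCtCoverNci$ by Lemma \ref{lemAbelianCompGroup}, the element $\rootReflection$ automatically centralizes $\realTorusCoverACt$, and the decomposition $\realTorusCtCoverInt = \realTorusCoverTCtInt\realTorusCoverACt$ shows that conjugation by $\rootReflection$ on $\realTorusCtCoverInt$ is determined by its action on $\realTorusCoverTCtInt$.

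The main obstacle will be upgrading the ``trivial action on the center'' condition to the stated ``centralizes $\realTorusCoverTCtInt$'' condition. The key input is Lemma \ref{lemAbelianCompGroup}: all commutators in $\realTorusCover$ lie in $\{\pm 1\}$, so the map $\psi: t \mapsto [\rootReflection, t]$ is a homomorphism from $\realTorusCoverTCtInt$ to $\{\pm 1\}$ that factors through the (finite) component group since $\realTorusCoverTCtIntId$ is central. Using the explicit structure of $\realTorusCoverTCtInt$ given by Propositions \ref{propMCompGroupCT} and \ref{propMCStruct} (generators of the form $\mac$ with prescribed commutation relations), one verifies that any nontrivial $\psi$ must be detected by a generator whose contribution to the central character is nontrivial; combined with the fact that $\genTorusCharCtInt$ is genuine (so $\genTorusCharCtInt(-1) = -\identity$), this forces $\chi \ne \chi^{\rootReflection}$. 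The equivalence then follows, completing the biconditional.

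Finally, if $\realTorusCoverTCt$ is abelian, then $\rootReflection \in \realTorusCoverTCt$ automatically centralizes the subgroup $\realTorusCoverTCtInt$, so the reducibility conclusion is immediate from the biconditional just established.
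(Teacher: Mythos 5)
The paper does not actually prove this statement---it is quoted from \cite{IC1} (before Theorem 4.4)---so your reconstruction must stand on its own. The Clifford/Mackey frame you set up is the right one, and everything through the reduction to ``$\genTorusCharCtInt \cong \genTorusCharCtInt^{\rootReflection}$ iff the central characters agree iff $\rootReflection$ acts trivially on $Z(\realTorusCtCoverInt)$'' is sound (the last equivalence does use genuineness correctly, since $\rootReflection$ sends a central $z$ to $\pm z$ and $\chi(-z)=-\chi(z)$). The remark about the abelian case is also fine.

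The problem is the step you yourself flag as ``the main obstacle'' and then dispatch with ``one verifies.'' What must be shown is: if the commutator character $\psi(t)=[\rootReflection,t]$ is nontrivial on $\realTorusCoverTCtInt$, then it is already nontrivial on $Z(\realTorusCtCoverInt)$. This is \emph{not} a consequence of genuineness alone, because an element $t$ with $\psi(t)=-1$ need not be central in $\realTorusCtCoverInt$, and for noncentral $t$ the relation $\genTorusCharCtInt^{\rootReflection}(t)=-\genTorusCharCtInt(t)$ does not obstruct an isomorphism $\genTorusCharCtInt^{\rootReflection}\cong\genTorusCharCtInt$. Concretely, take $G=\left<\pm1,a,b,c\right>$ with commutators valued in $\{\pm1\}$, $[a,b]=[a,c]=-1$, $[b,c]=1$, and $H=\left<\pm1,a,b\right>$ of index two: then $Z(H)=\{\pm1\}$, the outer element $c$ centralizes $Z(H)$ but not $H$, yet $\sigma^{c}\cong\sigma$ for the genuine irreducible $\sigma$ of $H$ (an intertwiner is $\sigma(b)$), so $\text{Ind}_{H}^{G}\sigma$ is reducible while $c$ fails to centralize $H$. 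Hence the stated criterion ``reducible iff $\rootReflection$ centralizes $\realTorusCoverTCtInt$'' genuinely depends on the particular commutation relations of Proposition \ref{propMCStruct} and on which element $\rootReflection$ actually is (a representative $\oac[{}]$ of the reflection in the Cayley-transformed root, with square a specific $\mac[{}]$): you must show that every generator anticommuting with $\rootReflection$ forces some \emph{central} element of $\realTorusCtCoverInt$ to anticommute with $\rootReflection$ as well. That verification is the entire content of the proposition beyond formal Clifford theory, and it is missing from your argument.
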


We now define the Cayley transform of a genuine triple through a simple noncompact imaginary root.

\begin{definition}[\cite{IC1}]
\label{defCTTriples}
Let $\gtRep = \genTripleInf$ and suppose $\simpleRoot \in \rootSystem(\complexLieAlgebra,\complexLieAlgebra[h])$ is noncompact and \emph{simple} for $\diff$. Define the \emph{Cayley transform of $\gtRep$ by $\simpleRoot$} to be
\begin{eqnarray*}
\ctOpNci(\gtRep) & = & \left[\begin{array}{ll}
\left\{\genTripleInfCtDiffChar{\diff^{\simpleRoot}}{\genTorusCharCt}{\simpleRoot}\right\} & \genTorusCharCt \text{ irreducible} \\
\left\{\genTripleInfCtDiffChar{\diff^{\simpleRoot}}{\genTorusCharCt_{+}}{\simpleRoot},~ \genTripleInfCtDiffChar{\diff^{\simpleRoot}}{\genTorusCharCt_{-}}{\simpleRoot}\right\} & \genTorusCharCt \text{ reducible}
\end{array}
\right..
\end{eqnarray*}
In particular, $\ctOpNci(\gtRep)$ is double valued if and only if $\simpleRoot$ is of type II and $\genTorusCharCt$ is reducible (Definition \ref{defCTChar}). If $\simpleRootb \in \rootSystem(\complexLieAlgebra, \absLieAlgebra)$ is an abstract noncompact simple root for $\gtRep$, we define the \emph{abstract Cayley transform of $\gtRep$ by $\simpleRootb$} to be $\ctOpNci[{\absConj(\simpleRootb)}](\gtRep)$.
\end{definition}

\begin{proposition}[\cite{RT3}, Lemma 6.14(g)]
\label{propCTLongSingleValued}
In the setting of Definition \ref{defCTTriples}, suppose $\simpleRoot$ is long. Then $\ctOpNci(\gtRep)$ is single valued and we have
\begin{eqnarray*}
\text{\emph{dim}}(\genTorusCharCt) & = & m\cdot\text{\emph{dim}}(\genTorusChar)
\end{eqnarray*}
where 
\begin{eqnarray*}
m & = & \left\{\begin{array}{ll}
1 & \simpleRoot \text{ type I} \\
2 & \simpleRoot \text{ type II}
\end{array}
\right..
\end{eqnarray*}
\end{proposition}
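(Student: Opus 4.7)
The plan is to split the argument according to the type of $\simpleRoot$ (Definition \ref{defTypeITypeII}), which by Proposition \ref{propNciCtVgr} controls whether $\realTorusCtCoverNci$ equals $\realTorusCtCoverInt$ or is an index-two extension of it.

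If $\simpleRoot$ is of type I, then $\realTorusCtCoverNci = \realTorusCtCoverInt$, so Definition \ref{defCTChar} gives $\genTorusCharCt = \genTorusCharCtInt$, which is irreducible by construction; hence $\ctOpNci(\gtRep)$ is single-valued with $m = 1$. The dimension identity $\dim(\genTorusCharCt) = \dim(\genTorusChar)$ is immediate from the defining property $\genTorusCharCtInt|_{\realTorusCoverTCtInt} = \genTorusChar|_{\realTorusCoverTCtInt}$ of Definition \ref{defCTChar}. Note that this part of the argument does not use that $\simpleRoot$ is long.

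If $\simpleRoot$ is of type II, then $[\realTorusCtCoverNci : \realTorusCtCoverInt] = 2$ by Proposition \ref{propNciCtVgr}, so the induction yields $\dim(\genTorusCharCt) = 2\dim(\genTorusCharCtInt) = 2\dim(\genTorusChar)$, matching the claim with $m = 2$. The single-valued assertion is therefore equivalent to showing $\genTorusCharCt = \mathrm{Ind}_{\realTorusCtCoverInt}^{\realTorusCtCoverNci}\genTorusCharCtInt$ is irreducible, and by the reducibility criterion immediately after Definition \ref{defCTChar} this reduces to producing an element $g \in \realTorusCoverTCtInt$ such that a representative $\oac \in \realTorusCoverTCt$ of $\rootReflection$ satisfies $\oac\, g\, \oac^{-1} \ne g$.

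Producing such a $g$ is the main obstacle, and this is precisely where the long-root hypothesis enters. The guiding principle is the non-abelian structure of Proposition \ref{propMCStruct}: since $\simpleRoot$ is long we have $\oac^{2} = \mac$ with $\mac^{2} = -1$, so $\oac$ has order four in the cover. Working inside $\rootSystem(\complexLieAlgebra, \complexLieAlgebraCtNci)$ (where $\simpleRoot$ is long real), we seek a long imaginary root $\simpleRootb$ with $(\simpleRoot, \rootCheck[\simpleRootb])$ odd; Proposition \ref{propMaRootSpace} then gives $\rootChar[\simpleRoot](\mac[\simpleRootb]) = -1$, and a direct variant of Lemma \ref{lemVgr} (with the real-root vector $\rootVector[\simpleRootb]$ replaced by the combination $\rootVector[\simpleRootb] + \overline{\rootVector[\simpleRootb]}$ from Definition \ref{defZb}) yields
\[
\oac\, \mac[\simpleRootb]\, \oac^{-1} \;=\; \oac^{\,1 - \rootChar[\simpleRoot](\mac[\simpleRootb])}\, \mac[\simpleRootb] \;=\; \oac^{\,2}\, \mac[\simpleRootb] \;=\; \mac\, \mac[\simpleRootb] \;\ne\; \mac[\simpleRootb].
\]
The genuine difficulty is verifying that such a $\mac[\simpleRootb]$ actually lies in $\realTorusCoverTCtInt = \realTorusCoverT \cap \realTorusCoverTCt$ and not merely in $\realTorusCoverTCt$; this requires an inverse-Cayley analysis via Proposition \ref{propCtInv} and exploits that $\simpleRoot$ is long so that a suitable long companion $\simpleRootb$ can be located in the pre-Cayley root data. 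If $\simpleRoot$ were instead short, Proposition \ref{propMShort} would force $\mac$ to be central and all of the above commutators to collapse, which is consistent with the conclusion being genuinely restricted to the long case.
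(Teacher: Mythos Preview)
Your treatment of the dimension identity matches the paper's: both observe it is an immediate consequence of Proposition \ref{propNciCtVgr} and Definition \ref{defCTChar}. For the single-valued assertion the paper does not supply an argument at all; it simply cites \cite{RT3} (Lemma 6.14(g)). So you have gone further than the paper by attempting a direct proof of irreducibility in the type II case.

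That said, your type II argument has a genuine gap at exactly the point you flag. You propose to exhibit $\mac[\simpleRootb] \in \realTorusCoverTCtInt$ with $(\simpleRoot,\rootCheck[\simpleRootb])$ odd, but you never establish that such a $\simpleRootb$ exists. The requirement is delicate: you need $\simpleRootb$ long and imaginary for the \emph{new} Cartan (so that $\mac[\simpleRootb] \in \realTorusCoverTCt$), yet with $\mac[\simpleRootb]$ also in $\realTorusCoverT$, while simultaneously having nonzero pairing with $\simpleRoot$. For a root imaginary in both Cartans one would typically need orthogonality to $\simpleRoot$, which kills the odd-pairing condition; the alternative is to take $\simpleRootb$ imaginary for $\realTorusCover$ but not for $\realTorusCtCoverNci$ and argue $\mac[\simpleRootb]$ nonetheless lands in $\realTorusCoverTCt$ via Proposition \ref{propMaIdCompH} or Corollary \ref{corMaRootSpace}. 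You gesture at an ``inverse-Cayley analysis'' but do not carry it out, and it is not clear a suitable $\simpleRootb$ exists for every configuration of $\imaginaryRoots[\cinv]$ in which $\simpleRoot$ happens to be type II. Without resolving this, the irreducibility claim is unproved.

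In short: your type I case and dimension computation are fine and agree with the paper; your type II irreducibility sketch is a reasonable outline but is not a proof, and the paper itself defers this point to \cite{RT3} rather than proving it in situ.
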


\begin{proof}
The fact that $\ctOpNci(\gtRep)$ is single valued is proven in \cite{RT3}. The statement about dimensions is an easy consequence of Proposition \ref{propNciCtVgr} and Definition \ref{defCTChar}.
\end{proof}

The following proposition implies the Cayley transform is well-behaved and descends to the level of $\hcBasisGen[\absDiff](p,q)$.

\begin{proposition}[\cite{IC1}]
The elements appearing in the definition of $\ctOpNci(\gtRep)$ are genuine triples for $\realGroupCover$. Moreover they have the same infinitesimal character as $\gtRep$ and are well defined up to $\maxRealCompactCover$-conjugacy.
\end{proposition}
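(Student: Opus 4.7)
The plan is to verify the three assertions in order: genuineness, infinitesimal character, and well-definedness. For genuineness, I would first observe that $-1 \in \text{Z}(\realGroupCover)$ lies in every Cartan subgroup and in particular in $\realTorusCoverTCtInt$; the prescription $\genTorusCharCtInt|_{\realTorusCoverTCtInt} = \genTorusChar|_{\realTorusCoverTCtInt}$ of Definition~\ref{defCTChar} then forces $\genTorusCharCtInt(-1) = \genTorusChar(-1) = -\identity$. In the type I case $\genTorusCharCt = \genTorusCharCtInt$ is visibly genuine. In the type II case $\genTorusCharCt = \text{Ind}_{\realTorusCtCoverInt}^{\realTorusCtCoverNci}\genTorusCharCtInt$; since $-1$ is central in $\realTorusCtCoverNci$ it acts on every induced coset by the same scalar, giving $\genTorusCharCt(-1) = -\identity$.

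Next I would verify the differential condition of Definition~\ref{defTriple}, that $\text{d}\genTorusCharCt = \diff^{\simpleRoot}|_{\complexLieAlgebraCtNci \cap \text{Lie}(\realTorusCoverTCt)} + \halfSumIm[\diff^{\simpleRoot}] - 2\halfSumImCpt[\diff^{\simpleRoot}]$. The restriction of $\text{d}\genTorusCharCt$ to $\realTorusCoverACt$ is specified directly by Definition~\ref{defCTChar}, and on $\text{ker}(\simpleRoot|_{\realLieAlgebra[t]}) \subset \realLieAlgebra[t] \cap \text{Lie}(\realTorusCoverTCt)$ the two expressions agree after accounting (via Lemma~\ref{lemRhoRootSumIm}) for the shifts in $\halfSumIm$ and $\halfSumImCpt$ caused by removing $\simpleRoot$ from the imaginary root system and converting it to a real root on the Cayley side. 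For the infinitesimal character, because $\ctOpNci$ is an inner automorphism of $\complexGroup$ and $\diff^{\simpleRoot} = \ctOpNci(\diff)$, the induced $\algWeylGroup{\complexLieAlgebra}{\complexLieAlgebraCtNci}$-orbit of $\diff^{\simpleRoot}$ in $(\complexLieAlgebraCtNci)^{*}$ is identified with the $\algWeylGroup{\complexLieAlgebra}{\complexLieAlgebra[h]}$-orbit of $\diff$ under the Harish-Chandra isomorphism, so both triples define the same character of $\uaeCenter$.

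Finally, for well-definedness up to $\maxRealCompactCover$-conjugacy I would address two sources of ambiguity. First, the root vector $\rootVector[\simpleRoot]$ of Lemma~\ref{lemXbSign} is determined only up to sign, and the two resulting operators $\ctOpNci$ and $\ctOpNci^{-1}$ produce the same Cartan subalgebra $\complexLieAlgebraCtNci$ and are related by an element of the normalizer of $\complexLieAlgebraCtNci$; both choices therefore land in the same $\maxRealCompactCover$-conjugacy class. Second, replacing $\gtRep$ by a $\maxRealCompactCover$-conjugate $k \cdot \gtRep$ allows $k$ to be carried through the entire construction, simultaneously conjugating $\realTorusCtCoverNci$, $\diff^{\simpleRoot}$, and $\genTorusCharCt$, so the output conjugacy class is unchanged. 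The main obstacle will be the type II reducible case: one must check that the unordered pair $\left\{\genTripleInfCtDiffChar{\diff^{\simpleRoot}}{\genTorusCharCt_{+}}{\simpleRoot},\genTripleInfCtDiffChar{\diff^{\simpleRoot}}{\genTorusCharCt_{-}}{\simpleRoot}\right\}$ is intrinsic to $\gtRep$. Here I would use Proposition~\ref{propNciCtVgr} to produce a representative of $\rootReflection$ lying in $\realTorusCoverTCt \setminus \realTorusCoverTCtInt \subset \maxRealCompactCover$, and argue via Frobenius reciprocity that conjugation by this element either fixes each $\genTorusCharCt_{\pm}$ or swaps them; in either case the pair is canonical modulo $\maxRealCompactCover$-conjugacy, which is precisely what is claimed.
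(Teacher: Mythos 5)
The paper does not actually prove this proposition: it is stated with the citation \cite{IC1} and no argument is given, so there is nothing internal to compare your proof against. Your outline is essentially the standard argument from \cite{IC1} and \cite{VGr}, Section 8.3, and its overall structure is sound: genuineness via the central element $-1 \in \realTorusCoverTCtInt$ and the fact that induction from a subgroup containing the center preserves the central character; preservation of infinitesimal character because $\ctOpNci$ is inner; and well-definedness by tracking the sign ambiguity of $\rootVector[\simpleRootb]$, carrying a conjugating element $k$ through the construction, and observing that in the reducible type~II case only the unordered pair $\left\{\genTorusCharCt_{+}, \genTorusCharCt_{-}\right\}$ (which is intrinsic, being the set of irreducible constituents of $\genTorusCharCt$) is asserted to be well defined.

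Two steps are imprecise as written. First, the compatibility of the $\rho$-shift is not the content of Lemma \ref{lemRhoRootSumIm}: that lemma compares $\halfSumIm[\diff]$ with $\halfSumIm[{\rootReflection(\diff)}]$ for the \emph{same} Cartan subalgebra and the \emph{same} imaginary root system, whereas what you need is to compare $\halfSumIm[\diff] - 2\halfSumImCpt[\diff]$ on $\complexLieAlgebra[h]$ with $\halfSumIm[{\diff^{\simpleRoot}}] - 2\halfSumImCpt[{\diff^{\simpleRoot}}]$ computed for the \emph{new} imaginary root system on $\complexLieAlgebraCtNci$ (where $\simpleRoot$ has become real and the gradings of the remaining imaginary roots change as in Proposition \ref{propAbsPairCt}). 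This is a genuine, if routine, computation in \cite{IC1} and should not be attributed to Lemma \ref{lemRhoRootSumIm}. Second, in the nonlinear setting $\realTorusCoverTCtInt$ need not be abelian and $\genTorusChar|_{\realTorusCoverTCtInt}$ need not be irreducible, so the existence and uniqueness of the irreducible representation $\genTorusCharCtInt$ postulated in Definition \ref{defCTChar} is itself part of what must be verified (via Proposition \ref{propGenRepBij} and the structure of $\pi_{0}(\realTorusCtCoverInt)$); your argument takes it for granted. Neither point invalidates your approach, but both are where the actual work cited to \cite{IC1} and \cite{RT3} lives.
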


\begin{remark}
For $\hcRep \in \hcBasisGen[\absDiff](p,q)$, denote the Cayley transform on the level of genuine parameters for $\realGroupCover$ by $\ctOpNci(\hcRep) \in \hcBasisGen[\absDiff](p,q)$. If $\genTripleInf$ is a genuine triple representing $\hcRep$, we will write
\begin{eqnarray*}
\text{dim}(\hcRep) & = & \text{dim}(\genTorusChar).
\end{eqnarray*}
\end{remark}

\begin{proposition}[\cite{RT3}, Proposition 6.12]
\label{propCrossNciTypeII}
Let $\hcRep \in \hcBasisGen[\absDiff](p,q)$ and suppose $\simpleRoot \in \imaginaryRoots(\complexLieAlgebra,\absLieAlgebra)(\absDiff)$ is an abstract integral noncompact imaginary root that is of type II for $\hcRep$. Then $\rootReflection \cross \hcRep =\hcRep$ if and only 
\begin{eqnarray*}
\text{\emph{dim}}(\ctOpNci(\hcRep)) & = & \text{\emph{dim}}(\hcRep).
\end{eqnarray*}
\end{proposition}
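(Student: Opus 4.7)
The plan is to show that both sides of the biconditional translate to the same reducibility statement for the induced representation defining the Cayley transform.

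First I would set up the cross-action side. By Lemma \ref{lemCaVgr}, for simple noncompact imaginary $\simpleRoot$ with $m = (\diff, \rootCheck)$,
\[
\rootReflection \cross \genTripleInf = \genTripleInfDiffChar{\diff - m\simpleRoot}{\genTorusChar \cdot \rootChar^{-(m+1)}}.
\]
Since $\simpleRoot$ is of type II (Definition \ref{defTypeITypeII}), there exists $\tau \in \realNormalizer[\maxRealCompactCover](\realTorusCover)$ inducing $\rootReflection$ on $\complexLieAlgebra[h]$. Conjugating by $\tau^{-1}$ returns the first two coordinates of $\rootReflection \cross \genTripleInf$ to $(\realTorusCover, \diff)$, so $\rootReflection \cross \hcRep = \hcRep$ if and only if the representations $\genTorusChar \circ \Ad{\tau^{-1}}$ and $\genTorusChar \cdot \rootChar^{-(m+1)}$ of $\realTorusCover$ are equivalent.

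Second I would bring in the Cayley transform side. By Proposition \ref{propNciCtVgr}, $[\realTorusCtCoverNci : \realTorusCtCoverInt] = 2$ in the type II case, and by Mackey's criterion $\genTorusCharCt = \text{Ind}_{\realTorusCtCoverInt}^{\realTorusCtCoverNci}\genTorusCharCtInt$ is reducible exactly when the nontrivial coset in $\realTorusCtCoverNci / \realTorusCtCoverInt$ preserves $\genTorusCharCtInt$ under conjugation. A natural representative is $\oac[{\ctOpNci(\simpleRoot)}]$ attached via Definition \ref{defZa} to the now-real root $\ctOpNci(\simpleRoot)$. Using $\realTorusCoverTCtInt = \realTorusCoverT \cap \realTorusCoverTCt$ and the normalization $\genTorusCharCtInt|_{\realTorusCoverTCtInt} = \genTorusChar|_{\realTorusCoverTCtInt}$ from Definition \ref{defCTChar}, a direct computation shows that the conjugation actions of $\tau$ and of $\oac[{\ctOpNci(\simpleRoot)}]$ agree on $\realTorusCoverTCtInt$ up to a cocycle compensated exactly by the root-character twist $\rootChar^{-(m+1)}$. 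Consequently the equivalence of representations isolated in the previous paragraph holds if and only if $\Ad{\oac[{\ctOpNci(\simpleRoot)}]}$ fixes $\genTorusCharCtInt$, i.e., $\genTorusCharCt$ is reducible.

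Finally I would read off the dimension statement. For short $\simpleRoot$: when $\genTorusCharCt$ is irreducible we have $\text{dim}(\ctOpNci(\hcRep)) = 2\,\text{dim}(\genTorusCharCtInt) = 2\,\text{dim}(\hcRep)$, while when $\genTorusCharCt = \genTorusCharCt_{+} \oplus \genTorusCharCt_{-}$ splits, each constituent has $\text{dim}(\genTorusCharCt_{\pm}) = \text{dim}(\hcRep)$. This completes the biconditional for short $\simpleRoot$. For long $\simpleRoot$, Proposition \ref{propCTLongSingleValued} forces $\text{dim}(\ctOpNci(\hcRep)) = 2\,\text{dim}(\hcRep)$ unconditionally, so the biconditional reduces to $\rootReflection \cross \hcRep \neq \hcRep$ in the long integral type II case. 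This follows from the Step 1 formula: for long $\simpleRoot$ one uses $\mac^{2} = -1$ (Proposition \ref{propMCStruct}) to verify that $\rootChar^{-(m+1)}$ restricts nontrivially to a generator of $\realTorusCover$ modulo its identity component, obstructing the required equivalence.

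The main obstacle is the matching in the middle paragraph: identifying $\tau$ in the original Cartan with $\oac[{\ctOpNci(\simpleRoot)}]$ in the Cayley-transformed Cartan and checking that the sign discrepancy between the two coset representatives of $\realTorusCtCoverInt$ in $\realTorusCtCoverNci$ is \emph{precisely} the root-character twist $\rootChar^{-(m+1)}$ demanded by Lemma \ref{lemCaVgr}. This is a delicate computation depending on the sign choices for root vectors in Lemmas \ref{lemXaSign} and \ref{lemXbSign} and on how the half-sum $\halfSumIm$ appearing in the differential of $\genTorusChar$ interacts with the Cayley transform of the normalization condition in Definition \ref{defCTChar}.
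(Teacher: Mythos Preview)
The paper does not supply its own proof of this proposition; it is quoted as Proposition~6.12 of \cite{RT3} and used without further argument. So there is nothing in the paper to compare your proposal against directly.

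On the substance of your proposal: the reduction to Mackey reducibility of $\text{Ind}_{\realTorusCtCoverInt}^{\realTorusCtCoverNci}\genTorusCharCtInt$ is the right idea, and your short-root argument is along the correct lines, subject to the delicate matching computation you yourself flag as the main obstacle. However, your treatment of the long case is both unnecessary and incorrect. By Proposition~\ref{propSuppRep}, a long noncompact imaginary root for a genuine parameter satisfies $(\absDiff,\rootCheck)\in\mathbb{Z}+\tfrac12$ and hence is \emph{never} integral; the hypotheses of the proposition therefore force $\simpleRoot$ to be short, and the long case is vacuous. Your attempted argument for the long case is also confused on its own terms: the sentence ``uses $\mac^{2}=-1$ \ldots\ to verify that $\rootChar^{-(m+1)}$ restricts nontrivially to a generator of $\realTorusCover$ modulo its identity component'' conflates the group element $\mac$ with a character value and does not actually exhibit the claimed obstruction to equivalence. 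Simply delete that paragraph and note that integrality plus noncompactness forces $\simpleRoot$ short via Proposition~\ref{propSuppRep}.
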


\begin{corollary}
\label{corCrossNciTypeII}
In the setting of Proposition \ref{propCrossNciTypeII}, suppose $\realGroupCover = \realSpinGroupCover{p}{q}$ has even rank and fix an evenly split Cartan subgroup $\realTorusESplitCover \subset \realGroupCover$ (Definition \ref{defMaxSplitEvenParity}). If $\hcRep$ has a representative genuine triple of the form $\genTripleESplitCtInf[\pSeq{j}{k}]$ (Definition \ref{defStdSeq}), then $\rootReflection \cross \hcRep \ne \hcRep$.
\end{corollary}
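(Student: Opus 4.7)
The plan is to invoke Proposition \ref{propCrossNciTypeII}, which converts the desired inequality $\rootReflection \cross \hcRep \ne \hcRep$ into the statement $\dim(\ctOpNci(\hcRep)) \ne \dim(\hcRep)$. When $\simpleRoot$ is long, this is immediate from Proposition \ref{propCTLongSingleValued}: the Cayley transform is single-valued and the dimension of $\genTorusCharCt$ is exactly twice that of $\genTorusChar$, forcing the required inequality.

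For $\simpleRoot$ short, the Cayley transform in Definition \ref{defCTTriples} may a priori be double-valued, in which case the two dimensions could agree. I will show that $\ctOpNci(\hcRep)$ is in fact single-valued whenever the representative triple has Cartan $\realTorusESplitCtCover[\pSeq{j}{k}]$, which again produces $\dim(\ctOpNci(\hcRep)) = 2\dim(\hcRep)$. By Definition \ref{defCTChar}, this reduces to showing that $\genTorusCharCt = \text{Ind}_{\realTorusCtCoverInt}^{\realTorusCtCoverNci}\genTorusCharCtInt$ is irreducible; equivalently, that a representative $\oac[\simpleRoot] \in \realTorusCoverTCt \setminus \realTorusCoverTCtInt$ of $\rootReflection$ does not stabilize the isomorphism class of $\genTorusCharCtInt$ under conjugation. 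The almost-central element $\stCenter$ attached to the evenly split Cartan is the natural detector: by Propositions \ref{propMaInHsc} and \ref{propZHjk}, $\stCenter$ lies in the center of $\realTorusESplitCtCover[\pSeq{j}{k}]$ and hence in $\realTorusCoverTCtInt$, so it acts as a scalar on $\genTorusCharCtInt$. Combining Lemma \ref{lemVgr} with the computation in Proposition \ref{propPsConj} yields $\oac[\simpleRoot]\, \stCenter\, \oac[\simpleRoot]^{-1} = \mac[\simpleRootc]\, \stCenter$, so conjugation rescales the $\stCenter$-eigenvalue of $\genTorusCharCtInt$ by $\chi(\mac[\simpleRootc])$. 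Whenever this scalar equals $-1$ the two representations are visibly inequivalent.

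The main obstacle will be the residual subcase $\chi(\mac[\simpleRootc]) = +1$, which by Proposition \ref{propCCNci} corresponds to $m = (\diff, \rootCheck)$ being odd. Here the $\stCenter$-invariant alone does not detect the conjugation. The planned resolution is twofold. First, I will exploit the extra central generators of $\realTorusESplitCtCover[\pSeq{j}{k}]$ enumerated in Proposition \ref{propZHjk}, notably $\mac[{\simpleRootai[j+1]}] \cdots \mac[{\simpleRootai[\qSpec]}]$ when $j < \qSpec$, and pair them with Corollary \ref{corDiffSignZ} to separate the two genuine characters of $\realTorusESplitCtCover[\pSeq{j}{k}]$ that share the prescribed central character and differential. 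Second, I will observe that short Cayley transform flips the parity of $\numRealBits$ from even to odd, pushing us outside the even-parity regime; tracking the resulting change in $|\realTorusCtCoverNci / \text{Z}(\realTorusCtCoverNci)|$ via Proposition \ref{propGenRepBij} and the structural analysis of Section \ref{AlgTorusCover} will then produce the remaining dimension discrepancy and complete the proof.
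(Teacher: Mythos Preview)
Your approach takes a detour that the paper avoids entirely. First, the long case is vacuous: in the setting of Proposition \ref{propCrossNciTypeII} the root $\simpleRoot$ is integral, and by Proposition \ref{propSuppRep} a long noncompact imaginary root for a genuine triple satisfies $(\absDiff,\rootCheck)\in\mathbb{Z}+\tfrac12$, so only short $\simpleRoot$ can occur.

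More importantly, the paper never analyzes irreducibility of the induced representation. Instead it observes that the existence of a short noncompact type~II root forces $j<\qSpec$ (via Theorem \ref{theoremFokko}), and then computes both $\dim(\hcRep)$ and $\dim(\ctOpNci(\hcRep))$ \emph{intrinsically} as $\bigl|\widetilde{H}/Z(\widetilde{H})\bigr|^{1/2}$ for the two Cartans, using Propositions \ref{propGenRepBij}, \ref{propCompH}, and \ref{propZHjk}. The ratio comes out to $2$, and Proposition \ref{propCrossNciTypeII} finishes. This sidesteps your obstacle at $\chi(\mac[\simpleRootc])=+1$ completely: the dimension of any genuine irreducible of the new Cartan is determined by the center quotient, regardless of how the induction decomposes.

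Your fallback plan (b) --- tracking $\bigl|\realTorusCtCoverNci/Z(\realTorusCtCoverNci)\bigr|$ --- is exactly this argument, but you reach for it only in a residual subcase rather than recognizing it as the uniform route. Plan (a) is shakier: Corollary \ref{corDiffSignZ} concerns genuine triples on $\realTorusESplitCtCover[\pSeq{j}{k}]$, whereas you need to separate the $\oac[\simpleRoot]$-conjugate of $\genTorusCharCtInt$ from $\genTorusCharCtInt$ as representations of the smaller group $\realTorusCoverTCtInt$, and it is not clear the extra central generator you name even lies there after the short Cayley transform.
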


\begin{proof}
The existence of $\simpleRoot$ implies we must have $j < \qSpec$ (Theorem \ref{theoremFokko}). Propositions \ref{propGenRepBij}, \ref{propCompH}, and \ref{propZHjk} imply
\[
\text{dim}(\hcRep) = \left(\frac{2^{\numRealBits-1}}{2}\right)^{\frac{1}{2}} = \left(2^{\numRealBits-2}\right)^{\frac{1}{2}}.
\]
Similarly we have 
\begin{eqnarray*}
\text{dim}(\ctOpNci(\hcRep)) & = & \left(2^{\numRealBits}\right)^{\frac{1}{2}} \\
\frac{\text{dim}(\ctOpNci(\hcRep))}{\text{dim}(\hcRep)} & = & \left(\frac{2^{\numRealBits}}{2^{\numRealBits-2}}\right)^{\frac{1}{2}} = 4^{\frac{1}{2}} = 2
\end{eqnarray*}
and the result follows from Proposition \ref{propCrossNciTypeII}. 
\end{proof}

We will also need an inverse version of Definition \ref{defCTTriples}. This is most easily stated in terms of abstract roots. We refer the reader to \cite{RT3} or \cite{IC1} for more details.

\begin{definition}
\label{defInvCT}
Let $\hcRep \in \hcBasisGen[\absDiff](p,q)$ and suppose $\simpleRoot \in \realRoots(\complexLieAlgebra, \absLieAlgebra)$ is a simple abstract root that is real for $\hcRep$. Define the \emph{inverse Cayley transform of $\hcRep$ by $\simpleRoot$} to be
\begin{eqnarray*}
\ctOp(\hcRep) & = & \left\{\hcRep' \in \hcBasisGen[\absDiff](p,q) \mid \ctOpNci(\hcRep') = \hcRep\right\}.
\end{eqnarray*}
\end{definition}

More explicit definitions of inverse Cayley transforms appear in \cite{IC1} and \cite{VGr}. The following proposition gives a characterization of when the inverse Cayley transform is nontrivial.

\begin{proposition}[\cite{RT3}]
\label{propInvCTNonempty}
In the setting of Definition \ref{defInvCT}, $\ctOp(\hcRep)$ is nonempty if and only if $\simpleRoot$ satisfies the parity condition (Definition \ref{defPC}).
\end{proposition}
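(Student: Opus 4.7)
The plan is to unwind the definition of inverse Cayley transform back to the concrete setting, then reduce existence to a single compatibility condition on a central element, which turns out to be exactly the parity condition. Let $\gtRep = \genTripleInf$ be a representative genuine triple for $\hcRep$ and set $\simpleRootb = \absConj(\simpleRoot) \in \realRoots[\cinv](\complexLieAlgebra,\complexLieAlgebra[h])$, so $\simpleRootb$ is a (simple, real) root for $\cinv$. By Definition \ref{defInvCT} and Proposition \ref{propCtInv}, an element of $\ctOp(\hcRep)$ is represented by a genuine triple $\gtRep' = \genTripleInfCtDiffChar{\diff'}{\genTorusChar'}{\simpleRootb}$ supported on the Cayley-transformed Cartan $\realTorusCover^{\simpleRootb}$ (on which $\simpleRootb$ is now noncompact imaginary) such that $\ctOpNci[{\ctOp[\simpleRootb](\simpleRootb)}](\gtRep') = \gtRep$.

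First I would write out what this equality forces. Comparing differentials, $\diff' = \ctOp[\simpleRootb](\diff)$ is determined on the $\simpleRootb$-kernel of $\complexLieAlgebra[h]$, and the vector-part prescription in Definition \ref{defCTChar} (applied in reverse) specifies $\diff'$ on the new imaginary direction. Comparing characters, the restriction of $\genTorusChar'$ to $\realTorusCoverT \cap \realTorusCoverT^{\simpleRootb}$ is forced to equal $\genTorusChar|_{\realTorusCoverT \cap \realTorusCoverT^{\simpleRootb}}$. Thus the whole question collapses to whether $\genTorusChar$ admits an extension to a genuine representation $\genTorusChar'$ of $\realTorusCoverT^{\simpleRootb}$ having the prescribed differential — and the obstruction sits entirely in how this prescription behaves on the one new element $\mac[\simpleRootb]$ (respectively on $\mac$, which is short and central) introduced by the Cayley transform.

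Next I would split by root length. For $\simpleRootb$ \emph{long}, the new Cartan contains $\mac[\simpleRootb] = \expgc(-\pi i \coroot[\simpleRootb])$ with $\mac[\simpleRootb]^{2} = -1$ (Remark \ref{remXbSign}), and the required value of $\genTorusChar'(\mac[\simpleRootb])$ is computed from $\diff'$ plus the standard $\halfSumIm - 2\halfSumImCpt$ shift exactly as in the proof of Proposition \ref{propCCNci}. An easy bookkeeping of the shifts (the $\halfSumIm$ contribution from the new imaginary root $\ctOp[\simpleRootb](\simpleRootb)$ lies in $\mathbb{Z}+\tfrac{1}{2}$ on $\rootCheck[{\ctOp[\simpleRootb](\simpleRootb)}]$) yields that this prescribed value squares correctly to $-\identity$ if and only if $(\diff,\rootCheck[\simpleRootb]) \in \mathbb{Z}+\tfrac{1}{2}$, matching Lemma \ref{lemGma} and giving the long-root parity condition of Definition \ref{defPC}. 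For $\simpleRootb$ \emph{short}, $\mac = \mac[{\ctOp[\simpleRootb](\simpleRootb)}]$ by Proposition \ref{propSameMa}, so the element is already present in $\realTorusCover$ and $\genTorusChar(\mac)$ is specified; the extension exists if and only if this known value agrees with the value forced on $\genTorusChar'(\mac)$ by the new differential $\diff'$, which by Proposition \ref{propCCNci} equals $-e^{-2\pi i(\diff',\ctOp[\simpleRootb](\simpleRootb))} \cdot \identity = -(-1)^{(\diff,\rootCheck)} \cdot \identity$. This is precisely the short-root parity condition.

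For the converse direction I would run the same calculation in reverse: given parity, the prescribed value of $\genTorusChar'$ on the one new (or pre-existing central) element is consistent, so Proposition \ref{propGenRepBij} produces an irreducible genuine $\genTorusChar'$ extending the required data, and the resulting triple $\gtRep'$ satisfies $\ctOpNci[{\ctOp[\simpleRootb](\simpleRootb)}](\gtRep') = \gtRep$ by Definition \ref{defCTChar}. The main obstacle I expect is the short-root case: the parity condition there is phrased not via integrality of $(\diff,\rootCheck)$ but via the central character value $\genTorusChar(\mac)$, and matching the two sides requires a careful accounting of the rho-shifts $\halfSumIm - 2\halfSumImCpt$ across the Cayley transform (both the disappearance of $\simpleRootb$ from the real positive system and the appearance of $\ctOp[\simpleRootb](\simpleRootb)$ among the imaginary positive roots), together with Theorem \ref{theoremCompareGradings} to confirm that the imaginary and real gradings on short roots are of opposite kind. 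The long case, by contrast, reduces to an elementary check that $\mac[\simpleRootb]^2 = -1$ is compatible with the strict half-integrality of $(\diff,\rootCheck)$.
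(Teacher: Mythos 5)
The paper offers no proof of this proposition — it is cited directly from \cite{RT3} — so the only question is whether your blind argument is sound. Your overall route is the standard one and your short-root analysis is correct: $\ma$ is central and equal on both Cartans (Propositions \ref{propMShort}, \ref{propSameMa}), so the genuinely given scalar $\genTorusChar(\mac)$ must agree with the value forced on the more compact Cartan by the differential, and the computation of Proposition \ref{propCCNci} shows this agreement is exactly the short-root parity condition of Definition \ref{defPC}.

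The long-root case is where your write-up has a genuine soft spot. You phrase the obstruction as ``the prescribed value of $\genTorusChar'(\mac[\simpleRootb])$ squares correctly to $-\identity$,'' but by Lemma \ref{lemGma} the operator $\genTorusChar(\mac)$ for a long real root is \emph{never} scalar (it has eigenvalues $\pm i$ with equal multiplicity), so there is no scalar matching equation on $\mac[\simpleRootb]$ to check, and ``squares to $-\identity$'' holds automatically for any genuine character since $\mac[\simpleRootb]^{2}=-1$. The actual obstruction in the long case lives one level up: by Proposition \ref{propSuppRep}, a genuine triple on the more compact Cartan forces $(\absDiff,\rootCheck)\in\mathbb{Z}$ when $\ctOp(\simpleRoot)$ is compact and $\in\mathbb{Z}+\frac{1}{2}$ when it is noncompact; so if parity fails, $\ctOp(\simpleRoot)$ is compact imaginary upstairs for every supportable triple and no forward Cayley transform in that root exists, while if parity holds Proposition \ref{propIffSupp} produces a supportable triple with $\ctOp(\simpleRoot)$ noncompact. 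You should route the long case through supportability rather than through a value of $\genTorusChar'$ on $\mac[\simpleRootb]$. Separately, your reduction ``the obstruction sits entirely in the one new element'' is asserted rather than proved: in the converse direction you still need to check that the triple you build upstairs actually Cayley-transforms back to $\hcRep$ (not merely to something with the same restriction data), which requires engaging with the type I/type II dichotomy and the possible reducibility of $\text{Ind}_{\realTorusCtCoverInt}^{\realTorusCtCoverNci}\genTorusCharCtInt$ in Definition \ref{defCTChar}; as written this surjectivity step is missing.
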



\subsection{Principal Classes}
\label{ssTheMapPs}
Fix a half-integral infinitesimal character $\absDiff \in \absLieAlgebraDual$ and suppose $\realGroupCover = \realSpinGroupCover{p}{q}$ has even rank and is not compact. Choose an evenly split Cartan subgroup $\realTorusESplitCover \subset \realGroupCover$ (Definition \ref{defMaxSplitEvenParity}) and let $\pSeq{j}{k}$ be a standard sequence (Definition \ref{defStdSeq}). If $\genPairESplitCtInf[\pSeq{j}{k}]$ is a genuine pair for which the corresponding abstract triple $\absTriple$ is supportable (Definition \ref{defSuppTriple}), Corollary \ref{corRepFiberCent} implies $\hcBasisGenInvOrderCentSpin{p}{q} = 2$ for some central character $\chi$ (Definition \ref{defCC}). In this section we use the element $\stCenter = \mac[{\simpleRooti[1]}]\mac[{\simpleRooti[2]}] \cdots \mac[{\simpleRooti[n/2]}]$ of Section \ref{ssEPCSG} to distinguish the elements in $\hcBasisGenInvCentSpin{p}{q}$.

To avoid trivialities, we begin by assuming $\realGroupCover = \realSpinGroupCover{p}{q}$ with $p > q \ge 2$. Let $\hcRep \in \hcBasisGenSpin{p}{q}$ and suppose $\genTripleESplitInfCtDiffChar{\varphi}{\genTorusChar}{}$ is a genuine triple representing $\hcRep$.  Since every short root for $\hcRep$ is either real, compact imaginary, or noncompact imaginary and of type II (Theorem \ref{theoremFokko}), $\hcRep$ has a representative genuine triple $\genTripleESplitCtInf[]$ with $(\diff, \ei) > 0$ for all $i$ (recall our choice of coordinates in Section \ref{ssEPCSG}). Such a representative will be called \emph{essentially dominant}.

\begin{definition}
\label{defPrincipalClass}
Two essentially dominant genuine triples $\genTripleESplitInfCtDiffChar{\diff_{1}}{\genTorusChar_{1}}{}$ and $\genTripleESplitInfCtDiffChar{\diff_{2}}{\genTorusChar_{2}}{}$ are said to be in the same \emph{principal class} if $\genTorusChar_{1}(\stCenter) = \genTorusChar_{2}(\stCenter)$. We denote the set of principal classes for $\realGroupCover$ with infinitesimal character $\absDiff$ by $\pc{p}{q}$ and with central character $\chi$ by $\pcCent{p}{q}$.
\end{definition}

\begin{proposition}
\label{propPC}
Suppose $\gtRep_{1} = \genTripleESplitInfCtDiffChar{\diff_{1}}{\genTorusChar_{1}}{}$ and $\gtRep_{2} = \genTripleESplitInfCtDiffChar{\diff_{2}}{\genTorusChar_{2}}{}$ are essentially dominant, $\maxRealCompactCover$-conjugate genuine triples. Then $\gtRep_{1}$ and $\gtRep_{2}$ are in the same principal class.
\end{proposition}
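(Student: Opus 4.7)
The plan is to realize the conjugating element as a real Weyl group element, constrain its action on the short roots using essential dominance, and then verify directly that this action preserves the value $\genTorusChar_{1}(\stCenter)$.

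First I would let $k \in \maxRealCompactCover$ be any element with $k\gtRep_{1}k^{-1} = \gtRep_{2}$. Because both triples share the Cartan subgroup $\realTorusESplitCover$, $k$ lies in $\realNormalizer[\maxRealCompactCover](\realTorusESplitCover)$ and represents an element $w$ of the real Weyl group $\cartanWeylGroup[\realGroupCover]{\realTorusESplitCover}$. The identity $\diff_{2} = w\cdot\diff_{1}$ combined with $(\diff_{i}, \ei[j]) > 0$ for all $j$ and both $i = 1,2$ forces $w\ei[j] = \ei[\sigma(j)]$ for some permutation $\sigma$, with no sign change. Inspection of the generators of $\cartanWeylGroup[\realGroupCover]{\realTorusESplitCover}$ from Proposition \ref{propCWGroup} shows that pure-permutation contributions come only from long reflections $\rootReflection[{\ei[i] - \ei[j]}]$ between coordinates in a common block (both real, i.e., both corresponding to $-$ signs in the diagram of $\realTorusESplitCover$, or both imaginary); every other generator introduces sign flips incompatible with essential dominance. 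In particular $\sigma$ preserves the real and imaginary blocks of short roots setwise.

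Second I would compute $k^{-1}\stCenter k = w(\stCenter)$ directly. Since the long roots $\simpleRooti[1], \ldots, \simpleRooti[n/2]$ are mutually orthogonal, Proposition \ref{propMCStruct} yields
\begin{eqnarray*}
w(\stCenter) & = & \expgc\left(\pi i \sum_{i=1}^{n/2} w(\rootCheck[{\simpleRooti[i]}])\right).
\end{eqnarray*}
Writing the original sum as $\ei[1] - \ei[2] + \ei[3] - \ei[4] + \cdots$ and expanding after application of $\sigma$, the difference $w(\sum_{i}\rootCheck[{\simpleRooti[i]}]) - \sum_{i}\rootCheck[{\simpleRooti[i]}]$ equals $2v$ for an integral combination $v = \sum_{k \in T}\epsilon_{k}\ei[k]$, where $T$ indexes those coordinates whose position in the alternating pattern is reversed by $\sigma$. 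A pigeonhole count using block preservation shows that $T$ meets each block in an even number of indices.

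Third, the correction simplifies using $\expgc(\pm 2\pi i \ei[k]) = \mac[{\ei[k]}]$ (since $\rootCheck[{\ei[k]}] = 2\ei[k]$ and $\mac[{\ei[k]}]^{2} = 1$), giving
\begin{eqnarray*}
w(\stCenter) & = & \stCenter \cdot \prod_{k \in T}\mac[{\ei[k]}].
\end{eqnarray*}
Each $\mac[{\ei[k]}]$ is central of order at most $2$ in $\realGroupCover$ (Proposition \ref{propMShort}), and Propositions \ref{propCCReal} and \ref{propCCNci} identify $\genTorusChar_{1}(\mac[{\ei[k]}])$ as a specific sign determined by $\chi$ and by $(\diff,\ei[k]) \bmod \mathbb{Z}$. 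A direct check using the evenness of $|T \cap \text{block}|$ together with these sign formulas shows $\genTorusChar_{1}\!\left(\prod_{k \in T}\mac[{\ei[k]}]\right) = 1$. Hence $\genTorusChar_{2}(\stCenter) = \genTorusChar_{1}(k^{-1}\stCenter k) = \genTorusChar_{1}(\stCenter)$ and $\gtRep_{1}, \gtRep_{2}$ lie in the same principal class.

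The main obstacle will be the sign bookkeeping in the final step: $\genTorusChar_{1}(\mac[{\ei[k]}])$ need not be constant across the imaginary block because $(\diff,\ei[k]) \bmod \mathbb{Z}$ can vary, so the cancellation must be extracted carefully from the central character. An appealing alternative is to decompose $w$ as a product of long reflections within a single block and apply Proposition \ref{propPsConj} iteratively, using its evident imaginary analogue (derivable from Lemma \ref{lemVgr}) for long compact imaginary reflections; that approach reduces the whole proof to the invariance statement $(\oac[{\simpleRootd}]\cdot\genTorusChar)(\stCenter) = \genTorusChar(\stCenter)$ for long $\simpleRootd$ already essentially established in the paper.
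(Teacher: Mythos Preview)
Your alternative in the final paragraph is essentially the paper's proof: essential dominance forces $\diff_{1}$ and $\diff_{2}$ to differ by long reflections only, $\maxRealCompactCover$-conjugacy forces these to be real or compact imaginary, and then one checks each such reflection preserves $\genTorusChar(\stCenter)$ using Proposition~\ref{propPsConj} (long real case) and Corollary~\ref{corZCrossIm} (long compact imaginary case; conjugation there coincides with cross action by Proposition~\ref{propCrossCpt}). The paper cites Proposition~\ref{propNonPsCross} and Corollary~\ref{corZCrossIm}, but the content is exactly what you describe.

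Your main route---the direct computation of $k^{-1}\stCenter k$ via a coroot formula---has a genuine gap. The identity $w(\stCenter) = \expgc\bigl(\pi i \sum_i w(\rootCheck[{\simpleRooti[i]}])\bigr)$ is not well-founded in the cover: for $i \le \qSpec$ the roots $\simpleRooti[i]$ are real, so $\mac[{\simpleRooti[i]}] = \expgc(\pi\zLa[{\simpleRooti[i]}])$ with $\zLa[{\simpleRooti[i]}] \notin \realLieAlgebra[h]$, and $i\coroot[{\simpleRooti[i]}] \notin \realLieAlgebra[h]$ either. These $\mac$'s lie outside $\realTorusESplitCoverId$, and conjugation on them is governed by Lemma~\ref{lemVgr}, not by the Weyl action on coroots (this is exactly where the extra signs of Remark~\ref{remMaSign} intervene). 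The subsequent identification $\expgc(\pm 2\pi i \ei[k]) = \mac[{\ei[k]}]$ likewise only holds when $\ei[k]$ is imaginary. Since $w$ preserves blocks, you could split the product $\stCenter$ into its real and imaginary factors and handle the real factor via Proposition~\ref{propPsConj} while running your coroot computation on the imaginary factor; but at that point you have essentially reverted to the alternative, and the advertised evenness/pigeonhole argument for $T$ is no longer doing the work.
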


\begin{proof}
Essential dominance implies $\diff_{1}$ and $\diff_{2}$ differ only by reflections in long roots. Since $\gtRep_{1}$ and $\gtRep_{2}$ are $\maxRealCompactCover$-conjugate, these roots must be either real or compact imaginary. The result now follows from Proposition \ref{propNonPsCross} and Corollary \ref{corZCrossIm}.
\end{proof}

\begin{remark}
\label{remPC}
Proposition \ref{propPC} implies the notion of a principal class descends to the level of genuine parameters. Since a principal class does not see the corresponding $\maxRealCompactCover$-orbit, Corollary \ref{corRepFiberCent} implies $\pcCentOrder{p}{q} = 2$ whenever $\pcCentOrder{p}{q} \ne 0$. Note if $\realGroupCover$ is split, a principal class is the same as a principal series representation. 	
\end{remark}

In those cases where $\hcBasisGenInvOrderCentSpin{p}{q} = 2$, we will assign each element of $\hcBasisGenInvCentSpin{p}{q}$ a distinct principal class (Definition \ref{defPsMap}). This will complete the process of distinguishing the elements in $\hcBasisGenCentSpin{p}{q}$ and allow us to define the duality map $\weylGroupInvMap$ (Definition \ref{defPsiOnHcBasisGen}). We begin with a definition.

\begin{definition}
\label{defNumFlips}	
Given an abstract involution $\inv \in \weylGroup(\complexLieAlgebra, \absLieAlgebra)$, let $\numFlips$ denote \emph{half the number of abstract coordinates that are both interchanged and negated by $\inv$}. In terms of diagrams, $\numFlips$ is exactly half the number of parentheses appearing in $\diagram$ (Section \ref{ssAbsWeylGroup}).
\end{definition}

\begin{definition}
\label{defPsMap}
In the setting above, let $\pSeq{j}{k}$ be a standard sequence and suppose $\genPairESplitCtInf[\pSeq{j}{k}]$ is a genuine pair for which the corresponding abstract triple $\absTriple$ is supportable. The \emph{principal class map}
\[
\psMap : \hcBasisGenInvCentSpin{p}{q} \to \pcCent{p}{q}
\]
is defined as follows. For $\hcRep \in \hcBasisGenInvCentSpin{p}{q}$, choose a genuine triple $\gtRep = \genTripleESplitInfCtDiffChar{\diff}{\genTorusChar_{1}}{\pSeq{j}{k}}$ beginning with $\realTorusESplitCtCover[\pSeq{j}{k}]$ and representing $\hcRep$. Write $\genTorusChar_{1}^{\text{z}}$ for the unique genuine character of Z($\realTorusESplitCtCover[\pSeq{j}{k}]$) corresponding to $\genTorusChar_{1}$ (Proposition \ref{propGenRepBij}). Then there is a unique genuine character $\genTorusChar_{2}^{\text{z}}$ of Z($\realTorusESplitCover$) with differential $\ctOpSeqInv{j}{k}(\diff)$ and
\begin{eqnarray*}
\genTorusChar_{2}^{\text{z}}(\mac[\simpleRootc]) & = & \genTorusChar_{1}^{\text{z}}(\mac[\simpleRootc]) \\
\genTorusChar_{2}^{\text{z}}(\stCenter) & = & \genTorusChar_{1}^{\text{z}}(\stCenter)\genTorusChar_{1}^{\text{z}}(\mac[\simpleRootc])^{\numFlips}.
\end{eqnarray*}
Here $\mac[\simpleRootc]$ denotes the nontrivial central element of $\realGroupCover$ corresponding to the short real roots in $\rootSystem(\complexLieAlgebra, \eSplitLieAlgebra)$ (Proposition \ref{propSameMacReal}). Let $\genTorusChar_{2}$ be a genuine representation of $\realTorusESplitCover$ corresponding to $\genTorusChar_{2}^{\text{z}}$ and define 
\begin{eqnarray*}
\psMap(\genTorusChar_{1}) & = & \genTorusChar_{2} \\
\psMap(\gtRep) & = & \genTripleESplitInfCtDiffChar{\ctOpSeqInv{j}{k}(\diff)}{\genTorusChar_{2}}{} \\
\psMap(\hcRep) & = & \left[\genTripleESplitInfCtDiffChar{\ctOpSeqInv{j}{k}(\diff)}{\genTorusChar_{2}}{}\right] \in \pc{p}{q}.
\end{eqnarray*}
In other words, $\psMap(\hcRep)$ is defined to be the principal class represented by the genuine triple $\genTripleESplitInfCtDiffChar{\ctOpSeqInv{j}{k}(\diff)}{\genTorusChar_{2}}{}$. In particular, $\hcRep$ and $\psMap(\hcRep)$ have the same central characters (Remark \ref{remCompareCC}).
\end{definition}

\begin{proposition}
\label{propPsMapWellDefined}
The map $\psMap$ is well defined.
\end{proposition}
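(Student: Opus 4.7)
The plan is to verify the three potential sources of ambiguity in Definition \ref{defPsMap}: (i) the existence and uniqueness of the genuine character $\genTorusChar_{2}^{\text{z}}$ of $\text{Z}(\realTorusESplitCover)$ specified by the two displayed formulas, (ii) the independence of the resulting principal class from the choice of representative $\gtRep = \genTripleESplitInfCtDiffChar{\diff}{\genTorusChar_{1}}{\pSeq{j}{k}}$ within its $\maxRealCompactCover$-conjugacy class, and (iii) the passage from the output genuine triple $\genTripleESplitInfCtDiffChar{\ctOpSeqInv{j}{k}(\diff)}{\genTorusChar_{2}}{}$ to a well-defined element of $\pcCent{p}{q}$.

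Point (i) will follow directly from the explicit description of $\text{Z}(\realTorusESplitCover)$ recorded in Section \ref{ssEPCSG}: any genuine character of this center is uniquely determined by its differential together with its values on $\mac[\simpleRootc]$ and $\stCenter$, and the data prescribed by the formulas are compatible with the relations $\mac[\simpleRootc]^{2} = 1$ and $\stCenter^{2} = (-1)^{\qSpec}$. Proposition \ref{propGenRepBij} then supplies a unique irreducible $\genTorusChar_{2}$ extending $\genTorusChar_{2}^{\text{z}}$. Point (iii) will be handled by bringing the output to essentially dominant form using cross actions in long roots and compact imaginary short roots; by Proposition \ref{propNonPsCross}, Corollary \ref{corZCrossIm}, and Proposition \ref{propCrossCpt}, each such reflection preserves $\genTorusChar_{2}^{\text{z}}(\stCenter)$, so the value that determines the principal class (Definition \ref{defPrincipalClass}) is unambiguous.

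The main obstacle is point (ii). Two representative triples of $\hcRep$ at $\realTorusESplitCtCover[\pSeq{j}{k}]$ differ by the cross action of an element $w \in \cartanWeylGroup[\realGroupCover]{\realTorusESplitCtCover[\pSeq{j}{k}]}$, and by Proposition \ref{propCWGroup} it suffices to verify invariance of the output under the generating reflections of each factor in the decomposition $(\rGroup \times \realWeylGroup) \rtimes \complexWeylGroup$. For each such reflection $\rootReflection[\simpleRoota]$ one compares $\rootReflection[\simpleRoota] \cross \genTorusChar_{1}^{\text{z}}$ to $\genTorusChar_{1}^{\text{z}}$ at $\mac[\simpleRootc]$ and at $\stCenter$, then transports back along $\ctOpSeqInv{j}{k}$ to compare the resulting $\genTorusChar_{2}^{\text{z}}$. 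The $\mac[\simpleRootc]$-values are always preserved since $\mac[\simpleRootc]$ is central in $\realGroupCover$; the $\stCenter$-values transform according to Corollary \ref{corZCrossReal} and Proposition \ref{propPsConj} for real reflections, Corollary \ref{corZCrossIm} for imaginary reflections, and Corollary \ref{corZCrossCmplx} for complex reflections. The delicate point is that cross actions through short real and short integral complex roots contribute factors of $\genTorusChar_{1}^{\text{z}}(\mac[\simpleRootc])$; the correction exponent $\numFlips$ in the definition is calibrated to count precisely the number of such sign contributions absorbed by the Cayley transform sequence $\pSeq{j}{k}$, since by Definition \ref{defNumFlips} it equals the number of interchanged-and-negated pairs in the diagram of $\inv$, which is the number of $\simpleRootai$-type transforms performed. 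Verifying this balance case by case — long real, short real, long imaginary, short imaginary, long complex, short complex — along each generator of $\cartanWeylGroup[\realGroupCover]{\realTorusESplitCtCover[\pSeq{j}{k}]}$ will occupy the bulk of the computation and is where the $\numFlips$-correction earns its keep.
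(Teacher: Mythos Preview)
Your plan has the right overall shape, but there are two substantive misunderstandings in part (ii) that would derail the argument.

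First, two representatives of $\hcRep$ at $\realTorusESplitCtCover[\pSeq{j}{k}]$ differ by \emph{conjugation} by an element of $\realNormalizer[\maxRealCompactCover](\realTorusESplitCtCover[\pSeq{j}{k}])$, not by the cross action. These are genuinely different operations on the character $\genTorusChar_{1}$: conjugation sends $\genTorusChar_{1}$ to $\oac[\simpleRootd]\cdot\genTorusChar_{1}$ (governed by Proposition \ref{propPsConj}), while cross action multiplies by a root character (governed by Corollaries \ref{corZCrossReal}, \ref{corZCrossIm}, \ref{corZCrossCmplx}). For real roots the two only coincide on $\stCenter$ when the parity condition fails (Corollary \ref{corZCrossReal}), so treating them interchangeably is not safe. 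The paper's proof works entirely with the conjugation action $\weylElt\cdot\gtRep$ and checks each factor of $\cartanWeylGroup[\realGroupCover]{\realTorusESplitCtCover[\pSeq{j}{k}]} \cong (\rGroup \times \realWeylGroup)\rtimes\complexWeylGroup$ separately; in several cases (e.g.\ $\weylElt\in A$ when $j=\qSpec$) one needs Corollary \ref{corZConjIm} and Corollary \ref{corDiffSignZ} rather than the cross-action corollaries you list.

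Second, your explanation of the role of $\numFlips$ is backwards. The correction term $\genTorusChar_{1}^{\text{z}}(\mac[\simpleRootc])^{\numFlips}$ is not there to absorb sign contributions from the Cayley sequence $\pSeq{j}{k}$; in fact $\numFlips$ counts the parenthesized pairs in the diagram of $\inv$, which for the standard involution at $\realTorusESplitCtCover[\pSeq{j}{k}]$ is $j-k$, not $j$. More importantly, since every $\weylElt$ in the real Weyl group centralizes $\inv$, the number $\numFlips$ is literally unchanged when passing from $\gtRep$ to $\weylElt\cdot\gtRep$. The paper therefore simply observes that the $\numFlips$-factor is the same on both sides and drops it; the entire well-definedness argument then reduces to comparing $\genTorusChar_{1}(\stCenter)$ with $(\weylElt\cdot\genTorusChar_{1})(\stCenter)$ and matching this against the behavior of $\psMap$ under conjugation at the level of $\realTorusESplitCover$. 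The true purpose of $\numFlips$ appears only later, in Theorem \ref{theoremDualityCommutesWithCA} Case III, where it changes by $\pm 1$ under cross action in a short complex root and is needed for $\weylGroupInvMap$ to intertwine cross actions.
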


\begin{proof}
The issue is the choice of genuine triple $\gtRep = \genTripleESplitInfCtDiffChar{\diff}{\genTorusChar_{1}}{\pSeq{j}{k}}$ representing $\hcRep$. It suffices to show $\psMap(\gtRep)$ is equivalent to $\psMap(\weylElt\cdot\gtRep)$ with
\begin{eqnarray*}
\weylElt\cdot\gtRep & = & \genTripleESplitInfCtDiffChar{\weylElt\cdot\diff}{\weylElt\cdot\genTorusChar_{1}}{\pSeq{j}{k}}
\end{eqnarray*}
and $\weylElt \in \realNormalizer[\maxRealCompactCover](\realTorusESplitCtCover[\pSeq{j}{k}]) / Z_{\maxRealCompactCover}(\realTorusESplitCtCover[\pSeq{j}{k}]) = \cartanWeylGroup[\realGroupCover]{\realTorusESplitCtCover[\pSeq{j}{k}]}$. Note $\weylElt\cdot\genTorusChar_{1}$ is defined only up to $\maxRealCompactCover$-conjugacy, however the corresponding character of Z($\realTorusESplitCtCover[\pSeq{j}{k}]$) is well defined. Since it is the character that matters for Definition \ref{defPsMap}, this level of precision is sufficient for our purposes.

As noted above, the differentials of $\psMap(\gtRep)$ and $\psMap(\weylElt\cdot\gtRep)$ will have essentially dominant representatives in $\eSplitLieAlgebraDual$. However any genuine representation of $\realTorusESplitCover$ with a fixed differential is determined by its values on $\mac[\simpleRootc]$ and $\stCenter$. Since conjugation and cross action cannot affect $\mac[\simpleRootc]$, we simply need to understand their effect on $\stCenter$. Moreover, conjugating by $\weylElt$ does not change the value of $\numFlips$, so we may effectively ignore the $\genTorusChar_{1}^{\text{z}}(\mac[\simpleRootc])^{\numFlips}$ term in the definition of $\genTorusChar_{2}^{\text{z}}(\stCenter)$. 

Proposition \ref{propCWGroup} implies
\begin{eqnarray*}
\cartanWeylGroup[\realGroupCover]{\realTorusESplitCtCover[\pSeq{j}{k}]} & \cong & \complexWeylGroup \ltimes ((A \ltimes \cptImaginaryWeylGroup) \times \realWeylGroup).
\end{eqnarray*}
Our proof is by cases based on the element $\weylElt$. 

\begin{steplist}
\item[Case I] Suppose $\weylElt \in \realWeylGroup$. Then conjugation commutes with $\ctOpSeqInv{j}{k}$ and $\psMap(\gtRep)$ is conjugate to $\psMap(\weylElt \cdot \gtRep)$. 

\item[Case II] Suppose $\weylElt \in \cptImaginaryWeylGroup$. We may assume $\weylElt = \rootReflection$ for $\simpleRoot \in \imaginaryRoots[\cinv](\complexLieAlgebra, \eSplitLieAlgebraCt[\pSeq{j}{k}])(\diff)$ imaginary and compact. Write $\ctOpRoot = \ctOpSeqInv{j}{k}(\simpleRoot)$ for the image of $\simpleRoot$ in $\rootSystem(\complexLieAlgebra,\eSplitLieAlgebra)$. If $\ctOpRoot$ remains imaginary and compact in $\rootSystem(\complexLieAlgebra,\eSplitLieAlgebra)$, then conjugation commutes with $\ctOpSeqInv{j}{k}$ and we are done. 

If $\ctOpRoot$ is short and real in $\rootSystem(\complexLieAlgebra,\eSplitLieAlgebra)$, set $m=(\diff,\rootCheck) \in \mathbb{Z}$. Proposition \ref{propPsConj} implies 
\begin{eqnarray*}
(\rootReflection[\ctOpRoot]\cdot\psMap(\genTorusChar_{1}))(\stCenter) & = & \psMap(\genTorusChar_{1})(\mac[\simpleRootc])\psMap(\genTorusChar_{1})(\stCenter)
\end{eqnarray*}
and Corollary \ref{corZCrossIm} gives
\begin{eqnarray*}
\rootReflection\cdot\genTorusChar_{1}(\stCenter) & = & (-1)^{(m+1)}\genTorusChar_{1}(\stCenter).
\end{eqnarray*}
Therefore $\psMap(\gtRep)$ is conjugate to $\psMap(\rootReflection\cdot\gtRep)$ if and only if
\begin{eqnarray*}
\psMap(\genTorusChar_{1})(\mac[\simpleRootc]) & = & (-1)^{(m+1)}\cdot\identity
\end{eqnarray*}
or if and only if $\ctOpRoot$ satisfies the parity condition  for $\psMap(\gtRep)$. Since $\simpleRoot$ is assumed to compact, this follows from Theorem \ref{theoremCompareGradings}.

If $\ctOpRoot$ is long and real in $\rootSystem(\complexLieAlgebra,\eSplitLieAlgebra)$, Proposition \ref{propPsConj} implies $(\rootReflection[\ctOpRoot]\cdot\psMap(\genTorusChar_{1}))(\stCenter) = \psMap(\genTorusChar_{1})(\stCenter)$. In particular, $\psMap(\gtRep)$ is conjugate to $\psMap(\rootReflection\cdot\gtRep)$ if and only if $\genTorusChar_{1}(\stCenter) = \rootReflection\cdot\genTorusChar_{1}(\stCenter)$ and this follows from Proposition \ref{propCrossCpt} and Corollary \ref{corZCrossIm}.

Finally if $\ctOpRoot$ is long and complex in $\rootSystem(\complexLieAlgebra,\eSplitLieAlgebra)$, then the $\maxRealCompactCover$-orbits for $\psMap(\gtRep)$ and $\psMap(\weylElt\cdot\gtRep)$ differ by cross action in $\ctOpRoot$ (Remark \ref{remAbsRegCrossAction}). If $\rootReflection[\ctOpRoot]$ preserves the (fixed) set of short positive roots, then the same sequence of short reflections will produce essentially dominant representatives for $\psMap(\gtRep)$ and $\psMap(\rootReflection\cdot\gtRep)$. Since $\genTorusChar_{1}(\stCenter) = \rootReflection\cdot\genTorusChar_{1}(\stCenter)$, $\psMap(\gtRep)$ and $\psMap(\rootReflection\cdot\gtRep)$ represent the same principal class. Otherwise, $\rootReflection[\ctOpRoot]$ will negate (and interchange) a short imaginary root and a short real root. If the imaginary root is compact, the real root will satisfy the parity condition (Theorem \ref{theoremCompareGradings}) and the composition of the two reflections changes $\psMap(\rootReflection \cdot \genTorusChar_{1})(\stCenter)$ by
\[
(-1)^{m+1}\psMap(\genTorusChar_{1})(\mac[\simpleRootc]) = (-1)^{m+1}(-1)^{m+1} = 1
\]
(for some integer $m$). If the imaginary root is noncompact, then the real root will not satisfy the parity condition and the composition of the two reflections changes $\psMap(\rootReflection \cdot \genTorusChar_{1})(\stCenter)$ by a factor of
\[
(-1)^{m}\psMap(\genTorusChar_{1})(\mac[\simpleRootc]) = (-1)^{m}(-1)^{m} = 1.
\]
Therefore $\psMap(\gtRep)$ and $\psMap(\rootReflection\cdot\gtRep)$ represent the same principal class as in the previous case.

\item[Case III] Suppose $\weylElt \in A$. Then 
\[
\weylElt \in \imaginaryWeylGroup[\cinv](\complexLieAlgebra, \eSplitLieAlgebraCt[\pSeq{j}{k}]) \cong W(B_{\numImaginaryBits[\cinv]}) \times W(A_{1})^{k}
\]
(Proposition \ref{propInvWeylGroups}) and $\weylElt$ can be expressed as a product of orthogonal reflections in noncompact imaginary roots (\cite{IC4}, Corollary 5.14). 

If $j < \qSpec$, the group $A$ is generated by reflections in noncompact imaginary roots of type II. Suppose first that $\simpleRoot$ is long and let $\weylElt = \rootReflection$. Then $\simpleRoot$ is an element of the $(A_{1})^{k}$ factor of $\imaginaryRoots[\cinv](\complexLieAlgebra, \eSplitLieAlgebraCt[\pSeq{j}{k}])$ and we must have $\simpleRoot = \ctOpSeq{j}{k}(\simpleRootai)$ for some $i$. In particular, there exists a Cartan subgroup $\realTorusESplitCtCover[\pSeq{}{}] = \ctOpNci(\realTorusESplitCtCover[\pSeq{j}{k}])$ containing $\stCenter$ and a representative of $\rootReflection$ (Proposition \ref{propNciCtVgr}). Since $\stCenter$ is central in $\realTorusESplitCtCover[\pSeq{}{}]$, $\stCenter$ commutes with $\rootReflection$ and we have $\genTorusChar_{1}(\stCenter) = \rootReflection\cdot\genTorusChar_{1}(\stCenter)$. The result now follows as in Case II with $\ctOpRoot$ long and real.

Suppose $\simpleRoot$ is short and of type II. Corollary \ref{corZCrossIm} gives
\begin{eqnarray*}
\rootReflection \cross \genTorusChar_{1}(\stCenter) & = & (-1)^{(m+1)}\genTorusChar_{1}(\stCenter)
\end{eqnarray*}
and Corollary \ref{corCrossNciTypeII} implies $\rootReflection \cross \hcRep \ne \hcRep$. Therefore we must have
\begin{eqnarray*}
\rootReflection \cdot \genTorusChar_{1}(\stCenter) & = & (-1)^{(m)}\genTorusChar_{1}(\stCenter)
\end{eqnarray*}
by Corollary \ref{corDiffSignZ}. In particular, $\psMap(\gtRep)$ and $\psMap(\rootReflection\cdot\gtRep)$ are conjugate if and only if
$\ctOpRoot$ does not satisfy the parity condition for $\psMap(\gtRep)$. Since $\simpleRoot$ is assumed to be noncompact, this follows from Theorem \ref{theoremCompareGradings}.

Finally let $j = \qSpec$ and $\weylElt = \rootReflection[{\simpleRoote}]\rootReflection[{\simpleRootd}]$, where $\simpleRootd$ and $\simpleRoote$ are orthogonal imaginary roots in $\imaginaryRoots[\cinv](\complexLieAlgebra, \eSplitLieAlgebraCt[\pSeq{j}{k}])$ that are noncompact and of type I. Set $m_{1} = (\diff, \rootCheck[\simpleRootd])$, $m_{2} = (\diff, \rootCheck[\simpleRoote])$, and suppose first that $\simpleRootd$ and $\simpleRoote$ are long. Then $m_{1},m_{2} \in \mathbb{Z} + \frac{1}{2}$ and Corollary \ref{corZConjIm} implies
\begin{eqnarray*}
\rootReflection[\simpleRootd]\cdot\genTorusChar_{1}(\stCenter) & = & (-1) \cdot \genTorusChar_{1}(\stCenter) \\
\rootReflection[\simpleRoote]\cdot\genTorusChar_{1}(\stCenter) & = & (-1) \cdot \genTorusChar_{1}(\stCenter) \\
\rootReflection[\simpleRoote]\rootReflection[\simpleRootd]\cdot\genTorusChar_{1}(\stCenter) & = & (-1)^{2}\cdot\genTorusChar_{1}(\stCenter) = \genTorusChar_{1}(\stCenter).
\end{eqnarray*}
as desired. If $\simpleRootd$ is long and $\simpleRoote$ is short, then $m_{1} \in \mathbb{Z} + \frac{1}{2}$ and $m_{2} \in \mathbb{Z}$. Corollary \ref{corZConjIm} implies
\begin{eqnarray*}
\rootReflection[\simpleRoote]\cdot\genTorusChar_{1}(\stCenter) & = & (-1)^{m_{2}+1}\cdot\genTorusChar_{1}(\stCenter) \\
\rootReflection[\simpleRoote]\rootReflection[\simpleRootd]\cdot\genTorusChar_{1}(\stCenter) & = & (-1)^{m_{2}}\cdot\genTorusChar_{1}(\stCenter).
\end{eqnarray*}
This is the desired result since $\simpleRoote$ is noncompact. Finally if both $\simpleRootd$ and $\simpleRoote$ are short we have
\begin{eqnarray*}
\rootReflection[\simpleRootd]\cdot\genTorusChar_{1}(\stCenter) & = & (-1)^{m_{1}+1}\cdot\genTorusChar_{1}(\stCenter) \\
\rootReflection[\simpleRoote]\cdot\genTorusChar_{1}(\stCenter) & = & (-1)^{m_{2}+1}\cdot\genTorusChar_{1}(\stCenter) \\
\rootReflection[\simpleRoote]\rootReflection[\simpleRootd]\cdot\genTorusChar_{1}(\stCenter) & = & (-1)^{m_{1}+m_{2}}\cdot\genTorusChar_{1}(\stCenter) \\
& = & \genTorusChar_{1}(\stCenter)
\end{eqnarray*}
since $m_{1} + m_{2} \in 2\mathbb{Z}$. The result follows from Proposition \ref{propNonPsCross}.

\item[Case IV]  Suppose $\weylElt \in \complexWeylGroup$. This case is handled in the same fashion as the previous cases. The reader is spared the details. \qedhere
\end{steplist}
\end{proof}

\begin{theorem}
\label{theoremPsMapBij}
In the setting of Definition \ref{defPsMap}, the map
\[
\hcBasisGenInvCentSpin{p}{q} \to \pcCent{p}{q}
\]
is a bijection.
\end{theorem}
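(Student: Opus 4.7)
Since the hypothesis that $\realTorusESplitCtCover[{\pSeq{j}{k}}]$ is an even parity Cartan forces $\realBitsParityBit = 0$, Corollary \ref{corRepFiberCent} gives $\hcBasisGenInvOrderCentSpin{p}{q} = 2$ whenever the left-hand side is nonzero, and Remark \ref{remPC} gives $\pcCentOrder{p}{q} = 2$ whenever nonzero. Reversing the recipe $\genTorusChar_{1}^{\text{z}} \mapsto \genTorusChar_{2}^{\text{z}}$ of Definition \ref{defPsMap} shows the two sets are nonempty simultaneously: any essentially dominant representative of a principal class pulls back via iterated inverse Cayley transform to a genuine triple on $\realTorusESplitCtCover[{\pSeq{j}{k}}]$ whose class lies in $\hcBasisGenInvCentSpin{p}{q}$. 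It therefore suffices to prove that $\psMap$ is injective.

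Take distinct $\hcRep_{1}, \hcRep_{2} \in \hcBasisGenInvCentSpin{p}{q}$ with representative triples $\gtRep_{i} = \genTripleESplitInfCtDiffChar{\diff_{i}}{\genTorusChar_{1,i}}{\pSeq{j}{k}}$ on the common Cartan. The whole problem reduces to verifying the sign discrepancy
\[
\genTorusChar_{1,1}^{\text{z}}(\stCenter) \;\neq\; \genTorusChar_{1,2}^{\text{z}}(\stCenter).
\]
Indeed, Definition \ref{defPsMap} prescribes $\genTorusChar_{2,i}^{\text{z}}(\stCenter) = \genTorusChar_{1,i}^{\text{z}}(\stCenter)\,\genTorusChar_{1,i}^{\text{z}}(\mac[\simpleRootc])^{\numFlips}$, and because $\hcRep_{1}$ and $\hcRep_{2}$ share the central character $\chi$ the correction factor is identical for $i = 1,2$, so any sign discrepancy at the $\genTorusChar_{1}^{\text{z}}$ level persists at the $\genTorusChar_{2}^{\text{z}}$ level and places $\psMap(\hcRep_{1}), \psMap(\hcRep_{2})$ in distinct principal classes.

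The sign discrepancy is verified by a case analysis matched to Theorem \ref{theoremRepFiber}. If $\hcRep_{1}$ and $\hcRep_{2}$ arise as distinct triple extensions of a common pair, then necessarily $j < \qSpec$ (otherwise Proposition \ref{propZHjk} permits only one extension) and Corollary \ref{corDiffSignZ} applies directly once one takes $\diff_{1} = \diff_{2}$. Otherwise $\hcRep_{1}$ and $\hcRep_{2}$ lie on distinct $\maxRealCompactCover$-orbits, which by Theorem \ref{theoremFokko} are related by some $\weylElt \in \imaginaryWeylGroup$. After absorbing elements of $\cptImaginaryWeylGroup$ and of the long-root $W(A_{1})^{k}$ factor---neither of which alters $\genTorusChar^{\text{z}}(\stCenter)$, by the same bookkeeping used in the proof of Proposition \ref{propPsMapWellDefined}---one reduces to $\weylElt = \rootReflection$ with $\simpleRoot$ a short noncompact imaginary root of type II. For $j < \qSpec$ one now runs Case III of Proposition \ref{propPsMapWellDefined} in reverse: Corollary \ref{corZCrossIm} gives the cross-action sign $(-1)^{m+1}$ with $m = (\diff, \rootCheck)$, Corollary \ref{corCrossNciTypeII} forbids $\rootReflection \cross \hcRep_{1} = \hcRep_{1}$, and Corollary \ref{corDiffSignZ} forces the conjugation representative to contribute the opposite sign $(-1)^{m}$, producing the required flip.

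The main obstacle is the residual case $j = \qSpec$, where Proposition \ref{propZHjk} places $\stCenter$ inside $\realTorusESplitCtCoverId[{\pSeq{j}{k}}]$ so its value is tied only to the differential and the cross-action analysis must be rerouted through Corollary \ref{corZConjIm}. One must separately handle $\simpleRoot$ long and $\simpleRoot$ short, using the parity constraints forced by supportability (Proposition \ref{propSuppRep}) and by Theorem \ref{theoremCompareGradings} on the integer $m$, to conclude that the combined contribution to $\genTorusChar^{\text{z}}(\stCenter)$ is again an overall sign of $-1$. Once this final bookkeeping is in place, injectivity---and hence bijectivity---is immediate.
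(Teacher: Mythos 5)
Your proposal is correct and follows essentially the same route as the paper: both reduce the statement to injectivity via Corollary \ref{corRepFiberCent} and Remark \ref{remPC}, and then exhibit a sign discrepancy at $\stCenter$ using Corollary \ref{corDiffSignZ} when the two parameters extend a common pair, and the cross-action versus conjugation comparison (Corollaries \ref{corZCrossIm}, \ref{corZConjIm}, Proposition \ref{propPsConj}, Theorem \ref{theoremCompareGradings}) when they lie on distinct $\maxRealCompactCover$-orbits. The paper organizes the argument by the four $(j,k)$ configurations of the standard sequence rather than by your same-pair/distinct-orbit dichotomy (under which your distinct-orbit subcase for $j<\qSpec$ is in fact vacuous), but the ingredients and the final $j=\qSpec$ bookkeeping you sketch are exactly the paper's Cases III and IV.
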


\begin{proof}
Corollary \ref{corRepFiberCent} implies $\psMap$ is, at worst, 2 to 1. In particular, if $\omega,\hcRep \in \hcBasisGenInvCentSpin{p}{q}$, it remains to show $\psMap(\omega) \ne \psMap(\hcRep)$.
Recall $\inv$ is a supportable abstract involution for the genuine pair $\genPairESplitCtInf[\pSeq{j}{k}]$ and let $\imaginaryBitsBit$, $\realBitsBit$, $\imaginaryBitsParityBit = \realBitsParityBit=0$ be the corresponding indicator bits (Definition \ref{defInvParams}). The proof is by cases for the standard sequence $\pSeq{j}{k}$.
\begin{steplist}
\item[Case I]  Suppose $j<\qSpec$ and $k=0$ so that $\realBitsBit = 1$. Corollary \ref{corNumGenRepsFormula} implies
\begin{eqnarray*}
\numCorGenTriplesESplitCtInf[{\genPairESplitCtInf[\pSeq{j}{k}]}] & = & 2^{1-\imaginaryBitsBit}2^{\realBitsBit(1-\realBitsParityBit)} \\
& = & 2\cdot2^{1-\imaginaryBitsBit}.
\end{eqnarray*}
Therefore each element of $\hcBasisGenInvCentSpin{p}{q}$ has a representative beginning with $\genPairESplitCtInf[\pSeq{j}{k}]$ and the result follows from Corollary \ref{corDiffSignZ}.

\item[Case II]  Suppose $j<\qSpec$ and $k>0$ so that $\imaginaryBitsBit = \realBitsBit = 1$. Corollary \ref{corNumGenRepsFormula} implies
\begin{eqnarray*}
\numCorGenTriplesESplitCtInf[{\genPairESplitCtInf[\pSeq{j}{k}]}] & = & 2^{1-\imaginaryBitsBit}2^{\realBitsBit(1-\realBitsParityBit)} = 2
\end{eqnarray*}
and there are two $\maxRealCompactCover$-orbits in the genuine fiber $\genFiber$ of $\inv$ (Section \ref{ssNLFibers}). Since $\mac[\simpleRootc] \in \realTorusESplitCtCoverId[\pSeq{j}{k}]$ by Proposition \ref{propZHjk}, both genuine triples extending $\genPairESplitCtInf[\pSeq{j}{k}]$ have the same central character and the result follows by Corollary \ref{corDiffSignZ}.

\item[Case III]  Suppose $j=\qSpec$ and $k>0$ so that $\imaginaryBitsBit=1$ and $\realBitsBit = 0$. Corollary \ref{corNumGenRepsFormula} implies
\begin{eqnarray*}
\numCorGenTriplesESplitCtInf[{\genPairESplitCtInf[\pSeq{j}{k}]}] & = & 2^{1-\imaginaryBitsBit}2^{\realBitsBit(1-\realBitsParityBit)} = 1.
\end{eqnarray*}
Then $\mac[\simpleRootc] \in \realTorusESplitCtCoverId[\pSeq{j}{k}] = \realTorusESplitCtCover[\pSeq{j}{k}]$ by Proposition \ref{propZHjk} and there are two $\maxRealCompactCover$-orbits in $\genFiber$ with a fixed abstract grading and central character (Proposition \ref{propCCNci}). Theorem \ref{theoremKOrbits} implies these orbits differ by cross action in any short noncompact root. Let $\gtRep = \genTripleESplitCtInf[\pSeq{j}{k}]$ be a genuine triple extending $\genPairESplitCtInf[\pSeq{j}{k}]$ and let $\simpleRoot \in \rootSystem(\complexLieAlgebra,\absLieAlgebra)$ be a short noncompact root. Set $m = (\absDiff, \rootCheck)$ and write $\ctOpRoot$ for the image of $\simpleRoot$ in $\rootSystem(\complexLieAlgebra,\eSplitLieAlgebra)$. Corollary \ref{corZCrossIm} and Proposition \ref{propPsConj} imply
\begin{eqnarray*}
\rootReflection \cross \genTorusChar(\stCenter) & = & (-1)^{m+1}\genTorusChar(\stCenter) \\
\rootReflection[\ctOpRoot]\cdot\psMap(\genTorusChar)(\stCenter) & = & \psMap(\genTorusChar)(\mac[{\ctOpRoot}])\psMap(\genTorusChar)(\stCenter).
\end{eqnarray*}
In particular, $\psMap(\gtRep) \ne \psMap(\rootReflection \cross \gtRep)$ if and only if $\simpleRoot$ does not satisfy the parity condition (Definition \ref{defPC}) for $\psMap(\gtRep)$. Since $\simpleRoot$ is noncompact, this follows by Theorem \ref{theoremCompareGradings}.

\item[Case IV]  Suppose $j=\qSpec$ and $k=0$ so that $\realBitsBit=0$. Corollary \ref{corNumGenRepsFormula} implies
\begin{eqnarray*}
\numCorGenTriplesESplitCtInf[{\genPairESplitCtInf[\pSeq{j}{k}]}] & = & 2^{1-\imaginaryBitsBit}2^{\realBitsBit(1-\realBitsParityBit)} \\
& = & 2^{1-\imaginaryBitsBit}
\end{eqnarray*}
and $\stCenter \in \realTorusESplitCtCoverId[\pSeq{j}{k}]$. Let $\gtRep = \genTripleESplitCtInf$ be a genuine triple extending $\genPairESplitCtInf[\pSeq{j}{k}]$ and suppose $\simpleRoot$ is a long noncompact imaginary root in $\rootSystem(\complexLieAlgebra, \eSplitLieAlgebraCt[\pSeq{j}{k}])$. Then $\genTripleESplitCtInf$ and $\genTripleESplitInfCtDiffChar{\rootReflection\cdot\diff}{\rootReflection\cdot\genTorusChar}{\pSeq{j}{k}}$ are \emph{not} conjugate in $\maxRealCompactCover$. If $m = (\diff, \rootCheck)$, then $m \in \mathbb{Z}+\frac{1}{2}$ and Corollary \ref{corZConjIm} implies
\begin{eqnarray*}
\rootReflection\cdot\genTorusChar(\stCenter) & = & (-1)^{2m} \cdot \genTorusChar(\stCenter) \\
& = & (-1) \cdot \genTorusChar(\stCenter).
\end{eqnarray*}
Since $\simpleRoot$ is long, $\psMap(\gtRep)$ and $\psMap(\rootReflection \cdot \gtRep)$ are not conjugate in $\maxRealCompactCover$ by Proposition \ref{propPsConj}. \qedhere
\end{steplist}
\end{proof}


\subsection{Definition of $\weylGroupInvMap$}
Let $\realGroupCover = \realSpinGroup{p}{q}$ with $p + q = 2n+1$, $p > q$, and recall the rank of $\realGroupCover$ is even. Fix a half-integral infinitesimal character $\absDiff \in \absLieAlgebraDual$ and an evenly split Cartan subgroup $\realTorusESplitCover \subset \realGroupCover$. Suppose $\genTripleESplitInf$ is a genuine triple for $\realGroupCover$ with central character $\chi$ (Definition \ref{defCC}) and let $\realGroupCoverDual = \realSpinGroupCover{p^{\vee}}{q^{\vee}}$ be the dual group (Definition \ref{defDualGroup}) for $\realGroupCover$ and $\genTripleESplitInf$. Define maps $\pcMap$, $\pcMapDual$ for $\realGroupCover$ and $\realGroupCoverDual$ respectively (Definition \ref{defPsMap}) and choose a bijection 
\[
\pcInvMap : \pcCent{p}{q} \to \pcCenter{p^{\vee}}{q^{\vee}}{\chi^{\vee}}
\]
on the corresponding principal classes (Definition \ref{defPrincipalClass}). Let $\inv \in \weylGroup(\complexLieAlgebra, \absLieAlgebra)$ be an involution and suppose $\hcBasisGenInvOrderCentSpin{p}{q} \ne 0$. Corollary \ref{corRepFiberCent} and Theorem \ref{theoremNumericalDuality} imply
\[
\hcBasisGenInvOrderCentSpin{p}{q} = \hcBasisGenInvOrderCentSpinDual{p^{\vee}}{q^{\vee}} \in \left\{1,2\right\}.
\]
The following definition extends $\weylGroupInvMap$ to the level of genuine parameters for $\realGroupCover$ and $\realGroupCoverDual$ (see also Remark \ref{remDefPsMap}).

\begin{definition}
\label{defPsiOnHcBasisGen}
In the above setting, let $\hcRep \in \hcBasisGenInvCentSpin{p}{q}$ and suppose $\absBg$ is the corresponding abstract bigrading. If $\hcBasisGenInvOrderCentSpin{p}{q} = 1$, define $\weylGroupInvMap(\hcRep) = \omega$, where $\omega$ is the unique element in $\hcBasisGenInvCentSpinDual{p}{q}$. If $\hcBasisGenInvOrderCentSpin{p}{q} = 2$, define $\weylGroupInvMap(\hcRep) = \omega$, where $\omega$ is the unique element in $\hcBasisGenInvCentSpinDual{p}{q}$ with $\psInvMap(\psMap(\hcRep)) = \pcMapDual(\omega)$ (Theorem \ref{theoremPsMapBij}). In either case the abstract bigrading of $\weylGroupInvMap(\hcRep)$ is $\absBgDual$ (Proposition \ref{propSuppRepsDual}).
\end{definition}

\begin{remark}
\label{remDefPsMap}
Definition \ref{defPsiOnHcBasisGen} is technically incomplete when $q$ or $q^{\vee}$ is less than or equal to one. If $q = 1$, then $\realSpinGroupCover{p}{q}$ has representations corresponding to an even parity Cartan subgroup if and only if $\realSpinGroupCover{p}{q}$ has discrete series representations. In this case it has exactly two of them (Theorem \ref{theoremFokko}) and we define these to be the two principal classes for $\realGroupCover$. If $q = 0$, then $\realSpinGroupCover{p}{q}$ is compact and $\hcBasisGenOrderCentSpin{p}{q} \in \left\{0,1\right\}$. To simplify the following formalism, we'd like to have $\hcBasisGenOrderCentSpin{p}{q} \in \left\{0,2\right\}$ for any half-integral $\absDiff$. Therefore if $\hcBasisGenOrderCentSpin{p}{q} \ne 0$, we create a formal `copy' of the element in $\hcBasisGenCentSpin{p}{q}$ and consider the two corresponding elements to be the principal classes for $\realSpinGroupCover{p}{q}$.
\end{remark}

We now prove $\weylGroupInvMap$ has the desired properties with respect to the operations of the KLV-algorithm.

\begin{theorem}
\label{theoremDualityCommutesWithCA}
In the setting of Definition \ref{defPsiOnHcBasisGen}, let $\simpleRoot \in \rootSystem(\complexLieAlgebra, \absLieAlgebra)(\absDiff)$ be an abstract integral root. If $\hcRep \in \hcBasisGenCentSpin{p}{q}$, then
\begin{eqnarray*}
\weylGroupInvMap(\rootReflection \cross \hcRep) & = & \rootReflection \cross \weylGroupInvMap(\hcRep).
\end{eqnarray*}
\end{theorem}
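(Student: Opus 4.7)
The plan is to reduce the identity to a symmetric statement about how the cross action interacts with the principal class maps $\pcMap$ and $\pcMapDual$. By Proposition $\ref{propCATriples}$, the abstract involution attached to $\rootReflection \cross \hcRep$ is $\rootReflection \inv \rootReflection$, and by Proposition $\ref{propCrossCC}$ the central character is preserved; the corresponding statements for $\rootReflection \cross \weylGroupInvMap(\hcRep)$ follow by applying $\weylGroupInvMap$ to the bigrading. Therefore the two parameters $\weylGroupInvMap(\rootReflection \cross \hcRep)$ and $\rootReflection \cross \weylGroupInvMap(\hcRep)$ share an abstract bigrading and central character, and in particular they lie in the same set $\hcBasisGenInvCentSpinDual{p^{\vee}}{q^{\vee}}$ with $\inv'=\rootReflection(-\inv)\rootReflection$. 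If that set is a singleton (the case $\hcBasisGenInvOrderCentSpinDual{p^{\vee}}{q^{\vee}}=1$), equality is automatic, so I reduce at once to the fiber-of-order-two case.

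In this nontrivial case, both $\pcMap$ and $\pcMapDual$ are bijections (Theorem $\ref{theoremPsMapBij}$), and $\weylGroupInvMap$ is defined via $\pcMapDual(\weylGroupInvMap(\cdot)) = \pcInvMap(\pcMap(\cdot))$. Using bijectivity of $\pcInvMap$, the desired identity collapses to the symmetric claim
\[
\pcMap(\rootReflection \cross \hcRep) = \pcMap(\hcRep) \;\Longleftrightarrow\; \pcMapDual(\rootReflection \cross \weylGroupInvMap(\hcRep)) = \pcMapDual(\weylGroupInvMap(\hcRep)).
\]
I would prove this by case analysis on the type of $\simpleRoot$ for $\hcRep$, noting that under $\weylGroupInvMap$ imaginary roots for $\inv$ become real for $-\inv$ (and vice versa), while complex roots remain complex. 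Theorem $\ref{theoremCompareGradings}$ pairs the cases: compact imaginary $\leftrightarrow$ real not satisfying parity, noncompact imaginary type I $\leftrightarrow$ real satisfying parity (via Definition $\ref{defInvCT}$ and Proposition $\ref{propInvCTNonempty}$), noncompact imaginary type II pairs with itself modulo the dual, and complex pairs with complex.

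Within each pairing, the computations made in Section $\ref{ssEPCSG}$ supply matching formulas for how $\genTorusChar(\stCenter)$ transforms. For $\simpleRoot$ compact imaginary the cross action is trivial (Proposition $\ref{propCrossCpt}$); dually $\simpleRoot$ is real not satisfying parity, where Corollary $\ref{corZCrossReal}$ shows $\rootReflection \cross$ is realized by $\maxRealCompactCover$-conjugation, so both principal classes are preserved. For $\simpleRoot$ noncompact of type I the cross action changes the $\maxRealCompactCover$-orbit (Proposition $\ref{propCrossNciTypeI}$), while on the dual side $\simpleRoot$ is real satisfying parity and the inverse Cayley transform produces distinct parameters; in both cases the principal-class equality can fail in the same way, which I would verify by comparing Corollaries $\ref{corZCrossIm}$ and $\ref{corZCrossReal}$ through the formula in Definition $\ref{defPsMap}$. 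The complex case is handled by Corollary $\ref{corZCrossCmplx}$, after observing that conjugating by $\inv$ does not alter the parity of the relevant coroot pairing.

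The hard part will be the noncompact imaginary type II case and its self-dual counterpart. Here $\rootReflection \cross \hcRep \neq \hcRep$ by Corollary $\ref{corCrossNciTypeII}$, but the two distinct parameters must still map to compatible principal classes on both sides of $\weylGroupInvMap$. The technical core is to balance the sign $(-1)^{m+1}$ produced by Corollary $\ref{corZCrossIm}$ against the corresponding dual computation for the short real root that replaces $\simpleRoot$ under $\weylGroupInvMap$, using Corollary $\ref{corDiffSignZ}$ to convert the cross-action sign into a $\maxRealCompactCover$-conjugation sign on $\stCenter$, and then feeding the result through the defining formula $\genTorusChar_2^{\text{z}}(\stCenter) = \genTorusChar_1^{\text{z}}(\stCenter)\genTorusChar_1^{\text{z}}(\mac[\simpleRootc])^{\numFlips}$ of Definition $\ref{defPsMap}$ while tracking any change in $\numFlips$ caused by the cross action. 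Showing that the two sign contributions cancel consistently on both sides of the duality is the sign-bookkeeping obstacle that pins the whole argument together.
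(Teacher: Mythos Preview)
Your reduction to the biconditional
\[
\pcMap(\rootReflection \cross \hcRep) = \pcMap(\hcRep) \;\Longleftrightarrow\; \pcMapDual(\rootReflection \cross \weylGroupInvMap(\hcRep)) = \pcMapDual(\weylGroupInvMap(\hcRep))
\]
is exactly the paper's reduction, and your treatment of the compact-imaginary/real-nonparity pairing is correct. The noncompact imaginary case, however, is far simpler than you suggest, and your description of it contains real confusions.

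First, you are missing a key simplification: since $\simpleRoot$ is integral and noncompact imaginary, Proposition~\ref{propSuppRep} forces $\simpleRoot$ to be short (a long noncompact imaginary root has $(\absDiff,\rootCheck)\in\mathbb{Z}+\tfrac12$, hence is not integral). Second, your dual pairing is wrong: a noncompact imaginary root for $\hcRep$, of either type, becomes a real root satisfying the parity condition for $\weylGroupInvMap(\hcRep)$; there is no sense in which type II ``pairs with itself modulo the dual.'' Your appeals to Definition~\ref{defInvCT} and Proposition~\ref{propInvCTNonempty} are also misplaced---those concern Cayley transforms, not cross action, and belong to the proof of Theorem~\ref{theoremDualityCommutesWithCT}.

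Most importantly, no sign bookkeeping is needed here. Once you know that both cross actions are nontrivial, bijectivity of $\pcMap$ and $\pcMapDual$ on two-element sets (Theorem~\ref{theoremPsMapBij}) immediately gives $\pcMap(\rootReflection\cross\hcRep)\ne\pcMap(\hcRep)$ and $\pcMapDual(\rootReflection\cross\weylGroupInvMap(\hcRep))\ne\pcMapDual(\weylGroupInvMap(\hcRep))$, so the biconditional holds with both sides false. Nontriviality on the $\hcRep$ side is Proposition~\ref{propCrossNciTypeI} or Corollary~\ref{corCrossNciTypeII}; on the $\weylGroupInvMap(\hcRep)$ side it is Corollary~\ref{corZCrossReal}, since $\simpleRoot$ satisfies the parity condition there. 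The ``hard part'' you describe dissolves.

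The complex cases do require the bookkeeping you anticipate, but only to check that both sides of the biconditional reduce to the same condition on $m=(\absDiff,\rootCheck)$ (namely $(-1)^m=1$ in the short case), via Corollary~\ref{corZCrossCmplx} together with the $\genTorusChar(\mac[\simpleRootc])^{\numFlips}$ factor in Definition~\ref{defPsMap} and the observation that $\numFlips[\rootReflection\inv\rootReflection]=\numFlips\pm 1$ for short complex $\simpleRoot$.
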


\begin{proof}
Let $\absBg$ be the abstract bigrading for $\hcRep$. We have
\begin{eqnarray*}
-(\rootReflection\inv\rootReflection^{-1}) & = & \rootReflection(-\inv)\rootReflection^{-1}
\end{eqnarray*}
and the result holds on the level of involutions (Proposition \ref{propCATriples}). If $\hcBasisGenInvOrderCentSpin{p}{q} = 1$, the elements $\hcRep$ and $\weylGroupInvMap(\hcRep)$ are determined by their involutions and we are done. 

If $\hcBasisGenInvOrderCentSpin{p}{q} = 2$, we need to show $\pcInvMap(\pcMap(\rootReflection\cross\hcRep)) = \pcMapDual(\rootReflection \cross \weylGroupInvMap(\hcRep))$. Since $\pcInvMap(\pcMap(\hcRep)) = \pcMapDual(\weylGroupInvMap(\hcRep))$ by definition, it suffices to show
\[
\psMap(\rootReflection\cross\hcRep) = \psMap(\hcRep) \iff \psMap(\rootReflection \cross \weylGroupInvMap(\hcRep)) = \psMap(\weylGroupInvMap(\hcRep)).
\]
The proof is by cases (for a change) based on the type of $\simpleRoot$ for $\hcRep$.
\begin{steplist}
\item[Case I]
Suppose $\simpleRoot$ is imaginary and compact for $\hcRep$. Then $\simpleRoot$ is real for $\weylGroupInvMap(\hcRep)$ and does not satisfy the parity condition. Therefore,
\begin{eqnarray*}
\rootReflection\cross\hcRep & = & \hcRep \\
\rootReflection \cross \weylGroupInvMap(\hcRep) & = & \weylGroupInvMap(\hcRep)
\end{eqnarray*}
and we have 
\begin{eqnarray*}
\psMap(\rootReflection\cross\hcRep) & = & \psMap(\hcRep) \\
\psMap(\rootReflection \cross \weylGroupInvMap(\hcRep)) & = & \psMap(\weylGroupInvMap(\hcRep))
\end{eqnarray*}
by Proposition \ref{propPsMapWellDefined}.

\item[Case II]
Suppose $\simpleRoot$ is imaginary and noncompact for $\hcRep$ . Proposition \ref{propSuppRep} implies $\simpleRoot$ is short and we know $\simpleRoot$ is real for $\weylGroupInvMap(\hcRep)$ and satisfies the parity condition. Then
\begin{eqnarray*}
\rootReflection\cross\hcRep & \ne & \hcRep \\
\rootReflection \cross \weylGroupInvMap(\hcRep) & \ne & \weylGroupInvMap(\hcRep)
\end{eqnarray*}
by Corollaries \ref{corCrossNciTypeII} and \ref{corZCrossReal}. Therefore
\begin{eqnarray*}
\psMap(\rootReflection\cross\hcRep) & \ne & \psMap(\hcRep) \\
\psMap(\rootReflection \cross \weylGroupInvMap(\hcRep)) & \ne & \psMap(\weylGroupInvMap(\hcRep))
\end{eqnarray*}
by Theorem \ref{theoremPsMapBij}.

\item[Case III]
Suppose $\simpleRoot$ is short and complex for $\hcRep$. Choose a genuine triple $\genTripleESplitCtInf[\pSeq{j}{k}]$ representing $\hcRep$ and recall the number $\numFlips$ from Definition \ref{defNumFlips}. It is easy to check 
\begin{eqnarray*}
\numFlips[{\rootReflection \cross \inv}] & = & \numFlips \pm 1.
\end{eqnarray*}
If $m = (\absDiff, \rootCheck)$, Corollary \ref{corZCrossCmplx} implies
\begin{eqnarray*}
\psMap(\rootReflection \cross \hcRep) = \psMap(\hcRep) & \iff & \\
(-1)^{m}\genTorusChar(\stCenter)\genTorusChar(\mac[\simpleRootc])^{\numFlips + 1} = \genTorusChar(\stCenter)\genTorusChar(\mac[\simpleRootc])^{\numFlips}\genTorusChar(\mac[\simpleRootc]) & \iff & \\
(-1)^{m} = 1.
\end{eqnarray*}
By the same argument we also have $\psMap(\rootReflection \cross \weylGroupInvMap(\hcRep)) = \psMap(\weylGroupInvMap(\hcRep)) \iff (-1)^{m} = 1$ and the result follows.

\item[Case IV]
Suppose $\simpleRoot$ is long and complex for $\hcRep$. This is handled in the same fashion as Case III and is left to the reader.
\end{steplist}
\end{proof}

\begin{theorem}
\label{theoremDualityCommutesWithCT}
Let $\hcRep \in \hcBasisGenCentSpin{p}{q}$ and suppose $\simpleRoot \in \rootSystem(\complexLieAlgebra, \absLieAlgebra)$ is an abstract simple root that is imaginary and noncompact. Then $\simpleRoot$ is real for $\weylGroupInvMap(\hcRep)$ and we have
\begin{eqnarray*}
\weylGroupInvMap(\ctOpNci(\hcRep)) & = & \ctOp(\weylGroupInvMap(\hcRep)).
\end{eqnarray*}
Note this is an equality of sets (Definitions \ref{defCTTriples} and \ref{defInvCT}).
\end{theorem}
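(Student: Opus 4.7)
The plan is to verify four things and then assemble them: (i) $\simpleRoot$ is real for $\weylGroupInvMap(\hcRep)$ and satisfies the parity condition, so that $\ctOp(\weylGroupInvMap(\hcRep))$ is nonempty; (ii) every element on each side of the claimed equality carries the correct dual abstract bigrading; (iii) the two sets have the same cardinality; (iv) the principal class bijection $\pcInvMap$ matches the elements on the two sides in the doubleton case. Once these are in place, set equality follows directly from Definition \ref{defPsiOnHcBasisGen}, which pins down $\weylGroupInvMap$ by involution and central character in the singleton case and by its principal class in the doubleton case.

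Step (i) is essentially formal: since $\inv(\simpleRoot)=\simpleRoot$ and $\weylGroupInvMap$ sends $\inv$ to $-\inv$, $\simpleRoot$ is real for $\weylGroupInvMap(\hcRep)$. Noncompactness gives $\grading(\simpleRoot)=1$, and Theorem \ref{theoremCompareGradings} combined with the swap of gradings defining $\weylGroupInvMap$ forces $\realGrading(\simpleRoot)=1$ on the dual side, i.e. $\simpleRoot$ satisfies the parity condition; Proposition \ref{propInvCTNonempty} then supplies nonemptiness. For steps (ii) and (iii), I would apply Proposition \ref{propAbsPairCt} to compute how the imaginary grading transforms on the left (new involution $\rootReflection\inv$) and its analog for the inverse Cayley transform to compute how the real grading transforms on the right (new involution $\rootReflection(-\inv) = -\rootReflection\inv$). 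Since these two rules are formally identical except for which grading they modify, the resulting bigradings agree after applying $\weylGroupInvMap$. Cardinality is controlled by Proposition \ref{propCTLongSingleValued} when $\simpleRoot$ is long (both sides singletons) and by Proposition \ref{propNciCtVgr} together with Definition \ref{defCTChar} when $\simpleRoot$ is short, distinguishing the type I case, the type II irreducible case, and the type II reducible case in which both sides become doubletons.

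The main obstacle is (iv). The difficulty is that $\psMap$ is defined only after transporting back to the evenly split Cartan $\realTorusESplitCover$ via a standard sequence $\pSeq{j}{k}$, whereas $\ctOpNci$ changes the Cartan conjugacy class. I would pick a representative $\gtRep = \genTripleESplitCtInf[\pSeq{j}{k}]$ of $\hcRep$, realize $\ctOpNci(\hcRep)$ as a genuine triple supported on the Cartan arising from an adjacent standard sequence $\pSeq{j'}{k'}$, and then compute $\pcMap$ at both endpoints using Definition \ref{defPsMap}. The essential identity to verify is that the change in $\genTorusChar^{\text{z}}(\stCenter)$ under the Cayley transform matches the compensating twist $\genTorusChar^{\text{z}}(\mac[\simpleRootc])^{\numFlips}$, noting that $\numFlips$ shifts by a controlled amount (depending on whether $\simpleRoot$ is short or long) as $\inv$ passes to $\rootReflection\inv$. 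Corollaries \ref{corZCrossIm} and \ref{corZConjIm} together with Proposition \ref{propPsConj} furnish the necessary equations; Proposition \ref{propNonPsCross} handles the long case cleanly.

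In the type II reducible case, both sets are doubletons. The two summands $\genTorusCharCt_{\pm}$ of Definition \ref{defCTChar} must be paired with the two elements of $\ctOp(\weylGroupInvMap(\hcRep))$, and the $\numFlips$ bookkeeping — which is precisely what distinguishes the two principal classes in $\pcCent{p}{q}$ via Theorem \ref{theoremPsMapBij} — selects this pairing. Showing that the pairing is consistent with the independently chosen bijection $\pcInvMap$ is the delicate combinatorial heart of the argument; I expect it to reduce, after the dust settles, to the same parity identity governing Case III of the proof of Proposition \ref{propPsMapWellDefined}, applied now to the root $\simpleRoot$ being Cayley transformed rather than cross-acted.
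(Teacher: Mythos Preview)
Your outline has the right ingredients, but you have misplaced the difficulty and organized the cases in a way that obscures the actual content. The paper's case split is not by type I versus type II versus type II reducible, but by whether $\hcBasisGenInvOrderCentSpin{p}{q}$ equals $1$ or $2$ (Corollary \ref{corRepFiberCent}) together with the length of $\simpleRoot$.

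The doubleton case you single out as the ``delicate combinatorial heart'' is in fact the easiest. It occurs precisely when $\hcBasisGenInvOrderCentSpin{p}{q}=1$ and $\simpleRoot$ is short; Theorem \ref{theoremFokko} forces $\simpleRoot$ to be of type II, and since the source fiber is a singleton, $\rootReflection\cross\hcRep=\hcRep$ and Proposition \ref{propCrossNciTypeII} makes $\ctOpNci(\hcRep)=\{\omega_+,\omega_-\}$ double-valued. But $\rootReflection\inv$ now has even parity, so $\hcBasisGenInvCentSpinInv{p}{q}{\rootReflection\inv}$ has exactly two elements, and $\{\omega_+,\omega_-\}$ is the \emph{entire} fiber. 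The same happens on the dual side once you check (via Corollary \ref{corCrossNciTypeII}) that $\ctOp(\weylGroupInvMap(\hcRep))$ is also double-valued. Since $\weylGroupInvMap$ is a bijection of fibers, the full fiber maps to the full fiber and the set equality is automatic. There is no pairing to construct and no consistency with $\pcInvMap$ to verify; the principal class map never enters.

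The only case requiring the principal class map is $\hcBasisGenInvOrderCentSpin{p}{q}=2$ with $\simpleRoot$ long. Here Proposition \ref{propCTLongSingleValued} makes both $\ctOpNci(\hcRep)$ and $\ctOp(\weylGroupInvMap(\hcRep))$ single-valued, but the target fiber at $\rootReflection\inv$ again has size $2$, so one must identify \emph{which} element is hit. The paper does not invoke the cross-action corollaries you cite; instead it uses $\complexWeylGroup$ or $\cptImaginaryWeylGroup$ to conjugate the representative so that $\absConj(\simpleRoot)$ is literally $\ctOpSeq{j}{k}(\pm\simpleRooti[j])$ (if $\simpleRoot$ lies in the $A_1^l$ factor of $\imaginaryRoots$) or $\ctOpSeq{j}{k}(\pm\simpleRooti[k])$ (if in the $B_{\numImaginaryBits}$ factor). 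Then $\ctOpNci$ lands on the adjacent standard Cartan $\realTorusESplitCtCover[\pSeq{j-1}{k}]$ or $\realTorusESplitCtCover[\pSeq{j}{k-1}]$, and Definition \ref{defCTChar} (restriction to $\realTorusCoverTCtInt$, which contains $\stCenter$) gives $\psMap(\gtRep)=\psMap(\ctOpNci[\ctOpRoot](\gtRep))$ directly. The same argument runs verbatim on the dual side.
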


\begin{proof}
Let $\absBg$ be the abstract bigrading for $\hcRep$. Since $\absBgDual$ is the abstract bigrading for $\weylGroupInvMap(\hcRep)$, $\simpleRoot$ is real for $\weylGroupInvMap(\hcRep)$ and satisfies the parity condition.
\begin{steplist}
\item[Case I]
Suppose $\hcBasisGenInvOrderCentSpin{p}{q} = 1$ and $\simpleRoot$ is long. Then $\ctOpNci(\hcRep)$ must be the unique element of $\hcBasisGenInvCentSpinInv{p}{q}{\rootReflection\inv}$. Similarly, $\ctOp(\weylGroupInvMap(\hcRep))$ is the unique element of $\hcBasisGenInvCentSpinInv{p}{q}{\rootReflection(-\inv)} = \hcBasisGenInvCentSpinInv{p}{q}{-\rootReflection(\inv)}$ and the result follows.

\item[Case II]
Suppose $\hcBasisGenInvOrderCentSpin{p}{q} = 1$ and $\simpleRoot$ is short. Using Theorem \ref{theoremFokko}, it is easy to check $\simpleRoot$ must be of type II. Since $\hcBasisGenInvOrderCentSpin{p}{q} = 1$, we must have $\rootReflection \cross \hcRep = \hcRep$ and Proposition \ref{propCrossNciTypeII} implies 
\begin{eqnarray*}
\ctOpNci(\hcRep) & = & \left\{\omega_{+}, \omega_{-}\right\} \subset \hcBasisGenInvCentSpinInv{p}{q}{\rootReflection(\inv)}
\end{eqnarray*}
is double valued. In particular, $\weylGroupInvMap(\omega_{+})$ and $\weylGroupInvMap(\omega_{-})$ are exactly the elements in $\hcBasisGenInvCentSpinInv{p}{q}{\rootReflection(-\inv)}$ with the same central character as $\weylGroupInvMap(\hcRep)$. Therefore it suffices to show $\ctOp(\weylGroupInvMap(\hcRep))$ is double valued and this follows immediately from Corollary \ref{corCrossNciTypeII}.

\item[Case III]
Suppose $\hcBasisGenInvOrderCentSpin{p}{q} = 2$ and $\simpleRoot$ is short. Then both $\ctOpNci(\hcRep)$ and $\ctOp(\weylGroupInvMap(\hcRep))$ are single valued and the result follows for the same reasons as in Case I.

\item[Case IV]
Suppose $\hcBasisGenInvOrderCentSpin{p}{q} = 2$ and $\simpleRoot$ is long. Choose a representative $\gtRep = \genTripleESplitCtInf[\pSeq{j}{k}]$ for $\hcRep$ with $\realTorusESplitCtCover[\pSeq{j}{k}] \subset \realGroupCover$ an even parity Cartan subgroup. Proposition \ref{propCTLongSingleValued} implies $\ctOpNci(\hcRep)$ is single-valued and we have
\begin{eqnarray*}
\imaginaryWeylGroup & \cong & W(B_{\numImaginaryBits}) \times W(A_{1})^{l} \\
\cartanWeylGroup{\realTorusESplitCtCover[\pSeq{j}{k}]} & \cong & ((A \ltimes \cptImaginaryWeylGroup) \times \realWeylGroup) \rtimes \complexWeylGroup.
\end{eqnarray*}
Suppose first that $\simpleRoot$ is an element of the $A_{1}^{l}$ factor of $\imaginaryRoots(\complexLieAlgebra, \absLieAlgebra)$. Then $j > k$ and conjugation in $\complexWeylGroup$ allows us to choose the representative $\gtRep$ such that
\[
\ctOpRoot = \absConj(\simpleRoot) = \ctOpSeq{j}{k}(\pm\simpleRooti[j]).
\]
In particular, $\ctOpNci[\ctOpRoot](\realTorusESplitCtCover[\pSeq{j}{k}]) = \realTorusESplitCtCover[\pSeq{j-1}{k}]$ is an even parity Cartan subgroup of $\realGroupCover$. Since $\simpleRoot$ is long, Definition \ref{defCTChar} directly implies
\begin{eqnarray*}
\psMap(\gtRep) & = & \psMap(\ctOpNci[\ctOpRoot](\gtRep))
\end{eqnarray*}
so that 
\begin{eqnarray*}
\psMap(\hcRep) & = & \psMap(\ctOpNci(\hcRep)).
\end{eqnarray*}
A similar argument holds for $\weylGroupInvMap(\hcRep)$ and ultimately gives
\begin{eqnarray*}
\psMap(\weylGroupInvMap(\hcRep)) & = & \psMap(\ctOp(\weylGroupInvMap(\hcRep)))
\end{eqnarray*}
and the result follows.

Now suppose $\simpleRoot$ is an element of the $W(B_{\numImaginaryBits})$ factor of $\imaginaryRoots(\complexLieAlgebra, \absLieAlgebra)$. Then $k > 0$ and conjugation in $\cptImaginaryWeylGroup$ allows us to choose the representative $\gtRep$ such that
\[
\ctOpRoot = \absConj(\simpleRoot) = \ctOpSeq{j}{k}(\pm\simpleRooti[k]).
\]
In particular, $\ctOpNci[\ctOpRoot](\realTorusESplitCtCover[\pSeq{j}{k}]) = \realTorusESplitCtCover[\pSeq{j}{k-1}]$ is an even parity Cartan subgroup of $\realGroupCover$ and we proceed as above.
\end{steplist}
\end{proof}

We will need an extension of Theorem \ref{theoremDualityCommutesWithCA} to the full abstract Weyl group $\weylGroup(\complexLieAlgebra, \absLieAlgebra)$. The precise statement requires a bit of setup. Let $\absDiff \in \absLieAlgebraDual$ be a half-integral infinitesimal character and suppose $\famInfChar[\absDiff]$ is a family for $\absDiff$ (Definition \ref{defFamInfChar}). For each $\kappa \in \famInfChar[\absDiff]$ we define a map $\pcMap_{\kappa} : \hcBasisGenCentSpinInf{p}{q}{\kappa} \to \pcCentInf{p}{q}{\kappa}$ as in Definition \ref{defPsMap} and let
\[
\pcMap' = \coprod_{\kappa \in \famInfChar[\absDiff]}\psMap_{\kappa} : \coprod_{\kappa \in \famInfChar[\absDiff]}\hcBasisGenCentSpinInf{p}{q}{\kappa} \to \coprod_{\kappa \in \famInfChar[\absDiff]}\pcCentInf{p}{q}{\kappa}.
\]
Identifying $\pcCentInf{p}{q}{\kappa}$ with $\pcCent{p}{q}$ in the obvious way gives a map 
\[
\pcMap : \coprod_{\kappa \in \famInfChar[\absDiff]}\hcBasisGenCentSpinInf{p}{q}{\kappa} \to \pcCent{p}{q}.
\]
Construct a map $\pcMapDual$ for $\realGroupCoverDual$ in a similar fashion and fix a bijection 
\[
\pcInvMap : \pcCent{p}{q} \to \pcCenter{p^{\vee}}{q^{\vee}}{\chi^{\vee}}
\]
as above. Define maps $\weylGroupInvMap_{\kappa} : \hcBasisGenCentSpinInf{p}{q}{\kappa} \to \hcBasisGenCentSpinInfDual{p}{q}{\kappa}$ for $\kappa \in \famInfChar$ (Definition \ref{defPsiOnHcBasisGen}) and set
\[
\weylGroupInvMap = \coprod_{\kappa \in \famInfChar}\weylGroupInvMap_{\kappa} : \coprod_{\kappa \in \famInfChar[\absDiff]}\hcBasisGenCentSpinInf{p}{q}{\kappa} \to \coprod_{\kappa \in \famInfChar[\absDiff]}\hcBasisGenCentSpinInfDual{p}{q}{\kappa}.
\]

\begin{theorem}
\label{theoremDualityCommutesWithECA}
In the notation above, let $\simpleRoot \in \rootSystem(\complexLieAlgebra, \absLieAlgebra)$ be a long abstract root and suppose $\rootReflection \notin \weylGroup(\absDiff)$. If $\kappa \in \famInfChar[\absDiff]$ and $\hcRep \in \hcBasisGenCentSpinInf{p}{q}{\kappa}$ then
\begin{eqnarray*}
\weylGroupInvMap(\rootReflection \cross \hcRep) & = & \rootReflection \cross \weylGroupInvMap(\hcRep).
\end{eqnarray*}
\end{theorem}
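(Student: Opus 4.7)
I would proceed by adapting the strategy of Theorem \ref{theoremDualityCommutesWithCA} to the extended cross action, using the long-ness of $\simpleRoot$ to control the correction term introduced by Definition \ref{defExtendedCrossActionTriples}. The first step is to verify the claim on the level of involutions: since $\rootReflection(-\inv)\rootReflection^{-1} = -\rootReflection\inv\rootReflection^{-1}$, both $\weylGroupInvMap(\rootReflection \cross \hcRep)$ and $\rootReflection \cross \weylGroupInvMap(\hcRep)$ have the same dual abstract bigrading and lie in the same summand $\hcBasisGenCentSpinInfDual{p}{q}{\rootReflection \cross \kappa}$ of $\coprod_{\kappa'} \hcBasisGenCentSpinInfDual{p}{q}{\kappa'}$. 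When the corresponding fiber has order one (Corollary \ref{corRepFiberCent}), the equality is immediate.

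In the remaining case of fiber order two, Definition \ref{defPsiOnHcBasisGen} extended across the family reduces the claim to the biconditional
\[
\pcMap(\rootReflection \cross \hcRep) = \pcMap(\hcRep) \iff \pcMapDual(\rootReflection \cross \weylGroupInvMap(\hcRep)) = \pcMapDual(\weylGroupInvMap(\hcRep)),
\]
where the two sides are identified via $\pcInvMap$. Unwinding Definition \ref{defPsMap}, this is a computation of $(\rootReflection \cross \genTorusChar)(\stCenter)$ together with the shift $\numFlips[\rootReflection\inv\rootReflection^{-1}] - \numFlips$.

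The main new feature is the $\absConj(\ecaElt{\kappa}{\rootReflection^{-1}})$ contribution to the character $\varphi$ of Definition \ref{defExtendedCrossActionTriples}. The key observation is that this correction is a root-lattice element attached to a reflection in a \emph{long} root: combining Lemma \ref{lemSumOfRoots} with the long-root cases of Lemmas \ref{lemRhoRootSum}--\ref{lemRhoRootSumCmplx}, and invoking Proposition \ref{propNonPsCross} (which says $\rootChar[\simpleRootd](\stCenter) = 1$ for long $\simpleRootd$ and $-1$ for short $\simpleRootd$), I would show that the only short-root terms contributing to $\Phi(\stCenter)$ are exactly those already isolated in the long-root cases of Theorem \ref{theoremDualityCommutesWithCA}. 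In particular the extended-cross-action correction does not disturb the parity computation that governs $(\rootReflection \cross \genTorusChar)(\stCenter)$.

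With this reduction in place, the casework runs parallel to the long-$\simpleRoot$ subcases of Theorem \ref{theoremDualityCommutesWithCA}: $\simpleRoot$ noncompact imaginary for $\hcRep$ (forced by non-integrality via Proposition \ref{propSuppRep}), $\simpleRoot$ real for $\hcRep$, and $\simpleRoot$ complex for $\hcRep$. Each case is handled by the corresponding Corollary \ref{corZCrossReal}, \ref{corZCrossIm}, \ref{corZCrossCmplx}, \ref{corZConjIm} together with Proposition \ref{propPsConj}; the dualities between the two sides match because $\absBgDual$ swaps the relevant integrality-and-grading data (Theorem \ref{theoremCompareGradings}), and the change in $\numFlips$ under $\rootReflection$ is independent of integrality. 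The main obstacle I expect is the explicit bookkeeping of $\ecaElt{\kappa}{\rootReflection^{-1}}$: making precise the claim that its short-root content is either absent or tracks exactly a single short-root factor already accounted for in the integral argument, rather than introducing a new parity discrepancy. Once this is pinned down, the rest of the proof reduces to invoking the machinery established in Sections \ref{ssTheMapPs}--\ref{secAlmostCentralMa}.
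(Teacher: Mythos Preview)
Your proposal is correct and follows essentially the same route as the paper: verify the statement on involutions, dispose of the fiber-one case trivially, and in the fiber-two case reduce to the principal-class biconditional, then run the long-root subcases of Theorem~\ref{theoremDualityCommutesWithCA} with the extended-cross-action correction $\ecaElt{\kappa}{\rootReflection^{-1}}$ controlled via Proposition~\ref{propNonPsCross}. The paper's own proof is in fact just a two-line sketch that defers to Theorem~\ref{theoremDualityCommutesWithCA} and Remark~\ref{remECAInfChar}; it notes (as you do) that Proposition~\ref{propSuppRep} rules out the compact-imaginary case, and it omits your separate ``real'' case since that is dual to the noncompact-imaginary case under the symmetric biconditional.
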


\begin{proof}
Let $\absBg[\kappa]$ be the abstract bigrading for $\hcRep$. On the level of involutions, the result follows as in Theorem \ref{theoremDualityCommutesWithCA}.
If $\hcBasisGenInvOrderInf{\inv}{\kappa} = 1$, the elements $\hcRep$ and $\weylGroupInvMap(\hcRep)$ are determined by their involutions and we are done. 

If $\left|\hcBasisGenCentSpinInf{p}{q}{\kappa}\right| = 2$ we again need to show 
\[
\psMap(\rootReflection\cross\hcRep) = \psMap(\hcRep) \iff \psMap(\rootReflection \cross \weylGroupInvMap(\hcRep)) = \psMap(\weylGroupInvMap(\hcRep))
\]
and Proposition \ref{propSuppRep} implies it suffices to check this for complex and noncompact imaginary roots. Combined with Remark \ref{remECAInfChar}, the details are as in Theorem \ref{theoremDualityCommutesWithCA} and are omitted.
\end{proof}


\subsection{Character Multiplicity Duality}
\label{ssCharMultDuality}
Let $\absDiff \in \absLieAlgebraDual$ be a half-integral infinitesimal character. We begin with one final definition.

\begin{definition}[\cite{RT3}, Definition 6.7]
\label{defLength}
Let $\inv$ be an involution in $\weylGroup(\complexLieAlgebra, \absLieAlgebra)$ and recall $\posRootSystem$ denotes the set of abstract roots that are positive for $\absDiff$.
The \emph{length} of $\inv$ is defined to be 
\[
\len[\inv] = \frac{1}{2}\left|\left\{\simpleRoot \in \posRootSystem \mid \inv(\simpleRoot) \notin \posRootSystem\right\}\right| + \frac{1}{2}\text{dim}(\inv_{-1})
\]
where $\inv_{-1}$ is the negative eigenspace for $\inv$. If $\hcRep \in \hcBasisGenSpin{p}{q}$ and $\absTriple$ is the corresponding abstract triple, we define $\len = \len[\inv]$.
\end{definition}

\begin{proposition}
In the setting of Definition \ref{defLength}, $\len \in \mathbb{N}^{+}$.
\end{proposition}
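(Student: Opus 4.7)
The statement requires two things: non-negativity and integrality of
\[
\ell(\theta) = \tfrac{1}{2}|S_\theta| + \tfrac{1}{2}\dim(\theta_{-1}), \qquad S_\theta := \{\alpha \in \posRootSystem \mid \theta(\alpha) \notin \posRootSystem\}.
\]
Non-negativity is immediate since both summands are non-negative integers. The substance is showing that $N := |S_\theta| + \dim(\theta_{-1})$ is even, and this is what I would focus on.

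First, I would reduce $|S_\theta| \bmod 2$ to a count of positive real roots. The map $\alpha \mapsto -\theta(\alpha)$ is an involution on $S_\theta$: if $\theta(\alpha) \in -\posRootSystem$ then $-\theta(\alpha) \in \posRootSystem$ and $\theta(-\theta(\alpha)) = -\alpha \in -\posRootSystem$, so $-\theta(\alpha) \in S_\theta$. Its fixed points are exactly the $\alpha \in \posRootSystem$ with $\theta(\alpha) = -\alpha$, i.e.\ the positive real roots. Every other orbit has size two, so $|S_\theta| \equiv |(\realRoots)^+| \pmod{2}$, reducing the problem to showing $|(\realRoots)^+| + \dim(\theta_{-1})$ is even.

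Next, I would evaluate both quantities from the diagram combinatorics of Section \ref{ssAbsWeylGroup}. By Proposition \ref{propInvWeylGroups}, $\realRoots$ is a root system of type $B_{\numRealBitsSimple} \times A_1^{\numComplexBitsSimple/2}$, so $|(\realRoots)^+| = \numRealBitsSimple^{2} + \numComplexBitsSimple/2$. For the $-1$-eigenspace of $\theta$ acting on $\absLieAlgebra$, each real bit contributes one dimension (the coordinate sent to its negative), and each $2$-cycle of the underlying permutation contributes one dimension as well: whether the cycle is flipped ($\ei \leftrightarrow -\ei[j]$) or not ($\ei \leftrightarrow \ei[j]$), the induced $2\times 2$ block has a unique $-1$-eigenvector in $\mathbb{R}\{\ei,\ei[j]\}$. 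Imaginary bits contribute $0$. Hence $\dim(\theta_{-1}) = \numRealBitsSimple + \numComplexBitsSimple/2$. Adding,
\[
|(\realRoots)^+| + \dim(\theta_{-1}) = \numRealBitsSimple^{2} + \numRealBitsSimple + \numComplexBitsSimple = \numRealBitsSimple(\numRealBitsSimple+1) + \numComplexBitsSimple,
\]
which is even: $\numRealBitsSimple(\numRealBitsSimple+1)$ is a product of consecutive integers, and $\numComplexBitsSimple$ is even because it counts entries lying in $2$-cycles (Definition \ref{defInvParams}). Therefore $N$ is even and $\ell(\theta) \in \mathbb{Z}_{\geq 0}$.

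The main care-point, and essentially the only place a small error could creep in, is correctly identifying which long roots in each complex $2$-block are real versus imaginary: for a non-flipped block $\ei \leftrightarrow \ei[j]$ the real root is $\ei-\ei[j]$ (the imaginary one being $\ei+\ei[j]$), while for a flipped block the roles swap. This bookkeeping is what makes the A$_1$-contribution $\numComplexBitsSimple/2$ correct in both summands. As a sanity check, one can note that for each irreducible factor of $\realRoots$ the sum (number of positive roots) + (rank) is already even ($k(k+1)$ for $B_k$, $2$ for $A_1$), giving a conceptually cleaner parity argument. Finally, $\ell(\theta) = 0$ does occur, precisely when $\theta = 1$ (the unique element of $\weylGroup$ preserving $\posRootSystem$ and having trivial $-1$-eigenspace), so the proposition's $\mathbb{N}^{+}$ must be read as including $0$.
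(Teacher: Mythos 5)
Your proof is correct, but it is a genuinely different argument from the one in the paper. The paper handles integrality by a representation-theoretic induction: it verifies the formula $\len[{-1}] = \tfrac{1}{2}n^{2} + \tfrac{1}{2}n = \binom{n+1}{2}$ for the principal series (split Cartan, $\inv = -1$) and then invokes Vogan's Lemma 8.6.13 of \cite{VGr}, which propagates integrality to all other parameters because passing between Cartan subgroups by Cayley transforms changes the length by integers. You instead give a self-contained parity argument on the involution itself: the involution $\simpleRoot \mapsto -\inv(\simpleRoot)$ on $\{\simpleRoot \in \posRootSystem \mid \inv(\simpleRoot) \notin \posRootSystem\}$ has the positive real roots as its fixed points, reducing the count mod $2$ to $\left|(\realRoots)^{+}\right| = \numRealBitsSimple^{2} + \numComplexBitsSimple/2$, while $\dim(\inv_{-1}) = \numRealBitsSimple + \numComplexBitsSimple/2$, so the sum is $\numRealBitsSimple(\numRealBitsSimple+1) + \numComplexBitsSimple$, which is even; your bookkeeping of which long root in each complex $2$-block is real versus imaginary is correct in both the flipped and unflipped cases. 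Your route is more elementary (no induction, no appeal to how length behaves under Cayley transform) and applies to an arbitrary involution in $\weylGroup$, whereas the paper's route reuses a lemma it needs anyway for the Hecke-module combinatorics, namely the unit change of length across Cayley transforms.

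One further point in your favor: your closing remark that $\len = 0$ occurs (for $\inv = 1$, i.e.\ discrete series parameters) and that the proposition's ``$\mathbb{N}^{+}$'' can only mean the nonnegative integers is consistent with the paper itself --- the table for $\realSpinGroupCover{3}{2}$ lists parameters $\gamma_{0}, \gamma_{1}$ of length $0$ --- and the paper's own proof likewise establishes only integrality and nonnegativity, not strict positivity. So there is no gap on your side; the substantive content of the statement is the parity claim, and you prove it.
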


\begin{proof}
It suffices to consider the case where $\realGroupCover$ is split. Suppose $\hcRep$ is a principal series for $\realGroupCover$ so that $\inv = -1$ and let $n>0$ denote the rank of $\realGroupCover$. One easily verifies
\begin{eqnarray*}
\len & = & \frac{1}{2}n^{2} + \frac{1}{2}n \\
& = & \frac{n(n+1)}{2} = \binom{n+1}{2}
\end{eqnarray*}
and the result follows for $\hcRep$. We now proceed as in \cite{VGr}, Lemma 8.6.13.
\end{proof}

Let $\realGroupCover = \realSpinGroup{p}{q}$ with $p + q = 2n+1$, $p > q$, and recall the rank of $\realGroupCover$ is even. Suppose $\genTripleESplitInf$ is a genuine triple for $\realGroupCover$ with central character $\chi$ (Definition \ref{defCC}) and let $\realGroupCoverDual = \realSpinGroupCover{p^{\vee}}{q^{\vee}}$ be the dual group (Definition \ref{defDualGroup}) for $\realGroupCover$ and $\genTripleESplitInf$. Define the map
\[
\weylGroupInvMap : \hcBasisGenCentSpin{p}{q} \to \hcBasisGenCentSpinInfDual{p}{q}{\absDiff}
\]
as in Definition \ref{defPsiOnHcBasisGen}. The following proposition describes the effect of $\weylGroupInvMap$ on length.

\begin{proposition}
Let $\hcRep \in \hcBasisGenCentSpin{p}{q}$ and suppose $\absTriple$ is the abstract triple for $\hcRep$. Then $\len[\weylGroupInvMap(\hcRep)] = \binom{n+1}{2} - \len$.
\end{proposition}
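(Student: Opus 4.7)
The proof will be a direct calculation, based on the observation (from Definition \ref{defDualGroup} and the construction of $\weylGroupInvMap$ in Definition \ref{defPsiOnHcBasisGen}) that the abstract involution attached to $\weylGroupInvMap(\hcRep)$ is $-\inv$. Thus the task is to compute $\len[-\inv]$ and add it to $\len[\inv]$ to recover the claimed total $\binom{n+1}{2}$.

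First, I would handle the root-count term. Since $\rootSystem$ contains no zero element, for any $\simpleRoot \in \posRootSystem$ we have $-\inv(\simpleRoot) \notin \posRootSystem$ if and only if $\inv(\simpleRoot) \in \posRootSystem$. Hence the sets
\[
\left\{\simpleRoot \in \posRootSystem \mid \inv(\simpleRoot) \notin \posRootSystem\right\} \quad \text{and} \quad \left\{\simpleRoot \in \posRootSystem \mid -\inv(\simpleRoot) \notin \posRootSystem\right\}
\]
partition $\posRootSystem$, so the corresponding contributions to $\len[\inv]$ and $\len[-\inv]$ sum to $\tfrac{1}{2}\lvert\posRootSystem\rvert$. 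Next, the $(-1)$-eigenspace of $-\inv$ acting on $\absLieAlgebra$ (equivalently on its real form) is exactly the $(+1)$-eigenspace of $\inv$, so $\text{dim}((-\inv)_{-1}) = n - \text{dim}(\inv_{-1})$, and the eigenspace terms sum to $\tfrac{n}{2}$.

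Combining,
\[
\len + \len[\weylGroupInvMap(\hcRep)] \;=\; \tfrac{1}{2}\lvert\posRootSystem\rvert + \tfrac{1}{2}n.
\]
For $\rootSystem$ of type $B_{n}$, $\lvert\posRootSystem\rvert = n^{2}$, so the right-hand side equals $\tfrac{n(n+1)}{2} = \binom{n+1}{2}$, yielding $\len[\weylGroupInvMap(\hcRep)] = \binom{n+1}{2} - \len$ as claimed.

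There is essentially no main obstacle: once one recognizes that the duality acts as $\inv \mapsto -\inv$ on the abstract involution (which is built into Definition \ref{defPsiOnHcBasisGen}), the statement reduces to an elementary complementarity argument plus the numerical input $\lvert\posRootSystem\rvert = n^{2}$ for $B_n$. The only minor point to verify carefully is that the eigenspace dimension in Definition \ref{defLength} is taken on the ambient Cartan (of complex dimension $n$, real dimension $n$ on the real span of the coroots), so that the $(+1)$ and $(-1)$ eigenspaces of $\inv$ genuinely have dimensions summing to $n$.
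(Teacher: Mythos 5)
Your proof is correct and is essentially the same argument as the paper's: both use that $\weylGroupInvMap$ sends $\inv$ to $-\inv$, then compute $\len[-\inv]$ via the complementarity of the root-count term (summing to $\tfrac{1}{2}\lvert\posRootSystem\rvert = \tfrac{n^2}{2}$) and of the eigenspace term (summing to $\tfrac{n}{2}$). No issues.
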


\begin{proof}
Set
\begin{eqnarray*}
m_{1} & = & \left|\left\{\simpleRoot \in \posRootSystem \mid \inv(\simpleRoot) \notin \posRootSystem \right\}\right| \\
m_{2} & = & \text{dim}(\inv_{-1}).
\end{eqnarray*}
Clearly we have
\begin{eqnarray*}
\len[\weylGroupInvMap(\hcRep)] = \len[\weylGroupInvMap(\inv)] & = & \len[-\inv] \\
& = & \frac{n^{2}-m_{1}}{2} + \frac{n-m_{2}}{2} \\
& = & \frac{n(n+1)}{2} - \frac{m_{1}+m_{2}}{2} \\
& = & \binom{n+1}{2} - \len
\end{eqnarray*}
as desired.
\end{proof}

Fix a family $\famInfChar[\absDiff]$ of infinitesimal characters for $\absDiff$ and let $\mathcal{B} = \left\{\gamma_{1},\ldots,\gamma_{r}\right\} = \hcBasisGenCentSpin{p}{q}$. If  $\delta_{i} = \weylGroupInvMap(\gamma_{i})$, write $\mathcal{B}' = \left\{\delta_{1},\ldots,\delta_{r}\right\} = \hcBasisGenCentSpinInfDual{p}{q}{\absDiff}$ and extend the map $\weylGroupInvMap$ (and thus the sets $\mathcal{B}$ and $\mathcal{B}'$) as in Theorem \ref{theoremDualityCommutesWithECA}. Let $\mathcal{M}$ (respectively $\mathcal{M}'$) denote the free $\mathbb{Z}[q,q^{-1}]$ module with basis $\mathcal{B}$ (respectively $\mathcal{B}'$). As in \cite{RT3}, we view $\mathcal{M}$ and $\mathcal{M}'$ as Hecke modules for the extended action of the Hecke algebra $\mathcal{H}(\weylGroup)$ (\cite{RT3}, Definition 9.4). The integer matrix $\MultMatrix$ (Definition \ref{defCMDuality}) for $\mathcal{B}$ (respectively $\mathcal{B}'$) is then determined from the combinatorics of $\mathcal{M}$ (respectively $\mathcal{M}'$). The interested reader is referred to \cite{VPc} for a reasonably concise account of this process.

For our purposes, only the following formalism is important. Define the dual $\mathbb{Z}[q,q^{-1}]$ module
\[
\mathcal{M}^{*} = \text{Hom}_{\mathbb{Z}[q,q^{-1}]}(\mathcal{M},\mathbb{Z}[q,q^{-1}])
\]
and extend $\mathcal{M}^{*}$ to an $\mathcal{H}(\weylGroup)$-module as in \cite{RT3}, Theorem 11.1. Write $\check{\mathcal{B}} = \left\{\check{\gamma}_{1},\ldots,\check{\gamma}_{r}\right\}$ for the dual basis of $\mathcal{M}^{*}$ and define the $\mathbb{Z}[q,q^{-1}]$-linear map
\begin{eqnarray*}
\Phi: \mathcal{M}^{*} & \longrightarrow & \mathcal{M}' \\
\check{\gamma}_{i} & \longmapsto & (-1)^{\len[{\gamma_{i}}]}\delta_{i}.
\end{eqnarray*}

\begin{theorem}
\label{theoremHeckeModuleDuality}
In the above setting, $\Phi$ is an isomorphism of $\mathcal{H}(W)$-modules.
\end{theorem}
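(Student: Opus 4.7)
The plan is to reduce the theorem to verifying that $\Phi$ intertwines the action of the Hecke algebra generators $T_{s}$, $s = \rootReflection[\simpleRoot]$ a simple reflection of $\weylGroup$, on $\mathcal{M}^{*}$ and $\mathcal{M}'$. Since $\Phi$ is manifestly an isomorphism of $\mathbb{Z}[q,q^{-1}]$-modules (a signed permutation of bases), the content of the theorem is the identity
\[
\Phi(T_{s} \cdot \check{\gamma}_{i}) \;=\; T_{s} \cdot \Phi(\check{\gamma}_{i})
\]
for each simple $s$ and each basis element $\check{\gamma}_{i}$. The action of $T_{s}$ on $\mathcal{M}$ is defined case-by-case (as in \cite{RT3}) based on the type of $\simpleRoot$ for $\gamma_{i}$ (compact imaginary, noncompact imaginary of type I or II, real of non-parity or parity kind and in the latter case of type I or II, or complex ascent/descent), and the induced action on $\mathcal{M}^{*}$ is obtained by transposition with compensating $q$- and sign-factors as in \cite{RT3}, Theorem 11.1.

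First I would establish the root-type dictionary under $\weylGroupInvMap$. By Theorem \ref{theoremCompareGradings}, combined with the definition of the dual bigrading $\absBgDual$ in Definition \ref{defDualGroup}, the map $\weylGroupInvMap$ interchanges compact imaginary roots with real non-parity roots and noncompact imaginary roots with real parity roots. Using Theorem \ref{theoremFokko} and the characterization of type II noncompact imaginary roots (vanishing of the associated $\ma$ in $\negQuotCorootLattice/\posnegQuotCorootLattice$), together with the parallel characterization for type II real parity roots, one verifies that a noncompact imaginary root of type I (resp.\ II) for $\gamma_{i}$ becomes a real parity root of type II (resp.\ I) for $\delta_{i} = \weylGroupInvMap(\gamma_{i})$. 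Complex ascents and descents are exchanged because $\ell(\weylGroupInvMap(\gamma)) = \binom{n+1}{2} - \ell(\gamma)$.

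Next I would carry out the six cases. In each one, the action of $T_{s}$ on $\gamma_{i}$ is expressed in terms of cross actions and Cayley transforms, whose compatibility with $\weylGroupInvMap$ is precisely Theorems \ref{theoremDualityCommutesWithCA}, \ref{theoremDualityCommutesWithCT}, and \ref{theoremDualityCommutesWithECA}. The signs $(-1)^{\ell(\gamma_{i})}$ built into $\Phi$ absorb the sign corrections appearing in the dual action on $\mathcal{M}^{*}$: each elementary operation (cross action through a complex descent, Cayley transform, inverse Cayley transform) changes $\ell$ by exactly $1$, and this change mirrors the length-change of $\weylGroupInvMap$ on the image side, producing matching $(-q)$-factors. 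In the type II Cayley-transform cases, where $\ctOpNci(\gamma_{i})$ and $\ctOp(\delta_{i})$ are double-valued, the set-theoretic equality supplied by Theorem \ref{theoremDualityCommutesWithCT} shows that the two summands from $\genTorusCharCt = \genTorusCharCt_{+} \oplus \genTorusCharCt_{-}$ map onto the corresponding pair for $\weylGroupInvMap(\gamma_{i})$.

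The hard part will be the bookkeeping in the type II Cayley transform cases, where the alignment between the two-element fiber on the noncompact imaginary side (governed by $\pcMap$ via Theorem \ref{theoremPsMapBij}) and the two-element fiber on the dual real-parity side must be shown to respect the $\pcInvMap$-identification chosen in Definition \ref{defPsiOnHcBasisGen}. Here the choice of $\pcInvMap$ is precisely what guarantees that the $\pm$ labels on the two Cayley summands are routed consistently by $\Phi$; without this preparation the duality would fail at the level of these two-element packets even though it holds on their images in $\pcCent{p}{q}$. Once the root-type dictionary and this packet alignment are in place, the case-by-case computation is formal and the theorem follows in the spirit of \cite{IC4}, Theorem 13.13 and the general framework of \cite{RT3}. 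Character multiplicity duality (Theorem \ref{theoremMainFirst}) then drops out by comparing the distinguished Kazhdan-Lusztig-Vogan bases of $\mathcal{M}^{*}$ and $\mathcal{M}'$ under $\Phi$.
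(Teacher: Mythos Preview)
Your proposal is correct and follows essentially the same approach as the paper: reduce to checking $T_{s}$-equivariance for simple reflections, split into cases by root type, and invoke Theorems \ref{theoremDualityCommutesWithCA}, \ref{theoremDualityCommutesWithCT}, and \ref{theoremDualityCommutesWithECA} together with the case computations already recorded in \cite{IC4} (Proposition 13.10 for integral roots) and \cite{RT3} (Theorem 11.1 for strictly half-integral roots). Your write-up is considerably more explicit about the root-type dictionary and the sign/length bookkeeping than the paper's terse proof, but the underlying strategy is identical.
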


\begin{proof}
It suffices to check the equivariance of the operators in $\mathcal{H}(\weylGroup)$ corresponding to simple roots. Depending on the root type and length, there are many cases to consider. For integral roots, the details are as in \cite{IC4}, Proposition 13.10. For strictly half integral roots, the details are as in \cite{RT3}, Theorem 11.1. In each case the result is a formal consequence of Theorems \ref{theoremDualityCommutesWithCA}, \ref{theoremDualityCommutesWithCT}, and \ref{theoremDualityCommutesWithECA}.
\end{proof}

\begin{theorem}
\label{theoremMainSecond}
In the above setting
\begin{equation}
\label{eqnCMDuality}
\Mult{\gamma_{i}}{\gamma_{j}} = (-1)^{\len[\gamma_{j}] -\len[\gamma_{i}]} \mult{\delta_{j}}{\delta_{i}}.
\end{equation}
\end{theorem}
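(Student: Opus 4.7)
The plan is to deduce Theorem \ref{theoremMainSecond} formally from the Hecke-module isomorphism $\Phi : \mathcal{M}^{*} \to \mathcal{M}'$ of Theorem \ref{theoremHeckeModuleDuality}, following the argument in \cite{IC4}, Section 13 adapted to the nonlinear setting of \cite{RT3}. The first step is to recast the two multiplicity matrices as specializations at $q=1$ of Kazhdan-Lusztig-Vogan polynomials living in $\mathcal{M}$ and $\mathcal{M}'$: write $\multMatrix_{\mathcal{B}}(q) = (P_{\gamma_i,\gamma_j}(q))$ and $\MultMatrix_{\mathcal{B}}(q) = (Q_{\gamma_i,\gamma_j}(q))$ for the polynomial entries built from the Hecke self-dual bases of $\mathcal{M}$, so that $\mult{\gamma_i}{\gamma_j} = \pm P_{\gamma_i,\gamma_j}(1)$ and $\Mult{\gamma_i}{\gamma_j} = \pm Q_{\gamma_i,\gamma_j}(1)$ with signs determined by the usual $(-1)^{\len[\gamma_i]-\len[\gamma_j]}$ convention; the analogous polynomials are defined in $\mathcal{M}'$.

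The second step is to observe that the two families of polynomials are exchanged by the dual pairing: in any Hecke module with a self-dual basis, the $P$-polynomials of the dual module coincide (up to a sign involving lengths) with the $Q$-polynomials of the original module. I would formalize this as a standalone lemma, verifying it from the definition of the bar involution on $\mathcal{M}^{*}$ and the fact that $\Phi$ intertwines bar involutions up to the sign $(-1)^{\len[\gamma]}$, so that the Kazhdan-Lusztig basis element of $\mathcal{M}^{*}$ attached to $\gamma_i$ is mapped by $\Phi$ to $(-1)^{\len[\gamma_i]}$ times the Kazhdan-Lusztig basis element of $\mathcal{M}'$ attached to $\delta_i$. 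This combines with the coefficient identification in the preceding step to produce
\begin{eqnarray*}
Q_{\gamma_i,\gamma_j}(q) & = & (-1)^{\len[\gamma_j]-\len[\gamma_i]}\, P_{\delta_j,\delta_i}(q).
\end{eqnarray*}

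The third step is to specialize at $q=1$. Evaluating the identity above and translating back from polynomials to integer multiplicities yields exactly
\[
\Mult{\gamma_i}{\gamma_j} \;=\; (-1)^{\len[\gamma_j]-\len[\gamma_i]}\, \mult{\delta_j}{\delta_i},
\]
as desired. Throughout, the key inputs are Theorems \ref{theoremDualityCommutesWithCA}, \ref{theoremDualityCommutesWithCT}, \ref{theoremDualityCommutesWithECA} (which give the $\mathcal{H}(\weylGroup)$-equivariance underlying $\Phi$) and the length identity $\len[\weylGroupInvMap(\hcRep)] = \binom{n+1}{2} - \len$ just established, which guarantees that the parities match correctly when sending $\gamma_i \leftrightarrow \delta_i$.

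The main obstacle I expect is purely bookkeeping rather than conceptual: one must verify that the sign conventions used in defining $\Phi$ (the factor $(-1)^{\len[\gamma_i]}$), the KLV normalization appearing in the nonlinear Hecke module of \cite{RT3}, and the normalization of the $Q$-polynomials with respect to the partial order on parameters all combine to give precisely $(-1)^{\len[\gamma_j]-\len[\gamma_i]}$ and not some related sign. Once these conventions are aligned, the passage from Theorem \ref{theoremHeckeModuleDuality} to the character multiplicity identity is automatic, exactly parallel to \cite{IC4}, Theorem 1.15 and \cite{RT1}.
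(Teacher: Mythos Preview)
Your proposal is correct and follows exactly the same approach as the paper: the paper's proof is the single sentence ``This follows immediately from Theorem \ref{theoremHeckeModuleDuality} and Lemma 13.7 of \cite{IC4},'' and your three steps are precisely an unpacking of what Lemma 13.7 of \cite{IC4} says and how it converts a Hecke-module duality into the multiplicity identity. The only difference is that you spell out the polynomial-level statement and the sign bookkeeping, whereas the paper simply cites Vogan's lemma as a black box.
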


\begin{proof}
This follows immediately from Theorem \ref{theoremHeckeModuleDuality} and Lemma 13.7 of \cite{IC4}.
\end{proof}

\begin{example}
Let $\realGroupCover = \realSpinGroupCover{3}{2}$ and fix $\absDiff = \left(\frac{3}{2},1\right)$. Write $\simpleRoot$ (respectively $\simpleRootb$) for the unique short (respectively long) abstract simple root in $\posRootSystem$. Suppose $\hcRep \in \hcBasisGenSpin{p}{q}$ is a principal series representation for which $\simpleRoot$ does not satisfy the parity condition and write $\chi$ for the corresponding central character. From Definition \ref{defDualGroup} we see
\begin{eqnarray*}
p^{\vee} & = & 2 \cdot 1 + 0 + 0 + 1  = 3\\
q^{\vee} & = & 2 \cdot 1 + 0 + 0 + 0 = 2
\end{eqnarray*}
and $\realGroupCoverDual = \realSpinGroupCover{3}{2} = \realGroupCover$. The structure of 
\[
\mathcal{B} = \left\{\gamma_{0},\ldots,\gamma_{8}\right\} = \hcBasisGenCentSpin{3}{2}
\]
is given in the following table
\[
\begin{array}{|c|c|c|c|c|}
\hline
\mathcal{B} & \text{length} & \rootReflection \times \gamma_i & \simpleRootb:\ctOp[](\gamma_i) & \simpleRoot:\ctOp[](\gamma_i) \\
\hline
\gamma_0 & 0 & \gamma_1 & \gamma_2 & \gamma_4 \\
\gamma_1 & 0 & \gamma_0 & \gamma_3 & \gamma_4 \\
\gamma_2 & 1 & \gamma_5 & \gamma_0 & * \\
\gamma_3 & 1 & \gamma_6 & \gamma_1 & * \\
\gamma_4 & 1 & \gamma_4 & * & \left\{\gamma_1,\gamma_{0}\right\} \\
\gamma_5 & 2 & \gamma_2 & \gamma_7 & * \\
\gamma_6 & 2 & \gamma_3 & \gamma_8 & * \\
\gamma_7 & 3 & \gamma_7 & \gamma_5 & * \\
\gamma_8 & 3 & \gamma_8 & \gamma_6 & * \\
\hline
\end{array}.
\]

Each row in the table corresponds to the element $\gamma_{i} \in \mathcal{B}$ listed in the first column. The second column gives the length of $\gamma_{i}$ and the third column gives the image of the (integral) cross action for $\rootReflection$. The final two columns give the images of the Cayley transforms (when defined) for the simple roots $\simpleRootb$ and $\simpleRoot$, respectively. 

If $\delta_{i} = \weylGroupInvMap(\gamma_{i}) \in \hcBasisGenCentSpinCent{3}{2}{\chi^{\vee}}$, the structure of $\mathcal{B}' = \left\{\delta_{8},\ldots,\delta_{0}\right\}$ is given by the following table
\[
\begin{array}{|c|c|c|c|c|}
\hline
\mathcal{B}' & \text{length} & \rootReflection \times \delta_i & \simpleRootb:\ctOp[](\gamma_i) & \simpleRoot:\ctOp[](\gamma_i) \\
\hline
\delta_8 & 0 & \delta_8 & \delta_6 & * \\
\delta_7 & 0 & \delta_7 & \delta_5 & * \\
\delta_6 & 1 & \delta_3 & \delta_8 & * \\
\delta_5 & 1 & \delta_2 & \delta_7 & * \\
\delta_4 & 2 & \delta_4 & * & \left\{\gamma_1,\gamma_{0}\right\} \\
\delta_3 & 2 & \delta_6 & \delta_1 & * \\
\delta_2 & 2 & \delta_5 & \delta_0 & * \\
\delta_1 & 3 & \delta_0 & \delta_3 & \delta_4 \\
\delta_0 & 3 & \delta_1 & \delta_2 & \delta_4 \\
\hline
\end{array}.
\]
Using the methods of \cite{RT3} one verifies the matrix $\MultMatrix$ for $\mathcal{B}$ is given by
\[
\MultMatrix = \left(\begin{array}{rrrrrrrrr}
1 & 0 & -1 & 0 & -1 & 1 & 1 & 0 & -1 \\
& 1 & 0 & -1 & -1 & 1 & 1 & -1 & 0 \\
& & 1 & 0 & 0 & -1 & 0 & 0 & 0 \\
& & & 1 & 0 & 0 & -1 & 0 & 0 \\
& & & & 1 & -1 & -1 & 0 & 0 \\
& & & & & 1 & 0 & -1 & 0 \\
& & & & & & 1 & 0 & -1 \\
& & & & & & & 1 & 0 \\
& & & & & & & & 1
\end{array}\right)
\]
with respect to the ordering above. Similarly, the matrix $\multMatrix$ for $\mathcal{B}'$ is given by
\[
\multMatrix = \left(\begin{array}{rrrrrrrrr}
1 & 0 & 1 & 0 & 0 & 0 & 0 & 0 & 1 \\
& 1 & 0 & 1 & 0 & 0 & 0 & 1 & 0 \\
& & 1 & 0 & 1 & 1 & 0 & 1 & 1 \\
& & & 1 & 1 & 0 & 1 & 1 & 1 \\
& & & & 1 & 0 & 0 & 1 & 1 \\
& & & & & 1 & 0 & 1 & 0 \\
& & & & & & 1 & 0 & 1 \\
& & & & & & & 1 & 0 \\
& & & & & & & & 1
\end{array}\right)
\]
with respect to the opposite order. Theorem \ref{theoremMainSecond} implies the matrix $\MultMatrix$ equals the antitranspose (i.e., reflection about the opposite diagonal) of the matrix $\multMatrix$ up to sign. The reader is invited to verify this for the above matrices.
\end{example}


\bibliographystyle{amsplain}
\bibliography{main}
\end{document}